\providecommand{\U}[1]{\protect\rule{.1in}{.1in}}
\newtheorem{theorem}{Theorem} [section]
\newtheorem{corollary}{Corollary}[section]
\newtheorem{definition}{Definition} [section]
\newtheorem{lemma}{Lemma}[section]
\newtheorem{proposition}{Proposition} [section]
\newtheorem{remark}{Remark} [section]
\newenvironment{proof}[1][Proof]{\textbf{#1.} }{\ \rule{1em}{1em}}
\numberwithin{equation}{section}
\newcommand{\ep}{\epsilon}
\newcommand{\BFR}{\mathbf{R}}
\newcommand{\BFX}{\mathbf{X}}
\newcommand{\BFA}{\mathbf{A}}
\DeclareMathOperator{\graph}{graph}
\begin{document}

\title{Instability, index theorem, and exponential trichotomy for Linear Hamiltonian PDEs}
\author{Zhiwu Lin and Chongchun Zeng\\School of Mathematics\\Georgia Institute of Technology\\Atlanta, GA 30332, USA}
\date{}
\maketitle

\begin{abstract}
Consider a general linear Hamiltonian system $\partial_{t}u=JLu$ in a Hilbert
space $X$. We assume that$\ L: X \to X^{*}$ induces a bounded and symmetric
bi-linear form $\left\langle L\cdot,\cdot\right\rangle $ on $X$, which has
only finitely many negative dimensions $n^{-}(L)$. There is no restriction on
the anti-self-dual operator $J: X^{*} \supset D(J) \to X$. We first obtain a
structural decomposition of $X$ into the direct sum of several closed
subspaces so that $L$ is blockwise diagonalized and $JL$ is of upper
triangular form, where the blocks are easier to handle. Based on this
structure, we first prove the linear exponential trichotomy of $e^{tJL}$. In
particular, $e^{tJL}$ has at most algebraic growth in the finite
co-dimensional center subspace. Next we prove an instability index theorem to
relate $n^{-}\left(  L\right)  $ and the dimensions of generalized eigenspaces
of eigenvalues of$\ JL$, some of which may be embedded in the continuous
spectrum. This generalizes and refines previous results, where mostly $J$ was
assumed to have a bounded inverse. More explicit information for the indexes
with pure imaginary eigenvalues are obtained as well. Moreover, when
Hamiltonian perturbations are considered, we give a sharp condition for the
structural instability regarding the generation of unstable spectrum from the
imaginary axis. Finally, we discuss Hamiltonian PDEs including dispersive long
wave models (BBM, KDV and good Boussinesq equations), 2D Euler equation for
ideal fluids, and 2D nonlinear Schr\"{o}dinger equations with nonzero
conditions at infinity, where our general theory applies to yield stability or
instability of some coherent states.

\end{abstract}
\tableofcontents

\section{Introduction}

In this paper, we consider a general linear Hamiltonian system
\begin{equation}
\partial_{t}u=JLu,\ u\in X \label{E:Hamiltonian}%
\end{equation}
in a real Hilbert space $X$. We assume that the operator $J:X^{\ast}\supset
D(J)\rightarrow X$ satisfies $J^{\ast}=-J$ and $L:X\rightarrow X^{\ast}$ is
bounded and satisfies $L^{\ast}=L$. This abstract equation is motivated by the
linearization of a large class of Hamiltonian PDEs at equilibria or relative
equilibria. Our first goal is to understand the structural and spectral
properties of (\ref{E:Hamiltonian}), its linear stability/instability, and the
persistence of these properties under small perturbations in a general
setting. Secondly, the general results on (\ref{E:Hamiltonian}) will be
applied to study the linearization at some coherent states of nonlinear
Hamiltonian PDEs such as the 2-dim incompressible Euler equation, generalized
Bullough-Dodd equation, Gross-Pitaevskii type equation, and some long wave
models like KdV, BBM, and the good Boussinesq equations.

Our main assumption is that the quadratic form $\langle L\cdot,\cdot\rangle$
admits a decomposition $X=X_{-}\oplus\ker L\oplus X_{+},$ such that
\[
\dim X_{-}=n^{-}\left(  L\right)  <\infty,\ \langle L\cdot,\cdot
\rangle|_{X_{-}}<0,\ \text{and }\langle L\cdot,\cdot\rangle|_{X_{+}}\geq
\delta>0.
\]
An additional regularity assumption is required when $\dim\ker L=\infty$ (see
\textbf{(H3)} in Section \ref{SS:setup}). We note that there is no additional
restriction on the symplectic operator $J$, which can be unbounded,
noninvertible, or even with infinite dimensional kernel. \newline

\textit{* Background: stability/instability and local dynamics near an
equilibrium.} As our motivation for studying the linear system
(\ref{E:Hamiltonian}) is to understand the stability/instability of and the
local dynamics near coherent states (steady states, traveling waves, standing
waves etc.) of a nonlinear PDE, we first give a brief discussion of several
standard notions of stability/instability and local dynamics. In a simple case
of an ODE system
\[
x_{t}=f(x),\quad x\in\mathbf{R}^{n},
\]
the local dynamics near an equilibrium $x_{0}$, without loss of generality
assuming $x_{0}=0$, is very much related to the dynamics of its linearized
equation
\[
x_{t}=Ax,\quad A_{n\times n}=Df(0).
\]
On the one hand, if $A$ has an unstable eigenvalue $\lambda$
($\operatorname{Re}\lambda>0$), then the above linearized equation has an
exponential growing solution and is therefore linearly unstable. Here, linear
stability means $e^{tA}$ is uniformly bounded for all $t\geq0$. While it is
clearly linearly stable if Re$\lambda<0$ for all $\lambda\in\sigma(A)$, there
might be linear solutions with polynomial growth if Re$\lambda\leq0$ for all
$\lambda\in\sigma(A)$, which is often referred to as the spectrally stable
case. Nonlinear instability immediately follows from spectral instability for
ODEs. However, it is a much more subtle issue what properties in addition to
the spectral (or even linear) stability would ensure nonlinear stability. On
the other hand, assume $\sigma_{1}\subset\sigma(A)$ and Re$\lambda<\alpha$ (or
Re$\lambda>\alpha$) for all $\lambda\in\sigma_{1}$. Let $E_{1}$ be the
eigen-space of $\sigma_{1}$ which is invariant under $e^{tA}$, then we have
the spectral mapping property
\[
\text{(SM) there exists }C>0\text{ s.t. }|e^{tA}x|\leq Ce^{\alpha
t}|x|,\,\forall x\in E_{1},\,t\geq0\text{ (or }t\leq0).
\]
Suppose $\alpha_{+}>\alpha_{-}$ and $\sigma(A)=\sigma_{+}\cup\sigma_{-}$ with
Re$\lambda>\alpha_{+}$ for all $\lambda\in\sigma_{+}$ and Re$\lambda
<\alpha_{-}$ for all $\lambda\in\sigma_{-}$. Let $E_{\pm}$ be the eigen-spaces
of $\sigma_{\pm}$, then the above spectral mapping property (SM) and
$\alpha_{+}>\alpha_{-}$ imply an \textit{exponential dichotomy} of $e^{tA}$:
in the decomposition $\mathbf{R}^{n}=E_{+}\oplus E_{-}$ which is invariant
under $e^{tA}$, the relative minimal exponential expanding rate of
$e^{tA}|_{E_{+}}$ is greater than the maximal rate of $e^{tA}|_{E_{-}}$. For
the nonlinear ODE system, the classical invariant manifold theory, based on
the cornerstone of the exponential dichotomy, implies the existence of locally
invariant (pseudo-)stable and unstable manifolds near $0$. They often provide
more detailed dynamic structures than the mere stability/instability and also
help to organize the local dynamics.

It often happens that $f(x)$ and thus $A$ depend on a small parameter
$\epsilon$, so one naturally desires to understand the dynamics of the
perturbed systems for $0<|\epsilon|<<1$ based on that of $\epsilon=0$. A
system is said to be structurally stable if its dynamics does not change
qualitatively under any sufficiently small perturbation. For ODEs, it is well
known that the local dynamics is structurally stable if $A$ is hyperbolic,
namely $\sigma(A)\cap i\mathbf{R}=\emptyset$.

The above ODE results may serve as \textit{guidelines} in the study of local
dynamics of PDEs near equilibria and relative equilibria while one has to keep
in mind the following issues (among others):\newline$\bullet$ Sometimes it is
highly non-trivial to analyze the spectra of linearized PDEs, particularly
when the linear operator is not self-adjoint and has continuous spectrum.
\newline$\bullet$ On the eigen-space $E_{1}$ of a spectral subset $\sigma_{1}%
$, the above spectral mapping type property (SM) may not hold for solutions of
the linearized PDEs, due to the existence of continuous spectrum of the
linearized operator (see e.g. \cite{renardy-sm}). \newline$\bullet$ Regularity
issues in spatial variables can cause serious complications in proving
nonlinear properties (stability/instability, local invariant manifolds,
\textit{etc.}) based on linear ones (spectral stability/instability,
exponential dichotomy, \textit{etc.}). The existing systematic results are
mainly for semilinear PDEs.

\textit{* Background: regarding Hamiltonian systems.} On a Hilbert space $X$,
a Hamiltonian system takes the form
\begin{equation}
u_{t}=J\nabla H\left(  u\right)  ,\label{E:NLHPDE}%
\end{equation}
where the symplectic operator $J:X^{\ast}\rightarrow X$ satisfies $J^{\ast
}=-J$ and $H:X\rightarrow\mathbf{R}$ is the Hamiltonian energy functional.
In a more general setting, $J=J(u)$ may depend on $u$ or \eqref{E:NLHPDE} may
be posed on a symplectic manifold $M$ where $J(u):T^{\ast}M\rightarrow TM$. In
the classical setting, the symplectic structure $\omega\in T^{\ast}M\otimes
T^{\ast}M$ is a 2-form given by
\[
\omega(u)(U_{1},U_{2})=\langle J(u)^{-1}U_{1},U_{2}\rangle,\;U_{1,2}\in
T_{u}M,
\]
which is required to be closed, namely $d\omega=0$. It is standard that $H$
and $\omega$ are invariant under the Hamiltonian flow associated with
(\ref{E:NLHPDE}). Suppose $u_{\ast}$ is a steady state of (\ref{E:NLHPDE})
(possibly in an appropriate reference frame, see examples in Section
\ref{SS:example}), then the linearized equation at $u_{\ast}$ takes the form
of \eqref{E:Hamiltonian} with $L=\nabla^{2}H\left(  u_{\ast}\right)  $. In
some cases, even though the nonlinear equation is not written in a
straightforward Hamiltonian form, the linearization at an equilibrium
$u_{\ast}$ can still be put in the Hamiltonian form (\ref{E:Hamiltonian}), see
Section \ref{SS:Euler} for the example of 2D Euler equation. It is standard
for Hamiltonian ODEs and also proved for many Hamiltonian PDEs that the
spectrum $\sigma(JL)$ is \textit{symmetric} with respect to both real and
imaginary axes. Therefore, either \eqref{E:Hamiltonian} is spectrally unstable
or its spectrum must lie on the imaginary axis. Even though the latter falls
into the spectral stability category, it is often subtle to obtain properties
of even the linear dynamics, such as linear stability and exponential
dichotomy, based on the spectral properties, particular when there is
continuous spectrum. Existing results in the literature often take advantage
of the conservation of $H$ or $\omega$.

The structural stability is also more subtle even for linear Hamiltonian PDEs.
On the one hand, the linearized operator $JL$ associated with the
linearization of Hamiltonian PDEs arising from physics and engineering usually
has most of its spectrum lie on the imaginary axis. Therefore, the structural
stability results based on the hyperbolicity of $JL$ are hardly applicable. On
the other hand, properties of Hamiltonian systems, such as the notions of
Krein signatures and the conservation of $H$ and $\omega$, provide crucial
additional tools. The structural stability of linear Hamiltonian PDEs
addressed in this paper is mainly related to spectral properties and linear
exponential dichotomy.

For Hamiltonian PDEs, there have been some works on local nonlinear dynamics
based on properties of the linearized equations. For semilinear Hamiltonian
PDEs $u_{t}=JH^{\prime}\left(  u\right)  $ with nonlinear terms of subcritical
growth,
such as nonlinear Klein-Gordon equation,
nonlinear Schr\"{o}dinger equation, and Gross-Pitaevskii equation, local
invariant manifolds can be constructed by combining ODE techniques with
dispersive estimates (e. g. \cite{BJ89} \cite{jin-et-GP}
\cite{nakanishi-schlag-book}). Such results for traveling wave solutions of
the generalized KdV equation had also been obtained (\cite{jin-et-kdv}) with
the help of smoothing estimates. The construction of invariant manifolds for
quasilinear PDEs is more difficult, and was only done in very few cases (e. g.
\cite{lin-zeng-invariant}). However, the passing from linear to nonlinear
instability, which is a much weaker statement than the existence of invariant
manifolds, had been done for many quasilinear PDEs (e.g. \cite{grenier-2000}
\cite{guo-strauss95} \cite{lin-liao-jin-modulational} \cite{lin-euler-imrn}
\cite{lin-cpam-bgk}). Several techniques were introduced to overcome the
difficulties of loss of derivative of nonlinear terms and the growth due to
the essential spectra of the linearized operators (see above references). The
passing from spectral (or linear) stability to nonlinear stability is more
subtle, particularly when $\left\langle Lu,u\right\rangle $ is not positive
definite after the symmetry reduction. When such positivity holds, the
nonlinear stability can usually be proved by using the Lyapunov functional,
see e.g. \cite{gss-87} \cite{gss-90} for Hamiltonian PDEs. If such positivity
fails, there is currently no general approach to study the nonlinear stability
based on the linear one. \newline

Our motivation of analyzing the linearized Hamiltonian system
(\ref{E:Hamiltonian}) in such a general form is to understand the
stability/instability of and the local dynamics near a coherent state $u_{*}$
of a nonlinear Hamiltonian PDE in the form of \eqref{E:NLHPDE} with $L
=\nabla^{2} H(u_{*})$. We first make some comments on the hypotheses.

On $L$, the assumption $n^{-}\left(  L\right)  <\infty$ is equivalent to that
$H\left(  u\right)  $ has a finite Morse index at the critical point $u_{\ast
}$. This assumption is automatically satisfied if $u_{\ast}$ is constructed by
minimizing $H\left(  u\right)  $ subject to finitely many constraints. In
applications to continuum mechanics (fluids, plasmas etc.), the PDEs are often
of a noncononical Hamiltonian form $u_{t}=J\left(  u\right)  \nabla H\left(
u\right)  $, with a symplectic operator $J\left(  u\right)  \ $depending on
the solution $u$. In many cases, the linearization at an equilibrium $u_{\ast
}$ can still be written in the Hamiltonian form (\ref{E:Hamiltonian}) and the
assumption $n^{-}\left(  L\right)  <\infty$ is satisfied (see Section
\ref{SS:Euler} for the example of 2D Euler equation). The uniform positivity
of $L$ on $X_{+}$ could be relaxed to positivity by defining a new phase space
(see Section \ref{S:degenerate}).

In the existing literature on systems in the form of (\ref{E:Hamiltonian}),
$J^{-1}:X\rightarrow X^{\ast}$ is mostly assumed to be a bounded operator,
which is not only for technical convenience but also natural in the sense that
the symplectic 2-form $\omega$ is defined in terms of $J^{-1}$. However, it
happens that $J$ does not have a bounded inverse for many important
Hamiltonian PDEs such as the KdV, BBM, the good Boussinesq equations, 2D Euler
equation, \textit{etc.}, see Section \ref{SS:example}.

The goal of this paper regarding the general Hamiltonian PDE
(\ref{E:Hamiltonian}) is to study its spectral structures, linear dynamics, as
well as certain structural stability properties under the assumption
$n^{-}(L)<\infty$, but without any assumption on $J$ in addition to $J^{\ast
}=-J$. Our main general results include the symmetry of the spectrum
$\sigma(JL)$, an index theorem relating certain spectral properties of $JL$ to
$n^{-}\left(  L\right)  $ which is useful for linear stability analysis, the
linear exponential trichotomy of $e^{tJL}$, and the persistence of these
properties for slightly perturbed Hamiltonian systems. These results are
mostly achieved based on a structural decomposition of (\ref{E:Hamiltonian}).
In Section \ref{SS:example}, several Hamiltonian PDEs are studied using these
general results.

In the below, we briefly describe our main results and some key ideas in the
proof. More details of the main theorems can be found in Section
\ref{S:MainResults} and proofs in later sections.

\textbf{Structural decomposition.} Most of the general theorems in this paper
are based on careful decompositions of the phase space into closed subspaces
through which $L$ and $JL$ take rather simple block forms. One of the most
fundamental decomposition is given in Theorem \ref{T:decomposition}. In this
decomposition,
\[
JL\longleftrightarrow%
\begin{pmatrix}
0 & A_{01} & A_{02} & A_{03} & A_{04} & 0 & 0\\
0 & A_{1} & A_{12} & A_{13} & A_{14} & 0 & 0\\
0 & 0 & A_{2} & 0 & A_{24} & 0 & 0\\
0 & 0 & 0 & A_{3} & A_{34} & 0 & 0\\
0 & 0 & 0 & 0 & A_{4} & 0 & 0\\
0 & 0 & 0 & 0 & 0 & A_{5} & 0\\
0 & 0 & 0 & 0 & 0 & 0 & A_{6}%
\end{pmatrix}
,
\]%
\[
L\longleftrightarrow%
\begin{pmatrix}
0 & 0 & 0 & 0 & 0 & 0 & 0\\
0 & 0 & 0 & 0 & B_{14} & 0 & 0\\
0 & 0 & L_{X_{2}} & 0 & 0 & 0 & 0\\
0 & 0 & 0 & L_{X_{3}} & 0 & 0 & 0\\
0 & B_{14}^{\ast} & 0 & 0 & 0 & 0 & 0\\
0 & 0 & 0 & 0 & 0 & 0 & B_{56}\\
0 & 0 & 0 & 0 & 0 & B_{56}^{\ast} & 0
\end{pmatrix}
,
\]
where $L$ takes an almost diagonal block form with $L_{X_{3}}\geq\delta$ for
some $\delta>0$ and $JL$ takes a blockwise upper triangular form. Moreover,
all the blocks of $JL$ are bounded operators except for $A_{3}$ which is
anti-self-adjoint with respect to the equivalent inner product $\langle
L_{X_{3}}\cdot,\cdot\rangle$ on $X_{3}$. In particular, all other diagonal
blocks are matrices and therefore have only eigenvalues of finite
multiplicity. The upper triangular form of $JL$ simplifies the spectral
analysis on $JL$ tremendously and plays a fundamental role in the proof of the
exponential trichotomy of $e^{tJL}$, the index formula, and the structural
stability/instability of \eqref{E:Hamiltonian}.

We briefly sketch some ideas in the construction of the decomposition here
under the assumption $\ker L=\left\{  0\right\}  $, from which the
decomposition in the general case follows. First, we observe that $JL$ is
anti-self-adjoint in the indefinite inner product $\left\langle L\cdot
,\cdot\right\rangle $. Thus, by a Pontryagin type invariant subspace Theorem
for symplectic operators in an indefinite inner product space, there exists an
invariant (under $JL$) subspace $W\subset X,$ satisfying that $L|_{W}\leq0$
and $\dim W=n^{-}\left(  L\right)  $.

It would be highly \textit{desirable} to extend $W$ to a finite dimensional
invariant subspace $\tilde{W}$ such that $L|_{\tilde{W}}\ $is non-degenerate.
This would yield the \textit{invariant} decomposition $X=\tilde{W}\oplus
\tilde{W}^{\perp L}$, where $\tilde{W}^{\perp L}$ is the orthogonal complement
of $\tilde{W}$ with respect to $\left\langle L\cdot,\cdot\right\rangle $ and
$L|_{\tilde{W}^{\perp L}}>0$. Since $JL|_{\tilde{W}^{\perp L}}$ is
anti-self-adjoint in the equivalent inner product $\left\langle L\cdot
,\cdot\right\rangle $ and $\tilde{W}$ is finite dimensional, this immediately
gives the decomposition we want.

However, such an invariant decomposition $X=\tilde{W}\oplus\tilde{W}^{\perp
L}$ is in general impossible since it would imply that $L$ is non-degenerate
on the subspace of generalized eigenvectors of any purely imaginary eigenvalue
of $JL$ (Lemma \ref{lemma-non-degenerate-finite-d}), while the counterexample
in Section \ref{SS:non-deg} shows that $L$ can be degenerate on such subspaces
of embedded eigenvalues in the continuous spectra. Our proof is by a careful
decomposition of the invariant spaces $W,\ W^{\perp L}$ and their complements.

\textbf{Exponential trichotomy.} Our second result is the exponential
trichotomy of $e^{tJL}$ in $X$ and more regular spaces (Theorem
\ref{theorem-dichotomy}). More precisely, we decompose $X=E^{u}\oplus
E^{c}\oplus E^{s}$, such that: $E^{u,c,s}$ are invariant under $e^{tJL}$,
\[
\dim E^{u}=\dim E^{s}\leq n^{-}(L),\ E^{c}=\left(  E^{u}\oplus E^{s}\right)
^{\perp L},
\]
and $A_{5}=e^{tJL}|_{E^{u}}$ $\left(  A_{6} =e^{tJL}|_{E^{s}}\right)  \ $has
exponential decay when $t<0$ $\left(  t>0\right)  \ $and $e^{tJL}|_{E^{c}}$
has possible polynomial growth for all $t$ with the optimal algebraic rate
explicitly given. Roughly speaking, the unstable (stable) spaces $E^{u}\left(
E^{s}\right)  $ are subspaces of generalized eigenvectors of the unstable
(stable) eigenvalues of $JL$ and the center space $E^{c}$ corresponds to the
spectra in the imaginary axis.

Such exponential trichotomy is an important step to prove nonlinear
instability, and furthermore to construct local invariant (stable, unstable,
center) manifolds which are crucial for a complete understanding of the local
dynamics, see, for example, \cite{BJ89, CLL91, CL88}. Such exponential
trichotomy or dichotomy might be tricky to get due to the spectral mapping
issue, that is, generally $\sigma\left(  e^{tJL}\right)  \subsetneq
e^{t\sigma\left(  JL\right)  }$. So even if the spectra of $JL$ is understood,
it is still a subtle issue to prove the estimates for $e^{tJL}$. In the
literature, the exponential dichotomy is usually obtained either by resolvent
estimates (e.g. \cite{latushkin-et-2000}) or compact perturbations of simpler
semigroups (\cite{vidav70} \cite{shizuta83}). The proofs were often technical
(particularly for resolvent estimates) and only worked for specific classes of
problems. Our result gives the exponential trichotomy for general Hamiltonian
PDEs (\ref{E:Hamiltonian}) with $n^{-}\left(  L\right)  <\infty$. Moreover,
the growth rates (particularly on the center space) obtained are sharp. In
particular, our sharp polynomial growth rate estimate on the center space
implies a stronger result than the usual spectral mapping statement. Our proof
of the exponential trichotomy which is very different from traditional
methods, is based on the upper triangular form of $JL$ in the decomposition
given in Theorem \ref{T:decomposition}. It can be seen that the Hamiltonian
structure of (\ref{E:Hamiltonian}) plays an important role in the proof.

\textbf{Index theorems.} Our third result is an index formula to relate the
counting of dimensions of some eigenspaces of $JL$ to $n^{-}\left(  L\right)
$. Denote the sum of algebraic multiplicities of all positive eigenvalues of
$JL$ by $k_{r}$ and the sum of algebraic multiplicities of eigenvalues of $JL$
in the first quadrant by $k_{c}$. Let $k_{i}^{\leq0}$ be the total number of
\textit{nonpositive} dimensions $n^{\leq0}(L|_{E_{i\mu}})$ of the quadratic
form $\left\langle L\cdot,\cdot\right\rangle $ restricted to the subspaces
$E_{i\mu}$ of generalized eigenvectors of all purely imaginary eigenvalues
$i\mu\in\sigma(JL)\cap i\mathbf{R}$ of $JL$ with positive imaginary parts, and
$k_{0}^{\leq0}$ be the number of nonpositive dimensions of $\left\langle
L\cdot,\cdot\right\rangle $ restricted to the generalized kernel of $JL$
modulo $\ker L$. We note that, when all purely imaginary eigenvalues are
semi-simple and $\left\langle L\cdot,\cdot\right\rangle $ restricted to these
kernels is non-degenerate, $k_{i}^{\leq0}$ is equal to $k_{i}^{-}$ which
represents the number of purely imaginary eigenvalues (with positive imaginary
parts) of negative Krein signature. The situation is more complicated if the
eigenvalue is not semi-simple or even embedded into the continuous spectra. In
the general case, we have
\begin{equation}
k_{r}+2k_{c}+2k_{i}^{\leq0}+k_{0}^{\leq0}=n^{-}\left(  L\right)  \text{.}
\label{formula-index}%
\end{equation}
Two immediate corollaries of (\ref{formula-index}) are: $n^{-}\left(
L\right)  =k_{0}^{\leq0}$ implies spectral stability and the oddness of
$n^{-}\left(  L\right)  -k_{0}^{\leq0}$ implies linear instability. Since by
(\ref{formula-index}) all the negative directions of $\left\langle
L\cdot,\cdot\right\rangle $ are associated to eigenvalues of $JL$,
conceptually the continuous spectrum of $JL\ $is only associated to positive
directions of $\left\langle L\cdot,\cdot\right\rangle $.

There have been lots of work on similar index formulae under various settings
in the literature. In the finite dimensional case where $L$ and $JL$ are
matrices, such index formula readily follows from arguments in a paper of
Mackay \cite{mackay86}, although was not written explicitly there. In the past
decade, there have been lots of work trying to extend it to the infinite
dimensional case. In most of these papers, $J$ is assumed to have a bounded
inverse (\cite{chu-pilinovsky} \cite{cug-Pelinovsky05} \cite{kapitula-et-04}
\cite{kollar-miller}), or $J|_{\left(  \ker J\right)  ^{\perp}}\ $\ has a
bounded inverse, as in the cases of periodic waves of dispersive PDEs
(\cite{bronski-et-quadratic-pencils} \cite{bronski-et-index-KDV}
\cite{kappitula-haragus} \cite{deconinck-kapitula}). Recently, in
\cite{kapitula-stefanov-kdv} \cite{pelinovsky-KDV}, the index formulae were
studied for KDV type equations in the whole line for which $J=\partial_{x}$
does not have bounded inverse. Our result (\ref{formula-index}) gives a
generalization of these results since we allow $J$ to be an arbitrary
anti-self-dual operator. In particular, $J|_{\left(  \ker J\right)  ^{\perp}}$
does not need to have a bounded inverse. This is important for applications to
continuum mechanics (e.g. fluids and plasmas) where $J$ usually has an
infinite dimensional kernel with $0$ in the essential spectrum of $J$ in some
appropriate sense (see Section \ref{SS:Euler} for the example of 2D Euler equation).

We should also point out some differences of (\ref{formula-index}) with
previous index formulae even in the case with bounded $J^{-1}$. In previous
works on index formula, it is assumed that $\left\langle L\cdot,\cdot
\right\rangle $ is non-degenerate on $(JL)^{-1}\left(  \ker L\right)  /\ker
L$. Under this assumption, the generalized kernel of $JL$ only have Jordan
blocks of length 2 and $k_{0}^{\leq0}=n^{-}\left(  L|_{(JL)^{-1}\left(  \ker
L\right)  /\ker L}\right)  $ (see Propositions \ref{prop-counting-k-0-1} and
\ref{prop-counting-k-0-2}). In (\ref{formula-index}), we do not impose such
non-degeneracy assumption on $L|_{(JL)^{-1}\left(  \ker L\right)  /\ker L}$
and thus the possible structures may be much richer. In the counting of
(\ref{formula-index}), we use $k_{i}^{\leq0},k_{0}^{\leq0}$, which are the
total dimensions of \textit{non-positive} directions of $L$ restricted on the
subspaces $E_{i\mu}$ of generalized eigenvectors of purely imaginary
eigenvalues $i\mu$ or zero eigenvalue (modulo $\ker L$). Since $\langle
L\cdot,\cdot\rangle$ might be degenerate on such subspace $E_{i\mu}\ $of an
embedded eigenvalue (see example in Section \ref{SS:non-deg}), they can not be
replaced by $k_{i}^{-},k_{0}^{-}$ (i.e. the dimensions of negative directions
of $L$) as used in the index formula of some papers (e.g.
\cite{kapitula-et-04}). However, in Proposition \ref{P:non-deg}, we show that
if a purely imaginary spectral point $i\mu$ is isolated, then $L$ is
non-degenerate on its generalized eigenspace $E_{i\mu}$ which consists of
generalized eigenvectors only. In this case, we also get an explicit formula
(\ref{counting-pure-imaginary}) for $n^{-}\left(  L|_{E_{i\mu}}\right)  $ by
its Jordan canonical form, which is independent of the choice of the basis
realizing the canonical form. This formula suggests that even for embedded
eigenvalues which might be of infinite multiplicity, the number and length of
nontrivial Jordan chains are bounded in terms of $n^{-}\left(  L\right)  $.

Moreover, even\ for the case where $\left\langle L\cdot,\cdot\right\rangle $
is degenerate on $E_{i\mu}$, we give a block decomposition of $JL$ and $L$ on
$E_{i\mu}$ (Proposition \ref{P:basis}). In this decomposition, $L$ is
blockwise diagonal and $JL$ takes an upper triangular form with three diagonal
blocks corresponding to the degenerate part of $L$, the simple eigenspaces and
the Jordan blocks of $i\mu$ of $JL$. Furthermore,we construct a special basis
for each Jordan block such that the corresponding $L$ is in an anti-diagonal
form (\ref{L-anti-diagonal}). The above decomposition of $E_{i\mu}$ yields
formula (\ref{counting-pure-imaginary}) for the case where $\left\langle
L\cdot,\cdot\right\rangle |_{E_{i\mu}}$ is non-degenerate and also plays an
important role on the constructive proof of Pontryagin type invariant subspace
Theorem \ref{T:Pontryagin} and the proof of structural instability Theorem
\ref{T:USImSpec}. To our knowledge, the formula (\ref{counting-pure-imaginary}%
) and the decomposition in Proposition \ref{P:basis} are new even for the
finite dimensional case.

We also note that for an eigenvalue $\lambda$ with $\operatorname{Re}%
\lambda\neq0$, $\left\langle L\cdot,\cdot\right\rangle |_{E_{\lambda}}=0$ and
by Corollary \ref{C:symmetry} $\left\langle L\cdot,\cdot\right\rangle
|_{E_{\lambda}\oplus E_{-\bar{\lambda}}}$ is non-degenerate with
\begin{equation}
n^{-}\left(  L|_{E_{\lambda}\oplus E_{-\bar{\lambda}}}\right)  =\dim
E_{\lambda}. \label{formula-unstable-L-index}%
\end{equation}
Therefore, we get the matrix form
\[
\left\langle L\cdot,\cdot\right\rangle |_{E_{\lambda}\oplus E_{-\bar{\lambda}%
}}\longleftrightarrow\left(
\begin{array}
[c]{cc}%
0 & A\\
A^{\ast} & 0
\end{array}
\right)  ,
\]
where $A$ is a nonsingular $n\times n$ matrix with $n=\dim E_{\lambda}$.

Now we discuss some ideas in our proof of index formula and the decomposition
in Proposition \ref{P:basis} after we briefly review previous approaches for
the index formulae. Like in the literature (\cite{kapitula-et-04}
\cite{cug-Pelinovsky05}), the index formula was usually proved by reducing the
eigenvalue problem $JLu=\lambda u$ to a generalized eigenvalue problem
$\left(  R-zS\right)  v=0\ $(so called linear operator pencil), where
$z=-\lambda^{2}\ $\ and $R,S$ are self-adjoint operators with $\ker S=\left\{
0\right\}  $. To get such reduction it is required that $J$ has a bounded
inverse and $L$ is non-degenerate on $(JL)^{-1}\left(  \ker L\right)  /\ker
L$. Notice that the operator $S^{-1}R$ is self-adjoint in the indefinite inner
product $\left\langle S\cdot,\cdot\right\rangle $. So by the Pontryagin
invariant subspace theorem (\cite{Gr90} \cite{chu-pilinovsky}
\cite{Krein-fixed-point} \cite{pontryagin}) for self-adjoint operators, there
is an $n^{-}\left(  S\right)  $-dimensional invariant (under $S^{-1}R$)
subspace $W$ such that $\left\langle S\cdot,\cdot\right\rangle |_{W}\leq0$,
where
\[
n^{-}\left(  S\right)  =n^{-}\left(  L\right)  -n\left(  L|_{(JL)^{-1}\left(
\ker L\right)  /\ker L}\right)  .
\]
Going back to the original problem $JLu=\lambda u$, an index formula can be
obtained by counting the negative dimensions of $L$ on the eigenspaces for
real, complex and pure imaginary eigenvalues. However, it should be pointed
out that the counting in some papers used the formula
(\ref{formula-unstable-L-index}), for which the required non-degeneracy of
$L|_{E_{\lambda}\oplus E_{-\bar{\lambda}}}$ seemed to be assumed but not proved.

In \cite{kappitula-haragus} and later also in \cite{bronski-et-index-KDV}
\cite{bronski-et-quadratic-pencils} \cite{deconinck-kapitula}, the index
formula was proved without reference to the Pontryagin invariant subspace
theorem. In these papers, some conditions on $J$ and $L$ were imposed to
ensure that the generalized eigenvectors of $JL$ form a complete basis of $X$.
Then the index formula follows by the arguments as in the finite dimensional
case (\cite{mackay86}). Such requirement of a complete basis is very strong
and mostly true only in some cases where the eigenvalues of $JL$ are all discrete.

Our proof of the index formula (\ref{formula-index}) is based on the
decomposition in Theorem \ref{T:decomposition}, where we used the Pontryagin
invariant subspace theorem for the anti-self-adjoint operator $JL$ in the
indefinite inner product $\left\langle L\cdot,\cdot\right\rangle $. The proof
of the detailed decompositions of $JL$ and $L\ $on $E_{i\mu}$ given in
Proposition \ref{P:basis}, particularly the construction of the special basis
realizing the Jordan canonical form, is carried out in two steps. First, in
the finite dimensional case, we construct a special basis of the eigenspace
$E_{i\mu}\ $of $JL\ $to skew-diagonalize $L$ on the Jordan blocks by using an
induction argument on the length of Jordan chains. Second, for the infinite
dimensional case, we decompose $E_{i\mu}\ $into subspaces corresponding to
degenerate eigenspaces, simple non-degenerate eigenspaces and Jordan blocks.
Since the Jordan block part is finite dimensional, the special basis is
constructed as in the finite dimensional case.

\textbf{Hamiltonian perturbations.} Our fourth main result is about the
persistence of exponential trichotomy and a sharp condition for the structural
stability of linear Hamiltonian systems under small Hamiltonian perturbations.
Consider a perturbed Hamiltonian system $u_{t}=J_{\#}L_{\#}u$ where
$J_{\#},L_{\#}$ are small perturbations of $J,L$ in the sense of (\ref{E:ep}).
This happens when the symplectic structure or the Hamiltonian of the system
depends on some parameters.

First, we show that the exponential trichotomy of $e^{tJL}$ persists under
small perturbations. More precisely, we show in Theorem \ref{T:PET} that there
exists a decomposition $X=E_{\#}^{u}\oplus E_{\#}^{s}\oplus E_{\#}^{c}$,
satisfying that: $E_{\#}^{u,s,c}$ are invariant under $e^{tJ_{\#}L_{\#}}$ and
are obtained as small perturbations of $E^{u,s,c}$ in the sense that
$E_{\#}^{u,s,c}=\graph(S_{\#}^{u,s,c})$ where
\[
S_{\#}^{u}:E^{u}\rightarrow E^{s}\oplus E^{c},\quad S_{\#}^{s}:E^{s}%
\rightarrow E^{u}\oplus E^{c},\quad S_{\#}^{c}:E^{c}\rightarrow E^{s}\oplus
E^{u}, \quad|S_{\#}^{u,s,c}|\leq C\epsilon,
\]
and $\epsilon$ is roughly the size of perturbations $L_{\#} - L$ and $J_{\#}
-J$ (see \eqref{E:ep}). Moreover, $e^{tJ_{\#}L_{\#}}$ has exponential decay on
$E_{\#}^{u}$ and $E_{\#}^{s}$ in negative and positive times respectively with
at most $O\left(  \epsilon\right)  $ loss of decay rates compared with
$e^{tJL}|_{E^{u,s}}$; on $E_{\#}^{c}$, $\ e^{tJ_{\#}L_{\#}}\ $has at most
small exponential growth at the rate $O\left(  \epsilon\right)  $. We note
that $J_{\#}L_{\#}|_{E_{\#}^{c}}$ might contain eigenvalues with small real
parts which are perturbed from the spectra of $JL$ in the imaginary axis and
thus the small exponential growth on $e^{tJ_{\#}L_{\#}}$ is the best one can
get. In the perturbed decomposition $E_{\#}^{u,s,c}$, we obtain the uniform
control of the growth rate and the bounds in semigroup estimates for
$e^{tJ_{\#}L_{\#}}$ on $E_{\#}^{u,s,c}$. Such uniform estimates of the
exponential trichotomy (or dichotomy) are important for many applications of
nonlinear perturbation problems, such as the modulational instability of
dispersive models (see Lemma \ref{lemma-modulational-localized-semigroup}).

We briefly discuss some ideas in the proof of Theorem \ref{T:PET}. The spaces
$E_{\#}^{u,s}$ are constructed as the ranges of the projection operators
$\tilde{P}_{\#}^{u,s}\ $by the Riesz projections associated with the operator
$J_{\#}L_{\#}$ in a contour enclosing $\sigma\left(  JL|_{E^{u,s}}\right)  $
and $E_{\#}^{c}=\left(  E_{\#}^{u,s}\right)  ^{\perp L_{\#}}$. The smallness
assumption (\ref{E:ep}) is used in the resolvent estimates to show that
$E_{\#}^{u,s,c}$ are indeed $O\left(  \epsilon\right)  $ perturbations of
$E^{u,s,c}$. It is actually not so straightforward to prove the small
exponential growth of $e^{tJ_{\#}L_{\#}}$ on $E_{\#}^{c}$ since the
perturbation term $J(L_{\#}-L)$ may be unbounded. We again use the
decomposition Theorem \ref{T:decomposition}, where in the decomposition for
$JL$,
only one block is infinite dimensional, with good structure, and others blocks
are all bounded.

In Theorems \ref{T:SImSpec} and \ref{T:USImSpec}, we prove that a pure
imaginary eigenvalue $i\mu\neq0\ $of $JL$ is structurally stable, in the sense
that the spectra of $J_{\#}L_{\#}$ near $i\mu$ stay in the imaginary axis,
\textit{if and only if} either $L|_{E_{i\mu}}>0$ or $i\mu$ is isolated and
$L|_{E_{i\mu}}<0$. In particular, when $\left\langle L\cdot,\cdot\right\rangle
$ is indefinite on $E_{i\mu}$ or $i\mu$ is an embedded eigenvalue and
$\left\langle Lu,u\right\rangle \leq0$ for some $0\neq u\in$ $E_{i\mu}$, there
exist perturbed operators $JL_{\#}$ with unstable eigenvalues near $i\mu$ and
$\left\vert L_{\#}-L\right\vert $ being arbitrarily small. The structural
stability of finite dimensional Hamiltonian systems had been well studied in
the literature (see \cite{ekeland90} \cite{mackay86} and references therein).
It was known that (see e.g. \cite{mackay86}) a purely imaginary eigenvalue
$i\mu\neq0$ is structurally stable if and only if $L$ is definite on $E_{i\mu
}$. As a consequence, for a family of Hamiltonian systems, the equilibrium can
lose spectral stability only by the collision of purely imaginary eigenvalues
of opposite Krein signatures (i.e. sign of $\left\langle L\cdot,\cdot
\right\rangle $)$\ $. For Hamiltonian PDEs, the situation is more subtle due
to the possible embedded eigenvalues in the continuous spectrum. In
\cite{Gr90}, the linearized equation at excited states of a nonlinear
Schr\"{o}dinger equation was studied and the structural instability was shown
for an embedded simple eigenvalue with negative signature. A similar result
was also obtained in \cite{cug-Pelinovsky05} for semi-simple embedded
eigenvalues. The assumptions in Theorems \ref{T:SImSpec} and \ref{T:USImSpec}
are much more general and they give a sharp condition for the structural
stability of nonzero pure imaginary eigenvalues of general Hamiltonian
operator $JL$. In particular, in Theorem \ref{T:USImSpec}, structural
instability is proved even for the case when the embedded eigenvalue is
degenerate, which was not included in \cite{Gr90} or \cite{cug-Pelinovsky05}
for linearized Schr\"{o}dinger equations.

In the below, we discuss some ideas in the proof of Theorems \ref{T:SImSpec}
and \ref{T:USImSpec}. In the finite dimensional case, the structural stability
of an eigenvalue $i\mu$ of $JL\ $with a definite energy quadratic form
$L|_{E_{i\mu}}$ can be readily seen from an argument based on Lyapunov
functions. The above intuition can be used to show structural stability in
Theorem \ref{T:SImSpec} for isolated eigenvalues with definite energy
quadratic forms. The proof is more subtle for embedded eigenvalues with
positive energy quadratic forms. We argue via contradiction by showing that if
there is a sequence of unstable eigenvalues perturbed from $i\mu$, then this
leads to a non-positive direction of $L|_{E_{i\mu}}$. In this proof, the
decomposition Theorem \ref{T:decomposition} again plays an important role. The
proof of structural instability Theorem \ref{T:USImSpec} is divided into
several cases. When $L|_{E_{i\mu}}$ is non-degenerate and indefinite, it can
be reduced to the finite dimensional case for which we can construct a
perturbed matrix to have unstable eigenvalues. In particular, in the case when
$E_{i\mu}$ contains a Jordan chain on which $L$ is non-degenerate, we use the
special basis in Proposition \ref{P:basis} to construct a perturbed matrix
with unstable eigenvalues.

The proof is more subtle for an embedded eigenvalue $i\mu\ $with non-positive
and possibly degenerate $\left\langle L\cdot,\cdot\right\rangle |_{E_{i\mu}}$.
First, we construct a perturbed Hamiltonian system $J\tilde{L}_{\#}$ near $JL$
such that $i\mu$ is an isolated eigenvalue of $J\tilde{L}_{\#}$ and there is a
positive direction of $\tilde{L}_{\#}|_{E_{i\mu}\left(  J\tilde{L}%
_{\#}\right)  }$. In this construction, we use the decomposition Theorem
\ref{T:decomposition} once again along with spectral integrals. Then by
Proposition \ref{P:non-deg}, $\tilde{L}_{\#}|_{E_{i\mu}\left(  J\tilde{L}%
_{\#}\right)  }$ is non-degenerate and is indefinite by our construction. Thus
it is reduced to the previously studied cases. In a rough sense, the
structural instability is induced by the resonance between the embedded
eigenvalue (with $\left\langle L\cdot,\cdot\right\rangle $ non-positive in the
directions of some generalized eigenvectors) and the pure continuous spectra
whose spectral space has only positive directions due to the index formula
(\ref{formula-index}).\newline

In some applications (see e.g. Subsection \ref{SS:2dGGP}), it is not easy to
get the \textit{uniform} positivity for $L|_{X^{+}}$ (i.e. assumption
(\textbf{H2.b})) in an obvious space $X$ and only the positivity $L|_{X^{+}}$
is available. In Theorem \ref{T:degenerate}, we show that under some
additional assumptions ((\textbf{B1})-(\textbf{B5}) in Section
\ref{SS:degenerate}), one can construct a new phase space $Y$ such that $X$ is
densely embedded into $Y$; the extension $L_{Y}$ of $L$ satisfies the uniform
positivity in $\left\Vert {\cdot}\right\Vert _{Y}$; $J_{Y}:D(J)\cap Y^{\ast
}\rightarrow Y$ is the restriction of $J$, and $\left(  J_{Y},L_{Y},Y\right)
$ satisfy the main assumptions (\textbf{H1-3}). Then we can apply the theorems
to $\left(  J_{Y},L_{Y},Y\right)  $.\newline

\textbf{Hamiltonian PDE models.} In Section \ref{SS:example} (see also
Subsection \ref{SS:Applications} for a summary), we study the stabilities and
related issues of various concrete Hamiltonian PDEs based on our above general
theory, including: stability of solitary and periodic traveling waves of long
wave models of BBM, KDV, and good Boussinesq types; the eigenvalue problem of
the form $Lu=\lambda u^{\prime}$ arising from the stability of solitary waves
of generalized Bullough--Dodd equation; modulational instability of periodic
traveling waves; stability of steady flows of 2D Euler equations; traveling
waves of 2D nonlinear Schr\"{o}dinger equations with nonzero condition at infinity.

This paper is organized as follows. In Section 2, we give the precise set-up
and list the main general results more precisely with some comments, where the
readers are directed to the corresponding subsequent sections for detailed
proofs. \textit{For some readers, who would like to see the general results
but do not desire to get into the technical details of the proofs, it is
possibly sufficient to read Subsections 2.1--2.6 only.} The stability analysis
of various Hamiltonian PDEs are outlined in Subsection 2.7. The proofs of the
main general results are given in Sections 3 to 10. Section 3 studies some
basic properties of linear Hamiltonian systems. Section 4 is about the finite
dimensional Hamiltonian systems. In particular, the special basis in
Proposition \ref{P:basis} is constructed. Section 5 is about the Pontryagin
type invariant subspace Theorem for anti-self-adjoint operators in an
indefinite inner product space. Two proofs are given. One is by the fixed
point argument as found in the literature (\cite{chu-pilinovsky}
\cite{ky-Fan63} \cite{Krein-fixed-point}), which provides the existence of an
invariant Pontryagin subspace abstractly. The second one in separable Hilbert
spaces is via Galerkin approximation which also yields an \textit{explicit}
construction of a maximally non-positive invariant subspace. Section 6 is to
prove decomposition Theorem \ref{T:decomposition} which plays a crucial role
in the proof of most of the main results. Section 7 contains the proof of the
exponential trichotomy of $e^{tJL}$. In Section 8, the index theorem is
proved. Besides, the structures of the generalized eigenspaces are studied and
more explicit formula for the indexes $k_{i}^{\leq0},k_{0}^{\leq0}$, etc. are
proved. The non-degeneracy of $L|_{E_{i\mu}}$ for any isolated spectral point
$i\mu$ is also proved there. In Section 9, we prove the persistence of the
exponential trichotomy and the structural stability/instability Theorems. In
Section 10, we prove that the uniform positivity assumption (\textbf{H2.b})
can be relaxed under some assumptions. We study the stability and related
issues of various Hamiltonian PDEs in Section 11. In the Appendix, we prove
some functional analysis facts used throughout the paper, including some basic
decompositions of the phase space, the well-posedness of the linear
Hamiltonian system, and the standard complexification procedure.

\section{Main results}

\label{S:MainResults}

In this section, we give details of the main results described in the
introduction. The detailed proofs are left for later sections.\newline

\noindent\textbf{A remark on notations:} Throughout the paper, given a densely
defined linear operator $T$ from a Banach space $X$ to a Banach space $Y$ we
will always use $T^{\ast}$ to denote its dual operator from a subspace of
$Y^{\ast}$ to $X^{\ast}$. It would \textit{never} mean the adjoint operator
even if $X=Y$ is a Hilbert space. Given a Hilbert space $X$ and a linear
operator $L:X \to X^{*}$, since $L^{*}: (X^{*})^{*} =X \to X^{*}$, it is
legitimate to compare whether $L=L^{*}$.

\subsection{Set-up}

\label{SS:setup}

Consider a linear Hamiltonian system
\begin{equation}
\partial_{t}u=JLu,\quad u\in X \label{eqn-hamiltonian}%
\end{equation}
where $X$ is a real Hilbert space. Let $\left(  \cdot,\cdot\right)  $ denote
the inner product on $X$ and $\left\langle \cdot,\cdot\right\rangle $ the dual
bracket between $X^{\ast}$ and $X$. We make the following assumptions:

\begin{enumerate}
\item[(\textbf{H1})] $J:X^{\ast} \supset D(J) \rightarrow X$ is
anti-self-dual, in the sense $J^{*} = -J$.

\item[(\textbf{H2})] The operator $L:X\rightarrow X^{\ast}$ is bounded and
symmetric (i.e. $L^{*}=L$) such that $\left\langle Lu,v\right\rangle $ is a
bounded symmetric bilinear form on $X$. Moreover, there exists a decomposition
of $X$ into the direct sum of three closed subspaces
\[
X=X_{-}\oplus\ker L\oplus X_{+}, \quad n^{-}(L) \triangleq\dim X_{-} < \infty
\]
satisfying

\begin{enumerate}
\item[(\textbf{H2.a})] $\left\langle Lu, u\right\rangle <0$ for all $u \in
X_{-}\backslash\{0\}$;

\item[(\textbf{H2.b})] there exists $\delta>0$ such that
\[
\left\langle Lu,u\right\rangle \geq\delta\left\Vert u\right\Vert ^{2}\ ,\text{
for any }u\in X_{+}.
\]

\end{enumerate}

\item[(\textbf{H3})] The above $X_{\pm}$ satisfy
\[
\ker i_{X_{+}\oplus X_{-}}^{*}= \{f\in X^{*} \mid\langle f, u\rangle=0, \,
\forall u\in X_{-} \oplus X_{+}\} \subset D(J)
\]
where $i_{X_{+}\oplus X_{-}}^{*}: X^{*} \to(X_{+}\oplus X_{-})^{*}$ is the
dual operator of the embedding $i_{X_{+}\oplus X_{-}}$.
\end{enumerate}

\begin{remark}
\label{R:orthogonal-d-1} If in addition we assume
\begin{equation}
\label{E:orthogonal-d}\ker i_{(\ker L)^{\perp}}^{*} = \{ f \in X^{*}
\mid\langle f, u\rangle=0, \forall u \in(\ker L)^{\perp}\} \subset D(J),
\end{equation}
where
\begin{equation}
\label{E:orthogonal-c}(\ker L)^{\perp}= \{ u \in X \mid(u, v) =0, \;\forall v
\in\ker L\},
\end{equation}
it is possible to choose $X_{\pm}\subset(\ker L)^{\perp}$. See Lemma
\ref{L:decom1} and Remark \ref{R:orthogonal-d-2}.
\end{remark}

Regarding the operator $L$, what often matters more is its associated
symmetric quadratic form $\langle Lu,v\rangle$, $u,v\in X$, (or the Hermitian
symmetric form after the complexification). We say a bounded symmetric
quadratic form $B(u,v)$ is \textit{non-degenerate} if
\begin{equation}
\inf_{v\neq0}\ \sup_{u\neq0}\ \frac{|B(u,v)|}{\Vert u\Vert\Vert v\Vert}>0,
\label{E:non-degeneracy-def}%
\end{equation}
or equivalently, $v\rightarrow f=B(\cdot,v)\in X^{\ast}$ defines an
isomorphism from $X$ to $X^{\ast}$ (or a complex conjugate (sometimes called
anti-linear) isomorphism -- satisfying $av\rightarrow\bar{a}f$ for any
$a\in\mathbf{C}$ -- after the complexification). Under assumptions
(\textbf{H1-3}), $\langle Lu,v\rangle$ is non-degenerate if and only if $\ker
L=\{0\}$ (see Lemma \ref{L:non-degeneracy}).

\begin{remark}
\label{R:assumptions} It is worth pointing out that $n^{-}(L)= \dim X_{-}$ is
actually the maximal dimension of subspaces where $\langle L\cdot,
\cdot\rangle<0$, see Lemma \ref{L:Morse-Index}. Thus $n^{-}(L)$ is the Morse
index of $L$.

By Riesz Representation Theorem, there exists a unique bounded symmetric
linear operator $\mathbb{L}:X\rightarrow X$ such that $(\mathbb{L}u,v)=\langle
Lu,v\rangle$. Let $\Pi_{\lambda}$, $\lambda\in\mathbf{R}$, denote the
orthogonal spectral projection operator
from $X$ to the closed subspace corresponding to the spectral subset
$\sigma(\mathbb{L})\cap(-\infty,\lambda]$. From the standard spectral theory
of self-adjoint operators, assumption \textbf{(H2)} is equivalent to that
there exists $\delta^{\prime}>0$ such that \newline i.) $\sigma(\mathbb{L}%
)\cap\lbrack-\delta^{\prime},\delta^{\prime}]\subset\{0\}$, which is
equivalent to the closeness of $R(L)$, and \newline ii.) $\dim(\Pi
_{-\delta^{\prime}}X)<\infty$. \newline The subspaces
\[
X_{-}=\Pi_{-\frac{\delta^{\prime}}{2}}X\qquad X_{+}=(I-\Pi_{\frac
{\delta^{\prime}}{2}})X,
\]
along with $\ker L$ lead to a decomposition of $X$ orthogonal with respect to
both $(\cdot,\cdot)$ and $\langle L\cdot,\cdot\rangle$, satisfying
(\textbf{H2}).
\end{remark}

\begin{remark}
\label{R:H3} We would like to point out that (\textbf{H3}) is automatically
satisfied if $\dim\ker L<\infty$. In fact in this case,
\[
\dim\ker i_{X_{+}\oplus X_{-}}^{*} =\dim\{f\in X^{*} \mid\langle f,
u\rangle=0, \, \forall u\in X_{-} \oplus X_{+}\} = \dim\ker L <\infty.
\]
Let $\{f_{1}, \ldots, f_{k}\}$ be a basis of $\ker i_{X_{+}\oplus X_{-}}^{*}$.
As $D(J)$ is dense in $X^{*}$, one may take $g_{j} \in D(J)$ sufficiently
close to $f_{j}$, $j=1, \ldots, k$. Let
\[
X_{1}= \{ u \in X\mid\langle g_{j}, u\rangle=0, \ \forall j=1, \ldots, k\}.
\]
Since $X_{1}$ is close to $X_{+}\oplus X_{-}$, it is easy to show that there
exist closed subspaces $X_{1\pm} \subset X_{1}$ satisfying (\textbf{H2}) and
$X_{1} = X_{1+} \oplus X_{1-}$.

In fact, if we had treated $L$ and $J$ as operators from $X$ to $X$ through
the Riesz Representation Theorem and $X_{\pm}$ happen to be given as in Remark
\ref{R:assumptions} then (\textbf{H3}) would take the form $\ker L\subset
D(J)$.

Assumption (\textbf{H3}) does ensure that $JL$ is densely defined, see Lemma
\ref{L:decomJL}.
\end{remark}

\begin{remark}
\label{R:closedness} Assumption (\textbf{H2.b}) requires that the quadratic
form $\langle Lu,u\rangle$ has a uniform positive lower bound on $X_{+}$. This
corresponds to that $0$ is an isolated eigenvalue of $\mathbb{L}$ defined in
Remark \ref{R:assumptions}, which also implies that $R(L)$ is closed and
$R(L)=\{\gamma\in X^{\ast}\mid\langle\gamma,u\rangle=0,\,\forall u\in\ker L\}$.

For some PDE systems, (\textbf{H2.b}) may not hold or be hard to verify, see,
e.g. Subsection \ref{SS:2dGGP}. In Subsection \ref{SS:degenerate}, we consider
a framework where assumption (\textbf{H2.b}) for the uniform positivity of
$L|_{X_{+}}\ $is weakened to the positivity of $L|_{X_{+}}$, if some
additional and more detailed structures are present. In that situation, we
construct a new phase space $Y\supset X\ $and extend the operators $L$ and $J$
to $Y\ $accordingly so that (\textbf{H1-3}) are satisfied.
\end{remark}

\subsection{Structural decomposition}

Our first main result is to construct a decomposition of the phase space $X$
which helps understanding both structures of $JL$ and $L$ simultaneously.

\begin{theorem}
\label{T:decomposition} Assume (\textbf{H1-H3}). There exist closed subspaces
$X_{j}$, $j=1, \ldots, 6$, and $X_{0}= \ker L$ such that

\begin{enumerate}
\item $X = \oplus_{j=0}^{6} X_{j}$, $X_{j} \subset\cap_{k=1}^{\infty
}D\big((JL)^{k}\big)$, $j\ne3$, and
\[
\dim X_{1} = \dim X_{4}, \; \dim X_{5} = \dim X_{6}, \; \dim X_{1} + \dim
X_{2} + \dim X_{5} = n^{-}(L);
\]

\item $JL$ and $L$ take the following forms in this decomposition
\begin{equation}
JL\longleftrightarrow%
\begin{pmatrix}
0 & A_{01} & A_{02} & A_{03} & A_{04} & 0 & 0\\
0 & A_{1} & A_{12} & A_{13} & A_{14} & 0 & 0\\
0 & 0 & A_{2} & 0 & A_{24} & 0 & 0\\
0 & 0 & 0 & A_{3} & A_{34} & 0 & 0\\
0 & 0 & 0 & 0 & A_{4} & 0 & 0\\
0 & 0 & 0 & 0 & 0 & A_{5} & 0\\
0 & 0 & 0 & 0 & 0 & 0 & A_{6}%
\end{pmatrix}
, \label{block-JL}%
\end{equation}%
\begin{equation}
L\longleftrightarrow%
\begin{pmatrix}
0 & 0 & 0 & 0 & 0 & 0 & 0\\
0 & 0 & 0 & 0 & B_{14} & 0 & 0\\
0 & 0 & L_{X_{2}} & 0 & 0 & 0 & 0\\
0 & 0 & 0 & L_{X_{3}} & 0 & 0 & 0\\
0 & B_{14}^{\ast} & 0 & 0 & 0 & 0 & 0\\
0 & 0 & 0 & 0 & 0 & 0 & B_{56}\\
0 & 0 & 0 & 0 & 0 & B_{56}^{\ast} & 0
\end{pmatrix}
. \label{block-L}%
\end{equation}

\item $B_{14}: X_{4} \to X_{1}^{*}$ and $B_{56}: X_{6} \to X_{5}^{*}$ are
isomorphisms and there exists $\delta>0$ satisfying $\mp\langle L_{X_{2,3}} u,
u\rangle\ge\delta\Vert u\Vert^{2}$, for all $u \in X_{2,3}$;

\item all blocks of $JL$ are bounded operators except $A_{3}$, where $A_{03}$
and $A_{13}$ are understood as their natural extensions defined on $X_{3}$;

\item $A_{2,3}$ are anti-self-adjoint with respect to the equivalent inner
product $\mp\langle L_{X_{2,3}} \cdot, \cdot\rangle$ on $X_{2,3}$;

\item the spectra $\sigma(A_{j})\subset i\mathbf{R}$, $j=1,2,3,4$,
$\pm\operatorname{Re}\lambda>0$ for all $\lambda\in\sigma(A_{5,6})$, and
$\sigma(A_{5})=-\sigma(A_{6})$;

\item $n^{-}(L|_{X_{5}\oplus X_{6}})=\dim X_{5}$ and $n^{-}(L|_{X_{1}\oplus
X_{4}})=\dim X_{1}$.

\item $(u, v) =0$ for all $u \in X_{1} \oplus X_{2} \oplus X_{3} \oplus X_{4}$
and $v \in\ker L$.
\end{enumerate}
\end{theorem}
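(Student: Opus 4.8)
The plan is to build the decomposition in several stages, starting from the case $\ker L = \{0\}$ and then reducing the general case to it. First I would invoke the Pontryagin-type invariant subspace theorem (Theorem \ref{T:Pontryagin}) for the operator $JL$, which is anti-self-adjoint with respect to the indefinite inner product $\langle L\cdot,\cdot\rangle$: this produces a closed invariant subspace $W \subset X$ with $\dim W = n^-(L)$ and $\langle L\cdot,\cdot\rangle|_W \le 0$. The subspace $W$ and its $\langle L\cdot,\cdot\rangle$-orthogonal complement $W^{\perp L}$ are both $JL$-invariant, but $W$ need not be non-degenerate for $L$, so the naive splitting $X = W \oplus W^{\perp L}$ may fail. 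The key is therefore to decompose $W$ itself according to the structure of $L|_W$ and $JL|_W$: isolate the radical $W \cap W^{\perp L}$ (where $L$ degenerates), the part on which $L$ is strictly negative, and within these the finite-dimensional pieces carrying the unstable/stable eigenvalues of $JL$ (which, by the spectral symmetry from Corollary \ref{C:symmetry}, come in $\pm\bar\lambda$ pairs and can be collected into $X_5 \oplus X_6$ with $B_{56}$ the pairing). The blocks $X_1 \leftrightarrow X_4$ arise from pairing the radical of $L|_W$ with a complementary isotropic subspace in $W^{\perp L}$, giving the off-diagonal $B_{14}$; $X_2$ is the strictly-negative finite-dimensional remainder of $W$ on which $L$ is non-degenerate; and $X_3$ is the genuinely infinite-dimensional positive part, essentially $W^{\perp L}$ modulo what has been removed, on which $\langle L\cdot,\cdot\rangle$ is equivalent to the inner product and $JL$ is honestly anti-self-adjoint.

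The steps, in order, would be: (1) reduce to $\ker L = \{0\}$ by splitting off $X_0 = \ker L$, using assumption (\textbf{H3}) to guarantee $JL$ remains densely defined and using Lemma \ref{L:decom1}/Remark \ref{R:orthogonal-d-1} to arrange compatibility of $X_\pm$ with $(\ker L)^\perp$ — this also yields item (8). (2) Apply Theorem \ref{T:Pontryagin} to get $W$ with $L|_W \le 0$, $\dim W = n^-(L)$, $JL$-invariant. (3) Analyze the finite-dimensional structure on $W$ and on the finite-dimensional invariant complements using the results of Section 4 (in particular Proposition \ref{P:basis} and the special basis realizing Jordan canonical forms), extracting the generalized eigenspaces of nonzero-real-part eigenvalues into $X_5, X_6$ and proving $\sigma(A_5) = -\sigma(A_6)$, $\dim X_5 = \dim X_6$, and item (7) for $X_5 \oplus X_6$ via formula (\ref{formula-unstable-L-index}). (4) Identify the radical $X_1$ of $L$ within the remaining piece of $W$ and pair it with an isotropic subspace $X_4$ in the complement so that $L$ restricted to $X_1 \oplus X_4$ is the anti-diagonal form with isomorphism $B_{14}$ — here one checks $B_{14}$ is an isomorphism using non-degeneracy of $L$ on the ambient space and the closed range property from (\textbf{H2.b}). (5) Let $X_2$ be the complementary negative-definite part of $W$ (finite-dimensional, hence $A_2$ a matrix, anti-self-adjoint in $-\langle L_{X_2}\cdot,\cdot\rangle$), and $X_3$ the remaining positive subspace, verifying $\langle L_{X_3}\cdot,\cdot\rangle \ge \delta\|\cdot\|^2$ and that $A_3$ is anti-self-adjoint there. (6) Read off the upper-triangular form (\ref{block-JL}) from invariance of the nested subspaces $X_5\oplus X_6$, then $\oplus_{j\ge 1}$, then the flag inside $W$; verify all off-diagonal blocks are bounded and that only $A_3$ is unbounded, with $A_{03}, A_{13}$ extended naturally; finally confirm the dimension count $\dim X_1 + \dim X_2 + \dim X_5 = n^-(L)$ by adding up the negative directions contributed by each block (using $n^-(L|_{X_1\oplus X_4}) = \dim X_1$, $n^-(L|_{X_2}) = \dim X_2$, $n^-(L|_{X_5\oplus X_6}) = \dim X_5$, and positivity on $X_3$).

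The main obstacle I expect is step (4)–(5): arranging the complement of $W$ so that the radical $X_1$ of $L|_W$ pairs \emph{cleanly} with a closed isotropic subspace $X_4$ while keeping the whole splitting $JL$-invariant in the upper-triangular sense. The difficulty is that $X_4$ must simultaneously (a) be an isotropic subspace of $X$ for $\langle L\cdot,\cdot\rangle$, (b) make $B_{14}: X_4 \to X_1^*$ an isomorphism, and (c) sit compatibly with the $JL$-flag so that no lower-triangular blocks appear — and there is genuine tension here because, as the authors emphasize in the introduction, one cannot hope for a fully invariant non-degenerate extension of $W$ (the counterexample in Section \ref{SS:non-deg} with embedded eigenvalues obstructs it). The resolution must exploit that the obstruction lives entirely in the finite-dimensional pieces $X_1, X_2, X_4$ attached to imaginary spectrum, so that the genuinely infinite-dimensional block $X_3$ stays clean; getting the bookkeeping of this finite-dimensional symplectic linear algebra exactly right, uniformly in whatever degeneracies $L|_W$ exhibits, is where the real work lies. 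A secondary technical point is verifying closedness of all the subspaces and the domain statement $X_j \subset \cap_k D((JL)^k)$ for $j \ne 3$, which follows once each such $X_j$ is finite-dimensional and $JL$-invariant, but requires care in the $\ker L = \{0\}$ reduction to ensure the reconstituted subspaces in the general case retain these properties.
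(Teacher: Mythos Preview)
Your strategy captures the right ideas --- Pontryagin subspace $W$, its radical $Y_1 = W \cap W^{\perp_L}$, a dual isotropic $Y_4$, the negative/positive definite pieces $Y_2, Y_3$ --- but the ordering of your steps (3) and (4)--(5) is inverted, and this matters. In step (3) you propose to extract $X_5, X_6$ directly from the finite-dimensional structure on $W$, invoking Proposition \ref{P:basis}. But $W$ need not contain the full unstable eigenspace $E^u$: although $\langle L\cdot,\cdot\rangle$ vanishes on $E^u$, nothing forces $E^u \subset W$, since $W + E^u$ need not remain $L$-non-positive. Proposition \ref{P:basis} is also the wrong tool --- it describes the Jordan structure on $E_{i\mu}$ for \emph{imaginary} $i\mu$. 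At this stage you have no a priori reason the hyperbolic spectrum of $JL$ on all of $X$ is even finite.

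The paper resolves this by doing your steps (4)--(5) \emph{first}. Proposition \ref{P:decomposition1} builds the four-block upper-triangular form $Y_1, Y_2, Y_3, Y_4$ (assuming $\ker L = \{0\}$); the key observation is that $\tilde A_2, \tilde A_3$ are anti-self-adjoint in equivalent inner products and thus have purely imaginary spectrum, which forces all hyperbolic spectrum of $JL$ into the finite-dimensional blocks $\tilde A_1, \tilde A_4$. Only then does Proposition \ref{P:decomposition2} extract $X_u, X_s$, by splitting $Y_1, Y_4$ into center and hyperbolic parts and using Lemma \ref{L:InvariantSubspace} (a Sylvester-equation argument) to produce genuinely $JL$-invariant hyperbolic subspaces. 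Finally, Proposition \ref{P:decomposition1} is applied a \emph{second} time, now on the center space $X_c$, to obtain the clean $X_1, \ldots, X_4$ with $\sigma(A_j) \subset i\mathbf{R}$. Incidentally, your anticipated ``main obstacle'' --- building the isotropic complement $X_4$ compatibly with the flag --- turns out to be routine: in the proof of Proposition \ref{P:decomposition1} one picks any complement $\tilde Y_4 \subset \cap_k D((JL)^k)$ of $W + W^{\perp_L}$ and then corrects $\tilde Y_2, \tilde Y_3, \tilde Y_4$ by explicit bounded maps $S_j$ into $Y_1$ chosen to kill the unwanted $L$-pairings; no $JL$-invariance of $Y_4$ itself is required, since the upper-triangular form follows automatically from the invariance of $W$ and $W^{\perp_L}$.
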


Through straightforward calculations, one may naturally rewrite the operator
$J$ and obtain additional relations among those blocks $A_{jk}$ using
$J^{\ast}=-J$.

\begin{corollary}
\label{C:decomposition} Let $P_{j}$, $j=0, \ldots, 6$ be the projections
associated to the decomposition in Theorem \ref{T:decomposition} and $\tilde
X_{j}^{*}= P_{j}^{*} X_{j}^{*} \subset X^{*}$. In the decomposition $X^{*} =
\Sigma_{j=0}^{6} \tilde X_{j}^{*}$, $J$ has the block form
\[
J \longleftrightarrow%
\begin{pmatrix}
J_{00} & J_{01} & J_{02} & J_{03} & J_{04} & 0 & 0\\
J_{10} & J_{11} & J_{12} & J_{13} & J_{14} & 0 & 0\\
J_{20} & J_{21} & J_{22} & 0 & 0 & 0 & 0\\
J_{30} & J_{31} & 0 & J_{33} & 0 & 0 & 0\\
J_{40} & J_{41} & 0 & 0 & 0 & 0 & 0\\
0 & 0 & 0 & 0 & 0 & 0 & J_{56}\\
0 & 0 & 0 & 0 & 0 & J_{65} & 0
\end{pmatrix}
.
\]
where the blocks, except $J_{00}$, are given by
\begin{align*}
&  - J_{10}^{*} = J_{01} = A_{04}B_{14}^{-1}, \; \; -J_{20}^{*} = J_{02} =
A_{02} L_{2}^{-1}\\
&  - J_{30}^{*} = J_{03} = A_{03} L_{X_{3}}^{-1}, \; \; -J_{40}^{*} = J_{04} =
A_{01} (B_{14}^{*})^{-1}\\
&  J_{11} = A_{14} B_{14}^{-1}, \; \; J_{12} = A_{12} L_{X_{2}}^{-1}, \; \;
J_{13} = A_{13} L_{X_{3}}^{-1}, \; \; J_{14} = A_{1} (B_{14}^{*})^{-1}\\
&  J_{21} = A_{24} B_{14}^{-1}, \; \; J_{22} = A_{2} L_{X_{2}}^{-1}, \; \;
J_{31} = A_{34}B_{14}^{-1}, \; \; J_{33} = A_{3} L_{X_{3}}^{-1}\\
&  J_{41} = A_{4} B_{14}^{-1}, \; \; J_{56} = A_{5} (B_{56}^{*})^{-1}, \; \;
J_{65} = A_{6} B_{56}^{-1}.
\end{align*}
Due to $J^{*} + J=0$, we also have $L_{X_{j}} A_{j} + A_{j}^{*} L_{X_{j}}$=0,
$j=2,3$, and
\begin{align*}
&  B_{14}^{*} A_{14} + A_{14}^{*} B_{14}=0, \; \; L_{X_{2}} A_{24} +
A_{12}^{*} B_{14}=0, \; \; L_{X_{3}} A_{34} + A_{13}^{*} B_{14} =0\\
&  B_{14} A_{4} + A_{1}^{*} B_{14}=0, \; \; \; \; B_{56} A_{6} + A_{5}^{*}
B_{56}=0.
\end{align*}

\end{corollary}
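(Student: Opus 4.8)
The plan is to compute $J$ directly from the known block forms of $JL$ and $L$ in Theorem \ref{T:decomposition}, using $L$ as a partial change of coordinates on the range side. First, observe that $L:X\to X^\ast$ maps $X_j$ into the dual block components: from \eqref{block-L} we see $L X_2 = \tilde X_2^\ast$, $L X_3 = \tilde X_3^\ast$, $L X_4 \subset \tilde X_1^\ast$ (via $B_{14}$), $L X_1 \subset \tilde X_4^\ast$ (via $B_{14}^\ast$), $L X_6 \subset \tilde X_5^\ast$ (via $B_{56}$), $L X_5 \subset \tilde X_6^\ast$ (via $B_{56}^\ast$), while $L X_0 = 0$. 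Since $B_{14}$ and $B_{56}$ are isomorphisms and $L_{X_2},L_{X_3}$ are invertible (by (H2.b) and part 3 of the theorem), each nonzero column of $L$ is onto its target dual block. Then, formally, $J = (JL)\circ L^{-1}$ restricted appropriately: on the subspace $\tilde X_k^\ast = L X_{\sigma(k)}$ (where $\sigma$ is the pairing $2\leftrightarrow 2$, $3\leftrightarrow 3$, $1\leftrightarrow 4$, $5\leftrightarrow 6$ read off from \eqref{block-L}), we have $J|_{\tilde X_k^\ast} = (JL)|_{X_{\sigma(k)}}\circ (L|_{X_{\sigma(k)}})^{-1}$. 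Reading off the entries of \eqref{block-JL} column by column and composing with the appropriate inverse ($B_{14}^{-1}$, $L_{X_2}^{-1}$, $L_{X_3}^{-1}$, $(B_{14}^\ast)^{-1}$, $B_{56}^{-1}$, $(B_{56}^\ast)^{-1}$) yields exactly the listed formulas for $J_{jk}$ with $(j,k)\neq(0,0)$; for instance the column of $JL$ over $X_4$ has entries $A_{04},A_{14},A_{24},A_{34},A_4$, and since $L X_4$ sits in $\tilde X_1^\ast$ via $B_{14}$, these become $J_{01}=A_{04}B_{14}^{-1}$, $J_{11}=A_{14}B_{14}^{-1}$, $J_{21}=A_{24}B_{14}^{-1}$, $J_{31}=A_{34}B_{14}^{-1}$, $J_{41}=A_4 B_{14}^{-1}$, as claimed. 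The zero blocks (e.g. $J_{25}=J_{26}=\cdots=0$ in the lower-left/upper-right $2\times2$ corner structure, and the vanishing of entries forced by the zeros in \eqref{block-JL} and \eqref{block-L}) follow from the corresponding zeros in $JL$ and the block-diagonal coupling of $L$.

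Next I would derive the antisymmetry relations. Since $J^\ast = -J$ as operators $X^\ast \supset D(J)\to X$, and since $L^\ast = L$, the operator $JL$ is anti-self-adjoint with respect to the bilinear form $\langle L\cdot,\cdot\rangle$; equivalently $L(JL) + (JL)^\ast L = 0$ on appropriate domains (this is essentially the content of the basic Lemma about $JL$ being $\langle L\cdot,\cdot\rangle$-anti-self-adjoint referenced in the Introduction). Writing this identity blockwise in the decomposition $X=\oplus_{j=0}^6 X_j$ and matching the $(j,k)$ entry gives, on the diagonal, $L_{X_j}A_j + A_j^\ast L_{X_j}=0$ for $j=2,3$; on the $(1,4)$/$(4,1)$ block, $B_{14}A_4 + A_1^\ast B_{14}=0$ and $B_{14}^\ast A_{14} + A_{14}^\ast B_{14}=0$; on the $(5,6)$/$(6,5)$ block, $B_{56}A_6 + A_5^\ast B_{56}=0$; and the mixed relations $L_{X_2}A_{24}+A_{12}^\ast B_{14}=0$, $L_{X_3}A_{34}+A_{13}^\ast B_{14}=0$ come from the $(2,4)$ and $(3,4)$ entries of $L(JL)+(JL)^\ast L=0$ together with the fact that the $(2,1)$ and $(3,1)$ entries of $L$ vanish while the $(1,4)$ entry is $B_{14}$. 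Finally, $-J_{k0}^\ast = J_{0k}$ for $k=1,\dots,4$ follows immediately from $J^\ast=-J$ applied to the already-computed off-diagonal blocks in the first row and column.

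The main obstacle is bookkeeping rather than depth: one must be careful that the "inversion of $L$" is only partial (it is genuinely invertible on $X_2,X_3$ and, via $B_{14},B_{56}$, identifies $X_4\cong \tilde X_1^\ast$ etc., but $L$ kills $X_0$ entirely), so the column of $JL$ over $X_0$ cannot be recovered this way and $J_{00}$ is left unspecified — this is exactly why the corollary excludes $J_{00}$. A second subtlety is domain issues: $A_3$ is unbounded and $A_{03},A_{13}$ appear as extensions to $X_3$, so the formulas $J_{03}=A_{03}L_{X_3}^{-1}$, $J_{13}=A_{13}L_{X_3}^{-1}$, $J_{33}=A_3 L_{X_3}^{-1}$ must be read as identities on a dense domain and the anti-self-duality relations involving $A_3$ interpreted accordingly; since $L_{X_3}$ is an isomorphism of $X_3$ onto $\tilde X_3^\ast$ this causes no real trouble, but it should be flagged. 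Everything else is a routine, if somewhat lengthy, matching of entries, and I would present it compactly by exhibiting the column-by-column computation for one representative column and the diagonal antisymmetry relation, then asserting the rest follows identically.
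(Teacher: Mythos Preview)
Your proposal is correct and follows exactly the approach indicated in the paper, which simply states that the corollary follows ``through straightforward calculations'' by rewriting $J$ and using $J^\ast=-J$; your column-by-column computation of $J=(JL)\circ L^{-1}$ on the range of $L$ together with the blockwise reading of $L(JL)+(JL)^\ast L=0$ is precisely this calculation, and your remarks about the undetermined block $J_{00}$ and the domain issues for the $X_3$ column are accurate.
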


\begin{remark}
\label{R:decomposition} From the corollary, we have the following observations.

(i) $A_{4}$ and $- A_{1}^{*}$ are similar through $B_{14}$ and thus have the
same spectrum, contained in $i\mathbf{R}$ and symmetric about the real axis.
This in turn implies that $\sigma(A_{1}) =\sigma(A_{4})$.

(ii) $A_{24}$ and $A_{34}$ can be determined by other blocks
\[
A_{24}=-L_{X_{2}}^{-1}A_{12}^{\ast}B_{14},\;\;A_{34}=-L_{X_{3}}^{-1}%
A_{13}^{\ast}B_{14}.
\]
Consequently,
\[
J_{21}=-L_{X_{2}}^{-1}A_{12}^{\ast},\;\;J_{31}=-L_{X_{3}}^{-1}A_{13}^{\ast}.
\]

\end{remark}

The proof of Theorem \ref{T:decomposition} is given in Section
\ref{S:decomposition}, largely based on the Pontryagin invariant subspace
theorem \ref{T:Pontryagin}. Theorem \ref{T:decomposition} decomposes the
closed operator $JL$ into an upper triangular block form, all of which are
bounded except for one block anti-self-adjoint with respective to an
equivalent norm. This decomposition plays a fundamental role in proving the
linear evolution estimates, the index theorem, the spectral analysis, and the
perturbation analysis.

\subsection{Exponential Trichotomy}

One of our main results is the exponential trichotomy of the semigroup
$e^{tJL}$ on $X$ and more regular spaces, to be proved in Section \ref{S:ET}.
Such linear estimates are important for studying nonlinear dynamics,
particularly, the construction of invariant manifolds for nonlinear
Hamiltonian PDEs.

\begin{theorem}
\label{theorem-dichotomy} Under assumptions (\textbf{H1})-(\textbf{H3}), $JL$
generates a $C^{0}$ group $e^{tJL}$ of bounded linear operators on $X$ and
there exists a decomposition%
\[
X=E^{u}\oplus E^{c}\oplus E^{s},\quad\dim E^{u}=\dim E^{s}\leq n^{-}(L)
\]
satisfying: \newline i) $E^{c}$ and $E^{u},E^{s}\subset D(JL)$ are invariant
under $e^{tJL}$; Here, $E^{u}=X_{5},\ E^{s}=X_{6}$ are the unstable and stable
spaces defined in Theorem \ref{T:decomposition}, and the center space $E^{c}$
is defined by
\[
E^{c}=\{u\in X\mid\langle Lu,v\rangle=0,\ \forall v\in E^{s}\oplus
E^{u}\} = \oplus_{j=0}^4 X_j;\newline 
\]
\newline ii) $\langle L\cdot,\cdot\rangle$ completely vanishes on $E^{u,s}$,
but is non-degenerate on $E^{u}\oplus E^{s}$;  \newline iii) let $\lambda
_{u}=\min\{$Re$\lambda\mid\lambda\in\sigma(JL),\ \text{Re}\lambda>0\}$, there
exist $M>0$ and an integer $k_{0}\geq0$, such that
\begin{equation}%
\begin{split}
&  \left\vert e^{tJL}|_{E^{s}}\right\vert \leq M(1+t^{\dim E^{s}%
-1})e^{-\lambda_{u}t},\quad\forall\;t\geq0;\ \\
&  |e^{tJL}|_{E^{u}}|\leq M(1+|t|^{\dim E^{u}-1})e^{\lambda_{u}t},\quad
\forall\;t\leq0,
\end{split}
\label{estimate-stable-unstable}%
\end{equation}%
\begin{equation}
\ |e^{tJL}|_{E^{c}}|\leq M(1+\left\vert t\right\vert ^{k_{0}}),\quad
\forall\;t\in\mathbf{R},\label{estimate-center}%
\end{equation}
and
\[
k_{0}\leq1+2\big(n^{-}(L)-\dim E^{u}\big);
\]
Moreover, for $k\geq1$, define the space $X^{k}\subset X$ to be
\[
X^{k}=D\big((JL)^{k}\big)=\left\{  u\in X\ |\ \left(  JL\right)  ^{n}u\in
X,\ n=1,\cdots,k.\right\}
\]
and
\begin{equation}
\left\Vert u\right\Vert _{X^{k}}=\left\Vert u\right\Vert +\left\Vert
JLu\right\Vert +\cdots+\Vert(JL)^{k}u\Vert.\label{norm-X-k}%
\end{equation}
Assume $E^{u,s}\subset X^{k}$, then the exponential trichotomy for $X^{k}$
holds true: $X^{k}$ is decomposed as a direct sum%
\[
X^{k}=E^{u}\oplus E_{k}^{c}\oplus E^{s},\ E_{k}^{c}=E^{c}\cap X^{k}%
\]
and the estimates (\ref{estimate-stable-unstable}) and (\ref{estimate-center})
still hold in the norm $X^{k}$.
\end{theorem}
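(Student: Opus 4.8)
The plan is to reduce everything to the block decomposition of Theorem \ref{T:decomposition}, handling each diagonal block of $JL$ separately and then re-assembling through the upper-triangular structure. First I would set $E^u = X_5$, $E^s = X_6$, and $E^c = X_0 \oplus X_1 \oplus X_2 \oplus X_3 \oplus X_4$; properties (i) and (ii) are then essentially read off from Theorem \ref{T:decomposition}: invariance holds because $JL$ is block upper triangular with $X_5, X_6$ appearing only on the diagonal, the identity $n^-(L|_{X_5\oplus X_6}) = \dim X_5$ together with the block form \eqref{block-L} gives that $\langle L\cdot,\cdot\rangle$ vanishes on each of $E^u, E^s$ and is nondegenerate on $E^u\oplus E^s$ (the pairing is through the isomorphism $B_{56}$), and $E^c = (E^u\oplus E^s)^{\perp L}$ follows since $L$ restricted to $X_0\oplus\cdots\oplus X_4$ has range orthogonal to $\tilde X_5^* \oplus \tilde X_6^*$. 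The bound $\dim E^u = \dim E^s \le n^-(L)$ is the relation $\dim X_5 \le \dim X_1 + \dim X_2 + \dim X_5 = n^-(L)$.

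Next I would establish the semigroup estimates block by block. For $A_5$ on $E^u = X_5$: since $\sigma(A_5)$ lies in $\{\operatorname{Re}\lambda > 0\}$ and $X_5$ is finite-dimensional, $e^{tA_5}$ is a matrix exponential, so $|e^{tA_5}| \le M(1+|t|^{\dim X_5 - 1})e^{\lambda_u t}$ for $t\le 0$, where $\lambda_u = \min\{\operatorname{Re}\lambda : \lambda\in\sigma(A_5)\}$ — and one checks $\lambda_u$ coincides with $\min\{\operatorname{Re}\lambda : \lambda\in\sigma(JL), \operatorname{Re}\lambda>0\}$ because $\sigma(JL)$ in the open right half plane equals $\sigma(A_5)$ (the other diagonal blocks $A_1,\dots,A_4$ have spectrum on $i\mathbf{R}$, and $\sigma(A_6) = -\sigma(A_5)$ sits in the left half plane). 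The $E^s$ estimate is symmetric. For the center space, on $X_0$ the operator acts through the nilpotent-looking first row, on $X_1, X_2, X_4$ the diagonal blocks are finite matrices with purely imaginary spectrum (hence $e^{tA_j}$ grows at most polynomially of degree $\dim X_j - 1$), and $A_3$ is anti-self-adjoint with respect to the equivalent inner product $\langle L_{X_3}\cdot,\cdot\rangle$, so $e^{tA_3}$ is uniformly bounded. The polynomial growth of $e^{tJL}|_{E^c}$ then comes from integrating the upper-triangular coupling: a Duhamel/variation-of-parameters expansion along the five-step filtration $X_4 \subset X_3\oplus X_4 \subset \cdots$ produces at worst a polynomial whose degree is the sum of contributions, and one bounds this sum by $1 + 2(n^-(L) - \dim E^u)$ using $\dim X_1 = \dim X_4$, $\dim X_1 + \dim X_2 = n^-(L) - \dim X_5$, and the fact that $A_3$ contributes no growth.

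For the $X^k$ statement, the point is that $D((JL)^k)$ decomposes compatibly: $X_j \subset \cap_k D((JL)^k)$ for $j\ne 3$ by Theorem \ref{T:decomposition}(1), so $E^u, E^s$ automatically sit inside every $X^k$ (consistent with the hypothesis $E^{u,s}\subset X^k$, which is then free), and $X^k = E^u \oplus (E^c\cap X^k)\oplus E^s$ because applying $(JL)^n$ respects the block structure. Since $JL$ commutes with itself, $e^{tJL}$ maps $X^k$ to $X^k$, and on $X^k$ the norm $\|u\|_{X^k} = \sum_{n=0}^k \|(JL)^n u\|$ is controlled by estimating each $\|(JL)^n e^{tJL}u\| = \|e^{tJL}(JL)^n u\|$ with the already-proved bounds applied to $(JL)^n u$ in each block — the only subtlety is the $A_3$ block, where one needs that $(JL)^n$ preserves $X_3\cap X^k$ and that the anti-self-adjoint estimate for $e^{tA_3}$ transfers to these higher-regularity norms, which holds because $A_3$ commutes with its own powers and the $\langle L_{X_3}\cdot,\cdot\rangle$-boundedness is preserved.

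The main obstacle I anticipate is making the Duhamel expansion across the upper-triangular blocks genuinely rigorous when $A_{03}$ and $A_{13}$ are only defined as extensions to $X_3$ (item 4 of Theorem \ref{T:decomposition}) and $A_3$ is unbounded: one must verify that the variation-of-parameters formula is justified on the appropriate domains, that the coupling integrals converge, and that the resulting growth estimate is genuinely sharp (the "optimal algebraic rate" claim) rather than merely an upper bound — sharpness requires exhibiting solutions realizing the top-degree polynomial growth, which means tracking when the off-diagonal blocks $A_{jk}$ are actually nonzero and produce secular terms. The bookkeeping that turns the raw sum of block contributions into the clean bound $k_0 \le 1 + 2(n^-(L) - \dim E^u)$ is where the Hamiltonian constraints from Corollary \ref{C:decomposition} (the relations $L_{X_j}A_j + A_j^* L_{X_j} = 0$ and the symmetry $\dim X_1 = \dim X_4$) must be used carefully.
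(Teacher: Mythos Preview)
Your approach is essentially the same as the paper's: set $E^u=X_5$, $E^s=X_6$, $E^c=\oplus_{j=0}^4 X_j$, read off (i)--(ii) from Theorem~\ref{T:decomposition}, bound each diagonal block (unitary for $A_{2,3}$, matrix-exponential for the finite ones), and propagate through the upper-triangular coupling by Duhamel. The paper carries out exactly this computation in a few lines, arriving at degree $2\dim X_1+1\le 1+2(n^-(L)-\dim E^u)$ for the center estimate.

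Two of your anticipated obstacles are not obstacles. First, the theorem claims only the \emph{upper bound} $k_0\le 1+2(n^-(L)-\dim E^u)$, not sharpness; the optimality remark after the theorem is not part of what must be proved. Second, the bookkeeping for $k_0$ needs only $\dim X_1=\dim X_4$ and $\dim X_1+\dim X_2+\dim X_5=n^-(L)$ from Theorem~\ref{T:decomposition}(1), not the symplectic relations of Corollary~\ref{C:decomposition}. The Duhamel integrals are standard once you note (Theorem~\ref{T:decomposition}(4)) that $A_{03},A_{13},A_{34}$ are bounded and $e^{tA_3}$ is a uniformly bounded $C^0$ group, so no domain subtleties arise.
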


An immediate corollary of the theorem is that there are only finitely many
eigenvalues of $JL$ outside the imaginary axis in the complex plane.

\begin{remark}
The above growth estimates is optimal as one may easily construct finite
dimensional examples which achieve upper bounds in the estimates.
\end{remark}

\begin{remark}
Naturally, the above invariant decomposition and exponential trichotomy are
based on the spectral decomposition of $JL$. The unstable/stable subspaces
$E^{u,s}$ are the eigenspaces of the stable/unstable spectrum, which have
finite total dimensions. Therefore, it is easy to obtain the exponential decay
estimates of $e^{tJL}|_{E^{u,s}}$. While $E^{c}$ is the eigenspace of the
spectrum residing on the imaginary axis, the growth estimate of $e^{tJL}%
|_{E^{c}}$ is far from obvious as the spectral mapping is often a complicated
issue especially when continuous spectra is involved. Normally some
sub-exponential growth estimates, like in the form of
\[
\forall\epsilon>0,\ \exists\ C>0\Longrightarrow|\ e^{tJL}|_{E^{c}}\ |\leq
Ce^{\epsilon|t|},\ \forall t\in\mathbf{R},
\]
are already sufficient for some nonlinear local analysis. Our above polynomial
growth estimate on $e^{tJL}|_{E^{c}}$ with uniform bound on the degree of the
polynomial based on $\dim X_{-}$ is a much stronger statement.
\end{remark}

\begin{remark}
Often the invariant subspaces $E^{u,s,c}$ are defined via spectral
decompositions where the $L$-orthogonality between $E^{s}\oplus E^{u}$ and
$E^{c}$ is not immediately clear. In fact, this is a special case of more
general $L$-orthogonality property. See Lemma \ref{L:L-orth-eS} and Corollary
\ref{C:L-orth-eS}.
\end{remark}

\subsection{Index Theorems and spectral properties}

\label{section-index theorem}

Roughly our next main result is on the relationship between the number of
negative directions of $L$ (the Morse index) and the dimensions of various
eigenspaces of $JL$, which may have some implications on $\dim E^{u,s}$ and
thus the stability/instability of the group $e^{tJL}$.

We first introduce some notations. Given any subspace $S\subset X$, denote
$n^{-}\left(  L|_{S}\right)  $ and $n^{\leq0}(L|_{S})$ as the maximal negative
and non-positive dimensions of $\left\langle Lu,u\right\rangle \ $restricted
to $S$, respectively. Clearly, $n^{-}(L|_{s})\leq n^{-}(L)<\infty$.

In order to state and prove our results on the index theorems, we will work
with the standard \textit{complexified} spaces, operators, and quadratic
forms, see Appendix (Section \ref{S:appendix}) for details.

For any eigenvalue $\lambda$ of $JL$ let $E_{\lambda}$ be the generalized
eigenspace, that is,
\[
E_{\lambda}= \{ u\in X\ |\ ( JL-\lambda I) ^{k} u=0,\ \text{for some integer
}k\geq1\}.
\]

\begin{remark}
As $JL$ generates a $C^{0}$ semigroup (Proposition \ref{P:well-posedness}),
$(JL-\lambda)^{k}$ is a densely defined closed operator (see
\cite{hille-phillips}) and thus $E_{\lambda}$ is indeed a closed subspace. It
will turn out that $E_{\lambda}=\ker(JL-\lambda I)^{2n^{-}(L)+1}$ for any
eigenvalue $\lambda$. See Theorem \ref{theorem-counting} for $\lambda\notin
i\mathbf{R}$ and Proposition \ref{P:finite-dim-E} for more details.
\end{remark}

Let $k_{r}$ be the sum of algebraic multiplicities of positive eigenvalues of
$JL$ and $k_{c}$ be the sum of algebraic multiplicities of eigenvalues of $JL$
in the first quadrant (i.e. both real and imaginary parts are positive).
Namely,
\begin{equation}
k_{r}=\sum_{\lambda>0}\dim E_{\lambda},\quad k_{c}=\sum_{Re\lambda
,\ Im\lambda>0}\dim E_{\lambda}. \label{E:kr-kc}%
\end{equation}
For any purely imaginary eigenvalue $i\mu$ $\left(  0\neq\mu\in\mathbf{R}%
^{+}\right)  $ of $JL,$ let
\begin{equation}
k^{\leq0}\left(  i\mu\right)  =n^{\leq0}\left(  L|_{E_{i\mu}}\right)  ,\quad
k_{i}^{\leq0}=\sum_{0\neq\mu\in\mathbf{R}^{+}}k^{\leq0}\left(  i\mu\right)  .
\label{E:ki}%
\end{equation}
The index counting on $E_{0}$ is slightly more subtle due to the possible
presence of nontrivial $\ker L\subset E_{0}$. Observe that, for any subspace
$S\subset X$, $L$ induces a quadratic form $\langle L\cdot,\cdot\rangle$ on
the quotient space $S\slash(\ker L\cap S)$. As $\ker L\subset E_{0}$, define
\begin{equation}
k_{0}^{\leq0}=n^{\leq0}\left(  \langle L\cdot,\cdot\rangle|_{E_{0}\slash\ker
L}\right)  . \label{defn-k-0}%
\end{equation}
Equivalently, let $\tilde{E}_{0}\subset E_{0}$ be any subspace satisfying
$E_{0}=\ker L\oplus\tilde{E}_{0}$. Define
\[
k_{0}^{\leq0}=n^{\leq0}\left(  L|_{\tilde{E}_{0}}\right)  .
\]
It is easy to see that $k_{0}^{\leq0}$ is independent of the choice of
$\tilde{E}_{0}$. We have the following index formula which is proved in
Subsection \ref{SS:Index-counting}.

\begin{theorem}
\label{theorem-counting} Assume (\textbf{H1})-(\textbf{H3}), we have

(i) If $\lambda\in\sigma(JL)$, then $\pm\lambda, \pm\bar\lambda\in\sigma(JL)$.

(ii) If $\lambda$ is an eigenvalue of $JL$, then $\pm\lambda, \pm\bar\lambda$
are all eigenvalues of $JL$. Moreover, for any integer $k>0$,
\[
\dim\ker(JL \pm\lambda)^{k} = \dim\ker(JL\pm\bar\lambda)^{k}.
\]

(iii) The indices satisfy
\begin{equation}
k_{r}+2k_{c}+2k_{i}^{\le0}+k_{0}^{\le0}=n^{-}\left(  L\right)  \text{.}
\label{counting-formula}%
\end{equation}

\end{theorem}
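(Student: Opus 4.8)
The plan is to derive all three parts of Theorem \ref{theorem-counting} from the structural decomposition Theorem \ref{T:decomposition}, exploiting the block-triangular form of $JL$ to reduce the spectral analysis to the individual diagonal blocks $A_0 = 0, A_1, A_2, A_3, A_4, A_5, A_6$ (where $A_0$ acts on $\ker L = X_0$). Since $JL$ is block upper triangular in the decomposition $X = \oplus_{j=0}^6 X_j$, its spectrum is the union $\bigcup_j \sigma(A_j)$, and every generalized eigenvector can be analyzed through this triangular structure. For part (i) and (ii), I would first recall that $JL$ is anti-self-adjoint with respect to $\langle L\cdot,\cdot\rangle$ (a fact from Section 3), so $\langle L(JL)u, v\rangle = -\langle Lu, (JL)v\rangle$; combined with the reality of the operators (before complexification) this forces the spectral symmetry. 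Concretely: if $JLu = \lambda u$, then taking the complex conjugate of the whole equation (the operators being real) gives $JL\bar u = \bar\lambda \bar u$, so $\bar\lambda \in \sigma(JL)$; and the anti-self-adjointness with respect to the (possibly indefinite) form $\langle L\cdot,\cdot\rangle$ gives, after passing to the quotient by $\ker L$ or using a suitable pairing, that $-\lambda$ is also an eigenvalue — this is most cleanly seen block by block, since on $X_5 \oplus X_6$ one has $\sigma(A_5) = -\sigma(A_6)$ directly from item (6) of Theorem \ref{T:decomposition}, on $X_1 \oplus X_4$ one has $A_4$ similar to $-A_1^*$ via $B_{14}$ (Remark \ref{R:decomposition}), and on $X_2, X_3$ the operators are anti-self-adjoint with respect to a genuine (definite) equivalent inner product, so their spectra lie on $i\mathbf{R}$ and are automatically symmetric under $\lambda \mapsto -\lambda$ (being purely imaginary) and under conjugation. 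The equality of Jordan chain lengths $\dim\ker(JL\pm\lambda)^k = \dim\ker(JL\pm\bar\lambda)^k$ follows the same way, transporting Jordan structure through these similarities and conjugations.

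For part (iii), the index formula, I would compute each contribution on the relevant block and add. The key accounting is: (a) On $X_5 \oplus X_6$, which carries exactly the unstable and stable spectrum, $\langle L\cdot,\cdot\rangle$ has the off-diagonal form $\begin{pmatrix} 0 & B_{56} \\ B_{56}^* & 0\end{pmatrix}$ with $B_{56}$ an isomorphism, hence $n^-(L|_{X_5\oplus X_6}) = \dim X_5$ (item (7) of Theorem \ref{T:decomposition}); moreover $\dim X_5 = \dim X_6$ equals the sum of algebraic multiplicities of eigenvalues with positive real part, which splits as $k_r + 2k_c$ once one observes that real positive eigenvalues contribute once each to $\sigma(A_5)$ while genuinely complex first-quadrant eigenvalues $\lambda$ come paired with $\bar\lambda$ (also in the right half-plane, hence also in $\sigma(A_5)$), giving the factor $2$ on $k_c$. (b) On $X_1 \oplus X_4$, again $\langle L\cdot,\cdot\rangle$ is off-diagonal with isomorphism $B_{14}$, so $n^-(L|_{X_1\oplus X_4}) = \dim X_1$; here $\sigma(A_1) = \sigma(A_4) \subset i\mathbf{R}$, and these are purely imaginary eigenvalues whose generalized eigenspaces sit partly in $X_1 \oplus X_4$. (c) On $X_2$, $\langle L\cdot,\cdot\rangle$ is negative definite, contributing $\dim X_2$ to $n^-(L)$; the eigenvalues of $A_2$ are purely imaginary. (d) $X_3$ contributes nothing to $n^-(L)$ since $L_{X_3} \geq \delta > 0$. (e) $X_0 = \ker L$ contributes nothing directly but sits inside $E_0$. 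Summing, $n^-(L) = \dim X_1 + \dim X_2 + \dim X_5 = (k_r + 2k_c) + [\text{imaginary contributions}] + [\text{zero-eigenvalue contribution}]$, and I must show the bracketed terms equal $2k_i^{\leq 0} + k_0^{\leq 0}$.

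The main obstacle — and where the bulk of the work lies — is identifying the imaginary and zero contributions correctly, i.e. showing that $\dim X_1 + \dim X_2$ (restricted appropriately) plus the zero-mode count reorganizes exactly into $2k_i^{\leq 0} + k_0^{\leq 0}$, where $k_i^{\leq 0}$ and $k_0^{\leq 0}$ are defined via \emph{non-positive} dimensions of $L$ on the \emph{full} generalized eigenspaces $E_{i\mu}$ and $E_0/\ker L$ of the original operator $JL$, not on the block subspaces. The issue is that $E_{i\mu}$ is spread across $X_1, X_2, X_3$ and $X_4$ (its part in $X_2$ contributing to $n^-$, its part in $X_3$ being $L$-positive, and its parts in $X_1 \oplus X_4$ pairing off via $B_{14}$), so I need a careful decomposition of each $E_{i\mu}$ compatible with the $X_j$ splitting — essentially the content of Proposition \ref{P:basis} applied blockwise. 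The plan here is: restrict $JL$ to $\oplus_{j=1}^4 X_j$ (the "center block" carrying imaginary spectrum), note $L$ restricted there is non-degenerate on $X_2 \oplus X_3$ and has the $B_{14}$-pairing on $X_1 \oplus X_4$; for each fixed $i\mu$, the generalized eigenspace $E_{i\mu}$ decomposes into its intersections with these pieces, and a counting of $n^{\leq 0}(L|_{E_{i\mu}})$ in terms of the sizes of these intersections — using that on the $X_1$-part $L$ vanishes and on the $X_4$-part $L$ vanishes while the pairing between them is the isomorphism $B_{14}$ — yields $k^{\leq 0}(i\mu) = \dim(E_{i\mu}\cap X_1) + \dim(E_{i\mu}\cap X_4)/2 + n^-(L|_{E_{i\mu}\cap X_2})$ type identities; summing over all $i\mu$ and over the zero eigenvalue (where $\ker L$ must be quotiented out, accounting for the asymmetry between $k_i^{\leq 0}$ carrying a factor $2$ and $k_0^{\leq 0}$ not) gives the result. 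A subtlety to handle carefully: the eigenvalue $i\mu = 0$ is special because $\pm i\mu$ coincide, which is precisely why $k_0^{\leq 0}$ appears with coefficient $1$ rather than $2$ — the generalized eigenspace $E_0$ is not "doubled" the way $E_{i\mu} \cup E_{-i\mu}$ is for $\mu \neq 0$. I would also invoke Proposition \ref{P:finite-dim-E} (or the finite-dimensionality of the non-$X_3$ blocks) to ensure all these dimensions are finite and the sums make sense.
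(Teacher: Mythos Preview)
Your treatment of parts (i) and (ii) is essentially correct and close to what the paper does (via Lemma~\ref{L:symmetry} and Corollary~\ref{C:symmetry}), though the paper's Lemma~\ref{L:symmetry} handles the $\lambda \leftrightarrow \bar\lambda$ and $\lambda \leftrightarrow -\bar\lambda$ symmetry more directly via the identity $L(JL-\lambda) = -\big((JL)^* + \bar\lambda\big)L$ without invoking the full block decomposition.

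For part (iii), however, there is a genuine gap. You write that ``the generalized eigenspace $E_{i\mu}$ decomposes into its intersections with these pieces'' (the $X_j$), and then propose identities of the type $k^{\le 0}(i\mu) = \dim(E_{i\mu}\cap X_1) + \tfrac12\dim(E_{i\mu}\cap X_4) + n^-(L|_{E_{i\mu}\cap X_2})$. But $JL$ is only block \emph{upper triangular}, not block diagonal, in the decomposition $X = \oplus_j X_j$: the subspaces $X_3$, $X_4$ are not invariant (there are nonzero off-diagonal blocks $A_{04}, A_{14}, A_{24}, A_{34}$, etc.), so $E_{i\mu}$ does \emph{not} split as $\oplus_j (E_{i\mu}\cap X_j)$. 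A generalized eigenvector of $JL$ typically has nontrivial components in several $X_j$ simultaneously that cannot be separated while remaining in $E_{i\mu}$. Consequently your proposed direct computation of $k^{\le 0}(i\mu)$ via these intersections does not go through, and invoking Proposition~\ref{P:basis} does not fix this: that proposition decomposes $E_{i\mu}$ into $E^D \oplus E^1 \oplus E^G$, which is a different splitting not aligned with the $X_j$.

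The paper sidesteps this difficulty entirely by proving \eqref{counting-formula} as two inequalities. For $\ge$ (Lemma~\ref{L:index1}): the subspace $Z = X_0 \oplus X_1 \oplus X_2$ \emph{is} invariant under $JL$ (it is an ``upper-left block''), $\langle L\cdot,\cdot\rangle \le 0$ on all of $Z$, and $\dim(Z/\ker L) = \dim X_1 + \dim X_2 = n^-(L) - \dim X_5$; since $E_{i\mu}(Z) \subset E_{i\mu}$, one gets $k^{\le 0}(i\mu) \ge \dim E_{i\mu}(Z)$ and summing gives the inequality. For $\le$ (Lemma~\ref{L:index2}): one builds a subspace $W$ of dimension $k_r + 2k_c + 2k_i^{\le 0} + k_0^{\le 0}$ on which $\langle L\cdot,\cdot\rangle \le 0$ by assembling $X_5$ together with chosen non-positive subspaces $E_{i\mu}^{\le 0} \subset E_{i\mu}$ (projected off $\ker L$), and then invokes the Pontryagin bound (Theorem~\ref{T:Pontryagin}) to conclude $\dim W \le n^-(L)$. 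This two-sided argument avoids ever needing to compute $k^{\le 0}(i\mu)$ exactly in terms of the block structure.
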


Combining Theorem \ref{theorem-dichotomy} and \ref{theorem-counting}, we have
the following corollary.

\begin{corollary}
\label{cor-instability-index} (i) If $k_{0}^{\leq0}=n^{-}\left(  L\right)  $,
then (\ref{eqn-hamiltonian}) is spectrally stable. That is, there exists no
exponentially unstable solution of (\ref{eqn-hamiltonian}).

(ii) If $n^{-}\left(  L\right)  -k_{0}^{\le0}$ is odd, then there exists a
positive eigenvalue of (\ref{eqn-hamiltonian}), that is, $k_{r}>0$. In
particular, if $n^{-}\left(  L\right)  -k_{0}^{\le0}=1$, then $k_{r}=1$ and
$k_{c}=k_{i}^{\le0}=0$, that is, (\ref{eqn-hamiltonian}) has exactly one pair
of stable and unstable simple eigenvalues.
\end{corollary}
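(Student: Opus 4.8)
The statement to prove is Corollary \ref{cor-instability-index}, which has two parts, both immediate consequences of the index formula \eqref{counting-formula} together with the exponential trichotomy Theorem \ref{theorem-dichotomy}.

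\medskip

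\textbf{Proof plan.} The plan is to read off both assertions directly from the index identity $k_r + 2k_c + 2k_i^{\le 0} + k_0^{\le 0} = n^-(L)$ proved in Theorem \ref{theorem-counting}(iii), using only that all the quantities $k_r, k_c, k_i^{\le 0}$ are nonnegative integers.

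\medskip

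For part (i): assume $k_0^{\le 0} = n^-(L)$. Substituting into \eqref{counting-formula} gives $k_r + 2k_c + 2k_i^{\le 0} = 0$, and since each summand is a nonnegative integer we conclude $k_r = k_c = 0$ (and also $k_i^{\le 0} = 0$). Thus $JL$ has no eigenvalue with positive real part: no positive real eigenvalue (since $k_r = 0$) and no eigenvalue in the open first quadrant (since $k_c = 0$); by the spectral symmetry in Theorem \ref{theorem-counting}(i)--(ii), i.e. $\lambda \in \sigma(JL) \Rightarrow \pm\lambda, \pm\bar\lambda \in \sigma(JL)$, the absence of eigenvalues in the open right half-plane follows, so there is no eigenvalue $\lambda$ with $\operatorname{Re}\lambda > 0$. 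By Theorem \ref{theorem-dichotomy}, $\dim E^u = \dim E^s \le n^-(L) < \infty$ and $E^{u,s}$ are exactly the generalized eigenspaces of the stable/unstable spectrum of $JL$; since there is no such spectrum, $E^u = E^s = \{0\}$, so $X = E^c$ and \eqref{estimate-center} gives at most polynomial growth of $e^{tJL}$. Hence there is no exponentially growing solution, i.e. \eqref{eqn-hamiltonian} is spectrally stable.

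\medskip

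For part (ii): suppose $n^-(L) - k_0^{\le 0}$ is odd. Then \eqref{counting-formula} reads $k_r + 2k_c + 2k_i^{\le 0} = n^-(L) - k_0^{\le 0}$, an odd number; since $2k_c + 2k_i^{\le 0}$ is even, $k_r$ must be odd, hence $k_r \ge 1$, so $JL$ has a positive eigenvalue. If moreover $n^-(L) - k_0^{\le 0} = 1$, then $k_r + 2k_c + 2k_i^{\le 0} = 1$ forces $k_r = 1$ and $k_c = k_i^{\le 0} = 0$. Thus there is exactly one positive eigenvalue counted with algebraic multiplicity, i.e.\ a simple positive eigenvalue $\lambda_0 > 0$, and by the symmetry in Theorem \ref{theorem-counting}(ii), $-\lambda_0$ is also a simple eigenvalue; $k_c = 0$ means no eigenvalues in the open first (hence, by symmetry, any open) quadrant, so $-\lambda_0$ and $\lambda_0$ are the only eigenvalues off the imaginary axis. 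This gives exactly one pair of stable/unstable simple eigenvalues.

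\medskip

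\textbf{Anticipated obstacle.} There is essentially no obstacle here: the corollary is a bookkeeping consequence once \eqref{counting-formula} and the spectral symmetry are in hand. The only point requiring a line of care is the link between ``no unstable eigenvalue'' and ``no exponentially unstable solution'' in part (i), which must be routed through Theorem \ref{theorem-dichotomy} (identifying $E^{u}$ with the unstable generalized eigenspace and using the polynomial bound \eqref{estimate-center} on $E^c = X$) rather than through a naive spectral mapping argument, since the spectral mapping property can fail in the presence of continuous spectrum as emphasized in the introduction.
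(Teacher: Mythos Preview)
Your proof is correct and follows exactly the approach the paper indicates: the paper does not give an explicit proof of this corollary, merely stating that it follows from combining Theorem \ref{theorem-dichotomy} and Theorem \ref{theorem-counting}, and you have correctly filled in the straightforward bookkeeping from the index formula \eqref{counting-formula} together with the identification of $E^{u,s}$ with the hyperbolic eigenspaces and the polynomial bound on $E^c$.
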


\begin{remark}
\label{R:subtlety} The formula (\ref{counting-formula}) might seem more
intuitive if those above $k^{\leq0}$ had been replaced by $k^{-}$. In fact
such an index formula with $k^{-}$ instead of $k^{\leq0}$ is true only if the
quadratic form $\langle Lu,v\rangle$ is non-degenerate on all $E_{i\mu}$,
$\mu\in\mathbf{R}^{+}$ and $\tilde{E}_{0}$, which would imply $n^{-}%
(L|_{E_{i\mu}})=n^{\leq0}(L|_{E_{i\mu}})$. However, the degeneracy is indeed
possible and the correct choice has to be $k^{\leq0}$. Such an example is
given in Subsection \ref{SS:non-deg}.
\end{remark}

Even though we can not claim $\dim E_{i\mu}<\infty$ for an eigenvalue $i\mu\in
i\mathbf{R}$ which might be embedded in the continuous spectrum, in fact
$E_{i\mu}$ is spanned by eigenvectors along with finitely many generalized
eigenvectors, except for $\mu=0$. More precisely, we prove the following two
propositions in Lemma \ref{L:e-space-1} and Subsection \ref{SS:e-space}.

\begin{proposition}
\label{P:finite-dim-E} Assume (\textbf{H1})-(\textbf{H3}). For any $i\mu
\in\sigma(JL) \cap i \mathbf{R}\backslash\{0\}$, it holds
\[
E_{i\mu} = \ker(JL- i\mu)^{2k^{\le0} (i\mu)+1}, \quad\dim\big((JL- i\mu) E_{i
\mu}\big) \le2 k^{\le0} (i\mu).
\]
Moreover,
\[
E_{0} = \ker(JL)^{2k_{0}^{\le0} +2}, \quad\dim\big((JL)^{2} E_{0}%
\big) \le2k_{0}^{\le0}.
\]

\end{proposition}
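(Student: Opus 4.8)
The plan is to combine the structural decomposition of Theorem~\ref{T:decomposition}, the finite-dimensional normal form of the generalized eigenspace (Proposition~\ref{P:basis}), and the fact that $JL$ is anti-self-adjoint with respect to the (complexified) Hermitian form $\langle L\cdot,\cdot\rangle$. First I would localize: in the decomposition of Theorem~\ref{T:decomposition}, $X_5\oplus X_6$ and $Y:=X_0\oplus X_1\oplus X_2\oplus X_3\oplus X_4$ are complementary closed $JL$-invariant subspaces with $\sigma(A_{5,6})\cap i\mathbf R=\emptyset$, so every $i\mu\in\sigma(JL)\cap i\mathbf R$ has $E_{i\mu}\subset Y$, where $JL$ is block upper-triangular with diagonal blocks $0$ (on $X_0=\ker L$) and $A_1,\dots,A_4$, all with spectra in $i\mathbf R$. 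Since $A_2,A_3$ are anti-self-adjoint for the equivalent inner products $\mp\langle L_{X_{2,3}}\cdot,\cdot\rangle$, every eigenvalue of $A_2$ and $A_3$ is semisimple; as $X_0,X_1,X_4$ are finite-dimensional, the only source of non-semisimplicity at $i\mu$ — and of growth of $\dim\big((JL-i\mu)E_{i\mu}\big)$ — is the coupling with those finite-dimensional blocks. This is precisely where Lemma~\ref{L:e-space-1} shows $\dim\big((JL-i\mu)E_{i\mu}\big)<\infty$ and realizes $E_{i\mu}$ as $\ker(JL-i\mu)$ together with a finite-dimensional $JL$-invariant subspace containing all Jordan chains of length $\ge2$ (for $\mu=0$ one also records the extra layer forced by $\ker L\subset E_0$).

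On this finite-dimensional Jordan part $JL$ is still $\langle L\cdot,\cdot\rangle$-anti-self-adjoint, so Proposition~\ref{P:basis} applies: $E_{i\mu}$ splits $L$-orthogonally into the radical of $\langle L\cdot,\cdot\rangle|_{E_{i\mu}}$ (on which $JL$ acts semisimply), a nondegenerate semisimple part, and a finite-dimensional genuinely-Jordan part on which, in the special basis, $\langle L\cdot,\cdot\rangle$ is in the anti-diagonal form~(\ref{L-anti-diagonal}). From that form a Jordan chain of length $\ell$ contributes at least $\lfloor\ell/2\rfloor$ to $n^{\le0}$ and exactly $\ell-1\le2\lfloor\ell/2\rfloor$ to $\operatorname{rank}(JL-i\mu)$. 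Because $n^{\le0}$ is super-additive over $L$-orthogonal sums, summing over the chains yields $\ell_j\le 2\,n^{\le0}\!\big(L|_{E_{i\mu}}\big)+1=2k^{\le0}(i\mu)+1$ for every chain, hence $E_{i\mu}=\ker(JL-i\mu)^{2k^{\le0}(i\mu)+1}$, and $\dim\big((JL-i\mu)E_{i\mu}\big)=\sum_j(\ell_j-1)\le 2k^{\le0}(i\mu)$.

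For $\mu=0$ one runs the same argument on the quotient $E_0/\ker L$: since $\ker L\subset\ker(JL)$, a $JL$-Jordan chain of length $\ell$ in $E_0$ projects to a chain of length $\ell$ or $\ell-1$ in $E_0/\ker L$, whose length is bounded by $2k_0^{\le0}+1$ via the counting of the previous paragraph applied to $\langle L\cdot,\cdot\rangle|_{E_0/\ker L}$; thus $\ell\le 2k_0^{\le0}+2$, i.e.\ $E_0=\ker(JL)^{2k_0^{\le0}+2}$, while $(JL)^2$ kills the bottom two vectors of every chain, giving $\dim\big((JL)^2E_0\big)\le 2k_0^{\le0}$. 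The routine part of all this is the anti-diagonal counting; the genuine obstacle is the localization/finiteness step, where — precisely because $i\mu$ may be embedded in the continuous spectrum so that $E_{i\mu}$ is infinite-dimensional and $\langle L\cdot,\cdot\rangle|_{E_{i\mu}}$ possibly degenerate — one must use the upper-triangular form of $JL$ and the semisimplicity of $A_2,A_3$ to confine all non-semisimplicity (and all of $(JL-i\mu)E_{i\mu}$) to a finite-dimensional invariant subspace, and to check that the radical directions of $L$ on $E_{i\mu}$ support no nontrivial Jordan chain.
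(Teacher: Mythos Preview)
Your overall strategy --- isolate the nilpotent part of $JL-i\mu$ on $E_{i\mu}$ as finite-dimensional, then count using the anti-diagonal form of $L$ on Jordan chains --- is the paper's strategy as well, but your execution has two genuine gaps and one wrong claim, and the paper's route to the crucial finiteness step is rather different from what you sketch.

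\medskip
\textbf{What Lemma~\ref{L:e-space-1} actually gives.} Lemma~\ref{L:e-space-1} proves only the chain-length bounds $E_{i\mu}=\ker(JL-i\mu)^{2k^{\le0}(i\mu)+1}$ and $E_0=\ker(JL)^{2k_0^{\le0}+2}$, by a direct contradiction argument using Lemma~\ref{lemma-orthogonal-condition-chain}. It does \emph{not} show $\dim\big((JL-i\mu)E_{i\mu}\big)<\infty$, nor does it produce a finite-dimensional invariant subspace carrying all the Jordan chains. So your sentence ``this is precisely where Lemma~\ref{L:e-space-1} shows \ldots'' is a misattribution; the dimension bound is the real work of this Proposition.

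\medskip
\textbf{Circularity with Proposition~\ref{P:basis}.} You invoke the full infinite-dimensional statement of Proposition~\ref{P:basis} (the splitting $E_{i\mu}=E^D\oplus E^1\oplus E^G$ with the stated dimension bounds). In the paper these two propositions are established \emph{together} in Subsection~\ref{SS:e-space}, both as consequences of the intermediate Lemmas~\ref{L:e-space-2} and~\ref{L:e-space-3}. Only the finite-dimensional case of Proposition~\ref{P:basis} (proved independently in Section~\ref{S:finiteD}) is available a priori. To avoid circularity you must first produce a finite-dimensional $JL$-invariant subspace on which $\langle L\cdot,\cdot\rangle$ is non-degenerate and which absorbs all of $(JL-i\mu)E_{i\mu}$ --- exactly the content of those two lemmas.

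\medskip
\textbf{The claim about the radical is false.} You assert that $JL$ acts semisimply on the radical $E^D$ of $\langle L\cdot,\cdot\rangle|_{E_{i\mu}}$. Proposition~\ref{P:basis} says only $\sigma(A_D)=\{i\mu\}$; it does not claim $A_D=i\mu I$, and in general there is no reason it should be. Fortunately this does not destroy the final inequality: from the upper-triangular block form one has $(JL-i\mu)E_{i\mu}\subset E^D\oplus(A_G-i\mu)E^G$, and since $\dim E^D$ contributes fully to $k^{\le0}(i\mu)$ one still gets
\[
\dim\big((JL-i\mu)E_{i\mu}\big)\le \dim E^D+2\,n^{-}\!\big(L|_{E^G\oplus E^1}\big)\le 2k^{\le0}(i\mu),
\]
which is precisely the chain of inequalities in Lemma~\ref{L:e-space-3}. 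But your counting argument as written, which relies on the anti-diagonal form and the semisimplicity of $A_D$, does not cover this.

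\medskip
\textbf{How the paper handles the finiteness step.} Rather than using the full Theorem~\ref{T:decomposition}, the paper works on the quotient $X_1=X_-\oplus X_+$ by $\ker L$, where $L_{X_1}$ is non-degenerate. On $E_{i\mu}^1\subset X_1$ it first splits off the radical $E^D$ of $L|_{E_{i\mu}^1}$ (automatically finite-dimensional: $L$ vanishes there, so $\dim E^D\le n^{\le0}(L|_{E_{i\mu}^1})$), then on the non-degenerate remainder applies Lemma~\ref{L:e-space-2}. The key finiteness input in that lemma is Theorem~\ref{T:Pontryagin} (Pontryagin): the subspace $E^0=\ker(JL-i\mu)\cap\tilde E$ on which $L$ vanishes has $\dim E^0\le n^-(L|_{\tilde E})$, and since every Jordan chain in $\tilde E$ ends in $E^0$ and has bounded length (Lemma~\ref{L:e-space-1}), $\tilde E$ is finite-dimensional. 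Now the finite-dimensional Proposition~\ref{P:basis} applies to $\tilde E$ and yields the sharp bound. For $\mu\ne0$ one then checks that $E_{i\mu}$ is the graph of a bounded map $E_{i\mu}^1\to\ker L$, so $\dim(JL-i\mu)E_{i\mu}=\dim(J_{X_1}L_{X_1}-i\mu)E_{i\mu}^1$; for $\mu=0$ one has $E_0=\ker L\oplus E_0^1$ and $(JL)^2E_0=(JL)(J_{X_1}L_{X_1}E_0^1)$, which gives the extra power of $JL$ and the bound $2k_0^{\le0}$.

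Your route through Theorem~\ref{T:decomposition} can in principle be made to work for mere finiteness (the semisimplicity of $A_2,A_3$ does force the $X_2,X_3$-components of $(JL-i\mu)E_{i\mu}$ into finite-dimensional subspaces, and for $\mu\ne0$ the $X_0$-component is slaved to the others), but extracting the sharp constant $2k^{\le0}(i\mu)$ from that decomposition is awkward because $n^{-}(L_{X_2})$, $\dim X_1$, etc., are related to $n^{-}(L)$, not to $k^{\le0}(i\mu)$. The paper's quotient-then-Pontryagin argument gets the sharp bound in one stroke.
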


The above proposition does not hold if $(JL)^{2} E_{0}$ is replaced by $JL
E_{0}$ as in the case of $\mu\ne0$. See an example in Remark \ref{R:(JL)^2} in
Subsection \ref{SS:e-space}.

For $\mu\in\mathbf{R}$, Theorem \ref{theorem-counting} and Proposition
\ref{P:finite-dim-E} mean that, in addition to eigenvectors, $JL|_{E_{i\mu}}$
has only finitely many nontrivial Jordan blocks with the total dimensions
bounded in term of $n^{-}(L)$. The number and the lengths of nontrivial Jordan
chains of $JL|_{E_{i\mu}}$ are independent of the choice of the basis
realizing the Jordan canonical form. Intuitively if a basis consisting of
generalized eigenvectors simultaneously diagonalizes the quadratic form
$\langle Lu,u\rangle$ and realizes the Jordan canonical form of $JL$, it would
greatly help us to understand the structure of \eqref{eqn-hamiltonian}.
However, usually this is not possible. Instead, we find a `good' basis for the
Jordan canonical form of $JL$ which also `almost' diagonalizes the quadratic
form $L$. To our best knowledge, we are not aware of such a result even in
finite dimensions.

\begin{proposition}
\label{P:basis} Assume (\textbf{H1})-(\textbf{H3}). For $i\mu\in\sigma(JL)\cap
i\mathbf{R}\backslash\{0\}$, there exists a decomposition of $E_{i\mu}$ into
closed subspaces $E_{i\mu}=E^{D}\oplus E^{1}\oplus E^{G}$ such that $L$ and
$JL$ take the block forms
\[
\langle L\cdot,\cdot\rangle\longleftrightarrow%
\begin{pmatrix}
0 & 0 & 0\\
0 & L_{1} & 0\\
0 & 0 & L_{G}%
\end{pmatrix}
,\quad JL\longleftrightarrow%
\begin{pmatrix}
A_{D} & A_{D1} & A_{DG}\\
0 & i\mu & 0\\
0 & 0 & A_{G}%
\end{pmatrix}
.
\]
For $\mu=0$, there exists a decomposition $E_{0}=\ker L\oplus E^{D}\oplus
E^{1}\oplus E^{G}$ such that $L$ and $JL$ take the block form
\[
\langle L\cdot,\cdot\rangle\longleftrightarrow%
\begin{pmatrix}
0 & 0 & 0 & 0\\
0 & 0 & 0 & 0\\
0 & 0 & L_{1} & 0\\
0 & 0 & 0 & L_{G}%
\end{pmatrix}
,\quad JL\longleftrightarrow%
\begin{pmatrix}
0 & A_{0D} & A_{01} & A_{0G}\\
0 & A_{D} & A_{D1} & A_{DG}\\
0 & 0 & 0 & 0\\
0 & 0 & 0 & A_{G}%
\end{pmatrix}
.
\]
In both cases, all blocks are bounded operators, $L_{1}$ and $L_{G}$ are
non-degenerate, $\sigma(A_{G})=\sigma(A_{D})=\{i\mu\}$, and
\[
\dim E^{G}\leq3\big(k^{\leq0}(i\mu)-\dim E^{D}-n^{-}(L|_{E^{1}})\big),\quad
\dim E^{1}\leq\infty.
\]
Moreover, $\ker(A_{G}-i\mu)\subset(A_{G}-i\mu)E^{G}$, namely, the Jordan
canonical form of $JL$ on $E^{G}$ has non-trivial blocks only. Let
$1<k_{1}<\cdots<k_{j_{0}}$ be the dimensions of Jordan blocks of $A_{G}$ in
$E^{G}$. Suppose there are $l_{j}$ Jordan blocks of size $k_{j}\times k_{j}$.
For each $j=1,\ldots,j_{0}$, there exist linearly independent vectors
\begin{equation}
\{u_{p,q}^{(j)}\mid p=1,\ldots,l_{j},\ q=1,\ldots,k_{j}\}\subset E^{G}
\label{chain-k-j}%
\end{equation}
such that

\begin{enumerate}
\item $\forall\ 1\leq p\leq l_{j}$,
\[
\left\{  u_{p,q}^{(j)}=(JL-i\mu)^{q-1}u_{p,1}^{(j)},\ q=1,\ldots
,k_{j}\right\}
\]
form a Jordan chain of length $k_{j}$. More explicitly ,
\[
\text{on span }\{u_{1,1}^{(j)},\ldots,u_{1,k_{j}}^{(j)},\ldots,u_{l_{j}%
,1}^{(j)},\ldots,u_{l_{j},k_{j}}^{(j)}\}:
\]%
\[
A_{G}\longleftrightarrow%
\begin{pmatrix}
i\mu & 0 & \cdots & 0 & 0 & \cdots & 0 & 0 & \cdots & 0 & 0\\
1 & i\mu & \cdots & 0 & 0 & \ \cdots\  & 0 & 0 & \cdots & 0 & 0\\
&  & \cdots &  &  &  &  &  &  &  & \\
0 & 0 & \cdots & 1 & i\mu & \ \cdots\  & 0 & 0 & \cdots & 0 & 0\\
&  &  &  & \cdots &  &  &  &  &  & \\
0 & 0 & \cdots & 0 & 0 & \ \cdots\  & i\mu & 0 & \cdots & 0 & 0\\
0 & 0 & \cdots & 0 & 0 & \ \cdots\  & 1 & i\mu & \cdots & 0 & 0\\
&  &  &  &  &  &  & \cdots &  &  & \\
0 & 0 & \cdots & 0 & 0 & \ \cdots\  & 0 & 0 & \cdots & 1 & i\mu
\end{pmatrix}
\]
The above count for all Jordan blocks of $A_{G}$ of size $k_{j}$.

\item $\langle Lu_{p, q}^{(j)}, u_{p^{\prime}, q^{\prime}}^{(j^{\prime}%
)}\rangle=0$ if $p\ne p^{\prime}$ or $j \ne j^{\prime}$.

\item $\forall\ 1\leq p\leq l_{j}$, the $k_{j}\times k_{j}$ representation
matrix of $L$ on a chain (\ref{chain-k-j}) is
\begin{equation}
\big(\langle Lu_{p,q}^{(j)},u_{p,r}^{(j)}\rangle\big)_{q,r}=%
\begin{pmatrix}
0 & 0 & \cdots & 0 & a_{p,1}^{(j)}\\
0 & 0 & \cdots & a_{p,2}^{(j)} & 0\\
\cdots &  &  &  & \\
a_{p,k_{j}}^{(j)} & 0 & \cdots & 0 & 0
\end{pmatrix}
, \label{L-anti-diagonal}%
\end{equation}
where the entries satisfy
\[
a_{p,q^{\prime}}^{(j)}=(-1)^{q^{\prime}-q}a_{p,q}^{(j)}\neq0,\quad\quad
a_{p,k_{j}+1-q}^{(j)}=\overline{a_{p,q}^{(j)}}%
\]
and thus the above matrix is non-degenerate.

\item If $k_{j}$ is odd, then $a_{p,\frac{1}{2}(k_{j}+1)}^{(j)}=\pm1$ and the
$k_{j}$-th Krein signature of $i\mu$ defined by
\[
n_{k_{j}}^{-}(i\mu)=\sum_{p=1}^{l_{j}}\min\{0,\ a_{p,\frac{1}{2}(k_{j}%
+1)}^{(j)}\}
\]
is independent of the choices of such bases $\{u_{p,q}^{(j)}\}$.
\end{enumerate}
\end{proposition}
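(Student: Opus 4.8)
The plan is to isolate the finite-dimensional combinatorial core of the statement, prove it by a canonical-form induction, and then patch back the (possibly infinite-dimensional) semisimple piece. Write $N=JL-i\mu$ (and $N=JL$ when $\mu=0$), restricted to $E_{i\mu}$; the whole argument rests on two facts: (a) $N$ is nilpotent on $E_{i\mu}$, and (b) $N$ is skew with respect to $\langle L\cdot,\cdot\rangle$, i.e.\ $\langle LNu,v\rangle=-\langle Lu,Nv\rangle$. Fact (b) for $JL$ itself is just $L^{\ast}=L$ and $J^{\ast}=-J$; and after complexification the naive extra $i\mu$-terms cancel precisely because $\overline{i\mu}=-i\mu$ together with conjugate-linearity of the form in the second slot, so $N$ inherits the skew property. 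I would first treat $i\mu\neq 0$ and then indicate the (closely parallel) modifications for $\mu=0$, where the only new feature is the layer $\ker L\subseteq\ker(JL)$, responsible for the extra zero row and column in the block form for $E_0$. By Proposition \ref{P:finite-dim-E} the space $(JL-i\mu)E_{i\mu}$ is finite-dimensional and $k^{\leq0}(i\mu)<\infty$; combined with the observations that the $L$-radical $E^{D}:=\{u\in E_{i\mu}\mid\langle Lu,v\rangle=0\ \forall v\in E_{i\mu}\}$ is $JL$-invariant (if $u\in E^{D}$ then $\langle L(JL)u,v\rangle=-\langle Lu,JLv\rangle=0$) and satisfies $\dim E^{D}=n^{\leq0}(L|_{E^{D}})\leq k^{\leq0}(i\mu)$, this confines all the nontrivial Jordan structure and all degeneracy of $L$ to a finite-dimensional $JL$-invariant subspace.

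The technical heart is the finite-dimensional statement: if $V$ is finite-dimensional, $N$ is nilpotent and $L$-skew, and $\langle L\cdot,\cdot\rangle$ is non-degenerate on $V$, then $V$ is an $L$-orthogonal direct sum of $N$-cyclic subspaces on each of which $L$ has the anti-diagonal shape (\ref{L-anti-diagonal}) with the stated entry relations. I would prove this by induction on the maximal Jordan-chain length $k$. The pivot is that the sesquilinear form $\beta(u,w)=\langle Lu,N^{k-1}w\rangle$ is Hermitian or skew-Hermitian, and, crucially, descends to a \emph{non-degenerate} form on $V/\ker N^{k-1}$, because $\ker N^{k-1}=(N^{k-1}V)^{\perp_{L}}$ --- this identity is where $L$-skewness plus non-degeneracy of $L$ enter, via a rank-nullity count. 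Diagonalizing $\beta$ produces chain generators $v^{(p)}_{1}$ with $\langle Lv^{(p)}_{1},N^{k-1}v^{(p')}_{1}\rangle=0$ for $p\neq p'$ and $\neq0$ for $p=p'$; the span $U$ of the associated length-$k$ chains then has $L|_{U}$ non-degenerate (its Gram matrix, grouped by Jordan level, is block anti-triangular with invertible blocks on the main anti-diagonal), so $V=U\oplus U^{\perp_{L}}$ with $U^{\perp_{L}}$ being $N$-invariant (again by $L$-skewness), $L$-non-degenerate, and of strictly smaller chain length: the induction hypothesis applies. It remains to tidy up $U$ itself, where a triangular, Gram--Schmidt-type modification of the generators $v^{(p)}_{1}$ by lower-level chain vectors --- performed from the top anti-diagonal downward --- yields chains that are mutually $L$-orthogonal and on which $L$ is exactly anti-diagonal, with a final rescaling normalizing the entries. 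The relations $a^{(j)}_{p,q'}=(-1)^{q'-q}a^{(j)}_{p,q}$ and $a^{(j)}_{p,k_{j}+1-q}=\overline{a^{(j)}_{p,q}}$ then drop out of $L$-skewness and Hermitian symmetry; when $k_{j}$ is odd the middle entry is real and nonzero, hence normalizable to $\pm1$, and $n^{-}_{k_{j}}(i\mu)$ is recognized as the signature of a basis-independent Hermitian form (the one induced by $\langle L\cdot,N^{k_{j}-1}\cdot\rangle$ on a canonically defined subquotient), hence an invariant.

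With the finite-dimensional result in hand the remaining assertions follow. The dimension bound is arithmetic: a $k\times k$ anti-diagonal block with $k\geq2$ contributes at least $\lfloor k/2\rfloor\geq k/3$ to $n^{-}(L)$, so $\dim E^{G}\leq3\,n^{-}(L|_{E^{G}})$, and since $L$ is block-diagonal and non-degenerate on $E^{1}\oplus E^{G}$ and vanishes on $E^{D}$, $n^{-}(L|_{E^{G}})=k^{\leq0}(i\mu)-\dim E^{D}-n^{-}(L|_{E^{1}})$; the absence of length-one chains in $E^{G}$ is built into the construction, as $\ker N|_{E^{G}}$ consists of chain bottoms, all lying in $NE^{G}$. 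For the infinite-dimensional $E_{i\mu}$ (possible only through a large semisimple eigenspace), I would pick a finite-dimensional $JL$-invariant $F\supseteq E^{D}+(JL-i\mu)E_{i\mu}$ with $\langle L\cdot,\cdot\rangle$ non-degenerate on $F/E^{D}$ --- enlarging $F$ finitely often by $JL$-cyclic subspaces of vectors pairing nontrivially with the $L$-radical of $F/E^{D}$, which terminates because that radical shrinks --- apply the finite-dimensional theorem to $F$ to split off $E^{D}$, $E^{G}$ and $E^{1}\cap F$, and take $E^{1}$ to be $E^{1}\cap F$ together with an $L$-orthogonal complement of $F$ in $E_{i\mu}$; on this complement $(JL-i\mu)$ maps into $F\cap F^{\perp_{L}}=E^{D}$, and a further finite adjustment yields $JL|_{E^{1}}=i\mu\,I$ and $L|_{E^{1}}$ non-degenerate, the block forms and zero patterns following from $JL$-invariance of $E^{D}$ and $F$. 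For $\mu=0$ one runs the same scheme on $E_{0}/\ker L$ (which inherits a non-degenerate form, $\ker L$ being the global $L$-radical, and on which $JL$ descends, being zero on $\ker L$), uses $\dim((JL)^{2}E_{0})<\infty$ from Proposition \ref{P:finite-dim-E}, and lifts, with $\ker L\subseteq\ker(JL)$ producing the extra zero row and column.

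The main obstacle I anticipate is the finite-dimensional canonical-form step --- simultaneously controlling, along the induction, the Jordan form of $N$, the exact anti-diagonal shape of $L$ on each chain with the precise entry relations, and the mutual $L$-orthogonality of distinct chains, together with the bookkeeping of the odd-length sign invariants. The delicate point is the order of operations inside the top-length block $U$: the cross-chain orthogonalization and the sub-anti-diagonal elimination are coupled, and one must first diagonalize the "top" form $\beta$ and then descend anti-diagonal by anti-diagonal, which I expect to require a nested induction. By contrast, the infinite-dimensional patching and the $\mu=0$ modifications, while requiring care so that the final decomposition is a genuine direct sum exhibiting all the stated zero blocks, should be essentially routine once the finiteness furnished by Proposition \ref{P:finite-dim-E} is exploited.
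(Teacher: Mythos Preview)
Your proposal is correct and follows essentially the same route as the paper. Your form $\beta(u,w)=\langle Lu,N^{k-1}w\rangle$ is, up to the factor $i^{k-1}$, the paper's Hermitian form $Q_k$ (Lemma~\ref{L:signature1}); your identification $\ker N^{k-1}=(N^{k-1}V)^{\perp_L}$ is Corollary~\ref{C:signature1} at the top chain length; your nested Gram--Schmidt descent anti-diagonal by anti-diagonal is exactly Lemma~\ref{L:JordanChain2}; and your outer induction on maximal chain length, passing to the $L$-orthogonal complement of the chains already built, is the scheme carried out in Section~\ref{S:finiteD}.

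The only organizational difference lies in the infinite-dimensional reduction. The paper (Lemmas~\ref{L:e-space-2}, \ref{L:e-space-3}) first splits off $E^D$, then on the resulting non-degenerate quotient directly identifies $E^1$ as an $L$-complement inside $\ker N$ of its own $L$-radical $E^0$, and shows that what remains, $\tilde E$, is automatically finite-dimensional because $\tilde E\cap\ker N=E^0$ has dimension at most $n^{-}(L|_{\tilde E})$. You instead build a finite-dimensional invariant $F\supseteq E^D+NE_{i\mu}$ from below by repeated enlargement until $L|_{F/E^D}$ is non-degenerate, and take $E^1$ essentially as $F^{\perp_L}$. Both work; the paper's route avoids the termination bookkeeping for the enlargement. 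One caution on wording: your ``a further finite adjustment yields $JL|_{E^1}=i\mu I$'' should be read as ``the $E^1\to E^1$ diagonal block of $JL$ is $i\mu$,'' since the off-diagonal block $A_{D1}\colon E^1\to E^D$ is permitted to be nonzero and cannot in general be removed (the proposition does not claim otherwise).
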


\begin{remark}
\label{R:JordanChain1}Since $\langle Lu,u\rangle$ is symmetric (Hermitian
after the complexification), we can normalize the above $a_{p,q}^{(j)}$ such
that $a_{p,q}^{(j)}=\pm1$ if $k_{j}$ is odd and $a_{p,q}^{(j)}=\pm i$ if
$k_{j}$ is even. In particular, when $\mu=0$, since the generalized eigenspace
is spanned by real functions in $X$, it follows that the Jordan chains in
$E^{G}\subset E_{0}$ are all of odd length. 
\end{remark}

In the splitting of $E_{i\mu}$, we note that only $E^{1}$ may be infinite
dimensional, where $L$ is positive except in finitely many directions. If
$\langle L\cdot,\cdot\rangle$ is non-degenerate on $E_{i\mu}$, the subspace
$E^{D}$ may be eliminated and many of our results can be improved. However,
this degeneracy indeed is possible. See such an example in Subsection
\ref{SS:non-deg}. On the positive side, in that subsection, we also prove the
following proposition on the non-degeneracy of $L|_{E_{i\mu}}$ for isolated
eigenvalues $i\mu$. In particular, the isolation assumption for $i\mu\in
\sigma(JL)\cap i\mathbf{R}$ usually holds if the problem comes from PDEs
defined on bounded or periodic domains.

\begin{proposition}
\label{P:non-deg} If $i\mu\in\sigma(JL) \cap i\mathbf{R}$ is isolated in
$\sigma(JL)$, then

(i) $i\mu$ is an eigenvalue, i.e. $E_{i\mu} \ne\{0\}$, and $\langle L\cdot,
\cdot\rangle$ is non-degenerate on $E_{i\mu}\slash (\ker L \cap E_{i\mu})$.

(ii) there exists a closed subspace $E_{\#}\subset X$ invariant under $JL$
such that $X=E_{i\mu}\oplus E_{\#}$ and $\langle Lu,v\rangle=0$ for all $u\in
E_{i\mu}$ and $v\in E_{\#}$.

(iii) $\sigma\big( (JL)|_{E_{\#}}\big) =\sigma(JL) \backslash\{i\mu\}$.
\end{proposition}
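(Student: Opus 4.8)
The plan is to realize $E_{\#}$ as the kernel of the Riesz spectral projection attached to the isolated point $i\mu$, and to identify the range of that projection with $E_{i\mu}$ by combining the $L$-orthogonality of spectral subspaces with the \emph{sharp} algebraic growth bound on the center subspace from Theorem \ref{theorem-dichotomy}.

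First I would set up the projection. Since $i\mu$ is isolated in the closed set $\sigma(JL)$, the remainder $\sigma(JL)\setminus\{i\mu\}$ is closed and stays at a positive distance from $i\mu$; choosing a small circle $\Gamma$ around $i\mu$ separating the two pieces, put $P=\frac1{2\pi i}\oint_{\Gamma}(zI-JL)^{-1}\,dz$. This is a bounded projection commuting with $JL$ (hence with $e^{tJL}$), so $X=R(P)\oplus N(P)$ with both summands closed and $JL$-invariant, $\sigma\big(JL|_{R(P)}\big)=\{i\mu\}$, $\sigma\big(JL|_{N(P)}\big)=\sigma(JL)\setminus\{i\mu\}$, and $R(P)\neq\{0\}$. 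I take $E_{\#}:=N(P)$, which already gives the invariant splitting required in (ii) and the spectral identity (iii); it remains to prove the $L$-orthogonality and, above all, that $R(P)=E_{i\mu}$. Since $JL$ is anti-self-adjoint for $\langle L\cdot,\cdot\rangle$ and $-\overline{\{i\mu\}}=\{i\mu\}$ is disjoint from $\sigma(JL)\setminus\{i\mu\}$, the $L$-orthogonality of spectral subspaces (Lemma \ref{L:L-orth-eS}, Corollary \ref{C:L-orth-eS}) yields $\langle Lu,v\rangle=0$ whenever $u\in R(P)$ and $v\in N(P)$; as the finite-dimensional spaces $E^{u}\oplus E^{s}$ have spectrum off the imaginary axis, they lie in $N(P)$, so in particular $R(P)\subset(E^{u}\oplus E^{s})^{\perp L}=E^{c}$.

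The heart of the argument is to upgrade $T:=(JL-i\mu)|_{R(P)}$ from quasi-nilpotent to nilpotent. On the closed invariant subspace $R(P)$ this $T$ is bounded with $\sigma(T)=\{0\}$, so $\|T^{n}\|^{1/n}\to0$ and $z\mapsto e^{zT}$ is entire of exponential type zero ($\|e^{zT}\|\le C_{\epsilon}e^{\epsilon|z|}$ for every $\epsilon>0$). On the other hand $e^{tJL}|_{R(P)}=e^{i\mu t}e^{tT}$, so by the inclusion $R(P)\subset E^{c}$ and (\ref{estimate-center}) one gets $\|e^{tT}\|\le M(1+|t|^{k_{0}})$ for all $t\in\mathbf{R}$. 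An entire function of exponential type zero that is polynomially bounded on $\mathbf{R}$ must be a polynomial (Phragm\'{e}n--Lindel\"{o}f together with Liouville), so for each $u\in R(P)$ the entire function $z\mapsto e^{zT}u=\sum_{n\ge0}\tfrac{z^{n}}{n!}T^{n}u$ is a polynomial of degree at most $k_{0}$; hence $T^{k_{0}+1}u=0$ for all $u$, i.e.\ $T^{k_{0}+1}=0$ and $R(P)\subset\ker(JL-i\mu)^{k_{0}+1}\subset E_{i\mu}$. The reverse inclusion is automatic: if $(JL-i\mu)^{k}u=0$, then $(I-P)u\in N(P)$ is annihilated by $(JL-i\mu)^{k}$, which is invertible on $N(P)$, forcing $u=Pu\in R(P)$. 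Thus $R(P)=E_{i\mu}$; since $R(P)\neq\{0\}$, taking the last nonzero vector of a Jordan chain shows $i\mu$ is an eigenvalue — the first assertion of (i).

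It remains to check non-degeneracy. The form $\langle L\cdot,\cdot\rangle$ has radical exactly $\ker L$ on $X$ (Lemma \ref{L:non-degeneracy}). If $u\in R(P)$ is $\langle L\cdot,\cdot\rangle$-orthogonal to all of $R(P)$, then by the $L$-orthogonality above it is also orthogonal to all of $N(P)$, hence to all of $X$, so $u\in\ker L$; therefore the radical of $\langle L\cdot,\cdot\rangle|_{R(P)}$ is precisely $\ker L\cap R(P)$, which, since $R(P)=E_{i\mu}$, is exactly the non-degeneracy asserted in (i). Then (ii) holds with $E_{\#}=N(P)$ (invariant, $X=E_{i\mu}\oplus E_{\#}$, $L$-orthogonal), and (iii) is the spectral decomposition already recorded. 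I expect the one genuine difficulty to be the nilpotency step, and it is precisely there that the Hamiltonian structure is essential: the \emph{algebraic} (rather than merely sub-exponential) growth on $E^{c}$ in Theorem \ref{theorem-dichotomy}, and the $L$-orthogonality that forces $R(P)\subset E^{c}$. A possible alternative for this step is to apply the structural decomposition Theorem \ref{T:decomposition} directly to the bounded operator $JL|_{R(P)}$ on $R(P)$ — where the blocks with off-axis spectrum are absent and the anti-self-adjoint block, having one-point spectrum, reduces to the scalar $i\mu$, so that $JL|_{R(P)}-i\mu$ is visibly a finite-rank nilpotent — at the cost of separately verifying (\textbf{H1})--(\textbf{H3}) for the restricted data.
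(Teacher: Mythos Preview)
Your proof is correct, and the overall architecture (Riesz projection at $i\mu$, $L$-orthogonality of $R(P)$ and $N(P)$ via Lemma~\ref{L:L-orth-eS}, non-degeneracy from the radical being $\ker L\cap R(P)$) matches the paper's Lemma~\ref{L:isolated-2}. The genuine difference is in the nilpotency step $R(P)=E_{i\mu}$. The paper groups $\pm i\mu$ together so that $I^{i\mu}=R(P_{i\mu}+P_{-i\mu})$ is a real subspace, verifies (\textbf{H1--3}) for the restricted data via Lemma~\ref{L:decomJ}, and then reapplies Theorem~\ref{T:decomposition} on $I^{i\mu}$: there the hyperbolic blocks vanish and the anti-self-adjoint block $A_{3}$, having spectrum $\{\pm i\mu\}$, satisfies $A_{3}^{2}+\mu^{2}=0$, which forces $\big((JL|_{I^{i\mu}})^{2}+\mu^{2}\big)^{k}=0$ through the upper-triangular form (Lemma~\ref{L:isolated-3}). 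You instead extract nilpotency \emph{dynamically}: $R(P)\subset E^{c}$ together with the sharp polynomial bound (\ref{estimate-center}) gives $\|e^{tT}\|\le M(1+|t|^{k_{0}})$, and since $T$ is quasi-nilpotent the entire map $z\mapsto e^{zT}u$ has exponential type zero, so Phragm\'en--Lindel\"of plus Liouville force it to be a polynomial of degree $\le k_{0}$, whence $T^{k_{0}+1}=0$. Your route avoids re-checking (\textbf{H1--3}) on the spectral subspace and works directly with the single point $i\mu$ rather than the pair $\pm i\mu$; the paper's route is more structural and yields the block picture explicitly. Both ultimately rest on Theorem~\ref{T:decomposition} (yours through Theorem~\ref{theorem-dichotomy}), and you in fact name the paper's approach as your ``possible alternative.''
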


In the case of an isolated spectral point $i\mu$, one may define the invariant
eigenspaces and its complement eigenspace via contour integral in operator
calculus. Usually it is not guaranteed that such $i\mu$ is an eigenvalue and
its eigenspace coincides with $E_{i\mu}$. This proposition implies that, under
assumptions (\textbf{H1-3}), this is exactly the case and $\langle
L\cdot,\cdot\rangle$ is non-degenerate on $E_{i\mu}$. As a corollary, we prove

\begin{proposition}
\label{P:direct decomposition} In addition to (\textbf{H1-3}), we assume

\begin{enumerate}
\item[(\textbf{H4})] $\langle L\cdot, \cdot\rangle$ is non-degenerate on
$E_{\lambda}$ for any non-isolated $\lambda\in\sigma\left(  JL\right)  \cap
i\mathbf{R}\backslash\{0\}$ and also on $E_{0}/\ker L$ if $0\in\sigma(JL)$ is
not isolated,
\end{enumerate}

then there exist closed subspaces $N$ and $M$, which are $L$-orthogonal, such
that $N\oplus\ker L$ and $M\oplus\ker L$ are invariant under $JL$, $X=N\oplus
M\oplus\ker L$, $\dim N<\infty$, and $L\geq\delta$ on $M$ for some $\delta>0$.
\end{proposition}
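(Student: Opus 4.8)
The plan is to build the finite dimensional space $N$ out of the generalized eigenspaces of the spectrum off the imaginary axis together with the (as it will turn out, finitely many) purely imaginary eigenvalues on which $\langle L\cdot,\cdot\rangle$ is not positive, and then to take $M$ to be the part transverse to $\ker L$ of the $L$-orthogonal complement of $N$. The two inputs that make this work are that, under (\textbf{H4}) together with Proposition \ref{P:non-deg}, $\langle L\cdot,\cdot\rangle$ is non-degenerate on $E_{i\mu}$ (on $E_0/\ker L$ when $\mu=0$) for \emph{every} $i\mu\in\sigma(JL)\cap i\mathbf{R}$, and that the index formula \eqref{counting-formula} simultaneously bounds the number of ``bad'' eigenvalues and pins down $n^-(L|_N)$.

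First I would apply Theorem \ref{theorem-dichotomy} to write $X=E^u\oplus E^c\oplus E^s$ with $E^u\oplus E^s$ finite dimensional, $\langle L\cdot,\cdot\rangle$ vanishing on $E^u$ and on $E^s$ but non-degenerate on $E^u\oplus E^s$, $\ker L\subset E^c=(E^u\oplus E^s)^{\perp L}$, all three subspaces $JL$-invariant, and $\sigma(JL|_{E^c})\subset i\mathbf{R}$; here $n^-(L|_{E^u\oplus E^s})=\dim E^u=k_r+2k_c$ by \eqref{formula-unstable-L-index} and \eqref{E:kr-kc}. Since by non-degeneracy $n^-(L|_{E_{i\mu}})=n^{\le0}(L|_{E_{i\mu}})=k^{\le0}(i\mu)$, and $k_i^{\le0},k_0^{\le0}<\infty$ by \eqref{counting-formula}, only finitely many purely imaginary eigenvalues $\pm i\mu_1,\dots,\pm i\mu_r$, possibly together with $0$, admit a non-positive direction of $L$ (the bad eigenvalues). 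For each bad $i\mu$, Proposition \ref{P:basis} applies with $E^D=\{0\}$ (forced by the non-degeneracy of $\langle L\cdot,\cdot\rangle|_{E_{i\mu}}$): it yields an $L$-orthogonal splitting $E_{i\mu}=E^1(i\mu)\oplus E^G(i\mu)$ on which $JL$ acts, respectively, as the scalar $i\mu$ and as an operator $A_G$ with $\sigma(A_G)=\{i\mu\}$, with $E^G$ finite dimensional and $\langle L\cdot,\cdot\rangle$ non-degenerate on both blocks; moreover $\langle L\cdot,\cdot\rangle|_{E^1}$ is $\ge\delta$ off a finite dimensional subspace, so one may split $E^1=E^{1,+}\oplus E^{1,-}$ $L$-orthogonally with $\dim E^{1,-}<\infty$, $L<0$ on $E^{1,-}$, $L\ge\delta$ on $E^{1,+}$ (for $\mu=0$ read $E_0=\ker L\oplus E^1\oplus E^G$). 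Put
\[
N:=(E^u\oplus E^s)\ \oplus\ \bigoplus_{i\mu\ \mathrm{bad}}\big(E^{1,-}(i\mu)\oplus E^G(i\mu)\big).
\]
Then $N$ is finite dimensional, $N\subset\bigcap_{k\ge1}D((JL)^k)$, $N\cap\ker L=\{0\}$, and $N\oplus\ker L$ is $JL$-invariant: for $\mu\ne0$ each of $E^{1,-}(i\mu)$, $E^G(i\mu)$ is $JL$-invariant (as $JL$ is $\mathrm{diag}(i\mu,A_G)$ on $E_{i\mu}$ once $E^D=\{0\}$), while for $\mu=0$, $E^{1,-}(0)$ is invariant and $E^G(0)\oplus\ker L$ is invariant.

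Because the summands of $N$ lie in distinct generalized eigenspaces of the $\langle L\cdot,\cdot\rangle$-anti-self-adjoint operator $JL$, and within each $E_{i\mu}$ the pieces $E^{1,-},E^G$ are $L$-orthogonal by Proposition \ref{P:basis}, $\langle L\cdot,\cdot\rangle|_N$ is block diagonal with non-degenerate blocks, hence non-degenerate. Consequently $v\mapsto\langle L\cdot,v\rangle|_N$ is an isomorphism $N\to N^{*}$ and $X=N\oplus\widetilde M$, where $\widetilde M:=\{u\in X\mid\langle Lu,v\rangle=0\ \forall v\in N\}\supset\ker L$; since $N\oplus\ker L$ is $JL$-invariant and $JL$ is $\langle L\cdot,\cdot\rangle$-anti-self-adjoint, $\widetilde M$ is $JL$-invariant as well. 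Set $M:=\widetilde M\cap(\ker L)^{\perp}$, so that $X=N\oplus M\oplus\ker L$, $N$ and $M$ are $L$-orthogonal, and $N\oplus\ker L$, $M\oplus\ker L=\widetilde M$ are both $JL$-invariant. Adding up blocks, $n^-(L|_N)=(k_r+2k_c)+\sum_{i\mu\ \mathrm{bad}}n^-(L|_{E_{i\mu}})=(k_r+2k_c)+(2k_i^{\le0}+k_0^{\le0})=n^-(L)$ by \eqref{counting-formula}; since $X=N\oplus\widetilde M$ is $L$-orthogonal this forces $n^-(L|_M)=n^-(L|_{\widetilde M})=0$, and $\ker(L|_M)=\ker L\cap M=\{0\}$ (if $\langle Lu,v\rangle=0$ for all $v\in M$ then, by $L$-orthogonality, also for all $v\in N$, hence for all $v\in X$, so $u\in\ker L\cap(\ker L)^{\perp}$). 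A routine argument using $n^-(L)<\infty$ and the closedness of $R(L)$ (Remark \ref{R:closedness}) then upgrades $\langle L\cdot,\cdot\rangle|_M\ge0$ with trivial kernel to $L\ge\delta'$ on $M$ for some $\delta'>0$, which finishes the proof.

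The main obstacle is the construction in the second paragraph: isolating the bad imaginary eigenspaces and showing $N$ is at once finite dimensional and a $JL$-invariant, $L$-orthogonal complement modulo $\ker L$. This is precisely where (\textbf{H4}) -- together with Proposition \ref{P:non-deg} for isolated spectral points -- is indispensable: if $\langle L\cdot,\cdot\rangle|_{E_{i\mu}}$ were degenerate, the block $E^D$ of Proposition \ref{P:basis} could be nontrivial, $E^G$ would cease to be $JL$-invariant, and the count would fail because $n^{\le0}\ne n^-$, so no decomposition of the stated form need exist; in the general case this degeneracy is exactly what is absorbed into the finite dimensional blocks $X_1,X_4$ of Theorem \ref{T:decomposition}. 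A lesser, purely routine point is the final step, passing from $n^-(L|_M)=0$ to uniform positivity of $L$ on $M$, which does use the Hilbert-space structure (weak compactness of the unit ball) essentially.
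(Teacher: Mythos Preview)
Your proof is correct and follows essentially the same route as the paper's: collect the hyperbolic eigenspaces together with the finite ``bad'' parts $E^{1,-}(i\mu)\oplus E^G(i\mu)$ of each $E_{i\mu}$ (using Proposition~\ref{P:basis} with $E^D=\{0\}$ via (\textbf{H4}) and Proposition~\ref{P:non-deg}), verify $n^-(L|_N)=n^-(L)$ via \eqref{counting-formula}, and take $M$ as a complement of $\ker L$ in $N^{\perp_L}$. One small slip in wording: for $\mu=0$ the subspace $E^{1,-}(0)$ is not itself $JL$-invariant (rather $JL(E^{1,-}(0))\subset\ker L$), but this is exactly what you need for the invariance of $N\oplus\ker L$, so the argument is unaffected.
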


In particular, if eigenvalues of $JL$ are isolated, then by Proposition
\ref{P:non-deg}, (\textbf{H4}) is automatically satisfied and Proposition
\ref{P:direct decomposition} holds. If we further assume $\ker L=\{0\}$, then
$X=N\oplus M$ and both $N$ and $M$ are invariant under $JL$. Proposition
\ref{P:direct decomposition} can be used to construct invariant decompositions
for $L-$self-adjoint operators. The next proposition gives a generalization of
Theorem A.1 in \cite{pego-kollar-et04}, which was proved for a compact
$L$-self-adjoint operator $A$ with $\ker A=\left\{  0\right\}  $. Such
decomposition was used to study the damping of internal waves in a stably
stratified fluid (\cite{pego-kollar-et04}).

\begin{proposition}
\label{P:pego} Let $X$ be a complex Hilbert space along with a Hermitian
symmetric quadratic form $B(u, v) = \langle Lv, u \rangle$ defined by an
(anti-linear) operator $L:X\rightarrow X^{\ast}$ satisfying (\textbf{H2}) with
$\ker L=\left\{  0\right\}  $. Let $A:X\rightarrow X$ be a $L-$self-adjoint
complex linear operator (i.e. $\langle LAu, v \rangle= \langle Lu, Av \rangle
$) such that nonzero eigenvalues of $A$ are isolated. If $L|_{\ker A}$ is
non-degenerate, then there exists a decomposition $X=N\oplus M$ such that $N$
and $M$ are $L$-orthogonal and invariant under $A$, $\dim N<\infty$ and
$L|_{M}$ is uniformly positive.
\end{proposition}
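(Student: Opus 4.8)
The plan is to reduce Proposition \ref{P:pego} to Proposition \ref{P:direct decomposition}. The obstacle is that Proposition \ref{P:direct decomposition} is phrased for the Hamiltonian operator $JL$ coming from assumptions (\textbf{H1-3}), whereas here we are only given an abstract $L$-self-adjoint operator $A$. So the first step is to manufacture an anti-self-dual $J$ out of $A$. Since $\ker L = \{0\}$, $L: X \to X^{*}$ is an (anti-linear) isomorphism by Lemma \ref{L:non-degeneracy}, so we may set $J \triangleq A L^{-1}: X^{*} \to X$, which is bounded and everywhere defined. The $L$-self-adjointness $\langle LAu, v\rangle = \langle Lu, Av\rangle$ translates exactly into $J^{*} = -J$: indeed for $f = Lu$, $g = Lv \in X^{*}$ one computes $\langle f, Jg\rangle = \langle Lu, ALv^{-1} \cdot \rangle$, and unwinding the definitions shows $\langle f, Jg \rangle = \langle LAv^{-1}f, v\rangle = -\langle g, Jf\rangle$ using symmetry of $\langle L\cdot,\cdot\rangle$ and the $L$-self-adjoint hypothesis. (This is the ``standard'' correspondence between $L$-self-adjoint operators and Hamiltonian operators $JL$; it needs to be spelled out carefully with the complexification conventions of the Appendix, but it is routine.) With this $J$, we have $JL = AL^{-1}L = A$, so $JL = A$ as operators on $X$.

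Next I would verify that the triple $(J, L, X)$ satisfies (\textbf{H1-3}). (\textbf{H1}) is $J^{*}=-J$, just established. (\textbf{H2}) is assumed outright on $L$ in the statement (including $n^{-}(L)<\infty$ and the uniform positivity (\textbf{H2.b}) on $X_{+}$). (\textbf{H3}) is automatic because $\ker L = \{0\}$ is finite-dimensional, hence $\ker i^{*}_{X_{+}\oplus X_{-}} = \{0\} \subset D(J) = X^{*}$ by Remark \ref{R:H3}. So all hypotheses of Proposition \ref{P:direct decomposition} are in force provided (\textbf{H4}) holds for $JL = A$.

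Now I must check (\textbf{H4}). The non-isolated spectral points of $A$ in $i\mathbf{R}\setminus\{0\}$: by hypothesis the \emph{nonzero} eigenvalues of $A$ are isolated, so a non-isolated spectral point $\lambda \in \sigma(A)\cap i\mathbf{R}\setminus\{0\}$ cannot be an eigenvalue, whence $E_{\lambda} = \{0\}$ and $\langle L\cdot,\cdot\rangle$ is (vacuously) non-degenerate on it. For $\lambda = 0$: either $0$ is isolated in $\sigma(A)$, or it is not. If $0$ is isolated, Proposition \ref{P:non-deg} already gives that $\langle L\cdot,\cdot\rangle$ is non-degenerate on $E_{0}/(\ker L\cap E_0) = E_0/\ker A$; if $0$ is not isolated, then since nonzero eigenvalues are isolated, one argues that the generalized kernel $E_0$ reduces to $\ker A$ itself (there can be no nontrivial Jordan chain at $0$ accumulated by spectrum in this setting — this uses that a Jordan block at $0$ would force $0$ to be isolated in $\sigma(JL)$ restricted to a finite-dimensional invariant piece together with the block structure of Theorem \ref{T:decomposition}), so $E_0/\ker L = \{0\}$ and again non-degeneracy is vacuous. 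In either case $\langle L\cdot,\cdot\rangle|_{E_0/\ker L}$ is non-degenerate, \emph{except} we are additionally told $L|_{\ker A}$ is non-degenerate; note that since $\ker L = \{0\}$, $\ker A \cap \ker L = \{0\}$, so $L|_{\ker A}$ non-degenerate is consistent with and reinforces the quotient statement. Hence (\textbf{H4}) holds.

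Finally, apply Proposition \ref{P:direct decomposition}: we get closed $L$-orthogonal subspaces $N, M$ with $N\oplus\ker L$ and $M\oplus\ker L$ invariant under $JL = A$, $X = N\oplus M\oplus \ker L$, $\dim N < \infty$, and $L \geq \delta$ on $M$. Since $\ker L = \{0\}$, this collapses to $X = N\oplus M$ with $N, M$ both invariant under $A$, $L$-orthogonal, $\dim N <\infty$, and $L|_{M}$ uniformly positive — which is exactly the conclusion. The main obstacle, and the only place real care is needed, is the first step: constructing $J$ and checking $J^{*} = -J$ and $JL = A$ rigorously within the complexification framework, together with the (\textbf{H4}) verification at $\lambda = 0$, where one must rule out that $0$ is a non-isolated spectral point carrying a nontrivial Jordan structure on which $L$ degenerates; the hypothesis that $L|_{\ker A}$ is non-degenerate is precisely what is imported to close that gap.
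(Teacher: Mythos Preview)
Your proposal contains a fundamental sign error at the very first step. You set $J \triangleq A L^{-1}$ and claim $J^{*}=-J$, but $L$-\emph{self}-adjointness of $A$ gives the opposite sign. Indeed, writing $f=Lu$, $g=Lv$ (recall $L$ is an isomorphism since $\ker L=\{0\}$), one has
\[
\langle f, Jg\rangle = \langle Lu, Av\rangle
\quad\text{and}\quad
\langle g, Jf\rangle = \langle Lv, Au\rangle = \langle LAu, v\rangle = \langle Lu, Av\rangle,
\]
using symmetry of $\langle L\cdot,\cdot\rangle$ and the hypothesis $\langle LAu,v\rangle=\langle Lu,Av\rangle$. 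Thus $\langle f,Jg\rangle = \langle g,Jf\rangle$, i.e.\ $J^{*}=+J$, not $-J$. The operator $JL=A$ is $L$-\emph{self}-adjoint, whereas the framework (\textbf{H1}) and all of the paper's structural results require $JL$ to be $L$-\emph{anti}-self-adjoint. So (\textbf{H1}) fails and none of Theorem~\ref{T:decomposition}, Theorem~\ref{theorem-counting}, Proposition~\ref{P:non-deg}, or Proposition~\ref{P:direct decomposition} applies.

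The paper repairs this by passing to the realification $X_{r}$ of $X$ and setting $J = i_{r} A_{r} L_{r}^{-1}$, where $i_{r}$ is multiplication by $i$ viewed as a real-linear isometry on $X_{r}$. The crucial identity $L_{r} i_{r} = - i_{r}^{*} L_{r}$ (which encodes that $\langle Lu,v\rangle$ is Hermitian, not bilinear, in the original complex variables) converts the $L_{r}$-self-adjoint $A_{r}$ into the $L_{r}$-anti-self-adjoint $i_{r}A_{r}$, and \emph{then} Proposition~\ref{P:direct decomposition} applies to $(X_{r},L_{r},J)$. One must afterwards pass back from the real decomposition $X_{r}=\tilde N\oplus\tilde M$ to a complex-linear, $A$-invariant decomposition $X=N\oplus M$, which requires the extra step $N=\tilde N + i_{r}\tilde N$, $M=N^{\perp_{L_{r}}}$ and a short check of complex $L$-orthogonality and $A$-invariance. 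Your (\textbf{H4}) verification at $\lambda=0$ is also unconvincing: the claim that a non-isolated $0$ forces $E_{0}=\ker A$ is not justified by the argument you sketch, and in the paper's route this issue is handled via the identification $\ker(i_{r}A_{r})=\ker A$ together with the assumed non-degeneracy of $L|_{\ker A}$.
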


We will extend the notion of the \textit{Krein signature} to eigenvalues
$i\mu$ for which $\langle L\cdot,\cdot\rangle$ on $E_{i\mu}$ is
non-degenerate, and give more detailed descriptions of $k_{i}^{-}$ and
$k_{0}^{-}$. As commented above, the non-degeneracy assumption means $E^{D}$
is eliminated in $E_{i\mu}$. For such $\mu$, define
\[
E_{i\mu,0}=\{v\in\ker(JL-i\mu)\mid\langle Lv,u_{p,q}^{(j)}\rangle
=0,\ \forall1\leq j\leq j_{0},\ 1\leq p\leq l_{j},\ 1\leq q\leq k_{j}\}
\]
which is the complementary subspace of $R(JL-i\mu)\cap\ker(JL-i\mu)$ inside
$\ker(JL-i\mu)$. It corresponds to the diagonalized part of $JL|_{E_{i\mu}}$.

\begin{definition}
For $\mu\geq0$ such that $\langle L\cdot,\cdot\rangle$ is non-degenerate on
$E_{i\mu}$, define the first Krein signature
\[
n_{1}^{-}(i\mu)=n^{-}(L|_{E_{i\mu,0}})
\]
and $k_{j}$-th Krein signatures as $n_{k_{j}}^{-}(i\mu)$ given in Proposition
\ref{P:basis}, for odd $k_{j}=2m-1\geq1$.
\end{definition}

\begin{remark}
\label{R:index1} The Krein signature $n_{k_{j}}^{-}(i\mu)$, for odd
$k_{j}=2m-1\geq1$, does not have to be defined as in Proposition \ref{P:basis}
using the above special bases. In fact, for any $j$, let $\left\{
v_{p,q}^{(j)}\right\}  $ be an arbitrary complete set of Jordan chains of
length $k_{j}$. Define the $l_{j}\times l_{j}\ $matrix $\tilde{M}_{j}=\left(
\left\langle Lv_{p_{1},m}^{(j)},v_{p_{2},m}^{(j)}\right\rangle \right)  ,$
$1\leq p_{1},p_{2}\leq l_{j}$. Then $n_{k_{j}}^{-}\left(  i\mu\right)
=n^{-}\left(  \tilde{M}_{j}\right)  $, the negative index (Morse index) of
$\tilde{M}_{j}$.
\end{remark}

\begin{remark}
The signatures $n_{k_{j}}^{-} (\mu)$ may also be defined in an intrinsic way
independent of bases. See Definition \ref{D:signature1} and equation \eqref{E:Q_K}.
\end{remark}

According to Proposition \ref{P:basis}, the 2-dim subspace span$\{u_{p,q}%
^{(j)},u_{p,k_{j}+1-q}^{(j)}\}$ and 1-dim subspace span$\{u_{p,\frac{1}%
{2}(k_{j}+1)}^{(j)}\}$ for odd $k_{j}$ are $L$-orthogonal to each other. With
respect to the basis $\{u_{p,q}^{(j)},u_{p,k_{j}+1-p}^{(j)}\}$ there, $L$
takes the form of the Hermitian symmetric matrix $%
\begin{pmatrix}
0 & a\\
\bar{a} & 0
\end{pmatrix}
$ with $a\neq0$, whose Morse index is clearly 1. Therefore, we obtain the
following formula for $k_{i}^{-}$.

\begin{proposition}
\label{prop-counting-pure-imaginary} In addition to (\textbf{H1}%
)-(\textbf{H3}), assume $i\mu\in\sigma(JL) \cap i\mathbf{R}$ satisfies that
$\langle L\cdot, \cdot\rangle$ is non-degenerate on $E_{i\mu}$. Then we have
\begin{equation}
k^{\le0} (i\mu) = k^{-}\left(  i\mu\right)  =\sum_{k_{j}\ \text{even}}%
\frac{l_{j}k_{j}}{2}+\sum_{k_{j}\ \text{odd}}\left[  \frac{l_{j}\left(
k_{j}-1\right)  }{2}+n_{k_{j}}^{-}\left(  i \mu\right)  \right]  .
\label{counting-pure-imaginary}%
\end{equation}

\end{proposition}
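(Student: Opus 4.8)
The plan is to reduce the computation of $k^{\leq 0}(i\mu) = n^{\leq 0}(L|_{E_{i\mu}})$ entirely to the block structure supplied by Proposition \ref{P:basis}, exploiting that non-degeneracy of $\langle L\cdot,\cdot\rangle$ on $E_{i\mu}$ forces the degenerate block $E^D$ to be trivial. First I would observe that when $L|_{E_{i\mu}}$ is non-degenerate, we may take $E^D = \{0\}$ in the decomposition of Proposition \ref{P:basis}, so $E_{i\mu} = E^1 \oplus E^G$ with $L$ block-diagonal: $L|_{E^1} = L_1$ and $L|_{E^G} = L_G$, both non-degenerate, and $E^1$ is $L$-orthogonal to $E^G$. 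Since the two blocks are $L$-orthogonal and each $\langle L\cdot,\cdot\rangle|_{E^1}$, $\langle L\cdot,\cdot\rangle|_{E^G}$ is non-degenerate, the Morse index (and non-positive index, which equals the Morse index in the non-degenerate case) is additive: $n^{-}(L|_{E_{i\mu}}) = n^{-}(L|_{E^1}) + n^{-}(L|_{E^G})$ and similarly $n^{\leq 0} = n^{-}$ on each piece, giving $k^{\leq 0}(i\mu) = k^{-}(i\mu)$ at once. The contribution $n^{-}(L|_{E^1})$ is exactly the first Krein signature $n_1^{-}(i\mu) = n^{-}(L|_{E_{i\mu,0}})$ by definition, since $E^1$ corresponds to the diagonalized (semisimple) part of $JL|_{E_{i\mu}}$ and $E_{i\mu,0}$ is precisely that diagonalizable eigenspace.

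The heart of the argument is then the computation of $n^{-}(L|_{E^G})$, the Morse index of $L$ restricted to the nontrivial Jordan-block part. Here I would invoke the special basis $\{u_{p,q}^{(j)}\}$ of Proposition \ref{P:basis}: the vectors in distinct chains ($p \neq p'$) or distinct block-sizes ($j \neq j'$) are mutually $L$-orthogonal (item 2), so $L|_{E^G}$ decomposes as an $L$-orthogonal direct sum over all individual Jordan chains. Again by additivity of the Morse index over $L$-orthogonal non-degenerate pieces, $n^{-}(L|_{E^G}) = \sum_{j}\sum_{p=1}^{l_j} n^{-}(M_{p}^{(j)})$, where $M_p^{(j)} = \big(\langle Lu_{p,q}^{(j)}, u_{p,r}^{(j)}\rangle\big)_{q,r}$ is the $k_j \times k_j$ anti-diagonal matrix in \eqref{L-anti-diagonal}. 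So the remaining task is purely linear algebra: compute the Morse index of a $k_j \times k_j$ Hermitian matrix with nonzero entries only on the anti-diagonal, with entries $a_{p,q}^{(j)}$ satisfying $a_{p,k_j+1-q}^{(j)} = \overline{a_{p,q}^{(j)}}$ and $a_{p,q'}^{(j)} = (-1)^{q'-q}a_{p,q}^{(j)}$.

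For this anti-diagonal Hermitian matrix I would argue as follows. If $k_j$ is even, $k_j = 2m$, the matrix pairs up index $q$ with index $k_j+1-q$ for $q = 1,\dots,m$, each pair giving an independent $2\times 2$ Hermitian block $\left(\begin{smallmatrix} 0 & a \\ \bar a & 0\end{smallmatrix}\right)$ with $a = a_{p,q}^{(j)} \neq 0$, which has eigenvalues $\pm|a|$ and hence Morse index exactly $1$; summing over the $m = k_j/2$ pairs gives $n^{-}(M_p^{(j)}) = k_j/2$, so the total contribution from size-$k_j$ blocks is $l_j k_j/2$, matching the first sum in \eqref{counting-pure-imaginary}. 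If $k_j$ is odd, $k_j = 2m-1$, the same pairing handles $q = 1,\dots,m-1$ paired with $k_j+1-q$, contributing $(k_j-1)/2$ to the Morse index per chain, and leaves the single central diagonal entry $a_{p,m}^{(j)} = a_{p,\frac12(k_j+1)}^{(j)}$, which by item 4 of Proposition \ref{P:basis} equals $\pm 1$; it contributes $1$ to the Morse index iff it is $-1$, i.e. it contributes $-\min\{0, a_{p,m}^{(j)}\}$... more precisely the central block contributes a negative direction exactly when $a_{p,m}^{(j)} < 0$, and summing over $p$ gives $-\sum_p \min\{0,a_{p,\frac12(k_j+1)}^{(j)}\}$; recalling the sign convention in the definition of $n_{k_j}^{-}(i\mu)$ (which is $\sum_p \min\{0, a_{p,\frac12(k_j+1)}^{(j)}\}$, a non-positive integer whose absolute value counts the negative central entries), the contribution from all size-$k_j$ odd blocks is $\frac{l_j(k_j-1)}{2} + n_{k_j}^{-}(i\mu)$, matching the second sum. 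Assembling $n^{-}(L|_{E^1}) + n^{-}(L|_{E^G})$ yields \eqref{counting-pure-imaginary}.

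The step I expect to be the main obstacle is the clean verification that $E^D = \{0\}$ under the non-degeneracy hypothesis and that the Morse index genuinely splits as an $L$-orthogonal sum over the individual chains — i.e. making rigorous that when $\langle L\cdot,\cdot\rangle$ is non-degenerate and a space is an $L$-orthogonal direct sum of subspaces on each of which $L$ is non-degenerate, the Morse indices add. This additivity is intuitively clear (diagonalize each block independently and concatenate), but one must be careful that $E^1$ may be infinite-dimensional while still having finite Morse index $n^{-}(L|_{E^1}) \leq n^{-}(L) < \infty$, so the splitting argument should be phrased in terms of maximal negative-definite subspaces rather than a naive basis count. Everything else — the eigenvalue computation for the anti-diagonal $2\times 2$ and $1\times 1$ blocks, and bookkeeping the sign conventions of $n_{k_j}^{-}$ — is routine.
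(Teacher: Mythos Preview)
Your proposal is correct and is essentially the paper's own argument, which consists of the short paragraph immediately preceding the proposition: under the non-degeneracy hypothesis one has $E^D=\{0\}$, the space splits $L$-orthogonally as $E^1\oplus E^G$ and further into the individual Jordan chains, and each $2\times 2$ anti-diagonal block $\left(\begin{smallmatrix}0&a\\\bar a&0\end{smallmatrix}\right)$ contributes Morse index exactly $1$. Your concern about additivity of the Morse index over an $L$-orthogonal sum with one infinite-dimensional summand is handled by Lemma~\ref{L:decomJ}(2), and the sign oddity you flagged in the literal definition $n_{k_j}^{-}(i\mu)=\sum_p\min\{0,a_{p,(k_j+1)/2}^{(j)}\}$ is a typo in the paper --- Remark~\ref{R:index1} and Definition~\ref{D:signature1} confirm the intended meaning is the non-negative count of negative central entries.
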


As Hamiltonian systems often possess additional symmetries which generate
nontrivial $\ker L$, $k_{0}^{\leq0}$ deserves some more discussion if $\ker
L\neq\{0\}$. The following propositions are proved in Subsection \ref{SS:E_0},
based on a decomposition of the subspace $E_{0}$. Recall that for any subspace
$S\subset X$, $L$ also induces a quadratic form $\langle L\cdot,\cdot\rangle$
on the quotient space $S\slash(S\cap\ker L)$.

\begin{proposition}
\label{prop-counting-k-0-1} Assume (\textbf{H1})-(\textbf{H3}), then
$(JL)^{-1}(\ker L)$ is a closed subspace. Furthermore, let
\[
n_{0}=n^{\leq0}(\langle L\cdot,\cdot\rangle|_{(JL)^{-1}(\ker L)\slash\ker
L}).
\]
Then

(i) $k_{0}^{\le0}\geq n_{0}$.

(ii) If $\left\langle L\cdot,\cdot\right\rangle $ is non-degenerate on
$(JL)^{-1} (\ker L) \slash \ker L$, then
\[
k_{0}^{\le0}=n_{0} = n^{-} (\langle L\cdot, \cdot\rangle|_{(JL)^{-1} (\ker L)
\slash \ker L}).
\]

\end{proposition}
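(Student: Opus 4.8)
The plan is to work with the block decomposition of $E_0$ supplied by Proposition \ref{P:basis} (the $\mu=0$ case) and to identify the subspace $(JL)^{-1}(\ker L)$ inside it. First I would establish closedness of $(JL)^{-1}(\ker L)$: since $JL$ generates a $C^0$ group (Theorem \ref{theorem-dichotomy}), it is a closed operator, $\ker L$ is closed, and the preimage of a closed set under a closed operator (whose graph is closed) is closed in the graph norm; but one checks that on $(JL)^{-1}(\ker L)$ the graph norm is equivalent to the $X$-norm because $\ker L \subset E_0 \subset \cap_k D((JL)^k)$ and $JL$ restricted to the finite-dimensional generalized kernel pieces is bounded — more directly, $(JL)^{-1}(\ker L) = \ker L \oplus S$ where $S$ is a finite-dimensional subspace of $\tilde E_0$ (namely the span of the first-level generalized eigenvectors), so it is closed as a finite-dimensional extension of the closed subspace $\ker L$. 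In the notation of Proposition \ref{P:basis}, $(JL)^{-1}(\ker L)/\ker L$ is exactly the "first generalized level": it is $E^1 \oplus \ker(A_D) \oplus (\text{the depth-}2\text{ Jordan vectors' predecessors})$ — more carefully, $v \in \tilde E_0$ satisfies $JLv \in \ker L$ iff $(JL)^2 v = 0$, i.e. $v \in \ker(JL)^2 \cap \tilde E_0$, which modulo $\ker L$ is $E^1 \oplus \ker(A_D) \oplus \{u^{(j)}_{p,q} : q \le \min(2,k_j)\text{-type vectors}\}$.

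The core of part (i) is then a counting/monotonicity statement: $n^{\le 0}$ of $\langle L\cdot,\cdot\rangle$ restricted to the smaller space $(JL)^{-1}(\ker L)/\ker L$ is at most $n^{\le 0}$ on the whole of $E_0/\ker L$, which is $k_0^{\le 0}$ by definition \eqref{defn-k-0}. This is immediate from monotonicity of the non-positive index under restriction to a subspace, once one verifies that the quotient by $\ker L$ is compatible, i.e. that $\ker L \cap (JL)^{-1}(\ker L) = \ker L$ (trivially true since $L \cdot 0 = 0$ means $\ker L \subset (JL)^{-1}(\ker L)$, using $JL(\ker L)=\{0\} \subset \ker L$). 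So $k_0^{\le 0} = n^{\le 0}(L|_{E_0/\ker L}) \ge n^{\le 0}(L|_{(JL)^{-1}(\ker L)/\ker L}) = n_0$.

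For part (ii), the extra input is the non-degeneracy hypothesis, which by Lemma \ref{lemma-non-degenerate-finite-d}-type reasoning forces $n^{\le 0} = n^{-}$ on $(JL)^{-1}(\ker L)/\ker L$, giving the second equality. For the first equality $k_0^{\le 0} = n_0$ I would use the skew-diagonal structure of $L$ on each Jordan chain from Proposition \ref{P:basis}(3): on a nontrivial chain of length $k_j$, the matrix \eqref{L-anti-diagonal} is anti-diagonal and non-degenerate, so its Morse index equals $\lfloor k_j/2 \rfloor$ plus a $0$-or-$1$ contribution from the middle entry when $k_j$ is odd — and crucially, exactly the same contribution is already "seen" at the first generalized level (the vectors $u^{(j)}_{p,1}, u^{(j)}_{p,k_j}$ and, for odd $k_j$, the middle vector all have representatives contributing to $(JL)^{-1}(\ker L)$ when $k_j \le 2$, while for $k_j > 2$ the non-degeneracy hypothesis together with the anti-diagonal pairing shows no net new non-positive directions appear beyond level one). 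The cleanest argument: non-degeneracy of $L$ on $(JL)^{-1}(\ker L)/\ker L$ lets us split off an $L$-nondegenerate finite-dimensional invariant piece and pass to its $L$-orthogonal complement, reducing to showing that on the complementary part (the deeper Jordan structure and $E^G$) the form $L$ has no non-positive directions not already accounted for — which follows from the anti-diagonal normal form, since on each length-$k_j$ chain the deeper vectors pair off the earlier ones. I expect the main obstacle to be this last bookkeeping in part (ii): carefully matching, chain by chain, the non-positive directions of $L$ on the full $E_0/\ker L$ against those already present on $(JL)^{-1}(\ker L)/\ker L$, and ruling out extra ones using only the non-degeneracy assumption rather than a stronger hypothesis. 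The anti-diagonal form \eqref{L-anti-diagonal} and the $L$-orthogonality of distinct chains (Proposition \ref{P:basis}(2)) are the tools that make this tractable.
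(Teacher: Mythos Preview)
Your argument for part~(i) is fine and matches the paper: $(JL)^{-1}(\ker L)\subset E_0$ (indeed $(JL)^{-1}(\ker L)\subset\ker(JL)^2$), so $n_0\le k_0^{\le 0}$ by monotonicity of the non-positive index. However, the closedness argument and part~(ii) both have genuine gaps.

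\medskip
\noindent\textbf{Closedness.} Your claim that $(JL)^{-1}(\ker L)=\ker L\oplus S$ with $\dim S<\infty$ is false in general. In the decomposition of Proposition~\ref{P:basis} for $\mu=0$, the subspace $E^1$ (on which the diagonal block of $JL$ vanishes) may be infinite-dimensional, and any $u\in E^1$ with $A_{D1}u=0$ satisfies $JLu\in\ker L$; this kernel has finite codimension in $E^1$, so $(JL)^{-1}(\ker L)/\ker L$ is typically infinite-dimensional. Equivalently, in the Theorem~\ref{T:decomposition} decomposition, the anti-self-adjoint block $A_3$ may have infinite-dimensional kernel. The paper instead uses Theorem~\ref{T:decomposition} directly: writing the preimage componentwise in $X_0\oplus\cdots\oplus X_6$, the hyperbolic components vanish, the $X_{1,2,4}$ contributions are finite-dimensional, and the $X_3$ contribution is $\ker A_{13}\cap\ker A_3$, closed since $A_{13}$ is bounded and $A_3$ is closed.

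\medskip
\noindent\textbf{Part (ii).} You miss the key observation, and your proposed Jordan-chain bookkeeping would not easily yield it. The paper shows that non-degeneracy of $\langle L\cdot,\cdot\rangle$ on $(JL)^{-1}(\ker L)/\ker L$ forces $E_0=(JL)^{-1}(\ker L)$ outright, after which $k_0^{\le 0}=n_0=n^-$ is immediate. The argument is a three-line contradiction: if some $u\in E_0\setminus(JL)^{-1}(\ker L)$, choose $m\ge 1$ minimal with $u_0:=(JL)^m u\in(JL)^{-1}(\ker L)$; then $u_0\notin\ker L$, but for every $v\in(JL)^{-1}(\ker L)$ one has
\[
\langle Lu_0,v\rangle=\langle L(JL)(JL)^{m-1}u,\,v\rangle=-\langle L(JL)^{m-1}u,\,JLv\rangle=0
\]
since $JLv\in\ker L$, contradicting non-degeneracy. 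Your approach via the anti-diagonal form~\eqref{L-anti-diagonal} is aimed at comparing $n^{\le 0}$ on two different spaces, but the actual phenomenon is that these spaces coincide; the chain-by-chain matching you anticipate as ``the main obstacle'' is therefore unnecessary, and as you wrote it the argument does not close (in particular, the non-degeneracy hypothesis does not say anything directly about individual Jordan chains of length $\ge 3$ in $E^G$ until one has first extracted the collapse $E_0=(JL)^{-1}(\ker L)$).
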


\begin{remark}
\label{R:counting-k-0-1} Practically, in order to compute $n_{0}$ in the above
proposition, let $S \subset(JL)^{-1} (\ker L)$ be a closed subspace such that
\begin{equation}
\label{E:subspaceS}(JL)^{-1} (\ker L) = \ker L \oplus S,
\end{equation}
then $n_{0} = n^{\le0} (L|_{S})$. Often $S$ can be taken as $(\ker L)^{\perp
}\cap(JL)^{-1} (\ker L)$.
\end{remark}

It is worth comparing the above results with some classical results (e.g.
\cite{gss-87, gss-90}). Consider a nonlinear Hamiltonian equation
\begin{equation}
\partial_{t}u=JDH(u) \label{eqn-nonlinear-Hamiltonian}%
\end{equation}
which has an additional conserved quantity $P(u)$ (often the momentum, mass
\textit{etc.}) due to some symmetry. Assume that for $c$ in a neighborhood of
$c_{0}$, there exists $u_{c}$ such that $DH(u_{c})-cDP(u_{c})=0$, which gives
a relative equilibrium of (\ref{eqn-nonlinear-Hamiltonian}) such as traveling
waves, standing waves, etc. The linearized equation of
(\ref{eqn-nonlinear-Hamiltonian}) in some reference frame at $u_{c_{0}}$ takes
the form of \eqref{eqn-hamiltonian} with $L=D^{2}H(u_{c_{0}})-c_{0}%
D^{2}P(u_{c_{0}})$. It can be verified that $JDP(u_{c_{0}})\in\ker L$ and
$L\partial_{c}u_{c}|_{c=c_{0}}=DP(u_{c_{0}})$. In the case where $\ker
L=span\{JDP(u_{c_{0}})\}$ and $J$ is one to one (not necessarily with bounded
$J^{-1}$ as assumed in \cite{gss-87, gss-90}), we have
\[
(JL)^{-1}(\ker L)=span\{JDP(u_{c_{0}}),\partial_{c}u_{c}|_{c=c_{0}}\}
\]
when $\frac{d}{dc}P(u_{c})|_{c=c_{0}}\neq0\ $and
\[
n_{0}=\left\{
\begin{array}
[c]{cc}%
0 & \text{if \ }\frac{d}{dc}P(u_{c})|_{c=c_{0}}<0\\
1 & \text{if \ }\frac{d}{dc}P(u_{c})|_{c=c_{0}}>0
\end{array}
\right.  .
\]
If we further assume $n^{-}(L)=1$, then the combination of Proposition
\ref{prop-counting-k-0-1} and Theorem \ref{theorem-counting} implies the
result in \cite{gss-87} that equation \eqref{eqn-hamiltonian} is stable if
$\frac{d}{dc}P(u_{c})|_{c=c_{0}}\leq0$ and unstable if $\frac{d}{dc}%
P(u_{c})|_{c=c_{0}}>0$.

In the following special cases, $k_{0}^{\leq0}$ as well as $n_{0}$ can be
better estimated, which is often useful in applications.

\begin{lemma}
\label{L:counting-k-0-2} Assume (\textbf{H1})-(\textbf{H3}). we have

(i) $\langle Lu, v\rangle=0, \quad\forall u\in\ker(JL), \, v \in\overline{R(
J)}$.

(ii) $\left\langle Lu, u\right\rangle $ is non-degenerate on $\ker(JL)
\slash \ker L$ if and only if it is non-degenerate on $\overline{R(
J)}\slash \big(\ker L \cap\overline{R( J)}\big)$.
\end{lemma}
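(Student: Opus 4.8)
The plan is to first isolate the algebraic core of the lemma — the identity $\ker(JL)=\big(\overline{R(J)}\big)^{\perp_{L}}$ exhibiting $\ker(JL)$ as the $L$-orthogonal complement of $\overline{R(J)}$ — and then read off (i) and the two implications of (ii) from it, invoking $n^{-}(L)<\infty$ only for the converse in (ii).

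Part (i) is immediate. If $u\in\ker(JL)$, then by definition of $D(JL)$ we have $Lu\in D(J)$ and $J(Lu)=JLu=0$, so $Lu\in\ker J$. Since $J$ is closed and densely defined with $J^{\ast}=-J$, $\ker J=\ker J^{\ast}$ is the annihilator of $\overline{R(J)}$ in $X^{\ast}$; hence $\langle Lu,v\rangle=0$ for all $v\in\overline{R(J)}$. (Equivalently: for $v=Jf$ one computes $\langle Lu,Jf\rangle=-\langle f,JLu\rangle=0$, then passes to $\overline{R(J)}$ by boundedness of $L$.)

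The same computation gives the sharper identity
\[
\ker(JL)=\{\,u\in X\mid\langle Lu,v\rangle=0\ \ \forall\,v\in\overline{R(J)}\,\}=:\big(\overline{R(J)}\big)^{\perp_{L}},
\]
since conversely, if $Lu$ annihilates $R(J)$ then $Lu\in\ker J\subset D(J)$, so $Lu\in D(J)$ and $J(Lu)=0$ automatically. From this the forward implication of (ii) is elementary: if $v\in\overline{R(J)}$ satisfies $\langle Lv,w\rangle=0$ for all $w\in\overline{R(J)}$, then $v\in\ker(JL)$ by the identity, and for any $w\in\ker(JL)$ part (i) gives $\langle Lw,v\rangle=0$, hence $\langle Lv,w\rangle=0$ by the symmetry of $\langle L\cdot,\cdot\rangle$; thus $v$ lies in the radical of $\langle L\cdot,\cdot\rangle|_{\ker(JL)}$, and non-degeneracy on $\ker(JL)/\ker L$ forces $v\in\ker L$.

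For the converse I would pass to $\widehat X=X/\ker L$, where the induced form $\widehat L=\langle L\cdot,\cdot\rangle$ is non-degenerate (Lemma \ref{L:non-degeneracy}) and bounded below off a finite-dimensional subspace, so that $L$-orthogonal complements in $\widehat X$ satisfy $\big(S^{\perp_{\widehat L}}\big)^{\perp_{\widehat L}}=\overline S$. Setting $\widehat N=q(\ker(JL))=\ker(JL)/\ker L$ (closed, as $\ker L\subset\ker(JL)$) and $\widehat R=q(\overline{R(J)})$, where $q$ is the quotient map, the identity reads $\widehat N=\widehat R^{\perp_{\widehat L}}$, so $\widehat N^{\perp_{\widehat L}}=\overline{\widehat R}$; and non-degeneracy of $\widehat L$ on $\widehat R$ is precisely non-degeneracy of $\langle L\cdot,\cdot\rangle$ on $\overline{R(J)}/(\ker L\cap\overline{R(J)})$. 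By the standard fact that, for such a form and a closed subspace $V$, $\widehat L|_{V}$ is non-degenerate iff $\widehat X=V\oplus V^{\perp_{\widehat L}}$ iff $\widehat L|_{V^{\perp_{\widehat L}}}$ is non-degenerate (applied with $V=\overline{\widehat R}$, hence $V^{\perp_{\widehat L}}=\widehat N$), the converse reduces to: non-degeneracy of $\widehat L$ on $\widehat R$ propagates to its closure $\overline{\widehat R}$. I expect this to be the main obstacle — in an abstract Hilbert space a dense subspace can carry a non-degenerate form whose closure degenerates — so one must use the structure of $R(J)$. Since the radical of $\widehat L|_{\overline{\widehat R}}$ is finite-dimensional (of dimension $\le n^{-}(L)$), I would locate it using the structural decomposition of Theorem \ref{T:decomposition}, in which $R(J)$ deviates from a closed, suitably co-finite subspace only along directions where $L$ is positive; this should give either that $\overline{R(J)}+\ker L$ is closed (so $\widehat R=\overline{\widehat R}$) or that the radical of $\widehat L|_{\overline{\widehat R}}$ already lies in $\widehat R$. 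The remaining points — the annihilator identity $\ker J^{\ast}=\ker J$, non-degeneracy of $\widehat L$ on $\widehat X$ under \textbf{(H2)}, and the $V\oplus V^{\perp_{\widehat L}}$ splitting for forms of finite negative index — are routine.
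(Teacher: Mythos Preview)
Your proof of (i) and the identity $\ker(JL)=(\overline{R(J)})^{\perp_L}$ are correct and match the paper's observation \eqref{E:k-0-1}. Your forward implication of (ii) is also correct, and in fact cleaner than the paper's: where the paper (Lemma~\ref{L:k-0-2-b}) proves this direction via the technical claim $\overline{R(LJ)}=L(S^{\#})$ (requiring a Cauchy-sequence argument), you get it in two lines from the identity and part (i).

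The gap you flag in the converse is genuine for your quotient approach, but your proposed fix via Theorem~\ref{T:decomposition} is unnecessary. The paper (Lemma~\ref{L:k-0-2-a}) sidesteps the closure issue entirely by working in $X$ rather than in $X/\ker L$: pick any closed complement $S^{\#}$ of $\overline{R(J)}\cap\ker L$ inside the closed space $\overline{R(J)}$. The hypothesis ``non-degenerate on $\overline{R(J)}/(\ker L\cap\overline{R(J)})$'' is exactly $\ker L_{S^{\#}}=\{0\}$ on the \emph{closed} subspace $S^{\#}\subset X$, and Lemma~\ref{L:non-degeneracy} then upgrades this to $L_{S^{\#}}:S^{\#}\to(S^{\#})^{\ast}$ being an isomorphism. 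The bounded $L$-orthogonal projection $u\mapsto u^{\#}:=L_{S^{\#}}^{-1}i_{S^{\#}}^{\ast}Lu$ now gives $u-u^{\#}\in(S^{\#})^{\perp_L}=(\overline{R(J)})^{\perp_L}=\ker(JL)$, so $X=\ker(JL)+S^{\#}$. From this splitting the non-degeneracy on $\ker(JL)/\ker L$ follows immediately: any $u\in\ker(JL)$ that is $L$-orthogonal to $\ker(JL)$ is already $L$-orthogonal to $S^{\#}$ by (i), hence to all of $X$, hence $u\in\ker L$. The point you missed is that closedness is available \emph{before} passing to the quotient; the quotient $q(\overline{R(J)})$ may fail to be closed precisely because $q$ need not be a closed map, but $S^{\#}$ itself is closed in $X$ and that is all one needs.
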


While the statement of the lemma and the following proposition in the language
of quotient spaces make them independent of choices of subspaces transversal
to $\ker L$, practically it might be easier to work with subspaces. The
following is an equivalent restatement of Lemma \ref{L:counting-k-0-2} using
subspaces. Actually the proof in Subsection \ref{SS:E_0} will be carried out
by using subspaces.

\begin{corollary}
\label{C:counting-k-0-2-a} Let $S_{1},S^{\#}\subset X$ be closed subspaces
such that
\begin{equation}
\ker(JL)=\ker L\oplus S_{1},\qquad\overline{R(J)}=\big(\overline{R(J)}\cap\ker
L\big)\oplus S^{\#}. \label{E:subspaceTS}%
\end{equation}
We have that $\langle L\cdot,\cdot\rangle$ is non-degenerate on $S_{1}$ if and
only if it is non-degenerate on $S^{\#}$.
\end{corollary}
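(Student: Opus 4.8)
The plan is to deduce this corollary directly from Lemma \ref{L:counting-k-0-2} by translating the quotient-space statements into subspace statements. The first observation is that $\ker L \subset \ker(JL)$ trivially (since $Lu = 0$ implies $JLu = 0$, using $\ker L \subset D(J)$ by \textbf{(H3)} — or more precisely using that $JL$ is densely defined and $\ker L$ lies in its domain with zero image), so a decomposition $\ker(JL) = \ker L \oplus S_1$ with $S_1$ closed does exist: take $S_1$ to be, e.g., the orthogonal complement of $\ker L$ inside $\ker(JL)$ with respect to $(\cdot,\cdot)$, which is closed since both $\ker L$ and $\ker(JL)$ are closed. Similarly $\overline{R(J)}\cap \ker L$ is a closed subspace of the closed subspace $\overline{R(J)}$, so a closed complement $S^\#$ exists. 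The point is then that the restriction $\langle L\cdot,\cdot\rangle|_{S_1}$ is, as a quadratic form, isometrically isomorphic to the induced form on the quotient $\ker(JL)/\ker L$: the quotient map restricted to $S_1$ is a bounded bijection onto $\ker(JL)/\ker L$, with bounded inverse by the open mapping theorem, and it intertwines the two forms because $\langle L\cdot,\cdot\rangle$ vanishes whenever either argument lies in $\ker L$. Hence non-degeneracy of $\langle L\cdot,\cdot\rangle$ on $S_1$ is equivalent to non-degeneracy of the induced form on $\ker(JL)/\ker L$, and likewise for $S^\#$ versus $\overline{R(J)}/(\overline{R(J)}\cap\ker L)$.

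With that dictionary in hand, the corollary is immediate: Lemma \ref{L:counting-k-0-2}(ii) asserts precisely that the induced form on $\ker(JL)/\ker L$ is non-degenerate iff the induced form on $\overline{R(J)}/(\overline{R(J)}\cap \ker L)$ is non-degenerate, so non-degeneracy on $S_1$ is equivalent to non-degeneracy on $S^\#$. I would also record that the statement is independent of the particular choices of $S_1$ and $S^\#$, which follows from the same quotient-isomorphism argument applied to any two choices.

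I expect the only genuinely delicate point to be verifying that non-degeneracy is really transported correctly by the quotient isomorphism — that is, checking that the infimum-supremum condition \eqref{E:non-degeneracy-def} on $S_1$ is equivalent to the corresponding condition on the quotient space, rather than merely the (weaker) condition that no nonzero vector is in the radical. This is where one needs the boundedness of the inverse of the quotient map $S_1 \to \ker(JL)/\ker L$, i.e. the open mapping theorem, together with the fact that the quotient norm on $\ker(JL)/\ker L$ is uniformly comparable to the norm of the unique representative in $S_1$; both forms then satisfy \eqref{E:non-degeneracy-def} simultaneously, possibly with different constants. Everything else is bookkeeping. Alternatively, one could bypass the quotient language entirely and give a direct proof mirroring the proof of Lemma \ref{L:counting-k-0-2} using the subspaces $S_1$ and $S^\#$ from the start, which, as the remark preceding the corollary indicates, is in fact how the argument in Subsection \ref{SS:E_0} is organized; in that case the corollary and the lemma are proved together and no separate transport argument is needed.
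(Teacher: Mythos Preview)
Your translation argument via the quotient isomorphism is correct as a piece of mathematics, and you correctly identify in your final paragraph what the paper actually does. But the logical order in the paper is the reverse of your main proposal: the paper proves the subspace statement (this corollary) \emph{first}, directly, and then remarks that Lemma~\ref{L:counting-k-0-2} is contained in it. So invoking Lemma~\ref{L:counting-k-0-2} as a black box would be circular in the paper's organization.

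Concretely, the paper establishes the two implications by separate arguments (Lemmas~\ref{L:k-0-2-a} and~\ref{L:k-0-2-b}) that work directly with $S_1$ and $S^{\#}$ and yield more than just the equivalence. For the direction ``non-degenerate on $S^{\#}$ $\Rightarrow$ non-degenerate on $S_1$'', the paper uses the isomorphism $L_{S^{\#}}$ to project an arbitrary $u\in X$ onto $u^{\#}\in S^{\#}$, shows the remainder $u_1 = u - u^{\#}$ satisfies $Lu_1 \in \ker J^\ast = \ker J$ and hence $u_1 \in \ker(JL)$, obtaining the decomposition $X = \ker(JL)\oplus S^{\#} = \ker L \oplus S_1 \oplus S^{\#}$; the non-degeneracy on $S_1$ then follows from the $L$-orthogonality \eqref{E:k-0-1} and the non-degeneracy on $S_1\oplus S^{\#}$. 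For the converse, the key step is the claim $\overline{R(LJ)} = L(S^{\#})$, proved using that $L|_{S^{\#}}$ is an isomorphism onto its range; one then projects via $L_{S_1}^{-1}$ and uses this claim to land the remainder in $\ker L \oplus S^{\#}$. Your quotient-transport argument buys brevity and makes the independence from the choice of complements transparent, but the paper's route gives the structural decomposition $X = \ker L \oplus S_1 \oplus S^{\#}$ as a byproduct, which is needed immediately afterward in Proposition~\ref{prop-counting-k-0-2}.
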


Under this non-degeneracy, we have

\begin{proposition}
\label{prop-counting-k-0-2} Assume (\textbf{H1})-(\textbf{H3}), and that
$\left\langle Lu,u\right\rangle $ is non-degenerate on $\ker(JL)\slash\ker L$
which is equivalent to $\ker(JL)\cap\overline{R(J)}\subset\ker L$, then

(i) $X=\ker(JL)+\overline{R(J)}$ and
\[
n^{-}(L)=n^{-}\big(L|_{\ker(JL)\slash\ker L}\big)+n^{-}\big(L|_{\overline
{R(J)}\slash\big(\ker L\cap\overline{R(J)}\big)}\big).
\]

(ii) Let
\[
\tilde S = \overline{R( J)} \cap(JL)^{-1} (\ker L).
\]
Then
\[
k_{0}^{\le0} \ge n^{-} (L|_{\ker(JL) \slash \ker L}) + n^{\le0} (L|_{\tilde
S\slash (\ker L \cap\tilde S)}).
\]

(iii) If, in addition, $\langle Lu, u\rangle$ is non-degenerate on $\tilde
S\slash (\ker L \cap\tilde S)$, then
\begin{equation}
\label{index-formula-D}%
\begin{split}
&  k_{0}^{\le0} = n^{-} (L|_{\ker(JL) \slash \ker L}) + n^{-} (L|_{\tilde
S\slash (\ker L \cap\tilde S)})\\
&  k_{r}+2k_{c}+2k_{i}^{\le0}=n^{-}\big( L|_{\overline{R( J)}\slash \big(\ker
L \cap\overline{R( J)}\big)} \big)-n^{-}\left(  L|_{\tilde S\slash (\ker L
\cap\tilde S)}\right)  .
\end{split}
\end{equation}

\end{proposition}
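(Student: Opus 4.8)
The plan is to reduce everything to the structural decomposition of Theorem \ref{T:decomposition} together with the earlier counting results (Lemma \ref{L:counting-k-0-2}, Corollary \ref{C:counting-k-0-2-a}, Proposition \ref{prop-counting-k-0-1}) and the index formula \eqref{counting-formula}. First I would establish the equivalence stated in part (i): by Lemma \ref{L:counting-k-0-2}(i) we always have $\langle Lu,v\rangle = 0$ for $u\in\ker(JL)$, $v\in\overline{R(J)}$, so the subspace $\ker(JL)\cap\overline{R(J)}$ lies in the radical of $\langle L\cdot,\cdot\rangle|_{\ker(JL)}$; since $\ker L\subset\ker(JL)$ always, non-degeneracy of $\langle L\cdot,\cdot\rangle$ on $\ker(JL)/\ker L$ forces $\ker(JL)\cap\overline{R(J)}\subset\ker L$, and conversely this inclusion plus Lemma \ref{L:counting-k-0-2}(ii) (or Corollary \ref{C:counting-k-0-2-a}) gives non-degeneracy back. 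For the splitting $X=\ker(JL)+\overline{R(J)}$, I would argue that $\overline{R(J)}^{\perp L}$ — the $L$-orthogonal complement inside $X$ — is contained in $\ker(JL)$ up to $\ker L$: indeed if $\langle Lw,Jf\rangle = 0$ for all $f\in D(J)$, then $\langle J L w, f\rangle = -\langle Lw, Jf\rangle = 0$, i.e. $JLw = 0$; combined with the non-degeneracy hypothesis and a dimension/codimension bookkeeping (the two pieces meet only in $\ker L$), this yields the direct-sum-modulo-$\ker L$ decomposition and hence the additivity of Morse indices $n^-(L) = n^-(L|_{\ker(JL)/\ker L}) + n^-(L|_{\overline{R(J)}/(\ker L\cap\overline{R(J)})})$.

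For parts (ii) and (iii) the key object is $E_0$, the generalized kernel of $JL$. The natural filtration $\ker L\subset \ker(JL)\subset (JL)^{-1}(\ker L)\subset E_0$ and the block decomposition $E_0 = \ker L\oplus E^D\oplus E^1\oplus E^G$ from Proposition \ref{P:basis} are what I would lean on. On $E^1$ the form $L$ is non-degenerate and $A_G=JL|_{E^1}$ is the diagonalized part, so $E^1\subset\ker(JL)$ modulo $\ker L$ and its contribution to $k_0^{\leq 0}$ is exactly $n^-(L|_{\ker(JL)/\ker L})$; the remaining contribution comes from the Jordan part, and the length-$2$ chains live precisely in $(JL)^{-1}(\ker L)$, where they contribute $n^{\leq 0}$ of $L$ restricted to the transversal $\tilde S = \overline{R(J)}\cap(JL)^{-1}(\ker L)$ modulo $\ker L$ (using that the top of a Jordan $2$-chain $JLu$ lies in $\ker(JL)\cap R(JL)\subset\overline{R(J)}$, while the bottom $u$ and the Jordan structure pair $\tilde S$ with the diagonal part trivially by item 2 of Proposition \ref{P:basis}). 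Longer chains and the degenerate block $E^D$ only contribute non-negatively, giving the inequality in (ii); when $L$ is additionally non-degenerate on $\tilde S/(\ker L\cap\tilde S)$, Propositions \ref{P:non-deg} and \ref{P:basis} force $E^D=\{0\}$ and all Jordan chains to have length exactly $2$, so the inequality becomes the equality in \eqref{index-formula-D}. The second line of \eqref{index-formula-D} then follows by subtracting this from \eqref{counting-formula} and using the additivity of part (i), noting that $n^{\leq 0}$ may be replaced by $n^-$ under the non-degeneracy assumption.

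The main obstacle I anticipate is the precise bookkeeping in part (iii): one must show that non-degeneracy of $L$ on $\tilde S/(\ker L\cap\tilde S)$ actually kills $E^D$ and bounds all nontrivial Jordan chains to length $2$, rather than merely controlling the length-$2$ part. This requires a careful analysis of how the anti-diagonal form \eqref{L-anti-diagonal} of $L$ on a Jordan chain of length $k_j$ interacts with the filtration — a chain of length $k_j\geq 3$ would place a vector $(JL)u$ with $JL(JLu)\neq 0$ inside $(JL)^{-1}(\ker L)$ only when $k_j = 3$, and one has to check that the resulting contribution is incompatible with non-degeneracy on $\tilde S$. I would handle this by invoking Proposition \ref{P:finite-dim-E} (which bounds $\dim((JL)^2 E_0)$) together with the explicit signature structure of item 3–4 of Proposition \ref{P:basis}, tracking exactly which generalized eigenvectors land in $\tilde S$ and showing the pairing matrix there is block-triangular with the problematic blocks being degenerate unless all chains have length $2$.
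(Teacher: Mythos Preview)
Your approach to part (i) is sound and close in spirit to the paper's: the paper establishes $X=\ker L\oplus S_1\oplus S^{\#}$ (in the two lemmas immediately preceding the proposition) by directly constructing the projection $u\mapsto L_{S^{\#}}^{-1}i_{S^{\#}}^{\ast}Lu$, but your observation that $\overline{R(J)}^{\perp_L}\subset\ker(JL)$ (via $(R(J))^{\perp}=\ker J^{\ast}=\ker J$) leads to the same decomposition after a little bookkeeping.

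For parts (ii) and (iii), however, you are taking a much harder road than necessary, and there are genuine gaps. The paper does not touch Proposition \ref{P:basis} or the Jordan structure of $E_0$ at all. Instead it observes that, under the decomposition from part (i),
\[
(JL)^{-1}(\ker L)=\ker L\oplus S_1\oplus S_2,
\]
where $S_1\subset\ker(JL)$ and $S_2\subset\overline{R(J)}$ are $L$-orthogonal by Lemma \ref{L:counting-k-0-2}(i). Then everything reduces to Proposition \ref{prop-counting-k-0-1}: the inequality in (ii) is simply $k_0^{\leq 0}\geq n^{\leq 0}(L|_{S_1\oplus S_2})=n^{-}(L|_{S_1})+n^{\leq 0}(L|_{S_2})$; and for (iii), non-degeneracy on $S_1$ and $S_2$ separately together with their $L$-orthogonality gives non-degeneracy on $(JL)^{-1}(\ker L)/\ker L$, whence Proposition \ref{prop-counting-k-0-1}(ii) yields equality directly. (Its proof, equation \eqref{E:E_0-2}, gives $E_0=(JL)^{-1}(\ker L)$ --- i.e.\ all Jordan chains have length $\leq 2$ --- as a \emph{consequence}, not something to be fought for separately.)

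Your route has concrete problems. First, Proposition \ref{P:non-deg} requires $i\mu$ to be \emph{isolated} in $\sigma(JL)$, which is not assumed here for $\mu=0$, so you cannot invoke it to kill $E^D$. Second, your identification ``$E^1\subset\ker(JL)$ modulo $\ker L$'' is incorrect: from the block form in Proposition \ref{P:basis} at $\mu=0$, $JL(E^1)\subset\ker L\oplus E^D$ (because of the $A_{D1}$ entry), not $\ker L$ alone, so when $E^D\neq\{0\}$ elements of $E^1$ need not lie in $\ker(JL)$; consequently the splitting of $k_0^{\leq 0}$ you propose does not line up cleanly with the $E^1/E^G$ decomposition. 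Third, the ``main obstacle'' you flag --- showing that non-degeneracy on $\tilde S$ forces length-$2$ chains and $E^D=\{0\}$ --- is real for your route but evaporates entirely once you route through Proposition \ref{prop-counting-k-0-1} as the paper does.
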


We notice that the last equality is only a consequence of the previous two
equalities on $n^{-}$ and $k_{0}^{\leq0}$ and the index Theorem
\ref{theorem-counting}.

In terms of subspaces, equivalently we have

\begin{corollary}
\label{C:counting-k-0-2-b} Let $S_{1},S^{\#}\subset X$ be closed subspaces
assumed in Corollary \ref{C:counting-k-0-2-a} and $S_{2}\in X$ be a closed
subspace such that
\begin{equation}
\overline{R(J)}\cap(JL)^{-1}(\ker L)=S_{2}\oplus\big(\overline{R(J)}\cap\ker
L\big). \label{E:subspaceS2}%
\end{equation}
Assume the non-degeneracy of $\left\langle Lu,u\right\rangle $ on $S_{1}$.
Under this condition, we have
\[
X=\ker L\oplus S_{1}\oplus S^{\#},
\]
and this decomposition is orthogonal with respect to the quadratic form
$\langle L\cdot,\cdot\rangle$. Moreover, we have
\[
n^{-}(L)=n^{-}(L_{S_{1}})+n^{-}(L|_{S^{\#}})\;\text{ and }\;k_{0}^{\leq0}\geq
n^{-}(L|_{S_{1}})+n^{\leq0}(L|_{S_{2}}).
\]
The additional non-degeneracy assumption of $\langle Lu,u\rangle$ on
$\tilde{S}\slash(\ker L\cap\tilde{S})$ is equivalent to its non-degeneracy on
$S_{2}$ and it implies
\[%
\begin{split}
&  k_{0}^{-}=n^{-}\left(  L|_{S_{1}}\right)  +n^{-}\left(  L|_{S_{2}}\right)
\\
&  k_{r}+2k_{c}+2k_{i}^{-}=n^{-}\big(L|_{S^{\#}}\big)-n^{-}\left(  L|_{S_{2}%
}\right)  .
\end{split}
\]

\end{corollary}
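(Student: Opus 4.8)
The statement is a translation of Lemma \ref{L:counting-k-0-2}, Corollary \ref{C:counting-k-0-2-a}, and Proposition \ref{prop-counting-k-0-2} from the language of quotient spaces into that of genuine subspaces, so the plan is to set up a dictionary and then quote those results. The one elementary fact underlying the whole dictionary is: if $V\subset X$ is a closed subspace and $W$ is any closed complement of $V\cap\ker L$ in $V$, then $w\mapsto w+(V\cap\ker L)$ is an isometry from $(W,\langle L\cdot,\cdot\rangle|_{W})$ onto the quotient form $\langle L\cdot,\cdot\rangle$ on $V/(V\cap\ker L)$, since $V\cap\ker L\subset\ker L$ lies in the radical of $\langle L\cdot,\cdot\rangle$. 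Hence $n^{-}$, $n^{\le0}$, and non-degeneracy of the quotient form coincide with those of $L|_{W}$. I will apply this with the pairs $(V,W)=(\ker(JL),S_{1})$, $(\overline{R(J)},S^{\#})$, and $(\tilde S,S_{2})$, where $\tilde S=\overline{R(J)}\cap(JL)^{-1}(\ker L)$; for the last I first note $\tilde S\cap\ker L=\overline{R(J)}\cap\ker L$ (because $\ker L\subset\ker(JL)\subset(JL)^{-1}(\ker L)$), so that $S_{2}$ as defined in \eqref{E:subspaceS2} really is a complement of $\tilde S\cap\ker L$ in $\tilde S$.

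With the dictionary, non-degeneracy of $\langle L\cdot,\cdot\rangle$ on $S_{1}$ is equivalent to its non-degeneracy on $\ker(JL)/\ker L$, so Proposition \ref{prop-counting-k-0-2}(i) applies and gives $X=\ker(JL)+\overline{R(J)}$ together with $\ker(JL)\cap\overline{R(J)}\subset\ker L$. Substituting $\ker(JL)=\ker L\oplus S_{1}$ and $\overline{R(J)}=(\overline{R(J)}\cap\ker L)\oplus S^{\#}$ and using $\overline{R(J)}\cap\ker L\subset\ker L$ gives $X=\ker L+S_{1}+S^{\#}$; directness is a short chase: if $k+s_{1}+s^{\#}=0$ with $k\in\ker L$, $s_{1}\in S_{1}$, $s^{\#}\in S^{\#}$, then $s^{\#}=-k-s_{1}\in\ker(JL)\cap\overline{R(J)}\subset\ker L$, hence $s^{\#}\in S^{\#}\cap\ker L=\{0\}$, and then $s_{1}=-k\in S_{1}\cap\ker L=\{0\}$. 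The $L$-orthogonality of this decomposition is immediate: $\ker L$ is $L$-orthogonal to everything, and $\langle LS_{1},S^{\#}\rangle=0$ is exactly Lemma \ref{L:counting-k-0-2}(i) since $S_{1}\subset\ker(JL)$ and $S^{\#}\subset\overline{R(J)}$. As the decomposition is $L$-orthogonal with $\ker L$ in the radical, the Gram matrix of $\langle L\cdot,\cdot\rangle$ is block-diagonal, whence $n^{-}(L)=n^{-}(L|_{S_{1}})+n^{-}(L|_{S^{\#}})$.

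For the index identities I simply rewrite Proposition \ref{prop-counting-k-0-2}(ii)--(iii) through the dictionary: $n^{-}\big(L|_{\ker(JL)/\ker L}\big)=n^{-}(L|_{S_{1}})$, $n^{\le0}\big(L|_{\tilde S/(\ker L\cap\tilde S)}\big)=n^{\le0}(L|_{S_{2}})$, $n^{-}\big(L|_{\overline{R(J)}/(\ker L\cap\overline{R(J)})}\big)=n^{-}(L|_{S^{\#}})$, and non-degeneracy on $\tilde S/(\ker L\cap\tilde S)$ is equivalent to non-degeneracy on $S_{2}$. This gives $k_{0}^{\le0}\ge n^{-}(L|_{S_{1}})+n^{\le0}(L|_{S_{2}})$ unconditionally, and, under the extra non-degeneracy on $S_{2}$, the equality $k_{0}^{\le0}=n^{-}(L|_{S_{1}})+n^{-}(L|_{S_{2}})$, while $k_{r}+2k_{c}+2k_{i}^{\le0}=n^{-}(L|_{S^{\#}})-n^{-}(L|_{S_{2}})$ is the difference of this with the $n^{-}(L)$ identity and the index Theorem \ref{theorem-counting}.

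The only point requiring more than bookkeeping is upgrading the $n^{\le0}$-indices $k_{0}^{\le0}$, $k_{i}^{\le0}$ to the $n^{-}$-indices $k_{0}^{-}$, $k_{i}^{-}$ appearing in the statement; this amounts to showing that, under the present non-degeneracy hypotheses, the form induced by $L$ on $E_{0}/\ker L$ (and on each $E_{i\mu}$) is non-degenerate, so that $n^{\le0}=n^{-}$ there. I would deduce this from the block decomposition of $E_{0}$ in Proposition \ref{P:basis} (respectively from Proposition \ref{P:non-deg}): a null direction of $L$ on a complement of $\ker L$ in $E_{0}$ — i.e. a nonzero vector in the block $E^{D}$ — would force a degenerate direction inside $S_{1}$ or $S_{2}$, contradicting the hypothesis. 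This structural step, rather than the routine translation, is where I expect the real work to lie.
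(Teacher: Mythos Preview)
Your main argument is exactly the paper's: the corollary is stated as the subspace translation of Proposition~\ref{prop-counting-k-0-2}, and the paper's proof in Subsection~\ref{SS:E_0} proceeds via precisely the dictionary you set up (complements of $\ker L$ inside $\ker(JL)$, $\overline{R(J)}$, and $\tilde S$), together with Lemmas~\ref{L:k-0-2-a}--\ref{L:k-0-2-b} for the decomposition $X=\ker L\oplus S_1\oplus S^{\#}$ and Proposition~\ref{prop-counting-k-0-1} for the index count. Your directness chase and the $L$-orthogonality via Lemma~\ref{L:counting-k-0-2}(i) are fine.

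The gap is in your last paragraph. For $k_0^{\le0}$ versus $k_0^{-}$ you are right that they coincide here, but the reason is much simpler than invoking Proposition~\ref{P:basis}: under the non-degeneracy of $L$ on $(JL)^{-1}(\ker L)/\ker L$, the proof of Proposition~\ref{prop-counting-k-0-1}(ii) already shows $E_0=(JL)^{-1}(\ker L)$ (equation~\eqref{E:E_0-2}), so $E_0/\ker L\cong S_1\oplus S_2$, on which $L$ is non-degenerate by hypothesis and the $L$-orthogonality of $S_1\subset\ker(JL)$ and $S_2\subset\overline{R(J)}$.

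For $k_i^{\le0}$ versus $k_i^{-}$, however, your proposed argument fails: Proposition~\ref{P:non-deg} requires $i\mu$ to be \emph{isolated} in $\sigma(JL)$, which is nowhere assumed here, and indeed Subsection~\ref{SS:non-deg} exhibits embedded eigenvalues $i\mu$ with $\langle L\cdot,\cdot\rangle$ degenerate on $E_{i\mu}$. The hypotheses on $S_1,S_2$ concern only $E_0$ and say nothing about $E_{i\mu}$ for $\mu\ne0$. The appearance of $k_i^{-}$ in the corollary's last line is a notational slip in the paper; Proposition~\ref{prop-counting-k-0-2}(iii), of which this corollary is the subspace restatement, correctly has $k_i^{\le0}$, and that is all one can prove. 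So drop the attempt to upgrade $k_i^{\le0}$ and simply record the identity with $k_i^{\le0}$ as in \eqref{index-formula-D}.
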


Very often subspaces $S_{1}, S^{\#}, S_{2}$ can be taken as various
intersections with $(\ker L)^{\perp}$.

\subsection{Structural stability/instability}

\label{SS:SS}

Our next main result is on the spectral properties of the Hamiltonian operator
$JL$ under small bounded perturbations. Consider the perturbed linear
Hamiltonian system
\begin{equation}
u_{t}=J_{\#}L_{\#}u,\qquad J_{\#}=J+J_{1},\quad L_{\#}=L+L_{1},\quad u\in X.
\label{E:PHam}%
\end{equation}
We assume the perturbations satisfy

\begin{enumerate}
\item[(\textbf{A1})] $J$ and $L$ satisfies (\textbf{H1-2}) and the
perturbations $J_{1}:X^{\ast}\rightarrow X$ and $L_{1}:X\rightarrow X^{\ast}$
are bounded operators with $J_{1}^{\ast}=-J_{1}$ and $L_{1}^{\ast}=L_{1}$.

\item[(\textbf{A2})] $\dim\ker L <\infty$;

\item[(\textbf{A3})] $D(JL) \subset D(J L_{1})$.
\end{enumerate}

We note that (\textbf{A2}) implies (\textbf{H3}) for $JL\ $by Remark
\ref{R:H3}. From the Closed Graph Theorem, $JL_{1}$ is a bounded operator on
the Hilbert space $D(JL)$ equipped with the graph norm
\begin{equation}
||u||_{G}^{2}\triangleq||u||^{2}+||JLu||^{2},\;u\in D(JL);\;|JL_{1}%
|_{G}\triangleq\sup_{||u||_{G}=1}||JL_{1}u||.\label{E:graph-norm}%
\end{equation}
We first point out that assumptions (\textbf{A1-3}) imply (\textbf{H1-3}) for
$J_{\#}L_{\#}$ when the perturbations are sufficiently small as assumed in
Theorem \ref{T:PET} below. See Lemma \ref{L:P-H3}. As indicated in assumption
(\textbf{A1}) we consider bounded perturbations to both the symplectic
structure $J$ and the energy quadratic form $L$, while the Hamiltonian
structure is preserved. Assumption (\textbf{A2}) ensures $n^{-}(L_{\#}%
)<\infty$ so that the perturbed problem is still in our framework. Assumption
(\textbf{A3}) is a regularity assumption which implies that $J_{\#}L_{\#}$ is
not more unbounded compared to $JL$. Therefore, the resolvent $(\lambda
-J_{\#}L_{\#})^{-1}$ is a small perturbation of $(\lambda-JL)^{-1}$ as proved
in Lemma \ref{L:resolvent}.

Let $E^{u,s,c}$ be the unstable/stable/center subspaces of $J L$, as well as
the constants $\lambda_{u}>0$, as given in Theorem \ref{theorem-dichotomy}.
The next theorem and the following proposition will be proved in Subsection
\ref{SS:PET}.

\begin{theorem}
\label{T:PET} Assume {(A1-3)}. There exist $C, \epsilon_{0}>0$ depending only
on $J$ and $L$ such that, if
\begin{equation}\label{E:ep}
|J_{1}|+ |L_{1}|+ |JL_{1}|_{G} \le \ep \le\epsilon_{0},
\end{equation}
then

\begin{enumerate}
\item[(a)] There exist bounded operators
\[
S_{\#}^{u}: E^{u} \to E^{s} \oplus E^{c}, \quad S_{\#}^{s}: E^{s} \to E^{u}
\oplus E^{c}, \quad S_{\#}^{c}: E^{c} \to E^{s} \oplus E^{u},
\]
such that
\[
|S_{\#}^{u,s,c}| \le C\epsilon, \quad e^{tJ_{\#} L_{\#}} E_{\#}^{u,s,c} =
E_{\#}^{u,s,c}, \quad\text{ where } E_{\#}^{u,s,c} = \graph (S_{\#}^{u,s,c}),
\]
for all $t \in\mathbf{R}$. Moreover,
\begin{equation}%
\begin{split}
&  \left\vert e^{tJ_{\#} L_{\#}}|_{E_{\#}^{s}}\right\vert \leq C (1+ t^{\dim
E^{s} -1}) e^{-(\lambda_{u}- C\epsilon) t},\quad\forall\;t\geq0;\ \\
&  |e^{tJ_{\#} L_{\#}}|_{E_{\#}^{u}}|\leq C (1+ |t|^{\dim E^{u} -1})
e^{(\lambda_{u} - C \epsilon)t},\quad\forall\;t\leq
0,\label{E:P-stable-unstable}%
\end{split}
\end{equation}
\begin{equation}
\ |e^{tJ_{\#} L_{\#}}|_{E_{\#}^{c}}|\leq C\epsilon^{\frac 1{2(1+ n^-(L) - \dim E^u)}-1} e^{C\epsilon^{\frac 1{2(1+ n^-(L) - \dim E^u)}} |t|},\; \forall
 t\in\mathbf{R}. \label{E:P-center}%
\end{equation}
\item[(b)] $\langle L_{\#} \cdot, \cdot\rangle$ vanishes on $E_{\#}^{u,s}$,
but is non-degenerate on $E_{\#}^{s} \oplus E_{\#}^{u}$, and
\[
E_{\#}^{c} = \{u\mid\langle L_{\#} u, v \rangle=0, \, \forall v \in E_{\#}^{u}
\oplus E_{\#}^{s}\}.
\]
\item[(c)] If $\langle L \cdot, \cdot\rangle\ge\delta>0$ on $E^{c}$, then
there exists $C^{\prime}>0$ depending on $\delta$, $J$, and $L$ such that
$|e^{tJ_{\#} L_{\#}}|_{E_{\#}^{c}}|\le C^{\prime}$ for any $t \in\mathbf{R}$.
\end{enumerate}
\end{theorem}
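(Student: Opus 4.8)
The plan is to leverage Theorem \ref{T:decomposition} as the backbone, exactly as announced in the introduction. The key structural fact is that in the decomposition of Theorem \ref{T:decomposition}, the only unbounded block of $JL$ is $A_3$ acting on $X_3$, and $A_3$ is anti-self-adjoint with respect to the equivalent inner product $\langle L_{X_3}\cdot,\cdot\rangle$; moreover $E^{u,s}$ consist of generalized eigenvectors of the finitely many eigenvalues of $JL$ off the imaginary axis and so are finite dimensional and lie in $\cap_k D((JL)^k)$. First I would set up the resolvent comparison: using assumption (\textbf{A3}) and the Closed Graph Theorem one shows (as in Lemma \ref{L:resolvent}, which I may invoke) that for $\lambda$ in a fixed compact set bounded away from $\sigma(JL)$, $(\lambda - J_\#L_\#)^{-1}$ is well-defined and differs from $(\lambda - JL)^{-1}$ by $O(\epsilon)$ in operator norm. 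Then the Riesz spectral projections $\tilde P_\#^{u}, \tilde P_\#^{s}$ of $J_\#L_\#$ associated with small contours $\Gamma^{u}, \Gamma^{s}$ enclosing $\sigma(JL|_{E^{u}})$, $\sigma(JL|_{E^{s}})$ respectively are $O(\epsilon)$-close to the corresponding projections for $JL$. This gives finite-dimensional invariant subspaces $E_\#^{u} = R(\tilde P_\#^{u})$, $E_\#^{s} = R(\tilde P_\#^{s})$ of the same dimensions as $E^{u}, E^{s}$, each a graph over $E^{u}$ (resp. $E^{s}$) of a map into the complementary subspace with norm $O(\epsilon)$; this yields the operators $S_\#^{u}, S_\#^{s}$. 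The spectra of $J_\#L_\#$ on these spaces are $O(\epsilon)$-perturbations of $\sigma(JL|_{E^{u,s}})$, which gives the decay rates $\lambda_u - C\epsilon$ in \eqref{E:P-stable-unstable} — the polynomial factor $t^{\dim E^{s}-1}$ persists simply because the dimension is unchanged.

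Next I would define $E_\#^{c} = \{u \mid \langle L_\# u, v\rangle = 0,\ \forall v \in E_\#^{u}\oplus E_\#^{s}\}$. The point is that $\langle L_\#\cdot,\cdot\rangle$ is a small perturbation of $\langle L\cdot,\cdot\rangle$, which is non-degenerate on $E^{u}\oplus E^{s}$ by Theorem \ref{theorem-dichotomy}(ii); hence $\langle L_\#\cdot,\cdot\rangle$ restricted to $E_\#^{u}\oplus E_\#^{s}$ is still non-degenerate for $\epsilon$ small, and it vanishes on $E_\#^{u}$ and on $E_\#^{s}$ separately by the same argument as in the unperturbed case (using that $E_\#^{u}, E_\#^{s}$ are generalized eigenspaces of eigenvalues $\lambda$ with $-\bar\lambda \ne \lambda$, so $\langle L_\# v, v'\rangle = 0$ for $v, v'$ in the same such eigenspace). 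This $L_\#$-orthogonality gives the direct sum $X = E_\#^{u}\oplus E_\#^{s}\oplus E_\#^{c}$, the invariance of $E_\#^{c}$ under $e^{tJ_\# L_\#}$ (because $J_\#L_\#$ is anti-self-adjoint with respect to $\langle L_\#\cdot,\cdot\rangle$ and $E_\#^{u}\oplus E_\#^{s}$ is invariant), and part (b). The graph map $S_\#^{c}: E^{c}\to E^{s}\oplus E^{u}$ with $|S_\#^{c}| \le C\epsilon$ follows from the $O(\epsilon)$-closeness of $E_\#^{c}$ to $E^{c}$, which in turn follows from the closeness of the three projections.

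The main obstacle, and the step I would spend the most care on, is estimate \eqref{E:P-center}: the bound $|e^{tJ_\# L_\#}|_{E_\#^{c}}| \le Ce^{C\epsilon|t|}$. The difficulty is precisely the one flagged in the introduction — the perturbation $J(L_\# - L) = JL_1$ need not be bounded on $X$ (only on the graph space $D(JL)$), so one cannot treat $J_\#L_\#|_{E_\#^{c}}$ as a bounded perturbation of $JL|_{E^{c}}$ on $X$ and conclude by a Gronwall/Duhamel argument directly. The plan is again to use Theorem \ref{T:decomposition}: conjugate $J_\#L_\#|_{E_\#^{c}}$ back to an operator on $E^{c}$ via the $O(\epsilon)$-close isomorphism $I + S_\#^{c}$ (or rather the associated change of coordinates), and then decompose $E^{c}$ according to the $X_j$-blocks. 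On the finitely many finite-dimensional blocks (those corresponding to $X_1, X_2, X_4$ and the part of $X_5\oplus X_6$ inside $E^{c}$ — which is empty, so really the center-type finite blocks) everything is a matrix and the perturbation is genuinely bounded. On the one infinite-dimensional block $X_3$, where $A_3$ is anti-self-adjoint with respect to $\langle L_{X_3}\cdot,\cdot\rangle$, I would show that the perturbed generator is still a bounded perturbation of an anti-self-adjoint operator \emph{in the appropriate equivalent inner product $\langle L_{\#}\cdot,\cdot\rangle$} restricted to the relevant block — here the Hamiltonian structure $J_\#^{*} = -J_\#$, $L_\#^{*} = L_\#$ is essential, since it forces $J_\#L_\#$ to be anti-self-adjoint with respect to $\langle L_\#\cdot,\cdot\rangle$, so on $E_\#^{c}$ (where $\langle L_\#\cdot,\cdot\rangle$ is an equivalent inner product when $\langle L\cdot,\cdot\rangle$ is positive definite there, handled in part (c)) or on its positive-definite sub-block it generates a group with at most exponential growth governed by the size of the indefinite part. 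Then the $O(\epsilon)$ bound on the growth rate comes from tracking how the perturbation changes the finitely many negative directions and the nilpotent (polynomially growing) structure — the degree $k_0$ of the polynomial is bounded by $1 + 2(n^-(L) - \dim E^u)$ uniformly, and an $O(\epsilon)$-small eigenvalue perturbation of a Jordan block of bounded size produces growth $e^{C\epsilon|t|}$. For part (c), when $\langle L\cdot,\cdot\rangle \ge \delta$ on $E^{c}$, then $\langle L_\#\cdot,\cdot\rangle \ge \delta/2$ on $E_\#^{c}$ for $\epsilon$ small, $J_\#L_\#|_{E_\#^{c}}$ is genuinely anti-self-adjoint in this equivalent inner product, hence generates a uniformly bounded (in fact isometric) group, giving $|e^{tJ_\# L_\#}|_{E_\#^{c}}| \le C'$.
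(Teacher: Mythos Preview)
Your proposal is correct and follows essentially the same route as the paper: Riesz projections off contours around $\sigma(JL|_{E^{u,s}})$ give $E_\#^{u,s}$ as $O(\epsilon)$-graphs, $E_\#^{c}$ is the $L_\#$-orthogonal complement (the paper obtains this via Lemma~\ref{L:L-orth-eS} after first defining $E_\#^{c}$ spectrally, but the outcome is the same), and the center estimate \eqref{E:P-center} is handled by conjugating back to $E^c$ and using the block form of Theorem~\ref{T:decomposition}. The paper organizes the last step as a two-stage reduction---first proving the special case $E^c=X$ (Lemma~\ref{L:P-center-R}) by explicitly isolating the $X_3$-block perturbation as $J_{33}L_{1,33}$ so that $A_3+J_{33}L_{1,33}=J_{33}(L_{X_3}+L_{1,33})$ remains anti-self-adjoint in the perturbed positive inner product and the residual is a genuinely bounded $O(\epsilon)$ perturbation, then verifying that the conjugated triple $(E^c,\tilde J_\#,\tilde L_\#)$ again satisfies (\textbf{A1--3}) so the lemma applies---which is exactly the mechanism you sketch, though you should be aware that checking $\tilde J_\#-J^c$ is bounded with norm $O(\epsilon)$ is not automatic and uses the finite-dimensionality of $E_\#^{us}$.
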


Due to assumption (\textbf{A3}), the resolvent $(\lambda-J_{\#}L_{\#})^{-1}$
is only a small perturbation of $(\lambda-JL)^{-1}$ as proved in Lemma
\ref{L:resolvent}. Therefore, the existence of the invariant subspaces
$E_{\#}^{u,s,c}$ as a small perturbation to $E^{u,s,c}$ follows immediately.
Statements (b) and (c) basically result from the Hamiltonian structure and the
estimates of $e^{tJ_{\#}L_{\#}}$ on $E_{\#}^{u,s}$ are basically due to their
finite dimensionality. If $J_{\#}L_{\#}-JL$ had been a bounded operator,
estimate \eqref{E:P-center} would follow easily from the standard spectral
theory as well. However, since $J:X^{\ast}\supset D(J)\rightarrow X$ is only
assumed to satisfy $J^{\ast}=-J^{\ast}$, the term $JL_{1}$ may not be bounded
and thus \eqref{E:P-center} does not follow from the standard spectral theory.
Our proof heavily relies on the decomposition given by Theorem
\ref{T:decomposition}. In fact, the usual resolvent estimate often neglects
the Hamiltonian structure of the problem which actually plays an essential
role here. Otherwise a counterexample without the Hamiltonian structure is
$J=J_{\#}=i$ and $L_{\#}=\partial_{xx}+\epsilon\partial_{x}$ with
$X=H^{1}(S^{1},\mathbf{C})$, for which the equation $u_{t}=J_{\#}L_{\#}u$ is
not even well-posed in $X\ $for $\epsilon\neq0$.

Another consequence of Lemma \ref{L:resolvent} of the resolvent estimate and
Lemma \ref{L:L-orth-eS} is the following structural stability type result.

\begin{proposition}
\label{P:PSubS} Suppose closed subsets $\sigma_{1,2} \subset\sigma(JL)$ satisfy

\begin{enumerate}
\item $\sigma(JL) = \sigma_{1} \cup\sigma_{2}$, $\sigma_{1} \cap\sigma_{2}
=\emptyset$, and $\sigma_{2}$ is compact.

\item For any $\lambda\in\sigma_{1}$ and $0\ne u \in E_{\lambda}$, it holds
$\langle Lu, u \rangle>0$.
\end{enumerate}

Then there exist $\alpha, \epsilon_{0}>0$ depending only on $J$ and $L$ such
that \eqref{E:ep} implies
\[
\{ \lambda\in\sigma(J_{\#}L_{\#}) \mid d(\lambda, \sigma_{2}) > \alpha\}
\subset i\mathbf{R}.
\]

\end{proposition}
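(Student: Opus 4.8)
The plan is to combine the resolvent perturbation estimate (Lemma \ref{L:resolvent}) with the positivity-of-$L$ criterion for nonexistence of spectrum off the imaginary axis, the latter coming from Lemma \ref{L:L-orth-eS} and the basic symmetry/sign structure of Hamiltonian operators. The key point is that hypothesis (2) says $\langle Lu,u\rangle>0$ on every generalized eigenspace associated to $\sigma_1$; in particular $\langle L\cdot,\cdot\rangle$ is uniformly positive on the spectral subspace $E_{\sigma_1}$ (after possibly shrinking to a compact piece of $\sigma_1$, the non-compact tail being handled by the trichotomy of Theorem \ref{theorem-dichotomy}, which forces all but finitely many spectral points onto $i\mathbf R$ already). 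First I would fix a compact neighborhood structure: choose disjoint open sets $U_1\supset\sigma_2$ and the relevant bounded part of $\sigma_1$, with $U_1$ at distance $>2\alpha$ from $\sigma_2$ for a suitable small $\alpha>0$, and let $\Gamma$ be a contour enclosing $\sigma_2$ inside $U_1$ and staying at distance $\gtrsim\alpha$ from $\sigma(JL)$.

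Next I would invoke Lemma \ref{L:resolvent}: under \eqref{E:ep}, $(\lambda - J_\#L_\#)^{-1}$ is a small perturbation of $(\lambda - JL)^{-1}$, uniformly for $\lambda$ on any fixed compact set bounded away from $\sigma(JL)$. Hence for $\epsilon\le\epsilon_0$ small, $\Gamma\subset\rho(J_\#L_\#)$, and the Riesz projection $P_\#$ of $J_\#L_\#$ over $\Gamma$ is an $O(\epsilon)$ perturbation of the corresponding Riesz projection $P$ of $JL$. This gives an invariant decomposition $X = R(P_\#)\oplus N(P_\#)$ with $\sigma(J_\#L_\#|_{R(P_\#)})$ contained inside $\Gamma$ (a compact set near $\sigma_2$) and $\sigma(J_\#L_\#|_{N(P_\#)})$ outside. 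The spectral points of $J_\#L_\#$ with $d(\lambda,\sigma_2)>\alpha$ are then exactly those of $J_\#L_\#|_{N(P_\#)}$, i.e. perturbed from $\sigma_1$, together with possibly the finitely many points of $\sigma_2$ itself if any survived — but by construction those lie within distance $\alpha$ of $\sigma_2$ and are excluded. So it remains to show $\sigma(J_\#L_\#|_{N(P_\#)})\subset i\mathbf R$.

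For this I would use that $N(P_\#)$ carries a quadratic form $\langle L_\#\cdot,\cdot\rangle$ which, since $L_\#-L = L_1$ is $O(\epsilon)$ bounded and $\langle L\cdot,\cdot\rangle$ is uniformly positive on $N(P) = R(\text{complement})$ (this positivity is where hypothesis (2) is essential — one argues that $\langle L\cdot,\cdot\rangle\ge\delta>0$ on the spectral subspace of $\sigma_1$, say via Lemma \ref{L:L-orth-eS} together with the fact that a bounded-below symmetric form that is positive on each generalized eigenspace and involves only finitely many "bad" directions total is uniformly positive after the trichotomy removes the stable/unstable parts), remains uniformly positive on $N(P_\#)$ for $\epsilon$ small, after transporting via the $O(\epsilon)$-close projections. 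But $J_\#L_\#$ is anti-self-adjoint with respect to $\langle L_\#\cdot,\cdot\rangle$ (Hamiltonian structure is preserved, since $J_1^*=-J_1$, $L_1^*=L_1$), so on the invariant subspace $N(P_\#)$, where this form is a genuine (equivalent) inner product, $J_\#L_\#|_{N(P_\#)}$ is skew-adjoint and therefore has purely imaginary spectrum. This yields the claimed inclusion.

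The main obstacle I anticipate is the uniform positivity transfer: one must show that $\langle L\cdot,\cdot\rangle>0$ on each $E_\lambda$, $\lambda\in\sigma_1$, upgrades to a uniform bound $\langle L\cdot,\cdot\rangle\ge\delta>0$ on the whole (possibly infinite-dimensional, possibly continuous-spectrum-containing) spectral subspace attached to $\sigma_1$, and then that this survives the $O(\epsilon)$ perturbation into $N(P_\#)$. The first half should follow from the index theorem \ref{theorem-counting} and the trichotomy — all negative directions of $L$ are accounted for by eigenvalues, so once the finitely many eigenvalues with nonpositive contributions (all of which, by hypothesis (2), must lie in $\sigma_2$) are split off by $P_\#$, what remains sees only the positive part of $L$ with its uniform lower bound inherited from (\textbf{H2.b}) via the decomposition Theorem \ref{T:decomposition}. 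The second half is a routine perturbation estimate once the clean decomposition is in place. Care is needed that the constants $\alpha,\epsilon_0$ depend only on $J,L$ and not on the perturbation, which is guaranteed by the uniform resolvent bounds of Lemma \ref{L:resolvent}.
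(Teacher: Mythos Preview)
Your proposal is correct and follows essentially the same approach as the paper: a Riesz projection around $\sigma_2$ using Lemma \ref{L:resolvent}, then uniform positivity of $\langle L\cdot,\cdot\rangle$ on the complementary spectral subspace $X_1=N(P)$, then perturbation of this positivity to $X_1^\#=N(P_\#)$ and conclusion via the Hamiltonian structure. The paper makes the ``main obstacle'' you identified precise in exactly the way you suggest: it uses the $L$-orthogonality of the spectral splitting (your Lemma \ref{L:L-orth-eS}) together with Lemma \ref{L:decomJ} to show $(X_1,J_{X_1},L_{X_1})$ satisfies (\textbf{H1--3}), and then applies Theorem \ref{theorem-counting} to this subsystem---since hypothesis (2) forces every index on the left of \eqref{counting-formula} to vanish, $n^-(L_{X_1})=0$, and (\textbf{H2}) gives the uniform lower bound; your detour about shrinking $\sigma_1$ to a compact piece is unnecessary.
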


From Proposition \ref{P:decomposition2}, any $\lambda\in\sigma(JL)\backslash
i\mathbf{R}$ is an eigenvalue, i.e. $E_{\lambda}\neq\{0\}$, and $\langle
L\cdot,\cdot\rangle$ vanishes on $E_{\lambda}$. Therefore, it must hold that
$\sigma_{1}\subset i\mathbf{R}$. Even though the second assumption on
$\sigma_{1}$ seems weaker than that $\langle L\cdot,\cdot\rangle$ is uniformly
positive on its eigenspaces, it along with Theorem \ref{theorem-counting}
actually implies the latter. This proposition means that, under small
perturbations, unstable eigenvalues can not bifurcate from such $\sigma_{1}$.

In the next we consider the deformation of purely imaginary spectral points of
$JL$ under perturbations as they are closely related to generation of linear
instability. The next two theorems are proved in Subsection \ref{SS:ImSpec}.
Firstly we prove that if $i\mu\in\sigma(JL)$ and $\langle L\cdot,\cdot\rangle$
has certain definite sign on $E_{i\mu}$, then $\sigma(J_{\#}L_{\#})$ would not
have nearby unstable eigenvalues.

\begin{theorem}
\label{T:SImSpec} Assume {(A1-3)}, $i\mu\in\sigma(JL) \cap i\mathbf{R}$, and
either a.) there exists $\delta>0$ such that $\langle L u, u\rangle\ge
\delta\vert| u\vert|^{2}$ for all $u \in E_{i\mu}$ or b.) $i\mu$ is isolated
in $\sigma(J L)$and $\langle Lu, u\rangle\le-\delta\Vert u \Vert^{2}$ for all
$u \in E_{i\mu}$, then there exist $\alpha, \epsilon_{0}>0$ depending on $J$,
$L$, $\mu$, and $\delta$ such that, if \eqref{E:ep} holds, then
\[
\{ \lambda\in\sigma(J_{\#} L_{\#}) \mid| \lambda-i\mu| \le\alpha\} \subset i
\mathbf{R}.
\]

\end{theorem}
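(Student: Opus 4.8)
I would treat the two hypotheses separately: under (b) the point $i\mu$ is isolated, so spectral (Riesz) projections are available; under (a) it may be embedded in the continuous spectrum, so I would argue by contradiction using weak compactness together with the structural decomposition of Theorem \ref{T:decomposition}. In both cases the driving mechanism is that $J_\#L_\#$ is anti-self-adjoint with respect to $\langle L_\#\cdot,\cdot\rangle$ (from $J_\#^\ast=-J_\#$, $L_\#^\ast=L_\#$), so a definite sign of this form on a finite-dimensional invariant subspace forces purely imaginary spectrum.

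Under hypothesis (b), by Proposition \ref{P:non-deg} the isolated point $i\mu$ is an eigenvalue and $X=E_{i\mu}\oplus E_\#$ with $E_\#$ closed, $JL$-invariant, $L$-orthogonal to $E_{i\mu}$, and $\sigma(JL|_{E_\#})=\sigma(JL)\setminus\{i\mu\}$; since $\langle L\cdot,\cdot\rangle\le-\delta\|\cdot\|^2$ on $E_{i\mu}$ we get $m:=\dim E_{i\mu}\le n^-(L)<\infty$, and the Riesz projection $P$ of $JL$ over a small circle $\Gamma=\partial B(i\mu,r)$ with $\overline{B(i\mu,r)}\cap\sigma(JL)=\{i\mu\}$ has range $E_{i\mu}$. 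By Lemma \ref{L:resolvent}, for $\epsilon$ small $\Gamma\subset\rho(J_\#L_\#)$ and $\sup_\Gamma\|(\lambda-J_\#L_\#)^{-1}-(\lambda-JL)^{-1}\|\le C\epsilon$, hence the Riesz projection $P_\#$ of $J_\#L_\#$ over $\Gamma$ satisfies $\|P_\#-P\|\le C\epsilon<1$; so $F:=R(P_\#)$ has dimension $m$, is $J_\#L_\#$-invariant, and $\sigma(J_\#L_\#)\cap B(i\mu,r)=\sigma(J_\#L_\#|_F)$. Every $u\in F$ obeys $\|u-Pu\|\le C\epsilon\|u\|$ with $Pu\in E_{i\mu}$, so expanding $\langle L_\#u,u\rangle=\langle Lu,u\rangle+\langle L_1u,u\rangle$ and using $\langle L(Pu),Pu\rangle\le-\delta\|Pu\|^2$ and $|L_1|\le\epsilon$ gives $\langle L_\#u,u\rangle\le-\tfrac{\delta}{4}\|u\|^2$ on $F$ for $\epsilon$ small. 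Thus $-\langle L_\#\cdot,\cdot\rangle$ is a genuine inner product on the finite-dimensional $F$, $J_\#L_\#|_F$ is skew-adjoint for it, and its spectrum is purely imaginary; taking $\alpha=r/2$ finishes case (b).

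Under hypothesis (a), suppose the conclusion fails. Then there are perturbations with sizes $\epsilon_n\le 1/n$ and $\lambda_n\in\sigma(J_{\#,n}L_{\#,n})\setminus i\mathbf{R}$, $|\lambda_n-i\mu|\le 1/n$. By Lemma \ref{L:P-H3} each $J_{\#,n}L_{\#,n}$ satisfies (\textbf{H1-3}), so by Theorem \ref{theorem-dichotomy} its spectrum off $i\mathbf{R}$ consists of finitely many eigenvalues; hence $\lambda_n$ is an eigenvalue, with a unit eigenvector $\phi_n$, and we may assume $\operatorname{Re}\lambda_n>0$. Anti-self-adjointness of $J_{\#,n}L_{\#,n}$ in $\langle L_{\#,n}\cdot,\cdot\rangle$ gives $(\lambda_n+\bar\lambda_n)\langle L_{\#,n}\phi_n,\phi_n\rangle=0$, so $\langle L_{\#,n}\phi_n,\phi_n\rangle=0$ and therefore $\langle L\phi_n,\phi_n\rangle=-\langle L_{1,n}\phi_n,\phi_n\rangle\to 0$. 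Using (\textbf{A3}) and Lemma \ref{L:resolvent} (so that $D(J_{\#,n}L_{\#,n})=D(JL)$ with uniformly equivalent graph norms for $\epsilon$ small) together with the eigenvalue equation, one writes $JL\phi_n=\lambda_n\phi_n-(J_{1,n}L_{\#,n}+JL_{1,n})\phi_n$ and, absorbing the $\epsilon_n\|JL\phi_n\|$ term, obtains $\|JL\phi_n\|\le C$ and $\|(JL-i\mu)\phi_n\|\le|\lambda_n-i\mu|+C\epsilon_n\to 0$. Pass to a subsequence with $\phi_n\rightharpoonup\phi$ in $X$. Then $L_{\#,n}\phi_n\rightharpoonup L\phi$ in $X^\ast$ while $J(L_{\#,n}\phi_n)=\lambda_n\phi_n-J_{1,n}L_{\#,n}\phi_n\rightharpoonup i\mu\phi$ in $X$; since the graph of the closed operator $J$ is weakly closed, $L\phi\in D(J)$ and $JL\phi=i\mu\phi$, i.e. $\phi\in E_{i\mu}$. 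Finally, writing $\langle Lu,u\rangle=(\mathbb{L}u,u)$ and splitting $\mathbb{L}$ by its spectral projections (the negative part being finite-dimensional, hence strongly convergent along $\phi_n$), the nonnegative part is weakly lower semicontinuous, so $\langle L\phi,\phi\rangle\le\liminf\langle L\phi_n,\phi_n\rangle=0$. If $\phi\ne 0$ this contradicts hypothesis (a) (or contradicts $E_{i\mu}=\{0\}$), and we are done.

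It remains to rule out $\phi=0$, which is the main obstacle and where Theorem \ref{T:decomposition} is used. Under (\textbf{A2}), $\dim\ker L<\infty$, so in $X=\bigoplus_{j=0}^{6}X_j$ every block is finite-dimensional except $X_3$, with $\langle Lu,u\rangle=\langle L_{X_3}u,u\rangle\ge\delta\|u\|^2$ for $u\in X_3$. If $\phi_n\rightharpoonup 0$, then $P_j\phi_n\to 0$ strongly for $j\ne 3$, so $\phi_n-P_3\phi_n\to 0$ and $\|P_3\phi_n\|\to 1$; hence
\[
\langle L\phi_n,\phi_n\rangle=\langle L_{X_3}P_3\phi_n,P_3\phi_n\rangle+o(1)\ge\delta\|P_3\phi_n\|^2+o(1)\longrightarrow\delta>0,
\]
contradicting $\langle L\phi_n,\phi_n\rangle\to 0$. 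Thus $\phi\ne 0$. The remaining work is bookkeeping: verifying $D(J_{\#,n}L_{\#,n})=D(JL)$ and the resolvent and graph-norm estimates for small $\epsilon$ (Lemmas \ref{L:P-H3}, \ref{L:resolvent}, assumption (\textbf{A3})), and checking that the cross terms in the quadratic-form expansions are genuinely $o(1)$; but the conceptual core is that the decomposition confines any possible loss of mass to the subspace $X_3$, on which $\langle L\cdot,\cdot\rangle$ is uniformly positive, forcing the contradiction.
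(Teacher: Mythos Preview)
Your proof is correct and follows essentially the same strategy as the paper's. Case (b) is handled identically via the Riesz projection, the closeness estimate from Lemma~\ref{L:resolvent}, and the transfer of uniform negativity of $\langle L_\#\cdot,\cdot\rangle$ to the perturbed spectral subspace. Case (a) is also argued by contradiction with a sequence of unit eigenvectors $\phi_n$, showing $\langle L\phi_n,\phi_n\rangle\to 0$ and $\|(JL-i\mu)\phi_n\|\to 0$, then passing to a weak limit $\phi\in\ker(JL-i\mu)$ with $\langle L\phi,\phi\rangle\le 0$, and finally excluding $\phi=0$ via Theorem~\ref{T:decomposition}.

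The one noteworthy difference is in the endgame of case (a). The paper, after obtaining $u=0$, uses the block equations (\ref{E:SS2})--(\ref{E:SS-4}) to force $\mu=0$ and then invokes the nontriviality of $\ker L\subset E_0$ to contradict \eqref{E:ImSpec1}. You instead exploit (\textbf{A2}) directly: since $\dim\ker L<\infty$, \emph{every} block $X_j$ with $j\ne 3$ is finite-dimensional, so $P_j\phi_n\to 0$ strongly for all $j\ne 3$; the unit mass is then forced into $X_3$, where $\langle L_{X_3}\cdot,\cdot\rangle\ge\delta$, yielding $\langle L\phi_n,\phi_n\rangle\to\delta>0$, a contradiction. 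This is a genuine simplification over the paper's route and avoids the separate treatment of the $X_0$-component and the detour through $\mu=0$. Your use of the weak closedness of the graph of $J$ (closed since $J=-J^\ast$) to pass to the limit is also a bit more direct than the paper's block-by-block argument, though equivalent in substance.
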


\begin{remark}
On the one hand, note that in the above theorem, we do not require $i\mu$
being an isolated eigenvalue or even an eigenvalue of $JL$. If $i\mu$ is not
an eigenvalue, $E_{i\mu}=\{0\}$ and the sign definiteness assumption is
automatically satisfied. On the other hand, if $i\mu$ is an isolated spectral
point, then Proposition \ref{P:non-deg} implies that $E_{i\mu}$ is nontrivial
and is precisely the eigenspace of $i\mu$. Moreover, from Lemma
\ref{lemma-orthogonal-condition-chain} and the sign definiteness of $L$ on
$E_{i\mu}$, we have $E_{i\mu}=\ker(JL-i\mu)$.
\end{remark}

On the one hand, the above theorem indicates that under Hamiltonian
perturbations, hyperbolic (i.e. stable and unstable) eigenvalues can not
bifurcate from either a.) any $i\mu\in\sigma(JL)$, whether isolated or not,
for which $\langle L\cdot,\cdot\rangle$ is positive on $E_{i\mu}$, or b.) any
isolated eigenvalue $i\mu$ where $\langle L\cdot,\cdot\rangle$ has a definite
sign on $E_{i\mu}$. Theorem \ref{T:SImSpec}, as well as Theorem \ref{T:PET}
can be viewed as robustness or structural stability type results.

On the other hand, as given in the next theorem, the structural stability
conditions in Theorem \ref{T:SImSpec} are also necessary for an eigenvalue
$i\mu\neq0$. As in many applications parameters mostly appear in the energy
operator $L$ instead of the symplectic operator $J$, we will study
perturbations only to $L$ for possible bifurcations of unstable eigenvalues
near $i\mu$.

\begin{theorem}
\label{T:USImSpec} Assume that $(J,L)$ satisfies (\textbf{H1-3}) and $0\ne
i\mu\in\sigma(JL) \cap i\mathbf{R}$ satisfies

\begin{enumerate}
\item $\langle L\cdot, \cdot\rangle$ is neither positive nor negative definite
on $E_{i\mu}$ or

\item $i\mu$ is non-isolated in $\sigma(JL)$ and there exists $u\in E_{i\mu}$
with $\langle Lu,u\rangle\leq0$,
\end{enumerate}

then for any $\epsilon>0$, there exist a symmetric bounded linear operator
$L_{1}:X\rightarrow X^{\ast}$ such that: $|L_{1}|<\epsilon$ and there exists
$\lambda\in\sigma\big(J(L+L_{1})\big)$ with $\operatorname{Re}\lambda>0$ and
$|\lambda-i\mu|<C\epsilon$, for some constant $C$ depending only on $\mu,J,L$.
\end{theorem}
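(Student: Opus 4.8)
The plan is to reduce everything to a finite-dimensional construction on (a piece of) the generalized eigenspace $E_{i\mu}$, using the structural decomposition of Theorem \ref{T:decomposition} to isolate the relevant finite-dimensional block and to control the perturbation. First I would distinguish the two hypotheses. If $\langle L\cdot,\cdot\rangle$ is non-degenerate on $E_{i\mu}$ but indefinite, then by Proposition \ref{P:basis} we have $E_{i\mu}=E^{1}\oplus E^{G}$ with $L_{1}$ on $E^{1}$ and $L_G$ on $E^G$ both non-degenerate, and $JL$ block-upper-triangular. Indefiniteness forces a negative direction either in the semisimple part $E^{1}$ (i.e. $n^{-}(L|_{E^{1}})>0$) or in some Jordan block of $E^{G}$ — more precisely, in the latter case there is an odd-length Jordan chain whose central Krein signature $n^{-}_{k_j}(i\mu)$ is $-1$, or an even-length chain (which always carries a negative direction in its anti-diagonal $L$-form $\left(\begin{smallmatrix}0&a\\\bar a&0\end{smallmatrix}\right)$). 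In each of these sub-cases I would write down an explicit small symmetric $2\times2$ or $1\times1$ perturbation supported on the offending chain (or on a pair of $L$-orthogonal semisimple eigenvectors of opposite signature, as in the classical Krein collision picture), and check by direct computation that the perturbed $2k_j\times 2k_j$ matrix acquires an eigenvalue with positive real part $O(\epsilon)$ near $i\mu$. The special anti-diagonal normal form \eqref{L-anti-diagonal} from Proposition \ref{P:basis} is exactly what makes these perturbed matrices tractable; this is the analogue of the finite-dimensional structural-instability result of \cite{mackay86}, and the needed computations are the standard ones for colliding eigenvalues / breaking a nontrivial Jordan block.

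The substantive case is hypothesis 2 with $\langle L\cdot,\cdot\rangle$ \emph{non-positive and possibly degenerate} on $E_{i\mu}$, with $i\mu$ embedded in the continuous spectrum. Here the strategy, as flagged in the introduction, is a two-stage perturbation. Stage one: construct a perturbed Hamiltonian operator $J\tilde L_\#$ (with $|\tilde L_\# - L|$ arbitrarily small) for which $i\mu$ becomes an \emph{isolated} spectral point, while simultaneously $\tilde L_\#|_{E_{i\mu}(J\tilde L_\#)}$ acquires a positive direction. To do this I would use the decomposition of Theorem \ref{T:decomposition}: on the infinite-dimensional block $X_3$, where $JL$ acts anti-self-adjointly with respect to the equivalent inner product $\langle L_{X_3}\cdot,\cdot\rangle$, the spectrum near $i\mu$ can be opened up by a spectral-projection surgery — pick a small contour around $i\mu$, and modify $L$ by a small rank-controlled symmetric operator built from the corresponding spectral subspace so that the continuous spectrum is pushed away from $i\mu$. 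Since $L_{X_3}>0$, the continuous-spectrum spectral subspace carries positive energy (consistent with the index formula \eqref{counting-formula}), so absorbing a sliver of it into the eigenspace of $i\mu$ produces the required positive direction in $\tilde L_\#|_{E_{i\mu}}$. Stage two: once $i\mu$ is isolated, Proposition \ref{P:non-deg}(i) guarantees $\tilde L_\#|_{E_{i\mu}(J\tilde L_\#)}$ is non-degenerate, and by construction it is indefinite (non-positive from the limiting part of $E_{i\mu}$, positive from the absorbed part); hence we are back in the first (non-degenerate indefinite) case, to which we apply the construction of the previous paragraph, obtaining $L_1'$ with $|L_1'|$ small and an unstable eigenvalue of $J(\tilde L_\# + L_1')$ near $i\mu$. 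Setting $L_1 = (\tilde L_\# - L) + L_1'$ and tracking that the unstable eigenvalue stays within $C\epsilon$ of $i\mu$ finishes the proof.

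The main obstacle I expect is stage one of the embedded case: making the "open up the continuous spectrum near $i\mu$" surgery precise and quantitative. One must show that a small, appropriately chosen symmetric perturbation of $L$ supported near the spectral subspace of $i\mu$ inside $X_3$ genuinely isolates $i\mu$ (rather than merely thinning the continuous spectrum), that the new $E_{i\mu}$ is finite-dimensional with a controlled structure, and that the positive direction is actually produced and not cancelled by the non-positive limit of the old generalized eigenvectors — all while keeping the perturbation norm below $\epsilon$ and the eventual eigenvalue within $C\epsilon$ of $i\mu$. This requires careful resolvent/spectral-integral estimates on the block $A_3$, leaning on the anti-self-adjointness of $A_3$ with respect to $\langle L_{X_3}\cdot,\cdot\rangle$ and on the boundedness of all the other blocks in \eqref{block-JL}. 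The second, technically lighter, obstacle is bookkeeping: verifying that the explicit small matrix perturbations in the non-degenerate case indeed move an eigenvalue off the imaginary axis with the claimed $O(\epsilon)$ real part and $O(\epsilon)$ displacement — these are finite-dimensional perturbation-theory computations, elementary in each sub-case but requiring care because the Jordan blocks may be long, so the splitting of a $k$-fold eigenvalue scales like $\epsilon^{1/k}$ and one must confirm that the relevant root of the perturbed characteristic polynomial has strictly positive real part for the chosen sign of the perturbation.
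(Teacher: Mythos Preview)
Your proposal is correct and follows essentially the same approach as the paper: reduce the non-degenerate indefinite case to explicit finite-dimensional matrix perturbations via the special basis of Proposition~\ref{P:basis} (the paper's Case~3, with subcases for sign-changing $\ker(JL-i\mu)$ versus a nontrivial Jordan chain), and handle the degenerate or embedded-negative-definite cases (the paper's Cases~4 and~5) by a two-stage perturbation in which an explicit spectral-integral surgery on the anti-self-adjoint block $A_3$ (Lemma~\ref{L:isolation}) isolates $i\mu$ and injects a positive direction, then invokes Proposition~\ref{P:non-deg} to land back in Case~3. The paper's execution is slightly finer than your sketch in one place: in the degenerate Case~4 it inserts an additional decomposition (Lemma~\ref{L:decom2}) that peels off a null vector $u_0\in E^D\subset\ker(JL-i\mu)$ and a transversal $w$ into an explicit $4$-dimensional block before applying the isolation lemma on the complementary $Y_1$, and in Case~5 it first splits off all of $E_{i\mu}\oplus E_{-i\mu}$ before isolating on the complement --- but your ``stage one'' description captures the mechanism correctly, including the point that positivity of $L_{X_3}$ is what supplies the new positive direction.
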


It is easy to see that conditions in Theorem \ref{T:USImSpec} are exactly
complementary to those in Theorem \ref{T:SImSpec} for $i\mu\neq0$ and thus
they give necessary and sufficient conditions on whether unstable eigenvalues
can bifurcate from $0\neq i\mu\in\sigma(JL)\cap i\mathbf{R}$ under Hamiltonian perturbations.

\begin{remark}
\label{R:Grillakis90} In \cite{Gr90}, Grillakis proved that an embedded purely
imaginary eigenvalue with negative energy of the linearized operator at
excited states of a semilinear nonlinear Schr\"{o}dinger equation is
`structurally unstable' under small perturbations and unstable eigenvalues can
be generated. The linearized operator is of the form $JL$, where
\[
J=\left(
\begin{array}
[c]{cc}%
0 & 1\\
-1 & 0
\end{array}
\right)  ,\ L=\left(
\begin{array}
[c]{cc}%
-\Delta+V_{1}\left(  x\right)  & 0\\
0 & -\Delta+V_{2}\left(  x\right)
\end{array}
\right)  .
\]
Here, $V_{1}\left(  x\right)  ,V_{2}\left(  x\right)  \rightarrow\omega>0$
exponentially when $\left\vert x\right\vert \rightarrow\infty$. Under some
assumptions, Theorem 2.4 in \cite{Gr90} implies that that if $i\mu\neq0$ is an
embedded eigenvalue of $JL$ with $\left\langle Lu,u\right\rangle <0$ for some
eigenfunction $u$, then an unstable eigenvalue may bifurcate from $i\mu$ under
Hamiltonian perturbations. Similar result was also obtained in
\cite{cug-Pelinovsky05}. This is a special case of the above theorem.
Actually, we can relax the structural instability condition to be that
$\left\langle L\cdot,\cdot\right\rangle $ is not positive definite on
$E_{i\mu}$, including cases of degeneracy of $L|_{E_{i\mu}}$ or with Jordan chains.

However, it should be pointed out that it is not clear that the above
structural instability may be realized by the linearized equation of the
nonlinear Schr\"{o}dinger equation at a perturbed excited state. It would be
interesting to see if one can prove the structural instability in the sense
that there is linear instability for nearby excited states.
\end{remark}

\begin{remark}
\label{R:0-e-v} The case $\mu=0$ is not included in Theorem \ref{T:USImSpec}
since this may be related to some additional degeneracy of $L$ or $J$. See for
example Cases 3b and 3d in Subsection \ref{SS:ImSpec}. The analysis of
possible bifurcations of unstable eigenvalues from $\mu=0$ could be carried
out in a similar fashion based on the Propositions \ref{P:basis},
\ref{P:non-deg}, Lemma \ref{L:isolation}, \textit{etc.}, but more carefully.
We feel that it might be easier to work on this case directly in concrete
applications and thus do not include it in the above theorem.
\end{remark}

\subsection{A theorem where $L$ does not have a positive lower bound on
$X_{+}$}

\label{SS:degenerate}

Among our global assumptions (\textbf{H1-3}), (\textbf{H2}) requires that the
phase space $X$ is decomposed into the direct sum of three subspaces
$X=X_{-}\oplus\ker L\oplus X_{+}$, such that the quadratic form $\langle
L\cdot,\cdot\rangle$ is uniformly positive/negative on $X_{\pm}$. This
assumption plays a crucial role in the analysis throughout the paper. However,
in some Hamiltonian PDEs $L$, which usually appears as the Hessian of the
energy functional at a steady state, may not have a positive lower bound on
$X_{+}$. One such simple example is $X=H^{1}(\mathbf{R}^{n})$ and
$L=-\Delta+a(x)$ where $\lim_{|x|\rightarrow\infty}a(x)=0$. Even if $a>0$
which implies $L>0$, but for any $\delta>0$, there exists $u\in H^{1}$ such
that $\langle Lu,u\rangle<\delta\Vert u\Vert_{H^{1}}^{2}$. A potential
resolution to this issue in this specific example is to take a different phase
space such as $\dot{H}^{1}$ instead of $H^{1}$. In Section \ref{S:degenerate},
we show that this observation may be applied in a rather general setting. As a
non-trivial example of this case, the stability of traveling waves of a
nonlinear Schr\"{o}dinger equation in 2-dim with non-vanishing condition at
$|x|=\infty$ is considered in Subsection \ref{SS:2dGGP}.

In this subsection, let $X$ be a real Hilbert space with the inner product
$(\cdot, \cdot)$ and we assume

\begin{enumerate}
\item[(\textbf{B1})] $Q_{0}, Q_{1}: X \to X^{*}$ are bounded positive
symmetric linear operators such that
\[
\langle(Q_{0} + Q_{1}) u, v\rangle= (u,v), \; Q_{0,1}^{*} = Q_{0,1}, \;
\langle Q_{0,1}u, u \rangle>0, \ \forall\ 0\ne u, v \in X.
\]

\item[(\textbf{B2})] $\mathbb{J}:X\rightarrow X$ is a bounded linear operator
satisfying
\[
\mathbb{J}^{-1}=-\mathbb{J},\quad\langle Q_{0}\mathbb{J}u,\mathbb{J}%
u\rangle=\langle Q_{0}u,u\rangle,\quad\forall u\in X.
\]
Let $J=\mathbb{J}Q_{0}^{-1}:X^{\ast}\supset Q_{0}(X)\rightarrow X$.

\item[(\textbf{B3})] $L:X \to X^{*}$ is a bounded symmetric linear operator
such that $L_{1} = L- Q_{1}$ satisfies
\[
|\langle L_{1} u, v\rangle|^{2} \le c_{0} (\langle Q_{0} u, u\rangle\langle
Q_{0} v, v\rangle+ \langle Q_{0} u, u\rangle\langle Q_{1} v, v\rangle+ \langle
Q_{1} u, u\rangle\langle Q_{0} v, v\rangle).
\]

\item[(\textbf{B4})] There exist closed subspaces $X_{\pm}\subset X$ such
that
\begin{align}
&  X=X_{-}\oplus\ker L\oplus X_{+}, \quad n^{-}(L) \triangleq\dim X_{-} <
\infty,\label{E:B4-decom}\\
&  \pm\langle Lu_{\pm}, u_{\pm}\rangle>0, \; \langle Lu_{+}, u_{-} \rangle=0,
\; \forall\ 0\ne u_{\pm}\in X_{\pm}. \label{E:B4-sign}%
\end{align}

\item[(\textbf{B5})] Subspaces $X_{\pm}$ satisfy
\[
\ker i_{X_{+}}^{*}= \{f\in X^{*} \mid\langle f, u\rangle=0, \, \forall u\in
X_{+}\} \subset Q_{0}(X) = D(J)
\]
where $i_{X_{+}}^{*}: X^{*} \to X_{+}^{*}$ is the dual operator of the
embedding $i_{X_{+}}$.
\end{enumerate}

Obviously the assumption in (\textbf{B1}) that $Q_{1}+Q_{0}$ is the Riesz
representation of the inner product can be weakened to that it is the Riesz
representation of an equivalent inner product. It is also easy to verify that
$J$ is closed and anti-symmetric, namely, $J\subset-J^{\ast}$. Roughly the
$L$-orthogonal decomposition of $X$ can be constructed a.) by taking $\ker
L\oplus X_{+}$ as the $L$-orthogonal complement of a carefully chosen $X_{-}$
and then $X_{+}$ as any complimentary subspace of $\ker L$ there; or b.) from
a spectral decomposition of the linear operator on $X$ corresponding to the
quadratic form $\langle L\cdot,\cdot\rangle$ through certain inner product. In
a typical application as in Subsection \ref{SS:2dGGP}, $Q_{1}$ is often a
uniformly positive elliptic operator of order $2s$, $L_{1}$ is a perturbation
containing lower order derivatives with variable coefficients, and $Q_{0}$
corresponds to the $L^{2}$ duality. It is convenient to start with $X=H^{s}$
initially. The assumption $n^{-}(L)<\infty$ may come from the construction of
the steady state via some variational approach. The lack of a positive lower
bound of $L$ restricted to $X_{+}\subset H^{s}$ is often due to the missing
control of the $L^{2}$ norm by $\left\langle L\cdot,\cdot\right\rangle $. This
also forces us to make the slightly stronger assumption (\textbf{B5}) than
(\textbf{H3}). In Section \ref{S:degenerate} we prove

\begin{theorem}
\label{T:degenerate} There exists a Hilbert space $Y$ such that \newline(a)
$X$ is densely embedded into $Y$; \newline(b) $L$ can be extended to a bounded
symmetric linear operator $L_{Y}: Y \to Y^{*}$;\newline(c) $(Y, L_{Y}, J_{Y})$
satisfy (\textbf{H1-3}), where $J_{Y}: D(J) \cap Y^{*} \to Y$ is the
restriction of $J$.
\end{theorem}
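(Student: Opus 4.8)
The plan is to construct $Y$ as the completion of $X$ under a new, weaker norm associated with the quadratic form $\langle(Q_0 + Q_1 + (\text{correction for }\ker L))\cdot,\cdot\rangle$-type expression, in which $\langle L\cdot,\cdot\rangle$ regains a uniform positive lower bound on the positive subspace. The guiding intuition from the $H^1$ versus $\dot H^1$ example is that we want to forget the part of the $X$-norm that $\langle L\cdot,\cdot\rangle$ fails to control, while keeping enough to still have a Hilbert space on which $J$ extends sensibly. Concretely, on $X_+$ the form $\langle L\cdot,\cdot\rangle$ is positive but not uniformly so; assumption (\textbf{B3}) tells us that the bad part $L_1 = L - Q_1$ is controlled by $Q_0$-weighted quantities, and on $X_+$ we will define the new inner product essentially by $\langle L u, u\rangle$ itself (which equals $\langle Q_1 u, u\rangle + \langle L_1 u, u\rangle$), while on $X_-\oplus\ker L$, which is finite-dimensional plus the kernel, we keep a convenient inner product (e.g. the one induced by $Q_1$, or the original one restricted there). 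Then $Y$ is the completion of $X$ with respect to the norm $\|u\|_Y^2 = \|u_-\|_{X}^2 + \|u_0\|_X^2 + \langle L u_+, u_+\rangle$ relative to the splitting $u = u_- + u_0 + u_+$; one checks density of $X$ in $Y$ is automatic since $X$ is by construction dense in its own completion, giving (a).

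For part (b), the extension of $L$: since $Y^*$ is the dual of the completion, a bounded symmetric form on $Y$ extends to an operator $L_Y: Y\to Y^*$ provided $\langle L u, v\rangle$ is bounded with respect to $\|\cdot\|_Y$. On the $X_+$ block this is immediate — the new norm is tailored to it. On the finite-dimensional $X_-$ block and on $\ker L$ everything is trivially bounded. The genuine work is estimating the cross terms $\langle L u_+, u_-\rangle$ etc.; by (\textbf{B4}) we have $\langle L u_+, u_-\rangle = 0$ exactly, which removes the worst cross term, and $\langle L u_\pm, u_0\rangle$ involves $\ker L$ so vanishes or reduces to terms handled by (\textbf{B3}). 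So $L_Y$ is well-defined, bounded, symmetric, and by construction $\langle L_Y u, u\rangle \geq \delta \|u_+\|_Y^2$ for the $X_+$-component, while it is $\leq 0$ (indeed uniformly negative) on the finite-dimensional $X_-$ and zero on $\ker L$ — so the decomposition $Y = \overline{X_-}^Y \oplus \overline{\ker L}^Y \oplus \overline{X_+}^Y$ witnesses (\textbf{H2}) with the same $n^-(L)$ (the $X_-$ block being finite-dimensional is unchanged by completion).

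For part (c) the remaining points are that $J_Y := J|_{D(J)\cap Y^*}$ is anti-self-dual as a map into $Y$ (\textbf{H1}), and that (\textbf{H3}) holds for the new splitting. For (\textbf{H1}): from (\textbf{B2}), $J = \mathbb{J}Q_0^{-1}$ with $\mathbb{J}$ orthogonal-like and $Q_0$-preserving; I would show that $J$ maps $Y^*$-bounded functionals to $Y$ using the $Q_0$-isometry property $\langle Q_0\mathbb{J}u,\mathbb{J}u\rangle = \langle Q_0 u,u\rangle$ together with (\textbf{B3}), which is exactly the hypothesis bounding $L_1$ — and hence the discrepancy between the $X$- and $Y$-norms — by $Q_0$-weighted terms that $\mathbb{J}$ leaves invariant; anti-self-duality $J_Y^* = -J_Y$ is inherited from $J\subset -J^*$ once one identifies the duals correctly. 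For (\textbf{H3}): the new splitting has $X_{Y,-}$, $\ker L_Y$ essentially the same finite-type objects, so $\ker i^*_{X_{Y,+}\oplus X_{Y,-}}$ is comparable to $\ker i^*_{X_+}$ pulled into $Y^*$, and (\textbf{B5}) — which was stated precisely so that $\ker i^*_{X_+}\subset Q_0(X) = D(J)$ — delivers the inclusion into $D(J_Y)$.

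\textbf{The hard part} will be verifying that $J_Y$ actually maps into $Y$ and not just into some larger space: the perturbed system $u_t = J_\#L_\#u$ being ill-posed in the wrong space (the $\partial_{xx} + \epsilon\partial_x$ warning in the text) shows this is the delicate point where the Hamiltonian structure must be used. The key device is (\textbf{B2})'s identity $\langle Q_0\mathbb{J}u,\mathbb{J}u\rangle = \langle Q_0 u,u\rangle$: since the $Y$-norm, up to the controlled $L_1$-error from (\textbf{B3}) and the finite-dimensional corrections, is built from $\langle Q_0\cdot,\cdot\rangle + \langle Q_1\cdot,\cdot\rangle$, and $\mathbb{J}$ preserves the $Q_0$-part exactly, the operator $J = \mathbb{J}Q_0^{-1}$ converts a functional with finite $Q_0$-dual-norm into an element with finite $Q_0$-weighted norm; the $Q_1$-part of the $Y$-norm is the one that $\langle L\cdot,\cdot\rangle$ genuinely controls, so it is handled by the defining property of the $Y$-norm on $X_+$. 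Getting these three weights — $Q_0$, $Q_1$, and the $L_1$ cross-term bound — to balance against each other in the right direction is the crux, and it is exactly why assumptions (\textbf{B1})–(\textbf{B3}) are stated with that particular structure.
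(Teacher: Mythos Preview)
Your construction of $Y$ as $X_{\le 0} \oplus \overline{X_+}^{\langle L\cdot,\cdot\rangle}$ matches the paper's, and your treatment of (a), (b), and (\textbf{H2}), (\textbf{H3}) is essentially correct. But you have misidentified the hard part, and the sentence ``anti-self-duality $J_Y^* = -J_Y$ is inherited from $J\subset -J^*$ once one identifies the duals correctly'' is a genuine gap.

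The issue is this: on $X$, the operator $J=\mathbb{J}Q_0^{-1}$ is only known to be closed and anti-\emph{symmetric}, i.e.\ $J\subset -J^*$, not anti-self-adjoint. The entire content of (\textbf{H1}) for $(Y,L_Y,J_Y)$ is the \emph{reverse} inclusion $J_Y^*\subset -J_Y$, and this is not inherited: one must show that $D(J_Y^*)\subset D(J_Y)$, meaning that for every $f\in D(J_Y^*)$ with $u=J_Y^*f\in Y$, one actually has $i_X^*f\in Q_0(X)$. The problem you name --- ``$J_Y$ maps into $Y$ and not into some larger space'' --- is trivially satisfied, since by definition $J_Y = i_X\mathbb{J}Q_0^{-1}i_X^*$ lands in $X\subset Y$.

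The paper's route to $J_Y^*\subset -J_Y$ is to first prove the intermediate fact that $u=J_Y^*f$ already lies in $X$ (not merely in $Y$). This requires making the completion $Y_+$ concrete via a resolution of identity $\{\Pi_\lambda\}$ for the bounded self-adjoint operator $\mathbb{L}=(L_+ + aQ_0^+)^{-1}L_+$ on $(X_+, \|\cdot\|_{L_+,a})$, so that membership in $X_+$ versus $Y_+$ is an explicit integrability condition in $\lambda$. One then tests $\langle f, J_Y g\rangle = \langle g, u\rangle$ against a carefully chosen $g$ built from $\Pi_\lambda$ to extract a uniform bound on $\|(I-\Pi_\lambda)P_+u\|_{L_+,a}$ as $\lambda\to 0^+$, forcing $u\in X$. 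Only \emph{after} this is the $Q_0$-isometry property of $\mathbb{J}$ from (\textbf{B2}) invoked, to conclude $i_X^*f = Q_0\mathbb{J}\tilde u \in Q_0(X)$. Your discussion of (\textbf{B2}) and (\textbf{B3}) is aimed at the wrong target: those assumptions do not by themselves balance into a proof that $D(J_Y^*)\subset D(J_Y)$ without the spectral-integral machinery (or an equivalent device) and the preliminary Lemma relating $Q_0(X)\cap \tilde X_+^*$ to $Q_0^+(X_+)$.
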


It is natural to define $Y$ through the completion of $X$ under a norm based
on $L$. To prove this theorem, the key is to show (\textbf{H1}) and
(\textbf{H3}) are satisfied.

\subsection{Some Applications to PDEs}

\label{SS:Applications}

We briefly discuss the applications of the general theory to several PDE
models in Section \ref{SS:example}. First, we consider the stability of
traveling waves of dispersive wave models of KDV, BBM and good Boussinesq
types. These PDE models arise as approximation long wave models for water
waves etc. We treat general dispersion symbols including nonlocal ones.

For solitary waves, the linearized equations are written in a Hamiltonian form
where the symplectic operators $J$ turn out to be non-invertible unbounded
operators. The index formula and the exponential trichotomy estimates are
obtained from Theorems \ref{theorem-dichotomy} and \ref{theorem-counting}.

For periodic waves, the linearized equations for perturbations of the same
period are again written in the Hamiltonian form with $J$ having nontrivial
kernels. This brings changes to the index counting formula and stability
criteria. In recent years, similar index formula had been studied in various
cases. Our results give a unified treatment for general dispersion symbols.
For both solitary waves and periodic waves, the linear stability conditions
are also shown to imply nonlinear orbital stability. For the unstable cases,
the exponential dichotomy can be used to show nonlinear instability and even
to further construct local invariant (stable, unstable and center) manifolds
near the traveling wave orbit in the energy space. Moreover, when a.) the
negative dimension of the linearized energy functional is equal to the
unstable dimension of the linearized equation and b.) the kernel of the
linearized energy functional is generated exactly by the symmetry group of the
system, the orbital stability and local uniqueness on the center manifold
could be obtained. These invariant manifolds also give a complete description
of dynamics near the orbit of unstable profiles. For more details, we refer to
recent papers (\cite{jin-et-kdv} \cite{jin-et-GP}) on the construction of
invariant manifolds near unstable traveling waves of supercritical KDV
equation and 3D Gross-Pitavaeskii equation.

We then consider the linearized problems arisen from the modulational
(Benjamin-Feir, side-band) instability of period waves. Besides obtaining an
index formula for each Floquet-Block problem, we also carry out some
perturbation analysis to justify that unstable modes in the long wave limit
can only arise from zero eigenvalue of the co-periodic problem. Subsequently
we obtain the semigroup estimates for both multi-periodic and localized
perturbations, which played an important role on the recent proof
(\cite{lin-liao-jin-modulational}) of nonlinear modulational instability of
various dispersive models.

As another application, we consider the eigenvalue problem of the form
$Lu=\lambda u^{\prime}$, which arises in the stability of traveling waves of
generalized Bullough--Dodd equation (\ref{eqn-BD}). Let $J=\partial_{x}^{-1}$,
then it is equivalent to the Hamiltonian form $JLu=\lambda u$. Thus general
theorems can be applied to get instability index formula and the stability
criterion which generalize the results in \cite{stefnov-L-prime16} by relaxing
some restrictions. In particular it implies the linear instability of any
traveling wave of generalized Bullough--Dodd equation (\ref{eqn-BD}), removing
the convexity assumption in \cite{stefnov-L-prime16}.

Next, we consider stability/instability of steady flows of 2D Euler equation
in a bounded domain. For a large class of steady flows, the linearized Euler
equation can be written in a Hamiltonian form satisfying \textbf{(H1)-(H3)}.
Here, the symplectic operator $J$ has an infinite dimensional kernel. The
index formula is obtained in terms of a reduced operator related to the
projection to $\ker L$. By using the perturbation theory in Section
\ref{SS:SS}, the structural instability in the case of the presence of
embedded eigenvalues is shown. The Hamiltonian structures are also useful in
studying the enhanced damping and inviscid damping problems.

Lastly, we study the stability of traveling waves of 2D nonlinear
Schr\"{o}dinger equations with nonzero condition at infinity. When written in
the Hamiltonian form $JL$, the quadratic form $\left\langle L\cdot
,\cdot\right\rangle $ does not have uniform lower bound on the positive
subspace $X_{+}$. The strategy used for the 3D case (\cite{lin-wang-zeng})
does not work in 2D. We use the theory in Section \ref{SS:degenerate} to
construct a new and larger phase space to recover the uniform positivity of
$\left\langle L\cdot,\cdot\right\rangle $ on the positive space. Then the
theory in Section \ref{section-index theorem} is used to prove the stability
criterion in terms of the sign of $dP/dc$, where $P\left(  c\right)  $ is the
momentum of a traveling wave of speed $c$. As a somewhat unusual application
of the index formula, we prove the positivity of the momentum $P$ for
traveling waves (in both 2D and 3D) with general nonlinear terms.

\section{Basic properties of Linear Hamiltonian systems}

\label{S:Preliminary}

In this section, we present a few basic qualitative properties of the linear
equation \eqref{eqn-hamiltonian}, including the conservation of energy, some
elementary spectral properties, \textit{etc.} As our problem is set up in a
functional analysis theoretical framework, in some cases we have to follow the
painful rigor at an orthodox level. To make it less tedious, we only keep
those basic results directly related to the dynamics of
\eqref{eqn-hamiltonian} in this section, while some more elementary properties
of \eqref{eqn-hamiltonian}, including its well-posedness (Proposition
\ref{P:well-posedness}), are left in Section \ref{S:appendix}, the Appendix.

Like any Hamiltonian flow, we have the conservation of energy and the
symplectic structure of the flow defined by \eqref{eqn-hamiltonian}.

\begin{lemma}
[\cite{mackay86}]\label{lemma-L-form-preserve} For any solutions $u(t),v(t)$
of (\ref{eqn-hamiltonian}), then we have

\begin{enumerate}
\item $\frac{d}{dt}\left\langle Lu( t) ,v( t) \right\rangle =0$;

\item $R(J)$ is invariant under $e^{tJL}$; and

\item if $J$ is one-to-one ($J^{-1}$ not necessarily bounded) and $u(0) \in
R(J)$, then $\frac{d}{dt}\left\langle J^{-1} u( t) ,v( t) \right\rangle =0$.
\end{enumerate}
\end{lemma}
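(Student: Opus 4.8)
The plan is to verify the three items by direct differentiation, using only the defining property $J^\ast = -J$, the symmetry $L^\ast = L$, and the fact (from Proposition \ref{P:well-posedness}) that $JL$ generates a $C^0$ group, so that for initial data in $D(JL)$ the solutions $u(t), v(t)$ are genuinely differentiable in $X$ with $\dot u = JLu$, $\dot v = JLv$. I would first dispatch item (1): compute $\tfrac{d}{dt}\langle Lu(t), v(t)\rangle = \langle L\dot u, v\rangle + \langle Lu, \dot v\rangle = \langle LJLu, v\rangle + \langle Lu, JLv\rangle$. Now $\langle Lu, JLv\rangle = \langle J^\ast Lu, Lv\rangle = -\langle JLu, Lv\rangle = -\langle L(JLu), v\rangle$ by the symmetry of $L$ (reading $\langle L\cdot,\cdot\rangle$ as a symmetric bilinear form), so the two terms cancel and the derivative is zero. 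For general initial data one extends by density, since $e^{tJL}$ is bounded on $X$ and $\langle L\cdot,\cdot\rangle$ is a bounded bilinear form, so both sides are continuous in the initial data.

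For item (2), I would observe that $R(J)$ is invariant because for $u(0) = Jf \in R(J)$ one has $\tfrac{d}{dt}u = JLu \in R(J)$ at every time, so $u(t)$ stays in $R(J)$; more carefully, one can use the variation-of-constants / Duhamel identity or simply note $e^{tJL}J = J e^{tLJ}$ (formally $JL$ and $LJ$ are intertwined by $J$) to conclude $e^{tJL}(R(J)) \subseteq R(J)$, with the rigorous version handled via the resolvent identity $(\lambda - JL)^{-1} J = J(\lambda - LJ)^{-1}$ on the appropriate domains and then the inverse Laplace transform / semigroup generation. Alternatively, one can avoid operator-calculus subtleties by noting that $R(J)$, or rather its closure $\overline{R(J)}$, is exactly the $L$-orthogonal complement of $\ker J L^{-1}$-type objects; but the cleanest route is the intertwining relation.

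For item (3), assuming $J$ injective and $u(0) = Jf \in R(J)$, item (2) guarantees $u(t) = Jg(t)$ for a unique $g(t) = J^{-1}u(t)$, and $\dot u = JLu$ gives $J\dot g = JLu$, hence $\dot g = Lu$ by injectivity of $J$ (here $\dot g$ is interpreted in the weak sense that $g(t) - g(0) = \int_0^t Lu(s)\,ds$, valid since $u$ is continuous). Then $\tfrac{d}{dt}\langle J^{-1}u(t), v(t)\rangle = \tfrac{d}{dt}\langle g(t), v(t)\rangle = \langle \dot g, v\rangle + \langle g, \dot v\rangle = \langle Lu, v\rangle + \langle J^{-1}u, JLv\rangle = \langle Lu, v\rangle + \langle J^\ast J^{-1}u, Lv\rangle = \langle Lu, v\rangle - \langle u, Lv\rangle = 0$, again using $J^\ast = -J$ and $L^\ast = L$.

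The main obstacle is not the algebra, which is a two-line cancellation in each case, but the functional-analytic bookkeeping: ensuring differentiability of $t \mapsto \langle Lu(t),v(t)\rangle$ when the data are not in $D(JL)$, justifying the pairing manipulations $\langle Lu, JLv\rangle = -\langle JLu, Lv\rangle$ when $Lv$ may only lie in $D(J)$ and not in any nicer space, and making sense of $J^{-1}u(t)$ and its time derivative in item (3) without assuming $J^{-1}$ bounded. I expect to handle all of these by first proving the identities for smooth data (data in $\cap_k D((JL)^k)$, which is dense), where every term is classically meaningful, and then passing to the limit using the boundedness of $e^{tJL}$ on $X$ (and on the graph-norm space $D(JL)$) together with the continuity of the bilinear form $\langle L\cdot,\cdot\rangle$; for item (3) the limiting argument is instead carried out in the space $R(J)$ with the norm $\|Jf\| + \|f\|$, or one simply notes that both sides are constant on a dense set and continuous, hence constant.
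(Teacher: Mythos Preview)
Your treatment of items (1) and (3) is essentially the paper's proof: direct cancellation for data in $D(JL)$, then density. For item (3), the paper also uses the formula $J^{-1}u(t) = J^{-1}u(0) + \int_0^t Lu(s)\,ds$ coming from item (2), exactly as you do.

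For item (2), the paper takes precisely your ``variation-of-constants / Duhamel'' option, and this is where it pays to be specific. The argument is: for $x \in D(JL)$ one has
\[
e^{tJL}x - x = J\int_0^t L e^{t'JL}x\,dt',
\]
and since $J$ is closed and $D(JL)$ is dense, this identity persists for all $x \in X$ (the integral $\int_0^t L e^{t'JL}x\,dt'$ lands in $D(J)$ in the limit). Hence $e^{tJL}x - x \in R(J)$ for \emph{every} $x$, and in particular $R(J)$ is invariant. This avoids your preferred intertwining route $e^{tJL}J = Je^{tLJ}$, which would require that $LJ$ generate a $C^0$ group; only closedness of $LJ$ is established (Corollary \ref{C:decomJ}), and its generation of a group is neither proved nor needed. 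Your naive first pass (``$\dot u = JLu \in R(J)$ so $u(t)$ stays in $R(J)$'') also has a gap, since $R(J)$ need not be closed and pointwise containment of the derivative does not immediately propagate. The Duhamel-plus-closedness-of-$J$ argument is both the cleanest and the one the paper uses.
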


\begin{proof}
Property (1) is clearly true if $u(0), v(0) \in D(JL)$ and then the general
case follows immediately from a density argument. To prove (2), we first
notice, for $x \in D(JL)$,
\[
e^{tJL} x - x = J \int_{0}^{t} L e^{t^{\prime}JL} x dt^{\prime}.
\]
Since $J$ is closed and $D(JL)$ is dense, a density argument implies that
$\int_{0}^{t} L e^{t^{\prime}JL} x dt^{\prime}\in D(J)$ for all $x$ and the
above equality holds for all $x$ and thus $R(J)$ is invariant under $e^{tJL}$.
For (3), first consider $v(0) \in D(JL)$ and the above equality yields
\begin{align*}
\frac{d}{dt}\left\langle J^{-1} u( t) ,v( t) \right\rangle = \langle L u(t),
v(t)\rangle+ \langle J^{-1} u(t), JL v(t) \rangle=0
\end{align*}
where we used the assumption that $J$ is anti-self-adjoint. Again the general
case of (3) follows from the density of $D(JL)$.
\end{proof}

An immediate consequence of the conservation of the quadratic form $\langle
L\cdot, \cdot\rangle$ is on invariant subspaces.

\begin{lemma}
\label{L:InvariantSubS} Suppose a subspace $X_{1} \subset X$ is invariant
under $e^{tJL}$, i.e. $e^{tJL} X_{1} \subset X_{1}$ for all $t\in\mathbf{R}$,
then $e^{tJL} X_{2} \subset X_{2}$ for all $t \in\mathbf{R}$ where the closed
subspace $X_{2} = \{u\in X\mid\langle Lu, v\rangle=0, \; \forall v \in
X_{1}\}$.
\end{lemma}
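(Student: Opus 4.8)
The plan is to exploit the conservation law in part (1) of Lemma \ref{lemma-L-form-preserve} together with the fact that $e^{tJL}$ is a $C^0$ group (Proposition \ref{P:well-posedness}), so that $e^{tJL}$ is invertible with inverse $e^{-tJL}$. First I would fix $u \in X_2$ and $t \in \mathbf{R}$, and aim to show $e^{tJL}u \in X_2$, i.e. $\langle L(e^{tJL}u), v\rangle = 0$ for every $v \in X_1$. The natural move is to write $v = e^{tJL}(e^{-tJL}v)$ and set $w = e^{-tJL}v$; since $X_1$ is invariant under $e^{sJL}$ for \emph{all} $s \in \mathbf{R}$ (this is where one-sided invariance is not enough, but the hypothesis gives invariance for all $t$), we have $w \in X_1$. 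Then applying part (1) of Lemma \ref{lemma-L-form-preserve} to the two solutions $t' \mapsto e^{t'JL}u$ and $t' \mapsto e^{t'JL}w$, the quantity $\langle L(e^{t'JL}u), e^{t'JL}w\rangle$ is constant in $t'$; evaluating at $t' = t$ and at $t' = 0$ gives
\[
\langle L(e^{tJL}u), v\rangle = \langle L(e^{tJL}u), e^{tJL}w\rangle = \langle Lu, w\rangle = 0,
\]
the last equality because $u \in X_2$ and $w \in X_1$. Since $v \in X_1$ was arbitrary, $e^{tJL}u \in X_2$.

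Two small technical points need attention. First, part (1) of Lemma \ref{lemma-L-form-preserve} as stated is about genuine solutions $u(t), v(t)$; one should note that $t \mapsto e^{tJL}u$ and $t \mapsto e^{tJL}w$ are exactly the (mild, and by density, legitimate) solutions with the respective initial data, so the conservation identity applies — this is already handled in the proof of that lemma by a density argument, so I would just cite it. Second, one must confirm $X_2$ is closed: this is immediate since $X_2 = \bigcap_{v \in X_1} \ker\big(\langle L\cdot, v\rangle\big)$ is an intersection of kernels of bounded linear functionals (boundedness of $\langle L\cdot,\cdot\rangle$ is part of \textbf{(H2)}), hence closed; the problem statement already records this.

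I do not expect a serious obstacle here — the lemma is essentially a one-line consequence of conservation of $\langle L\cdot,\cdot\rangle$ plus invertibility of the flow. The only thing to be careful about is the direction of time: the argument genuinely uses that $X_1$ is invariant under $e^{-tJL}$ as well as $e^{tJL}$, which is why the hypothesis is stated for all $t \in \mathbf{R}$ rather than just $t \geq 0$. If one only had forward invariance of $X_1$, the claim would fail in general, so this is the one place where the hypothesis must be used in full strength.
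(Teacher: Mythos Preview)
Your proof is correct and follows essentially the same approach as the paper: both use the conservation of $\langle L\cdot,\cdot\rangle$ from Lemma \ref{lemma-L-form-preserve}(1) together with the invariance of $X_1$ under $e^{-tJL}$ to write $\langle L e^{tJL}u, v\rangle = \langle Lu, e^{-tJL}v\rangle = 0$. Your additional remarks on the closedness of $X_2$ and the necessity of two-sided invariance are accurate but not needed for the core argument.
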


\begin{proof}
For any $u\in X_{2}$, $v\in X_{1}$, and $t \in\mathbf{R}$, Lemma
\ref{lemma-L-form-preserve} and the invariance of $X_{1}$ imply
\[
\langle L e^{tJL} u, v\rangle= \langle L u, e^{-tJL} v\rangle= 0
\]
which yields the conclusion.
\end{proof}

While in a substantial part of the paper, we shall work with the real Hilbert
space $X$ and real operators $J,L$, \textit{etc.}, for considerations where
complex eigenvalues are involved, we have to work with their standard
\textbf{complexification}. See the Appendix (Section \ref{S:appendix}) for details.

Let $\lambda$ be an eigenvalue of $JL$ (i.e. $\lambda\in\sigma\left(
JL\right)  $) and
\[
E_{\lambda}= \{ u\in X\ |\ ( JL-\lambda I) ^{k} u=0,\ \text{for some integer
}k\geq1\}.
\]
Then by Lemma \ref{lemma-L-form-preserve}, we have

\begin{lemma}
[Lemma 2 in \cite{mackay86} or Lemma 2.7 in \cite{kappitula-haragus}%
]\label{lemma-orthogonal-eigenspace} If $v_{1}\in E_{\lambda_{1}},v_{2}\in
E_{\lambda_{2}}$ and $\lambda_{1}+\bar{\lambda}_{2}\neq0$, then $\left\langle
Lv_{1},v_{2}\right\rangle =0$.
\end{lemma}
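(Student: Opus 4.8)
The plan is to run the standard Hamiltonian argument: use the conservation of the quadratic form from Lemma \ref{lemma-L-form-preserve}, applied on the complexified space, together with the fact that on a generalized eigenspace the group $e^{tJL}$ acts as an exponential times a polynomial.

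First I would record the algebraic identity behind everything. For $u,v\in D(JL)$, using $L^{\ast}=L$ and $J^{\ast}=-J$ one has $\langle L(JL)u,v\rangle=\langle Lv,JLu\rangle=\langle J^{\ast}Lv,Lu\rangle=-\langle Lu,(JL)v\rangle$, i.e. $JL$ is anti-self-adjoint with respect to $\langle L\cdot,\cdot\rangle$ (this is just the infinitesimal form of Lemma \ref{lemma-L-form-preserve}(1), and it carries over verbatim to the complexification since $J,L$ are real, only scalars getting conjugated). Consequently, along the complexified flow $t\mapsto\langle L\,e^{tJL}v_{1},e^{tJL}v_{2}\rangle$ is constant; this follows from Lemma \ref{lemma-L-form-preserve}(1) by splitting into real and imaginary parts, each a real solution.

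Next, fix $v_{1}\in E_{\lambda_{1}}$, $v_{2}\in E_{\lambda_{2}}$, and pick $m$ with $(JL-\lambda_{j})^{m}v_{j}=0$. Then $e^{tJL}v_{j}=e^{\lambda_{j}t}P_{j}(t)$ where $P_{j}(t)=\sum_{k=0}^{m-1}\frac{t^{k}}{k!}(JL-\lambda_{j})^{k}v_{j}$ is a finite sum, a polynomial in $t$ with values in $E_{\lambda_{j}}$ (so the possible infinite dimensionality of $E_{\lambda_{j}}$, or that $\lambda_{j}$ may be embedded, causes no trouble). Plugging into the conserved quantity and using that $t\in\mathbf{R}$ is real,
\[
\langle Lv_{1},v_{2}\rangle=\langle L\,e^{tJL}v_{1},e^{tJL}v_{2}\rangle=e^{(\lambda_{1}+\bar{\lambda}_{2})t}\,Q(t),\qquad Q(t):=\langle LP_{1}(t),P_{2}(t)\rangle,
\]
and $Q$ is a polynomial in $t$. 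If $\lambda_{1}+\bar{\lambda}_{2}\neq0$, then $e^{(\lambda_{1}+\bar{\lambda}_{2})t}Q(t)$ cannot be independent of $t$ unless $Q\equiv0$: when $\operatorname{Re}(\lambda_{1}+\bar{\lambda}_{2})\neq0$ the modulus of the exponential tends to $0$ or $\infty$ as $t\to\pm\infty$ while $|Q(t)|$ is at most polynomial; when $\lambda_{1}+\bar{\lambda}_{2}=i\omega\neq0$, a non-zero polynomial times $e^{i\omega t}$ is either unbounded or non-constant. Hence $Q\equiv0$, and evaluating at $t=0$ gives $\langle Lv_{1},v_{2}\rangle=Q(0)=0$.

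I do not expect a real obstacle here; the only points needing a line of care are the complexification bookkeeping (conservation of $\langle L\cdot,\cdot\rangle$ and the polynomial-times-exponential description of $e^{tJL}|_{E_{\lambda}}$) and the elementary fact that $e^{\mu t}$ times a polynomial is non-constant for $\mu\neq0$. A purely algebraic alternative avoids the semigroup altogether: iterating the identity gives $\langle L(JL-\lambda_{1})^{m}v_{1},w\rangle=(-1)^{m}\langle Lv_{1},(JL+\bar{\lambda}_{1})^{m}w\rangle$ for every $w\in E_{\lambda_{2}}$, whose left side vanishes; since $JL+\bar{\lambda}_{1}=(\bar{\lambda}_{1}+\lambda_{2})I+(JL-\lambda_{2})$ is a non-zero scalar plus a nilpotent on each $\ker(JL-\lambda_{2})^{n}$ when $\bar{\lambda}_{1}+\lambda_{2}\neq0$, it is invertible there, so one may choose $w$ with $(JL+\bar{\lambda}_{1})^{m}w=v_{2}$ and conclude $\langle Lv_{1},v_{2}\rangle=0$.
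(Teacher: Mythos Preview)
Your proposal is correct and follows essentially the same route the paper indicates: the paper simply cites Lemma~\ref{lemma-L-form-preserve} (conservation of $\langle L\cdot,\cdot\rangle$ along the flow) and the references \cite{mackay86,kappitula-haragus} without spelling out the argument, and your main proof is exactly the standard derivation from that lemma. One cosmetic point: under the paper's complexification convention $\langle L\cdot,\cdot\rangle$ is conjugate-linear in the first slot, so the exponent that appears is $e^{(\bar\lambda_{1}+\lambda_{2})t}$ rather than $e^{(\lambda_{1}+\bar\lambda_{2})t}$; since these are complex conjugates the nonvanishing hypothesis and the conclusion are unaffected. Your algebraic alternative at the end is also valid and in fact closer in spirit to the identity \eqref{E:anti-S} that the paper uses repeatedly afterwards.
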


The following lemma will be repeatedly used to analyze the structure of
$E_{i\mu}$, $\mu\in\mathbf{R}$.

\begin{lemma}
\label{lemma-orthogonal-condition-chain} For any $i\mu\in\sigma\left(
JL\right)  $ $\left(  \mu\in\mathbf{R}\right)  $, $u_{1} \in(JL-i\mu)^{l} X$,
and $u_{2} \in\ker(JL - i\mu)^{l}$, then $\left\langle Lu_{1}, u_{2}%
\right\rangle =0$.
\end{lemma}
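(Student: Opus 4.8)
The plan is to reduce the statement to the single algebraic fact that $JL$ is anti-self-adjoint with respect to the form $\langle L\cdot,\cdot\rangle$, and then iterate it while keeping track of domains. I would work throughout in the complexification (Section \ref{S:appendix}), where $\langle L\cdot,\cdot\rangle$ is a bounded Hermitian form and $JL-i\mu$ is densely defined. The first step is to establish, for all $w,v\in D(JL)$,
\[
\langle L(JL)w,v\rangle=-\langle Lw,(JL)v\rangle .
\]
Indeed, since $L$ is bounded and symmetric, $\langle L(JLw),v\rangle=\langle Lv,JLw\rangle$; and since $J^{\ast}=-J$, the elements $Lw,Lv\in X^{\ast}$ — which lie in $D(J)$ precisely because $w,v\in D(JL)$ — satisfy $\langle Lv,J(Lw)\rangle=-\langle Lw,J(Lv)\rangle$, and combining these gives the identity. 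Because $\overline{i\mu}=-i\mu$ for real $\mu$, adding the scalar term $-i\mu\langle Lw,v\rangle$ to both sides shows that the ``$\langle L\cdot,\cdot\rangle$-adjoint'' of $JL-i\mu$ is again, up to a sign, $JL-i\mu$ itself:
\[
\langle L(JL-i\mu)w,v\rangle=-\langle Lw,(JL-i\mu)v\rangle ,\qquad w,v\in D(JL).
\]
(The overall sign is immaterial for the conclusion, so I need not be fussy about the sesquilinearity convention when extending from real $w,v$.)

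Next I would iterate this by induction on $l$: for all $w,v\in D\big((JL)^{l}\big)$,
\[
\langle L(JL-i\mu)^{l}w,v\rangle=(-1)^{l}\langle Lw,(JL-i\mu)^{l}v\rangle .
\]
The only subtlety is that at the $j$-th application of the basic identity the intermediate vectors $(JL-i\mu)^{j}w$ and $(JL-i\mu)^{l-1-j}v$ must still lie in $D(JL)$; this is exactly what the hypothesis $w,v\in D((JL)^{l})$ guarantees, and is the reason those domain conditions are built into the statement of the lemma.

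Finally, writing $u_{1}=(JL-i\mu)^{l}x$ with $x\in D((JL)^{l})$ and noting $u_{2}\in\ker(JL-i\mu)^{l}\subseteq D((JL)^{l})$, the iterated identity with $w=x$, $v=u_{2}$ gives
\[
\langle Lu_{1},u_{2}\rangle=(-1)^{l}\langle Lx,(JL-i\mu)^{l}u_{2}\rangle=0 .
\]
I do not expect a genuine obstacle here: the entire content is the anti-self-adjointness of $JL$ in the indefinite form $\langle L\cdot,\cdot\rangle$ plus a careful bookkeeping of domains for the possibly unbounded operator $JL$ — the one place a careless argument could slip is applying the basic identity to a vector outside $D(JL)$. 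This lemma, together with Lemma \ref{lemma-orthogonal-eigenspace}, is exactly the tool needed later to analyze the structure of the generalized eigenspaces $E_{i\mu}$.
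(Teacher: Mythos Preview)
Your proof is correct and follows essentially the same approach as the paper's: both establish the identity $\langle L(JL-i\mu)w,v\rangle=-\langle Lw,(JL-i\mu)v\rangle$ (the paper's equation \eqref{E:anti-S}) and then iterate $l$ times to move $(JL-i\mu)^{l}$ from $u_{1}$ onto $u_{2}$, where it vanishes. Your version is in fact more careful about domain bookkeeping than the paper's terse proof, which simply states \eqref{E:anti-S} ``for any $u,v\in X$'' without explicitly restricting to $D(JL)$.
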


\begin{proof}
First, we observe that for any $u,v\in X$,
\begin{equation}
\label{E:anti-S}\left\langle L\left(  JL-i\mu\right)  u,v\right\rangle
=-\left\langle Lu,\left(  JL-i\mu\right)  v\right\rangle .
\end{equation}
Let $v\in X$ such that $(JL-i\mu)^{l} v = u_{1}$, then we have
\[
\langle Lu_{1},u_{2}\rangle= \langle L (JL-i\mu)^{l} v,u_{2}\rangle= (-1)^{l}
\langle L v, (JL-i\mu)^{l} u_{2}\rangle=0.
\]

\end{proof}

The following lemma is a direct consequence of Lemma
\ref{lemma-orthogonal-condition-chain} and \eqref{E:anti-S}.

\begin{lemma}
\label{L:e-space-1} For any $i \mu\in\sigma(JL) \cap i\mathbf{R}$, it holds
\[
E_{i\mu} = \ker(JL- i\mu)^{2k^{\le0} (i\mu)+1}, \; \mu\ne0, \; \text{ and } \;
E_{0} = \ker(JL)^{2k_{0}^{\le0} +2}.
\]

\end{lemma}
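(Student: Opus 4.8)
The plan is to prove that $E_{i\mu} = \ker(JL - i\mu)^{2k^{\leq 0}(i\mu)+1}$ for $\mu \neq 0$ (and the analogue $E_0 = \ker(JL)^{2k_0^{\leq 0}+2}$) by showing that the Jordan chains at $i\mu$ cannot be too long, the obstruction being that a long chain forces many nonpositive directions of $\langle L\cdot,\cdot\rangle$. Recall $E_{i\mu} = \bigcup_{k\geq 1}\ker(JL-i\mu)^k$ by definition, so it suffices to bound the exponent at which the kernels stabilize.

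First I would set $N = JL - i\mu$ and analyze a single Jordan chain. Suppose $u_m \in \ker N^m \setminus \ker N^{m-1}$, and set $u_j = N^{m-j}u_m$ for $j = 1,\dots,m$, so that $Nu_1 = 0$ and $u_j \in \ker N^j$, $u_j \in N^{m-j}X$. The key computational input is the identity \eqref{E:anti-S}, $\langle L(JL-i\mu)u, v\rangle = -\langle Lu, (JL-i\mu)v\rangle$, i.e. $\langle LNu,v\rangle = -\langle Lu, Nv\rangle$; this is exactly the statement that $N$ is anti-self-adjoint with respect to $\langle L\cdot,\cdot\rangle$ after complexification (it follows from $L^* = L$ and $J^* = -J$, noting $i\mu$ is imaginary so $(i\mu)^* = -i\mu$ matches the sign flip). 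Using this repeatedly, one computes $\langle Lu_j, u_k\rangle = \langle LN^{m-j}u_m, N^{m-k}u_m\rangle$; by \eqref{E:anti-S} we may move the powers of $N$ from one slot to the other at the cost of signs, so $\langle Lu_j, u_k\rangle$ depends only on $j+k$ (up to sign), and it vanishes whenever $j + k \leq m$ by Lemma \ref{lemma-orthogonal-condition-chain} (since then $u_k \in \ker N^k \subset \ker N^{m-j}$ while $u_j \in N^{m-j}X$). Thus on $\mathrm{span}\{u_1,\dots,u_m\}$ the matrix of $\langle L\cdot,\cdot\rangle$ is anti-triangular: its entries below the anti-diagonal $j+k = m+1$ vanish. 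The crucial nonvanishing fact is that the anti-diagonal entry $\langle Lu_j, u_{m+1-j}\rangle$ is nonzero — if it were zero for all $j$, then one checks $\langle L\cdot,\cdot\rangle$ would be degenerate on a larger invariant subspace forcing the chain to actually be shorter, contradicting $u_m \notin \ker N^{m-1}$; this is essentially the content leading to \eqref{L-anti-diagonal} in Proposition \ref{P:basis}. An anti-triangular Hermitian matrix of size $m$ with nonzero anti-diagonal has signature with at least $\lfloor m/2\rfloor$ negative directions and, when $m$ is even, exactly $m/2$ of each sign; in particular it contributes at least $\lceil (m-1)/2 \rceil$ to $n^{\leq 0}(L|_{E_{i\mu}})$. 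More carefully, a chain of length $m$ contributes at least $\lfloor m/2 \rfloor$ nonpositive directions when $m$ is even and at least $(m-1)/2$ when $m$ is odd, so in all cases at least $(m-1)/2$.

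Next I would assemble the chains. By Lemma \ref{lemma-orthogonal-eigenspace} distinct generalized eigenspaces $E_\lambda, E_\mu$ with $\lambda + \bar\mu \neq 0$ are $L$-orthogonal, so the contribution to $n^{\leq 0}(L)$ from $E_{i\mu}$ (with $\mu > 0$, paired with $E_{-i\mu}$ via the symmetry in Corollary \ref{C:symmetry}) is independent across eigenvalues. Within $E_{i\mu}$, if there is any Jordan chain of length $m$, the above gives $k^{\leq 0}(i\mu) = n^{\leq 0}(L|_{E_{i\mu}}) \geq (m-1)/2$, hence $m \leq 2k^{\leq 0}(i\mu) + 1$. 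Since every element of $\ker N^{m}$ lies in a Jordan chain of length at most $m$, this shows $\ker N^{m} = \ker N^{2k^{\leq 0}(i\mu)+1}$ for all $m \geq 2k^{\leq 0}(i\mu)+1$, i.e. $E_{i\mu} = \ker N^{2k^{\leq 0}(i\mu)+1}$. For the zero eigenvalue $\mu = 0$ the argument is the same except that $\ker L \subset E_0$ does not contribute to the chain structure transversal to $\ker L$, which produces the extra $+1$ in the exponent (length $m$ chains modulo $\ker L$ contribute at least $(m-1)/2$ nonpositive directions but the raw kernel $\ker(JL)^2$ can be larger than $JL\,E_0$, cf. Remark \ref{R:(JL)^2}); tracking this gives $E_0 = \ker(JL)^{2k_0^{\leq 0}+2}$.

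The main obstacle I anticipate is justifying that the anti-diagonal entries $\langle Lu_j, u_{m+1-j}\rangle$ are genuinely nonzero on a maximal-length chain, and handling the interaction between several chains of the same length and chains of different lengths simultaneously — one cannot just diagonalize each chain's anti-triangular block independently because the cross terms $\langle L u^{(i)}, u^{(j)}\rangle$ between different chains of length $m$ need not vanish. The clean way around this is to invoke Proposition \ref{P:basis}, which provides a basis realizing the Jordan form of $JL|_{E_{i\mu}}$ in which $L$ is block-anti-diagonal with nonzero entries and different chains are mutually $L$-orthogonal; then counting $n^{\leq 0}$ over all blocks is immediate. Since the present lemma is stated as a direct consequence of Lemma \ref{lemma-orthogonal-condition-chain} and \eqref{E:anti-S}, I would actually prefer the self-contained route: work with $\ker N^m$ directly, pick $u \in \ker N^m \setminus \ker N^{m-1}$, and on the $N$-cyclic subspace it generates observe the anti-triangular structure; the nonvanishing of the extreme anti-diagonal entry $\langle Lu, N^{m-1}u\rangle$ — equivalently $\langle LN^{m-1}u, u\rangle$ — follows because $N^{m-1}u \in \ker N$ is nonzero and, were $\langle LN^{m-1}u, w\rangle = 0$ for all $w$ in the cyclic subspace, a short induction using \eqref{E:anti-S} would force $N^{m-1}u$ to be $L$-orthogonal to all of $E_{i\mu}$ and then (since $L$ restricted to the full eigenspace pairs it nondegenerately against the complementary part, by the decomposition of Theorem \ref{T:decomposition}) give a contradiction with $N^{m-1}u \neq 0$. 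This yields at least $(m-1)/2$ nonpositive directions and closes the argument.
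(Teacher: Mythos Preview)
Your setup is right up through the anti-triangular structure: you correctly observe via Lemma~\ref{lemma-orthogonal-condition-chain} that $\langle Lu_j, u_k\rangle = 0$ whenever $j+k \leq m$. But then you take a wrong turn by trying to prove the anti-diagonal entries $\langle Lu_j, u_{m+1-j}\rangle$ are nonzero. This is both unnecessary and false in general: the paper constructs in Subsection~\ref{SS:non-deg} (Lemma~\ref{L:CExample}) an explicit example where $\langle L\cdot,\cdot\rangle$ vanishes identically on $E_{i\mu}$, so every anti-diagonal entry on every chain is zero. Your proposed justifications fail accordingly---degeneracy of $L$ on the cyclic subspace does not force the chain to shorten, your ``$L$-orthogonal to all of $E_{i\mu}$ implies zero'' step presumes a nondegeneracy that the example shows need not hold, and appealing to Theorem~\ref{T:decomposition} or Proposition~\ref{P:basis} would be circular since both are proved later and rely on material built on the present lemma.

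The paper's argument is simpler and uses only what you already have. From your own observation, on the span of $\{u_1,\dots,u_\ell\}$ with $\ell = \lfloor m/2\rfloor$ every entry $\langle Lu_j,u_k\rangle$ vanishes (since $j+k \leq 2\ell \leq m$). A subspace on which $\langle L\cdot,\cdot\rangle \equiv 0$ is in particular nonpositive, so $k^{\leq 0}(i\mu) \geq \lfloor m/2\rfloor$. If some chain had length $m \geq 2k^{\leq 0}(i\mu)+2$ this would give $k^{\leq 0}(i\mu) \geq k^{\leq 0}(i\mu)+1$, a contradiction; hence $E_{i\mu}=\ker(JL-i\mu)^{2k^{\leq 0}(i\mu)+1}$. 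No signature count of an anti-triangular matrix is needed at all. For $\mu=0$ the paper splits into two cases according to whether the top vector $(JL)^{K-1}u$ lies in $\ker L$; in the second case one works with the span of $\{(JL)^{K-2}u,\dots,(JL)^{K-k_0^{\leq 0}-2}u\}$ inside a complement of $\ker L$, and this is what produces the extra $+1$ in the exponent.
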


\begin{remark}
\label{R:e-space-1} As $JL-i\mu$ is a generator of a strongly $C^{0}$
semigroup, $(JL-i\mu)^{m}$ is closed for any $m$ and thus $E_{i\mu}$ is a
closed subspace and $JL|_{E_{i\mu}}$ is a bounded operator with $\sigma
(JL|_{E_{i\mu}}) = \{i\mu\}$.
\end{remark}

\begin{proof}
We first consider $\mu\neq0$ and argue by contradiction. Suppose $u\in
E_{i\mu}$ such that
\[
(JL-i\mu)^{K}u=0,\quad(JL-i\mu)^{K-1}u\neq0,\quad K\geq2k^{\leq0}(i\mu)+2.
\]
For any $K-1\geq j_{1},j_{2}\geq K-k^{\leq0}(i\mu)-1$, we obtain from Lemma
\ref{lemma-orthogonal-condition-chain}
\[
\langle L(JL-i\mu)^{j_{1}}u,(JL-i\mu)^{j_{2}}u\rangle=0.
\]
Therefore, the quadratic form $\langle L\cdot,\cdot\rangle$ vanishes on
$span\{(JL-i\mu)^{K-1}u,(JL-i\mu)^{K-2},\ldots,(JL-i\mu)^{K-k^{\leq0}(i\mu
)-1}u\}$, whose dimension is $k^{\leq0}(i\mu)+1$. This contradicts the
definition of $k^{\leq0}(i\mu)$.

To finish the proof, we consider $\mu=0$. Again we argue by contradiction.
Suppose $u\in E_{0}$ is such that
\[
(JL)^{K}u=0,\quad(JL)^{K-1}u\neq0,\quad K\geq2k_{0}^{\leq0}+3.
\]

\textit{Case 1. $(JL)^{K-1}u\notin\ker L$.} In this case, clearly
\[
span\{(JL)^{j}u\mid0\leq j\leq K-1\}\cap\ker L=\{0\}.
\]
Let $\tilde{E}_{0}\subset E_{0}$ be a subspace such that $E_{0}=\tilde{E}%
_{0}\oplus\ker L$ and $(JL)^{j}u\in\tilde{E}_{0}$ for any $0\leq j\leq K-1$.
Much as in the above, $\langle L\cdot,\cdot\rangle$ vanishes on
\[
Z\triangleq span\{(JL)^{K-1}u,(JL)^{K-2}u,\ldots,(JL)^{K-k_{0}^{\leq0}%
-1}u\}\subset\tilde{E}_{0}.
\]
Since $\dim Z=k_{0}^{\leq0}+1$, this is a contradiction to the definition of
$k_{0}^{\leq0}$.

\textit{Cases 2. $(JL)^{K-1}u\in\ker L\backslash\{0\}$.} Clearly,
\[
span\{(JL)^{j}u\mid0\leq j\leq K-2\}\cap\ker L=\{0\}.
\]
Let $\tilde{E}_{0}\subset E_{0}$ be a subspace such that $E_{0}=\tilde{E}%
_{0}\oplus\ker L$ and $(JL)^{j}u\in\tilde{E}_{0}$ for any $0\leq j\leq K-2$.
Let
\[
Z\triangleq span\{(JL)^{K-2}u,(JL)^{K-3}u,\ldots,(JL)^{K-k_{0}^{\leq0}%
-2}u\}\subset\tilde{E}_{0}.
\]
According to Lemma \ref{lemma-orthogonal-condition-chain}, for $K-k_{0}%
^{\leq0}-2\leq j_{1},j_{2}\leq K-2$ and $j_{1}+j_{2}\geq K$, we have $\langle
L(JL)^{j_{1}}u,(JL)^{j_{2}}u\rangle=0$. If $K-k_{0}^{\leq0}-2\leq j_{1}%
,j_{2}\leq K-2$ and $j_{1}+j_{2}<K$, it must hold $j_{1}=j_{2}=K-k_{0}^{\leq
0}-2$ and $K=2k_{0}^{\leq0}+3$. Using \eqref{E:anti-S} we obtain
\[
\langle L(JL)^{K-k_{0}^{\leq0}-2}u,(JL)^{K-k_{0}^{\leq0}-2}u\rangle
=(-1)^{K-k_{0}^{\leq0}-2}\langle L(JL)^{K-1}u,u\rangle=0
\]
where in the last equality we used $(JL)^{K-1}u\in\ker L$. Therefore, $\langle
L\cdot,\cdot\rangle$ vanishes on $Z$. Since $\dim Z=k_{0}^{\leq0}+1$, this is
again a contradiction to the definition of $k_{0}^{\leq0}$. The proof of the
lemma is complete.
\end{proof}

To end the section of basic properties, we prove the following Lemma on the
symmetry of $\sigma\left(  JL\right)  $ about both axes.

\begin{lemma}
\label{L:symmetry} Assume (\textbf{H1})-(\textbf{H3}), except for
$n^{-}\left(  L\right)  <\infty$. Suppose $\lambda\in\sigma\left(  JL\right)
$, then we have

i) $\pm\lambda,\pm\bar{\lambda}\in\sigma\left(  JL\right)  $.

ii) Suppose $\lambda$ is an eigenvalue of $JL$ and assume in addition $\ker
L=\{0\}$ or $\lambda\neq0$, then $\bar{\lambda}$ is also an eigenvalue of $JL$
and $-\lambda,-\bar{\lambda}$ are eigenvalues of $(JL)^{\ast}=-LJ$. Moreover,
for any $k>0$,
\begin{equation}
\ker(JL-\bar{\lambda})^{k}=\{\bar{u}\mid u\in\ker(JL-\lambda)^{k}%
\}\label{E:bar-lambda}%
\end{equation}
and
\begin{equation}
L:\ker(JL-\lambda)^{k}\rightarrow\ker\big((JL)^{\ast}+\bar{\lambda}%
\big)^{k}=\ker(LJ-\bar{\lambda})^{k}\label{E:lambda-sym}%
\end{equation}
is an anti-linear isomorphism.

iii) Suppose $\lambda$ is an isolated eigenvalue of $JL$ with finite algebraic
multiplicity, then $-\lambda,\pm\bar{\lambda}$ are also eigenvalues of $JL$
with the same algebraic and geometric multiplicities.
\end{lemma}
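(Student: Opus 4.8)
The plan is to reduce all three statements to two algebraic identities plus one general functional‑analytic principle. The first identity is the intertwining relation
\[
L(JL)=(LJ)L=-(JL)^{*}L\qquad\text{on }D(JL),
\]
which holds because $L$ is bounded and $(JL)^{*}=L^{*}J^{*}=-LJ$; iterating it gives $L(JL-\lambda)^{k}=(-1)^{k}\big((JL)^{*}+\lambda\big)^{k}L$ for every $k$ and $\lambda$. The second is that $JL$ (on $X$) and $LJ$ (on $X^{*}$) are the two orderings of the product of the bounded operator $L$ with the closed operator $J$, so $\sigma(JL)\cup\{0\}=\sigma(LJ)\cup\{0\}$ and, for $\lambda\neq0$, $L$ maps $\ker(JL-\lambda)^{k}$ bijectively onto $\ker(LJ-\lambda)^{k}$. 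The principle is that $JL$ is densely defined (Lemma \ref{L:decomJL}) and closed (because $L$ is bounded and $J$ closed), so $\sigma\big((JL)^{*}\big)=\sigma(JL)$, and at an isolated eigenvalue of finite algebraic multiplicity the Riesz projections of $JL$ and $(JL)^{*}$ are mutually dual, hence of equal rank with transpose Jordan structures. Throughout one works with the standard complexification, for which $JL$ is the complexification of a real operator and $\langle L\cdot,\cdot\rangle$ is Hermitian.

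For (i): conjugation symmetry of the complexified real operator $JL$ gives $\bar\lambda\in\sigma(JL)$. For $-\lambda$, combine $\sigma\big((JL)^{*}\big)=\sigma(JL)$ with $(JL)^{*}=-LJ$ to obtain $\sigma(LJ)=-\sigma(JL)$, then use $\sigma(LJ)\cup\{0\}=\sigma(JL)\cup\{0\}$; for $\lambda\neq0$ this yields $-\lambda\in\sigma(JL)$, while for $\lambda=0$ there is nothing to prove. Conjugating $-\lambda$ produces $-\bar\lambda$. Note this part needs only the spectra, not eigenvectors.

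For (ii): conjugation gives \eqref{E:bar-lambda} and that $\bar\lambda$ is an eigenvalue. The iterated intertwining relation maps $\ker(JL-\lambda)^{k}$ into $\ker\big((JL)^{*}+\lambda\big)^{k}=\ker(LJ-\bar\lambda)^{k}$ — the conjugate entering through the anti‑linear complex structure on $L$ that makes $\langle L\cdot,\cdot\rangle$ Hermitian — so $-\lambda,-\bar\lambda$ are eigenvalues of $(JL)^{*}$, proving the membership claims and \eqref{E:lambda-sym} up to bijectivity. Injectivity of $L$ on $\ker(JL-\lambda)^{k}$: if $Lu=0$ and $(JL-\lambda)^{k}u=0$ then $JLu=0$, whence $(JL-\lambda)^{k}u=(-\lambda)^{k}u$, forcing $u=0$ when $\lambda\neq0$ (and $L$ is globally injective when $\ker L=\{0\}$). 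Surjectivity onto $\ker(LJ-\bar\lambda)^{k}$ in the degenerate case $\ker L\neq\{0\}$, $\lambda\neq0$: any $g$ in that kernel annihilates $\ker L$, since $R(LJ)$ annihilates $\ker L$ (because $\langle LJh,w\rangle=\langle Lw,Jh\rangle=0$ for $w\in\ker L$) and the lowest‑order term of $0=(LJ-\bar\lambda)^{k}g$ is $(-\bar\lambda)^{k}g$; hence $g\in R(L)$ by Remark \ref{R:closedness}, and one adjusts the preimage by an element of $\ker L$ (using $JL|_{\ker L}=0$) to land inside $\ker(JL-\bar\lambda)^{k}$. When $\ker L=\{0\}$, $L:X\to X^{*}$ is an isomorphism (Lemma \ref{L:non-degeneracy}) and surjectivity is immediate from $L(JL-\bar\lambda)^{k}L^{-1}g=(-1)^{k}\big((JL)^{*}+\bar\lambda\big)^{k}g$.

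For (iii): since $\sigma(JL)$ is invariant under $\lambda\mapsto-\lambda$ and $\lambda\mapsto\bar\lambda$, the points $-\lambda,\pm\bar\lambda$ are again isolated in $\sigma(JL)$. Conjugation is a real‑linear isomorphism $E_{\lambda}\to E_{\bar\lambda}$ preserving Jordan structure, so $\bar\lambda$ has the same finite algebraic and geometric multiplicities. For $-\lambda$ (with $\lambda\neq0$; the case $\lambda=0$ forces $\ker L=\{0\}$ and is handled by $L$ being globally bijective): the bijection $L:\ker(JL-\lambda)^{k}\to\ker(LJ-\lambda)^{k}=\ker\big((JL)^{*}+\lambda\big)^{k}$ intertwines the two nilpotents up to a sign, so $E_{-\lambda}\big((JL)^{*}\big)$ has the same finite dimension and the same Jordan block sizes as $E_{\lambda}(JL)$; duality of the Riesz projections at the isolated eigenvalue $-\lambda$ then transfers this to $E_{-\lambda}(JL)$, since a finite‑dimensional nilpotent and its transpose have identical block structure (in particular equal kernels, hence equal geometric multiplicities). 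Composing with conjugation settles $-\bar\lambda$. The main obstacle is the bookkeeping in (ii)–(iii) when $\ker L\neq\{0\}$: here $L$ is not invertible, so injectivity on each generalized eigenspace of a nonzero eigenvalue and surjectivity onto the corresponding kernel of $(JL)^{*}$ must be argued by hand via closedness of $R(L)$ and its annihilator characterization; keeping the complexification convention straight so that $\bar\lambda$ (not $\lambda$) appears in \eqref{E:lambda-sym} is a secondary but necessary care.
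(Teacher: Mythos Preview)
Your treatment of (ii) and (iii) matches the paper's: the intertwining relation $L(JL-\lambda)^{k}=(-1)^{k}\big((JL)^{*}+\bar\lambda\big)^{k}L$ (note the paper has $\bar\lambda$ throughout, coming from the anti-linearity of the complexified $L$; your displayed formula with $\lambda$ and your later $\bar\lambda$ in \eqref{E:lambda-sym} are inconsistent), injectivity via $\ker L\cap E_{\lambda}=\{0\}$, surjectivity by correcting a preimage by an element of $\ker L$, and for (iii) the duality of Riesz projections. These are the same arguments.

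For (i) your route differs from the paper's. The paper argues directly: if $\lambda\ne0$ and $R(JL-\lambda)\ne X$, then $R\big((JL)^{*}+\bar\lambda\big)\ne X^{*}$, by exploiting that $R(L)$ is closed with annihilator $\ker L$ (Remark~\ref{R:closedness}) and checking that $(JL)^{*}+\bar\lambda$ cannot be onto $R(L)$. You instead combine $\sigma(LJ)=-\sigma(JL)$ (from $(JL)^{*}=-LJ$, $\sigma(A^{*})=\overline{\sigma(A)}$, and conjugation symmetry of $\sigma(JL)$---the last ingredient is needed, not merely closedness) with the product--spectrum relation $\sigma(JL)\cup\{0\}=\sigma(LJ)\cup\{0\}$. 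This last equality is \emph{not} elementary when $J$ is unbounded: the formal resolvent identity $(\lambda-LJ)^{-1}=\lambda^{-1}\big(I+L(\lambda-JL)^{-1}J\big)$ is only defined on $D(J)$, and showing its range is all of $X^{*}$ via the closed-range theorem leads back to $-\bar\lambda\in\rho(JL)$, which is what you are proving. What does work is the \emph{single} inclusion $\rho(LJ)\setminus\{0\}\subset\rho(JL)$: for $\lambda\in\rho(LJ)\setminus\{0\}$ the operator $J(\lambda-LJ)^{-1}$ is everywhere defined on $X^{*}$ and closed, hence bounded by the closed-graph theorem, so $(\lambda-JL)^{-1}=\lambda^{-1}\big(I+J(\lambda-LJ)^{-1}L\big)$ is bounded. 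This gives $\sigma(JL)\setminus\{0\}\subset\sigma(LJ)=-\sigma(JL)$, which suffices. So your argument for (i) is salvageable, but as written it invokes more than is proved; the paper's hands-on argument avoids this by using the specific structure of $L$.
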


Here the operators $J$ and $L$ are understood as their complexification, and
thus are anti-linear mappings satisfying \eqref{E:anti-linear}.

\begin{proof}
As i) is trivial if $\lambda=0$, so we assume $\lambda\neq0$ or $\ker L=\{0\}$.

Due to \eqref{E:real} which states that $JL$ is real, \eqref{E:bar-lambda} and
$\bar{\lambda}\in\sigma(JL)$ follow immediately. We are left to prove
$-\lambda,-\bar{\lambda}\in\sigma(JL)$ and \eqref{E:lambda-sym}.

The anti-linearity property \eqref{E:anti-linear} implies
\begin{equation}
L(JL-\lambda)u=(LJ-\bar{\lambda})Lu=-{\large (}(JL)^{\ast}+\bar{\lambda
}{\large )}Lu,\;\forall u\in D(JL).\label{E:EValue1}%
\end{equation}
Therefore, we have that, for any integer $k>0$,
\begin{equation}
\big((JL)^{\ast}+\bar{\lambda}\big)^{k}Lu=(-1)^{k}L(JL-\lambda)^{k}u,\;\forall
u\in D\big((JL)^{k}\big).\label{E:EValue2}%
\end{equation}

It follows from \eqref{E:EValue2} that $L\big(\ker(JL-\lambda)^{k}%
\big)\subset\ker\big((JL)^{\ast}+\bar{\lambda}\big)^{k}$. Under the assumption
$\lambda\neq0$ or $\ker L=\{0\}$, it holds $\ker L\cap E_{\lambda}=\{0\}$ and
thus $L$ is one-to-one on $E_{\lambda}$. Therefore, if $\lambda$ is an
eigenvalue of $JL$, then $E_{\lambda}$ is nontrivial which implies
$L\big(\ker(JL-\lambda)^{k}\big)$, as well as $\ker\big((JL)^{\ast}%
+\bar{\lambda}\big)^{k}$, are nontrivial. We obtain that $-\bar{\lambda}$, as
well as $-\lambda$, is an eigenvalue of $(JL)^{\ast}$. Consequently
$-\lambda,-\bar{\lambda}\in\sigma(JL)$.

To finish the proof of \eqref{E:lambda-sym}, we only need to show
\[
L\big(\ker(JL-\lambda)^{k}\big)\supset\ker\big((JL)^{\ast}+\bar{\lambda
}\big)^{k}.
\]
This is obvious from \eqref{E:EValue2} if $\ker L=\{0\}$. In the case of
$\lambda\neq0$, it is clear $\ker\big((JL)^{\ast}+\bar{\lambda}\big)^{k}%
\subset R(L)$. Therefore, for any $v\in\ker\big((JL)^{\ast}+\bar{\lambda
}\big)^{k}$, there exists $u_{1}\in X$ such that $v=Lu_{1}$. Equation
\eqref{E:EValue2} again implies
\[
w=(JL-\lambda)^{k}u_{1}\in\ker L.
\]
As $\lambda\neq0$, let $u=u_{1}-(-\lambda)^{-k}w$, then since $\left(
JL-\lambda\right)  w=\left(  -\lambda\right)  w$,$\ $we have $v=Lu$ and
$u\in\ker(JL-\lambda)^{k}$ due to $(JL-\lambda)^{k}w=\left(  -\lambda\right)
^{k}w$. Therefore, $\ker\big((JL)^{\ast}+\bar{\lambda}\big)^{k}\subset
L\ker(JL-\lambda)^{k}$ and thus $L$ is an one to one correspondence (actually
an anti-linear isomorphism) from $\ker(JL-\lambda)^{k}$ to $\ker
\big((JL)^{\ast}+\bar{\lambda}\big)^{k}$.

Finally, suppose $\lambda\neq0$ and $R(JL-\lambda)\neq X$, we will show
$R\big((JL)^{\ast}+\bar{\lambda}\big)\neq X^{\ast}$ which implies
$-\bar{\lambda},-\lambda\in\sigma\big((JL)^{\ast}\big)=\overline{\sigma\left(
JL\right)  }$ and thus completes the proof of (i). Assume one the contrary
$R\big((JL)^{\ast}+\bar{\lambda}\big)=X^{\ast}$. Let $\gamma\in D(J)\backslash
R(L)$, according to Remark \ref{R:closedness}, there exists $u\in\ker L$ such
that $\langle\gamma,u\rangle\neq0$. One can compute $\langle\big((JL)^{\ast
}+\bar{\lambda}\big)\gamma,u\rangle=\bar{\lambda}\langle\gamma,u\rangle\neq0$
and thus $\big((JL)^{\ast}+\bar{\lambda}\big)\gamma\notin R(L)$. Therefore, if
$R\big((JL)^{\ast}+\bar{\lambda}\big)=X^{\ast}$, it must hold $\big((JL)^{\ast
}+\bar{\lambda}\big)\big(R(L)\big)=R(L)$, which is the range of the right side
of \eqref{E:EValue1}. However, since $\lambda\neq0$, we have $(JL-\lambda
)(\ker L)=\ker L$. Along with $R(JL-\lambda)\neq X$, it implies
$R(L)\not \subset R\big(L(JL-\lambda)\big)$, which is the range of the left
side of \eqref{E:EValue1}. We obtain a contradiction and thus
$R\big((JL)^{\ast}+\bar{\lambda}\big)\neq X^{\ast}$.

If $\lambda\in\sigma\left(  JL\right)  $ is isolated and of finite
multiplicity, then the same is true for $\bar{\lambda}$. By i) and ii),
$-\lambda,-\bar{\lambda}\in\sigma\big((JL)^{\ast}\big)$ are also isolated and
of the same multiplicities, this implies that $-\lambda,-\bar{\lambda}$
$\in\sigma\left(  JL\right)  $ have the same (geometric and algebraic)
multiplicities (see \cite{kato-book} P. 184).
\end{proof}

\begin{remark}
\label{R:inf-dim-nega-2} As in the proof of Lemma \ref{L:decomJ} and Corollary
\ref{L:decomJ}, the assumption $n^{-}(L)<\infty$ is not required in the above
proof. So Lemma \ref{L:symmetry} holds even when $n^{-}(L)=\infty$. On the
other hand, this lemma gives the symmetry of $\sigma(JL)$, but not for general
eigenvalues, except for purely imaginary eigenvalues or isolated eigenvalues
of finite multiplicity. If $\lambda\in\sigma\left(  JL\right)  \ $is a nonzero
eigenvalue which is non-isolated or of infinite multiplicity, then above lemma
implies that $-\lambda,-\bar{\lambda}$ are eigenvalues of $\sigma
\big((JL)^{\ast}\big)$. In general, we can not exclude the possibility that
$-\lambda,-\bar{\lambda}$ are not eigenvalues of $JL$. However, when
$n^{-}(L)<\infty$, any $\lambda\in\sigma\left(  JL\right)  $ with
$\operatorname{Re}\lambda\neq0$ must be isolated and of finite multiplicity,
and the symmetry of eigenvalues and the dimensions of their eigenspaces are
given in Corollary \ref{C:symmetry}.
\end{remark}

\section{Finite dimensional Hamiltonian systems}

\label{S:finiteD}

In this section, we consider the case where the energy space $X$ of
(\ref{eqn-hamiltonian}) is $X=\mathbf{R}^{n}$ which is complexified to
$\mathbf{C}^{n}$. The assumptions (\textbf{H.1-3}) become that $J$ is a real
anti-symmetric $n\times n\ $matrix and $L$ is a real symmetric $n\times
n\ $matrix. The counting formula (\ref{counting-formula}) essentially follows
from \cite{mackay86}, except for the formula (\ref{counting-pure-imaginary}).
We do not need to assume that $J$ is invertible as assumed in \cite{mackay86}.

For $\lambda\in\sigma\left(  JL\right)  ,\ $define
\[
I_{\lambda}=E_{\lambda}\oplus E_{-\bar{\lambda}} \text{ if } \lambda\notin
i\mathbf{R}, \text{ and } I_{\lambda}=E_{\lambda} \text{ if } \lambda\in i
\mathbf{R}.
\]
We have $\mathbf{C}^{n}=I_{\lambda_{1}}\oplus\cdots\oplus I_{\lambda_{l}}$,
where $\lambda_{j}\in\sigma\left(  JL\right)  $ are all distinct eigenvalues
of $JL$ with Re$\lambda_{j} \ge0$. By Lemma \ref{lemma-orthogonal-eigenspace},
we have
\begin{equation}
n^{-}\left(  L\right)  =\sum_{j}n^{-}\left(  L|_{I_{\lambda_{j}}}\right)  .
\label{identity-negative-index}%
\end{equation}
Based on Lemma \ref{L:symmetry} of the symmetry of $\sigma(JL)$, to prove
Theorem \ref{theorem-counting} in the finite dimensional case, it suffices to
compute $n^{-}\left(  L|_{I_{\lambda}}\right)  $ for any $0\ne\lambda\in
\sigma\left(  JL\right)  \backslash i\mathbf{R}$.

\begin{lemma}
[\cite{mackay86}]\label{lemma-non-degenerate-finite-d} Let $\lambda\in
\sigma\left(  JL\right)  $. Assume $\ker L =\{0\}$ or $\lambda\ne0$, then the
restriction $\left\langle L\cdot,\cdot\right\rangle |_{I_{\lambda}}$ is non-degenerate.
\end{lemma}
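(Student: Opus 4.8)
The plan is to deduce the non-degeneracy of $\langle L\cdot,\cdot\rangle$ on $I_\lambda$ from its non-degeneracy on the whole complexified space, using that the decomposition $\mathbf{C}^n = I_{\lambda_1}\oplus\cdots\oplus I_{\lambda_l}$ recorded above is $L$-orthogonal.

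First I would note that $I_\lambda$ is one of the summands $I_{\lambda_j}$: if $\operatorname{Re}\lambda\ge 0$ this is immediate, and if $\operatorname{Re}\lambda<0$ then $\lambda\notin i\mathbf{R}$, so $I_\lambda=E_\lambda\oplus E_{-\bar\lambda}=I_{-\bar\lambda}$ with $\operatorname{Re}(-\bar\lambda)>0$; write $I_\lambda=I_{\lambda_{j_0}}$. Next I would check the $L$-orthogonality of the summands, which is already implicit in \eqref{identity-negative-index}: for $j\ne k$, bilinearity reduces $\langle L\, I_{\lambda_j}, I_{\lambda_k}\rangle$ to terms $\langle Lx,y\rangle$ with $x\in E_\mu$, $y\in E_\nu$, $\mu\in\{\lambda_j,-\bar\lambda_j\}$, $\nu\in\{\lambda_k,-\bar\lambda_k\}$, each of which vanishes by Lemma~\ref{lemma-orthogonal-eigenspace} once $\mu+\bar\nu\ne 0$ is verified. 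That verification is a short case check: using $\operatorname{Re}\lambda_j,\operatorname{Re}\lambda_k\ge 0$, the relations $\lambda_j+\bar\lambda_k=0$ and $\bar\lambda_j+\lambda_k=0$ each force $\lambda_j=-\bar\lambda_k$, hence both purely imaginary and $\lambda_j=\lambda_k$, contradicting $j\ne k$; while $\lambda_j-\lambda_k\ne 0$ and $\bar\lambda_k-\bar\lambda_j\ne 0$ are trivial.

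Granting these, the conclusion is immediate. Let $v\in I_\lambda=I_{\lambda_{j_0}}$ satisfy $\langle Lv,w\rangle=0$ for all $w\in I_\lambda$. By $L$-orthogonality of the summands, $v$ is also $L$-orthogonal to $I_{\lambda_k}$ for every $k\ne j_0$, hence $\langle Lv,w\rangle=0$ for all $w\in\mathbf{C}^n$, i.e.\ $Lv=0$, so $v\in\ker L$. Since $\ker L\subset\ker(JL)\subset E_0$, this gives $v\in\ker L\cap I_\lambda$. If $\ker L=\{0\}$ we conclude $v=0$ directly; if $\lambda\ne 0$ then $I_\lambda$ and $E_0$ are distinct summands of the direct sum, so $\ker L\cap I_\lambda\subset E_0\cap I_\lambda=\{0\}$ and again $v=0$. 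In either case $\langle L\cdot,\cdot\rangle|_{I_\lambda}$ is non-degenerate.

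I expect no genuine obstacle in this finite dimensional setting; the only points needing care are the sign/conjugate bookkeeping in the pairwise $L$-orthogonality and the observation that $\ker L$ sits inside the zero-eigenvalue block $E_0$ — which is precisely where the hypothesis ``$\ker L=\{0\}$ or $\lambda\ne 0$'' enters. Alternatively one could transport the question to $(JL)^\ast=-LJ$ through the anti-linear isomorphism $L:E_\lambda\to\ker\big((JL)^\ast+\bar\lambda\big)$ from Lemma~\ref{L:symmetry}, but the $L$-orthogonal decomposition argument above seems cleanest.
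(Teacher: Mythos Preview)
Your proof is correct and follows essentially the same approach as the paper: both argue that a vector in $I_\lambda$ which is $L$-orthogonal to $I_\lambda$ must, by the $L$-orthogonality of the summands (Lemma~\ref{lemma-orthogonal-eigenspace}), be $L$-orthogonal to all of $\mathbf{C}^n$, hence lie in $\ker L$, and then the hypothesis forces it to vanish. You are simply more explicit than the paper in identifying $I_\lambda$ with one of the summands $I_{\lambda_j}$ and in verifying the pairwise $L$-orthogonality case by case.
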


\begin{proof}
Suppose $\left\langle L\cdot,\cdot\right\rangle |_{I_{\lambda}}$ is
degenerate. Then there exists $0 \ne u\in I_{\lambda}$ such that $\left\langle
Lu,v\right\rangle =0$ for any $v\in I_{\lambda}$. Since $\mathbf{C}^{n}$ is
the direct sum of all different $I_{\lambda^{\prime}}$, $\lambda^{\prime}%
\in\sigma(JL)$, this implies that $\left\langle Lu,v\right\rangle =0$ for any
$v\in\mathbf{C}^{n}$ by Lemma \ref{lemma-orthogonal-eigenspace}. So $Lu=0$ and
thus $0\ne u\in I_{\lambda}\cap\ker L$. It implies that $\lambda=0$ and $\ker
L \ne\{0\}$, a contradiction to our assumptions.
\end{proof}

\begin{lemma}
[\cite{mackay86} or \cite{kappitula-haragus}]%
\label{lemma-negative-unstable-space} If $\operatorname{Re}\lambda>0$ and let
$m_{\lambda}$ to be the algebraic multiplicity of $\lambda$. Then
$n^{-}\left(  L|_{I_{\lambda}}\right)  =m_{\lambda}$.
\end{lemma}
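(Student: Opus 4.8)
The plan is to read off the block structure that the (complexified, Hermitian) quadratic form $\langle L\cdot,\cdot\rangle$ takes on $I_{\lambda}=E_{\lambda}\oplus E_{-\bar\lambda}$. First I would observe that since $\operatorname{Re}\lambda>0$, for any $v_{1},v_{2}\in E_{\lambda}$ one has $\lambda+\bar\lambda=2\operatorname{Re}\lambda\neq0$, so Lemma \ref{lemma-orthogonal-eigenspace} gives $\langle Lv_{1},v_{2}\rangle=0$; likewise, for $v_{1},v_{2}\in E_{-\bar\lambda}$ one has $(-\bar\lambda)+\overline{(-\bar\lambda)}=-2\operatorname{Re}\lambda\neq0$, so $\langle L\cdot,\cdot\rangle$ vanishes on $E_{-\bar\lambda}$ as well. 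Consequently, in any bases of $E_{\lambda}$ and $E_{-\bar\lambda}$, the Hermitian matrix of $\langle L\cdot,\cdot\rangle|_{I_{\lambda}}$ has the form $\left(\begin{smallmatrix}0 & A\\ A^{*} & 0\end{smallmatrix}\right)$, where $A$ encodes the pairing $(u,v)\mapsto\langle Lu,v\rangle$ with $u\in E_{\lambda}$, $v\in E_{-\bar\lambda}$ (this pairing is the only part of the form not forced to vanish, since $\lambda+\overline{(-\bar\lambda)}=0$).

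Next I would invoke Lemma \ref{lemma-non-degenerate-finite-d}, which applies because $\lambda\neq0$, to get that $\langle L\cdot,\cdot\rangle|_{I_{\lambda}}$ is non-degenerate. Together with the vanishing of the two diagonal blocks, this forces the pairing between $E_{\lambda}$ and $E_{-\bar\lambda}$ to be non-degenerate on both sides; hence $A$ has trivial left and right kernel, so $A$ is a square invertible matrix and $\dim E_{-\bar\lambda}=\dim E_{\lambda}=m_{\lambda}$. (This equality of dimensions is also consistent with Lemma \ref{L:symmetry}.)

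Finally I would finish with the elementary linear-algebra fact that a Hermitian matrix $\left(\begin{smallmatrix}0 & A\\ A^{*} & 0\end{smallmatrix}\right)$ with $A$ invertible $m\times m$ is Hermitian-congruent, via $P=\operatorname{diag}(I,A^{-1})$, to $\left(\begin{smallmatrix}0 & I\\ I & 0\end{smallmatrix}\right)$, whose eigenvalues are $+1$ and $-1$, each of multiplicity $m$ (eigenvectors $\binom{x}{x}$ and $\binom{x}{-x}$). By Sylvester's law of inertia, $n^{-}(L|_{I_{\lambda}})=m=m_{\lambda}$. I do not anticipate a genuine obstacle: the only points needing care are verifying that the two diagonal blocks truly vanish (which is exactly where $\operatorname{Re}\lambda>0$ enters, via Lemma \ref{lemma-orthogonal-eigenspace}) and that non-degeneracy of the full restricted form is available from Lemma \ref{lemma-non-degenerate-finite-d}; everything after that is bookkeeping with Hermitian forms, following \cite{mackay86}.
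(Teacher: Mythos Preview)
Your proposal is correct and follows essentially the same approach as the paper: invoke Lemma \ref{lemma-orthogonal-eigenspace} to get the block form $\left(\begin{smallmatrix}0 & A\\ A^{*} & 0\end{smallmatrix}\right)$ on $I_{\lambda}=E_{\lambda}\oplus E_{-\bar\lambda}$, then use Lemma \ref{lemma-non-degenerate-finite-d} to conclude that $A$ is nonsingular, from which $n^{-}(L|_{I_{\lambda}})=m_{\lambda}$ follows. You simply spell out the final inertia computation in more detail than the paper does.
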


\begin{proof}
From Lemma \ref{lemma-orthogonal-eigenspace}, the quadratic form $\langle
L\cdot, \cdot\rangle$ on $I_{\lambda}= E_{\lambda}\oplus E_{-\bar\lambda}$ can
be represented in the block form $%
\begin{pmatrix}
0 & A\\
A^{*} & 0
\end{pmatrix}
$. Lemma \ref{lemma-non-degenerate-finite-d} implies the non-degeneracy of $A$
and thus the lemma follows.
\end{proof}

The counting formula (\ref{counting-formula}) in the finite dimensional case
follows from these lemmas and (\ref{identity-negative-index}).

In the rest of this subsection, we carefully analyze $k^{-}(i\mu) =
n^{-}\left(  L|_{E_{i\mu}}\right)  $, $\mu\in\mathbf{R}$, and obtain
Proposition \ref{P:basis} in finite dimensions. Based on Lemma
\ref{lemma-orthogonal-condition-chain} and equation \eqref{E:anti-S}, we first prove

\begin{lemma}
\label{L:signature1} Suppose $i\mu\in\sigma\left(  JL\right)  $ $\left(
\mu\in\mathbf{R}\right)  $ and $K>0$ is an integer, then

\begin{enumerate}
\item for $u, v \in\ker(JL-i\mu)^{K}$,
\[
Q_{K} (u,v) \triangleq i^{K-1} \langle L (JL-i\mu)^{K-1} u, v\rangle
\]
defines a Hermitian form on $\ker(JL-i\mu)^{K}$; and

\item assume $\ker L =\{0\}$ or $\mu\ne0$, then
\[
Y_{K} \triangleq\big(\ker(JL-i\mu)^{K} \cap R(JL-i\mu) \big) + \ker
(JL-i\mu)^{K-1} = \ker Q_{K}.
\]

\end{enumerate}
\end{lemma}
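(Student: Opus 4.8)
The two assertions are essentially independent, and I would treat them in turn. For (1), note first that in this finite dimensional setting $L$ and $(JL-i\mu)^{K-1}$ act on all of $\mathbf{C}^{n}$, so $Q_{K}$ is well defined on $\ker(JL-i\mu)^{K}$, and it inherits sesquilinearity from the Hermitian form $\langle L\cdot,\cdot\rangle$ because $(JL-i\mu)^{K-1}$ is $\mathbf{C}$-linear ($JL$ is real and $-i\mu$ is a scalar). The only real point is the Hermitian symmetry $Q_{K}(u,v)=\overline{Q_{K}(v,u)}$. Here I would use the Hermitian symmetry of $\langle L\cdot,\cdot\rangle$ to write $\overline{Q_{K}(v,u)}=\overline{i^{K-1}}\,\langle Lu,(JL-i\mu)^{K-1}v\rangle=(-i)^{K-1}\langle Lu,(JL-i\mu)^{K-1}v\rangle$, and then apply the identity \eqref{E:anti-S} exactly $K-1$ times to transfer the operator $(JL-i\mu)^{K-1}$ from the second slot onto the first, which contributes a factor $(-1)^{K-1}$. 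Since $(-i)^{K-1}(-1)^{K-1}=i^{K-1}$, this equals $Q_{K}(u,v)$, proving (1).

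For (2), write $V_{j}=\ker(JL-i\mu)^{j}$, a flag inside the finite dimensional space $E_{i\mu}$ on which $(JL-i\mu)$ is nilpotent. Because $JL-i\mu$ is invertible on $\bigoplus_{\lambda\ne i\mu}E_{\lambda}$, a short argument (any $y$ with $(JL-i\mu)y\in E_{i\mu}$ must already lie in $E_{i\mu}$) gives $V_{K}\cap R(JL-i\mu)=(JL-i\mu)V_{K+1}$, so that $Y_{K}=(JL-i\mu)V_{K+1}+V_{K-1}$. Since $(JL-i\mu)^{K-1}$ maps $V_{K}$ into $V_{1}=\ker(JL-i\mu)$, membership $u\in\ker Q_{K}$ is equivalent to $u\in V_{K}$ together with $(JL-i\mu)^{K-1}u\in V_{1}^{\circ}$, where
\[
V_{1}^{\circ}:=\{z\in V_{1}\mid\langle Lz,v\rangle=0,\ \forall v\in V_{K}\}.
\]
The decisive step will then be the identity $V_{1}^{\circ}=(JL-i\mu)^{K}V_{K+1}$.

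To prove this identity I would invoke Lemma \ref{lemma-non-degenerate-finite-d} with $\lambda=i\mu$, which gives that $\langle L\cdot,\cdot\rangle$ is non-degenerate on $E_{i\mu}=I_{i\mu}$. By Lemma \ref{lemma-orthogonal-condition-chain} (with $l=K$) the subspace $(JL-i\mu)^{K}E_{i\mu}$ is $\langle L\cdot,\cdot\rangle$-orthogonal to $V_{K}$ inside $E_{i\mu}$; since $\dim\big((JL-i\mu)^{K}E_{i\mu}\big)=\dim E_{i\mu}-\dim V_{K}$ by rank--nullity, and the $\langle L\cdot,\cdot\rangle$-orthogonal complement of $V_{K}$ in $E_{i\mu}$ has the same dimension by non-degeneracy, the orthogonal complement of $V_{K}$ in $E_{i\mu}$ is exactly $(JL-i\mu)^{K}E_{i\mu}$. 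Intersecting with $V_{1}$, and using that $(JL-i\mu)^{K}w\in\ker(JL-i\mu)$ iff $w\in V_{K+1}$, yields $V_{1}^{\circ}=(JL-i\mu)^{K}V_{K+1}$. Finally, given $u\in V_{K}$ with $(JL-i\mu)^{K-1}u=(JL-i\mu)^{K}w$ for some $w\in V_{K+1}$, the element $u-(JL-i\mu)w$ lies in $V_{K}\cap\ker(JL-i\mu)^{K-1}=V_{K-1}$, hence $u\in(JL-i\mu)V_{K+1}+V_{K-1}=Y_{K}$; the reverse inclusion $Y_{K}\subset\ker Q_{K}$ is immediate from the same description. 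This gives $\ker Q_{K}=Y_{K}$.

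I expect the main obstacle to be precisely the identity $V_{1}^{\circ}=(JL-i\mu)^{K}V_{K+1}$, i.e. the determination of the $\langle L\cdot,\cdot\rangle$-orthogonal complement of $V_{K}$ inside $E_{i\mu}$ by a dimension count. This is the place where the hypothesis ``$\ker L=\{0\}$ or $\mu\ne0$'' is used, since it is exactly what makes $\langle L\cdot,\cdot\rangle$ non-degenerate on $E_{i\mu}$ (via Lemma \ref{lemma-non-degenerate-finite-d}); everything else is bookkeeping with the nilpotent operator $(JL-i\mu)|_{E_{i\mu}}$ and its kernel flag $V_{1}\subset V_{2}\subset\cdots$.
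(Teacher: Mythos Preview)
Your proof is correct. Part (1) follows the paper exactly: both use \eqref{E:anti-S} to verify Hermitian symmetry of $Q_K$. Part (2), however, proceeds along a genuinely different route from the paper, and it is worth recording the contrast.

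The paper argues by duality in the ambient space $\mathbf{C}^n$: if $Q_K(u,v)=0$ for all $v\in\ker(JL-i\mu)^K$, then $L(JL-i\mu)^{K-1}u$ annihilates $\ker(JL-i\mu)^K$, hence lies in the range of the adjoint $\big((JL-i\mu)^K\big)^{*}=(LJ-i\mu)^K$. The intertwining relation $L(JL-i\mu)=(LJ-i\mu)L$ and Lemma~\ref{L:symmetry} (which says $L$ is an isomorphism from $\ker(JL-i\mu)^{K-1}$ onto $\ker(LJ-i\mu)^{K-1}$) then let one peel off $L$ and produce $\tilde{w}$ with $u-(JL-i\mu)\tilde{w}\in\ker(JL-i\mu)^{K-1}$ modulo $\ker L$; the hypothesis ``$\mu\ne 0$ or $\ker L=\{0\}$'' is used only at this last absorption step. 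Your argument instead stays entirely inside $E_{i\mu}$: you invoke Lemma~\ref{lemma-non-degenerate-finite-d} to make $\langle L\cdot,\cdot\rangle|_{E_{i\mu}}$ non-degenerate, then combine Lemma~\ref{lemma-orthogonal-condition-chain} with a rank--nullity count to identify the $L$-orthogonal complement of $V_K$ in $E_{i\mu}$ as $(JL-i\mu)^K E_{i\mu}$, and finally intersect with $V_1$. The paper's approach avoids any dimension count and the preliminary identification $V_K\cap R(JL-i\mu)=(JL-i\mu)V_{K+1}$, at the cost of tracking the adjoint operator $LJ$ and the anti-linear $L$; your approach is more self-contained and geometric but uses the non-degeneracy on $E_{i\mu}$ (hence the hypothesis) earlier and more globally. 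Both are clean, comparable in length, and specific to the finite-dimensional setting of Section~\ref{S:finiteD}.
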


\begin{proof}
That $Q_{K}$ is a Hermitian form on $\ker(JL-i\mu)^{K}$ is an immediate
consequence of equation \eqref{E:anti-S}. Lemma
\ref{lemma-orthogonal-condition-chain} also implies $Y_{K}\subset\ker Q_{K}$.
We will show $Y_{K}=\ker Q_{K}$ under the additional assumption $\ker L=\{0\}$
or $\mu\neq0$. Suppose $u\in\ker(JL-i\mu)^{K}$ is such that
\[
Q_{K}(u,v)=i^{K-1}\langle L(JL-i\mu)^{K-1}u,v\rangle=0,\quad\forall\ v\in
\ker(JL-i\mu)^{K}.
\]
By duality, it implies that
\[
L(JL-i\mu)^{K-1}u\in\big((JL-i\mu)^{K})\big)^{\ast}(\mathbf{C}^{n}%
)=(LJ-i\mu)^{K}(\mathbf{C}^{n}).
\]
Therefore, there exists $w\in\mathbf{C}^{n}$ such that
\[
L(JL-i\mu)^{K-1}u=(LJ-i\mu)^{K}w.
\]
Since $\mu\neq0$ or $L$ is surjective, the above equation implies $w\in R(L)$
and thus there exists $\tilde{w}\in\mathbf{C}^{n}$ such that $w=L\tilde{w}$.
Consequently,
\[
(LJ-i\mu)^{K-1}L\big(u-(JL-i\mu)\tilde{w}\big)=L(JL-i\mu)^{K-1}u-(LJ-i\mu
)^{K}w=0
\]
which along with Lemma \ref{L:symmetry} implies
\[
L\big(u-(JL-i\mu)\tilde{w}\big)\in\ker(LJ-i\mu)^{K-1}=L\ker(JL-i\mu)^{K-1}.
\]
Therefore, there exists $v\in\ker(JL-i\mu)^{K-1}$ such that
\[
y=u-(JL-i\mu)\tilde{w}-v\in\ker L.
\]
If $\mu\neq0$, let $w_{1}=\tilde{w}+\frac{1}{i\mu}y$. If $\ker L=\{0\}$, we
have $y=0$ and let $w_{1}=\tilde{w}$. In both cases, we have
\[
u=v+(JL-i\mu)w_{1},\quad v\in\ker(JL-i\mu)^{K-1}\subset\ker(JL-i\mu)^{K}.
\]
Therefore, $(JL-i\mu)w_{1}\in R(JL-i\mu)\cap\ker(JL-i\mu)^{K}$ and then $u\in
Y_{K}$. The proof is complete.
\end{proof}

\begin{corollary}
\label{C:signature1} Assume $\ker L =\{0\}$ or $\mu\ne0$, then $Q_{K}$ induces
a non-degenerate Hermitian form on the quotient space $\ker(JL-i\mu)^{K} /
Y_{K}$.
\end{corollary}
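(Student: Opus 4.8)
The plan is to obtain the corollary as an immediate consequence of part~(2) of Lemma~\ref{L:signature1}. That part identifies the radical of the Hermitian form $Q_K$ on $V \triangleq \ker(JL-i\mu)^K$: under exactly the hypothesis that $\ker L = \{0\}$ or $\mu \ne 0$, one has
\[
\ker Q_K \triangleq \{u \in V \mid Q_K(u,v) = 0 \text{ for all } v \in V\} = Y_K .
\]
Granting this, the only remaining ingredient is the standard linear-algebra fact that every Hermitian form descends to a non-degenerate Hermitian form on the quotient of its domain by its radical. All the analytic content has already been carried out in the proof of Lemma~\ref{L:signature1}, where the hypotheses on $L$ and $\mu$ enter through Lemma~\ref{L:symmetry} and Lemma~\ref{lemma-orthogonal-condition-chain}; so I expect no genuine obstacle here, and the argument is purely formal.

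First I would note that $Y_K$ is a subspace of $V$, being a sum of two subspaces, so the quotient $V/Y_K$ is a well-defined (finite-dimensional, in the present setting $X = \mathbf{C}^n$) complex vector space. Next, since $Q_K$ is Hermitian, $Q_K(u,y) = \overline{Q_K(y,u)} = 0$ whenever $y \in \ker Q_K = Y_K$, hence $Q_K(u+y,v) = Q_K(u,v)$ and $Q_K(u,v+y') = Q_K(u,v)$ for all $y, y' \in Y_K$ and $u,v \in V$. Therefore
\[
\bar Q_K([u],[v]) \triangleq Q_K(u,v)
\]
is well-defined on $(V/Y_K) \times (V/Y_K)$ and is again a Hermitian form.

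Finally I would check non-degeneracy: if $[u] \in V/Y_K$ satisfies $\bar Q_K([u],[v]) = 0$ for every $[v] \in V/Y_K$, then $Q_K(u,v) = 0$ for every $v \in V$, i.e.\ $u \in \ker Q_K = Y_K$, which means $[u] = 0$ in $V/Y_K$. Thus $\bar Q_K$ is non-degenerate, completing the proof. (Equivalently, in any basis of $V/Y_K$ the Gram matrix of $\bar Q_K$ is invertible.) The main point to emphasize is that there is nothing to do beyond invoking Lemma~\ref{L:signature1}(2); once that identification of $\ker Q_K$ with $Y_K$ is in hand, the corollary is formal.
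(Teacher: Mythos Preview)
Your proposal is correct and matches the paper's approach exactly: the paper states the corollary without proof, treating it as an immediate formal consequence of Lemma~\ref{L:signature1}(2), which identifies $Y_K$ with the radical $\ker Q_K$ of the Hermitian form $Q_K$. The details you spell out---that $Q_K$ descends to the quotient by its radical and is non-degenerate there---are the standard linear algebra justification that the paper leaves implicit.
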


If $K$ is odd, for any $u, v \in\ker(JL-i\mu)^{K}$, clearly
\begin{equation}
\label{E:Q_K}Q_{K} (u,v) = \langle L (JL-i\mu)^{\frac{K-1}2} u, (JL-i\mu
)^{\frac{K-1}2} v\rangle.
\end{equation}

\begin{definition}
\label{D:signature1} For odd $K$, define $n_{K}^{-}(i\mu)$ to be the negative
index of the quadratic form $Q_{K}$.
\end{definition}

The above quotient space $\ker(JL-i\mu)^{K} / Y_{K}$ is closely related to
Jordan chains. Suppose a basis of $\mathbf{C}^{n}$ realizes the Jordan
canonical form of $JL$, and there are totally $l$ Jordan blocks of size $K
\times K$ corresponding to $i\mu$. There must be $l$ Jordan chains of length
$K$ in such basis, each of which is generated by some $v \in\ker(JL-i\mu)^{K}
\slash Y_{K}$ as
\[
v, \ (JL-i\mu) v, \ \ldots, \ (JL-i\mu)^{K-1} v.
\]
From standard linear algebra, we have the following lemma.

\begin{lemma}
\label{L:JordanChain1} Vectors $v_{1,1},\ldots,v_{l,1}$ generate all $l$
Jordan chains of length $K$ in the sense that
\[
v_{j,k}=(JL-i\mu)^{k-1}v_{j,1},\quad1\leq k\leq K,\text{ }1\leq j\leq l,
\]
are in a basis of $\mathbf{C}^{n}$ realizing all $l\ $Jordan blocks of size
$K$ of $JL$ corresponding to $i\mu\in\sigma(JL)$, if and only if
\[
v_{1,1}+Y_{K},\ \ldots,\ v_{l,1}+Y_{K}%
\]
form a basis of $\ker(JL-i\mu)^{K}/Y_{K}$.
\end{lemma}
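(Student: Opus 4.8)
The plan is to reduce the statement to a fact about the Jordan canonical form of the nilpotent operator $N=(JL-i\mu)|_{E_{i\mu}}$ on the finite dimensional generalized eigenspace $E_{i\mu}$. The first step is to rewrite $Y_K$ in a form adapted to a Jordan basis: I claim $\ker(JL-i\mu)^{K}\cap R(JL-i\mu)=N\ker N^{K+1}$. The inclusion ``$\supseteq$'' is clear, and if $y=Nx$ lies in $\ker N^{K}$ then $N^{K+1}x=N^{K}y=0$, so $x\in\ker N^{K+1}$ and $y\in N\ker N^{K+1}$. Hence
\[
Y_K=N\ker N^{K+1}+\ker N^{K-1}.
\]

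Next I would compute $\dim\big(\ker N^{K}/Y_K\big)$ block by block against a fixed Jordan basis of $N$. For a Jordan block of size $s$ with chain $w,Nw,\dots,N^{s-1}w$, its contributions have dimensions $\min(s,K)$ to $\ker N^{K}$; $\min(s,K-1)$ to $\ker N^{K-1}$; and $\min(s,K+1)-1$ to $N\ker N^{K+1}$ (namely $N$ applied to the top $\min(s,K+1)$ vectors of the chain, which lie in $\ker N^{K+1}$). A short case check over the three regimes $s<K$, $s=K$, $s>K$ shows the block contributes exactly one dimension to $\ker N^{K}/Y_K$ precisely when $s=K$, the surviving class then being represented by the chain generator $w$. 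Summing, $\dim(\ker N^{K}/Y_K)$ equals the number $l$ of Jordan blocks of $N$ of size exactly $K$, and moreover the generators of the size-$K$ blocks in \emph{any} Jordan basis descend to a basis of $\ker N^{K}/Y_K$. Since in a basis realizing all size-$K$ Jordan blocks the chains of length $K$ are exactly those $l$ block chains (a length-$K$ chain forces its block to have size $K$), this establishes the ``only if'' direction.

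For the ``if'' direction, assume $v_{1,1}+Y_K,\dots,v_{l,1}+Y_K$ form a basis of $\ker N^{K}/Y_K$. Since $\ker N^{K-1}\subseteq Y_K$, each $v_{j,1}\in\ker N^{K}\setminus\ker N^{K-1}$, so $v_{j,1},Nv_{j,1},\dots,N^{K-1}v_{j,1}$ is a Jordan chain of length exactly $K$. Joint linear independence of the union of these $l$ chains follows by applying descending powers of $N$ to a vanishing combination: at the top level $\sum_j c_j v_{j,1}\in\ker N^{K-1}\subseteq Y_K$ forces all $c_j=0$, and one peels off one level at a time. Finally, the standard ``chain completion'' step in the construction of a Jordan basis extends these $l$ independent length-$K$ chains by chains of the other block sizes to a Jordan basis of $\mathbf{C}^{n}$ in which exactly these $l$ chains realize the size-$K$ blocks; the count above guarantees $l$ is precisely the number of size-$K$ blocks, so no obstruction arises. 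I expect the main obstacle to be purely bookkeeping: carrying out the three-case computation of the block contributions (keeping straight $\min(s,K)$, $\min(s,K-1)$, $\min(s,K+1)-1$ and when their spans coincide), and phrasing the converse so that it invokes, rather than re-proves, the standard construction of a Jordan basis from a partial system of chains whose top vectors are independent modulo $\ker N^{K-1}+N\ker N^{K+1}$.
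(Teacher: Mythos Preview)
Your proof is correct. The paper does not actually prove this lemma; immediately before the statement it writes ``From standard linear algebra, we have the following lemma'' and gives no argument. Your write-up supplies exactly the details the paper omits, and your key observation --- that $Y_K = N\ker N^{K+1} + \ker N^{K-1}$ is precisely the subspace one quotients by at the $K$-th step in the standard inductive construction of a Jordan basis for a nilpotent operator --- is the right way to see why both directions hold. The block-by-block dimension count and the peeling argument for independence of the chains are both sound.

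One small point to make explicit: in your first step you tacitly replace $R(JL-i\mu)$, the range over all of $\mathbf{C}^n$, by $R(N) = N(E_{i\mu})$. This is justified because $\mathbf{C}^n = \bigoplus_\lambda E_\lambda$ with $JL-i\mu$ an isomorphism on each $E_\lambda$ for $\lambda\neq i\mu$, and $\ker(JL-i\mu)^K\subset E_{i\mu}$; hence if $y\in\ker(JL-i\mu)^K$ equals $(JL-i\mu)x$ for some $x\in\mathbf{C}^n$, the $E_{i\mu}$-component $x'$ of $x$ already satisfies $Nx'=y$. A one-line remark to this effect would close the only gap.
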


The following lemma would lead to the realization of the Jordan canonical form
of $JL$ and skew-diagonalization of $L$ simultaneously.

\begin{lemma}
\label{L:JordanChain2} Assume $\ker L =\{0\}$ or $\mu\ne0$ where $i\mu
\in\sigma(JL) \cap i\mathbf{R}$. Suppose $\dim\ker(JL-i\mu)^{K}\slash Y_{K}
=l>0$ and $Z \subset\ker(JL-i\mu)^{K}$ satisfies
\[
JL (Z) = Z \; \text{ and } \; Z\slash (Y_{K} \cap Z) = \ker(JL-i\mu
)^{K}\slash Y_{K},
\]
then there exist $v_{1}, \ldots, v_{l} \in Z$ such that
\begin{equation}
\label{E:basis1}\langle L (JL-i\mu)^{m} v_{j}, v_{k}\rangle= \pm i^{K-1}
\delta_{j,k} \delta_{m, K-1}, \quad0\le m \le K-1.
\end{equation}

\end{lemma}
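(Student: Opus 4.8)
The plan is to build $v_1,\dots,v_l$ inside $Z$ in three moves: first choose crude representatives of the required length-$K$ Jordan chains, then normalize the top-level Hermitian form on them, and finally kill all the lower-order $L$-pairings by a downward induction whose correction steps never disturb what has already been normalized. For the set-up, note that $JL(Z)=Z$ and $i\mu Z\subseteq Z$ force $N:=(JL-i\mu)|_Z$ to map $Z$ into itself with $N^K=0$ (since $Z\subseteq\ker(JL-i\mu)^K$). Iterating \eqref{E:anti-S} gives, for $u,v\in\ker(JL-i\mu)^K$ and $a,b\ge0$,
\[
\langle LN^{a}u,N^{b}v\rangle=(-1)^{a}\langle Lu,N^{a+b}v\rangle ,
\]
which vanishes when $a+b\ge K$ and otherwise depends only on $a+b$. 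Thus, writing $\gamma_m(u,v):=\langle Lu,N^{m}v\rangle$ for $0\le m\le K-1$ — a form linear in $v$, conjugate-linear in $u$, with $\gamma_m(u,v)=(-1)^{m}\overline{\gamma_m(v,u)}$ by \eqref{E:anti-S} and $L=L^{\ast}$ — the conclusion \eqref{E:basis1} is equivalent to $\gamma_m(v_j,v_k)=0$ for $0\le m\le K-2$ and $\gamma_{K-1}(v_j,v_k)=\pm i^{K-1}\delta_{j,k}$. Since $\gamma_{K-1}$ is a unit scalar times $Q_K$, it descends to classes modulo $Y_K=\ker Q_K$ (Lemma \ref{L:signature1}). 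Using the hypothesis $Z/(Y_K\cap Z)=\ker(JL-i\mu)^{K}/Y_K$, pick $w_1,\dots,w_l\in Z$ whose classes form a basis of $\ker(JL-i\mu)^{K}/Y_K$; as $w_j\notin Y_K\supseteq\ker(JL-i\mu)^{K-1}$, each $w_j$ generates a Jordan chain of length $K$ (and by Lemma \ref{L:JordanChain1} these account for all length-$K$ chains).

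Next I normalize the top form. By Lemma \ref{L:signature1}(2) and Corollary \ref{C:signature1}, $Q_K$ induces a non-degenerate Hermitian form on $\ker(JL-i\mu)^{K}/Y_K$; since $\operatorname{span}\{w_1,\dots,w_l\}$ maps isomorphically onto this quotient, the Hermitian matrix $\big(Q_K(w_j,w_k)\big)$ is invertible, so by Sylvester's law of inertia there is an invertible matrix $(c_{jk})$ with $v_j^{(0)}:=\sum_k c_{jk}w_k\in Z$ satisfying $\gamma_{K-1}(v_j^{(0)},v_k^{(0)})=\alpha_j\delta_{j,k}$, where $\alpha_j=\pm i^{K-1}$ (the sign depending on $j$, with the number of minus signs equal to the negative index of $Q_K$). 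These still generate length-$K$ chains because the change of basis is invertible. This settles the $m=K-1$ part of the target.

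Then I annihilate the lower pairings by downward induction on $m=K-2,\dots,0$: assume $v_1,\dots,v_l\in Z$ generate length-$K$ chains with $\gamma_{K-1}(v_j,v_k)=\alpha_j\delta_{j,k}$ and $\gamma_{m'}(v_j,v_k)=0$ for $m<m'\le K-2$ (base case $m=K-2$ is the previous step). Put $a:=K-1-m\ge1$ and replace $v_j$ by $\tilde v_j:=v_j+\sum_k t_{jk}N^{a}v_k$ with $(t_{jk})$ to be chosen. Since $N^{a}v_k\in\ker(JL-i\mu)^{K}\cap R(JL-i\mu)\subseteq Y_K$, all classes modulo $Y_K$ are unchanged, so the $\tilde v_j$ still generate length-$K$ chains and $\gamma_{K-1}$ is unaffected; moreover, for every $m'$ with $m<m'\le K-1$, each term of $\gamma_{m'}(\tilde v_j,\tilde v_k)-\gamma_{m'}(v_j,v_k)$ carries total $N$-degree $\ge a+m'\ge K$ and hence vanishes, so all higher pairings are preserved. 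Using $\gamma_m(v_j,N^{a}v_l)=\gamma_{K-1}(v_j,v_l)=\alpha_j\delta_{j,l}$, the symmetry of $\gamma_m$, and the vanishing of the cross-term (degree $2a+m=2K-2-m\ge K$), one gets
\[
\gamma_m(\tilde v_j,\tilde v_k)=\gamma_m(v_j,v_k)+\alpha_j t_{kj}+(-1)^{m}\overline{\alpha_k t_{jk}} .
\]
Requiring all of these to vanish is a linear system in $(t_{jk})$ which is solvable precisely because of the matching symmetry $\gamma_m(v_j,v_k)=(-1)^{m}\overline{\gamma_m(v_k,v_j)}$ of the right-hand side: the diagonal equation $\alpha_j t_{jj}+(-1)^{m}\overline{\alpha_j t_{jj}}=-\gamma_m(v_j,v_j)$ is solvable since $\gamma_m(v_j,v_j)=(-1)^{m}\overline{\gamma_m(v_j,v_j)}$ lies in the range of $s\mapsto s+(-1)^{m}\overline s$, and for each pair $j\ne k$ the two attached equations are compatible (e.g. one may take $t_{jk}=0$ for $j>k$ and solve for the rest). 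When the induction reaches $m=0$, the resulting $v_1,\dots,v_l\in Z$ satisfy $\gamma_m(v_j,v_k)=0$ for $0\le m\le K-2$ and $\gamma_{K-1}(v_j,v_k)=\pm i^{K-1}\delta_{j,k}$, which is exactly \eqref{E:basis1}.

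The choice of the $w_j$ and the Sylvester step are routine; the heart of the matter is the downward induction — recognizing that the corrections replacing $v_j$ by $v_j+\sum_k t_{jk}(JL-i\mu)^{K-1-m}v_k$ are the right family of moves, because they (a) fix every class modulo $Y_K$, (b) preserve all higher-order pairings by a pure degree count, and (c) act on $\gamma_m$ by an affine map whose linear part surjects onto the $(-1)^{m}$-Hermitian matrices. Point (c), i.e. the solvability of the $t$-system, is exactly where the Hamiltonian antisymmetry \eqref{E:anti-S} — through the identity $\gamma_m(u,v)=(-1)^{m}\overline{\gamma_m(v,u)}$ — is indispensable; without it one could normalize the top form but not propagate the normalization down the chains.
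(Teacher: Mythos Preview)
Your proof is correct and follows essentially the same approach as the paper: both first diagonalize the top-degree Hermitian form $Q_K$ on representatives in $Z$, and then run a downward induction in which the $m$-th pairing is killed by a correction of the form $v_j\mapsto v_j+\sum_k t_{jk}(JL-i\mu)^{K-1-m}v_k$, using that such shifts preserve all higher pairings by a degree count and that the resulting linear system for $(t_{jk})$ is solvable thanks to the $(-1)^m$-Hermitian symmetry coming from \eqref{E:anti-S}. The only cosmetic differences are that the paper writes the correction in lower-triangular form from the outset and gives explicit formulas for the coefficients, whereas you argue solvability more abstractly and observe at the end that one may take $t_{jk}=0$ for $j>k$; your $\gamma_m$ notation also streamlines the bookkeeping.
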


\begin{proof}
Since $Q_{K}$ induces a non-degenerate Hermitian form on $\ker(JL-i\mu
)^{K}\slash Y_{K}=Z\slash(Z\cap Y_{K})$, there exist $w_{1},\ldots,w_{l}\in Z$
such that $w_{1}+Y_{K},\ldots,w_{l}+Y_{K}$ form a basis of $\ker(JL-i\mu
)^{K}\slash Y_{K}$ and diagonalize $Q_{K}$, that is,
\[
\langle L(JL-i\mu)^{K-1}w_{j},w_{k}\rangle=(-i)^{K-1}Q_{K}(w_{j},w_{k})=\pm
i^{K-1}\delta_{j,k}.
\]
Therefore, we have found $w_{1},\ldots,w_{l}$ satisfying \eqref{E:basis1} for
$m=K-1$.

Suppose $1\leq m_{0}+1\leq K-1$ and we have found $w_{1},\ldots,w_{l}\in Z$
satisfying \eqref{E:basis1} for $m\geq m_{0}+1$. Denote
\[
\alpha=K-1-m_{0}\geq1,\;Q_{K}(w_{j},w_{k})=b_{j,k}=\pm\delta_{j,k},\;\langle
L(JL-i\mu)^{m_{0}}w_{j},w_{k}\rangle=c_{j,k}.
\]
In the next step we will construct $v_{1},\ldots,v_{l}$ satisfying
\eqref{E:basis1} for $m\geq m_{0}$ in the form of
\[
v_{j}=w_{j}+\sum_{j^{\prime}=1}^{j}a_{j,j^{\prime}}(JL-i\mu)^{\alpha
}w_{j^{\prime}}\in Z.
\]
According to \eqref{E:anti-S}, $\langle L(JL-i\mu)^{m}\cdot,\cdot\rangle$ is
Hermitian or anti-Hermitian. Without loss of generality, we may consider only
$j\leq k$ in \eqref{E:basis1}. Compute using \eqref{E:anti-S}
\begin{equation}%
\begin{split}
\langle L(JL &  -i\mu)^{m}v_{j},v_{k}\rangle=(-1)^{\alpha}\sum_{k^{\prime}%
=1}^{k}\overline{a_{k,k^{\prime}}}\langle L(JL-i\mu)^{\alpha+m}w_{j}%
,w_{k^{\prime}}\rangle\\
&  +\langle L(JL-i\mu)^{m}w_{j},w_{k}\rangle+\sum_{j^{\prime}=1}%
^{j}a_{j,j^{\prime}}\langle L(JL-i\mu)^{\alpha+m}w_{j^{\prime}},w_{k}\rangle\\
&  +(-1)^{\alpha}\sum_{j^{\prime}=1}^{j}\sum_{k^{\prime}=1}^{k}a_{j,j^{\prime
}}\overline{a_{k,k^{\prime}}}\langle L(JL-i\mu)^{2\alpha+m}w_{j^{\prime}%
},w_{k^{\prime}}\rangle.
\end{split}
\label{E:basis2}%
\end{equation}
If $m+\alpha=K-1-m_{0}+m\geq K$, the induction assumption and the above
equation imply
\[
\langle L(JL-i\mu)^{m}v_{j},v_{k}\rangle=\langle L(JL-i\mu)^{m}w_{j}%
,w_{k}\rangle=\pm i^{K-1}\delta_{j,k}\delta_{m,K-1}%
\]
and thus \eqref{E:basis1} for $m\geq m_{0}+1$ holds for these $v_{1}%
,\ldots,v_{l}$ with any choices of $a_{j,j^{\prime}}$. For $m=m_{0}$, i.e.
$m+\alpha=K-1$, if $j<k$, \eqref{E:basis2} implies
\[
\langle L(JL-i\mu)^{m_{0}}v_{j},v_{k}\rangle=c_{j,k}+(-1)^{\alpha}%
(-i)^{K-1}\overline{a_{k,j}}b_{j,j}.
\]
Noticing $b_{j,j}=\pm1$ and letting
\[
a_{k,j}=(-1)^{\alpha+1}(-i)^{K-1}\overline{c_{j,k}}b_{j,j},\;j<k,
\]
then we have
\[
\langle L(JL-i\mu)^{m_{0}}v_{j},v_{k}\rangle=0,\quad j<k.
\]
If $j=k$,
\[
\langle L(JL-i\mu)^{m_{0}}v_{k},v_{k}\rangle=c_{k,k}+(-i)^{K-1}b_{k,k}%
\big(a_{k,k}+(-1)^{\alpha}\overline{a_{k,k}})\big).
\]
Let
\[
a_{k,k}=-\frac{1}{2}i^{K-1}b_{k,k}c_{k,k}=-\frac{1}{2}i^{\alpha}%
b_{k,k}i^{m_{0}}c_{k,k}.
\]
Since \eqref{E:anti-S} implies $c_{k,j}=(-1)^{m_{0}}\overline{c_{j,k}}$, we
have $i^{m_{0}}c_{k,k}\in\mathbf{R}$ and thus $i^{\alpha}a_{k,k}\in\mathbf{R}$
which makes it easy to verify
\[
\langle L(JL-i\mu)^{m_{0}}v_{k},v_{k}\rangle=0.
\]
Therefore, $v_{1},\ldots,v_{l}\in Z$ satisfy \eqref{E:basis1} for all $m\geq
m_{0}$ and the lemma follows from the induction.
\end{proof}

We are in a position to prove Proposition \ref{P:basis} in finite dimensions.
\newline

\textbf{Proof of Proposition \ref{P:basis} assuming $\dim X<\infty$ and $\ker
L=\{0\}$}: Let $E^{D}=\{0\}$, then $1<k_{1}<\cdots<k_{j_{0}}$ are the
dimensions of nontrivial Jordan blocks in $E_{i\mu}$, $\mu\in\mathbf{R}$, and
there are $l_{j}>0$ Jordan blocks of size $k_{j}$. For each $1\leq j\leq
j_{0}$, we will find linearly independent
\[
\{u_{p,q}^{(j)}\mid p=1,\ldots,l_{j},\ q=1,\ldots,k_{j}\}\subset E_{i\mu}%
\]
which form all Jordan chains of length $k_{j}$ and satisfy the desired
properties. The construction is by induction on $j$.

For $j=j_{0}$, applying Lemma \ref{L:JordanChain2} to $Z=\ker(JL-i\mu
)^{k_{j_{0}}}=E_{i\mu}$, where $\dim\ker(JL-i\mu)^{k_{j_{0}}}\slash
Y_{k_{j_{0}}}=l_{j_{0}}$ according to Lemma \ref{L:JordanChain1}, there exist
$u_{1,1}^{(j_{0})},\ldots,u_{l_{j_{0}},1}^{(j_{0})}$ such that
\begin{equation}
\langle L(JL-i\mu)^{m}u_{p_{1},1}^{(j_{0})},u_{p_{2},1}^{(j_{0})}\rangle=\pm
i^{k_{j_{0}}-1}\delta_{p_{1},p_{2}}\delta_{m,k_{j_{0}}-1},\quad0\leq m\leq
j_{0}-1. \label{E:basis3}%
\end{equation}
In particular we have $Q_{K}(u_{p_{1},1}^{(j_{0})},u_{p_{2},1}^{(j_{0})}%
)=\pm\delta_{p_{1},p_{2}}$. Lemma \ref{L:signature1} and Corollary
\ref{C:signature1} imply that $u_{1,1}^{(j_{0})}+Y_{k_{j_{0}}},\dots
,u_{l_{j_{0}},1}^{(j_{0})}+Y_{k_{j_{0}}}$ form a basis of $\ker(JL-i\mu
)^{k_{j_{0}}}\slash Y_{k_{j_{0}}}$. From Lemma \ref{L:JordanChain1}, we obtain
that
\[
u_{p,q}^{(j_{0})}=(JL-i\mu)^{q-1}u_{p,1}^{(j_{0})},\quad q=1,\cdots,k_{j_{0}%
},\;p=1,\ldots,l_{j_{0}}%
\]
form $l_{j_{0}}$ Jordan chains realizing all Jordan blocks of size $k_{j_{0}}$
of $JL$ corresponding to $i\mu\in\sigma(JL)$. Moreover, equation
\eqref{E:basis3} implies
\[
\langle Lu_{p_{1},q_{1}}^{(j_{0})},u_{p_{2},q_{2}}^{(j_{0})}\rangle=\pm
i^{k_{j_{0}}-1}\delta_{p_{1},p_{2}}\delta_{q_{1}+q_{2},k_{j_{0}}+1}.
\]

Suppose $0\leq j_{\ast}<j_{0}$ and we have constructed linearly independent
$u_{p,q}^{(j)}$ for all $j_{\ast}<j\leq j_{0}$, $1\leq p\leq l_{j}$, $1\leq
q\leq k_{j}$ satisfying
\begin{equation}
u_{p,q}^{(j)}=(JL-i\mu)^{q-1}u_{p,1}^{(j)},\quad\langle Lu_{p_{1},q_{1}}%
^{(j)},u_{p_{2},q_{2}}^{(j)}\rangle=\pm i^{k_{j}-1}\delta_{p_{1},p_{2}}%
\delta_{q_{1}+q_{2},k_{j}+1}.\label{E:basis4}%
\end{equation}
Clearly,
\[
Z_{1}=span\{u_{p,q}^{(j)}\mid j_{\ast}<j\leq j_{0},\ 1\leq p\leq l_{j},\ 1\leq
q\leq k_{j}\}\subset E_{i\mu}%
\]
is a subspace invariant under $JL$. Moreover, vectors $\{u_{p,q}^{(j)}\}$ form
a basis of $Z_{1}$ realizing the Jordan canonical form of $JL$ on $Z_{1}$
consisting of all those Jordan blocks of $JL$ corresponding to $i\mu$ of size
greater than $k_{j_{\ast}}$. According to \eqref{E:basis4}, the quadratic form
$\langle L\cdot,\cdot\rangle$ is non-degenerate on $Z_{1}$. In the next step
we will construct $u_{p,q}^{(j_{\ast})}$ for $1\leq p\leq l_{j_{\ast}}$ and
$1\leq q\leq k_{j_{\ast}}$. Let
\[
Z=\{u\in E_{i\mu}\mid\langle Lu,v\rangle=0,\ \forall v\in Z_{1}\}.
\]
Due to the non-degeneracy of $\langle L\cdot,\cdot\rangle$ on both $Z_{1}$ and
$I_{i\mu}=E_{i\mu}$ (Lemma \ref{lemma-non-degenerate-finite-d}), we have
$E_{i\mu}=Z_{1}\oplus Z$. For any $u\in Z$ and $v\in Z_{1}$, due to the
symmetry of $L$ and $J$, we have
\[
\langle LJLu,v\rangle=-\langle Lu,JLv\rangle=0,\;\text{ as }JLv\in Z_{1}%
\]
which implies $JL(Z)\subset Z$. Since the Jordan canonical form of $JL$ on
$Z_{1}$ includes all Jordan blocks of $JL$ on $E_{i\mu}$ of size greater than
$k_{j_{\ast}}$, the Jordan canonical form of $JL$ on $Z$ must be those Jordan
blocks of $JL$ on $E_{i\mu}$ of size no greater than $k_{j_{\ast}}$.
Therefore, $Z\subset\ker(JL-i\mu)^{k_{j_{\ast}}}$ and then Lemma
\ref{L:JordanChain1} implies $Z\slash(Z\cap Y_{k_{j_{\ast}}})=\ker
(JL-i\mu)^{k_{j_{\ast}}}\slash Y_{k_{j_{\ast}}}$. Lemma \ref{L:JordanChain2}
provides vectors $u_{1,1}^{(j_{\ast})},\ldots,u_{l_{j_{\ast}},1}^{(j_{\ast}%
)}\in Z$. It is easy to verify that $u_{p,q}^{(j)}$, $j_{\ast}\leq j\leq
j_{0}$, satisfy the induction assumption for $j_{\ast}\leq j\leq j_{0}$.
Therefore, by induction, we find all $u_{p,q}^{(j)}$ satisfying
\eqref{E:basis4} and realizing all Jordan blocks of $JL$ on $E_{i\mu}$ of size
greater than $1$. It is straightforward to verify all the properties in
Proposition \ref{P:basis}. In particular, Lemma \ref{L:JordanChain1} and
equation \eqref{E:Q_K} imply that the Krein signature defined in Proposition
\ref{P:basis} and Remark \ref{R:index1} coincides with the one in the above
Definition \ref{D:signature1} in terms of $Q_{K}$. Therefore, it is
independent of the choice of the basis (Jordan chains) realizing the Jordan
canonical form.

Finally, let
\[
E^{1}=\{v\in E_{i\mu}\mid\langle Lu_{p,q}^{(j)},v\rangle=0,\ \forall\ 1\leq
j\leq j_{0},\ 1\leq p\leq l_{j},\ 1\leq q\leq k_{j}\}.
\]
Much as in the invariance of $Z$ in the above, $JL(E^{1})\subset E^{1}$. Since
all the Jordan blocks are realized by
\[
\left\{  u_{p,q}^{(j)},\ 1\leq j\leq j_{0},\ 1\leq p\leq l_{j},\ 1\leq q\leq
k_{j}\right\}  ,
\]
we have $E^{1}\subset\ker(JL-i\mu)$. This completes the proof. \hfill$\square$

Based on Proposition \ref{P:basis}, we give the following result to be used later.

\begin{lemma}
\label{lemma-pontr-subspace-finite-d} Let $J,\ L$ be real $n\times n$
matrices. Assume $J$ is anti-symmetric and $L$ is symmetric and nonsingular.
Then there exists an invariant (under $JL$) subspace $W\ $of $\mathbf{C}^{n}$
such that $\dim W=n^{-}\left(  L\right)  $ and $\left\langle L\cdot
,\cdot\right\rangle |_{W}\leq0$.
\end{lemma}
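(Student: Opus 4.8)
The plan is to use the direct-sum decomposition of $\mathbf{C}^{n}$ into the generalized eigenspaces of $JL$ that has already been set up in this section, and to build $W$ blockwise. Recall $\mathbf{C}^{n}=I_{\lambda_{1}}\oplus\cdots\oplus I_{\lambda_{l}}$ where the $\lambda_{j}$ are the distinct eigenvalues of $JL$ with $\operatorname{Re}\lambda_{j}\geq0$, where $I_{\lambda}=E_{\lambda}\oplus E_{-\bar{\lambda}}$ for $\lambda\notin i\mathbf{R}$ and $I_{\lambda}=E_{\lambda}$ for $\lambda\in i\mathbf{R}$; by Lemma \ref{lemma-orthogonal-eigenspace} this decomposition is $\langle L\cdot,\cdot\rangle$-orthogonal and each $I_{\lambda_{j}}$ is invariant under $JL$. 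Hence it suffices to produce, in each block $I_{\lambda_{j}}$, an invariant subspace $W_{j}$ with $\langle L\cdot,\cdot\rangle|_{W_{j}}\leq 0$ and $\dim W_{j}=n^{-}(L|_{I_{\lambda_{j}}})$; then $W=\oplus_{j}W_{j}$ works, since $\dim W=\sum_j n^{-}(L|_{I_{\lambda_j}})=n^{-}(L)$ by \eqref{identity-negative-index}, and $L$-orthogonality of the blocks makes $\langle L\cdot,\cdot\rangle|_{W}$ the direct sum of the $\langle L\cdot,\cdot\rangle|_{W_{j}}$, hence $\leq 0$.

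For the blocks with $\operatorname{Re}\lambda_{j}>0$: by Lemma \ref{lemma-negative-unstable-space}, $\langle L\cdot,\cdot\rangle$ vanishes identically on $E_{\lambda_j}$ (and on $E_{-\bar\lambda_j}$) since both are isotropic, and $n^{-}(L|_{I_{\lambda_j}})=m_{\lambda_j}=\dim E_{\lambda_j}$. So just take $W_j=E_{\lambda_j}$, which is invariant under $JL$ and on which the form vanishes, hence is $\leq 0$. This disposes of the hyperbolic part with no further work. It remains to handle the purely imaginary blocks $I_{i\mu}=E_{i\mu}$, $\mu\in\mathbf{R}$ (including $\mu=0$, which is allowed here since $L$ is nonsingular so $\ker L=\{0\}$). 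This is where Proposition \ref{P:basis} in finite dimensions — already established above — does the work: it gives a basis of $E_{i\mu}$, organized into Jordan chains $\{u^{(j)}_{p,q}\}$ plus a complementary piece $E^{1}\subset\ker(JL-i\mu)$, on which $\langle L\cdot,\cdot\rangle$ is block-diagonal with the anti-diagonal blocks \eqref{L-anti-diagonal} on each chain and a (non-degenerate) Hermitian form on $E^{1}$.

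On $E^{1}$ the form is an ordinary non-degenerate Hermitian matrix, so choose an $L$-negative subspace $W^{1}\subset E^{1}$ of dimension $n^{-}(L|_{E^{1}})$, automatically invariant since $E^{1}\subset\ker(JL-i\mu)$. On each Jordan chain of length $k=k_{j}$, using the anti-diagonal normal form \eqref{L-anti-diagonal} with entries $a_{p,q}$ satisfying $a_{p,k+1-q}=\overline{a_{p,q}}$ and $a_{p,q'}=(-1)^{q'-q}a_{p,q}$: the span of the "tail" vectors $u^{(j)}_{p,q}$ with $q>\lfloor k/2\rfloor$ is invariant under $JL$ (it is $(JL-i\mu)^{\lceil k/2\rceil}$ applied to the chain, together with the $i\mu$-eigenvector structure) and is $L$-isotropic, of dimension $\lfloor k/2\rfloor$; when $k$ is even this already gives the maximal $L$-nonpositive dimension $k/2$; when $k$ is odd one adjoins the middle vector $u^{(j)}_{p,(k+1)/2}$ precisely when $a_{p,(k+1)/2}=-1$, keeping invariance (the middle vector spans a $JL$-invariant line modulo the tail) and keeping the form $\leq 0$, giving dimension $(k-1)/2+n^{-}_{k_j}$-worth of contribution; summing over all chains recovers exactly $k^{-}(i\mu)=n^{-}(L|_{E_{i\mu}})$ by formula \eqref{counting-pure-imaginary}. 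Taking $W_{i\mu}$ to be the (internal-direct) sum of these per-chain invariant isotropic/nonpositive subspaces together with $W^{1}$ gives an invariant subspace of $E_{i\mu}$ with $\langle L\cdot,\cdot\rangle\leq 0$ and the right dimension. I expect the only delicate point to be checking invariance of these per-chain subspaces together with the verification that the middle-vector sign choice is consistent with $n^{-}_{k_j}(i\mu)$; both are routine given the explicit normal form of Proposition \ref{P:basis}, and the dimension bookkeeping then closes via \eqref{identity-negative-index} and \eqref{counting-pure-imaginary}. $\hfill\square$
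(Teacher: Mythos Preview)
Your argument is correct and follows the same blockwise strategy as the paper: take $E_{\lambda}$ for each eigenvalue with $\operatorname{Re}\lambda>0$, and on each $E_{i\mu}$ use the basis of Proposition~\ref{P:basis} to build an invariant $L$-nonpositive piece of dimension $n^{-}(L|_{E_{i\mu}})$, with the global count closed by \eqref{identity-negative-index} and \eqref{counting-pure-imaginary}. Two remarks. First, a small indexing slip: for odd $k$ your condition $q>\lfloor k/2\rfloor$ already picks up the middle vector, so that span is neither isotropic nor of dimension $\lfloor k/2\rfloor$; the correct isotropic tail is $q>\lceil k/2\rceil$ (equivalently the image of $(JL-i\mu)^{\lceil k/2\rceil}$ on the chain, exactly as your parenthetical says), after which adjoining the middle vector when $a_{p,(k+1)/2}=-1$ works as you claim. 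Second, the paper's written proof actually selects the \emph{initial} segment $span\{u_{p,1}^{(j)},\ldots,u_{p,\lfloor k_j/2\rfloor}^{(j)}\}$ of each chain rather than the tail; since $(JL-i\mu)u_{p,q}^{(j)}=u_{p,q+1}^{(j)}$, only tail segments are $JL$-invariant, so your choice is in fact the correct one.
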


\begin{proof}
For any purely imaginary eigenvalue $\lambda=i\mu\in i\mathbf{R}$, we start
with the special basis of $E_{i\mu}$ given by Proposition \ref{P:basis} (as
well as Remark \ref{R:JordanChain1}). For each Jordan chain $\left\{
u_{p,1}^{(j)},\cdots,u_{p,k_{j}}^{(j)}\right\}  \ $of even length, define the
subspace $Z_{i\mu,j,p}=span\left\{  u_{p,1}^{(j)},\cdots,u_{p,k_{j}/2}%
^{(j)}\right\}  $. For each Jordan chain $\left\{  u_{p,1}^{(j)}%
,\cdots,u_{p,k_{j}}^{(j)}\right\}  \ $of odd length $k_{j}\geq1$, define the
subspace
\[
Z_{i\mu,j,p}=%
\begin{cases}
span\{u_{p,1}^{(j)},\cdots,u_{p,(k_{j}-1)/2}^{(j)}\}\quad & \text{ if }\langle
Lu_{p,(k_{j}+1)/2}^{(j)},u_{p,(k_{j}+1)/2}^{(j)}\rangle>0,\\
span\{u_{p,1}^{(j)},\cdots,u_{p,(k_{j}+1)/2}^{(j)}\} & \text{ if }\langle
Lu_{p,(k_{j}+1)/2}^{(j)},u_{p,(k_{j}+1)/2}^{(j)}\rangle<0.
\end{cases}
\]
Proposition \ref{P:basis} implies that $\left\langle Lu,u\right\rangle \leq0$
for all $u\in Z_{i\mu,j,p}$ defined above. For any eigenvalue $\lambda$ of
$JL$ with $\operatorname{Re}\lambda>0$, recall $\left\langle Lu,u\right\rangle
=0$ for all $u\in E_{\lambda}$ by Lemma \ref{lemma-orthogonal-eigenspace}.
Define%
\[
Z_{i\mu}=\oplus_{j=0}^{k_{j}}\oplus_{p=1}^{l_{j}}Z_{i\mu,j,p}%
\]
and
\[
W=\oplus_{\operatorname{Re}\lambda>0}E_{\lambda}\oplus_{i\mu\in\sigma(JL)\cap
i\mathbf{R}}Z_{i\mu}.
\]
Then $\left\langle L\cdot,\cdot\right\rangle |_{W}\leq0$ since these subspaces
are pairwise orthogonal in $\left\langle L\cdot,\cdot\right\rangle $.
Moreover, $\dim W=n^{-}\left(  L\right)  $ due to the counting formula
(\ref{counting-formula}) and (\ref{counting-pure-imaginary}).
\end{proof}

\section{Invariant subspaces}

\label{S:Pontryagin}

In this section, we study subspaces of $X$ invariant under $JL$, including
both positive and negative results. As the first step to prove our main
results, a non-positive (with respect to $\langle L\cdot,\cdot\rangle$)
invariant subspace of the maximal possible dimension $n^{-}(L)$ is derived in
Subsection \ref{SS:Pontryagin}. The existence of such subspaces is not only
useful for the linear dynamics, but also a rather interesting and delicate
result as demonstrated in the discussions and examples in Subsection
\ref{SS:CounterExample}. Throughout this section, we work under the
non-degeneracy assumption that \eqref{E:non-degeneracy-def} holds for $L$
which is equivalent to $L:X\rightarrow X^{\ast}$ is an isomorphism.

\subsection{Maximal non-positive invariant subspaces (Pontryagin invariant
subspaces)}

\label{SS:Pontryagin}

\begin{theorem}
\label{T:Pontryagin} In additional to hypotheses (\textbf{H-3}), assume $L$
satisfies the non-degeneracy assumption \eqref{E:non-degeneracy-def}, then

\begin{enumerate}
\item $\dim W\le n^{-}(L)$ holds for any subspace $W \subset X$ satisfying
$\langle Lu, u\rangle\le0$ for any $u\in W$; and

\item there exists a subspace $W \subset D(JL)$ such that
\[
\dim W = n^{-}(L), \quad JL(W) \subset W, \text{ and } \langle Lu, u\rangle
\le0, \; \forall u \in W.
\]

\end{enumerate}
\end{theorem}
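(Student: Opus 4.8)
The plan is to dispose of (1) by an elementary dimension count, and to reduce (2) to a Pontryagin-type invariant subspace theorem for the operator $JL$, which is skew-adjoint in the indefinite inner product $\langle L\cdot,\cdot\rangle$. For (1), non-degeneracy gives $\ker L=\{0\}$, so (\textbf{H2}) provides $X=X_-\oplus X_+$ with $\dim X_-=n^-(L)$ and $\langle Lu,u\rangle\ge\delta\|u\|^2$ on $X_+$. If $\langle Lu,u\rangle\le0$ for all $u\in W$, then the projection $P\colon X\to X_-$ along $X_+$ is injective on $W$: a nonzero $u$ with $Pu=0$ would lie in $W\cap X_+$ and satisfy $0\ge\langle Lu,u\rangle\ge\delta\|u\|^2>0$, which is impossible. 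Hence $\dim W=\dim P(W)\le\dim X_-=n^-(L)$ (cf. Lemma \ref{L:Morse-Index}).

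For (2) I would pass to the complexification and put $[u,v]:=\langle Lu,v\rangle$, a Hermitian form which, by (\textbf{H2}) and non-degeneracy, has exactly $\kappa:=n^-(L)<\infty$ negative squares and admits a Hilbert majorant (from the spectral picture of Remark \ref{R:assumptions}); thus $(X,[\cdot,\cdot])$ is a Pontryagin space $\Pi_\kappa$. By (\textbf{H3}) and Lemma \ref{L:decomJL}, $D(JL)$ is dense, and $JL$ is closed, being a $C^0$-group generator (Proposition \ref{P:well-posedness}); equation \eqref{E:anti-S} at $\mu=0$ reads $[JLu,v]=-[u,JLv]$, so $JL$ is skew-adjoint for $[\cdot,\cdot]$, equivalently $iJL$ is self-adjoint on $\Pi_\kappa$. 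Part (2) is then precisely the Pontryagin invariant subspace theorem: such an operator admits a $\kappa$-dimensional invariant subspace $W$ on which $[\cdot,\cdot]\le0$, and $W\subset D(JL)$ necessarily.

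There are two routes to this theorem. (a) \emph{Abstractly}: writing $X=X_-\oplus X_+$ and normalizing $\mp[\cdot,\cdot]$ to be inner products on $X_\mp$, every maximal non-positive subspace is the graph $W_K=\{x+Kx:x\in X_-\}$ of a contraction $K\colon X_-\to X_+$, and invariance under $iJL$ becomes an operator equation of Riccati type for $K$; one solves it inside the set of contractions $X_-\to X_+$ --- convex and compact in the weak operator topology because $\dim X_-<\infty$ --- by the Ky Fan / Bohnenblust--Karlin fixed point theorem, as in \cite{ky-Fan63,Krein-fixed-point,chu-pilinovsky}, obtaining $W=W_K\subset D(JL)$ built into the scheme. (b) \emph{Constructively}, for separable $X$: choose finite-dimensional real subspaces $X_-\subset X_1\subset X_2\subset\cdots\subset D(JL)$ with dense union, each non-degenerate for $[\cdot,\cdot]$ so that $n^-([\cdot,\cdot]|_{X_n})=\kappa$, and let $B_n$ be the $[\cdot,\cdot]$-orthogonal compression of $JL$ to $X_n$; in a real basis $B_n=J_nL_n$ with $J_n$ skew and $L_n$ symmetric nonsingular, so Lemma \ref{lemma-pontr-subspace-finite-d} gives a $\kappa$-dimensional $B_n$-invariant $W_n=\graph(K_n)$ with $\|K_n\|\le1$; a weak limit $K_{n_j}\rightharpoonup K$ (legitimate since $X_-$ is finite-dimensional) yields $W=\graph(K)$, still of dimension $\kappa$, with $[\cdot,\cdot]\le0$ on $W$ by weak lower semicontinuity of $\|\cdot\|$.

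The main obstacle lies in route (b): one must show $W\subset D(JL)$ and that $W$ is genuinely $JL$-invariant, not merely non-positive. Since $B_n$ only \emph{compresses} $JL$, this requires a uniform bound on $JL$ restricted to the $W_n$ before the limit is taken; I expect to obtain it from resolvent estimates for $B_n$ along a contour chosen using that $\sigma(JL)$ lies in a vertical strip (a $C^0$-group generator) together with the $\Pi_\kappa$-structure, which pins down the relevant part of $\sigma(B_n|_{W_n})$ and hence controls the matrix representing $B_n|_{W_n}$; closedness of $JL$ then delivers $W\subset D(JL)$ and $JL(W)\subset W$. In route (a) the analogous technical point is only to verify that the unbounded $iJL$ is self-adjoint on $\Pi_\kappa$ in the strict sense, which is exactly where density of $D(JL)$ from (\textbf{H3}) and the closedness of $JL$ enter.
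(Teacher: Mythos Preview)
Your proof of (1) and your two-route plan for (2) match the paper's own structure closely: the paper also gives a fixed-point proof (\#1) and a Galerkin proof (\#2). The differences are in the technical execution, and in route (b) your proposed mechanism for the key step is not the one that actually works.

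\textbf{Route (a).} You propose to apply the Pontryagin theorem directly to the unbounded $[\,\cdot\,,\cdot\,]$-skew operator $JL$, reducing to a Riccati equation for the angular operator $K$. The paper instead first passes through the Cayley transform $T=(JL+a)(JL-a)^{-1}$, which is a bounded $[\,\cdot\,,\cdot\,]$-unitary operator (here $a>0$ is chosen so that $\pm a\notin\sigma(JL)$, possible since $JL$ is a bounded perturbation of an anti-self-adjoint operator). Then $T$ maps graphs of contractions to graphs of contractions, and the Tychonoff fixed point theorem on the weakly compact unit ball of $L(X_-,X_+)$ produces the invariant $W$; invariance under $(JL-a)^{-1}=(T-I)/2a$ and finite-dimensionality of $W$ then give $W\subset D(JL)$ and $JL(W)\subset W$ for free. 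This bypasses entirely the issue you flag of checking strict $\Pi_\kappa$-self-adjointness of $iJL$.

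\textbf{Route (b).} Your setup is exactly the paper's: an $[\,\cdot\,,\cdot\,]$-orthogonal basis $\{\xi_k\}\subset D(JL)$ with $X_-=\mathrm{span}\{\xi_1,\dots,\xi_\kappa\}$, compressions $F^{(n)}=\pi^n JL|_{X^{(n)}}$, finite-dimensional invariant non-positive $W^{(n)}=\mathrm{span}\{w_1^{(n)},\dots,w_\kappa^{(n)}\}$ from Lemma \ref{lemma-pontr-subspace-finite-d}, and weak limits $w_j^{(n)}\rightharpoonup w_j^\infty$. You correctly identify the obstacle as a uniform bound on $F^{(n)}|_{W^{(n)}}$, but your proposed resolution via resolvent estimates and spectral contours is the wrong tool here: there is no a priori spectral localization for $F^{(n)}|_{W^{(n)}}$ uniform in $n$ coming from the $\Pi_\kappa$ structure alone. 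The paper's argument is instead purely algebraic. Writing $F^{(n)}w_k^{(n)}=\sum_j a_{kj}^{(n)} w_j^{(n)}$, the defining property of the $[\,\cdot\,,\cdot\,]$-orthogonal compression gives, for any fixed $w\in X^{(l)}$ with $l\le n$,
\[
\sum_j a_{kj}^{(n)}\,[w_j^{(n)},w] \;=\; [F^{(n)}w_k^{(n)},w] \;=\; -[w_k^{(n)},JLw].
\]
The right side is bounded uniformly in $n$ because $\{w_k^{(n)}\}$ is norm-bounded and $JLw$ is a fixed vector. If the $a_{kj}^{(n)}$ were unbounded, dividing by the largest one and passing to the limit would produce a nontrivial relation $\sum_j c_j\,[w_j^\infty,w]=0$ for all $w$ in a dense set, hence for all $w\in X$, forcing $\sum_j c_j w_j^\infty=0$ by non-degeneracy of $L$ and contradicting the independence of the $w_j^\infty$ (which follows since $P_-w_j^\infty=\xi_j$). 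With the uniform bound in hand, the same identity passes to the limit and yields $JLw_k^\infty=\sum_j a_{kj}^\infty w_j^\infty$, giving both $W^\infty\subset D(JL)$ and invariance in one stroke. No resolvent or spectral argument is needed.
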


\begin{remark}
\label{R:Pontryagin} Though the theorem is stated for real Hilbert spaces, the
same proof shows that it also holds for complex Hilbert space $X$ and
Hermitian forms $L$ and $J$. Furthermore, the invariance of $W$ under $JL$
implies that $W \subset\cap_{k=1}^{\infty}D\big( (JL)^{k} \big)$.
\end{remark}

This theorem is basically equivalent to the classical Pontryagin invariant
subspace theorem which is usually stated for a self-adjoint operator $A\ $with
resect to some indefinite quadratic form $\langle L\cdot,\cdot\rangle\ $on $X$
with finitely many negative directions (i.e. $L$ satisfies \textbf{(H2)} with
$n^{-}\left(  L\right)  <\infty$ and $\ker L=\{0\}$). It states that there
exists a subspace $W\subset X$ $\ $such that $W$ is invariant under $A$,
$\left\langle L\cdot,\cdot\right\rangle |_{W}\leq0$ and $\dim W=$
$n^{-}\left(  L\right)  $ (i.e. maximal non-positive dimension). Such theorems
have been proved in the literature (e.g. see \cite{Gr90} \cite{chu-pilinovsky}
and the references therein). We believe that it will play a fundamental role
in further studies of Hamiltonian systems and deserves more attention than it
currently does. For the Hamiltonian PDE (\ref{eqn-hamiltonian}) considered in
this paper, one important observation is that the operator $JL$ is
anti-self-adjoint with respect to the inner product $\langle L\cdot
,\cdot\rangle$. Since both anti-self-adjoint and self-adjoint operators are
related to unitary operators by the Cayley transform, the Pontryagin invariant
subspace theorems can be equivalently stated for unitary, self-adjoint or
anti-self-adjoint cases. By Lemma \ref{lemma-L-form-preserve}, $e^{tJL}$ is
unitary in $\langle L\cdot,\cdot\rangle$. But to study the eigenvalues of $JL$
more directly, we still use Cayley transform to relate $JL$ to an unitary
operator and then apply the Pontryagin invariant subspace theorem. For the
sake of completeness, in the following we outline a proof of Theorem
\ref{T:Pontryagin} by the arguments given in \cite{chu-pilinovsky} for the
proof of Pontryagin invariant subspace theorem via unitary operators which is
based on compactness and fixed point theorems (see also \cite{ky-Fan63}
\cite{Krein-fixed-point}).

We also give another more constructive proof of Theorem \ref{T:Pontryagin}, by
using the Hamiltonian structure of (\ref{eqn-hamiltonian}) and Galerkin
approximation. It provides more information about the invariant subspace $W$.

\begin{proof}
The assumption (\ref{E:non-degeneracy-def}) is equivalent to $\ker L=\left\{
0\right\}  $. The first statement of the Theorem follows by the same proof of
Lemma \ref{L:Morse-Index}. Below we give two different proofs of the
construction of the invariant subspace $W$ in the second part of the Theorem.

Proof (\textbf{\#1.}) Here we sketch a proof of Theorem \ref{T:Pontryagin} by
using the arguments in \cite{chu-pilinovsky}. Let $X_{\pm}\subset X$ be given
by Lemma \ref{L:decom1}. Assumptions (\textbf{H2-3}) and
\eqref{E:non-degeneracy-def} ensure that
\[
X=X_{-}\oplus X_{+},\quad X^{\ast}=\tilde{X}_{-}^{\ast}\oplus\tilde{X}%
_{+}^{\ast},\quad\pm\langle Lu,u\rangle\geq\delta\Vert u\Vert^{2},\;\ \forall
u\in X_{\pm},
\]
where $\tilde{X}_{\pm}^{\ast}=P_{\pm}^{\ast}X_{\pm}^{\ast}$ and $P_{\pm}$ are
the associated projections. As in the proof of Lemma \ref{L:decomJL}, let
$i_{X_{\pm}}:X_{\pm}\rightarrow X$ be the embedding and
\[
L_{\pm}=\pm P_{\pm}^{\ast}i_{X_{\pm}}^{\ast}Li_{X_{\pm}}P_{\pm},\quad
(u,v)_{L}\triangleq\langle(L_{+}+L_{-})u,v\rangle.
\]
There exists $\delta>0$ such that $\langle L_{\pm}u,u\rangle\geq\delta\Vert
u\Vert^{2}$, for all $u\in X_{\pm}$ and the quadratic form $(\cdot,\cdot)_{L}$
induces an equivalent norm $|u|_{L}\triangleq\sqrt{(u,v)_{L}}$ on $X$. We
denote the Hilbert space $\big(X,\langle(L_{+}+L_{-})\cdot,\cdot\rangle\big)$
by $X_{L}$.

\textit{Step 1.} It is clear that $J(L_{+}+L_{-})$ is an anti-self-adjoint
operator on $X_{L}$ and $JL_{-}$ is a bounded linear operator of finite rank
on $X_{L}$ as $L_{-}X_{-}=P_{-}^{\ast}X_{-}^{\ast}\subset D(J)$. Writing
$JL=J(L_{+}+L_{-})-2JL_{-}$, we obtain that there exists $a>0$ such that
$\alpha\notin\sigma(JL)$ if $|\operatorname{Re}\alpha|\geq a$. Let
\[
T=(JL+a)(JL-a)^{-1},\text{ then }\langle LTu,Tv\rangle=\langle Lu,v\rangle
,\;\forall u,v\in X
\]
through straightforward calculation using $J=-J^{\ast}$ and that $L$ is
bounded and symmetric. In some sense, $JL$ is anti-self-adjoint with respect
to the quadratic form $\langle L\cdot,\cdot\rangle$ and thus $T$ is formally
the Cayley transformation.

\textit{Step 2.} Let $X_{L\pm}$ be the subspaces $X_{\pm}$ equipped with the
inner product $(\cdot,\cdot)_{L}$ which is equivalent to $\pm\langle
L_{X_{\pm}}\cdot,\cdot\rangle$ on $X_{\pm}$, where $L_{X_{\pm}}$ is defined in
\eqref{E:L_y1}. One may prove (see Lemma 3.6 in \cite{chu-pilinovsky}) that a
subspace $W\subset X_{L}$ satisfies $\dim W=n^{-}(L)$ and $\langle
Lu,u\rangle\leq0$ for all $u\in W$ if and only if $W$ is the graph of a
bounded linear operator $S:X_{L-}\rightarrow X_{L_{+}}$ with operator norm
$|S|\leq1$. Denote this set of operators, i.e. the unit ball of $L(X_{L-}%
,X_{L+})$, by $B_{1}(X_{L-},X_{L+})$. This proves the first statement.

\textit{Step 3.} For any $S\in B_{1}(X_{L-},X_{L+})$, since $T$ preserves the
quadratic form $\langle L\cdot,\cdot\rangle$, one may show that
$T\big(\graph(S)\big)$ is still the graph of some $S^{\prime}\in B_{1}%
(X_{L-},X_{L+})$. Hence we define a transformation $\mathcal{T}$ on
$B_{1}(X_{L-},X_{L+})$ as
\[
\graph\big(\mathcal{T}(S)\big)=T\big(\graph(S)\big).
\]

\textit{Step 4.} The space of bounded operators $L(X_{L-},X_{L+})$ equipped
with the weak topology is a locally convex topological vector space. Since
$X_{L-}$ is finite dimensional, the unit ball $B_{1}(X_{L-},X_{L+})$ is convex
and compact under the weak topology. Using the boundedness and the finite
dimensionality of $X_{L-}$, one may prove (see \cite{chu-pilinovsky} for
details) that $\mathcal{T}$ is continuous under the weak topology. According
to the Tychonoff fixed point theorem (sometimes referred as the
Schauder-Tychonoff fixed point theorem, see \cite{Ze86}), $\mathcal{T}$ has a
fixed point $S\in B_{1}(X_{L-},X_{L+})$. Let $W=\graph (S)$ and thus
$T(W)\subset W$. According to the definition of $T$, we have
\[
(JL-a)^{-1}=\frac{1}{2a}(T-I)
\]
which implies that $W$ is invariant under $(JL-a)^{-1}$. As $W$ is finite
dimensional and $(JL-a)^{-1}$ is bounded and injective, it is clear that
$W=(JL-a)^{-1}W\subset D(JL)$ and thus $JL(W)\subset W$.
\end{proof}


\noindent\textbf{Alternative proof (\#2) of Theorem \ref{T:Pontryagin} via
Galerkin approximation on separable }$X$. On the one hand, the above proof
given in \cite{chu-pilinovsky} is elegant and is based on fixed point theorems
involving compactness, which does not yield much detailed information of the
invariant subspace W. On the other hand, clearly Theorem \ref{T:Pontryagin} is
a generalization into Hilbert spaces of Lemma
\ref{lemma-pontr-subspace-finite-d} whose constructive proof provides more
explicit information of the invariant subspaces. In fact, assuming $X$ is
separable, in the rest of this section we give an alternative proof of Theorem
\ref{T:Pontryagin} based on Lemma \ref{lemma-pontr-subspace-finite-d}.

Denote
\begin{equation}
\label{E:bracket}\left[  \cdot,\cdot\right]  =\left\langle L\cdot
,\cdot\right\rangle \text{ on } X.
\end{equation}
Let $X_{\pm}$ be the same subspaces of $X$ chosen as in the above proof \#1
(as well as in the proof of Proposition \ref{P:well-posedness}). We will study
the eigenvalues of $JL$ by a Galerkin approximation. Choose an orthogonal
(with respect to $\left[  \cdot,\cdot\right]  $) basis $\left\{  \xi
_{k}\right\}  _{k=1}^{\infty}\ $of $X$ such that $\xi_{k}\in D\left(
JL\right)  $,
\[
X_{-}=span\left\{  \xi_{1},\cdots,\xi_{n^{-}\left(  L\right)  }\right\}
,\ X_{+}= \overline{span\left\{  \xi_{k}\right\}  _{k=n^{-}\left(  L\right)
+1}^{\infty}},
\]
and $\left[  \xi_{k},\xi_{j}\right]  =0$ if $k\neq j;\ $ $\left[  \xi_{j}%
,\xi_{j}\right]  =-1$ if $1\leq j\leq n^{-}\left(  L\right)  ;$ $\left[
\xi_{j},\xi_{j}\right]  =1$ if $j\geq n^{-}\left(  L\right)  +1$. For each
$n>$ $n^{-}\left(  L\right)  $, define $X^{\left(  n\right)  }=span\left\{
\xi_{1},\cdots,\xi_{n}\right\}  $ and denote $\pi^{n}$ be the orthogonal
projection with respect to the quadratic form $[\cdot, \cdot]$ from $X$ to
$X^{\left(  n\right)  }$.

Let $X,JL$, and $[\cdot,\cdot]$ (as a Hermitian symmetric form) also denote
their complexifications as in Section \ref{S:Preliminary}. Still $\{\xi
_{1},\xi_{2},\ldots\}$ form a basis of the complexified $X$. Define the
operator $F^{\left(  n\right)  }:X^{\left(  n\right)  }\rightarrow X^{\left(
n\right)  }$ by
\[
F^{(n)}v=\pi^{n}JLv.
\]
Notice that, for $j,k\leq n$,
\[
\left[  F^{(n)}\xi_{k},\xi_{j}\right]  =\left[  \pi^{n}JL\xi_{k},\xi
_{j}\right]  =\left\langle LJL\xi_{k},\xi_{j}\right\rangle =\left\langle
L\xi_{j},JL\xi_{k}\right\rangle \triangleq\left(  J^{\left(  n\right)
}\right)  _{jk},
\]
where the $n\times n$ matrix $\left(  J^{\left(  n\right)  }\right)  $ is real
and anti-symmetric. Let $v=\sum_{j=1}^{n}y_{j}\xi_{j}\in X^{(n)}$ and denote
$\vec{y}^{\left(  n\right)  }=\left(  y_{1},\cdots,y_{n}\right)  ^{T}$ and the
$n\times n$ matrix
\[
H^{\left(  n\right)  }=\left(  \left[  \xi_{k},\xi_{j}\right]  \right)
=diag\left[  \underset{1\text{ to }n^{-}\left(  L\right)
}{\underbrace{-1,\cdots,-1}},\underset{n^{-}\left(  L\right)  +1\text{ to
}n}{\underbrace{1,\cdots,1}}\right]  \text{.}%
\]
Then $F^{\left(  n\right)  }v=\sum_{k=1}^{n}a_{k}\xi_{k}$, where
\[
\vec{a}^{\left(  n\right)  }=\left(  a_{1},\cdots,a_{n}\right)  ^{T}%
=H^{\left(  n\right)  }J^{\left(  n\right)  }\vec{y}^{\left(  n\right)  }.
\]
So the eigenvalue problem $F^{\left(  n\right)  }\left(  v\right)  =\lambda v$
is equivalent to
\begin{equation}
H^{\left(  n\right)  }J^{\left(  n\right)  }\vec{y}^{\left(  n\right)
}=\lambda\vec{y}^{\left(  n\right)  }, \label{eigen-finite-d}%
\end{equation}
Let $\vec{z}^{\left(  n\right)  }=H^{\left(  n\right)  }\vec{y}^{\left(
n\right)  }$, then the eigenvalue problem (\ref{eigen-finite-d}) becomes
\[
J^{\left(  n\right)  }H^{\left(  n\right)  }\vec{z}^{\left(  n\right)
}=\lambda\vec{z}^{\left(  n\right)  }.
\]

For any $n\ge n^{-}\left(  L\right)  $, since $n^{-}\left(  H^{\left(
n\right)  }\right)  =$ $n^{-}\left(  L\right)  $, by Lemma
\ref{lemma-pontr-subspace-finite-d}, there exists a subspace $Z^{\left(
n\right)  }\subset\mathbf{C}^{n}$ of dimension $n^{-}\left(  L\right)  $, such
that $Z^{\left(  n\right)  }$ is invariant under $J^{\left(  n\right)
}H^{\left(  n\right)  }$ and $\left\langle H^{\left(  n\right)  }z,
z\right\rangle \le0$ for any $z \in Z^{\left(  n\right)  }$. Define
$Y^{\left(  n\right)  }=H^{\left(  n\right)  }Z^{\left(  n\right)  }$ and
\[
W^{\left(  n\right)  }=\left\{  \sum_{j=1}^{n}y_{j}\xi_{j}\ |\ \left(
y_{1},\cdots,y_{n}\right)  ^{T}\in Y^{\left(  n\right)  }\right\}  .
\]
Then $W^{\left(  n\right)  }$ is invariant under the linear mapping
$F^{\left(  n\right)  }$, $\dim\left(  W^{\left(  n\right)  }\right)
=n^{-}\left(  L\right)  $ and the quadratic functions
\[
\left\langle L\cdot,\cdot\right\rangle |_{W^{\left(  n\right)  }}=\left\langle
H^{\left(  n\right)  }\cdot,\cdot\right\rangle |_{Y^{\left(  n\right)  }
}=\left\langle H^{\left(  n\right)  }\cdot,\cdot\right\rangle |_{Z^{\left(
n\right)  }}\leq0.
\]

As in the proof (\#1) above, denote $P_{\pm}:X\rightarrow X_{\pm}$ to be the
projection operators with $\ker P_{\pm}=X_{\mp}$. Since the definitions of
$X_{+}$ and $W^{(n)}$ imply $W^{(n)}\cap X_{+}=\{0\}$, it holds that
$P_{-}\left(  W^{\left(  n\right)  }\right)  =X_{-}$. So we can choose a basis
$\{w_{1}^{\left(  n\right)  },\cdots,w_{n^{-}\left(  L\right)  }^{\left(
n\right)  }\}$ of $W^{\left(  n\right)  }$ such that $w_{j}^{(n)}=\xi
_{j}+w_{j+}^{(n)}$ with $w_{j+}^{(n)}\in X_{+}$. For each $j\leq n^{-}(L)$,
since
\[
0\geq\langle Lw_{j}^{\left(  n\right)  },w_{j}^{\left(  n\right)  }%
\rangle=\langle L\xi_{j}^{\left(  n\right)  },\xi_{j}^{\left(  n\right)
}\rangle+\langle Lw_{j+}^{\left(  n\right)  },w_{j+}^{\left(  n\right)
}\rangle\geq-1+\delta_{0}\Vert w_{j+}^{(n)}\Vert^{2},
\]
so $\Vert w_{j}^{\left(  n\right)  }\Vert\leq C$ for some constant $C$
independent of $j$ and $n$. Therefore, as $n\rightarrow\infty$, subject to a
subsequence, we have $w_{j}^{\left(  n\right)  }\rightharpoonup w_{j}^{\infty
}\in X$ weakly and $P_{-}(w_{j}^{\infty})=\xi_{j}$. The subspace $W^{\infty
}=span\left\{  w_{j}^{\infty}\right\}  _{j=1}^{n^{-}\left(  L\right)  }$ is of
dimension $n^{-}\left(  L\right)  $ since $P_{-}\left(  W^{\infty}\right)
=X_{-}$.

We now show that: i) $W^{\infty}$ is invariant under the operator $JL$ and ii)
$\left\langle Lu,u\right\rangle \leq0$ for any $u\in W^{\infty}$. To prove i),
first note that since $W^{\left(  n\right)  }$ is invariant under $F^{\left(
n\right)  }$, we have
\[
F^{(n)}w_{k}^{\left(  n\right)  }=\sum_{j=1}^{n^{-}\left(  L\right)  }%
a_{kj}^{\left(  n\right)  }\ w_{j}^{\left(  n\right)  },\quad a_{ij}^{\left(
n\right)  }\in\mathbf{C}.
\]
For any integer $l\in\mathbf{N}$ and a fixed $w\in X^{\left(  l\right)  }$,
when $n\geq l$,
\begin{align}
\sum_{j=1}^{n^{-}\left(  L\right)  }a_{kj}^{\left(  n\right)  }\ [w_{j}%
^{\left(  n\right)  },w] &  =[F^{\left(  n\right)  }w_{k}^{\left(  n\right)
},w]=\langle LJLw_{k}^{\left(  n\right)  },w\rangle
\label{identity-g-eigenvector-n}\\
&  =-\langle Lw_{k}^{\left(  n\right)  },JLw\rangle=-[w_{k}^{\left(  n\right)
},JLw].\nonumber
\end{align}
We claim that $\left\{  a_{ij}^{\left(  n\right)  }\right\}  $ is uniformly
bounded for $1\leq k,j\leq n^{-}\left(  L\right)  $ and $n>n^{-}\left(
L\right)  $. Suppose otherwise, there exists $1\leq k_{0},j_{0}\leq
n^{-}\left(  L\right)  $ and a subsequence $\left\{  n_{m}\right\}
\rightarrow\infty,$ such that, for all $j\leq n^{-}(L)$,
\[
\left\vert a_{k_{0}j_{0}}^{\left(  n_{m}\right)  }\right\vert =\max_{1\leq
j\leq n^{-}\left(  L\right)  }\left\{  \left\vert a_{k_{0}j}^{\left(
n_{m}\right)  }\right\vert \right\}  \rightarrow\infty\;\text{ and }\;\forall
j,\;c_{k_{0},j}=\lim_{m\rightarrow\infty}a_{k_{0}j}^{\left(  n_{m}\right)
}/a_{k_{0}j_{0}}^{\left(  n_{m}\right)  }\;\text{ exists }.
\]
Then from (\ref{identity-g-eigenvector-n}), we get
\[
\sum_{j=1}^{n^{-}\left(  L\right)  }\frac{a_{k_{0}j}^{\left(  n_{m}\right)  }%
}{a_{k_{0}j_{0}}^{\left(  n_{m}\right)  }}\ \left[  w_{j}^{\left(
n_{m}\right)  },w\right]  =-\frac{1}{a_{k_{0}j_{0}}^{\left(  n_{m}\right)  }%
}\left[  w_{k_{0}}^{\left(  n_{m}\right)  },JLw\right]
\]
and letting $m\rightarrow\infty$, we obtain
\begin{equation}
\sum_{j=1}^{n^{-}\left(  L\right)  }c_{k_{0},j}\ \left[  w_{j}^{\infty
},w\right]  =0\label{identity-0}%
\end{equation}
where in particular we also notice $\left\vert c_{k_{0},j_{0}}\right\vert =1$.
By a density argument, the identity (\ref{identity-0}) holds also for any
$w\in X$. Therefore, $\sum_{j=1}^{n^{-}\left(  L\right)  }c_{k_{0},j}%
\ w_{j}^{\infty}=0$ by the non-degeneracy of $\left[  \cdot,\cdot\right]  $.
This is in contradiction to the independency of $\left\{  w_{k}^{\infty
}\right\}  $. So $\{a_{kj}^{\left(  n\right)  }\}$ is uniformly bounded. Let
$n\rightarrow\infty$ in (\ref{identity-g-eigenvector-n}), subject to a
subsequence, we obtain
\[
\sum_{j=1}^{n^{-}\left(  L\right)  }a_{kj}^{\infty}\ \left[  w_{j}^{\infty
},w\right]  =-\left[  w_{j}^{\infty},JLw\right]  =\left[  JLw_{k}^{\infty
},w\right]  ,\;\text{ where }\,a_{kj}^{\infty}=\lim_{n\rightarrow\infty}%
a_{kj}^{\left(  n\right)  }.
\]
By a density argument again the above equality is also true for any $w\in X$,
which implies
\[
JL\left(  w_{k}^{\infty}\right)  =\sum_{j=1}^{n^{-}\left(  L\right)  }%
a_{kj}^{\infty}\ w_{j}^{\infty}.
\]
So $W^{\infty}$ is invariant under $JL$.

Now we prove the above claim ii), that is, $\left\langle L\cdot,\cdot
\right\rangle |_{W^{\infty}}\leq0$. For any
\[
u=\sum_{j=1}^{n^{-}\left(  L\right)  }c_{j}w_{j}^{\infty}\in W^{\infty}.
\]
denote
\[
u^{\left(  n\right)  }=\sum_{j=1}^{n^{-}\left(  L\right)  }c_{j}w_{j}^{\left(
n\right)  }\in W^{\left(  n\right)  }.
\]
Clearly, $u^{\left(  n\right)  }\rightharpoonup u$ weakly in $X$ and
$\left\langle Lu^{\left(  n\right)  },u^{\left(  n\right)  }\right\rangle
\leq0$, which converges subject to a subsequence. Since
\[
\lim_{n\rightarrow\infty}\left\langle LP_{-}u^{\left(  n\right)  }%
,P_{-}u^{\left(  n\right)  }\right\rangle =\left\langle LP_{-}u,P_{-}%
u\right\rangle ,
\]
which is due to $P_{-}w_{j}^{\infty}=\xi_{j}$ and therefore $P_{-}u^{\left(
n\right)  }\rightarrow P_{-}u$ strongly in $X$, and
\[
\lim_{n\rightarrow\infty}\left\langle LP_{+}u^{\left(  n\right)  }%
,P_{+}u^{\left(  n\right)  }\right\rangle \geq\left\langle LP_{+}%
u,P_{+}u\right\rangle .
\]
as $\langle Lx,x\rangle^{\frac{1}{2}}$ is a norm on $X_{+}$. Therefore,
\begin{align}
0 &  \geq\lim_{n\rightarrow\infty}\left\langle Lu^{\left(  n\right)
},u^{\left(  n\right)  }\right\rangle =\left\langle LP_{-}u,P_{-}%
u\right\rangle +\lim_{n\rightarrow\infty}\left\langle LP_{+}u^{\left(
n\right)  },P_{+}u^{\left(  n\right)  }\right\rangle
\label{inequality-nonpositive-L-limit}\\
&  \geq\left\langle LP_{-}u,P_{-}u\right\rangle +\left\langle LP_{+}%
u,P_{+}u\right\rangle =\left\langle Lu,u\right\rangle .\nonumber
\end{align}
This complete the proof of claim ii) and thus the proof of Theorem
\ref{T:Pontryagin} under the separable assumption on $X$. \hfill$\square$

\subsection{Further discussions on invariant subspaces and invariant
decompositions}

\label{SS:CounterExample}

\noindent\textbf{Continuous dependence of invariant subspaces on }%
$JL$\textbf{.} In perturbation problems, the operator $JL$ may depend on a
perturbation parameter $\epsilon$. One would naturally wish that a family
$W_{\epsilon}$ of non-positive invariant subspaces of dimension $n^{-}(L)$ may
be found depending on $\epsilon$ at least continuously. However, this turns
out to be impossible in general, even if $L$ is assumed to be non-degenerate.
See an example in Section \ref{SS:E_0}. \newline

\noindent\textbf{Invariant splitting, I.} In the presence of $W$ invariant
under $JL$ with $\dim W=n^{-}(L)$, it is natural to ask whether it is possible
to make it into an invariant (under $JL$) decomposition of $X$, i.e. whether
there exist such $W$ and a codim-$n^{-}(L)$ invariant subspace $W_{1}\subset
X$ such that $X=W\oplus W_{1}$. This is usually not possible as in the
following example
\[
J=%
\begin{pmatrix}
0 & -1 & 1 & 0\\
1 & 0 & 0 & 1\\
-1 & 0 & 0 & 0\\
0 & -1 & 0 & 0
\end{pmatrix}
,\;L=%
\begin{pmatrix}
0 & 0 & 0 & 1\\
0 & 0 & -1 & 0\\
0 & -1 & 0 & 0\\
1 & 0 & 0 & 0
\end{pmatrix}
,\;JL=%
\begin{pmatrix}
0 & -1 & 1 & 0\\
1 & 0 & 0 & 1\\
0 & 0 & 0 & -1\\
0 & 0 & 1 & 0
\end{pmatrix}
.
\]
Here $n^{-}(L)=2$ and the only eigenvalues are $\sigma(JL)=\{\pm i\}$. The
only possible non-positive 2-dim invariant subspace, where the eigenvalues of
the restriction of $JL$ are contained in $\sigma(JL)$, has to be the geometric
kernel of $\pm i$ and thus $W=\{x_{3}=x_{4}=0\}$. There does not exist any
2-dim invariant subspace $W_{1}$ such that $\mathbf{R}^{4}=W\oplus W_{1}$
since the restriction of $JL$ on $W_{1}$ has to have eigenvectors of $\pm i$
as well. \newline

\noindent\textbf{Invariant Splitting, II.} In light of Lemma
\ref{L:InvariantSubS},
\[
W^{\perp_{L}}=\{u\in X\mid\langle Lu,v\rangle=0,\;\forall v\in W\}
\]
is invariant under $e^{tJL}$. While one may wish $X=W\oplus W^{\perp_{L}}$,
the only obstacle is that $W^{\perp_{L}}$ may intersect $W$ nontrivially as
$L_{W}$, as defined in \eqref{E:L_y1}, may be degenerate as in the above
example. A more natural question is whether it is possible to enlarge $W$ to
some closed $\tilde{W}\supset W$ such that
\[
\dim\tilde{W}<\infty,\quad JL(\tilde{W})\subset\tilde{W},\;\text{ and
}L_{\tilde{W}}\text{ an isomorphism}.
\]
If so, Lemmas \ref{L:non-degeneracy} and \ref{L:InvariantSubS} would imply
\[
\tilde{W}^{\perp_{L}}=\{u\in X\mid\langle Lu,v\rangle=0,\;\forall v\in
\tilde{W}\}
\]
is invariant under $e^{tJL}$ and $X=\tilde{W}\oplus\tilde{W}^{\perp_{L}}$.
Moreover, $L_{\tilde{W}^{\perp_{L}}}$ is positive definite due to Theorem
\ref{T:Pontryagin} and thus $e^{tJL}$ is stable on $\tilde{W}^{\perp_{L}}$.
Consequently all the index counting and stability analysis related to
$e^{tJL}$ can be reduced to the finite dimensional $\tilde{W}$, which has been
analyzed in Section \ref{S:finiteD}. For example, we would have a counting
theorem like Theorem \ref{theorem-counting}, in particular with $k_{0}^{\leq
0}$ and $k_{i}^{\leq0}$ replaced by $k_{0}^{-}$ and $k_{i}^{-}$, respectively.

Unfortunately the above splitting is not always possible either, as can be
seen from a counterexample in Subsection \ref{SS:non-deg}. In Proposition
\ref{P:direct decomposition}, we give conditions to get such a decomposition.

\section{Structural decomposition}

\label{S:decomposition}

In this section, we prove Theorem \ref{T:decomposition} and Corollary
\ref{C:decomposition} on the decomposition of $X$. Our first step to decompose
$X$ is the following proposition based on the invariant subspace Theorem
\ref{T:Pontryagin}.

\begin{proposition}
\label{P:decomposition1} In addition to (\textbf{H1-H3}), assume $\ker
L=\{0\}$. There exist closed subspaces $Y_{j}$, $j=1,2,3,4$, such that
$X=\oplus_{j=1}^{4}Y_{j}$ and
\begin{equation}
\dim Y_{1}=\dim Y_{4}=n^{-}(L)-\dim Y_{2}<\infty,\quad Y_{1,2,4}\subset
\cap_{k=1}^{\infty}D\big((JL)^{k}\big),\label{E:dim1}%
\end{equation}
and accordingly the linear operator $JL$ and the quadratic form $\langle
L\cdot,\cdot\rangle$ take the block forms
\[
JL\longleftrightarrow%
\begin{pmatrix}
\tilde{A}_{1} & \tilde{A}_{12} & \tilde{A}_{13} & \tilde{A}_{14}\\
0 & \tilde{A}_{2} & 0 & \tilde{A}_{24}\\
0 & 0 & \tilde{A}_{3} & \tilde{A}_{34}\\
0 & 0 & 0 & \tilde{A}_{4}%
\end{pmatrix}
,\quad L\longleftrightarrow%
\begin{pmatrix}
0 & 0 & 0 & \tilde{B}\\
0 & L_{Y_{2}} & 0 & 0\\
0 & 0 & L_{Y_{3}} & 0\\
\tilde{B}^{\ast} & 0 & 0 & 0
\end{pmatrix}
.
\]
Here $\tilde{B}:Y_{4}\rightarrow Y_{1}^{\ast}$ is an isomorphism and the
quadratic forms $L_{Y_{2}}\leq-\delta_{0}$ and $L_{Y_{3}}\geq\delta_{0}$ for
some $\delta_{0}>0$. Moreover, $\tilde{A}_{3}:D(\tilde{A}_{3})=Y_{3}\cap
D(JL)\rightarrow Y_{3}$ is closed, while all other blocks are bounded
operators. The operators $\tilde{A}_{2,3}$ are anti-self-adjoint with respect
to the equivalent inner product $\mp\langle L_{Y_{2,3}}\cdot,\cdot\rangle$.
\end{proposition}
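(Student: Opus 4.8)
The strategy is to exploit the Pontryagin invariant subspace Theorem \ref{T:Pontryagin}, which under the non-degeneracy assumption $\ker L=\{0\}$ provides a closed subspace $W\subset D(JL)$ with $\dim W=n^-(L)$, $JL(W)\subset W$, and $\langle Lu,u\rangle\le 0$ on $W$. The plan is to peel off from $W$ first the subspace $W_0\subset W$ on which $\langle L\cdot,\cdot\rangle$ vanishes, i.e. $W_0 = W\cap W^{\perp_L}$ where $W^{\perp_L}=\{u\in X\mid \langle Lu,v\rangle=0,\ \forall v\in W\}$. Since $L$ is negative definite on a complement of $W_0$ inside $W$ (otherwise $\dim W$ would exceed $n^-(L)$, contradicting Theorem \ref{T:Pontryagin}(1)), I can write $W=W_0\oplus Y_2$ with $\langle L\cdot,\cdot\rangle|_{Y_2}\le -\delta_0<0$. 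The key structural fact is that $W^{\perp_L}$ is invariant under $JL$ (Lemma \ref{L:InvariantSubS}, noting $JL$ is the generator of $e^{tJL}$, or directly from \eqref{E:anti-S}), and $W_0$ is invariant as well since it is $JL(W)\subset W$ intersected with the invariant $W^{\perp_L}$.

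Next I would build the $Y_4$ piece as a subspace pairing non-degenerately with $W_0$ via $L$. Because $W_0$ is finite-dimensional and $L:X\to X^*$ is an isomorphism (non-degeneracy), the functionals $\{Lu\mid u\in W_0\}$ are linearly independent in $X^*$; choosing $Y_4$ to be any $L$-nondegenerate complement paired with $W_0$ — concretely, take $Y_4$ to be a $\dim W_0$-dimensional subspace such that $L|_{W_0\oplus Y_4}$ is non-degenerate and $W_0, Y_4$ are each $L$-isotropic (possible by an argument analogous to Lemma \ref{lemma-negative-unstable-space}, since $\langle L\cdot,\cdot\rangle|_{W_0}=0$) — gives the off-diagonal isomorphism $\tilde B:Y_4\to Y_1^*$ where $Y_1:=W_0$. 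One then sets $Y_3$ to be the $L$-orthogonal complement of $Y_1\oplus Y_2\oplus Y_4$. Since $L$ is non-degenerate and $Y_1\oplus Y_2\oplus Y_4$ is finite-dimensional with non-degenerate $L$ on it, we get $X=Y_1\oplus Y_2\oplus Y_3\oplus Y_4$ with $\langle L\cdot,\cdot\rangle$ block-diagonalized as claimed; $L|_{Y_3}\ge\delta_0$ follows from Theorem \ref{T:Pontryagin}(1) (any negative direction in $Y_3$ would combine with $Y_1\oplus Y_2$ to exceed $n^-(L)$) together with assumption (\textbf{H2.b}) to upgrade positivity to uniform positivity, after checking $Y_3$ is a closed complement so the spectral-gap argument of Remark \ref{R:assumptions}/Remark \ref{R:closedness} applies.

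For the block-triangular form of $JL$: invariance of $Y_1=W_0$ under $JL$ kills the entries below the $(1,1)$ block in the first column; invariance of $W^{\perp_L}=Y_2^{\perp_L}\cap\dots$ — more precisely, $Y_1\oplus Y_2=W$ is $JL$-invariant, so the first two columns have zero entries in rows $3,4$, giving the zeros in positions $(3,1),(3,2),(4,1),(4,2)$; and $Y_1\oplus Y_2\oplus Y_3 = W\oplus Y_3$: here I must check $W^{\perp_L}\cap(\text{stuff})$ is invariant to get the $(3,2)$-type... actually the zero in position $(2,3)$ comes from $Y_1\oplus Y_2\oplus Y_3$ being invariant, which holds because it equals $(Y_4)^{\perp_L}$ if $Y_4$ is $JL$... no — instead one shows $Y_1\oplus Y_3$ related spaces. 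The cleanest route: $W=Y_1\oplus Y_2$ invariant gives column-block structure for columns $1,2$; then $L$-orthogonality and Lemma \ref{L:InvariantSubS} applied to $W$ gives $W^{\perp_L}$ invariant, and $W^{\perp_L}=Y_3\oplus Y_4$ after identifying it (using that $L|_W$ restricted... this needs $W\cap W^{\perp_L}=W_0=Y_1\subset W$, so $W+W^{\perp_L}\ne X$ — one instead uses $(Y_2)^{\perp_L}\supset$...). I expect this bookkeeping — correctly matching which sums of the $Y_j$ are $JL$-invariant to produce exactly the displayed pattern of zeros, particularly the $(2,3)$ and $(3,2)$ zero blocks and the surviving $\tilde A_{13},\tilde A_{24},\tilde A_{34}$ couplings to $Y_4$ — to be the main obstacle; it requires carefully iterating $L$-orthogonality (\eqref{E:anti-S}: $\langle L(JL)u,v\rangle=-\langle Lu,(JL)v\rangle$) against the invariant subspaces $W$ and an appropriately chosen invariant completion.

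Finally, the anti-self-adjointness of $\tilde A_{2,3}$ with respect to $\mp\langle L_{Y_{2,3}}\cdot,\cdot\rangle$: on $Y_2$ (resp. $Y_3$), which is $L$-nondegenerate and invariant (for $Y_3$, it is invariant once one knows $Y_1\oplus Y_2\oplus Y_3$ and $Y_1\oplus Y_2$ are both invariant, as $Y_3$ need not itself be invariant — so more precisely $\tilde A_3$ acts on $Y_3$ as the induced quotient operator $(JL)|_{(Y_1\oplus Y_2\oplus Y_3)/(Y_1\oplus Y_2)}$, which is anti-self-adjoint in the induced non-degenerate form because the original $JL$ is anti-self-adjoint in $\langle L\cdot,\cdot\rangle$ via \eqref{E:anti-S}). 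Closedness of $\tilde A_3$ and boundedness of all other blocks follow from: the finite-dimensionality of $Y_1,Y_2,Y_4$, the closed graph theorem, and the fact that $\langle L_{Y_3}\cdot,\cdot\rangle$ is an equivalent inner product on $Y_3$ in which $JL|$ generates a one-parameter group of $L$-isometries, hence $\tilde A_3$ is the (closed, densely defined) generator. I would present these last points briefly, since they are routine given Theorem \ref{T:Pontryagin} and Lemma \ref{lemma-L-form-preserve}.
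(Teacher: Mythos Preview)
Your approach is essentially the paper's --- $W$ from Theorem \ref{T:Pontryagin}, $Y_1 = W\cap W^{\perp_L}$, $Y_2$ a complement of $Y_1$ in $W$ with $L|_{Y_2}\le -\delta_0$ --- and the pieces are correct. The confusion you flag as ``the main obstacle'' (the zeros in the $JL$ block form, particularly at $(2,3)$ and $(4,3)$) dissolves once you explicitly recognize the \emph{second} invariant subspace. You already use that $W=Y_1\oplus Y_2$ is $JL$-invariant, giving the zeros in rows $3,4$ of columns $1,2$. The companion fact is that $W^{\perp_L}$ is $JL$-invariant (Lemma \ref{L:InvariantSubS}), and if $Y_1\oplus Y_3 = W^{\perp_L}$, then $JL(Y_3)\subset Y_1\oplus Y_3$ kills the $(2,3)$ and $(4,3)$ entries. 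Your definition of $Y_3$ as the $L$-orthogonal complement of $Y_1\oplus Y_2\oplus Y_4$ \emph{does} yield $Y_1\oplus Y_3=W^{\perp_L}$, provided $Y_4$ is $L$-isotropic, $L$-orthogonal to $Y_2$, and paired isomorphically with $Y_1$ --- but you never verify this identity, and the attempt via $(Y_4)^{\perp_L}$ fails since $Y_4$ is not itself invariant.

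The paper sidesteps this by reversing the order of construction: it first takes $\tilde Y_3$ as a complement of $Y_1$ \emph{inside} $W^{\perp_L}$ (so $Y_1\oplus \tilde Y_3=W^{\perp_L}$ by fiat), then $\tilde Y_4$ as a complement of $W+W^{\perp_L}$ in $X$ chosen with $\tilde Y_4\subset \cap_k D\big((JL)^k\big)$ by density --- a regularity point you omit but which is needed for \eqref{E:dim1} and for boundedness of the last-column blocks. At this stage $L$ is not yet block-diagonal; the paper fixes this by shifting each $\tilde Y_j$ ($j=2,3,4$) into $Y_1$ via explicit maps $S_j:\tilde Y_j\to Y_1$ built from $\tilde B_{41}^{-1}$. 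Because these shifts land in $Y_1\subset W\cap W^{\perp_L}$, one still has $Y_1\oplus Y_2=W$ and $Y_1\oplus Y_3=W^{\perp_L}$ after the adjustment, and the upper-triangular form of $JL$ follows immediately from the invariance of these two subspaces. The anti-self-adjointness of $\tilde A_{2,3}$ and boundedness of the remaining blocks are then read off from Lemma \ref{L:decomJ} applied to $X=(Y_1\oplus Y_4)\oplus(Y_2\oplus Y_3)$, rather than via the quotient-operator argument you sketch.
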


Before we give the proof the proposition, we would like to make two remarks.
Firstly we observe that $(JL)^{k}$ takes the same blockwise form as the above
one of $JL$. Secondly, the bounded operator $\tilde{A}_{13}$ should be
understood as the closure of $P_{1}JL|_{Y_{3}}$, which may not be closed or
everywhere defined itself. Here $P_{j}:X\rightarrow Y_{j}$ is the projection
to $Y_{j}$, $j=1,2,3,4$, according to the decomposition.

\begin{proof}
Theorem \ref{T:Pontryagin} states that there exists $W\subset D\big((JL)^{k}%
\big)$ such that $\dim W=n^{-}(L)$, $JL(W)\subset W$, and $\langle
Lu,u\rangle\leq0$ for all $u\in W$. Let $Y_{1}=W\cap W^{\perp_{L}}$
($\perp_{L}$ defined as in Lemma \ref{L:non-degeneracy}), $\tilde{Y}%
_{2}\subset W$, and $\tilde{Y}_{3}\subset W^{\perp_{L}}$ be closed subspaces
such that $W=Y_{1}\oplus\tilde{Y}_{2}$ and $W^{\perp_{L}}=Y_{1}\oplus\tilde
{Y}_{3}$. Recall the notation $L_{Y}$ as defined in \eqref{E:L_y1} for any
closed subspace $Y$.

\textit{Claim.} $L_{Y_{1}} =0$ and there exists $\delta_{0}>0$ such that
$L_{\tilde Y_{2}} \le-\delta_{0}$ and $L_{\tilde Y_{3}}\ge\delta_{0}$.

In fact, since the quadratic form $\langle Lu,u\rangle\leq0$, for all $u\in
W$, the variational principle yields that $u\in W$ satisfies $\langle
Lu,u\rangle=0$ if and only if $\langle Lu,v\rangle=0$ for all $v\in W$, or
equivalently $u\in W\cap W^{\perp_{L}}=Y_{1}$. Therefore, $\langle
Lu,u\rangle<0$ for any $u\in\tilde{Y}_{2}\backslash\{0\}$ which along with
$\dim\tilde{Y}_{2}<\infty$ implies $L_{\tilde{Y}_{2}}\leq-\delta_{0}$ for some
$\delta_{0}>0$.

If there exists $u\in W^{\perp_{L}}\backslash W$ satisfying $\langle
Lu,u\rangle\leq0$, the definition of $W^{\perp_{L}}$ would imply $\langle
Lu,v\rangle\leq0$ for all $v\in\tilde{W}=W\oplus\mathbf{R}u$ and $\dim
\tilde{W}=n^{-}(L)+1$. This would contradict Theorem \ref{T:Pontryagin} and
thus we obtain $\langle Lu,u\rangle>0$ for all $u\in\tilde{Y}_{3}%
\backslash\{0\}$. Consequently, Lemma \ref{L:non-degeneracy} implies that
$L_{\tilde{Y}_{3}}\geq\delta_{0}$ for some $\delta_{0}>0$ and the claim is proved.

Since $L$ is assumed to non-degenerate, it is easy to see codim-$(W+W^{\perp
_{L}})=\dim Y_{1}<\infty$. Let $\tilde{Y}_{4}$ be a subspace such that
$X=(W+W^{\perp_{L}})\oplus\tilde{Y}_{4}=Y_{1}\oplus\tilde{Y}_{2}\oplus
\tilde{Y}_{3}\oplus\tilde{Y}_{4}$ and $\tilde{Y}_{4}\subset\cap_{k=1}^{\infty
}D\big((JL)^{k}\big)$, which is possible as $\cap_{k=1}^{\infty}%
D\big((JL)^{k}\big)$ is dense and $\dim\tilde{Y}_{4}=\dim Y_{1}<\infty$. With
respect to this decomposition, $L$ takes the form
\[
L\longleftrightarrow%
\begin{pmatrix}
0 & 0 & 0 & \tilde{B}_{41}^{\ast}\\
0 & L_{\tilde{Y}_{2}} & 0 & \tilde{B}_{42}^{\ast}\\
0 & 0 & L_{\tilde{Y}_{3}} & \tilde{B}_{43}^{\ast}\\
\tilde{B}_{41} & \tilde{B}_{42} & \tilde{B}_{43} & L_{\tilde{Y}_{4}}%
\end{pmatrix}
.
\]
The non-degeneracy of $L$ implies that $\tilde{B}_{41}=i_{\tilde{Y}_{4}}%
^{\ast}Li_{Y_{1}}:Y_{1}\rightarrow\tilde{Y}_{4}^{\ast}$ is an isomorphism.
Let
\[
S_{4}=-\frac{1}{2}\tilde{B}_{41}^{-1}L_{\tilde{Y}_{4}}:\tilde{Y}%
_{4}\rightarrow Y_{1},\quad S_{j}=-\tilde{B}_{41}^{-1}\tilde{B}_{4j}:\tilde
{Y}_{j}\rightarrow Y_{1},\;j=2,3.
\]
For any $u,v\in\tilde{Y}_{4}$, we have
\[%
\begin{split}
\langle L(u+S_{4}u),v+S_{4}v\rangle= &  \langle L_{\tilde{Y}_{4}}%
u,v\rangle+\langle\big(LS_{4}+(LS_{4})^{\ast}\big)u,v\rangle\\
= &  \langle L_{\tilde{Y}_{4}}u,v\rangle+\langle\big(\tilde{B}_{41}%
S_{4}+(\tilde{B}_{41}S_{4})^{\ast}\big)u,v\rangle=0.
\end{split}
\]
Similarly, for any $u\in\tilde{Y}_{j}$, $j=2,3$, and $v\in\tilde{Y}_{4}$,
\[
\langle L(u+S_{j}u),v+S_{4}v\rangle=\langle\tilde{B}_{4j}u,v\rangle
+\langle\tilde{B}_{41}S_{j}u,v\rangle=0.
\]
Let $Y_{j}=(I+S_{j})\tilde{Y}_{j}$. Clearly, it still holds $X=\oplus
_{j=1}^{4}Y_{4}$. Moreover, $Y_{1,2,4}\subset\cap_{k=1}^{\infty}%
D\big((JL)^{k}\big)$, the dimension relationship in (\ref{E:dim1}) holds, and
in this decomposition $L$ takes the desired form as in the statement of the
proposition. Due to $W=Y_{1}\oplus Y_{2}$ and $W^{\perp_{L}}=Y_{1}\oplus
Y_{3}$, the same claim as above implies the uniform positivity of $-L_{Y_{2}}$
and $L_{Y_{3}}$. The non-degeneracy of $\tilde{B}$ follows from the
non-degeneracy assumption of $L$.

The invariance of $W$, and thus the invariance of $W^{\perp_{L}}$ due to Lemma
\ref{L:InvariantSubS}, yields the desired form of $JL$. The properties that
$\tilde{A}_{2,3}$ are anti-self-adjoint with respect to $\langle L_{Y_{2,3}%
}\cdot,\cdot\rangle$ and the boundedness of other blocks can be proved by
applying Lemma \ref{L:decomJ} repeatedly to the splitting based on
$X=(Y_{1}\oplus Y_{4})\oplus(Y_{2}\oplus Y_{3})$.
\end{proof}

The following general functional analysis lemma on invariant subspaces will be
used several times in the rest of the paper.

\begin{lemma}
\label{L:InvariantSubspace} Let $Z$ be a Banach space and $Z_{1,2} \subset Z$
be closed subspaces such that $Z= Z_{1} \oplus Z_{2}$. Suppose $A$ is a linear
operator on $X$ which, in the above splitting, takes the form $%
\begin{pmatrix}
A_{1} & A_{12}\\
0 & A_{2}%
\end{pmatrix}
$, such that

\begin{itemize}
\item $A_{1,2} : Z_{1,2} \supset D(A_{1,2}) \to Z_{1,2}$ are densely defined
closed operators, one of which and $A_{12} : Z_{2} \to Z_{1}$ is bounded and

\item $\sigma(A_{1}) \cap\sigma(A_{2}) = \emptyset$,
\end{itemize}

then there exists a bounded operator $S: Z_{2} \to Z_{1}$ such that

\begin{enumerate}
\item $SZ_{2} \subset D(A_{1})$ and

\item $A\big( \tilde Z_{2} \cap D(A) \big) \subset\tilde Z_{2}$, where $\tilde
Z_{2} = (I+S)Z_{2} =\{z_{2} + S(z_{2}) \mid z_{2} \in Z_{2}\}$.
\end{enumerate}
\end{lemma}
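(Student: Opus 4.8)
The plan is to solve a Sylvester-type operator equation for $S$. If we want $\tilde Z_2 = (I+S)Z_2$ to be invariant under $A$, then for $z_2 \in D(A_2)$ we need $A(z_2 + Sz_2) \in \tilde Z_2$, i.e. its $Z_1$-component must equal $S$ applied to its $Z_2$-component. Computing in the block form, $A(z_2 + Sz_2) = (A_1 S z_2 + A_{12} z_2) \oplus A_2 z_2$, so invariance is equivalent to the Sylvester equation
\[
A_1 S - S A_2 = -A_{12}
\]
holding on $D(A_2)$, together with $S Z_2 \subset D(A_1)$. So the whole lemma reduces to producing a bounded solution $S$ of this equation with the stated domain property.

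The standard way to solve this is via a contour integral. Since $\sigma(A_1)$ and $\sigma(A_2)$ are disjoint and (because $A$ generates, or at least $A_1, A_2$ are closed with resolvent sets that split the plane) we can choose a contour — or a union of contours — $\Gamma$ separating $\sigma(A_1)$ from $\sigma(A_2)$; using the fact that one of $A_1, A_2$ is bounded, its spectrum is compact, which makes it possible to take $\Gamma$ to be a finite union of rectifiable closed curves enclosing the bounded one's spectrum and avoiding the other's. Say $A_1$ is the bounded one; then put
\[
S = \frac{1}{2\pi i}\int_\Gamma (\lambda - A_1)^{-1} A_{12} (\lambda - A_2)^{-1}\, d\lambda ,
\]
where $\Gamma$ encircles $\sigma(A_1)$ once counterclockwise and $\sigma(A_2) \cap (\text{interior of }\Gamma) = \emptyset$. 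The integrand is bounded and continuous in $\lambda$ along $\Gamma$ (resolvents of closed operators are analytic on the resolvent set, $A_{12}$ is bounded), so the integral converges in operator norm and $S: Z_2 \to Z_1$ is bounded. If instead $A_2$ is the bounded one, one takes $\Gamma$ encircling $\sigma(A_2)$ and orients it so as to pick up the $A_2$ residues; the formula and argument are symmetric. One then verifies: (i) $S Z_2 \subset D(A_1)$, because $(\lambda - A_1)^{-1}$ maps into $D(A_1)$ and $A_1 S$ can be computed by moving $A_1$ inside the integral using $A_1(\lambda - A_1)^{-1} = \lambda(\lambda-A_1)^{-1} - I$ (valid since $A_1$ is closed and the integral converges after this manipulation); (ii) $A_1 S - S A_2 = -A_{12}$ on $D(A_2)$, by the resolvent identity and the fact that $\frac{1}{2\pi i}\int_\Gamma (\lambda - A_1)^{-1} d\lambda = I$ while $\frac{1}{2\pi i}\int_\Gamma (\lambda - A_2)^{-1} d\lambda = 0$ (the latter because $\Gamma$ does not enclose $\sigma(A_2)$) — this is the classical Rosenblum computation. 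Conclusions (1) and (2) of the lemma then follow from (i) and the reformulation of invariance as the Sylvester equation above, noting that for $z_2 + S z_2 \in D(A)$ one needs $z_2 \in D(A_2)$ (since $S z_2 \in D(A_1)$ automatically), so the invariance identity holds exactly on $D(A) \cap \tilde Z_2$.

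The main obstacle — and the only point requiring care beyond bookkeeping — is the unboundedness of whichever of $A_1, A_2$ is not assumed bounded: one must make sure the contour $\Gamma$ can be chosen as a \emph{bounded} curve (finite length) that still separates the two spectra. This is exactly where the hypothesis "one of which ... is bounded" is used: the bounded operator has compact spectrum, so $\Gamma$ can be taken to enclose that compact set while staying in the resolvent set of the other operator (whose spectrum, being the complement relative to the splitting, is a closed set disjoint from $\sigma$ of the bounded one). A minor secondary point is justifying the manipulation $A_1 S = \frac{1}{2\pi i}\int_\Gamma A_1(\lambda-A_1)^{-1}A_{12}(\lambda-A_2)^{-1} d\lambda$ when $A_1$ is the unbounded one: this is handled by writing $A_1(\lambda-A_1)^{-1} = \lambda(\lambda-A_1)^{-1} - I$, checking that the resulting integral converges, and invoking closedness of $A_1$ to pull it out. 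Everything else is the routine Rosenblum/Sylvester argument.
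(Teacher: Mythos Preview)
Your approach is essentially identical to the paper's: both reduce the lemma to the Sylvester equation $SA_2 - A_1 S = A_{12}$ and solve it by the Rosenblum contour integral $S = \pm\frac{1}{2\pi i}\oint_\Gamma (A_1-\lambda)^{-1}A_{12}(A_2-\lambda)^{-1}\,d\lambda$, splitting into the two cases according to which of $A_1,A_2$ is bounded and enclosing the compact spectrum; the paper's handling of $SZ_2\subset D(A_1)$ in the unbounded-$A_1$ case is exactly the Riemann-sum-plus-closedness argument you sketch. One small slip: with $\Gamma$ encircling $\sigma(A_1)$ your formula as written gives $A_1 S - SA_2 = +A_{12}$ rather than $-A_{12}$ (since $\frac{1}{2\pi i}\oint_\Gamma(\lambda-A_1)^{-1}d\lambda = I$ and $\frac{1}{2\pi i}\oint_\Gamma(\lambda-A_2)^{-1}d\lambda = 0$), so a minus sign is needed in front of the integral in that case, exactly as the paper does.
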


\begin{remark}
Clearly, the above properties also imply $D(A)\cap\tilde{Z}_{2}=(I+S)D(A_{2})$
is dense in the closed subspace $\tilde{Z}_{2}$ and $A|_{\tilde{Z}_{2}%
}:D(A)\cap\tilde{Z}_{2}\rightarrow\tilde{Z}_{2}$ is a closed operator. By
using the splitting $Z=Z_{1}\oplus\tilde{Z}_{2}$, $A$ is block diagonalized
into $diag(A_{1},A_{2})$. Moreover, if $A_{2}$ is bounded, then the closed
graph theorem implies that $A|_{\tilde{Z}_{2}}$ is also bounded.
\end{remark}

The proof of this lemma may be found in some standard functional analysis
textbook. For the sake of completeness we also give a proof here.

\begin{proof}
Let us first consider the case when $A_{2}$ is bounded. Since $\sigma(A_{2})$
is compact and $\sigma(A_{2})\cap\sigma(A_{1})=\emptyset$, there exists an
open subset $\Omega\subset\mathbf{C}$ with compact closure and smooth boundary
$\Gamma=\partial\Omega$ such that $\sigma(A_{2})\subset\Omega\subset
\overline{\Omega}\subset\mathbf{C}\backslash\sigma(A_{1})$. We have
\[
\frac{1}{2\pi i}\oint_{\Gamma}(\lambda-A_{1})^{-1}d\lambda=0,\quad\frac
{1}{2\pi i}\oint_{\Gamma}(\lambda-A_{2})^{-1}d\lambda=I.
\]
Define
\[
S=\frac{1}{2\pi i}\oint_{\Gamma}T(\lambda)d\lambda,\text{ where }%
T(\lambda)=(A_{1}-\lambda)^{-1}A_{12}(A_{2}-\lambda)^{-1}.
\]
Since $(A_{j}-\lambda)^{-1}$, $j=1,2$, is analytic from $\mathbf{C}%
\backslash\sigma(A_{j})$ to $L(Z_{j})$, it is clear that $S:Z_{2}\rightarrow
Z_{1}$ is bounded. In particular, observing $T(\lambda)z\in D(A_{1})$ for any
$z\in Z_{2}$, one may verify
\begin{equation}
T(\lambda)A_{2}z-A_{1}T(\lambda)z=(A_{1}-\lambda)^{-1}A_{12}z-A_{12}%
(A_{2}-\lambda)^{-1}z\triangleq\tilde{T}(\lambda)z,\label{E:SDecomp1}%
\end{equation}
where $\tilde{T}(\lambda)\in L(Z_{2},Z_{1})$ is also analytic in $\lambda$.

We first show that $Sz\in D(A_{1})$ for any $z\in Z_{2}$. In fact, let $S_{n}%
$, $n\in\mathbf{N}$, be the values of a sequence of Riemann sums of the
integral defining $S$, such that $S_{n}\rightarrow S$. Clearly, the discrete
Riemann sums satisfy $S_{n}z\in D(A_{1})$ and along with \eqref{E:SDecomp1} we
obtain that
\[
S_{n}A_{2}z-A_{1}S_{n}z=\tilde{T}_{n}z\rightarrow\frac{1}{2\pi i}\oint%
_{\Gamma}\tilde{T}(\lambda)zd\lambda=A_{12}z
\]
where $\tilde{T}_{n}z$ is the corresponding Riemann sum of the integral on the
right side. Therefore, we obtain from the closedness of $A_{1}$ that $Sz\in
D(A_{1})$ and
\begin{equation}
SA_{2}-A_{1}S=A_{12}.\label{E:SDecomp2}%
\end{equation}
From this equation it is straightforward to verify, for any $z\in Z_{2}$,
\begin{equation}
A(z+Sz)=A_{2}z+SA_{2}z.\label{E:SDecomp3}%
\end{equation}

In the other case where $A_{1}$ is bounded, the proof is similar. In fact, let
$\Omega\subset\mathbf{C}$ be an open subset with compact closure and smooth
boundary $\Gamma=\partial\Omega$ such that $\sigma(A_{1})\subset\Omega
\subset\overline{\Omega}\subset\mathbf{C}\backslash\sigma(A_{2})$. Define
\[
S=-\frac{1}{2\pi i}\oint_{\Gamma}T(\lambda)d\lambda\in L(Z_{2},Z_{1}).
\]
It holds trivially $Sz\in D(A_{1})=Z_{1}$ for any $z\in Z_{2}$. The same
calculation, based on \eqref{E:SDecomp1} but without the need of going through
the Riemann sum as $D(A_{1})=Z_{1}$, leads us to \eqref{E:SDecomp2} which
implies \eqref{E:SDecomp3} for any $z\in D(A_{2})$. The proof is complete.
\end{proof}

In the next step, we remove the non-degeneracy assumption on $L$ and split the
phase space $X$ into the direct sum of the hyperbolic (if any) and central
subspaces of $JL$. In particular, the non-degeneracy of the quadratic form
$\langle L \cdot, \cdot\rangle$ on the hyperbolic subspace $X_{u} \oplus
X_{s}$ is of particular importance in the decomposition of $JL$.

\begin{proposition}
\label{P:decomposition2} Assume (\textbf{H1-3}). There exist closed subspaces
$X_{u, s, c} \subset X$ such that

\begin{enumerate}
\item $X=X_{c}\oplus X_{u} \oplus X_{s}$, $X_{u, s} \subset D(JL)$, $\dim
X_{u} = \dim X_{s} \le n^{-}(L)$, and $\ker L \subset X_{c}$;

\item with respect to this decomposition, $JL$ and $L$ take the forms
\[
JL\longleftrightarrow%
\begin{pmatrix}
A_{c} & 0 & 0\\
0 & A_{u} & 0\\
0 & 0 & A_{s}%
\end{pmatrix}
,\quad L\longleftrightarrow%
\begin{pmatrix}
L_{X_{c}} & 0 & 0\\
0 & 0 & B\\
0 & B^{\ast} & 0
\end{pmatrix}
;
\]

\item $B:X_{u}\rightarrow X_{s}^{\ast}$ is an isomorphism, $A_{c}$ is densely
defined, closed, and the spectral sets satisfy $\sigma(A_{c})\subset
i\mathbf{R}$ and $\pm\operatorname{Re}\lambda>0$ for any $\lambda\in
\sigma(A_{u,s})$.
\end{enumerate}
\end{proposition}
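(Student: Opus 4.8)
The plan is to derive the splitting from the finite dimensionality of the part of $\sigma(JL)$ off the imaginary axis. \emph{First, I would reduce to the non-degenerate case.} If $\ker L\neq\{0\}$, peel it off: since $\ker L\subset\ker(JL)\subset D(JL)$ and the decomposition $X=\ker L\oplus(\ker L)^{\perp}$ is orthogonal with respect to both $(\cdot,\cdot)$ and $\langle L\cdot,\cdot\rangle$, the operator $JL$ has the block form $\begin{pmatrix}0&C\\0&D\end{pmatrix}$ on it (the first column vanishes because $JL$ annihilates $\ker L$) while $L\longleftrightarrow\mathrm{diag}(0,L')$, $L'$ being the restriction to $X':=(\ker L)^{\perp}$, which is non-degenerate by (\textbf{H2.b}) (so $0$ is isolated in the spectrum of the Riesz representative of $\langle L\cdot,\cdot\rangle$). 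A short computation from $J^{\ast}=-J$ shows that $D$ is anti-self-adjoint with respect to $\langle L'\cdot,\cdot\rangle$, hence $D=J'L'$ with $J':=D(L')^{-1}$ anti-self-dual, and the Appendix decomposition lemmas (Lemmas \ref{L:decomJL}, \ref{L:decomJ}) give that $(J',L',X')$ satisfies (\textbf{H1-3}) with $\ker L'=\{0\}$. As the block form yields $\sigma(JL)=\{0\}\cup\sigma(J'L')$, it remains to locate the off-axis spectrum of the non-degenerate operator $J'L'$. Applying Proposition \ref{P:decomposition1} to $(J',L',X')$, the diagonal blocks $\tilde A_{2},\tilde A_{3}$ have spectrum in $i\mathbf{R}$ while $\tilde A_{1},\tilde A_{4}$ act on finite dimensional spaces; reading off the spectrum of the blockwise upper triangular operator (harmless despite $\tilde A_{3}$ being unbounded, since all off-diagonal blocks and all diagonal resolvents involved are bounded) shows $\sigma(J'L')\setminus i\mathbf{R}\subset\sigma(\tilde A_{1})\cup\sigma(\tilde A_{4})$. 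Therefore $\Sigma:=\sigma(JL)\setminus i\mathbf{R}$ is a finite set of isolated eigenvalues of finite algebraic multiplicity, symmetric under $\lambda\mapsto-\bar\lambda$ with matching multiplicities (Lemma \ref{L:symmetry}); write $\Sigma=\Sigma_{+}\sqcup\Sigma_{-}$ by the sign of $\operatorname{Re}\lambda$.

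\emph{Next I would build the three subspaces.} Set $X_{u}=\oplus_{\lambda\in\Sigma_{+}}E_{\lambda}$, $X_{s}=\oplus_{\lambda\in\Sigma_{-}}E_{\lambda}$, and $X_{c}=\{v\in X\mid\langle Lv,w\rangle=0,\ \forall\,w\in X_{u}\oplus X_{s}\}$. Then $X_{u},X_{s}$ are finite dimensional, closed, contained in $\cap_{k\ge1}D\big((JL)^{k}\big)$, and invariant under $JL$; $X_{c}$ is closed and invariant under $e^{tJL}$ by Lemma \ref{L:InvariantSubS}. By Lemma \ref{lemma-orthogonal-eigenspace}, $\langle L\cdot,\cdot\rangle$ vanishes on $X_{u}$ and on $X_{s}$ (since $\lambda+\bar\mu\neq0$ when $\operatorname{Re}\lambda,\operatorname{Re}\mu$ have the same nonzero sign), while Corollary \ref{C:symmetry} (equivalently \eqref{formula-unstable-L-index}) gives that $\langle L\cdot,\cdot\rangle$ is non-degenerate on $X_{u}\oplus X_{s}$ with $n^{-}(L|_{X_{u}\oplus X_{s}})=\dim X_{u}$; in particular $\dim X_{u}=\dim X_{s}\le n^{-}(L)$. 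This non-degeneracy yields $X=X_{c}\oplus X_{u}\oplus X_{s}$ as a direct sum of closed subspaces, and since $\ker L$ annihilates all of $X$ we get $\ker L\subset X_{c}$.

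\emph{The block forms then follow.} All three summands are $JL$-invariant with trivial pairwise intersections, so $JL\longleftrightarrow\mathrm{diag}(A_{c},A_{u},A_{s})$, with $A_{u},A_{s}$ bounded of finite rank and $\sigma(A_{u})=\Sigma_{+}$, $\sigma(A_{s})=\Sigma_{-}$, and $A_{c}$ closed and densely defined, being the restriction of $JL$ to the closed invariant subspace $X_{c}$ which admits the finite dimensional invariant complement $X_{u}\oplus X_{s}$. To see $\sigma(A_{c})\subset i\mathbf{R}$: if $A_{c}$ had a spectral point $\lambda\notin i\mathbf{R}$ then $\lambda\in\Sigma$, and the Riesz projection of $A_{c}$ at $\lambda$, being the restriction to $X_{c}$ of the (finite rank) Riesz projection of $JL$ at $\lambda$, would have nonzero range inside $E_{\lambda}\cap X_{c}\subset(X_{u}\oplus X_{s})\cap X_{c}=\{0\}$, a contradiction. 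For $L$: $X_{c}$ is $\langle L\cdot,\cdot\rangle$-orthogonal to $X_{u}\oplus X_{s}$ by its definition, and $L$ vanishes on $X_{u}$ and on $X_{s}$, so $L\longleftrightarrow\mathrm{diag}\big(L_{X_{c}},\begin{pmatrix}0&B\\B^{\ast}&0\end{pmatrix}\big)$ with $B:=i_{X_{s}}^{\ast}L\,i_{X_{u}}:X_{u}\to X_{s}^{\ast}$, and the non-degeneracy found above together with $\dim X_{u}=\dim X_{s}$ forces $B$ to be an isomorphism.

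\emph{The main obstacle} is the finiteness of $\Sigma=\sigma(JL)\setminus i\mathbf{R}$: this is the step that genuinely uses the structure theory, namely Proposition \ref{P:decomposition1} (hence the Pontryagin invariant subspace Theorem \ref{T:Pontryagin}), and it requires a little care in identifying the spectrum of the blockwise upper triangular $JL$ because one diagonal block is unbounded. Everything else is comparatively soft: the peeling-off of $\ker L$ and the verification that $(J',L',X')$ inherits (\textbf{H1-3}) are routine applications of the Appendix lemmas, and the orthogonality and non-degeneracy assertions are immediate from Lemma \ref{lemma-orthogonal-eigenspace} and Corollary \ref{C:symmetry}.
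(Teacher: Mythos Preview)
Your overall strategy matches the paper's: peel off $\ker L$, apply Proposition~\ref{P:decomposition1} to the non-degenerate quotient to see that $\Sigma=\sigma(JL)\setminus i\mathbf{R}$ is a finite set of isolated points, then build $X_{u},X_{s},X_{c}$ accordingly. There are two issues, one minor and one substantive.

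\textbf{Minor.} Your reduction via $X'=(\ker L)^{\perp}$ implicitly uses condition~\eqref{E:orthogonal-d}, which is not part of (\textbf{H1-3}). Indeed, Lemma~\ref{L:decomJ} needs $\ker i_{X'}^{\ast}\subset D(J)$ to conclude $(J')^{\ast}=-J'$, and for $X'=(\ker L)^{\perp}$ this is precisely~\eqref{E:orthogonal-d}. The paper sidesteps this by taking the complement $Y=X_{-}\oplus X_{+}$ furnished by Lemma~\ref{L:decom1}, for which (\textbf{H3}) applies directly. This is a routine fix.

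\textbf{Substantive (circularity).} You invoke Corollary~\ref{C:symmetry} (equivalently~\eqref{formula-unstable-L-index}) to obtain the non-degeneracy of $\langle L\cdot,\cdot\rangle$ on $X_{u}\oplus X_{s}$. But in the paper Corollary~\ref{C:symmetry} is derived \emph{from} Proposition~\ref{P:decomposition2}; its proof reads ``Since by Proposition~\ref{P:decomposition2} $L|_{X_{u}\oplus X_{s}}$ is non-degenerate\ldots''. So this step is circular. Without the non-degeneracy you cannot conclude $X=X_{c}\oplus X_{u}\oplus X_{s}$, and the argument stalls.

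The paper avoids this by a hands-on construction: it splits the finite dimensional blocks $Y_{1},Y_{4}$ of Proposition~\ref{P:decomposition1} into center and hyperbolic parts $Y_{jc},Y_{jh}$, then uses Lemma~\ref{L:InvariantSubspace} to lift $Y_{1h},Y_{4h}$ to $JL$-invariant subspaces $X_{1h},X_{4h}\subset X$. The non-degeneracy of $\langle L\cdot,\cdot\rangle$ on $X_{h}=X_{1h}\oplus X_{4h}$ is then read off from the isomorphism $\tilde{B}:Y_{4}\to Y_{1}^{\ast}$ via~\eqref{E:temp2}--\eqref{E:tildeB}. Your route can be repaired: once you know each $E_{\lambda}$, $\lambda\in\Sigma$, is finite dimensional (which you assert but do not argue; it follows from the block upper triangular form of the Riesz projection), Lemma~\ref{L:L-orth-eS} --- whose proof is independent of Proposition~\ref{P:decomposition2} --- gives $L$-orthogonality between $P_{h}X=X_{u}\oplus X_{s}$ and $\ker P_{h}$, and since $\ker L\subset\ker P_{h}$ the non-degeneracy follows. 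But as written the appeal to Corollary~\ref{C:symmetry} is a genuine gap.
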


\begin{proof}
Let $X_{0}=\ker L$ and $Y=X_{-}\oplus X_{+}$ where $X_{\pm}$ are given in
Lemma \ref{L:decom1}. Let $P:X\rightarrow Y$ be the projection associated to
$X=Y\oplus X_{0}$ and $J_{Y}=PJP^{\ast}$. Lemma \ref{L:decomJ} implies that
$(Y,L_{Y},J_{Y})$ satisfy assumptions (\textbf{H1-3}), with $L_{Y}$ being an
isomorphism. Applying Proposition \ref{P:decomposition1}, we obtain closed
subspaces $Y_{j}$, $j=1,2,3,4$, such that $X=X_{0}\oplus(\oplus_{j=1}^{4}%
Y_{j})$ and $JL$ and $L$ take the forms
\[
JL\longleftrightarrow%
\begin{pmatrix}
0 & \tilde{A}_{01} & \tilde{A}_{02} & \tilde{A}_{03} & \tilde{A}_{04}\\
0 & \tilde{A}_{1} & \tilde{A}_{12} & \tilde{A}_{13} & \tilde{A}_{14}\\
0 & 0 & \tilde{A}_{2} & 0 & \tilde{A}_{24}\\
0 & 0 & 0 & \tilde{A}_{3} & \tilde{A}_{34}\\
0 & 0 & 0 & 0 & \tilde{A}_{4}%
\end{pmatrix}
,\quad L\longleftrightarrow%
\begin{pmatrix}
0 & 0 & 0 & 0 & 0\\
0 & 0 & 0 & 0 & \tilde{B}\\
0 & 0 & L_{Y_{2}} & 0 & 0\\
0 & 0 & 0 & L_{Y_{3}} & 0\\
0 & \tilde{B}^{\ast} & 0 & 0 & 0
\end{pmatrix}
,
\]
where $\tilde{B}$ is an isomorphism, $L_{Y_{2}}\leq-\delta_{0}$, $L_{Y_{3}%
}\geq\delta_{0}$, for some $\delta_{0}>0$, and $\tilde{A}_{2,3}$ are
anti-self-adjoint with respect to the equivalent inner products $\mp\langle
L_{Y_{2,3}}\cdot,\cdot\rangle$ on $Y_{2,3}$. The upper triangular structure of
$JL$ implies $\sigma(JL)=\{0\}\bigcup\cup_{j=1}^{4}\sigma(\tilde{A}_{j})$.
Moreover, we have $\sigma(\tilde{A}_{2,3})\subset i\mathbf{R}$ due to the
anti-self-adjointness of $\tilde{A}_{2,3}$.

For $j=1,4$, as $\dim Y_{j}<\infty$, let $Y_{j}=Y_{jc}\oplus Y_{jh}$, where
$Y_{jc}$ and $Y_{jh}$, are the eigenspaces of $\tilde{A}_{j}$ corresponding to
all eigenvalues with zero and nonzero real parts, respectively. For any
$x_{1,4}\in Y_{1,4}$, the above form of $JL$ and $L$ imply
\begin{align*}
\langle\tilde{B}x_{4},\tilde{A}_{1}x_{1}\rangle+\langle\tilde{B}\tilde{A}%
_{4}x_{4},x_{1}\rangle= &  \langle L\tilde{A}_{1}x_{1},x_{4}\rangle+\langle
L\tilde{A}_{4}x_{4},x_{1}\rangle\\
= &  \langle LJLx_{1},x_{4}\rangle+\langle LJLx_{4},x_{1}\rangle=0.
\end{align*}
Much as in the proof of Lemma \ref{lemma-orthogonal-eigenspace}, due to the
difference in eigenvalues, we obtain
\begin{equation}
\langle\tilde{B}x_{4h},x_{1c}\rangle=0=\langle\tilde{B}x_{4c},x_{1h}%
\rangle,\;\forall x_{jc}\in Y_{jc},\;x_{jh}\in Y_{jh},\,j=1,4.\label{E:temp2}%
\end{equation}
Therefore, the non-degeneracy of $\tilde{B}$ implies that
\begin{equation}
\langle\tilde{B}x_{4h},x_{1h}\rangle\text{ and }\langle\tilde{B}x_{4c}%
,x_{1c}\rangle\text{ are non-degenerate quadratic forms}\label{E:tildeB}%
\end{equation}
on $Y_{1h}\times Y_{4h}$ and $Y_{1c}\times Y_{4c}$.

Applying Lemma \ref{L:InvariantSubspace} to $X_{0}\oplus Y_{1h}$ and
$JL|_{X_{0}\oplus Y_{1h}}$, we obtain a linear operator $S_{1}:Y_{1h}%
\rightarrow X_{0}$ such that $X_{1h}\triangleq(I+S_{1})Y_{1h}\subset D(JL)$
satisfies $JL(X_{1h})=X_{1h}$ and $\sigma(JL|_{X_{1h}})=\sigma(\tilde{A}%
_{1}|_{Y_{1h}})$. Clearly, we still have the decomposition
\[
X=X_{0}\oplus Y_{1c}\oplus X_{1h}\oplus Y_{2}\oplus Y_{3}\oplus Y_{4c}\oplus
Y_{4h}.
\]
Applying again Lemma \ref{L:InvariantSubspace} to
\[
Z=X_{0}\oplus Y_{1c}\oplus Y_{2}\oplus Y_{3}\oplus Y_{4h}%
\]
and the projection (with the kernel $X_{1h}\oplus Y_{4c}$) of $JL|_{Z}$ to
$Z$, we obtain a bounded linear operator
\[
S_{4}:Y_{4h}\rightarrow X_{0}\oplus Y_{1c}\oplus Y_{2}\oplus Y_{3}%
\]
such that $X_{4h}\triangleq(I+S_{4})Y_{4h}\subset D(JL)$ satisfies
$JL(X_{4h})\subset X_{1h}\oplus X_{4h}$. Let $X_{h}=X_{1h}\oplus X_{4h}$, we
have
\[
X_{h}\subset D(JL),\;JL(X_{h})=X_{h},\;\sigma(JL|_{X_{h}})=\sigma
(JL)\backslash i\mathbf{R}=\sigma(\tilde{A}_{1}|_{Y_{1h}})\cup\sigma(\tilde
{A}_{4}|_{Y_{4h}}).
\]

According to \eqref{E:temp2} and the form of $L$, it holds
\[
\langle L(I+S_{1})x_{1h},(I+S_{4})x_{4h}\rangle=\langle Lx_{1h},x_{4h}%
\rangle=\langle\tilde{B}x_{4h},x_{1h}\rangle.
\]
Therefore, we obtain the non-degeneracy of $\langle L\cdot,\cdot\rangle$ on
$X_{h}$ from \eqref{E:tildeB} and the construction of $X_{h}$. Let
$X_{h}=X_{u}\oplus X_{s}$, where $X_{u,s}$ are the eigenspaces of all
eigenvalues $\lambda\in\sigma(JL|_{X_{h}})$ with $\pm$Re$\lambda>0$. Lemma
\ref{lemma-orthogonal-eigenspace} implies $\langle Lu,v\rangle=0$ on both
$X_{u}$ and $X_{s}$ and thus $\langle L\cdot,\cdot\rangle$ is a non-degenerate
quadratic form on $X_{u}\times X_{s}$ due to the non-degeneracy of $\langle
L\cdot,\cdot\rangle$ on $X_{h}$. This also yields
\[
\dim X_{u}=\dim X_{s}=\frac{1}{2}\dim X_{h}\leq\dim Y_{1}\leq n^{-}(L).
\]

Let
\[
X_{c}=X_{h}^{\perp_{L}}=\{u\in X\mid\langle Lu,v\rangle=0,\,\forall v\in
X_{h}\}.
\]
By Lemmas \ref{L:non-degeneracy} and \ref{L:InvariantSubS}, $X=X_{h}\oplus
X_{c}$ and $X_{c}$ is invariant under $e^{tJL}$. Therefore, $JL$ is densely
defined on $X_{c}$ and $A_{c}\big(D(A_{c})\cap X_{c}\big)\subset X_{c}$ where
$A_{c}=JL|_{X_{c}}$. Moreover, from the non-degeneracy of $\langle
L\cdot,\cdot\rangle$ on $X_{h}$, it is straightforward to show that $X_{c}$
can be written as a graph of a bounded linear operator from $X_{0}\oplus
Y_{1c}\oplus Y_{2}\oplus Y_{3}\oplus Y_{4c}$ to $X_{h}$. Therefore, due to the
upper triangular structure of $JL$, the spectrum $\sigma(JL|_{X_{c}})$ is
given by the union of the spectrum of those diagonal blocks of $JL$
complementary to $Y_{1h}$ and $Y_{4h}$ and thus $\sigma(JL|_{X_{c}})\subset
i\mathbf{R}$.
\end{proof}

As a by-product, we prove the symmetry of eigenvalues of $\sigma(JL)$.

\begin{corollary}
\label{C:symmetry} Suppose $\lambda\in\sigma(JL)$.

(i) If $\lambda\in\sigma(JL)\backslash i\mathbf{R}$, then $\lambda$ is an
isolated eigenvalue of finite algebraic multiplicity. Its eigenspace consists
of generalized eigenvectors only. Moreover, let $m_{\lambda}$ to be the
algebraic multiplicity of $\lambda$, then
\begin{equation}
n^{-}\left(  L|_{E_{\lambda}\oplus E_{-\bar{\lambda}}}\right)  =\dim\left(
E_{\lambda}\right)  =m_{\lambda}.
\label{formula-negative-dimension-hyperbolic}%
\end{equation}

(ii) If $\lambda$ is an eigenvalues of $JL$, then $\pm\lambda,\pm\bar{\lambda
}$ are also eigenvalues of $JL$. Moreover, for any integer $k>0$, $\dim
\ker(JL-a)^{k}$ are the same for $a=\pm\lambda,\pm\bar{\lambda}$.
\end{corollary}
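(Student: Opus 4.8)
The plan is to read both parts off the spectral splitting $X=X_{c}\oplus X_{u}\oplus X_{s}$ of Proposition \ref{P:decomposition2}, combined with the elementary orthogonality Lemma \ref{lemma-orthogonal-eigenspace} and the general spectral symmetry Lemma \ref{L:symmetry}; everything reduces to the finite dimensional computations already done in Section \ref{S:finiteD}.

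For (i) I would argue as follows. Suppose $\lambda\in\sigma(JL)\setminus i\mathbf{R}$, say $\operatorname{Re}\lambda>0$. Since $\sigma(JL)=\sigma(A_{c})\cup\sigma(A_{u})\cup\sigma(A_{s})$ with $\sigma(A_{c})\subset i\mathbf{R}$, $\operatorname{Re}\sigma(A_{u})>0$ and $\operatorname{Re}\sigma(A_{s})<0$, one has $\lambda\in\sigma(A_{u})$, and since $\dim X_{u}\le n^{-}(L)<\infty$ the block $A_{u}$ is a matrix. Hence $\lambda$ is isolated in $\sigma(JL)$ with finite algebraic multiplicity $m_{\lambda}$; because $\lambda\notin\sigma(A_{c})\cup\sigma(A_{s})$ a standard block computation gives $E_{\lambda}\subset X_{u}$, so $E_{\lambda}=\ker(JL-\lambda)^{m}$ for a finite $m$, it consists of generalized eigenvectors only, and $\dim E_{\lambda}=m_{\lambda}$. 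By Lemma \ref{L:symmetry}(i), $-\bar\lambda\in\sigma(JL)$; since $\operatorname{Re}(-\bar\lambda)<0$ the same reasoning makes $-\bar\lambda$ an eigenvalue with $E_{-\bar\lambda}\subset X_{s}$, so $E_{\lambda}\oplus E_{-\bar\lambda}\subset X_{h}:=X_{u}\oplus X_{s}$, which is finite dimensional and on which $\langle L\cdot,\cdot\rangle$ is non-degenerate (Proposition \ref{P:decomposition2}(3)). I then repeat, inside $X_{h}$, the arguments of Lemmas \ref{lemma-non-degenerate-finite-d} and \ref{lemma-negative-unstable-space}: Lemma \ref{lemma-orthogonal-eigenspace} shows that $\langle L\cdot,\cdot\rangle$ vanishes on $E_{\lambda}$ and on $E_{-\bar\lambda}$ separately and that $E_{\lambda}\oplus E_{-\bar\lambda}$ is $\langle L\cdot,\cdot\rangle$-orthogonal to the span of the remaining generalized eigenspaces inside $X_{h}$; the non-degeneracy of $\langle L\cdot,\cdot\rangle$ on $X_{h}$ therefore forces non-degeneracy on $E_{\lambda}\oplus E_{-\bar\lambda}$, i.e. the off-diagonal block $A$ of the Hermitian form there is an isomorphism. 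A block-diagonal form whose only non-trivial block is $A$ (with $A$ square invertible) has Morse index exactly $\dim E_{\lambda}$, which is \eqref{formula-negative-dimension-hyperbolic}.

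For (ii): if $\operatorname{Re}\lambda=0$ then $\{\pm\lambda,\pm\bar\lambda\}=\{\lambda,\bar\lambda\}$ and the assertion is immediate from Lemma \ref{L:symmetry}(ii), which already gives $\ker(JL-\bar\lambda)^{k}=\{\bar u\mid u\in\ker(JL-\lambda)^{k}\}$ for every $k$. If $\operatorname{Re}\lambda\ne0$, then by part (i) and Lemma \ref{L:symmetry}(i) all of $\lambda,-\lambda,\bar\lambda,-\bar\lambda$ are eigenvalues whose generalized eigenspaces lie inside the finite dimensional invariant block $X_{h}$, so it suffices to study the finite dimensional operator $A_{h}:=JL|_{X_{h}}$ together with the non-degenerate form $\langle L\cdot,\cdot\rangle|_{X_{h}}$. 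Since $JL$ is anti-self-adjoint for $\langle L\cdot,\cdot\rangle$ and $X_{h}$ is invariant, $A_{h}$ is anti-self-adjoint for $\langle L\cdot,\cdot\rangle|_{X_{h}}$, i.e. as matrices $L_{h}A_{h}=-A_{h}^{*}L_{h}$ with $L_{h}$ invertible, so $A_{h}$ is similar to $-A_{h}^{*}$; together with the reality of $A_{h}$ this forces the Jordan structures of $A_{h}$ at $\lambda,-\lambda,\bar\lambda,-\bar\lambda$ to coincide block size by block size. Consequently $\dim\ker(JL-a)^{k}=\dim\ker(A_{h}-a)^{k}$ is the same for all $a\in\{\pm\lambda,\pm\bar\lambda\}$ and all $k$. (Equivalently, one applies Lemma \ref{L:symmetry} itself to the triple $(X_{h},A_{h},L_{h})$.)

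The main obstacle is exactly this last point: getting the \emph{full} Jordan equality in (ii) for $\operatorname{Re}\lambda\ne0$, rather than merely equality of algebraic and geometric multiplicities as supplied by Lemma \ref{L:symmetry}(iii). The resolution is the reduction to the finite dimensional, $L$-non-degenerate invariant block $X_{h}$, where the Hamiltonian identity $L_{h}A_{h}=-A_{h}^{*}L_{h}$ with $L_{h}$ invertible exhibits $A_{h}$ as similar to $-A_{h}^{*}$ and hence pins down the Jordan canonical form. Every other step is bookkeeping built on Proposition \ref{P:decomposition2} and the finite dimensional lemmas of Section \ref{S:finiteD}.
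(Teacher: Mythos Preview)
Your proposal is correct and follows essentially the same route as the paper: both reduce to the finite dimensional hyperbolic block $X_{h}=X_{u}\oplus X_{s}$ from Proposition~\ref{P:decomposition2}, use Lemma~\ref{lemma-negative-unstable-space} for \eqref{formula-negative-dimension-hyperbolic}, and obtain the full Jordan symmetry in (ii) via a similarity coming from the Hamiltonian structure on $X_{h}$. The only cosmetic difference is that the paper writes out $J$ blockwise to get $A_{s}=-B^{-1}A_{u}^{*}B$ directly, whereas you phrase the same similarity as $L_{h}A_{h}=-A_{h}^{*}L_{h}$ with $L_{h}$ invertible; these are equivalent formulations of the same identity.
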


For an eigenvalue $\lambda\in i\mathbf{R}$, it may happen $\dim\ker(JL
-\lambda) =\infty$.

\begin{proof}
According to Lemma \ref{L:symmetry}, we only need to prove $\lambda\in
\sigma(JL)\backslash i\mathbf{R}$ implies that $\lambda$ is an isolated
eigenvalue of finite multiplicity and $\dim\ker(JL-\lambda)^{k}=\dim
\ker(JL+\bar{\lambda})^{k}$.

In fact, if $\lambda\in\sigma(JL)\backslash i\mathbf{R}$, then Proposition
\ref{P:decomposition2} implies that $\lambda\in\sigma(A_{u})\cup\sigma(A_{s}%
)$. As $A_{u,s}$ are finite dimensional matrices, $\lambda$ must be an
isolated eigenvalue of $JL$ with finite algebraic multiplicity. Moreover, from
the blockwise forms of $L$ and $JL$ and $J^{\ast}=-J$, it is easy to compute
\[
J\longleftrightarrow%
\begin{pmatrix}
J_{X_{c}} & 0 & 0\\
0 & 0 & A_{u}(B^{\ast})^{-1}\\
0 & A_{s}B^{-1} & 0
\end{pmatrix}
.
\]
Again since $J^{\ast}=-J$, we have $A_{s}=-B^{-1}A_{u}^{\ast}B$. As $A_{u,s}$
are finite dimensional matrices and eigenvalues of $JL$ with positive (or
negative) real parts coincide with eigenvalues of $A_{u}$ (or $A_{s}$), the
statement in the corollary follows from this similarity immediately.

Since by Proposition \ref{P:decomposition2} $L|_{X_{u}\oplus X_{s}}$ is
non-degenerate, formula (\ref{formula-negative-dimension-hyperbolic}) follows
from Lemma \ref{lemma-negative-unstable-space} in the finite dimensional case.
\end{proof}

\noindent\textbf{Proof of Theorem \ref{T:decomposition}.} Let $X_{5,6}%
=X_{u,s}$ and $J_{X_{c}}=P_{c}JP_{c}^{\ast}$, where $X_{u,s,c}$ are obtained
in Proposition \ref{P:decomposition2} and $P_{c}:X\rightarrow X_{c}$ be the
projection associated to $X=X_{c}\oplus X_{u}\oplus X_{s}$. According to Lemma
\ref{L:decomJ}, $(X_{c},L_{X_{c}},J_{X_{c}})$ satisfy assumption
(\textbf{H1-3)} as well. Since Proposition \ref{P:decomposition2} also ensures
the non-degeneracy of $L_{X_{5}\oplus X_{6}}$ and $\dim X_{5,6}\leq n^{-}(L)$,
the finite dimensional results in Section \ref{S:finiteD} (Lemma
\ref{L:symmetry} and \ref{lemma-negative-unstable-space}) imply the symmetry
between the spectra $\sigma(A_{5})$ and $\sigma(A_{6})$ and $n^{-}%
(L|_{X_{5}\oplus X_{6}})=\dim X_{5}$. Therefore, we obtain, from the
$L$-orthogonality between $X_{c}$ and $X_{u}\oplus X_{s}$,
\[
n^{-}(L_{X_{c}})=n^{-}(L)-\dim X_{5}.
\]
Recall $X_{0}=\ker L=\ker L_{X_{c}}\subset X_{c}$. Let $X_{\pm}$ be given by
Lemma \ref{L:decom1} applied to $(X_{c},L_{X_{c}},J_{X_{c}})$, $Y=X_{+}\oplus
X_{-}$, $P_{Y}:X\rightarrow Y$ be the associated projection, and $J_{Y}%
=P_{Y}JP_{Y}^{\ast}$. Again Lemma \ref{L:decomJ} implies $(Y,L_{Y},J_{Y})$
satisfy (\textbf{H1--3}) with $L_{Y}$ being an isomorphism. Applying
Proposition \ref{P:decomposition1} to $Y$ and we obtain subspaces $\tilde
{X}_{j}$, $j=1,2,3,4$. To ensure the orthogonality between $X_{0}=\ker L$ and
$X_{j}$, $j=1,2,3,4$, we modify the definition of $X_{j}$ as
\[
X_{j}=\{u\in X_{0}\oplus\tilde{X}_{j}\mid(u,v)=0,\;\forall v\in\ker L\},\quad
j=1,2,3,4.
\]
It is straightforward to verify the desired properties of the decomposition
$X=\oplus_{j=0}^{6}X_{j}$ by using Propositions \ref{P:decomposition1} and
\ref{P:decomposition2}. The proof of Theorem \ref{T:decomposition} is
complete. \hfill$\square$

To finish this section, we give the following lemma on the $L$-orthogonality
between certain eigenspaces defined by spectral integrals.

\begin{lemma}
\label{L:L-orth-eS} Let $\Omega\subset\mathbf{C}$ be an open subset symmetric
about $i\mathbf{R}$ with smooth boundary $\Gamma=\partial\Omega$ and compact
closure such that $\Gamma\cap\sigma(JL)=\emptyset$. Let
\[
P=\frac{1}{2\pi i}\oint_{\Gamma}(z-JL)^{-1}dz.
\]
and then it holds that $\langle L(I-P)u,Pv\rangle=0$, for any $u,v\in X$.
\end{lemma}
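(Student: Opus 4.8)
The plan is to reduce the statement to the single fact that the Riesz projection $P$ is \emph{self-adjoint with respect to the form $\langle L\cdot,\cdot\rangle$}, i.e. $\langle LPu,v\rangle=\langle Lu,Pv\rangle$ for all $u,v\in X$. Granting this, the conclusion is immediate: since $P^{2}=P$ (standard, as $\Gamma\cap\sigma(JL)=\emptyset$),
\[
\langle L(I-P)u,Pv\rangle=\langle LP(I-P)u,v\rangle=\langle L\,0\,u,v\rangle=0 .
\]
So the whole task is to prove the $L$-self-adjointness of $P$.

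The key algebraic input is that $JL$ is anti-self-adjoint with respect to $\langle L\cdot,\cdot\rangle$, which is exactly \eqref{E:anti-S} with $\mu=0$: $\langle L(JL)x,y\rangle=-\langle Lx,(JL)y\rangle$ for $x,y\in D(JL)$. First I would promote this to a resolvent identity. Fix $z\in\rho(JL)$; since $\rho(JL)$ is symmetric about $i\mathbf{R}$ by Lemma~\ref{L:symmetry}, also $-\bar z\in\rho(JL)$. Putting $x=(z-JL)^{-1}u$ into $\langle Lu,y\rangle=\langle L(z-JL)x,y\rangle$ and using the anti-self-adjointness gives, after complexification, $\langle Lu,y\rangle=\langle Lx,(\bar z+JL)y\rangle$ for every $y\in D(JL)$ (the scalar $z$ becomes $\bar z$ because the complexified form is conjugate-linear in one of its slots). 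Taking $y=(\bar z+JL)^{-1}v$ then yields
\[
\langle L(z-JL)^{-1}u,v\rangle=\langle Lu,(\bar z+JL)^{-1}v\rangle ,\qquad z\in\rho(JL).
\]

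Now I would substitute this identity into the contour integral defining $P$ and change variables. Writing $\langle LPu,v\rangle$ as the contour integral over $\Gamma$ of $\langle L(z-JL)^{-1}u,v\rangle$ and applying the identity turns it into the contour integral of $\langle Lu,(\bar z+JL)^{-1}v\rangle$. Substituting $w=-\bar z$, which maps $\Gamma=\partial\Omega$ onto $-\bar\Gamma=\Gamma$ — here the hypothesis that $\Omega$ is symmetric about $i\mathbf{R}$ is used — with reversed orientation, and using $(\bar z+JL)^{-1}=-(w-JL)^{-1}$, a short computation (keeping careful track of the conjugation of the line element forced by the (conjugate-)linearity of $\langle L\cdot,\cdot\rangle$) identifies the result with $\langle Lu,\frac{1}{2\pi i}\oint_{\Gamma}(w-JL)^{-1}v\,dw\rangle=\langle Lu,Pv\rangle$. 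This establishes the $L$-self-adjointness of $P$, hence the lemma.

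The one genuinely delicate point is the complexification bookkeeping: one must track which argument of $\langle L\cdot,\cdot\rangle$ is complex-linear and which is conjugate-linear, and correspondingly whether the resolvent identity carries $z$, $-z$, or $\bar z$. It is precisely the appearance of $\bar z$ (reflection across the imaginary axis) that makes ``$\Omega$ symmetric about $i\mathbf{R}$'' the right hypothesis and makes $w=-\bar z$ send $\Gamma$ back to itself; everything else — idempotency of $P$, analyticity of $z\mapsto(z-JL)^{-1}$ on $\rho(JL)$, and interchanging the bounded functionals $\langle L\cdot,v\rangle$ and $\langle Lu,\cdot\rangle$ with the contour integral — is routine. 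An alternative route that sidesteps part of this bookkeeping is to observe that $PX$ and $(I-P)X$ are closed, $e^{tJL}$-invariant subspaces, namely the spectral subspaces for $\sigma(JL)\cap\Omega$ and $\sigma(JL)\setminus\overline\Omega$, and that their $L$-orthogonality is the resolvent-level analogue of Lemma~\ref{lemma-orthogonal-eigenspace}, valid because $\lambda_{1}\in\Omega$ and $\lambda_{2}\notin\overline\Omega$ force $\lambda_{1}\ne-\overline{\lambda_{2}}$ by the symmetry of $\Omega$ about $i\mathbf{R}$.
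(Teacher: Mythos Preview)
Your proposal is correct and takes a genuinely different route from the paper's proof. The paper does \emph{not} prove the $L$-self-adjointness identity $\langle LPu,v\rangle=\langle Lu,Pv\rangle$ directly; instead it introduces an auxiliary contour $\Gamma_{1}\subset\Omega$ strictly inside $\Gamma$ (so that $\bar z_{1}+z_{2}\neq0$ for $z_{1}\in\Gamma$, $z_{2}\in\Gamma_{1}$), establishes the two-variable resolvent identity
\[
\langle L(z_{1}-JL)^{-1}u,(z_{2}-JL)^{-1}v\rangle=\frac{1}{\bar z_{1}+z_{2}}\Big(\langle L(z_{1}-JL)^{-1}u,v\rangle+\langle Lu,(z_{2}-JL)^{-1}v\rangle\Big),
\]
and then integrates over both contours, evaluating the inner integrals of $\frac{1}{\bar z_{1}+z_{2}}$ by Cauchy's theorem (using that $-\bar z_{1}$ lies outside $\Gamma_{1}$ while $-\bar z_{2}$ lies inside $\Gamma$, again by the $i\mathbf{R}$-symmetry of $\Omega$) to obtain $\langle LPu,Pv\rangle=\langle Lu,Pv\rangle$ directly. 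Your single-contour argument with the substitution $w=-\bar z$ is shorter and more transparent: it isolates the one-variable identity $\langle L(z-JL)^{-1}u,v\rangle=\langle Lu,(\bar z+JL)^{-1}v\rangle$ as the essential point and dispenses with the auxiliary contour entirely. The paper's double-integral technique, on the other hand, is a standard device that generalizes readily to products of several spectral projections, which is perhaps why the authors chose it. Both arguments use the $i\mathbf{R}$-symmetry of $\Omega$ at exactly the same moment and for exactly the same reason.
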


The above $P$ is simply the standard spectral projection operator.

\begin{proof}
We first observe for any $w,w^{\prime}\in X$, \eqref{E:complexify} and
\eqref{E:AntiHermitian} imply
\begin{align}
&  \frac{1}{2\pi i}\oint_{\Gamma}\langle Lw,(z-JL)^{-1}w^{\prime}\rangle
dz=\langle Lw,Pw^{\prime}\rangle,\label{E:temp2.1}\\
&  \frac{1}{2\pi i}\oint_{\Gamma}\langle L(z-JL)^{-1}w,w^{\prime}\rangle
d\bar{z}=-\langle LPw,w^{\prime}\rangle, \label{E:temp2.2}%
\end{align}
where the first equality is used in the derivation of the second equality.
Here the $d\bar{z}$ and the minus sign in the second equality are due to the
anti-linear nature of $L$ in \eqref{E:AntiHermitian}.

Let $\Omega_{1}\subset\Omega$ be an open subset symmetric about $i\mathbf{R}$
such that $\Gamma_{1}=\partial\Omega_{1}\subset\Omega$ is smooth and
$\sigma(JL)\cap(\Omega\backslash\Omega_{1})=\emptyset$. Clearly,
\[
P=\frac{1}{2\pi i}\oint_{\Gamma_{1}}(z-JL)^{-1}dz,
\]
due to the analyticity of $(z-JL)^{-1}$. Denote
\[
\tilde{u}(z)=(z-JL)^{-1}u,\;\tilde{v}(z)=(z-JL)^{-1}v,\quad\forall
z\notin\sigma(JL).
\]
For $z_{1},z_{2}\notin\sigma(JL)$ satisfying $\bar{z}_{1}+z_{2}\neq0$, one may
compute using \eqref{E:complexify} and \eqref{E:AntiHermitian}
\begin{align*}
&  \frac{1}{\bar{z}_{1}+z_{2}}\big(\langle L(z_{1}-JL)^{-1}u,v\rangle+\langle
Lu,(z_{2}-JL)^{-1}v\rangle\big)\\
= &  \frac{1}{\bar{z}_{1}+z_{2}}\big(\langle L\tilde{u}(z_{1}),(z_{2}%
-JL)\tilde{v}(z_{2})\rangle+\langle L(z_{1}-JL)\tilde{u}(z_{1}),\tilde
{v}(z_{2})\rangle\big)\\
= &  \langle L\tilde{u}(z_{1}),\tilde{v}(z_{2})\rangle=\langle L(z_{1}%
-JL)^{-1}u,(z_{2}-JL)^{-1}v\rangle.
\end{align*}
Due to the definition of $\Gamma_{1}$ and its symmetry about the imaginary
axis, $\bar{z}_{1}+z_{2}\neq0$ for any $z_{1}\in\Gamma$ and $z_{2}\in
\Gamma_{1}$. Integrating the above equality along these curves, where
$\Gamma_{1}$ is enclosed in $\Gamma$, we obtain from the Cauchy integral
theorem and \eqref{E:temp2.1} and \eqref{E:temp2.2}
\begin{align*}
\langle LPu,Pv\rangle= &  \frac{-1}{(2\pi i)^{2}}\oint_{\Gamma}\oint%
_{\Gamma_{1}}\langle L(z_{1}-JL)^{-1}u,(z_{2}-JL)^{-1}v\rangle dz_{2}d\bar
{z}_{1}\\
= &  \frac{-1}{(2\pi i)^{2}}\oint_{\Gamma}\oint_{\Gamma_{1}}\frac{1}{\bar
{z}_{1}+z_{2}}\langle L(z_{1}-JL)^{-1}u,v\rangle dz_{2}d\bar{z}_{1}\\
&  +\frac{-1}{(2\pi i)^{2}}\oint_{\Gamma_{1}}\oint_{\Gamma}\frac{1}{\bar
{z}_{1}+z_{2}}\langle Lu,(z_{2}-JL)^{-1}v\rangle d\bar{z}_{1}dz_{2}.
\end{align*}
Since $-\bar{z}_{1}$ is not enclosed in $\Gamma_{1}$ while $-\bar{z}_{1}$ is
enclosed in $\Gamma$, the above first integral vanishes and the we obtain from
\eqref{E:temp2.1} and the Cauchy integral theorem
\[
\langle LPu,Pv\rangle=\langle Lu,Pv\rangle.
\]
This proves the lemma.
\end{proof}

The above lemma implies that $\langle Lu, v\rangle=0$ for any $u \in\ker P$
and $v \in PX$, where $X = PX \oplus\ker P$ is a spectral decomposition of $X$
invariant under $JL$. As a corollary, we give the following extension of Lemma
\ref{lemma-orthogonal-eigenspace}.

Let $\tilde{\sigma}\subset\sigma(JL)$ be compact and also open in the relative
topology of $\sigma(JL)$, namely $\tilde{\sigma}$ is isolated in $\sigma(JL)$.
There exists an open domain $\Omega\subset\mathbf{C}$ with compact closure and
smooth boundary such that $\Omega\cap\sigma(JL)=\tilde{\sigma}$. Let
\[
P_{\tilde{\sigma}}=\frac{1}{2\pi i}\oint_{\partial\Omega}(z-JL)^{-1}dz,\quad
X_{\tilde{\sigma}}=P_{\tilde{\sigma}}X,\quad X_{\tilde{\sigma}^{c}}=\ker
P_{\tilde{\sigma}}.
\]
According to the Cauchy integral theorem, the projection operator
$P_{\tilde{\sigma}}$ as well as the above subspaces, which are invariant under
$JL$, are independent of the choice of $\Omega$ and $JLP_{\tilde{\sigma}%
}=P_{\tilde{\sigma}}JL$. Moreover,
\[
\sigma(JL|_{X_{\tilde{\sigma}}})=\tilde{\sigma},\quad\sigma(JL|_{X_{\tilde
{\sigma}^{c}}})=\sigma(JL)\backslash\tilde{\sigma}.
\]

\begin{corollary}
\label{C:L-orth-eS} Suppose $\sigma_{j}\subset\sigma(JL)$, $j=1,2$, are
compact and also open in the relative topology of $\sigma(JL)$. In addition,
assume
\[
\sigma_{1}\cap\tilde{\sigma}_{2}=\emptyset,\;\text{ where }\;\tilde{\sigma
}_{2}=\{\lambda\in\mathbf{C}\mid\lambda\in\sigma_{2}\text{ or }\bar{\lambda
}\in\sigma_{2}\}.
\]
Then $\langle Lu,v\rangle=0$ for any $u\in X_{\sigma_{1}}$ and $v\in
X_{\sigma_{2}}$ where $X_{\sigma_{1,2}}$ are defined as in the above.
\end{corollary}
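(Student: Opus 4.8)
The plan is to obtain Corollary~\ref{C:L-orth-eS} as a direct consequence of Lemma~\ref{L:L-orth-eS}, by taking the spectral projection in that lemma to be the one onto $\tilde\sigma_2$. First I would record the relevant features of $\tilde\sigma_2$: since $\sigma(JL)$ is symmetric about the imaginary axis (Lemma~\ref{L:symmetry}), reflecting the compact, relatively open piece $\sigma_2$ across $i\mathbf{R}$ again yields a compact, relatively open subset of $\sigma(JL)$; consequently $\tilde\sigma_2$ is a compact subset of $\sigma(JL)$, open in the relative topology, invariant under $\lambda\mapsto-\bar\lambda$, containing $\sigma_2$, and --- by the standing hypothesis --- disjoint from $\sigma_1$. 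Being relatively clopen and compact, $\tilde\sigma_2$ is at positive distance from $\sigma(JL)\backslash\tilde\sigma_2$, so one can choose a bounded open set $\Omega\subset\mathbf{C}$, symmetric about $i\mathbf{R}$, with smooth boundary $\Gamma=\partial\Omega$, such that $\Gamma\cap\sigma(JL)=\emptyset$ and $\sigma(JL)\cap\Omega=\tilde\sigma_2$. Set $P=\frac{1}{2\pi i}\oint_\Gamma(z-JL)^{-1}\,dz$; by the Cauchy integral theorem this equals the spectral projection $P_{\tilde\sigma_2}$ onto $X_{\tilde\sigma_2}$.

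The remaining step is the bookkeeping among Riesz projections. Since $\sigma_2\subset\tilde\sigma_2$ are both isolated in $\sigma(JL)$, one has $P_{\tilde\sigma_2}P_{\sigma_2}=P_{\sigma_2}$, hence $X_{\sigma_2}\subset X_{\tilde\sigma_2}=PX$ and $Pv=v$ for every $v\in X_{\sigma_2}$. Likewise, since $\sigma_1\cap\tilde\sigma_2=\emptyset$ with both sets isolated, $P_{\tilde\sigma_2}P_{\sigma_1}=0$, hence $X_{\sigma_1}\subset\ker P$ and $(I-P)u=u$ for every $u\in X_{\sigma_1}$. Now Lemma~\ref{L:L-orth-eS} applied to this $\Omega$ gives $\langle L(I-P)w,Pw'\rangle=0$ for all $w,w'\in X$; taking $w=u\in X_{\sigma_1}$ and $w'=v\in X_{\sigma_2}$ and inserting the two identities just recorded yields
\[
\langle Lu,v\rangle=\langle L(I-P)u,Pv\rangle=0,
\]
which is the claim.

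The one point that genuinely has to be arranged, rather than being routine, is the first: one must surround $\tilde\sigma_2$ by a contour symmetric about $i\mathbf{R}$ and missing $\sigma(JL)$, because the proof of Lemma~\ref{L:L-orth-eS} relies on the reflection $z\mapsto-\bar z$ carrying $\Gamma$ to itself (that is precisely what forces the residue computation there to collapse). This is exactly why $\tilde\sigma_2$, the $i\mathbf{R}$-symmetric enlargement of $\sigma_2$, is the correct object to feed into the lemma, and why the symmetry of $\sigma(JL)$ furnished by Lemma~\ref{L:symmetry} is needed. Everything else --- separating a clopen spectral piece from its complement by a contour, the elementary identities among Riesz projections, and the identification $P=P_{\tilde\sigma_2}$ --- is standard operator calculus. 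As a consistency check, when $\sigma_1=\{\lambda\}$ and $\sigma_2=\{\mu\}$ are isolated eigenvalues, $X_{\sigma_j}$ are the generalized eigenspaces $E_\lambda$ and $E_\mu$ and the disjointness hypothesis reduces to $\lambda+\bar\mu\ne0$, so the corollary recovers Lemma~\ref{lemma-orthogonal-eigenspace}.
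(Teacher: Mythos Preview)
Your proof is correct and follows essentially the same approach as the paper: choose an $i\mathbf{R}$-symmetric domain $\Omega$ isolating $\tilde\sigma_2$, apply Lemma~\ref{L:L-orth-eS} to the resulting projection $P=P_{\tilde\sigma_2}$, and conclude via the inclusions $X_{\sigma_2}\subset PX$ and $X_{\sigma_1}\subset\ker P$. Your version simply fills in more of the routine details (the relative clopenness of $\tilde\sigma_2$, the Riesz-projection identities) that the paper leaves implicit.
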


\begin{proof}
According to our assumptions, there exists an open domain $\Omega
\subset\mathbf{C}$, symmetric about $i\mathbf{R}$ with smooth boundary and
compact closure such that $\Omega\cap\sigma(JL)=\tilde{\sigma}_{2}$ and
$\partial\Omega\cap\sigma(JL)=\emptyset$. The corollary follows from Lemma
\ref{L:L-orth-eS} and the facts $X_{\sigma_{1}}\subset\ker P_{\tilde{\sigma
}_{2}}$ and $X_{\sigma_{2}}\subset P_{\tilde{\sigma}_{2}}X$.
\end{proof}

\section{Exponential trichotomy}

\label{S:ET}

We prove Theorem \ref{theorem-dichotomy} on the exponential trichotomy in this
section. The proof is based on the decomposition Theorem \ref{T:decomposition}
and we follow the notations there.

Let
\[
E^{u} = X_{5}, \quad E^{s} = X_{6}, \quad E^{c} = \oplus_{j=0}^{4} X_{j},
\]
where $X_{j}$, $j=0, \ldots, 6$, are given by Theorem \ref{T:decomposition}.
Based on Theorem \ref{T:decomposition}, it only remains to prove the growth estimates.

Since $A_{2,3}$ are anti-self-adjoint with respect to the equivalent inner
product $\mp\langle L_{X_{2,3}}\cdot,\cdot\rangle$, there exists a constant
$C>0$ such that
\begin{equation}
|e^{tA_{2}}|,\ |e^{tA_{3}}|\leq C,\;\forall t\in\mathbf{R}. \label{E:ET1}%
\end{equation}
Since $\dim X_{5}=\dim X_{6}<\infty$ and $\sigma(A_{5})=-\sigma(A_{6})$, it is
clear
\begin{equation}%
\begin{split}
&  |e^{tA_{5}}|\leq C(1+|t|^{\dim X_{5}-1})e^{\lambda_{u}t},\;\forall t<0,\\
&  |e^{tA_{6}}|\leq C(1+|t|^{\dim X_{6}-1})e^{-\lambda_{u}t},\;\forall t>0
\end{split}
\label{E:ET2}%
\end{equation}
for some $C>0$ and $\lambda_{u}=\min\{{\operatorname{Re}}\lambda\mid\lambda
\in\sigma(A_{5})\}$. Finally, as $\dim X_{1}=\dim X_{4}<\infty$ and
$\sigma(A_{1,4})\subset i\mathbf{R}$, we also have
\begin{equation}
|e^{tA_{1,4}}|\leq C(1+|t|^{\dim X_{1}-1}),\;\forall t\in\mathbf{R}.
\label{E:ET3}%
\end{equation}

For any $x\in X$, write
\[
e^{tJL}x=\sum_{j=0}^{6}x_{j}(t),\quad x_{j}(t)\in X_{j},
\]
where $X_{j}$, $j=0,\ldots,6$, are given by Theorem \ref{T:decomposition}. One
can write down the equations explicitly:
\begin{equation}%
\begin{cases}
\partial_{t}x_{0}=A_{01}x_{1}+A_{02}x_{2}+A_{03}x_{3}+A_{04}x_{4}\\
\partial_{t}x_{1}=A_{1}x_{1}+A_{12}x_{2}+A_{13}x_{3}+A_{14}x_{4}\\
\partial_{t}x_{2}=A_{2}x_{2}+A_{24}x_{4}\\
\partial_{t}x_{3}=A_{3}x_{3}+A_{34}x_{4}\\
x_{j}(t)=e^{tA_{j}}x_{j}(0),\;j=4,5,6.
\end{cases}
\label{E:ET4}%
\end{equation}
For $j=2,3$, we obtain from Theorem \ref{T:decomposition} and inequalities
\eqref{E:ET1} and \eqref{E:ET3} that
\begin{equation}%
\begin{split}
\Vert x_{j}(t)\Vert=  &  \Vert e^{tA_{j}}x_{j}(0)+\int_{0}^{t}e^{(t-\tau
)A_{j}}A_{j4}e^{\tau A_{4}}x_{4}(0)d\tau\Vert\\
\leq &  C\big(\Vert x_{j}(0)\Vert+(1+|t|^{\dim X_{1}})\Vert x_{4}(0)\Vert\big)
\end{split}
\label{E:ET5}%
\end{equation}
for some $C>0$. Regrading $x_{1}(t)$, we have from \eqref{E:ET1},
\eqref{E:ET3}, and \eqref{E:ET5}
\begin{align}
\Vert x_{1}(t)\Vert\leq &  \Vert e^{tA_{1}}x_{1}(0)+\int_{0}^{t}%
e^{(t-\tau)A_{1}}\big(A_{12}x_{2}(\tau)+A_{13}x_{3}(\tau)+A_{14}e^{\tau A_{4}%
}x_{4}(0)\big)d\tau\Vert\nonumber\\
\leq &  C\Big(1+|t|^{\dim X_{1}-1}+\int_{0}^{|t|}1+|t-\tau|^{\dim X_{1}%
-1}|\tau|^{\dim X_{1}}d\tau\Big)\Vert x(0)\Vert\nonumber\\
\leq &  C(1+|t|^{2\dim X_{1}})\Vert x(0)\Vert. \label{E:ET6}%
\end{align}
Much as on the above we also have
\begin{equation}
\Vert x_{0}(t)\Vert\leq C(1+|t|^{2\dim X_{1}+1})\Vert x(0)\Vert, \label{E:ET7}%
\end{equation}
The above inequalities prove the desired exponential trichotomy estimates.

Finally, repeatedly applying $JL$ to equation \eqref{eqn-hamiltonian} and
using the above inequalities yield the trichotomy estimates in the graph norms
on $D\big((JL)^{k}\big)$.

\section{The index theorems and the structure of $E_{i\mu}$}

\label{S:Index}

Our goal in this section is to complete the proof of the index theorems and
related properties.

\subsection{Proof of Theorem \ref{theorem-counting}: the index counting
formula}

\label{SS:Index-counting}

The symmetry of $\sigma(JL)$, the eigenvalues of $JL$, and the dimensions of
the spaces of generalized eigenvectors have been proved in Lemma
\ref{L:symmetry} and Corollary \ref{C:symmetry}. The index formula
\eqref{counting-formula} will be proved in the next two lemmas. Recall the
notations $n^{-}(L|_{Y})$ and $n^{\leq0}(L|_{Y})$ for a subspace $Y\subset X$
and indices $k_{r}$, $k_{c}$, $k^{\leq0}(i\mu)$, $k_{i}^{\leq0}$, $k_{0}%
^{\leq0}$ \textit{etc.} defined in Section \ref{section-index theorem}.

\begin{lemma}
\label{L:index1} Under hypotheses (\textbf{H1-3}), it holds
\[
k_{r}+2k_{c}+2k_{i}^{\le0}+k_{0}^{\le0}\ge n^{-}\left(  L\right)  .
\]

\end{lemma}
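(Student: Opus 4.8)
The plan is to derive the inequality from the structural decomposition of Theorem~\ref{T:decomposition} and the fine structure of the purely imaginary generalized eigenspaces, reducing everything to a finite-dimensional count. First I would peel off the hyperbolic directions. By the block forms \eqref{block-JL}--\eqref{block-L} and part~(7) of Theorem~\ref{T:decomposition} one has $n^{-}(L)=\dim X_{1}+\dim X_{2}+\dim X_{5}$. Moreover $\sigma(A_{5})$ is exactly the set of eigenvalues of $JL$ with positive real part and $\sigma(A_{5})=-\sigma(A_{6})$, so by Corollary~\ref{C:symmetry} $\dim X_{5}=n^{-}(L|_{X_{5}\oplus X_{6}})=k_{r}+2k_{c}$. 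Hence it suffices to prove
\[
\dim X_{1}+\dim X_{2}\le 2k_{i}^{\le0}+k_{0}^{\le0},
\]
an inequality concerning only the center block $E^{c}=X_{0}\oplus X_{1}\oplus X_{2}\oplus X_{3}\oplus X_{4}$, on which $\sigma(JL)\subset i\mathbf{R}$, where $L|_{X_{0}}=0$, $L|_{X_{3}}\ge\delta>0$, $L|_{X_{2}}\le-\delta<0$, and $L$ is anti-diagonal on $X_{1}\oplus X_{4}$ (hence nondegenerate there with negative index $\dim X_{1}$), so that $n^{-}(L|_{E^{c}})=\dim X_{1}+\dim X_{2}$.

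Next I would distribute the finite blocks $X_{1},X_{2},X_{4}$ over the purely imaginary spectrum. These are finite-dimensional matrix blocks of the upper-triangular operator $JL|_{E^{c}}$ with spectra in $i\mathbf{R}$ (and $A_{2}$ is semisimple, being anti-self-adjoint with respect to the definite form $-\langle L_{X_{2}}\cdot,\cdot\rangle$). For each eigenvalue $i\mu$, $\mu\in\mathbf{R}$, of $\mathrm{diag}(A_{1},A_{2},A_{4})$ I would apply Lemma~\ref{L:InvariantSubspace} repeatedly to peel off the $i\mu$-spectral part of $JL|_{E^{c}}$ across the spectral gaps; combined with Corollary~\ref{C:L-orth-eS} (mutual $L$-orthogonality of spectral subspaces whose spectra, together with their conjugates, are disjoint) and with the internal decomposition of $E_{i\mu}$ from Proposition~\ref{P:basis}, this realizes the $i\mu$-component of $X_{1}\oplus X_{2}\oplus X_{4}$ as a subspace $\hat Z_{i\mu}$ of $E_{i\mu}(JL)$ when $\mu\ne0$, and of $E_{0}(JL)$ when $\mu=0$, carrying exactly the corresponding $\langle L\cdot,\cdot\rangle$-data (the $X_{3}$- and $\ker L$-contributions over $i\mu$ being $L$-orthogonal to it and, on $X_{3}$, positive). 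Then I would count. For $\mu>0$: the $X_{1}\oplus X_{4}$-contribution over $i\mu$ has negative index equal to the dimension of its $X_{1}$-component (anti-diagonal form), and the $X_{2}$-contribution over $i\mu$ is entirely negative; both are bounded by $n^{\le0}(L|_{E_{i\mu}})=k^{\le0}(i\mu)$, and since $A_{1},A_{2},A_{4}$ are real their $i\mu$- and $(-i\mu)$-parts have equal dimension, giving a factor $2$; summing over $\mu>0$ produces $2k_{i}^{\le0}$. For $\mu=0$: using $\ker L\subset E_{0}$ and $L|_{\ker L}=0$, the corresponding contribution embeds into $E_{0}/\ker L$, bounded by $n^{\le0}(L|_{E_{0}/\ker L})=k_{0}^{\le0}$. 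Adding up gives the displayed inequality.

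The main obstacle will be the second step: justifying rigorously that the iterated graph transforms of Lemma~\ref{L:InvariantSubspace} peel off each $i\mu$-spectral part of the upper-triangular $JL|_{E^{c}}$ so that the $X_{1}\oplus X_{2}\oplus X_{4}$-component lands inside $E_{i\mu}(JL)$ (resp.\ $E_{0}/\ker L$) while preserving the relevant quadratic-form data of $L$ and the mutual $L$-orthogonality of the pieces over distinct $\mu$ and of each piece with the $X_{3}$- and $\ker L$-directions. The case $i\mu\in\sigma(A_{3})$, in which $E_{i\mu}(JL)$ genuinely mixes the finite blocks with the infinite block $X_{3}$, and the case $\mu=0$ with nontrivial $\ker L\subset E_{0}$ (forcing the quotient by $\ker L$ in the definition of $k_{0}^{\le0}$), both require careful separate bookkeeping, handled via Proposition~\ref{P:basis}.
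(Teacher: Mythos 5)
Your first step is correct and matches the paper: $\dim X_{5}=k_{r}+2k_{c}$ and $n^{-}(L)=\dim X_{1}+\dim X_{2}+\dim X_{5}$, so the lemma reduces to $\dim X_{1}+\dim X_{2}\le 2k_{i}^{\le0}+k_{0}^{\le0}$. The second step, however, contains a genuine gap and is also far more complicated than needed. You bring $X_{4}$ into the count and propose to peel off the $i\mu$-spectral parts of $X_{1}\oplus X_{2}\oplus X_{4}$ by iterating Lemma~\ref{L:InvariantSubspace}; but $X_{1}\oplus X_{2}\oplus X_{4}$ is not invariant under $JL$ (the block $A_{34}$ sends $X_{4}$ into $X_{3}$), so the graph corrections necessarily move the $X_{4}$-directions into the infinite-dimensional block $X_{3}$, on which $L$ is uniformly positive. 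The corrected subspaces therefore do \emph{not} ``carry exactly the corresponding $\langle L\cdot,\cdot\rangle$-data'': the anti-diagonal form on $X_{1}\oplus X_{4}$ acquires a positive contribution from the $X_{3}$-components, and you never verify that the resulting pieces are $L$-nonpositive, of the required dimension, and contained in $E_{i\mu}$. You also invoke Corollary~\ref{C:L-orth-eS}, which requires the spectral subsets to be isolated in $\sigma(JL)$ --- false for an embedded $i\mu\in\sigma(A_{3})$. These are precisely the points you yourself flag as ``the main obstacle'', and they are left unresolved.

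The fix is to drop $X_{3}$ and $X_{4}$ entirely. The subspace $Z=X_{0}\oplus X_{1}\oplus X_{2}$ is invariant under $JL$ directly from the upper-triangular form \eqref{block-JL} (no graph transform needed), and from \eqref{block-L} one reads off $L|_{Z}\le0$: the only nonzero blocks of $L$ touching $X_{0},X_{1},X_{2}$ are $L_{X_{2}}$ and $B_{14}$, the latter pairing $X_{1}$ with $X_{4}\not\subset Z$. Since $JL$ vanishes on $X_{0}$ and $\dim(X_{1}\oplus X_{2})<\infty$, $Z$ splits as $\oplus_{i\mu}(E_{i\mu}\cap Z)$, and each $E_{i\mu}\cap Z$ is an $L$-nonpositive subspace of $E_{i\mu}$, whence $\dim(E_{i\mu}\cap Z)\le k^{\le0}(i\mu)$ for $\mu\ne0$ and $\dim(E_{0}\cap Z)-\dim\ker L\le k_{0}^{\le0}$. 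Summing over $\mu$ (the $\pm\mu$ symmetry supplies the factor $2$) gives $\dim X_{1}+\dim X_{2}\le 2k_{i}^{\le0}+k_{0}^{\le0}$; this is the paper's argument.
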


\begin{proof}
Let $X_{j}$, $j=0,\ldots,6$, be the closed subspaces constructed in Theorem
\ref{T:decomposition} and $Z=\oplus_{j=0}^{2}X_{j}$. From Theorem
\ref{T:decomposition}, $Z$ is an invariant subspace of $JL$ containing $\ker
L$ satisfying $\sigma(JL|_{Z})\subset i\mathbf{R}$. For any eigenvalue
$i\mu\in\sigma(JL|_{Z})$, let $E_{i\mu}(Z)=E_{i\mu}\cap Z$ be the subspace of
generalized eigenvectors of $i\mu$ in $Z$, and denote the corresponding
non-positive index of $L|_{Z}$ by
\[
k_{i}^{\leq0}(Z)=\Sigma_{i\mu\in\sigma(JL|_{Z})\cap i\mathbf{R^{+}}}k^{\leq
0}(i\mu,Z),
\]
where $k^{\leq0}(i\mu,Z)=n^{\leq0}(L|_{E_{i\mu}(Z)}).$

On the one hand, for any eigenvalue $i\mu\neq0$, it clearly holds $E_{i\mu
}(Z)\subset E_{i\mu}$ and thus $k^{\leq0}(i\mu,Z)\leq k^{\leq0}(i\mu)$.
Therefore, we have $k_{i}^{\leq0}(Z)\leq k_{i}^{\leq0}$. For the same reason,
we also have $k_{0}^{\leq0}(Z)\leq k_{0}^{\leq0}$ as $\ker L\subset E_{0}(Z)$,
where $k_{0}^{\leq0}(Z)$ has a similar definition as $k_{0}^{\leq0}$ (defined
in (\ref{defn-k-0})) except applied to $E_{0}(Z)$ instead of $E_{0}$. From
Theorem \ref{T:decomposition} and the finite dimensionality of $X_{5}$, it is
clear $k_{r}+2k_{c}=\dim X_{5}$. Consequently, we obtain
\begin{equation}
k_{r}+2k_{c}+2k_{i}^{\leq0}+k_{0}^{\leq0}\geq\dim X_{5}+2k_{i}^{\leq
0}(Z)+k_{0}^{\leq0}(Z).\label{inequality-index}%
\end{equation}

On the other hand, due to the finite dimensionality of $X_{j}$, $j=1,2$, and
the blockwise upper triangular form of $JL$, we have $Z=\oplus_{i\mu\in
\sigma(JL|_{Z})\cap i\mathbf{R}}E_{i\mu}(Z)$. Moreover, since $L$ is
non-positive on $Z$ according to Theorem \ref{T:decomposition}, we have
\[
2k_{i}^{\leq0}(Z)+k_{0}^{\leq0}(Z)=\dim X_{1}+\dim X_{2}=n^{-}(L)-\dim X_{5}.
\]
Combining it with (\ref{inequality-index}), we obtain the conclusion of the lemma.
\end{proof}

\begin{lemma}
\label{L:index2} Under hypotheses (\textbf{H1-3}), it holds
\[
k_{r}+2k_{c}+2k_{i}^{\le0}+k_{0}^{\le0}\le n^{-}\left(  L\right)  .
\]

\end{lemma}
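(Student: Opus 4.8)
The strategy is to show $k_{r}+2k_{c}+2k_{i}^{\le0}+k_{0}^{\le0}\le n^{-}(L)$ by producing a subspace $W\subset X$ of dimension exactly $k_{r}+2k_{c}+2k_{i}^{\le0}+k_{0}^{\le0}$ on which $\langle L\cdot,\cdot\rangle\le 0$, and then invoking Lemma \ref{L:Morse-Index} (which says $n^{-}(L)$ is the maximal dimension of a subspace on which $L$ is non-positive). Since the reverse inequality was already established in Lemma \ref{L:index1}, combining the two lemmas then gives the index formula \eqref{counting-formula}.

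The construction of $W$ proceeds eigenspace by eigenspace, using the $L$-orthogonality results of Lemma \ref{lemma-orthogonal-eigenspace} and Corollary \ref{C:L-orth-eS}. For eigenvalues $\lambda$ with $\operatorname{Re}\lambda>0$, by Corollary \ref{C:symmetry}(i) the form $\langle L\cdot,\cdot\rangle$ vanishes identically on $E_{\lambda}$, which contributes $\dim E_{\lambda}$ to the non-positive directions; summing over the positive real eigenvalues gives $k_{r}$ and over the first-quadrant eigenvalues (together with their mirror images $-\bar\lambda$, which contribute nothing extra since $\langle L\cdot,\cdot\rangle$ vanishes on $E_{-\bar\lambda}$ too, but the pair $E_\lambda\oplus E_{-\bar\lambda}$ is what is genuinely $L$-isotropic) yields $2k_c$. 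For each purely imaginary eigenvalue $i\mu\ne 0$, choose inside $E_{i\mu}$ a subspace of dimension $k^{\le0}(i\mu)=n^{\le0}(L|_{E_{i\mu}})$ on which $L$ is non-positive; this is possible by definition of $n^{\le0}$. However $E_{i\mu}$ may be infinite-dimensional, so one must check that these choices can be made within a common invariant framework; here the decomposition Theorem \ref{T:decomposition} is the right tool — the part of $\sigma(JL)$ on $i\mathbf{R}\setminus\{0\}$ with non-positive $L$-directions is captured by the finite-dimensional block $X_2$ together with a finite-dimensional piece of the $X_3$ / center data. For $i\mu=0$ similarly pick a subspace of $\tilde E_0$ of dimension $k_0^{\le0}$ on which $L\le 0$, plus all of $\ker L$ (on which $L=0$). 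Finally, since all these pieces live in eigenspaces for distinct eigenvalues $\lambda$ with $\lambda+\bar\lambda'\ne 0$ except within a single $E_{i\mu}$ where we chose them mutually compatible, Lemma \ref{lemma-orthogonal-eigenspace} guarantees that $\langle L\cdot,\cdot\rangle$ is block-diagonal across the pieces, so the direct sum $W$ inherits $\langle Lu,u\rangle\le 0$ for all $u\in W$; by construction $\dim W=k_r+2k_c+2k_i^{\le0}+k_0^{\le0}$.

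An alternative, cleaner route — and the one I expect the authors take given their emphasis on Theorem \ref{T:decomposition} — is to work entirely inside that decomposition. Write $X=\oplus_{j=0}^{6}X_j$; the hyperbolic part $X_5\oplus X_6$ contributes $k_r+2k_c=\dim X_5$ non-positive directions (it is $L$-isotropic on $X_5$ and on $X_6$, with $n^-(L|_{X_5\oplus X_6})=\dim X_5$ by Theorem \ref{T:decomposition}(7)). The center part $\oplus_{j=0}^{4}X_j$ must then be shown to carry exactly $2k_i^{\le0}+k_0^{\le0}$ non-positive $L$-directions; since $L$ on this part is block-diagonalized as $\mathrm{diag}(0,B_{14}\text{-pairing},L_{X_2},L_{X_3})$ with $L_{X_2}$ negative definite of dimension $\dim X_2$ and $L_{X_3}$ positive definite, and $\dim X_1+\dim X_2=n^-(L)-\dim X_5$, one identifies $2k_i^{\le0}+k_0^{\le0}$ with $\dim X_1+\dim X_2$ by distributing the $B_{14}$-isotropic pairs and the $X_2$ directions among the purely imaginary eigenspaces $E_{i\mu}$. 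This requires knowing that the generalized eigenspaces $E_{i\mu}$ of $JL$, restricted to the center block, are finite-dimensional modulo the positive-definite $X_3$-directions — which is exactly Proposition \ref{P:finite-dim-E} / Lemma \ref{L:e-space-1}.

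The main obstacle is the bookkeeping for the purely imaginary (possibly embedded, possibly degenerate) eigenvalues: one must be sure that summing the \emph{local} non-positive indices $k^{\le0}(i\mu)$ over all $i\mu$ does not overcount or undercount relative to $\dim X_1+\dim X_2$, given that $\langle L\cdot,\cdot\rangle$ can be \emph{degenerate} on an individual $E_{i\mu}$ (so $k^{\le0}\ne k^-$) and that some $E_{i\mu}$ sit inside the continuous spectrum. The resolution is that the degenerate and positive directions of $L|_{E_{i\mu}}$ always land in the $X_3$-type (uniformly positive) block, which does not count toward $n^-(L)$, while the non-positive directions land in $X_1\oplus X_2$; combined with the upper-triangular structure of $JL$ (so that $E_{i\mu}$ decomposes compatibly across the blocks) and the finiteness statement of Proposition \ref{P:finite-dim-E}, the count closes exactly. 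Once $W$ with $\dim W=k_r+2k_c+2k_i^{\le0}+k_0^{\le0}$ and $L|_W\le 0$ is in hand, Lemma \ref{L:Morse-Index} finishes the proof.
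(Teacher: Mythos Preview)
Your first approach---build a subspace $W$ of dimension $k_r+2k_c+2k_i^{\le0}+k_0^{\le0}$ on which $\langle L\cdot,\cdot\rangle\le 0$, then bound $\dim W$ by $n^-(L)$---is exactly what the paper does. Concretely, the paper takes $W=X_5\oplus E_0^{\le0}\oplus\bigl(\oplus_{\mu\ne 0} P_Y E_{i\mu}^{\le0}\bigr)$ inside $Y=\oplus_{j=1}^6 X_j$, where each $E_{i\mu}^{\le0}\subset E_{i\mu}$ is a maximal non-positive subspace and $P_Y$ is the projection with kernel $\ker L$; the $L$-orthogonality between the pieces comes from Lemma~\ref{lemma-orthogonal-eigenspace} exactly as you say.

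Two corrections. First, drop ``plus all of $\ker L$'': including it would inflate $\dim W$ (possibly to infinity) and wreck the count. Second, Lemma~\ref{L:Morse-Index} as stated requires \emph{strict} negativity on $W\setminus\{0\}$, which you do not have (e.g.\ $L$ vanishes identically on $X_5$). The paper fixes this by working in $Y$, where $L_Y$ is non-degenerate, and invoking part~(1) of Theorem~\ref{T:Pontryagin}, whose hypothesis is $\langle Lu,u\rangle\le0$. Equivalently you can note that your $W$ satisfies $W\cap\ker L=\{0\}$ by construction, and then the proof of Lemma~\ref{L:Morse-Index} goes through verbatim with $\le$ in place of $<$: if $P_-u=0$ then $u\in\ker L\oplus X_+$, so $\langle Lu,u\rangle\ge0$, hence $=0$, hence $u\in\ker L$, hence $u=0$.

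Your ``alternative route''---identifying $2k_i^{\le0}+k_0^{\le0}$ with $\dim X_1+\dim X_2$ directly from the block structure---is \emph{not} what the paper does for this direction; that mechanism is essentially how Lemma~\ref{L:index1} (the reverse inequality) is proved, where one shows the non-positive directions in $Z=\oplus_{j=0}^2 X_j$ give a lower bound. Trying to run it for the upper bound would require showing every non-positive direction of each $L|_{E_{i\mu}}$ lands in $X_1\oplus X_2$ after projection, which is harder than the direct construction and unnecessary.
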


\begin{proof}
Let $X_{j}$, $j=0, \ldots, 6$, be the closed subspaces constructed in Theorem
\ref{T:decomposition} and $Y=\oplus_{j=1}^{6} X_{j}$. Let $P_{Y}$ be the
projection associated to $X= \ker L \oplus Y$. Lemma \ref{L:decomJ} implies
that $(Y, L_{Y}, J_{Y})$ satisfies assumptions (\textbf{H1-3}), where
$n^{-}(L_{Y}) = n^{-}(L)$. The definitions of $J_{Y}$ and $L_{Y}$ also imply
$J_{Y} L_{Y} = P_{Y} (JL)$.

Let $i\mu\in\sigma(JL)\cap i\mathbf{R^{+}}$. By the definition of $k^{\leq
0}(i\mu)$, there exists a subspace $E_{i\mu}^{\leq0}\subset E_{i\mu}$ such
that $\dim E_{i\mu}^{\leq0}=k^{\leq0}(i\mu)$ and $\langle Lu,u\rangle\leq0$,
for all $u\in E_{i\mu}^{\leq0}$. Since $\mu\neq0$ and thus $E_{i\mu}\cap\ker
L=\{0\}$, we have $\dim P_{Y}E_{i\mu}^{\leq0}=\dim E_{i\mu}^{\leq0}$. For
$\mu<0$, let $E_{i\mu}^{\leq0}=\{\bar{u}\mid u\in E_{-i\mu}^{\leq0}\}$. For
$\mu=0$, let $\tilde{E}_{0}=E_{0}\cap Y$ where clearly $E_{0}=\ker
L\oplus\tilde{E}_{0}$. There exists a subspace $E_{0}^{\leq0}\subset\tilde
{E}_{0}$ such that $\dim E_{0}^{\leq0}=k_{0}^{\leq0}$ and $\langle
Lu,u\rangle\leq0$, for all $u\in E_{0}^{\leq0}$. Let
\[
W=X_{5}\oplus E_{0}^{\leq0}\oplus(\oplus_{i\mu\in\sigma{JL}\cap i\mathbf{R}%
}P_{Y}E_{i\mu}^{\leq0})\subset Y.
\]
It is clearly (the complexification of) a real subspace of $Y$ satisfying
$\bar{u}\in W$ for all $u\in W$. Theorem \ref{T:decomposition} implies
\[
\dim W=\dim X_{5}+k_{0}^{\leq0}+2k_{i}^{\leq0}=k_{r}+2k_{c}+k_{0}^{\leq
0}+2k_{i}^{\leq0}.
\]
From Lemma \ref{lemma-orthogonal-eigenspace}, we have $X_{5}$ and
$P_{Y}E_{i\mu}^{\leq0}$ $\left(  i\mu\in\sigma{JL}\cap i\mathbf{R}\right)  $
are mutually $L$-orthogonal. Therefore, our construction of $W$ yields that
$\langle Lu,u\rangle\leq0$ for all $u\in W\subset Y$. Applying Theorem
\ref{T:Pontryagin} to $(Y,L_{Y},J_{Y})$ implies $\dim W\leq n^{-}(L_{Y}%
)=n^{-}(L)$ and thus the lemma is proved.
\end{proof}

\subsection{Structures of subspaces $E_{i\mu}$ of generalized eigenvectors}

\label{SS:e-space}

In this subsection, we will prove Propositions \ref{P:finite-dim-E} and
\ref{P:basis}. We complete the proof in several steps.

\begin{lemma}
\label{L:e-space-2} Let $i\mu\in\sigma(JL)\cap i\mathbf{R}$ and $E\subset
E_{i\mu}$ be a closed subspace such that $JL(E)\subset E$. In addition to
(\textbf{H1-3}), assume $\langle L\cdot,\cdot\rangle$ is non-degenerate (in
the sense of \eqref{E:non-degeneracy-def}) on both $X$ and $E$. Then there
exist closed subspaces $E^{1},\tilde{E}\subset E$ such that $E=E^{1}%
\oplus\tilde{E}$ and $L,JL$ take the following forms on $E$
\[
\langle L\cdot,\cdot\rangle\longleftrightarrow%
\begin{pmatrix}
L_{E^{1}} & 0\\
0 & L_{\tilde{E}}%
\end{pmatrix}
,\quad JL\longleftrightarrow%
\begin{pmatrix}
i\mu & 0\\
0 & \tilde{A}%
\end{pmatrix}
,
\]
and $\ker(JL-i\mu)\cap\tilde{E}\subset(JL-i\mu)\tilde{E}$ with non-degenerate
$L_{E^{1}}$ and $L_{\tilde{E}}$ and
\[
\dim\tilde{E}\leq3\big(n^{-}(L|_{E})-n^{-}(L|_{E^{1}})\big),\;\dim
\big((JL-i\mu)E\big)\leq2\big(n^{-}(L|_{E})-n^{-}(L|_{E^{1}})\big).
\]

\end{lemma}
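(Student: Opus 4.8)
\textbf{Proof plan for Lemma \ref{L:e-space-2}.}

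The plan is to split off, from the invariant subspace $E\subset E_{i\mu}$, the "semisimple, diagonalizable" part on which $JL$ acts as pure scalar $i\mu$, and to do this $L$-orthogonally so that $L$ blockwise diagonalizes at the same time. First I would consider the subspace $E_{i\mu,0}(E)\subset\ker(JL-i\mu)\cap E$ consisting of those $v$ in the geometric kernel that are $\langle L\cdot,\cdot\rangle$-orthogonal to all of $\ker(JL-i\mu)^K\cap R(JL-i\mu)$ for every $K$; equivalently, using Lemma \ref{L:signature1} and its corollary, this is a complement of $\big(\ker(JL-i\mu)\cap R(JL-i\mu)\big)$ inside $\ker(JL-i\mu)\cap E$. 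The key point is that, by Lemma \ref{lemma-orthogonal-condition-chain}, $\langle L\cdot,\cdot\rangle$ restricted to $E_{i\mu,0}(E)$ has the same kernel as its restriction to the whole geometric kernel, so one can peel off a maximal subspace $E^1\subset E_{i\mu,0}(E)$ on which $L$ is non-degenerate (automatically $JL|_{E^1}=i\mu$). Then set $\tilde E=\{u\in E\mid \langle Lu,v\rangle=0,\ \forall v\in E^1\}$. Since $L_{E^1}$ is non-degenerate and so is $L|_E$ (hypothesis), Lemma \ref{L:non-degeneracy} gives $E=E^1\oplus\tilde E$ with $L_{\tilde E}$ non-degenerate; the $JL$-invariance of $E^1$ (it is in the geometric kernel) together with $J^*=-J$, $L^*=L$ yields, exactly as in the invariance argument for $Z$ in the proof of Proposition \ref{P:basis} in Section \ref{S:finiteD}, that $JL(\tilde E)\subset\tilde E$. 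This produces the block forms for $L$ and $JL$ with $\tilde A=JL|_{\tilde E}$.

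Next I would verify $\ker(JL-i\mu)\cap\tilde E\subset (JL-i\mu)\tilde E$, i.e. that $\tilde A$ has no nontrivial semisimple part. Suppose $0\ne v\in\ker(JL-i\mu)\cap\tilde E$ lies outside $(JL-i\mu)\tilde E$; by Lemma \ref{L:signature1}(2) applied within $\tilde E$, such a $v$ can be chosen in the "$E_{i\mu,0}$-type" complement, hence $\langle Lv,\cdot\rangle$ vanishes on $\ker(JL-i\mu)^K\cap R(JL-i\mu)$ for all $K$ — but within $\tilde E$ it is also $L$-orthogonal to $E^1$ by construction, so $\langle Lv,\cdot\rangle$ would vanish on all of $E$, contradicting non-degeneracy of $L|_E$ (since $v\ne0$). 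This forces $\tilde E$'s geometric kernel to lie in the image, which is the stated Jordan-block-only property.

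Finally, the dimension bounds. Since $JL|_{\tilde E}$ has only nontrivial Jordan blocks at $i\mu$ and $L_{\tilde E}$ is non-degenerate, I would invoke Lemma \ref{L:signature1} / Definition \ref{D:signature1}: each Jordan chain of length $k_j\ge 2$ contributes $\lceil k_j/2\rceil$ independent non-positive directions of $L$ on $\tilde E$ — $k_j/2$ for even $k_j$ and $(k_j-1)/2$ or $(k_j+1)/2$ for odd $k_j$, all at least $k_j/3$ — so $n^{\le 0}(L|_{\tilde E})\ge \tfrac13\dim\tilde E$, and since $n^{-}(L|_{\tilde E})=n^{-}(L|_E)-n^{-}(L|_{E^1})$ (by the $L$-orthogonal splitting) and $n^{\le 0}=n^{-}$ on $\tilde E$ by non-degeneracy of $L_{\tilde E}$, we get $\dim\tilde E\le 3\big(n^{-}(L|_E)-n^{-}(L|_{E^1})\big)$. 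For the range bound, $(JL-i\mu)E=(JL-i\mu)\tilde E$ is spanned by the non-top elements of the Jordan chains, contributing $k_j-1\le 2\lceil k_j/2\rceil$ each, giving $\dim\big((JL-i\mu)E\big)\le 2\big(n^{-}(L|_E)-n^{-}(L|_{E^1})\big)$. The main obstacle I anticipate is the careful bookkeeping in the non-degeneracy/orthogonality step — making sure that $E^1$ can genuinely be chosen maximal non-degenerate inside the geometric kernel so that $\tilde E$ inherits non-degeneracy and the "Jordan-block-only" conclusion, which relies precisely on Lemma \ref{L:signature1}'s identification $Y_K=\ker Q_K$; everything else is the Jordan-form counting already carried out in Section \ref{S:finiteD}.
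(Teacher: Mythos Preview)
Your overall strategy---split off an $L$-orthogonal piece $E^1\subset\ker(JL-i\mu)\cap E$ on which $JL=i\mu$, set $\tilde E=(E^1)^{\perp_L}\cap E$, then count Jordan chains in $\tilde E$---matches the paper's. But there is a genuine logical gap: you never prove that $\tilde E$ is \emph{finite-dimensional} before you start talking about Jordan chains and invoking Lemma~\ref{L:signature1}. That lemma lives in Section~\ref{S:finiteD} and its identification $Y_K=\ker Q_K$ uses finite-dimensional surjectivity/duality; the chain-counting ``each block of length $k_j$ contributes $\lfloor k_j/2\rfloor$ non-positive directions'' likewise presupposes a Jordan canonical form, which you do not yet have in a possibly infinite-dimensional $\tilde E$. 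The paper closes this gap first: with $E^0$ defined as the $L$-degenerate part of $\ker(JL-i\mu)\cap E$ and $E^1$ its complement, one has $\tilde E\cap\ker(JL-i\mu)=E^0$ exactly, and since $\langle L\cdot,\cdot\rangle\equiv 0$ on $E^0$, the Pontryagin bound (Theorem~\ref{T:Pontryagin}, part~1) gives $\dim E^0\le n^-(L|_{\tilde E})<\infty$; then $(JL-i\mu)^K|_{\tilde E}=0$ forces $\dim\tilde E\le K\dim E^0<\infty$. Only after this does the paper reduce to the finite-dimensional Proposition~\ref{P:basis}.

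Your step-4 contradiction also has a hole. You argue that if $0\ne v\in\ker(JL-i\mu)\cap\tilde E$ lies outside $(JL-i\mu)\tilde E$, then $v$ is $L$-orthogonal to $E^1$ (by construction) and to $R(JL-i\mu)$ (by Lemma~\ref{lemma-orthogonal-condition-chain}), hence to all of $E$. But $E$ is not contained in $E^1+R(JL-i\mu)$: the ``top'' generators of nontrivial chains in $\tilde E$ are in neither set, so you cannot conclude $\langle Lv,\cdot\rangle=0$ on $E$. The paper avoids this entirely by choosing $E^1$ as the complement of the $L$-null part $E^0$ of the \emph{whole} geometric kernel (not merely a complement of $\ker\cap R$). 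With that choice, $\tilde E\cap\ker(JL-i\mu)=E^0$ is tautological, $L$ vanishes identically there, and after the finite-dimensional reduction the ``Jordan-block-only'' property drops out of Proposition~\ref{P:basis} (since any semisimple eigenvector in $\tilde E$ would have to lie in $E^0$, where $L=0$, contradicting non-degeneracy on the $E^1$-piece of Proposition~\ref{P:basis}). Once $\dim\tilde E<\infty$ is in hand, your chain-counting for the two dimension bounds is correct.
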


\begin{remark}
The property $\ker(JL-i\mu) \cap\tilde E \subset(JL-i\mu)\tilde E$, or
equivalently $\ker(\tilde A -i\mu) \subset(\tilde A -i\mu) \tilde E$, is
equivalent to that the Jordan canonical form of $\tilde A$ contains only
nontrivial Jordan blocks.
\end{remark}

\begin{proof}
From Lemma \ref{L:e-space-1}, $E_{i\mu}=\ker(JL-i\mu)^{K}$ for some $K>0$ and
$JL:E_{i\mu}\rightarrow E_{i\mu}$ is a bounded operator. Let
\[%
\begin{split}
&  E^{0}=\{u\in E\cap\ker(JL-i\mu)\mid\langle Lu,v\rangle=0,\ \forall v\in
E\cap\ker(JL-i\mu)\},\\
&  E^{1}=\{u\in E\cap\ker(JL-i\mu)\mid(u,v)=0,\ \forall v\in E^{0}\}.
\end{split}
\]
Obviously, $\ker(JL-i\mu)\cap E=E^{0}\oplus E^{1}$. Moreover, for any $u\in
E^{1}\backslash\{0\}$, there must exist $v\in E^{1}$ such that $\langle
Lu,v\rangle\neq0$, otherwise it would lead to $u\in E^{0}$, a contradiction.
Applying statement 2 of Lemma \ref{L:non-degeneracy} to $Y=E^{1}$, we obtain
that $\langle L\cdot,\cdot\rangle$ is non-degenerate on $E^{1}$. Since
$\langle L\cdot,\cdot\rangle$ is assumed to be non-degenerate on both $X$ and
$E$, we apply statement 1 of Lemma \ref{L:non-degeneracy} to obtain
\begin{equation}
X=E^{1}\oplus(E^{1})^{\perp_{L}}\text{ and }E=E^{1}\oplus\tilde{E},\text{
where }\tilde{E}=E\cap(E^{1})^{\perp_{L}}.\label{E:splitting-E}%
\end{equation}
Here $(E^{1})^{\perp_{L}}\subset X$ is the subspace $L$-perpendicular to
$E^{1}$. Clearly, $E^{0}\subset\tilde{E}$.

\textit{Claim. 1.) $\dim\tilde E< \infty$, 2.) $\langle L\cdot, \cdot\rangle$
is non-degenerate on $\tilde E$, and 3.) $JL (\tilde E) \subset\tilde E$.}

The invariance of $\tilde{E}$ under $JL$ follows directly from the invariance
of $E$ and $E^{1}$ and Lemma \ref{L:InvariantSubS}. The non-degeneracy of
$\langle L\cdot,\cdot\rangle$ on both $E$ and $E^{1}$ implies that $\langle
L\cdot,\cdot\rangle$ is non-degenerate on $\tilde{E}$ as well. To complete the
proof of the claim, we only need to prove $\dim\tilde{E}<\infty$.

On the one hand, from the above definitions, $\langle Lu,v\rangle=0$ for any
$u,v\in E^{0}$. The non-degeneracy of $\langle L\cdot,\cdot\rangle$ on $E$ and
Theorem \ref{T:Pontryagin} along with Remark \ref{R:Pontryagin} imply
\begin{equation}
\dim E^{0}\leq n^{-}(L|_{\tilde{E}}).\label{E:dim1'}%
\end{equation}
On the other hand, it is clear from the definitions of $\tilde{E}$ and the
non-degeneracy of $\langle L\cdot,\cdot\rangle$ on $E^{1}$ that
\begin{equation}
\tilde{E}\cap\ker(JL-i\mu)=E^{0}.\label{E:E_0-1}%
\end{equation}
Moreover, from Lemma \ref{L:e-space-1}, $E\subset E_{i\mu}=\ker(JL-i\mu)^{K}$
for some $K>0$, and each Jordan chain in $\tilde{E}$ contains a vector in
$E^{0}$, we obtain
\[
\dim\tilde{E}\leq K\dim E^{0}\leq Kn^{-}(L|_{E}),
\]
from the invariance of $\tilde{E}$ under $JL$. The claim is proved.

Now we complete the proof of the lemma by reducing it to a finite dimensional
problem satisfying our framework. Firstly, to replace $\tilde{E}$ by the
complexification of some real Hilbert space, let
\[
\tilde{E}^{R}=\{u+\bar{v}\mid u,v\in\tilde{E}\}
\]
which satisfies $\bar{u}\in\tilde{E}^{R}$ for any $u\in\tilde{E}^{R}$. Since
$u\in E_{i\mu}$ implies $\bar{u}\in E_{-i\mu}$, we have $\tilde{E}^{R}%
=\tilde{E}$ if $\mu=0$. If $\mu\neq0$, from \eqref{E:real} and Lemma
\ref{lemma-orthogonal-eigenspace} we obtain
\[
\langle L\bar{u},v\rangle=0,\;\langle L\bar{u},\bar{v}\rangle=\overline
{\langle Lu,v\rangle},\quad\forall u,v\in\tilde{E}.
\]
Therefore, $\tilde{E}^{R}$ satisfies the same properties as in the above claim
whether $\mu=0$ or not. Using the non-degeneracy of $\langle L\cdot
,\cdot\rangle$ on $X$ and $\tilde{E}^{R}$, and applying Lemma \ref{L:decomJ}
to the splitting $X=\tilde{E}^{R}\oplus(\tilde{E}^{R})^{\perp_{L}}$ with the
associated projections $P_{\tilde{E}^{R}}$ and $I-P_{\tilde{E}^{R}}$, we have
that the combination $(\tilde{E}^{R},L_{\tilde{E}^{R}},J_{\tilde{E}^{R}})$
satisfies assumptions (\textbf{H1-3}), where $J_{\tilde{E}^{R}}=P_{\tilde
{E}^{R}}JP_{\tilde{E}^{R}}^{\ast}$. We may apply Proposition \ref{P:basis},
whose finite dimensional case under the non-degeneracy assumption on $\langle
L\cdot,\cdot\rangle$ has been proved in Section \ref{S:finiteD}. As $\tilde
{E}=\tilde{E}^{R}\cap\ker(JL-i\mu)^{K}$, that canonical form implies
\[
\dim\big((JL-i\mu)\tilde{E}\big)\leq2n^{-}(L|_{\tilde{E}}),\quad\ker
(JL-i\mu)\cap\tilde{E}\subset(JL-i\mu)\tilde{E},
\]
where \eqref{E:E_0-1} is also used along with the canonical form. We notice
$(JL-i\mu)E=(JL-i\mu)\tilde{E}$, as $E^{1}\subset\ker(JL-i\mu)$, and thus
\[
\dim\big((JL-i\mu)E\big)\leq2n^{-}(L|_{\tilde{E}})=2\big(n^{-}(L|_{E}%
)-n^{-}(L_{E^{1}})\big).
\]
The block forms of $L$ and $JL$ follow from the $L$-orthogonality and the
invariance of the splitting $E=E^{1}\oplus\tilde{E}$. Finally, the estimate on
$\dim\tilde{E}$ follows from the above inequality and (\ref{E:dim1'}) and
\eqref{E:E_0-1}. The proof is complete.
\end{proof}

Next we study $E_{i\mu}$ by assuming the non-degeneracy of $L$.

\begin{lemma}
\label{L:e-space-3} In addition to (\textbf{H1-3}), assume $\langle
L\cdot,\cdot\rangle$ is non-degenerate. Let $i\mu\in\sigma(JL)\cap
i\mathbf{R}$. There exist subspaces $E^{D,1,G}\subset E_{i\mu}$ such that
\[
E_{i\mu}=E^{D}\oplus E^{1}\oplus E^{G},\quad\dim\big((JL-i\mu)E_{i\mu
}\big)\leq2\big(k^{\leq0}(i\mu)-n^{-}(L|_{E^{1}})\big),
\]%
\[
\dim E^{G}\leq3\big(k^{\leq0}(i\mu)-\dim E^{D}-n^{-}(L|_{E^{1}})\big),
\]
and $L$ and $JL$ take the block forms on $E$
\[
\langle L\cdot,\cdot\rangle\longleftrightarrow%
\begin{pmatrix}
0 & 0 & 0\\
0 & L_{1} & 0\\
0 & 0 & L_{G}%
\end{pmatrix}
,\quad JL\longleftrightarrow%
\begin{pmatrix}
A_{D} & A_{D1} & A_{DG}\\
0 & i\mu & 0\\
0 & 0 & A_{G}%
\end{pmatrix}
\]
where all blocks are bounded operators and $L_{1}$ and $L_{G}$ are
non-degenerate. Moreover, $\ker(A_{G}-i\mu)\subset(A_{G}-i\mu)E_{G}$.
\end{lemma}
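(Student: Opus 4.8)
The plan is to reduce the statement to Lemma \ref{L:e-space-2} by quotienting $E_{i\mu}$ by the radical of $\langle L\cdot,\cdot\rangle$ restricted to it. First I would set $E^{D}=\{u\in E_{i\mu}\mid\langle Lu,v\rangle=0\ \forall v\in E_{i\mu}\}$, the radical of the Hermitian form $\langle L\cdot,\cdot\rangle|_{E_{i\mu}}$. Using \eqref{E:anti-S} and the fact that, by Lemma \ref{L:e-space-1} and Remark \ref{R:e-space-1}, $E_{i\mu}$ is closed and $JL$-invariant with $JL|_{E_{i\mu}}$ a bounded operator satisfying $\sigma(JL|_{E_{i\mu}})=\{i\mu\}$, one checks $JL(E^{D})\subset E^{D}$: for $u\in E^{D}$ and $v\in E_{i\mu}$, $\langle L(JL-i\mu)u,v\rangle=-\langle Lu,(JL-i\mu)v\rangle=0$ since $(JL-i\mu)v\in E_{i\mu}$. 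By construction $\langle L\cdot,\cdot\rangle$ vanishes identically on $E^{D}$, so $E^{D}$ is a non-positive subspace of $E_{i\mu}$ and hence $\dim E^{D}\le n^{\le0}(L|_{E_{i\mu}})=k^{\le0}(i\mu)$, which is finite by the already-established index Theorem \ref{theorem-counting}.

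Next I would pass to the Hilbert space $\mathcal{Q}=E_{i\mu}/E^{D}$ (a quotient by a finite-dimensional closed subspace). Since $E^{D}$ is $JL$-invariant, $JL$ descends to a bounded operator $\mathcal{A}$ on $\mathcal{Q}$ with $(\mathcal{A}-i\mu)^{K}=0$ for the $K$ of Lemma \ref{L:e-space-1} and $\sigma(\mathcal{A})=\{i\mu\}$, so $E_{i\mu}(\mathcal{A})=\mathcal{Q}$. Because $E^{D}$ is exactly the radical, the form descends to a bounded symmetric \emph{non-degenerate} form on $\mathcal{Q}$, represented by an isomorphism $\mathcal{L}:\mathcal{Q}\to\mathcal{Q}^{*}$, characterized by $\langle Lu,v\rangle=\langle\mathcal{L}\pi u,\pi v\rangle$ for $u,v\in E_{i\mu}$, where $\pi:E_{i\mu}\to\mathcal{Q}$ is the quotient map. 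Relation \eqref{E:anti-S} depends only on classes modulo $E^{D}$, so it descends to show $\mathcal{A}$ is skew-adjoint with respect to $\langle\mathcal{L}\cdot,\cdot\rangle$; equivalently $\mathcal{J}:=\mathcal{A}\mathcal{L}^{-1}$ satisfies $\mathcal{J}^{*}=-\mathcal{J}$. Since $n^{-}(\mathcal{L})=k^{\le0}(i\mu)-\dim E^{D}<\infty$ and $\mathcal{L}$ is non-degenerate, Remark \ref{R:assumptions} shows $(\mathcal{J},\mathcal{L},\mathcal{Q})$ satisfies (\textbf{H1-3}), with (\textbf{H3}) vacuous because $\mathcal{J}$ is bounded.

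Then I would apply Lemma \ref{L:e-space-2} to the system $(\mathcal{J},\mathcal{L},\mathcal{Q})$ with the invariant subspace taken to be $\mathcal{Q}$ itself, which is legitimate since $\mathcal{L}$ is non-degenerate both on the ambient space and on the chosen subspace. This yields closed, $\langle\mathcal{L}\cdot,\cdot\rangle$-orthogonal subspaces $\mathcal{Q}=\mathcal{Q}^{1}\oplus\mathcal{Q}^{G}$, with $\mathcal{L}$ non-degenerate on each, $\mathcal{A}$ block-diagonal equal to $i\mu$ on $\mathcal{Q}^{1}$ and to some $\mathcal{A}_{G}$ on $\mathcal{Q}^{G}$ with $\ker(\mathcal{A}_{G}-i\mu)\subset(\mathcal{A}_{G}-i\mu)\mathcal{Q}^{G}$, and $\dim\mathcal{Q}^{G}\le3(n^{-}(\mathcal{L})-n^{-}(\mathcal{L}|_{\mathcal{Q}^{1}}))$, $\dim((\mathcal{A}-i\mu)\mathcal{Q})\le2(n^{-}(\mathcal{L})-n^{-}(\mathcal{L}|_{\mathcal{Q}^{1}}))$. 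Finally I would lift: $\pi$ restricts to an isomorphism from $(E^{D})^{\perp}$ (the $(\cdot,\cdot)$-orthogonal complement of $E^{D}$ inside $E_{i\mu}$) onto $\mathcal{Q}$, so set $E^{1}=(\pi|_{(E^{D})^{\perp}})^{-1}(\mathcal{Q}^{1})$, $E^{G}=(\pi|_{(E^{D})^{\perp}})^{-1}(\mathcal{Q}^{G})$, giving $E_{i\mu}=E^{D}\oplus E^{1}\oplus E^{G}$. The identity $\langle Lu,v\rangle=\langle\mathcal{L}\pi u,\pi v\rangle$ converts the block forms of $\mathcal{L}$ and $\mathcal{A}$ into those claimed for $L$ and $JL$: $L\leftrightarrow\mathrm{diag}(0,L_{1},L_{G})$ with $L_{1}\cong\mathcal{L}|_{\mathcal{Q}^{1}}$, $L_{G}\cong\mathcal{L}|_{\mathcal{Q}^{G}}$ non-degenerate; $JL(E^{D})\subset E^{D}$, $(JL-i\mu)E^{1}\subset E^{D}$ (yielding the off-diagonal block $A_{D1}$), $JL(E^{G})\subset E^{D}\oplus E^{G}$ (yielding $A_{DG}$ and $A_{G}\cong\mathcal{A}_{G}$), with $\ker(A_{G}-i\mu)\subset(A_{G}-i\mu)E^{G}$ inherited. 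The dimension bounds follow from $n^{-}(\mathcal{L})=k^{\le0}(i\mu)-\dim E^{D}$, $n^{-}(\mathcal{L}|_{\mathcal{Q}^{1}})=n^{-}(L|_{E^{1}})$, and the fact that $\pi$ maps $(JL-i\mu)E_{i\mu}$ onto $(\mathcal{A}-i\mu)\mathcal{Q}$ with kernel inside $E^{D}$.

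The conceptual point, and where I expect the care is needed, is recognizing that the only obstruction to invoking Lemma \ref{L:e-space-2} directly is the possible degeneracy of $\langle L\cdot,\cdot\rangle$ on $E_{i\mu}$ (which genuinely occurs for embedded eigenvalues, cf. Subsection \ref{SS:non-deg}), and that this degeneracy is confined to the $JL$-invariant finite-dimensional radical $E^{D}$, which can therefore be divided out. The technical items to get right are: checking that $(\mathcal{J},\mathcal{L},\mathcal{Q})$ really satisfies (\textbf{H1-3}) — the finiteness $\dim E^{D}\le k^{\le0}(i\mu)$ is precisely where Theorem \ref{theorem-counting} enters, and the skew-adjointness of $\mathcal{A}$ must be verified to descend cleanly — and transporting the block structure back through the lifting $(E^{D})^{\perp}\cong\mathcal{Q}$, whose complementary splitting is not $JL$-invariant; it is exactly this non-invariance that produces the genuinely nonzero off-diagonal blocks $A_{D1}$ and $A_{DG}$.
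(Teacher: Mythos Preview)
Your proof is correct and follows essentially the same route as the paper: isolate the radical $E^{D}$ of $\langle L\cdot,\cdot\rangle|_{E_{i\mu}}$, pass to a space on which the form is non-degenerate, and invoke Lemma~\ref{L:e-space-2}. The only presentational difference is that the paper works with the orthogonal complement $E_{i\mu}^{ND}=(E^{D})^{\perp}\cap E_{i\mu}$ embedded in the \emph{real} subspace $I_{i\mu}=E_{i\mu}+E_{-i\mu}$ (so that (\textbf{H1--3}) apply literally), whereas you phrase everything via the quotient $\mathcal{Q}=E_{i\mu}/E^{D}$ and lift back afterward; the two are canonically isomorphic and the paper's equation~\eqref{E:dim2} is exactly your identity $n^{-}(\mathcal{L})=k^{\le0}(i\mu)-\dim E^{D}$. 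One small correction: the finiteness $\dim E^{D}\le k^{\le0}(i\mu)\le n^{-}(L)$ follows already from Theorem~\ref{T:Pontryagin} (or Lemma~\ref{L:Morse-Index}) and the non-degeneracy of $L$, not from the index Theorem~\ref{theorem-counting}.
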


\begin{proof}
Again, to apply previous results directly it would be easier to consider the
complexifications of real Hilbert spaces
\[
I_{i\mu}\triangleq E_{i\mu}+E_{-i\mu}=\{u+\bar{v}\mid u,v\in E_{i\mu}\}.
\]
Due to Lemma \ref{L:e-space-1}, $JL|_{I_{i\mu}}$ is bounded with
\[
L(I_{i\mu})\subset D(J),\;JL(I_{i\mu})\subset I_{i\mu},\;\sigma(JL|_{I_{i\mu}%
})=\{\pm i\mu\}.
\]
We split the spaces by starting with
\[%
\begin{split}
&  E^{D}=\{u\in E_{i\mu}\mid\langle Lu,v\rangle=0,\ \forall v\in E_{i\mu
}\},\;I^{D}=\{u+\bar{v}\mid u,v\in E^{D}\},\\
&  E_{i\mu}^{ND}=\{u\in E_{i\mu}\mid(u,v)=0,\ \forall v\in E^{D}%
\},\;I^{ND}=\{u+\bar{v}\mid u,v\in E_{i\mu}^{ND}\}.
\end{split}
\]
From the anti-symmetry of $JL$ with respect to $\langle L\cdot,\cdot\rangle$
and the invariance of $E_{i\mu}$ along with \eqref{E:real}, we have
$JL(I^{D})\subset I^{D}$. In the splitting $I_{i\mu}=I^{D}\oplus I^{ND}$,
$\langle L\cdot,\cdot\rangle$ and $JL$ can be represented in the following
block forms
\[
\langle L\cdot,\cdot\rangle\longleftrightarrow%
\begin{pmatrix}
0 & 0\\
0 & L_{ND}%
\end{pmatrix}
,\quad JL\longleftrightarrow%
\begin{pmatrix}
A_{D} & A_{D,ND}\\
0 & A_{ND}%
\end{pmatrix}
,
\]
where all blocks are bounded real (satisfying \eqref{E:real}) operators. In
particular, $\ker(L|_{I_{i\mu}})=I^{D}$ and $I_{i\mu}=I^{D}\oplus I^{ND}$ and
thus Lemma \ref{L:non-degeneracy} implies that $L_{ND}:I^{ND}\rightarrow
(I^{ND})^{\ast}$ is an isomorphism. The anti-symmetry of $JL$ with respect to
$\langle L\cdot,\cdot\rangle$ yields $L_{ND}A_{ND}+A_{ND}^{\ast}L_{ND}=0$.
Therefore,
\[
J_{ND}=A_{ND}L_{ND}^{-1}:(I^{ND})^{\ast}\rightarrow I_{ND}%
\]
is an anti-symmetric bounded operator satisfying $A_{ND}=J_{ND}L_{ND}$.
Clearly, the combination $(I^{ND},L_{ND},J_{ND})$ satisfies (\textbf{H1-3})
with the non-degenerate $L_{ND}$. Moreover, $\sigma(J_{ND}L_{ND})=\{\pm
i\mu\}$ with the eigenspace of $i\mu$ given by $E_{i\mu}^{ND}$ where $\langle
L_{ND}\cdot,\cdot\rangle$ is also non-degenerate. Therefore, we may apply
Lemma \ref{L:e-space-2} (with $X$ and $E$ replaced by $I^{ND}$ and $E_{i\mu
}^{ND}$, respectively) to obtain the splitting $E_{i\mu}^{ND}=E^{1}\oplus
E^{G}$ and the desired block forms of $L$ and $JL$ follow. The desired
estimate on $\dim E^{G}$ is obtained by noting
\begin{equation}
k^{\leq0}(i\mu)=n^{-}(L|_{E_{i\mu}^{ND}})+\dim E^{D}.\label{E:dim2}%
\end{equation}
Moreover, according to Lemma \ref{L:e-space-2}, we have
\[
\dim(A_{ND}-i\mu)E_{i\mu}^{ND}\leq2\big(n^{-}(L|_{E_{i\mu}^{ND}}%
)-n^{-}(L|_{E^{1}})\big).
\]
Along with \eqref{E:dim2} and the block form of $JL$, it implies
\[%
\begin{split}
&  \dim(JL-i\mu)E_{i\mu}\leq\dim E^{D}+\dim(A_{ND}-i\mu)E_{i\mu}^{ND}\\
\leq &  \dim E^{D}+2\big(n^{-}(L|_{E_{i\mu}^{ND}})-n^{-}(L|_{E^{1}}%
)\big)\leq2\big(k^{\leq0}(i\mu)-n^{-}(L|_{E^{1}})\big)
\end{split}
\]
which finishes the proof.
\end{proof}

\textbf{Proof of Proposition \ref{P:finite-dim-E} and Proposition
\ref{P:basis}.} What remains to be proved in these two propositions can be
obtained in a similar framework and we complete their proofs together here.

Let $X_{\pm}$ be given by Lemma \ref{L:decom1} and $X_{1}=X_{-}\oplus X_{+}$.
Clearly, $X=X_{0}\oplus X_{1}$, where $X_{0}=\ker L$, with the associated
projections $P_{X_{0,1}}$. According to Lemma \ref{L:decomJ}, $\langle
L\cdot,\cdot\rangle$ and $JL$ take the following block forms
\[
\langle L\cdot,\cdot\rangle\longleftrightarrow%
\begin{pmatrix}
0 & 0\\
0 & L_{X_{1}}%
\end{pmatrix}
,\quad JL\longleftrightarrow%
\begin{pmatrix}
0 & A_{1}\\
0 & J_{X_{1}}L_{X_{1}}%
\end{pmatrix}
,
\]
where $A_{1}:X_{1}\rightarrow\ker L$ is bounded and $L_{X_{1}}=i_{X_{1}}%
^{\ast}Li_{X_{1}}:X_{1}\rightarrow X_{1}^{\ast}$ and $J_{X_{1}}=P_{X_{1}%
}JP_{X_{1}}^{\ast}$. Moreover, Lemmas \ref{L:decomJ} and \ref{L:decom1} imply
that $(X_{1},L_{X_{1}},J_{X_{1}})$ satisfies assumptions (\textbf{H1-3}) with
the isomorphic $L_{X_{1}}$ and $n^{-}(L_{X_{1}})=n^{-}(L)$. For any eigenvalue
$i\mu\in i\mathbf{R}$, let $E_{i\mu}^{1}$ be the subspace of generalized
eigenvectors of $i\mu$ for $J_{X_{1}}L_{X_{1}}$, possibly $\{0\}$ if $\mu=0$.
From Lemma \ref{L:e-space-1} and \ref{L:e-space-3}, for some $K>0$,
\[
E_{i\mu}^{1}=\ker(J_{X_{1}}L_{X_{1}}-i\mu)^{K},\quad\dim(J_{X_{1}}L_{X_{1}%
}-i\mu)E_{i\mu}^{1}\leq2n^{\leq0}(L_{X_{1}}|_{E_{i\mu}^{1}}).
\]
For any integer $k>0$, $(JL-i\mu)^{k}$ takes the block form
\[
(JL-i\mu)^{k}\longleftrightarrow%
\begin{pmatrix}
(-i\mu)^{k} & A_{k}\\
0 & (J_{X_{1}}L_{X_{1}}-i\mu)^{k}%
\end{pmatrix}
,
\]
where the linear operator $A_{k}:X_{1}\rightarrow\ker L$ can be computed
inductively
\[
A_{k+1}=(-i\mu)^{k}A_{1}+A_{k}(J_{X_{1}}L_{X_{1}}-i\mu),\quad D\big((J_{X_{1}%
}L_{X_{1}}-i\mu)^{k}\big)\subset D(A_{k+1}).
\]
It is straightforward to show
\begin{equation}
u\in E_{i\mu}\ \Longleftrightarrow\ P_{X_{1}}u\in E_{i\mu}^{1}\;\text{ and
}\;(-i\mu)^{K}P_{X_{0}}u+A_{K}P_{X_{1}}u=0.\label{E:temp3}%
\end{equation}

We first consider $\mu\neq0$. We obtain from \eqref{E:temp3}
\[
E_{i\mu}=\{u-(-i\mu)^{-K}A_{K}u\mid u\in E_{i\mu}^{1}\},
\]
i.e. vectors in $E_{i\mu}$ are determined only by their $X_{1}$-component.
From Lemma \ref{L:e-space-1} and Remark \ref{R:e-space-1}, $E_{i\mu}^{1}$ and
$E_{i\mu}$ are both subspaces. Therefore, $A_{K}$ is a bounded operator.
Since
\[
\langle L\big(u-(-i\mu)^{-K}A_{K}u\big),v-(-i\mu)^{-K}A_{K}v\rangle=\langle
Lu,v\rangle,\;\forall u,v\in E_{i\mu}^{1},
\]
we obtain from Lemma \ref{L:e-space-3}
\[%
\begin{split}
\dim(JL-i\mu)E_{i\mu}= &  \dim(J_{X_{1}}L_{X_{1}}-i\mu)E_{i\mu}^{1}\\
\leq &  2n^{\leq0}(L_{X_{1}}|_{E_{i\mu}^{1}})=2n^{\leq0}(L_{E_{i\mu}%
})=2k^{\leq0}(i\mu).
\end{split}
\]
This proves the desired estimate on $\dim(JL-i\mu)E_{i\mu}$ in Proposition
\ref{P:finite-dim-E}. Along with Lemma \ref{L:e-space-1}, it completes the
proof of Proposition \ref{P:finite-dim-E} in the case of $\mu\neq0$.

To prove Proposition \ref{P:basis}, let $E_{i\mu}^{1}=\tilde{E}^{D}%
\oplus\tilde{E}^{1}\oplus\tilde{E}^{G}$ where these subspaces are given by
Lemma \ref{L:e-space-3} for $J_{X_{1}}L_{X_{1}}$. Let
\[
E^{D,1,G}=\{u-(-i\mu)^{-K}A_{K}u\mid u\in\tilde{E}^{D,1,G}\}.
\]
It is easy to verify that they satisfy the properties in Proposition
\ref{P:basis}. Since $\dim E^{G}<\infty$, the `good' basis of $E^{G}$ has been
constructed in the finite dimensional cases in Section \ref{S:finiteD} and the
proof of Proposition \ref{P:basis} is complete.

For $\mu=0$, it is easy to see from the above block forms
\[
E_{0}=X_{0}\oplus E_{0}^{1}.
\]
Therefore, we have
\[
(JL)^{2}E_{0}=(JL)^{2}E_{0}^{1}=JL(P_{X_{1}}JLE_{0}^{1})=JL(J_{X_{1}}L_{X_{1}%
}E_{0}^{1}),
\]
which along with Lemma \ref{L:e-space-3} implies
\[
\dim\ (JL)^{2}E_{0}\leq\dim\ J_{X_{1}}L_{X_{1}}E_{0}^{1}\leq2k_{0}^{\leq0}.
\]
This completes the proof of Proposition \ref{P:finite-dim-E} in the case of
$\mu=0$.

To prove Proposition \ref{P:basis}, let $E_{0}^{1}=E^{D}\oplus E^{1}\oplus
E^{G}$ where these subspaces are given by Lemma \ref{L:e-space-3} for
$J_{X_{1}}L_{X_{1}}$ and $\mu=0$. It is easy to verify that they satisfy the
properties in Proposition \ref{P:basis}. Again since $\dim E^{G}<\infty$, the
`good' basis of $E^{G}$ has been constructed in the finite dimensional cases
in Section \ref{S:finiteD} and the proof of Proposition \ref{P:basis} is
complete. \hfill$\square$

\begin{remark}
\label{R:(JL)^2} In the case of $\mu=0$, we can not replace $(JL)^{2}E_{0}$ by
$JLE_{0}$, as seen from the following counterexample. Consider $X=Y\oplus
Y\oplus\mathbf{R^{2}}$ where $Y$ is any Hilbert space. Let
\[
J=%
\begin{pmatrix}
0 & I & 0 & 0\\
-I & 0 & 0 & 0\\
0 & 0 & 0 & -1\\
0 & 0 & 1 & 0
\end{pmatrix}
,\quad L=%
\begin{pmatrix}
0 & 0 & 0 & 0\\
0 & I & 0 & 0\\
0 & 0 & 1 & 0\\
0 & 0 & 0 & -1
\end{pmatrix}
,\quad JL=%
\begin{pmatrix}
0 & I & 0 & 0\\
0 & 0 & 0 & 0\\
0 & 0 & 0 & 1\\
0 & 0 & 1 & 0
\end{pmatrix}
.
\]
It is clear that $k_{0}^{\leq0}=0$, $\ker L=X_{0}=Y\oplus\{0\}\oplus
\{(0,0)^{T}\}$, $E_{0}=Y\oplus Y\oplus\{(0,0)^{T}\}$, and $\dim JLE_{0}%
=\dim\ker L=\dim Y$.
\end{remark}

\subsection{Subspace of generalized eigenvectors $E_{0}$ and index $k_{0}%
^{\le0}$}

\label{SS:E_0}

In this Subsection we prove Propositions \ref{prop-counting-k-0-1},
\ref{prop-counting-k-0-2}, Lemma \ref{L:counting-k-0-2} and Corollary
\ref{C:counting-k-0-2-a}, \ref{C:counting-k-0-2-b} on the subspace $E_{0}$ and
the non-positive index $k_{0}^{\leq0}$ for the eigenvalue $0$.

\textbf{Proof of Proposition \ref{prop-counting-k-0-1}}.
According to Corollary \ref{C:decomJ}, $LJ:D(J)\rightarrow X$ is closed and
thus $LJL$ is also closed. Therefore, $(JL)^{-1}(\ker L)=\ker(LJL)$ is also closed.

Since $(JL)^{-1}(\ker L)\subset E_{0}$, due to the hyperbolicity of $JL$ on
$X_{5,6}$, we have $(JL)^{-1}(\ker L)\subset\oplus_{j=0}^{4}X_{j}$, where the
decomposition of $X=\ker L\oplus\oplus_{j=1}^{6}X_{j}$ is given in Theorem
\ref{T:decomposition}. Let
\[
S=(JL)^{-1}(\ker L)\cap\oplus_{j=1}^{4}X_{j}.
\]
Since $X_{0}=\ker L\subset(JL)^{-1}(\ker L)$, we have $\ker L\oplus
S=(JL)^{-1}(\ker L)\subset E_{0}$. Therefore, from the definition of
$k_{0}^{\leq0}$ it is clear $k_{0}^{\leq0}\geq n_{0}=n^{\leq0}(L|_{S})$ and we
only need to prove (ii) of Proposition \ref{prop-counting-k-0-1}.

Assume in addition that $\langle L\cdot,\cdot\rangle$ is non-degenerate on
$(JL)^{-1}(\ker L)\slash\ker L$. We claim
\begin{equation}
E_{0}=(JL)^{-1}(\ker L).\label{E:E_0-2}%
\end{equation}
In fact, suppose $u\in E_{0}\backslash\big((JL)^{-1}(\ker L)\big)$. There
exists $m>0$ such that
\[%
\begin{split}
&  u_{1}\triangleq(JL)^{m-1}u\notin(JL)^{-1}(\ker L)\\
&  u_{0}\triangleq JLu_{1}=(JL)^{m}u\in(JL)^{-1}(\ker L)\backslash\ker L.
\end{split}
\]
It follows that, for any $v\in(JL)^{-1}(\ker L)$,
\[
JLv\in\ker L\Longrightarrow\langle Lu_{0},v\rangle=\langle L(JL)u_{1}%
,v\rangle=-\langle Lu_{1},JLv\rangle=0.
\]
The existence of such $u_{0}$ would imply $\langle L\cdot,\cdot\rangle$ is
degenerate on $(JL)^{-1}(\ker L)\slash\ker L$, contradictory to our
assumption. Therefore, \eqref{E:E_0-2} is proved and consequently we obtain
from the definition of $k_{0}^{\leq0}$ that
\[
k_{0}^{\leq0}=n^{\leq0}(\langle L\cdot,\cdot\rangle|_{(JL)^{-1}(\ker
L)\slash\ker L}\big)=n^{-}(\langle L\cdot,\cdot\rangle|_{(JL)^{-1}(\ker
L)\slash\ker L}\big)
\]
due to the non-degeneracy assumption. This completes the proof of the
proposition. \hfill$\square$

We will prove Lemma \ref{L:counting-k-0-2}, Proposition
\ref{prop-counting-k-0-2}, and Corollary \ref{C:counting-k-0-2-a} and
\ref{C:counting-k-0-2-b} in the rest of the subsection. We first observe that
it is straightforward to show $\langle Lu,v\rangle=0$, for any $u\in\ker(JL)$
and $v\in R(J)$. Through a density argument, we obtain
\begin{equation}
\langle Lu,v\rangle=0,\quad\forall u\in\ker(JL),\;v\in\overline{R(J)}.
\label{E:k-0-1}%
\end{equation}
Throughout the rest of this subsection, let $S_{1},S_{2},S^{\#}$ be defined as
in Corollaries \ref{C:counting-k-0-2-a} and \ref{C:counting-k-0-2-b}, i.e.
\[
\ker(JL)=\ker L\oplus S_{1},\qquad\overline{R(J)}=\big(\overline{R(J)}\cap\ker
L\big)\oplus S^{\#}%
\]
and
\[
\overline{R(J)}\cap(JL)^{-1}(\ker L)=S_{2}\oplus\big(\overline{R(J)}\cap\ker
L\big).
\]

\begin{lemma}
\label{L:k-0-2-a} Suppose $\langle L\cdot, \cdot\rangle$ is non-degenerate on
$S^{\#}$, then it is also non-degenerate on $S_{1}$ and moreover,
\begin{equation}
\label{E:k-0-2}X=\ker(JL) \oplus S^{\#} = \ker L \oplus S_{1} \oplus S^{\#}.
\end{equation}

\end{lemma}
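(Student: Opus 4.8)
The plan is to identify $\ker(JL)$ with the $L$-orthogonal complement of $\overline{R(J)}$ and then invoke an abstract splitting lemma for $L$-orthogonal complements.

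\textbf{Step 1: $\ker(JL)=\overline{R(J)}^{\perp_{L}}$.} Here $\overline{R(J)}^{\perp_{L}}\triangleq\{u\in X\mid\langle Lu,v\rangle=0,\ \forall v\in\overline{R(J)}\}$, which is a closed subspace since $L$ is bounded. The inclusion ``$\subset$'' is exactly \eqref{E:k-0-1}. For ``$\supset$'', if $u\in\overline{R(J)}^{\perp_{L}}$ then $Lu$ annihilates $\overline{R(J)}$, i.e. $Lu\in R(J)^{\circ}=\ker(J^{\ast})=\ker J$ by (\textbf{H1}); since $\ker J\subset D(J)$ this forces $Lu\in D(J)$, hence $u\in D(JL)$ and $JLu=J(Lu)=0$. (By Lemma \ref{lemma-L-form-preserve} and Lemma \ref{L:InvariantSubS} this also re-proves that $\ker(JL)$ is $e^{tJL}$-invariant, though we will not need it.)

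\textbf{Step 2 (the main point): a splitting lemma.} I would establish the general fact: \emph{if $W\subset X$ is a closed subspace on which $\langle L\cdot,\cdot\rangle$ descends to a non-degenerate form on $W/(W\cap\ker L)$, then $X=W+W^{\perp_{L}}$.} Let $\mathbb{L}:X\to X$ be the Riesz representation of $\langle L\cdot,\cdot\rangle$ from Remark \ref{R:assumptions} (so $(\mathbb{L}u,v)=\langle Lu,v\rangle$, $\ker\mathbb{L}=\ker L$, $\mathbb{L}$ bounded self-adjoint), and let $P_{W}$ be the orthogonal projection onto $W$. The operator $T\triangleq P_{W}\mathbb{L}|_{W}:W\to W$ is bounded and self-adjoint on $W$; it annihilates $W\cap\ker L$, and the non-degeneracy hypothesis is precisely the statement that $T$ is bounded below on the orthogonal complement of $W\cap\ker L$ inside $W$ (in particular $\ker T=W\cap\ker L$). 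Hence $T$ has closed range and, by self-adjointness, $R(T)$ equals the orthogonal complement of $W\cap\ker L$ in $W$. Now for any $x\in X$ and any $z\in W\cap\ker L\subset\ker\mathbb{L}$ we have $(P_{W}\mathbb{L}x,z)=(\mathbb{L}x,z)=(x,\mathbb{L}z)=0$, so $P_{W}\mathbb{L}x\in R(T)$; thus there is $w\in W$ with $P_{W}\mathbb{L}w=P_{W}\mathbb{L}x$, i.e. $\mathbb{L}(x-w)\perp W$, i.e. $x-w\in W^{\perp_{L}}$. Applying this with $W=\overline{R(J)}$, whose quotient $\overline{R(J)}/(\overline{R(J)}\cap\ker L)$ carries, up to an equivalent norm, the same form as $S^{\#}$ (which is non-degenerate by hypothesis, since $S^{\#}$ is a complement of $\overline{R(J)}\cap\ker L$ in $\overline{R(J)}$ and $\langle L\cdot,\cdot\rangle$ vanishes as soon as one argument lies in $\ker L$), Step 1 gives $X=\overline{R(J)}+\ker(JL)$.

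\textbf{Step 3: conclusion.} First $\ker(JL)\cap S^{\#}=\{0\}$: any $u$ in this intersection lies in $\overline{R(J)}^{\perp_{L}}$ by Step 1, so $\langle Lu,v\rangle=0$ for all $v\in\overline{R(J)}\supset S^{\#}$, whence $u=0$ by non-degeneracy on $S^{\#}$. Since $\overline{R(J)}=(\overline{R(J)}\cap\ker L)\oplus S^{\#}\subset\ker L+S^{\#}\subset\ker(JL)+S^{\#}$, combining with $X=\overline{R(J)}+\ker(JL)$ gives $X=\ker(JL)\oplus S^{\#}$, and then $X=\ker L\oplus S_{1}\oplus S^{\#}$ because $\ker(JL)=\ker L\oplus S_{1}$. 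Finally, for non-degeneracy of $\langle L\cdot,\cdot\rangle$ on $S_{1}$: if $u\in S_{1}$ satisfies $\langle Lu,s\rangle=0$ for all $s\in S_{1}$, then also $\langle Lu,z\rangle=0$ for $z\in\ker L$ (trivially, since $L$ is symmetric) and $\langle Lu,r\rangle=0$ for $r\in\overline{R(J)}$ (by \eqref{E:k-0-1}, as $u\in\ker(JL)$); since $X=\ker L+S_{1}+\overline{R(J)}$ this means $Lu=0$, so $u\in\ker L\cap S_{1}=\{0\}$.

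\textbf{Expected main obstacle.} The one genuinely nontrivial ingredient is Step 2: as the examples in Subsection \ref{SS:CounterExample} show, $W+W^{\perp_{L}}=X$ fails for general $W$, so the non-degeneracy assumption on $S^{\#}$ must be used in an essential way — concretely, to guarantee the closed range of $T=P_{W}\mathbb{L}|_{W}$. Everything else is a routine consequence of the identity in Step 1 together with \eqref{E:k-0-1}.
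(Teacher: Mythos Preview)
Your Steps 1--3 are correct through the decomposition $X=\ker(JL)\oplus S^{\#}$. The route is somewhat different from the paper's: you first characterize $\ker(JL)=\overline{R(J)}^{\perp_{L}}$ explicitly (a clean identity the paper only uses implicitly) and then prove an abstract splitting lemma $X=W+W^{\perp_{L}}$ via the Riesz operator $\mathbb{L}$ and the compressed operator $T=P_{W}\mathbb{L}|_{W}$, applied to $W=\overline{R(J)}$. The paper instead works directly with $S^{\#}$ rather than all of $\overline{R(J)}$: it sets $u^{\#}=L_{S^{\#}}^{-1}i_{S^{\#}}^{\ast}Lu$ and checks that $u-u^{\#}$ is $L$-orthogonal to $\overline{R(J)}$, hence lies in $\ker(JL)$. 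Your abstraction is a bit more general (it handles a possible kernel $W\cap\ker L$), at the cost of an extra layer; the paper's version is shorter because $S^{\#}\cap\ker L=\{0\}$ by construction.

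There is, however, a genuine gap in your last paragraph. You only establish $\ker L_{S_{1}}=\{0\}$, and under (\textbf{H1--3}) this is \emph{not} the same as non-degeneracy in the uniform sense \eqref{E:non-degeneracy-def} when $\dim\ker L=\infty$; Remark~\ref{R:non-degeneracy} gives exactly such a counterexample. The paper avoids this by a different route: having already proved $X=\ker L\oplus(S_{1}\oplus S^{\#})$, it invokes Lemma~\ref{L:non-degeneracy}(3) to obtain uniform non-degeneracy on $S_{1}\oplus S^{\#}$; then, since \eqref{E:k-0-1} makes $S_{1}$ and $S^{\#}$ $L$-orthogonal, $L_{S_{1}\oplus S^{\#}}$ is block-diagonal and the assumed non-degeneracy of the $S^{\#}$ block forces that of the $S_{1}$ block. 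Inserting this one-line invocation of Lemma~\ref{L:non-degeneracy}(3) completes your argument.
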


\begin{proof}
The non-degeneracy of $\langle L\cdot,\cdot\rangle$ on $S^{\#}$ implies the
non-degeneracy of $L_{S^{\#}}:S^{\#}\rightarrow(S^{\#})^{\ast}$, which is
defined in \eqref{E:L_y1}. For any $u\in X$, as in the proof of Lemma
\ref{L:non-degeneracy}, let
\[
u^{\#}=L_{S^{\#}}^{-1}i_{S^{\#}}^{\ast}Lu\in S^{\#}%
\]
which satisfies
\[
\langle Lu_{1},v\rangle=0,\;\forall v\in S^{\#},\;\text{ where }u_{1}%
=u-u^{\#}.
\]
By the definition of $S^{\#}$, we also have
\[
\langle Lu_{1},v\rangle=0,\;\forall v\in\overline{R(J)}.
\]
Since $J^{\ast}=-J$, we obtain
\[
Lu_{1}\in\ker J^{\ast}=\ker J\,\Longrightarrow u_{1}\in\ker(JL)=\ker L\oplus
S_{1}.
\]
Therefore, $u=u_{1}+u^{\#}\in\ker(JL)+S^{\#}$ and thus $X=\ker(JL)+S^{\#}$.

For any $u\in S^{\#}\cap ker(JL)$, from \eqref{E:k-0-1} we obtain $\langle
Lu,v\rangle=0$, for any $v\in\ker(JL)+\overline{R(J)}\supset\ker(JL)+S^{\#}%
=X$. Therefore, $u\in\ker L$. Since $u\in S^{\#}\cap\ker L=\{0\}$, we have
$u=0$ and thus $X=\ker(JL)\oplus S^{\#}=\ker L\oplus S_{1}\oplus S^{\#}$.

From Lemma \ref{L:non-degeneracy}, $\langle L\cdot,\cdot\rangle$ is
non-degenerate on $S^{\#}\oplus S_{1}$. Since it is also assumed to be
non-degenerate on $S^{\#}$, the non-degeneracy of $\langle L\cdot,\cdot
\rangle$ on $S_{1}\ $follows from the $L$-orthogonality \eqref{E:k-0-1}
between $S_{1}$ and $S^{\#}$.
\end{proof}

\begin{lemma}
\label{L:k-0-2-b} Suppose $\langle L\cdot, \cdot\rangle$ is non-degenerate on
$S_{1}$, then it is also non-degenerate on $S^{\#}$.
\end{lemma}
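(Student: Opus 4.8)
The plan is to reduce the assertion to a purely linear statement about the intersection of $S^{\#}$ with $\ker(JL)$, and then to play off the $L$-orthogonality \eqref{E:k-0-1} against the assumed non-degeneracy of $\langle L\cdot,\cdot\rangle$ on $S_{1}$. Recall the choices $\ker(JL)=\ker L\oplus S_{1}$ and $\overline{R(J)}=\big(\overline{R(J)}\cap\ker L\big)\oplus S^{\#}$.

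First I would identify the subspace of $X$ that is $L$-orthogonal to $S^{\#}$, i.e. $\big(S^{\#}\big)^{\perp_{L}}=\{x\in X\mid\langle Lx,v\rangle=0,\ \forall v\in S^{\#}\}$. Since $L$ is symmetric, $\langle Lx,v\rangle=\langle Lv,x\rangle=0$ automatically for every $v\in\ker L$, and in particular for every $v\in\overline{R(J)}\cap\ker L$; together with $\overline{R(J)}=\big(\overline{R(J)}\cap\ker L\big)\oplus S^{\#}$ this shows $\big(S^{\#}\big)^{\perp_{L}}=\{x\mid\langle Lx,v\rangle=0,\ \forall v\in\overline{R(J)}\}$. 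Because $J^{*}=-J$, the annihilator $R(J)^{\perp}$ in $X^{*}$ equals $\ker(J^{*})=\ker J$ (the standard fact for the closed densely defined $J$), so the last set is exactly $\{x\mid Lx\in\ker J\}=\ker(JL)$. Hence $\big(S^{\#}\big)^{\perp_{L}}=\ker(JL)$, and by Lemma \ref{L:non-degeneracy} (the criterion that, under (\textbf{H1-3}), a closed subspace $Y$ with $Y\cap Y^{\perp_{L}}=\{0\}$ carries a non-degenerate restriction of $\langle L\cdot,\cdot\rangle$ in the sense of \eqref{E:non-degeneracy-def}) it suffices to prove $S^{\#}\cap\ker(JL)=\{0\}$.

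To establish this, I would take $w\in S^{\#}\cap\ker(JL)$ and decompose $w=w_{0}+w_{1}$ with $w_{0}\in\ker L$, $w_{1}\in S_{1}$, using $\ker(JL)=\ker L\oplus S_{1}$. For any $w_{1}'\in S_{1}\subset\ker(JL)$, since $w\in S^{\#}\subset\overline{R(J)}$, \eqref{E:k-0-1} gives $\langle Lw_{1}',w\rangle=0$, hence $\langle Lw,w_{1}'\rangle=0$, and since $Lw_{0}=0$ this reads $\langle Lw_{1},w_{1}'\rangle=0$ for all $w_{1}'\in S_{1}$. The assumed non-degeneracy of $\langle L\cdot,\cdot\rangle$ on $S_{1}$ then forces $w_{1}=0$, so $w=w_{0}\in\ker L$; but $w\in S^{\#}\subset\overline{R(J)}$, so $w\in\overline{R(J)}\cap\ker L$, and since $S^{\#}$ is by construction transversal to $\overline{R(J)}\cap\ker L$ we conclude $w=0$. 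This completes the argument, and, combined with Lemma \ref{L:k-0-2-a}, yields the equivalence asserted in Corollary \ref{C:counting-k-0-2-a}.

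The computation is essentially routine; the only point needing genuine care is the identification $\big(S^{\#}\big)^{\perp_{L}}=\ker(JL)$, which rests on $R(J)^{\perp}=\ker(J^{*})=\ker J$ for the (possibly unbounded, non-invertible) anti-self-dual operator $J$, and on invoking the correct form of Lemma \ref{L:non-degeneracy} to upgrade ``$S^{\#}$ contains no nonzero $L$-null vector'' to uniform non-degeneracy. Everything else is bookkeeping with the direct-sum decompositions of $\ker(JL)$ and $\overline{R(J)}$.
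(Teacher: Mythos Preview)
Your identification $(S^{\#})^{\perp_L}=\ker(JL)$ and the argument that $S^{\#}\cap\ker(JL)=\{0\}$ are both correct and clean. The gap is in the final step: you invoke ``the criterion that, under (\textbf{H1-3}), a closed subspace $Y$ with $Y\cap Y^{\perp_L}=\{0\}$ carries a non-degenerate restriction of $\langle L\cdot,\cdot\rangle$.'' But that is \emph{not} what statement~2 of Lemma~\ref{L:non-degeneracy} says. That statement requires $\dim\ker L<\infty$ in addition to $\ker L_Y=\{0\}$, and Remark~\ref{R:non-degeneracy} explicitly flags this hypothesis as essential, with a counterexample. Since (\textbf{H1-3}) allow $\dim\ker L=\infty$, your argument does not close in that generality: injectivity of $L_{S^{\#}}$ is established, but you have not shown $L_{S^{\#}}:S^{\#}\to(S^{\#})^{*}$ is surjective (equivalently, bounded below).

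The paper's proof supplies exactly the missing ingredient by first proving the decomposition $X=\ker L\oplus S_1\oplus S^{\#}$ (via the claim $\overline{R(LJ)}=L(S^{\#})$, which uses the assumed non-degeneracy on $S_1$ only through boundedness of $L_{S_1}^{-1}$). Once that splitting is in hand, statement~3 of Lemma~\ref{L:non-degeneracy}---which needs no finiteness of $\ker L$---gives non-degeneracy on $S_1\oplus S^{\#}$, and the $L$-orthogonal block structure between $S_1$ and $S^{\#}$ then forces non-degeneracy on $S^{\#}$. If you want to keep your route, you would need either to add the hypothesis $\dim\ker L<\infty$, or to prove $X=\ker(JL)\oplus S^{\#}$ independently; the latter is essentially what the paper does.
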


\begin{proof}
Like in the proof of the previous lemma, the non-degeneracy of $\langle
L\cdot, \cdot\rangle$ on $S_{1}$ implies the non-degeneracy of $L_{S_{1}}:
S_{1} \to S_{1}^{*}$. For any $u\in X$, as in the proof of Lemma
\ref{L:non-degeneracy}, let
\[
u_{1} = L_{S_{1}}^{-1} i_{S_{1}}^{*} L u\in S_{1}
\]
which satisfies
\[
\langle Lu_{*}, v \rangle=0, \; \forall v \in\ker(JL) = \ker L \oplus S_{1},
\; \text{ where } u_{*} = u - u_{1}.
\]
Since $JL = - (LJ)^{*}$, we obtain $Lu_{*} \in\overline{R(LJ)}$.

\textit{Claim: $\overline{R(LJ)}=L(S^{\#})$.} In fact, it is easy to see
$L(S^{\#})\subset L\big(\overline{R(J)}\big)\subset\overline{R(LJ)}$ due to
the boundedness of $L$. In the following we will prove that $\overline
{R(LJ)}\subset L(S^{\#})$. Let $y\in\overline{R(LJ)}$, there exists a sequence
$y_{n}=LJx_{n}$ such that $y_{n}\rightarrow y$ as $n\rightarrow+\infty$. Since
$\overline{R(J)}=\ker L\oplus S^{\#}$, let $Jx_{n}=z_{n,0}+z_{0,\#}$ where
$z_{n,0}\in\ker L$ and $z_{n,\#}\in S^{\#}$. As $y_{n}=LJx_{n}=Lz_{n,\#}%
\rightarrow y$ and the non-degeneracy assumption of $\langle L\cdot
,\cdot\rangle$ on $S^{\#}$ implies that $L|_{S^{\#}}:S^{\#}\rightarrow
L(S^{\#})$ is an isomorphism, we obtain that $\{z_{n,\#}\}$ is a Cauchy
sequence. Let $z_{n,\#}\rightarrow z_{\#}\in S^{\#}$ and then $y=Lz_{\#}\in
L(S^{\#})$. The claim is proved.

We can now finish the proof of the lemma. Since we have proved
\[
L(u-u_{1})=Lu_{\ast}\in\overline{R(LJ)}=L(S^{\#}),
\]
there exists $u_{\#}\in S^{\#}$ such that $L(u-u_{1})=Lu_{\#}$. Let
$u_{0}=u-u_{1}-u_{\#}$. Clearly, $u_{0}\in\ker L$. Therefore, $u=u_{0}%
+u_{1}+u_{\#}$ and thus $X=\ker L\oplus S_{1}\oplus S^{\#}=\ker(JL)\oplus
S^{\#}$. The proof of $\ker(JL)\cap S^{\#}=\{0\}$ and consequently the
non-degeneracy of $\langle L\cdot,\cdot\rangle$ on $S_{1}$ is the same as in
the proof of the last lemma.
\end{proof}

The conclusion in Lemma \ref{L:counting-k-0-2} is already contained in the
above lemmas.

\textbf{Proof of Proposition \ref{prop-counting-k-0-2} and equivalently
Corollary \ref{C:counting-k-0-2-b}}. The property $X= \ker(JL) +
\overline{R(J)}$ is a direct consequence of \eqref{E:k-0-2}. Along with
\eqref{E:k-0-1}, it also implies $\overline{R(J)} \cap\ker(JL)= \overline
{R(J)} \cap\ker L \subset\ker L$.

From the $L$-orthogonality \eqref{E:k-0-1}, the decomposition \eqref{E:k-0-2},
and the non-degeneracy of $\langle L\cdot, \cdot\rangle$ on $S_{1}, S^{\#}$,
and $S_{1} \oplus S^{\#}$, we immediately obtain $n^{-} = n^{-}(L|_{S_{1}}) +
n^{-}(L|_{S^{\#}})$.

From the decomposition \eqref{E:k-0-2} and the definitions of $S_{1}$ and
$S_{2}$, we have
\[
(JL)^{-1}\ker L=\ker L\oplus S_{1}\oplus S_{2}.
\]
Therefore, $k_{0}^{\leq0}\geq n^{-}(L|_{S_{1}})+n^{\leq0}(L|_{S_{2}})$ follows
from Proposition \ref{prop-counting-k-0-1}.

Finally, let us assume, in addition, that $\langle L\cdot, \cdot\rangle$ is
non-degenerate on $S_{2}$. Immediately we have the non-degeneracy of $\langle
L\cdot, \cdot\rangle$ on $(JL)^{-1} (\ker L) \slash \ker L$ and Proposition
\ref{prop-counting-k-0-1} implies $k_{0}^{\le0} = n^{-} (L|_{S_{1}}) +
n^{\le0} (L|_{S_{2}})$. The proof is complete. \hfill$\square$

\subsection{Non-degeneracy of $\langle L\cdot, \cdot\rangle$ on $E_{i\mu}$ and
isolated purely imaginary spectral points}

\label{SS:non-deg}

In Proposition \ref{P:basis}, the presence of the subspace $E^{D} \subset
E_{i\mu}$ is due to the possible degeneracy of $\langle L\cdot, \cdot\rangle$
on $E_{i\mu}$. Otherwise the statement of the proposition would be much more
clean and some results can be improved. However, in case when $i\mu$ is not
isolated in $\sigma(JL)$, it is indeed possible that $\langle L\cdot,
\cdot\rangle$ degenerates on $E_{i\mu}$ even if it is non-degenerate on $X$.
\newline

\noindent\textbf{Example of degenerate $\langle L\cdot, \cdot\rangle$ on
$E_{i\mu}$.} Consider $X= \mathbf{R}^{2n} \oplus\mathbf{R}^{2n} \oplus X_{1}$,
where $X_{1}$ is a Hilbert space. Here we identify Hilbert spaces and their
dual spaces via Riesz Representation Theorem. Let $\mu\in\mathbf{R}$ and
\newline$\bullet$ $A: X_{1} \supset D(A) \to X_{1}$ be an anti-self-adjoint
operator such that $i\mu\in\sigma(A)$ is \textit{not} an eigenvalue;\newline%
$\bullet$ $A_{1}: \mathbf{R}^{2n} \to X_{1}$ such that $\ker A_{1} =\{0\}$
and, after the complexification of $A$ and $A_{1}$ into complex linear
operators, $R(A_{1}) \cap R(A\pm i\mu) = \{0\}$, which is possible due to the
spectral assumption on $A$; and \newline$\bullet$ $J=%
\begin{pmatrix}
0 & J_{2n} & 0\\
J_{2n} & J_{2n} & - B^{-1} A_{1}^{*}\\
0 & A_{1} B^{-1} & A
\end{pmatrix}
, \; L =
\begin{pmatrix}
0 & B & 0\\
B & 0 & 0\\
0 & 0 & I_{X_{1}}%
\end{pmatrix}
$, where $B_{2n\times2n}$ is any symmetric matrix and $J_{2n} =
\begin{pmatrix}
0 & -I_{n\times n}\\
I_{n\times n} & 0
\end{pmatrix}
$. \newline One may compute
\[
JL=
\begin{pmatrix}
J_{2n} B & 0 & 0\\
J_{2n}B & J_{2n}B & - B^{-1} A_{1}^{*}\\
A_{1} & 0 & A
\end{pmatrix}
.
\]

\begin{lemma}
\label{L:CExample} For any integer $k>0$,
\[
\ker(JL-i\mu)^{k}=\{(0,x,0)^{T}\mid x\in\ker(J_{2n}B-i\mu)^{k}\}\subset
\mathbf{R}^{2n}\times\mathbf{R}^{2n}\times X_{1}.
\]
Consequently, $\langle L\cdot,\cdot\rangle$ vanishes on $E_{\pm i\mu}$.
\end{lemma}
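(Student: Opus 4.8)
The plan is to compute $\ker(JL-i\mu)^k$ by exploiting the block lower-triangular structure of $JL-i\mu$. Writing a general vector as $(y,x,z)^T$ with $y,x\in\mathbf{R}^{2n}$ and $z\in X_1$, the equation $(JL-i\mu)(y,x,z)^T = (y',x',z')^T$ reads
\[
(J_{2n}B-i\mu)y = y',\quad J_{2n}B(y+x) - B^{-1}A_1^* z - i\mu x = x',\quad A_1 y + (A-i\mu)z = z'.
\]
The first component involves only $y$, so I would first argue by induction on $k$ that $u\in\ker(JL-i\mu)^k$ forces $y=0$: the first coordinate of $(JL-i\mu)^k u$ is exactly $(J_{2n}B-i\mu)^k y$, and since $J_{2n}B$ is a $2n\times 2n$ matrix, $\ker(J_{2n}B-i\mu)^k = \ker(J_{2n}B-i\mu)^{2n}$ is finite-dimensional; but I cannot directly conclude $y=0$ from this alone, so instead I track the third coordinate.

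With $y$ allowed to be nonzero a priori, the key is the third block: on $\ker(JL-i\mu)^k$, applying $(JL-i\mu)^{k-1}$ and looking at the third coordinate, one gets a relation of the form $A_1(\text{polynomial in }J_{2n}B\text{ applied to }y) + (A-i\mu)(\cdots) = 0$, i.e. an element of $R(A_1)$ equals an element of $R(A-i\mu)$ (after complexification). By the standing assumption $R(A_1)\cap R(A\pm i\mu)=\{0\}$ and $\ker A_1 = \{0\}$, this forces the $y$-part to lie in $\ker(J_{2n}B-i\mu)^{k-1}$-type conditions, and iterating drives $y=0$ entirely; more carefully, I would set up a downward induction peeling off one power of $(JL-i\mu)$ at a time, at each stage using $R(A_1)\cap R(A-i\mu)=\{0\}$ to separate the $\mathbf{R}^{2n}$-contribution from the $X_1$-contribution. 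Once $y=0$, the third equation becomes $(A-i\mu)z$-chains lying in $\ker$, and since $i\mu\in\sigma(A)$ is \emph{not} an eigenvalue, $A-i\mu$ is injective, so $z=0$. What remains is $(J_{2n}B - i\mu)^k x = 0$ with the middle block, i.e. $x\in\ker(J_{2n}B-i\mu)^k$, and the $X_1$-component stays $0$; one checks conversely that every such $(0,x,0)^T$ indeed lies in $\ker(JL-i\mu)^k$ because the off-diagonal terms $J_{2n}B$ (acting on $y+x=x$), $B^{-1}A_1^*$ (acting on $z=0$), and $A_1$ (acting on $y=0$) interact correctly — this needs the observation that on such vectors $JL-i\mu$ acts as $(0, (J_{2n}B-i\mu)x, 0)^T$.

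Finally, for the consequence about $\langle L\cdot,\cdot\rangle$: from the description $E_{\pm i\mu} = \{(0,x,0)^T\mid x\in\ker(J_{2n}B\mp i\mu)^{2n}\}$ (using $E_{i\mu}=\ker(JL-i\mu)^K$ from Lemma \ref{L:e-space-1}, or just $k=2n$ here), and the block form of $L$, compute $\langle L(0,x,0)^T,(0,x',0)^T\rangle$; since $L$ maps $(0,x,0)^T$ to $(Bx,0,0)^T$ and pairs it against the second coordinate of the other vector, which is $x'$ — wait, more precisely $\langle L(0,x,0)^T,(0,x',0)^T\rangle$ only picks up the $(1,2)$ and $(2,1)$ entries of $L$ paired with zero first coordinates, hence vanishes identically. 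The main obstacle I anticipate is the forward inclusion in Step 2, namely rigorously propagating the condition $R(A_1)\cap R(A-i\mu)=\{0\}$ through the Jordan chain to kill $y$ — this requires care in bookkeeping which polynomial in $J_{2n}B$ shows up at which level of the chain, and in handling the complexification correctly since $A$ and $A_1$ are real operators while $i\mu$ is imaginary. The converse inclusion and the degeneracy statement are then routine matrix computations.
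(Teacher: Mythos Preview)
Your proposal is correct and follows essentially the same approach as the paper. The paper makes your ``downward induction peeling off'' concrete by first computing the $(3,1)$-block of $(JL-i\mu)^k$ explicitly as $A_{31}=\sum_{l=0}^{k-1}(A-i\mu)^l A_1 (J_{2n}B-i\mu)^{k-1-l}$, then separating off the $l=0$ term as $A_1(J_{2n}B-i\mu)^{k-1}y$ versus $(A-i\mu)(\cdots)$, and using a minimal-$m$ argument (let $m$ be least with $(J_{2n}B-i\mu)^m y=0$, then derive $(J_{2n}B-i\mu)^{m-1}y=0$) rather than an outer induction on $k$; this is exactly the bookkeeping you anticipated, and your outline of the converse inclusion and the degeneracy computation match the paper as well.
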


\begin{remark}
The embedding from $\mathbf{R}^{2n}$ to $\{0\}\times\mathbf{R}^{2n}%
\times\{0\}\subset X$-- an invariant subspace under $JL$, serves as a
similarity transformation between the $2n$-dim Hamiltonian operator $J_{2n}B$
and the restriction of the infinite dimensional one $JL$. If $i\mu\in
\sigma(J_{2n}B)$, then $J_{2n}B$ and $JL$ have exactly the same structures on
the subspaces $E_{i\mu}(J_{2n}B)$ and $E_{i\mu}$ of generalized eigenvectors
of $i\mu$. However, the energy structure is completely destroyed. Namely the $2n$-dim Hamiltonian operator $J_{2n}B$ has a non-trivial energy $\langle B\cdot, \cdot \rangle$ while the energy $\langle L \cdot, \cdot \rangle$ of $JL$ vanishes completely on $\mathbf{R}^{2n}$ to $\{0\}\times\mathbf{R}^{2n}\times\{0\}\subset X$.
\end{remark}

\begin{proof}
Using the invariance under $JL$ of $\{0\}\times\mathbf{R}^{2n}\times\{0\}$ and
$\{0\}\times\mathbf{R}^{2n}\times X_{1}$, it is easy to compute inductively
\[
(JL-i\mu)^{k}=%
\begin{pmatrix}
(J_{2n}B-i\mu)^{k} & 0 & 0\\
A_{21} & (J_{2n}B-i\mu)^{k} & A_{23}\\
A_{31} & 0 & (A-i\mu)^{k}%
\end{pmatrix}
,
\]
where
\[
A_{31}=\Sigma_{l=0}^{k-1}(A-i\mu)^{l}A_{1}(J_{2n}B-i\mu)^{k-1-l}.
\]
Let $P_{1,2,3}$ denote the projections from $X$ to its components. For any
$u=(x_{1},x_{2},v)^{T}\in X$, we have
\begin{align*}
P_{3}(JL-i\mu)^{k}u  &  =A_{31}x_{1}+(A-i\mu)^{k}v\\
&  =A_{1}(J_{2n}B-i\mu)^{k-1}x_{1}+(A-i\mu)\Big((A-i\mu)^{k-1}v\\
&  \qquad\qquad\quad+\Sigma_{l=1}^{k-1}(A-i\mu)^{l-1}A_{1}(J_{2n}%
B-i\mu)^{k-1-l}x_{1}\Big).
\end{align*}

Suppose $P_{3}(JL-i\mu)^{k}u=0$. Since $A_{1}$ and $A-i\mu$ are both
one-to-one and $R(A_{1})\cap R(A-i\mu)=\{0\}$, we obtain
\[
(J_{2n}B-i\mu)^{k-1}x_{1}=0,\;(A-i\mu)^{k-1}v+\Sigma_{l=1}^{k-1}(A-i\mu
)^{l-1}A_{1}(J_{2n}B-i\mu)^{k-1-l}x_{1}=0.
\]
Let $m\in\lbrack0,k-1]$ be the minimal non-negative integer satisfying
$(J_{2n}B-i\mu)^{m}x_{1}=0$. If $m\geq1$, from the definition of $m$, the
above second equality and the injectivity of $(A-i\mu)^{k-1-m}$ imply
\begin{align*}
0 &  =(A-i\mu)^{m}u+\Sigma_{l=k-m}^{k-1}(A-i\mu)^{l+m-k}A_{1}(J_{2n}%
B-i\mu)^{k-1-l}x_{1}\\
&  =(A-i\mu)\Big((A-i\mu)^{m-1}v+\Sigma_{l=k-m+1}^{k-1}(A-i\mu)^{l+m-k-1}%
A_{1}(J_{2n}B-i\mu)^{k-1-l}x_{1}\Big)\\
&  \ \ \ \ \ \ +A_{1}(J_{2n}B-i\mu)^{m-1}x_{1}.
\end{align*}
Again since $A_{1}$ and $A-i\mu$ are both one-to-one and $R(A_{1})\cap
R(A-i\mu)=\{0\}$, we derive $(J_{2n}B-i\mu)^{m-1}x_{1}=0$ which contradicts
the definition of $m$. Therefore, $m=0$, that is, $x_{1}=0$. Due to the
injectivity of $(A-i\mu)^{k}$, it implies $v=0$ as well.

Suppose $u \in\ker(JL-i\mu)^{k}$, the above arguments imply $u =(0, x, 0)^{T}$
and the lemma follows immediately.
\end{proof}

In the rest of this subsection we will prove that the degeneracy of $\langle
L\cdot, \cdot\rangle$ may occur on $E_{i\mu}$ only if $i\mu\in\sigma(JL)$ is
not an isolated spectral point.

\begin{lemma}
\label{L:isolated-2} Assume (\textbf{H1-3}) and $i\mu\in\sigma(JL) \cap
i\mathbf{R}$ is isolated in $\sigma(JL)$, then there exist closed subspaces
$I^{i\mu}, E_{\&} \subset X$ such that

\begin{enumerate}
\item[(i)] $I^{i\mu}$ and $E_{\&}$ are complexifications of real subspaces of
$X$, namely $u\in I^{i\mu}$ (or $E_{\&}$) if and only if $\overline{u}\in
I^{i\mu}$ (or $E_{\&}$). Moreover, they are invariant under $JL$ and
\[
X=I^{i\mu}\oplus E_{\&},\quad\sigma(JL|_{I^{i\mu}})=\{\pm i\mu\},\quad
\sigma(JL|_{E_{\&}})=\sigma(JL)\backslash\{\pm i\mu\}.
\]

\item[(ii)] $\ker L \subset E_{\&}$ if $\mu\ne0$ or $\ker L \subset I^{i\mu}$
if $\mu=0$.

\item[(iii)] $\langle Lu,v\rangle=0$ for all $u\in I^{i\mu}$ and $v\in E_{\&}%
$. Moreover, $\langle L\cdot,\cdot\rangle$ is non-degenerate on quotient
spaces $I^{i\mu}\slash(\ker L\cap I^{i\mu})$ and $E_{\&}\slash(\ker L\cap
E_{\&})$.
\end{enumerate}
\end{lemma}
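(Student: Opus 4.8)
The plan is to take $I^{i\mu}$ and $E_{\&}$ to be the spectral subspaces of $JL$ associated with the isolated spectral subset $\{\pm i\mu\}$ and with its complement in $\sigma(JL)$, and to read off the three asserted properties from the standard Riesz projection together with Lemma \ref{L:L-orth-eS}. First I would record that $JL$ generates a $C^{0}$ group (Theorem \ref{theorem-dichotomy}), so it is closed and densely defined, $\sigma(JL)$ is closed, and $(z-JL)^{-1}$ is analytic on $\mathbf{C}\setminus\sigma(JL)$. Since the maps $z\mapsto-z$ and $z\mapsto\bar z$ are homeomorphisms of $\mathbf{C}$ carrying $\sigma(JL)$ onto itself (Lemma \ref{L:symmetry} (i)), isolation of $i\mu$ forces isolation of $-i\mu=\overline{i\mu}$; hence one can choose an open set $\Omega\subset\mathbf{C}$ with compact closure and smooth boundary $\Gamma=\partial\Omega$, symmetric about both the real and the imaginary axes, with $\Omega\cap\sigma(JL)=\{\pm i\mu\}$ and $\Gamma\cap\sigma(JL)=\emptyset$ --- a small disk around $0$ if $\mu=0$, or the union of two small disks around $\pm i\mu$ if $\mu\neq0$. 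Set
\[
P=\frac{1}{2\pi i}\oint_{\Gamma}(z-JL)^{-1}\,dz,\qquad I^{i\mu}=PX,\qquad E_{\&}=\ker P=(I-P)X .
\]
Standard operator calculus gives that $I^{i\mu},E_{\&}$ are closed and invariant under $JL$, that $X=I^{i\mu}\oplus E_{\&}$, and that $\sigma(JL|_{I^{i\mu}})=\{\pm i\mu\}$, $\sigma(JL|_{E_{\&}})=\sigma(JL)\setminus\{\pm i\mu\}$. Since $JL$ is the complexification of a real operator it commutes with conjugation, so $\overline{(z-JL)^{-1}}=(\bar z-JL)^{-1}$; combined with the symmetry of $\Gamma$ about $\mathbf{R}$ (and the resulting reversal of orientation) this yields $\bar P=P$, so $I^{i\mu}$ and $E_{\&}$ are complexifications of real subspaces. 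This establishes (i).

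For (ii) I would use that any $u\in\ker L$ satisfies $Lu=0$, hence $u\in D(JL)$ and $JLu=0$, so $(z-JL)^{-1}u=z^{-1}u$ for $z\neq0$ and $Pu=\bigl(\tfrac{1}{2\pi i}\oint_{\Gamma}z^{-1}\,dz\bigr)u$. When $\mu\neq0$ the origin lies outside $\Omega$, so $Pu=0$ and $\ker L\subset\ker P=E_{\&}$; when $\mu=0$ the origin lies inside $\Omega$, so $Pu=u$ and $\ker L\subset PX=I^{i\mu}$. For (iii), since $\Omega$ is symmetric about $i\mathbf{R}$ and $\Gamma$ avoids $\sigma(JL)$, Lemma \ref{L:L-orth-eS} applies to this $P$ and gives $\langle L(I-P)u,Pv\rangle=0$ for all $u,v\in X$; together with $L^{\ast}=L$ this says exactly that $\langle Lu,v\rangle=0$ whenever $u\in I^{i\mu}$ and $v\in E_{\&}$. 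For the non-degeneracy statements, suppose $u\in I^{i\mu}$ with $\langle Lu,w\rangle=0$ for all $w\in I^{i\mu}$; writing an arbitrary $x\in X$ as $x=w+v$ with $w\in I^{i\mu}$, $v\in E_{\&}$, the $L$-orthogonality just proved gives $\langle Lu,x\rangle=0$, hence $Lu=0$ in $X^{\ast}$, i.e. $u\in\ker L\cap I^{i\mu}$. Thus the induced form is non-degenerate on $I^{i\mu}/(\ker L\cap I^{i\mu})$, and the symmetric argument handles $E_{\&}/(\ker L\cap E_{\&})$.

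The only step needing some care is the construction of $\Gamma$: one must choose the isolating neighborhood of $\{\pm i\mu\}$ simultaneously symmetric about both axes, so that the realness argument (conjugation) and the $L$-orthogonality argument (Lemma \ref{L:L-orth-eS}, which needs symmetry about $i\mathbf{R}$) can both be applied to the same $P$; this is possible precisely because $i\mu$ and $-i\mu$ are both isolated. No serious obstacle is expected --- the remainder is routine bookkeeping with the Riesz projection, and in particular nothing here uses finiteness of $\ker L$ or of $E_{i\mu}$, which is consistent with the way Lemma \ref{L:isolated-2} is later combined with Proposition \ref{P:basis} to yield the non-degeneracy and spectral-splitting claims of Proposition \ref{P:non-deg}.
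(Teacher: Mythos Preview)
Your approach is essentially the paper's: Riesz projection onto the isolated spectral set $\{\pm i\mu\}$, realness from the symmetry of $\Gamma$ about $\mathbf{R}$, $L$-orthogonality from Lemma~\ref{L:L-orth-eS} via the symmetry about $i\mathbf{R}$, and the kernel-of-the-form computation for (iii). Your treatment of (ii) and of $\bar P=P$ is in fact more explicit than the paper's, which simply attributes these to ``standard spectral theory.''

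There is one small gap in your non-degeneracy argument. You show that if $u\in I^{i\mu}$ satisfies $\langle Lu,w\rangle=0$ for all $w\in I^{i\mu}$ then $u\in\ker L$, i.e.\ the induced form on the quotient has trivial kernel. But ``non-degenerate'' in this paper means the stronger condition~\eqref{E:non-degeneracy-def}, and trivial kernel does not imply this in infinite dimensions without additional structure. The missing step is to invoke Lemma~\ref{L:decomJ}(2): since $X=I^{i\mu}\oplus E_{\&}$ is $L$-orthogonal, $L_{I^{i\mu}}$ and $L_{E_{\&}}$ each satisfy (\textbf{H2}), and under (\textbf{H2}) trivial kernel is equivalent to non-degeneracy (cf.\ the remark after~\eqref{E:non-degeneracy-def} and Lemma~\ref{L:non-degeneracy}). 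The paper makes exactly this invocation; once you add it, your proof is complete.
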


\begin{proof}
Let $\Gamma\subset\mathbf{C}\backslash\sigma(JL)$ be a small circle, oriented
counterclockwisely, enclosing $i\mu$ but no other elements in $\sigma(JL)$.
Define the spectral projection and the eigenspaces
\[
P_{i\mu}=\frac{1}{2\pi i}\oint_{\Gamma}(\lambda-JL)^{-1}d\lambda,\quad
E^{i\mu}=P_{i\mu}X.
\]
It is standard to verify that $P_{i\mu}$ is a bounded projection on $X$
satisfying
\[
JLP_{i\mu}=P_{i\mu}JL;\;\sigma\big((JL)|_{E^{i\mu}}\big)=\{i\mu\};\;E^{i\mu
}\subset D(JL);\;e^{tJL}E^{i\mu}=E^{i\mu},\,\forall t\in\mathbf{R}.
\]
By Lemma \ref{L:symmetry}, $-i\mu\in\sigma(JL)$ is also an isolated point of
$\sigma(JL)$. Let $P_{-i\mu}$ and $E^{-i\mu}$ be defined similarly. It is
standard that $P_{i\mu}P_{-i\mu}=P_{-i\mu}P_{i\mu}=0$ and thus $P_{i\mu
}+P_{-i\mu}$ is also a projection (or $P_{i\mu}$ instead if $\mu=0$). Define
\[
E_{\&}=\ker(P_{i\mu}+P_{-i\mu}),\quad I^{i\mu}=E^{i\mu}+E^{-i\mu}%
\]
and we have
\begin{equation}
e^{tJL}E_{\&}=E_{\&},\;\forall t\in\mathbf{R};\;\sigma\big((JL)|_{E_{\&}%
}\big)=\sigma(JL)\backslash\{\pm i\mu\};\;X=I^{i\mu}\oplus E_{\&}%
.\label{E:E-and}%
\end{equation}
Therefore, statements (i) and (ii) in the lemma follow from the standard
spectral theory. The $L$-orthogonality between $I^{i\mu}$ and $E_{\&}$ follows
from Lemma \ref{L:L-orth-eS} where $\Omega$ can be taken as the union of the
two small disks centered at $\pm i\mu$.

To complete the proof of the lemma, it suffices to prove the non-degeneracy of
$\langle L\cdot,\cdot\rangle$ on $I^{i\mu}\slash(\ker L\cap I^{i\mu})$ and
$E_{\&}\slash(\ker L\cap E_{\&})$. According to Lemma \ref{L:decomJ},
$L_{I^{i\mu}}$ and $L_{E_{\&}}$ satisfy (\textbf{H2}). Therefore, either they
are non-degenerate or have non-trivial kernels. Suppose there exists $u\in
I^{i\mu}$ such that $\langle Lu,v\rangle=0$ for all $v\in I^{i\mu}$. From
$X=I^{i\mu}\oplus E_{\&}$ and the $L$-orthogonality between $I^{i\mu}$ and
$E_{\&}$, we obtain $\langle Lu,v\rangle=0$ for all $v\in X$, which implies
$u\in\ker L$. Therefore, $\langle L\cdot,\cdot\rangle$ is non-degenerate on
$I^{i\mu}\slash(\ker L\cap I^{i\mu})$. The proof of the non-degeneracy of
$\langle L\cdot,\cdot\rangle$ on $E_{\&}\slash(\ker L\cap E_{\&})$ is similar
and thus we complete the proof of the lemma.
\end{proof}

Notice that $I^{i\mu}$ is given not in terms of $E_{i\mu}$, but of $E^{i\mu}$
defined using spectral integrals. In the following we establish the
relationship between $I^{i\mu}$ and the subspace $E_{i\mu}$ of generalized eigenvectors.

\begin{lemma}
\label{L:isolated-3} It holds $I^{i\mu} = E_{i\mu} + E_{-i\mu}$.
\end{lemma}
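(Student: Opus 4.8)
The plan is to establish the two inclusions $E_{i\mu}+E_{-i\mu}\subseteq I^{i\mu}$ and $I^{i\mu}\subseteq E_{i\mu}+E_{-i\mu}$ separately, the second being the substantial one. The whole argument takes place inside the closed, $JL$-invariant subspaces $E^{i\mu}=P_{i\mu}X$, $E^{-i\mu}=P_{-i\mu}X$ and $I^{i\mu}=E^{i\mu}+E^{-i\mu}$ from the proof of Lemma \ref{L:isolated-2}, and the point is simply to match the spectrally-defined subspace $E^{i\mu}$ with the generalized eigenspace $E_{i\mu}$.

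For $E_{i\mu}+E_{-i\mu}\subseteq I^{i\mu}$, I would use only soft spectral theory: given $u\in E_{i\mu}$, so $(JL-i\mu)^{k}u=0$ for some $k$, split $u=P_{i\mu}u+(I-P_{i\mu})u$. The first term lies in $E^{i\mu}\subset\bigcap_{n}D((JL)^{n})$, hence the second lies in $D((JL)^{k})$ and is annihilated by $(JL-i\mu)^{k}$. Since $JL$ restricted to $\ker P_{i\mu}$ has spectrum $\sigma(JL)\setminus\{i\mu\}$, the operator $(JL-i\mu)$ is invertible there, forcing $(I-P_{i\mu})u=0$. Thus $E_{i\mu}\subseteq E^{i\mu}$, and symmetrically $E_{-i\mu}\subseteq E^{-i\mu}$, which gives the inclusion into $I^{i\mu}$.

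For the reverse inclusion I would work inside $I^{i\mu}$. By Lemma \ref{L:decomJ} (exactly as used in the proof of Lemma \ref{L:isolated-2}), the triple $(I^{i\mu},L_{I^{i\mu}},J_{I^{i\mu}})$ satisfies (\textbf{H1-3}), $JL|_{I^{i\mu}}$ is bounded, and $n^{-}(L_{I^{i\mu}})\le n^{-}(L)<\infty$. Applying the structural decomposition Theorem \ref{T:decomposition} to this triple, the hyperbolic blocks $X_{5},X_{6}$ vanish since $\sigma(JL|_{I^{i\mu}})=\{\pm i\mu\}\subset i\mathbf{R}$, so $JL|_{I^{i\mu}}$ is blockwise upper triangular with finitely many diagonal blocks: the zero block on $\ker L_{I^{i\mu}}$, the finite matrices $A_{1},A_{2},A_{4}$ with spectra in $\{\pm i\mu\}$, and $A_{3}$, which is anti-self-adjoint with respect to an equivalent inner product on $X_{3}$ and has spectrum in $\{\pm i\mu\}$. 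Each diagonal block is then annihilated by the single polynomial $p(t)=(t-i\mu)^{D}(t+i\mu)^{D}$ for $D$ large: the matrices by Cayley--Hamilton; the zero block trivially; and $A_{3}$ by $(t-i\mu)(t+i\mu)$, because a bounded skew-adjoint operator whose spectrum consists of the isolated eigenvalues $\pm i\mu$ splits into the corresponding eigenspaces. Since the diagonal block of a power of a block upper triangular operator is the corresponding power of that diagonal block, $p(JL|_{I^{i\mu}})$ is strictly block upper triangular, hence $p(JL|_{I^{i\mu}})^{m}=0$ where $m$ is the number of blocks; that is, $(JL-i\mu)^{N}(JL+i\mu)^{N}=0$ on $I^{i\mu}$ with $N=mD$.

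Finally I would conclude. If $\mu=0$ this reads $(JL)^{2N}=0$ on $I^{0}=E^{0}$, so $E^{0}\subseteq E_{0}$. If $\mu\neq0$, take $u\in E^{i\mu}$; then $(JL+i\mu)$ restricted to the invariant subspace $E^{i\mu}$ is invertible since $-i\mu\notin\sigma(JL|_{E^{i\mu}})=\{i\mu\}$, so from $(JL+i\mu)^{N}(JL-i\mu)^{N}u=0$ and $(JL-i\mu)^{N}u\in E^{i\mu}$ we get $(JL-i\mu)^{N}u=0$, i.e. $u\in E_{i\mu}$; symmetrically $E^{-i\mu}\subseteq E_{-i\mu}$. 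Combining with the first inclusion yields $I^{i\mu}=E^{i\mu}+E^{-i\mu}=E_{i\mu}+E_{-i\mu}$. The main obstacle is exactly the reverse inclusion: ruling out a genuinely quasinilpotent (non-nilpotent) part of $JL-i\mu$ on the possibly infinite-dimensional piece $X_{3}$. This is where the Hamiltonian structure is essential, through the anti-self-adjointness of $A_{3}$ with respect to an equivalent inner product, which upgrades "$\sigma(A_{3})\subseteq\{\pm i\mu\}$" into an honest eigenspace splitting of $X_{3}$.
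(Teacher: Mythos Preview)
Your proof is correct and follows essentially the same route as the paper: apply Theorem \ref{T:decomposition} to $(I^{i\mu},L_{I^{i\mu}},J_{I^{i\mu}})$, use the anti-self-adjointness of $A_{3}$ to reduce the only infinite-dimensional block to $(A_{3}^{2}+\mu^{2})=0$, and then exploit the block upper-triangular structure to obtain $(JL-i\mu)^{N}(JL+i\mu)^{N}=0$ on $I^{i\mu}$, from which the identification with $E_{i\mu}+E_{-i\mu}$ follows via the spectral splitting $E^{i\mu}\oplus E^{-i\mu}$. One small imprecision to clean up: for $\mu\neq 0$ your polynomial $p$ does \emph{not} annihilate a nontrivial zero block (since $p(0)=\mu^{2D}\neq 0$), so you need to note that $\ker L_{I^{i\mu}}=\{0\}$ in that case, which follows from Lemma \ref{L:isolated-2}(ii)--(iii) and is exactly the point the paper records as ``$X_{0}=\{0\}$''.
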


\begin{proof}
Let $P^{\mu}:X\rightarrow I^{i\mu}$ be the projection associated to the
$L$-orthogonal decomposition $X=I^{i\mu}\oplus E_{\&}$. Let $J^{\mu}=P^{\mu
}J(P^{\mu})^{\ast}$. As $I^{i\mu}\subset D(JL)$, Lemma \ref{L:decomJ} implies
that $(I^{i\mu},L_{I^{i\mu}},J^{\mu})$ satisfies assumptions (\textbf{H1-3}).
The invariance of $I^{i\mu}$ under $JL$ implies $JL|_{I^{i\mu}}=J^{\mu
}L_{I^{i\mu}}$ and $\sigma(J^{\mu}L_{I^{i\mu}})=\{\pm i\mu\}$. Since $\pm
i\mu\notin\sigma(JL|_{E_{\&}})$, we have $E_{\pm i\mu}\subset I^{i\mu}$.

We apply Theorem \ref{T:decomposition} to $JL$ on $I^{i\mu}$, where there is
no hyperbolic subspace, and obtain the decomposition of $I^{i\mu}$ into closed
subspaces $I^{i\mu}=\Sigma_{j=0}^{4}X_{j}$, where $X_{0}=\ker L$ if $\mu=0$ or
$X_{0}=\{0\}$ if $\mu\neq0$. In this decomposition, $L_{I^{i\mu}}$ and $JL$
take the block forms
\[
JL\leftrightarrow%
\begin{pmatrix}
0 & A_{01} & A_{02} & A_{03} & A_{04}\\
0 & A_{1} & A_{12} & A_{13} & A_{14}\\
0 & 0 & A_{2} & 0 & A_{24}\\
0 & 0 & 0 & A_{3} & A_{34}\\
0 & 0 & 0 & 0 & A_{4}%
\end{pmatrix}
,\;L_{I^{i\mu}}\leftrightarrow%
\begin{pmatrix}
0 & 0 & 0 & 0 & 0\\
0 & 0 & 0 & 0 & B_{14}\\
0 & 0 & L_{X_{2}} & 0 & 0\\
0 & 0 & 0 & L_{X_{3}} & 0\\
0 & B_{14}^{\ast} & 0 & 0 & 0
\end{pmatrix}
.
\]
Note $L_{X_{3}}\geq\delta$ for some $\delta>0$ and $A_{2,3}$ are
anti-self-adjoint with respect to the equivalent inner product $\mp\langle
L_{X_{2,3}}\cdot,\cdot\rangle$ with $\sigma(A_{1,2,3,4})=\{i\mu,-i\mu\}$.

In the case of $\mu=0$, the anti-self-adjoint operator $A_{2,3}$ must be
$A_{2,3}=0$. Meanwhile all other finite dimensional diagonal blocks are also
nilpotent. Therefore, it is straightforward to compute that $(JL|_{I^{i\mu}%
})^{k}=0$ for some integer $k>0$. Therefore, $I^{i\mu}$ consists of
generalized eigenvectors only and $I^{i\mu}=E_{i\mu}$ in the case of $\mu=0$.

In the case of $\mu\neq0$, $X_{0}=\{0\}$. Moreover, as $A_{3}$ is
anti-self-adjoint with respect to the inner product $\langle L_{X_{3}}%
\cdot,\cdot\rangle$, we can further decompose $X_{3}$ into closed subspaces
$X_{3}=X_{3+}\oplus X_{3-}$, where $X_{3\pm}=\ker(A_{3}\pm i\mu)$, with
associated projections $Q_{\pm}:X_{3}\rightarrow X_{3\pm}$. Accordingly
$A_{3}=i\mu Q_{+}-i\mu Q_{-}$, which implies $A_{3}^{2}+\mu^{2}=0$. As
$A_{1,2,4}$ are finite dimensional with the only eigenvalues $\pm i\mu$, we
obtain that $\big((JL|_{I^{i\mu}})^{2}+\mu^{2}\big)^{k}=0$ for some integer
$k>0$. Rewrite it as
\[
(JL-i\mu)^{k}(JL+i\mu)^{k}=(JL+i\mu)^{k}(JL-i\mu)^{k}=0\;\text{ on }I^{i\mu}.
\]
Let $X_{\pm}$ be the invariant eigenspace of $\pm i\mu$ of $JL|_{I^{i\mu}}$
defined via spectral integrals. We have $I^{i\mu}=X_{+}\oplus X_{-}$. As
$JL\pm i\mu$ is an isomorphism from $X_{\pm}$ to itself, we obtain from the
above identity that $X_{\pm}=\ker(JL\mp i\mu)^{k}$. Therefore, $X_{\pm}$ are
the subspaces of generalized eigenvectors of $\pm i\mu$ of $JL$, that is,
\[
I^{i\mu}=\ker(JL-i\mu)^{k}\oplus\ker(JL+i\mu)^{k}=E_{i\mu}\oplus E_{-i\mu}.
\]

\end{proof}

Finally, let $E_{\#}=E_{\&}$ if $\mu=0$ or $E_{\#}=E_{-i\mu}\oplus E_{\&}$ if
$\mu\neq0$. In the case of $\mu\neq0$, Lemmas \ref{lemma-L-form-preserve},
\ref{L:non-degeneracy}, \ref{L:decomJ} and the non-degeneracy of $\langle
L\cdot,\cdot\rangle$ on $I^{i\mu}$ imply that $\langle L\cdot,\cdot\rangle$ is
non-degenerate on $E_{i\mu}$ and $E_{\&}\slash\ker L$. This along with the
above lemmas completes the proof of Proposition \ref{P:non-deg}.

Based on Proposition \ref{P:non-deg}, we are ready to prove Proposition
\ref{P:direct decomposition}. \newline

\textbf{Proof of Proposition \ref{P:direct decomposition}:} Let
\[
\Lambda= \{ 0 \ne i\mu\in\sigma(JL) \cap i\mathbf{R} \mid k^{\le0} (i\mu)
>0\},
\]
which is a finite set according to \eqref{counting-formula} of Theorem
\ref{theorem-counting}.

Let $i\mu\in\Lambda$. We have that $\langle L\cdot,\cdot\rangle$ is
non-degenerate on $E_{i\mu}$ either by our assumption if $i\mu$ is not
isolated in $\sigma(JL)$ or by Proposition \ref{P:non-deg} if $i\mu$ is
isolated. From Proposition \ref{P:basis}, we have the $L$-orthogonal and
$JL$-invariant decomposition $E_{i\mu}=E_{i\mu}^{1}\oplus E_{i\mu}^{G}$, where
$E_{i\mu}^{1}\subset\ker(JL-i\mu)$, $\dim E_{i\mu}^{G}<\infty$, and $\langle
L\cdot,\cdot\rangle$ is non-degenerate on both $E_{i\mu}^{1}$ and $E_{i\mu
}^{G}$. Let $E_{i\mu}^{1,-}\subset E_{i\mu}^{1}$ be a subspace such that $\dim
E_{i\mu}^{1,-}=n^{-}(L|_{E_{i\mu}^{-}})$ and $\langle L\cdot,\cdot\rangle$ is
negative definite on $E_{i\mu}^{1,-}$. Let
\[
E_{i\mu}^{finite}=E_{i\mu}^{G}\oplus E_{i\mu}^{1,-},\text{ which satisfies
}\dim E_{i\mu}^{finite}<\infty,\;n^{-}(L|_{E_{i\mu}^{finite}})=k^{\leq0}%
(i\mu).
\]
Moreover, $JL(E_{i\mu}^{finite})=E_{i\mu}^{finite}$ according to its construction.

If $0\notin\sigma(JL)$, let $E_{0}^{finite}=\{0\}$ and we may skip to the next
step to define $N$ and $M$. Otherwise, our assumption and Propositions
\ref{P:non-deg}, \ref{P:basis} imply an $L$-orthogonal decomposition
$E_{0}=\ker L\oplus E_{0}^{1}\oplus E_{0}^{G}$, where
\[
JL(E_{0}^{1})\subset\ker L,\ \dim E_{0}^{G}<\infty,\ JL(E_{0}^{G})\subset
E_{0}^{G}\oplus\ker L,
\]
and $\langle L\cdot,\cdot\rangle$ is non-degenerate on both $E_{0}^{1}$ and
$E_{0}^{G}$. Let $E_{0}^{1,-}\subset E_{0}^{1}$ be such that $\dim E_{0}%
^{1,-}=n^{-}(L|_{E_{0}^{1}})$ and $\langle L\cdot,\cdot\rangle$ is negative
definite on $E_{0}^{1,-}$. Let $E_{0}^{finite}=E_{0}^{1,-}\oplus E_{0}^{G}$,
which satisfies
\[
\dim E_{0}^{finite}<\infty,\;n^{-}(L|_{E_{0}^{finite}})=k_{0}^{\leq0}.
\]

Let
\[
N=(\oplus_{\operatorname{Re}\lambda\neq0}E_{\lambda})\oplus(\oplus_{i\mu
\in\Lambda}E_{i\mu}^{finite})\oplus E_{0}^{finite},\quad\tilde{M}=N^{\perp
_{L}}=(N\oplus\ker L)^{\perp_{L}}.
\]
Clearly, $\dim N<\infty$, $n^{-}(L|_{N})=n^{-}(L)$ (due to
\eqref{counting-formula} of Theorem \ref{theorem-counting}), $\langle
L\cdot,\cdot\rangle$ is non-degenerate on $N$, and $N\oplus\ker L$ is
invariant under $JL$. Therefore, $\tilde{M}$ is also invariant under $JL$,
$X=N\oplus\tilde{M}$ (due to Lemma \ref{L:non-degeneracy}), $\ker
L\subset\tilde{M}$, and $n^{-}(L|_{\tilde{M}})=0$. Moreover, $N$ and $M$ are
complexifications of real subspaces as $E_{\lambda}$ and $E_{\bar{\lambda}}$
have exactly the same structure. Let $M\subset\tilde{M}$ be any closed
subspace such that $\tilde{M}=M\oplus\ker L$ and this completes the proof of
proposition. \hfill$\square$

To end this section, we prove the decomposition result Proposition
\ref{P:pego} for $L$-self-adjoint operators. \newline

\textbf{Proof of Proposition \ref{P:pego}}: In order to apply the previous
results, which have been given in the framework of real Hilbert spaces, to
prove this proposition, we first convert it into a problem on real Hilbert
spaces. Recall $(\cdot,\cdot)$ and $\langle\cdot,\cdot\rangle$ denote the
complex inner product and the complex duality pair between $X^{\ast}$ and $X$,
respectively. Let $X_{r}$ be the same set as $X$ but equipped with the real
inner product $(u,v)_{r}=\operatorname{Re}(u,v)$. On $X_{r}$, the
$i-$multiplication $i:X\rightarrow X$ becomes a real linear isometry $i_{r}:$
$X_{r}\rightarrow X_{r}$ with $i_{r}^{2}=-I$. Let $L_{r}:X_{r}\rightarrow
X_{r}^{\ast}$ be the linear symmetry bounded operator defined as $\langle
L_{r}u,v\rangle_{r}=\operatorname{Re}\langle Lu,v\rangle$ where $\langle
\cdot,\cdot\rangle_{r}$ denote the real duality pair between $X_{r}^{\ast}$
and $X_{r}$. Subsequently, the non-degeneracy of $L$ yields the non-degeneracy
of $L_{r}$. Accordingly, $A$ becomes a real linear operator $A_{r}%
:X_{r}\supset D(A_{r})\rightarrow X_{r}$. The linearity of $L$ and $A$ implies
that $i_{r}A_{r}=A_{r}i_{r}$ and $L_{r}i_{r}=-i_{r}^{\ast}L_{r}$. Finally,
that $A$ is $L$-self-adjoint is translated to the $L_{r}$-self-adjointness of
$A_{r}$, namely, $L_{r}A_{r}=A_{r}^{\ast}L_{r}$.

Define $J=i_{r}A_{r}L_{r}^{-1}:X_{r}^{\ast}\rightarrow X_{r}$. The $L_{r}%
$-self-adjointness of $A_{r}$ implies $J^{\ast}=-J$ and thus $i_{r}%
A_{r}=JL_{r}$ with $\left(  J,L_{r},X_{r}\right)  $ satisfying (\textbf{H1-3}%
). It is easy to prove
\[
\sigma(A_{r})=\sigma(A)\subset\mathbf{R},\quad\sigma(i_{r}A_{r})=\big(i\sigma
(A_{r})\big)\cup\big(-i\sigma(A_{r})\big),
\]
so the nonzero eigenvalues of $i_{r}A_{r}$ are isolated and $\ker\left(
i_{r}A_{r}\right)  =\ker A$. It is straightforward to deduce the
non-degeneracy of $\langle L_{r}\cdot,\cdot\rangle_{r}$ on $\ker(i_{r}A_{r})$
from our non-degeneracy assumption of $\langle L\cdot,\cdot\rangle$ on $\ker
A$, and thus (\textbf{H4}) is satisfied. Thus by Proposition
\ref{P:direct decomposition}, there exists a decomposition $X_{r}=\tilde
{N}\oplus\tilde{M}$ such that $\tilde{N}$ and $\tilde{M}$ are $L_{r}%
$-orthogonal and invariant under $i_{r}A_{r}$, $\dim\tilde{N}<\infty$ and
$L_{r}|_{\tilde{M}}>0$, which also implies $L$ is uniformly positive on
$\tilde{M}$. Let
\[
N=\tilde{N}+i_{r}\tilde{N},\quad M=N^{\perp_{L_{r}}}=\{u\in X_{r}\mid\langle
L_{r}u,v\rangle_{r}=0,\,\forall v\in N\}\subset\tilde{M}.
\]
Clearly, $\dim N\leq2\dim\tilde{N}<\infty$ and $\langle L_{r}\cdot
,\cdot\rangle$ is uniformly positive on $M$, thus so is $\langle L\cdot
,\cdot\rangle$ on $M$. To complete the proof, we only need to show $N,M\subset
X$ are $L$-orthogonal and invariant under $A$. We first consider the
$L$-orthogonality which also involves the imaginary part of the quadratic form
of $L$. Suppose there exist $u_{1},u_{2},\in\tilde{N}$ and $v\in M$ such that
$\langle L(u_{1}+iu_{2}),v\rangle=Re^{i\theta}\neq0$. It implies $\langle
Lu,v\rangle=\langle L_{r}u,v\rangle_{r}=R\in\mathbf{R}\backslash\{0\}$, where
\[
u=(\cos\theta)u_{1}+(\sin\theta)u_{2}+i_{r}\big((\cos\theta)u_{2}+(\sin
\theta)u_{1}\big)\in N
\]
which is a contradiction to the definitions of $N$ and $M$ and thus they are
$L$-orthogonal. Secondly, $i_{r}A_{r}(\tilde{N})\subset\tilde{N}$, $i_{r}%
A_{r}=A_{r}i_{r}$, and $i_{r}^{2}=-I$ imply $A_{r}(i_{r}\tilde{N}%
)\subset\tilde{N}$ and $A_{r}\tilde{N}\subset i_{r}\tilde{N}$. Therefore, $N$
is invariant under $A$. It along with the $L$-self-adjointness of $A$ also
implies the invariance of $M$ and the proof of Proposition \ref{P:pego} is
complete. \hfill$\square$

\section{Perturbations}

\label{S:perturbation}

In this section we study the robustness of the spectral properties of the
Hamiltonian operator $JL$ under small perturbations preserving Hamiltonian
structures. Consider
\[
u_{t}=J_{\#}L_{\#}u,\qquad J_{\#}=J+J_{1},\quad L_{\#}=L+L_{1},\quad u\in X.
\]
Unless otherwise specified, assumptions (\textbf{A1-3}) given in Subsection
\ref{SS:SS} are assumed throughout this section. We first prove

\begin{lemma}
\label{L:P-H3} Assumptions (\textbf{A1-3}) imply that there exists
$\epsilon>0$ depending on $J$ and $L$ such that, if $|L_{1}|\leq\epsilon$,
then (\textbf{H1-3}) is satisfied by $J_{\#}$ and $L_{\#}$ and
\[
\dim\ker L_{\#}\leq\dim\ker L<\infty,\quad D(J_{\#}L_{\#})=D(JL).
\]

\end{lemma}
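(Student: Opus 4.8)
\textbf{Proof plan for Lemma \ref{L:P-H3}.}

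The plan is to verify the three hypotheses one at a time for the pair $(J_\#, L_\#)$, using that each of (\textbf{H1}), (\textbf{H2}), (\textbf{H3}) is in a suitable sense "open" under the perturbations allowed by (\textbf{A1-3}). First, (\textbf{H1}) is immediate: since $J^* = -J$ and $J_1^* = -J_1$ with $J_1$ bounded, the sum $J_\# = J + J_1$ is again anti-self-dual on $D(J_\#) = D(J)$ (a bounded perturbation does not change the domain or the anti-self-duality). Likewise $L_\# = L + L_1$ is bounded and symmetric because $L_1^* = L_1$.

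The substantive point is (\textbf{H2}). I would pass to the Riesz representations: let $\mathbb{L}, \mathbb{L}_1 : X \to X$ be the bounded self-adjoint operators with $(\mathbb{L}u,v) = \langle Lu,v\rangle$ and $(\mathbb{L}_1 u, v) = \langle L_1 u, v\rangle$, so $\|\mathbb{L}_1\| = |L_1| \le \epsilon$. By Remark \ref{R:assumptions}, (\textbf{H2}) for $L$ is equivalent to the existence of $\delta' > 0$ with $\sigma(\mathbb{L}) \cap [-\delta', \delta'] \subset \{0\}$ and $\dim(\Pi_{-\delta'} X) < \infty$. For $\epsilon < \delta'/2$, standard perturbation theory of self-adjoint operators gives that $\sigma(\mathbb{L}_\#) = \sigma(\mathbb{L} + \mathbb{L}_1)$ is contained in an $\epsilon$-neighborhood of $\sigma(\mathbb{L})$; hence $\sigma(\mathbb{L}_\#) \cap [-\delta'/2, \delta'/2]$ is contained in an $\epsilon$-neighborhood of $\{0\}$, which is again an isolated piece of the spectrum separated from the rest by a gap of size $\ge \delta'/2$. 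The associated spectral projection $\Pi^\#$ (for the part of $\sigma(\mathbb{L}_\#)$ near $0$) is then an $O(\epsilon)$ perturbation of $\Pi_{-\delta'/2}^{L} + (\text{projection onto }\ker L)$ via the contour-integral formula $\Pi^\# = \frac{1}{2\pi i}\oint_\Gamma (z - \mathbb{L}_\#)^{-1}\,dz$ over a fixed circle $\Gamma$ of radius $\delta'/2$, so its range has dimension $\le \dim(\Pi_{-\delta'/2}X) + \dim\ker L < \infty$. Now I split this finite-dimensional range into the negative and the (possibly trivial) zero part of $\langle L_\# \cdot, \cdot\rangle$ restricted to it, set $X_-^\#$ to be the negative part (so $\dim X_-^\# \le$ that finite number), put $\ker L_\#$ as its natural summand, and take $X_+^\#$ to be the range of $I - \Pi^\#$ together with any complement of $\ker L_\#$; on $X_+^\#$ the operator $\mathbb{L}_\#$ is bounded below by $\delta'/2$ away from its small-spectrum part, giving uniform positivity $\langle L_\# u, u\rangle \ge \frac{\delta'}{2}\|u\|^2$. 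This establishes (\textbf{H2}) for $(J_\#, L_\#)$ with $n^-(L_\#) < \infty$. Moreover $\dim\ker L_\# \le \dim\ker L$ follows because $\ker L_\#$ is contained in the finite-dimensional range of $\Pi^\#$, and in fact since $0$ being in $\sigma(\mathbb{L}_\#)$ forces (by the gap structure) that $\dim \ker \mathbb{L}_\# \le \dim(\Pi^\# X) $; a cleaner bound $\dim\ker L_\# \le \dim\ker L$ comes from noting that the total dimension of the non-positive part of $\langle L_\# \cdot,\cdot\rangle$ on $\Pi^\# X$ cannot exceed $\dim X_- + \dim\ker L$, combined with $n^-(L_\#) \ge n^-(L) - O(\epsilon\text{-count})$; I would carry out the bookkeeping so that the kernel alone is controlled, possibly shrinking $\epsilon$ so that $n^-(L_\#) = n^-(L)$ actually holds (this is consistent with (\textbf{A2}) being the hypothesis that makes $n^-(L_\#) < \infty$ in the first place, and the Morse index is lower semicontinuous, so only the kernel can shrink).

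Next, (\textbf{H3}): by Remark \ref{R:H3}, since (\textbf{A2}) gives $\dim\ker L < \infty$ and we have just shown $\dim \ker L_\# < \infty$, assumption (\textbf{H3}) for $(J_\#, L_\#)$ is automatic — one does not even need smallness here, only $\dim\ker L_\# < \infty$, which holds. Finally, the domain identity $D(J_\# L_\#) = D(JL)$: write $J_\# L_\# = (J + J_1)(L + L_1) = JL + JL_1 + J_1 L + J_1 L_1$. The terms $J_1 L$, $J_1 L_1$ are bounded since $J_1$ is bounded. By (\textbf{A3}), $D(JL) \subset D(JL_1)$, and by the closed graph theorem argument already noted in the text (around \eqref{E:graph-norm}) $JL_1$ is bounded from $(D(JL), \|\cdot\|_G)$ to $X$. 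Hence $J_\# L_\# - JL$ is bounded on $D(JL)$, so $J_\# L_\#$ and $JL$ have the same domain. The main obstacle is the bookkeeping in the (\textbf{H2}) step — specifically making the finite-dimensional decomposition of the near-zero spectral subspace of $\mathbb{L}_\#$ line up with a genuine $L_\#$-orthogonal splitting $X_-^\# \oplus \ker L_\# \oplus X_+^\#$ with the uniform positivity constant, and extracting the sharp inequality $\dim\ker L_\# \le \dim\ker L$ — but this is routine finite-dimensional linear algebra once the spectral-gap perturbation estimate is in hand.
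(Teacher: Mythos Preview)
Your approach is sound but differs from the paper's. The paper avoids spectral calculus: it keeps the original $X_\pm$ (on which $\pm L_\#$ stays uniformly definite for small $|L_1|$), shows that any $u\in\ker L_\#$ has its $(X_+\oplus X_-)$-component determined by its $\ker L$-component, so $\ker L_\# \subset Y := \graph(S)$ for a bounded $S:\ker L\to X_+\oplus X_-$ (whence $\dim\ker L_\#\le\dim\ker L$ at once), verifies $Y\perp_{L_\#}(X_+\oplus X_-)$, splits the finite-dimensional $Y$ into $Y_+\oplus\ker L_\#\oplus Y_-$, and sets $X_{\#\pm}=X_\pm\oplus Y_\pm$. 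The payoff is that $X_{\#+}\oplus X_{\#-}\supset X_+\oplus X_-$, so (\textbf{H3}) for $J_\#$ follows \emph{directly} from (\textbf{H3}) for $J$, with no appeal to Remark~\ref{R:H3}. In fact the paper remarks, immediately after the proof, that your spectral-decomposition route gives (\textbf{H2}) ``much more easily'' but the resulting $X_{\#\pm}$ ``may not satisfy (\textbf{H3})''; your invocation of Remark~\ref{R:H3} is precisely the patch, and it is legitimate once $\dim\ker L_\#<\infty$ is in hand.

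Two loose ends in your write-up. Your $\Pi^\#$ is defined by a contour of radius $\delta'/2$ about $0$, which encloses only the near-zero spectrum of $\mathbb{L}_\#$ and \emph{not} the negative part sitting near $\sigma(\mathbb{L})\cap(-\infty,-\delta']$; hence $(I-\Pi^\#)X$ is not uniformly $L_\#$-positive as you claim. Use instead the spectral projection for $(-\infty,\delta'/2]$, or handle three spectral pieces. On the plus side, with the small contour the rank of $\Pi^\#$ equals $\dim\ker L$ by norm-continuity of finite-rank spectral projections, and $\ker L_\#\subset\Pi^\# X$ then gives $\dim\ker L_\#\le\dim\ker L$ cleanly, without the Morse-index bookkeeping you sketch. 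Finally, ``$J_\#L_\#-JL$ bounded on $D(JL)$'' yields only $D(JL)\subset D(J_\#L_\#)$; the reverse inclusion follows once you know $J_\#L_\#$ is a closed generator and $(\lambda-J_\#L_\#)|_{D(JL)}:D(JL)\to X$ is already bijective for some $\lambda$ in the common resolvent set (this is the content of Lemma~\ref{L:resolvent}).
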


\begin{proof}
It is obvious that (\textbf{H1}) is satisfied by $J_{\#}$. Let $X_{\pm}$ be
the subspaces provided in (\textbf{H2}) satisfied by $L$. Clearly, we still
have, for $\epsilon<<1$,
\[
\pm\langle L_{\#}u,u\rangle\geq\delta||u||^{2},\quad\forall u\in X_{\pm}%
\]
for some $\delta>0$ independent of $\epsilon$. Let $X_{1}=X_{+}\oplus X_{-}$.
Assumption (\textbf{H2}) for $L$ implies that$\langle L\cdot,\cdot\rangle$
restricted to $X_{1}$ is non-degenerate, i.e.
\[
L_{X_{1}}=i_{X_{1}}^{\ast}Li_{X_{1}}:X_{1}\rightarrow X_{1}^{\ast},
\]
defined as in \eqref{E:L_y1}, is an isomorphism. Therefore,
\[
L_{\#,X_{1}}=i_{X_{1}}^{\ast}L_{\#}i_{X_{1}}:X_{1}\rightarrow X_{1}^{\ast},
\]
as a small bounded perturbation of $L_{X_{1}}$, is also an isomorphism.
Suppose $u=u_{0}+u_{1}\in\ker L_{\#}$, where $u_{0}\in\ker L$ and $u_{1}\in
X_{1}$, then we have
\[
0=L_{\#}u=L_{\#}u_{1}+L_{1}u_{0}\ \Longrightarrow\ u_{1}=-L_{\#,X_{1}}%
^{-1}i_{X_{1}}^{\ast}L_{1}u_{0},
\]
that is,
\[
\ker L_{\#}\subset Y,\;\text{ where }Y=\graph(S)\text{ and }S=-L_{\#,X_{1}%
}^{-1}i_{X_{1}}^{\ast}L_{1}:\ker L\rightarrow X_{1}.
\]
Moreover, from the \eqref{E:L_Y2} type identity, it also holds that, for any
$v\in\ker L$ and $u_{1}\in X_{1}$,
\begin{align*}
\langle L_{\#}(v+Sv),u_{1}\rangle= &  \langle L_{1}v,u_{1}\rangle-\langle
L_{\#}L_{\#,X_{1}}^{-1}i_{X_{1}}^{\ast}L_{1}v,u_{1}\rangle\\
= &  \langle L_{1}v,u_{1}\rangle-\langle i_{X_{1}}^{\ast}L_{1}v,u_{1}%
\rangle=0,
\end{align*}
that is, $Y$ and $X_{1}$ are $L_{\#}$-orthogonal.

Since $\dim Y=\dim\ker L<\infty$ due to (\textbf{A2}), the quadratic form
$\langle L_{\#}\cdot,\cdot\rangle$ restricted to $Y$ leads to a decomposition
of $Y$
\[
Y=Y_{+}\oplus\ker L_{\#}\oplus Y_{-},
\]
where $\pm L_{\#}$ is positive on $Y_{\pm}$. Let $X_{\#\pm}=X_{\pm}\oplus
Y_{\pm}$, then
\[
X=X_{\#+}\oplus\ker L_{\#}\oplus X_{\#-}.
\]
Due to the $L_{\#}$-orthogonality between $Y$ and $X_{1}$, it is easy to
derive that $\pm\langle L_{\#}\cdot,\cdot\rangle$ are positive definite on
$X_{\#\pm}$. Therefore, (\textbf{H2}) is satisfied.

Finally we prove (\textbf{H3}). Suppose $\gamma\in X^{*}$ and $\langle\gamma,
u \rangle=0$ for all $u \in X_{\#+} \oplus X_{\#-} \supset X_{+} \oplus X_{-}%
$. From (\textbf{A1}) which requires that (\textbf{H1-3}) being satisfied by
$J$ and $L$, we have $\gamma\in D(J) = D(J_{\#})$ as $J_{1}$ is assumed to be bounded.
\end{proof}

Much as in Remark \ref{R:assumptions} by composing with the Riesz
representation, we may treat $L_{\#}$ as a bounded symmetric operator on $X$
and then apply its spectral decomposition, a decomposition satisfying
(\textbf{H2}) can be obtained much more easily. However, that decomposition
may not satisfy (\textbf{H3}).

In Subsection \ref{SS:PET}, we will obtain the persistence of exponential
trichotomy of the perturbed system. In Subsection \ref{SS:ImSpec}, we will
focus on purely imaginary spectral points of $\sigma(JL)$ and the possibility
of bifurcation of unstable eigenvalues of $J_{\#} L_{\#}$. To start, following
the standard procedure we show that assumption (\textbf{A3}) implies the
convergence of the resolvents. Recall $\vert|\cdot\vert|_{G}$ denote the graph
norm on $D(JL)$ and $|\cdot|_{G}$ the corresponding operator norm.

\begin{lemma}
\label{L:resolvent} Let $K \subset\mathbf{C}\backslash\sigma(JL)$ be compact,
then there exist $C, \epsilon>0$ depending on $K$, $J$, and $L$, such that,
for any $\lambda\in K$ and
\[
|J_{1}|, |JL_{1}|_{G} \le\epsilon, \quad|L_{1}| \le1,
\]
it holds that the densely defined closed operator $\lambda- J_{\#} L_{\#}:
D(JL) \to X$ has a bounded inverse and
\[
| (\lambda- J_{\#} L_{\#})^{-1} - (\lambda- JL)^{-1} | \le C (|J_{1}|+ |J
L_{1}|_{G} ).
\]

\end{lemma}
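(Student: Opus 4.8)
The plan is to write $\lambda - J_\#L_\# = (\lambda - JL) - (J_1 L + J L_1 + J_1 L_1)$ and treat the second group of terms as a small perturbation of the boundedly invertible operator $\lambda - JL : D(JL) \to X$, using the graph norm $\|\cdot\|_G$ on $D(JL)$ as the natural domain topology. The key point is that the perturbation $J_1 L + J L_1 + J_1 L_1$ must be shown to be a bounded operator from $(D(JL), \|\cdot\|_G)$ to $X$ with operator norm $O(|J_1| + |JL_1|_G)$, uniformly in $\lambda \in K$. Once this is established, a standard Neumann series argument gives the invertibility of $\lambda - J_\#L_\#$ and the resolvent difference estimate, since $(\lambda - JL)^{-1} : X \to (D(JL),\|\cdot\|_G)$ is bounded uniformly on the compact set $K \subset \mathbf{C}\backslash\sigma(JL)$ (the resolvent is continuous, hence bounded, on $K$, and the graph-norm bound follows because $JL(\lambda - JL)^{-1} = \lambda(\lambda-JL)^{-1} - I$ is also bounded on $K$).

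First I would record the elementary mapping properties: $L : X \to X^*$ and $L_1 : X \to X^*$ are bounded (hypotheses (\textbf{H2}) and (\textbf{A1})), $J_1 : X^* \to X$ is bounded (\textbf{A1}), so $J_1 L$, $J_1 L_1 : X \to X$ are bounded with norms controlled by $|J_1|$ (times $|L|$ or $|L_1| \le 1$). The term $J L_1$ is the only subtle one, since $J$ is unbounded; but assumption (\textbf{A3}), $D(JL) \subset D(JL_1)$, together with the Closed Graph Theorem (as already noted in the text around \eqref{E:graph-norm}), makes $JL_1 : (D(JL), \|\cdot\|_G) \to X$ a bounded operator with norm exactly $|JL_1|_G$. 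Collecting these, for $u \in D(JL)$,
\[
\|(J_1 L + JL_1 + J_1 L_1)u\| \le \big(|J_1|\,|L| + |JL_1|_G + |J_1|\,|L_1|\big)\|u\|_G \le C_0\big(|J_1| + |JL_1|_G\big)\|u\|_G
\]
for $|L_1| \le 1$, with $C_0$ depending only on $J,L$.

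Next I would set $R(\lambda) = (\lambda - JL)^{-1}$ and note that $\sup_{\lambda \in K}\big(\|R(\lambda)\|_{X\to X} + \|JL\,R(\lambda)\|_{X\to X}\big) =: M_K < \infty$ by compactness of $K$ and analyticity of the resolvent on $\mathbf{C}\backslash\sigma(JL)$; consequently $\|R(\lambda)\|_{X \to (D(JL),\|\cdot\|_G)} \le M_K + 1$. Writing $\lambda - J_\#L_\# = (\lambda - JL)\big(I - R(\lambda)(J_1L + JL_1 + J_1L_1)\big)$ as operators on $D(JL)$, and choosing $\epsilon$ small enough that $C_0(M_K+1)(|J_1| + |JL_1|_G) \le \frac12$ whenever $|J_1|, |JL_1|_G \le \epsilon$, the factor in parentheses is invertible on $(D(JL),\|\cdot\|_G)$ by Neumann series, hence $\lambda - J_\#L_\#$ has a bounded inverse $X \to D(JL)$. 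The resolvent identity
\[
(\lambda - J_\#L_\#)^{-1} - R(\lambda) = (\lambda - J_\#L_\#)^{-1}(J_1 L + JL_1 + J_1 L_1)R(\lambda)
\]
then yields, using the uniform bound $\|(\lambda - J_\#L_\#)^{-1}\|_{X\to X} \le 2M_K$ from the Neumann series, the estimate $\|(\lambda - J_\#L_\#)^{-1} - R(\lambda)\| \le 2M_K \cdot C_0(|J_1| + |JL_1|_G)\cdot M_K$, which is the claimed bound with $C = 2C_0 M_K^2$.

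I expect the only genuine subtlety — the ``hard part'' — to be the careful bookkeeping that (\textbf{A3}) plus Closed Graph really delivers the uniform bound on $JL_1$ in the graph norm, and that the compactness of $K$ gives the uniform (in $\lambda$) control needed to make $\epsilon$ and $C$ independent of $\lambda \in K$; the rest is the routine Neumann-series perturbation of resolvents. One should also double-check that $D(J_\#L_\#) = D(JL)$, which is exactly the content of Lemma \ref{L:P-H3} (and is needed so that ``$\lambda - J_\#L_\#$'' makes sense as an operator on the fixed domain $D(JL)$); this is why the hypothesis $|L_1| \le 1$, rather than merely boundedness, is harmless and why no smallness of $|L_1|$ beyond boundedness enters the final estimate.
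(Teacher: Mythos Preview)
Your proposal is correct and follows essentially the same approach as the paper: write $\lambda - J_\#L_\# = (\lambda - JL) - (JL_1 + J_1L_\#)$, control the perturbation term via the graph norm using (\textbf{A3}) and the Closed Graph Theorem, then invoke a Neumann series. The only cosmetic difference is that the paper factors on the left, $\lambda - J_\#L_\# = \big(I - (JL_1 + J_1L_\#)(\lambda-JL)^{-1}\big)(\lambda-JL)$, and bounds $JL_1(\lambda-JL)^{-1}$ directly as an operator on $X$, whereas you factor on the right and work on $(D(JL),\|\cdot\|_G)$; the estimates are identical in substance.
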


\begin{proof}
It is straightforward to compute
\[
\lambda-J_{\#}L_{\#}=\big(I-(JL_{1}+J_{1}L_{\#})(\lambda-JL)^{-1}%
\big)(\lambda-JL).
\]
According to assumption (\textbf{A3}), $JL_{1}(\lambda-JL)^{-1}$ is a closed
operator with the domain $X$. The closed graph theorem implies that it is
actually bounded with
\begin{align*}
|JL_{1}(\lambda-JL)^{-1}|\leq &  |JL_{1}|_{G}\big(|(\lambda-JL)^{-1}%
|+|JL(\lambda-JL)^{-1}|\big)\\
\leq &  |JL_{1}|_{G}\big(1+(1+|\lambda|)|(\lambda-JL)^{-1}|\big),
\end{align*}
where $JL=\lambda-(\lambda-JL)$ was used in the last step. The conclusion of
the lemma follows from this along with the boundedness of $J_{1}$, $L$, and
$L_{1}$.
\end{proof}

\subsection{Persistent exponential trichotomy and stability: Theorem
\ref{T:PET} and Proposition \ref{P:PSubS}}

\label{SS:PET}

In this subsection, our main task is to prove Theorem \ref{T:PET} as well as
Proposition \ref{P:PSubS}. With the help of Lemmas \ref{L:resolvent} and
\ref{L:L-orth-eS}, we are able to prove most of Theorem \ref{T:PET} and
Proposition \ref{P:PSubS} by standard arguments in the spectral theory.
However, proving \eqref{E:P-center} requires more elaborated arguments as one
of the perturbation term $JL_{1}$ is not necessarily a small bounded operator.\\

\noindent\textbf{Proof of Theorem \ref{T:PET} except \eqref{E:P-center}.}
Adopt the notation used in \eqref{E:ep}
\begin{equation}
|J_{1}|+|L_{1}|+|JL_{1}|_{G} \le \ep. \label{E:ep1}
\end{equation}
Let
\[
\sigma_{u,s}=\{\lambda\in\sigma(JL)\mid\pm\operatorname{Re}\lambda
>0\}\subset\sigma(JL)
\]
and $\Omega_{u}\subset\mathbf{C}$ be open and bounded with smooth boundary
$\Gamma_{u}=\partial\Omega_{u}\subset\mathbf{C}\backslash\sigma(JL)$ such that
$\sigma(JL)\cap\Omega_{u}=\sigma_{u}$. According to Lemma \ref{L:symmetry},
$\sigma_{s}$ is symmetric to $\sigma_{u}$ about $i\mathbf{R}$ and thus we let
$\Omega_{s}$ be the domain symmetric to $\Omega_{u}$ and $\Gamma_{s}%
=\partial\Omega_{s}$. For small $\epsilon$, Lemma \ref{L:resolvent} allows us
to define the following objects via standard contour integrals
\begin{align*}
&  \tilde{P}_{\#}^{u,s}=\frac{1}{2\pi i}\oint_{\Gamma_{u,s}}(z-J_{\#}%
L_{\#})^{-1}dz,\quad E_{\#}^{u,s}=\tilde{P}_{\#}^{u,s}X,\\
&  A_{\#}^{u,s}=(J_{\#}L_{\#})|_{E_{\#}^{u,s}}=\frac{1}{2\pi i}\oint%
_{\Gamma_{u,s}}z(z-J_{\#}L_{\#})^{-1}dz.
\end{align*}
Let
\[
\tilde{P}_{\#}^{c}=I-\tilde{P}_{\#}^{u}-\tilde{P}_{\#}^{s},\quad E_{\#}%
^{c}=\tilde{P}_{\#}^{c}X,\quad A_{\#}^{c}=(J_{\#}L_{\#})|_{E_{\#}^{c}},
\]
and $\tilde{P}^{u,s},E^{u,s,c},A^{u,s,c}$ denote the corresponding unperturbed objects.

From the standard spectral theory, subspaces $E_{\#}^{u,s,c}$ are invariant
under $J_{\#}L_{\#}$. Therefore, $A_{\#}^{u,s,c}$ are operators on
$E_{\#}^{u,s,c}$ with $\sigma(A_{\#}^{u,s})\subset\Omega_{u,s}$ and
$\sigma(A_{\#}^{c})\subset\big(\mathbf{C}\backslash(\Omega_{u}\cup\Omega
_{s})\big)$. Lemma \ref{L:resolvent} implies
\[
|\tilde{P}_{\#}^{u,s}-\tilde{P}^{u,s}|\leq C\epsilon,
\]
and thus $E_{\#}^{c}$ is $O(\epsilon)$ close to $E^{c}$, too. Along with the
non-degeneracy of $\langle L\cdot,\cdot\rangle$ on $E^{u}\oplus E^{s}$ and
$|L_{1}|\leq\epsilon$, above implies the non-degeneracy of $\langle
L_{\#}\cdot,\cdot\rangle$ on $E_{\#}^{u}\oplus E_{\#}^{s}$. Therefore, we
obtain from $X=E_{\#}^{u}\oplus E_{\#}^{s}\oplus E_{\#}^{c}$ and Lemma
\ref{L:L-orth-eS}
\[
E_{\#}^{c}=\{u\in X\mid\langle L_{\#}u,v\rangle=0,\,\forall u\in E_{\#}%
^{u}\oplus E_{\#}^{s}\}.
\]
As $O(\epsilon)$ perturbations, it is clear that subspaces $E_{\#}^{u,s,c}$
can be written as graphs of $O(\epsilon)$ bounded operators $S_{\#}^{u,s,c}$
in the coordinate frame $X=E^{u}\oplus E^{s}\oplus E^{c}$. Moreover, from the
above integral forms, $A_{\#}^{u,s}$ are only $O(\epsilon)$ bounded
perturbations to $JL$ on finite dimensional subspaces $E_{\#}^{u,s}$ which are
$O(\epsilon)$ perturbations to $E^{u,s}$, and thus inequality
\eqref{E:P-stable-unstable} follows as well. Since the subspace $E_{\#}%
^{u}\oplus E_{\#}^{s}$, invariant under $J_{\#}L_{\#}$, is finite dimensional,
the vanishness of $\langle L_{\#}\cdot,\cdot\rangle$ on $E_{\#}^{u,s}$ follows
from Lemma \ref{lemma-orthogonal-eigenspace}. Through this point we complete
the proof of parts (a) and (b), except \eqref{E:P-center}, of Theorem
\ref{T:PET}.

Suppose, as in part (c) in Theorem \ref{T:PET}, there exists $\delta>0$ such
that $\langle Lu,u\rangle\geq\delta||u||^{2}$ for all $u\in E^{c}$. Since $L$
and $L_{1}$ are bounded and $E_{\#}^{c}$ is $O(\epsilon)$ perturbation of
$E^{c}$, we have $\langle L_{\#}u,u\rangle\geq\frac{\delta}{2}||u||^{2}$ for
all $u\in E_{\#}^{c}$. Therefore, the conservation of $\langle L_{\#}%
\cdot,\cdot\rangle$ by $e^{tJ_{\#}L_{\#}}$ and the invariance of $E_{\#}^{c}$
under $e^{tJ_{\#}L_{\#}}$ imply the boundedness of $e^{tJ_{\#}L_{\#}}%
|_{E_{\#}^{c}}$ uniformly in $t\in\mathbf{R}$ which proves part (b) of Theorem
\ref{T:PET}. \hfill$\square$\newline

To complete the proof of Theorem \ref{T:PET}, we shall prove the weak
exponential growth estimate \eqref{E:P-center} in the perturbed center
subspace $E_{\#}^{c}$, which involves much more than simple applications of
the standard operator calculus and the conservation of energy. We first
consider a special case where $JL$ has no hyperbolic directions.

\begin{lemma}
\label{L:P-center-R} Assume $E^{c} = X$, then \eqref{E:P-center} holds for
some $C, \epsilon_{0}>0$ depending on $J$, $L$.
\end{lemma}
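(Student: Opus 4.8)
\textbf{Proof proposal for Lemma \ref{L:P-center-R}.}

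The plan is to exploit the structural decomposition of Theorem \ref{T:decomposition}, which in the case $E^c=X$ reduces the operator $JL$ to the upper triangular block form \eqref{block-JL} with \emph{no} hyperbolic blocks, i.e. only the blocks $0, A_1, A_2, A_3, A_4$ and the coupling blocks $A_{0k}, A_{jk}$ survive, acting on $X=\oplus_{j=0}^4 X_j$ with $X_0=\ker L$. The only infinite-dimensional block is $A_3$, which is anti-self-adjoint with respect to the equivalent inner product $-\langle L_{X_3}\cdot,\cdot\rangle$ on $X_3$ (so $|e^{tA_3}|\le C$ for all $t$), while $A_1,A_2,A_4$ act on finite-dimensional spaces with purely imaginary spectrum, hence $|e^{tA_j}|\le C(1+|t|^{\dim X_1})$ as in \eqref{E:ET3}. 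The key point is that \emph{all} blocks of $JL$ except $A_3$ are bounded operators, and $A_3$ is bounded relative to an equivalent Hilbert structure.

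The main steps I would carry out are: (1) Transport the perturbed operator $J_\#L_\#=J L + (JL_1+J_1L_\#)$ into the decomposition frame of Theorem \ref{T:decomposition}; the perturbation splits into a genuinely bounded part (everything involving $J_1$, and $JL_1$ acting into or out of finite-dimensional blocks, using $|J_1|,|L_1|\le\epsilon$ and the graph-norm bound $|JL_1|_G\le\epsilon$) plus the only potentially troublesome piece, namely the $X_3\to X_3$ component of $JL_1$, call it $A_3^{(1)}$. (2) Observe that because $|JL_1|_G\le\epsilon$ and $A_3=JL|_{X_3}$ is, up to the equivalent inner product, anti-self-adjoint, the operator $A_3+A_3^{(1)}$ generates a $C^0$ group on $X_3$ with $|e^{t(A_3+A_3^{(1)})}|\le Ce^{C\epsilon|t|}$; this is the heart of the matter and is where the Hamiltonian structure is essential, since $A_3^{(1)}$ need not be bounded in $\|\cdot\|_X$ but its compression to $D(JL)\cap X_3$ with the graph norm is bounded by $\epsilon$, and one estimates $\frac{d}{dt}\langle L_{X_3} x_3(t),x_3(t)\rangle$ directly: the unperturbed part contributes zero by anti-self-adjointness, and the perturbation contributes at most $C\epsilon\|x_3(t)\|^2$, whence Gr\"onwall gives the $e^{C\epsilon|t|}$ growth on $X_3$. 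Actually, to be careful about the off-diagonal feedback into $X_3$ from $X_4$ (the block $A_{34}$ and its perturbation), I would instead estimate the full energy $\langle L_\# e^{tJ_\#L_\#}x, e^{tJ_\#L_\#}x\rangle$, which is \emph{conserved}; combined with $\langle L_\#\cdot,\cdot\rangle \ge \delta>0$ on $E^c=X$ modulo the (at most finitely many) bad directions carried by $X_0,X_1,X_2$ and the finite-dimensional part of $X_4$, this pins the $X_3$-component up to a polynomially-growing error driven by the finite-dimensional blocks. (3) Then bootstrap through the triangular structure exactly as in Section \ref{S:ET}: the components $x_4,x_2$ grow at most like the finite-dimensional blocks perturbed by $O(\epsilon)$, i.e. $e^{C\epsilon|t|}$ times a polynomial; $x_1$ picks up one more integration (another polynomial factor, still absorbed into $e^{C\epsilon|t|}$ after enlarging $C$); and $x_0$ one more. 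Since a fixed polynomial $(1+|t|^m)$ is bounded by $C_\epsilon e^{\epsilon|t|}$, all these combine to the clean bound \eqref{E:P-center}.

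The main obstacle, and the step deserving the most care, is Step (2): controlling $e^{t(A_3+A_3^{(1)})}$ when $A_3^{(1)}=P_3 JL_1|_{X_3}$ is only small \emph{in the graph norm of $JL$}, not in $\|\cdot\|_X$. The resolution is that on $X_3$ the graph norm of $JL$ is equivalent to the graph norm of $A_3$, which by anti-self-adjointness (w.r.t. $\langle L_{X_3}\cdot,\cdot\rangle$) is equivalent to the norm $\|x\|^2+\|A_3 x\|^2$; so $A_3^{(1)}$ is a bounded perturbation in this equivalent scale, $A_3+A_3^{(1)}$ is still the generator of a $C^0$ group, and a direct energy estimate on $\langle L_{X_3}\cdot,\cdot\rangle$ (whose time-derivative kills the skew part and leaves only the $O(\epsilon)$ term) yields the desired $e^{C\epsilon|t|}$ bound; this is precisely why the counterexample $L_\#=\partial_{xx}+\epsilon\partial_x$ mentioned after Theorem \ref{T:PET} fails — there the Hamiltonian structure, hence the conserved/skew structure used here, is absent. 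I would then record Lemma \ref{L:P-center-R} and in the subsequent general case reduce to it by splitting off the (finite-dimensional, $O(\epsilon)$-perturbed) hyperbolic blocks $X_5,X_6$ using Lemma \ref{L:resolvent} and the $L_\#$-orthogonal decomposition already established above.
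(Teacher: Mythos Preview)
Your Step~(2) has a genuine gap. Differentiating the \emph{unperturbed} energy $\langle L_{X_3}x_3,x_3\rangle$ along the flow $\dot x_3=(A_3+A_3^{(1)})x_3$ does kill the $A_3$ part by anti-self-adjointness, but the surviving term is (after peeling off the bounded pieces of $A_3^{(1)}$) essentially
\[
2\operatorname{Re}\langle L_{X_3}J_{33}L_{1,33}x_3,\,x_3\rangle
=-2\operatorname{Re}\langle L_{1,33}x_3,\,A_3x_3\rangle,
\]
which is bounded only by $C\epsilon\|x_3\|\,\|A_3x_3\|$, \emph{not} by $C\epsilon\|x_3\|^2$. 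Since $A_3$ is unbounded, Gr\"onwall on $\langle L_{X_3}\cdot,\cdot\rangle$ does not close. Working in the graph norm does not help either: differentiating $\|A_3x_3\|^2$ brings in $\|A_3^2x_3\|$, and so on.

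The paper's resolution is exactly the missing idea: the only unbounded piece of the whole perturbation $J_\#L_\#-JL$ (after splitting off the bounded $J_1L_\#$ and the bounded $JL_1P_k$ for $k\neq 3$, and the bounded off-diagonal part $(J-i_{X_3}J_{33}i_{X_3}^*)L_1P_3$) is $i_{X_3}J_{33}L_{1,33}P_3$, which maps $X_3$ into $X_3$. Crucially, this piece is \emph{absorbed into a new Hamiltonian structure} on $X_3$:
\[
A_3+J_{33}L_{1,33}=J_{33}(L_{X_3}+L_{1,33}),
\]
and since $|L_{1,33}|\le|L_1|\le\epsilon$ while $L_{X_3}\ge\delta>0$, the perturbed energy $\langle(L_{X_3}+L_{1,33})\cdot,\cdot\rangle$ is still uniformly positive and is \emph{exactly conserved} by this flow. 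Hence $e^{tJ_{33}(L_{X_3}+L_{1,33})}$ is uniformly bounded in $t$ (not merely $e^{C\epsilon|t|}$). The comparison operator $JL+i_{X_3}J_{33}L_{1,33}P_3$ thus keeps the upper triangular block form of Theorem~\ref{T:decomposition} with only the $(3,3)$ block changed, so it generates a polynomially bounded group; the remaining perturbation is genuinely bounded of size $O(\epsilon)$, and standard Duhamel gives \eqref{E:P-center}. Your alternative suggestion of using the full conserved $\langle L_\#\cdot,\cdot\rangle$ is closer in spirit, but since $L_\#$ is indefinite on $X$ and the perturbed dynamics couples all blocks, turning that into a closed estimate requires precisely this isolation of $J_{33}L_{1,33}$ to control $x_3$ independently of the growing finite-dimensional components.
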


\begin{proof}
From the construction of $E_{\#}^{u,s,c}$ and the additional assumption
$E^{c}=X$, it is clear $E_{\#}^{c} =X$ and \eqref{E:P-center} is reduced to
\[
|e^{tJ_{\#} L_{\#}}| \le C e^{C\epsilon|t|}, \quad\forall t\in\mathbf{R}, \;
\text{ where } \epsilon\triangleq|J_{1}|+ |L_{1}|+ |JL_{1}|_{G}.
\]
Since
\[
J_{\#} L_{\#} = J L_{\#} + J_{1} (L + L_{1})
\]
and $J_{1}$, $L$, and $L_{1}$ are bounded with $|J_{1}|\le\epsilon$, 
we have 
\begin{equation}
\label{E:P-center-R}
|J_{\#} L_{\#} - J L_{\#}| \le C \ep.  
\end{equation}

Let $X=\oplus_{j=0}^{4}X_{j}$ be the decomposition, associated with
projections $P_{j}$, given by Theorem \ref{T:decomposition} for $J$ and $L$,
where $X_{0}=\ker L$ and $X_{5}=X_{6}=\{0\}$ due to the assumption $E^{c}=X$.
Much as in \eqref{E:L_y1}, let
\[
L_{1,jk}=i_{j}^{\ast}L_{1}i_{k}:X_{k}\rightarrow X_{j}^{\ast},\quad
j,k=0,\ldots,4,
\]
which satisfy $L_{1,jk}=L_{1,kj}^{\ast}$ and
\[
L_{1}=\Sigma_{j,k=0}^{4}P_{j}^{\ast}L_{1,jk}P_{k},\quad\quad\langle
L_{1,jk}u,v\rangle=\langle L_{1}u,v\rangle,\quad\forall u\in X_{k},\;v\in
X_{j}.
\]
Let $J_{jk}=P_{j}JP_{k}^{\ast}$ be the blocks of $J$ associated to this
decomposition, which have the forms given in Corollary \ref{C:decomposition}
and satisfy
\[
J=\Sigma_{j,k=0}^{4}i_{X_{j}}J_{jk}i_{X_{k}}^{\ast},\quad|J-i_{X_{3}}%
J_{33}i_{X_{3}}^{\ast}|\leq C.
\]
We write
\begin{align*}
JL_{\#}= &  JL+JL_{1}P_{3}+\Sigma_{k\in\{0,1,2,4\}}JL_{1}P_{k}\\
= &  JL+(J-i_{X_{3}}J_{33}i_{X_{3}}^{\ast})L_{1}P_{3}\\
&  \qquad+i_{X_{3}}J_{33}i_{X_{3}}^{\ast}\Sigma_{j,k=0}^{4}P_{j}^{\ast
}L_{1,jk}P_{k}P_{3}+\Sigma_{k\in\{0,1,2,4\}}JL_{1}P_{k}\\
= &  JL+i_{X_{3}}J_{33}L_{1,33}P_{3}+(J-i_{X_{3}}J_{33}i_{X_{3}}^{\ast}%
)L_{1}P_{3}+\Sigma_{k\in\{0,1,2,4\}}JL_{1}P_{k},
\end{align*}
where $P_{X_{j}}i_{X_{k}}=\delta_{jk}I_{X_{k}}$ is used. Since, for $k\neq3$,
$X_{k}\subset D(JL)\subset D(JL_{1})$, we have that $JL_{1}P_{k}$ is a bounded
operator with the norm bounded in terms of $|JL_{1}|_{G}$, $|P_{k}|$, and
$|JLP_{k}|$. Along with the boundedness of $J-i_{X_{3}}J_{33}i_{X_{3}}^{\ast
},$ we obtain
\begin{equation}
|JL_{\#}-(JL+i_{X_{3}}J_{33}L_{1,33}P_{3})|<C\epsilon.\label{E:P-center-1}%
\end{equation}
From Theorem \ref{T:decomposition} we have
\[
JL+i_{X_{3}}J_{33}L_{1,33}P_{3}\longleftrightarrow%
\begin{pmatrix}
0 & A_{01} & A_{02} & A_{03} & A_{04}\\
0 & A_{1} & A_{12} & A_{13} & A_{14}\\
0 & 0 & A_{2} & 0 & A_{24}\\
0 & 0 & 0 & A_{3}+J_{33}L_{1,33} & A_{34}\\
0 & 0 & 0 & 0 & A_{4}%
\end{pmatrix}
.
\]
Note all blocks of $JL+i_{X_{3}}J_{33}L_{1,33}P_{3}$ are identical to those of
$JL$ except its $(4,4)$-block
\[
A_{3}+J_{33}L_{1,33}=J_{33}(L_{X_{3}}+L_{1,33}).
\]
Since $\langle L_{X_{3}}\cdot,\cdot\rangle$ is uniformly positive on $X_{3}$,
so is $\langle(L_{X_{3}}+L_{1,33})\cdot,\cdot\rangle$. Therefore, the group
$e^{t(A_{3}+J_{33}L_{1,33})}$, conserving $\langle(L_{X_{3}}+L_{1,33}%
)\cdot,\cdot\rangle$, satisfies
\[
|e^{t(A_{3}+J_{33}L_{1,33})}|\leq C,\quad\forall t\in\mathbf{R}.
\]
By using the upper triangular form of $JL$, this inequality, assumption
$E^{c}=X$ that $JL$ has no hyperbolic eigenvalues, and the finite
dimensionality $\dim X_{1}=\dim X_{4}=n^{-}(L)-\dim X_{2}$, it is easy to
prove
\[
|e^{t(JL+i_{X_{3}}J_{33}L_{1,33}P_{3})}|\leq C(1+|t|^{1+2n^{-}(L)}),\quad\forall t\in\mathbf{R}.
\]
Along with \eqref{E:P-center-R}, (\ref{E:P-center-1}), and the above estimate, 
from the following lemma we obtain \eqref{E:P-center} assuming $E^{c}=X$.
\end{proof}

The following lemma follows from standard argument and we include a sketch of the proof for the sake of completeness. 

\begin{lemma} \label{L:pert-tech-1}
Let $\BFX$ be a Banach space space, $\omega\in \BFR$, $C_0, k\ge 0$, and $\BFA: D(\BFA) \to X$ the generator of a $C^0$ semigroup on $\BFX$, such that 
\[
|e^{t\BFA}|\le C_0 (1+t^k) e^{\omega t}, \quad \forall t>0.
\]
Suppose $\BFA_1 \in L(\BFX)$ and $|\BFA_1| \le \ep\in (0, 1]$, then there exists $C>0$ depending only on $C_0$ and $k$ such that 
\[
|e^{t(\BFA+ \BFA_1)}|\le C \ep^{-\frac k{k+1}} e^{(\omega + C \ep^{\frac 1{k+1}}) t}, \quad \forall t>0.
\] 
\end{lemma}

\begin{proof} 
Without loss of generality, we may assume $\omega =0$ (otherwise $\BFA$ can be replaced by $\BFA-\omega$). Since $t^k e^{-t}$ is bounded for $t>0$, there exists $C>0$ depending only on $k$ such that $(1+t^k) \le C \ep^{-\frac k{k+1}} e^{\ep^{\frac 1{k+1}} t}$ for all $t>0$. From the variation of parameter formula we have  
\begin{align*}
\big|e^{t(\BFA+ \BFA_1)}\big| = & \Big| e^{t\BFA} + \int_0^t e^{(t-\tau) \BFA} \BFA_1e^{\tau (\BFA+ \BFA_1)} d\tau\Big| \\
\le & C \ep^{-\frac k{k+1}} e^{\ep^{\frac 1{k+1}} t} + C \ep^{\frac 1{k+1}} \int_0^t e^{\ep^{\frac 1{k+1}} (t-\tau)}\big|e^{\tau(\BFA+ \BFA_1)}\big| d\tau,    
\end{align*}
and the desired estimate follows from the Gronwall inequality. 
\end{proof}

By using the invariance of $E_{\#}^{c}$ under $J_{\#}L_{\#}$, in the following
we convert $e^{tJ_{\#}L_{\#}}|_{E_{\#}^{c}}$ to a flow $e^{t\tilde{J}%
_{\#}\tilde{L}_{\#}}$ on $E^{c}$ via a similarity transformation and then
apply Lemma \ref{L:P-center-R} to obtain \eqref{E:P-center}. \newline

\noindent\textbf{Proof of \eqref{E:P-center} in Theorem \ref{T:PET}.} In the
general case, let $E_{\#}^{u,s,c}$ be the invariant unstable/stable/center
subspaces and $P_{\#}^{u,s,c}$ be the projections associated to the
decomposition $X=E_{\#}^{u}\oplus E_{\#}^{s}\oplus E_{\#}^{c}$. We also adopt
the notations $E_{\#}^{us}=E_{\#}^{u}\oplus E_{\#}^{s}$ and $P_{\#}%
^{us}=P_{\#}^{u}+P_{\#}^{s}$. Correspondingly, let $E^{u,s,c,us}$ and
$P^{u,s,c,us}$ denoted the unperturbed invariant subspaces and projections.
Recall $E_{\#}^{c}$ can be written as the graph of a bounded operator
$S_{\#}^{c}:E^{c}\rightarrow E^{us}$ with $|S_{\#}^{c}|=O(\epsilon)$. Let
$\tilde{S}_{\#}^{c}=i_{E^{c}}+S_{\#}^{c}:E^{c}\rightarrow E_{\#}^{c}\subset X$
so that $E_{\#}^{c}=\tilde{S}_{\#}^{c}(X^{c})$. Clearly, $P^{c}i_{E_{\#}^{c}%
}=(\tilde{S}_{\#}^{c})^{-1}$.

Let
\[
J^{c}=P^{c}J(P^{c})^{\ast},\quad J_{\#}^{c}=P_{\#}^{c}J_{\#}(P_{\#}^{c}%
)^{\ast},\quad L_{\#}^{c}=i_{E_{\#}^{c}}^{\ast}L_{\#}i_{E_{\#}^{c}},\quad
L^{c}=i_{E^{c}}^{\ast}Li_{E^{c}}.
\]
From the invariance of $E_{\#}^{c}$ under $J_{\#}L_{\#}$, the $L_{\#}%
$-orthogonality between $E_{\#}^{c}$ and $E_{\#}^{us}$, and Lemma
\ref{L:decomJ} applied to the decomposition $X=E_{\#}^{c}\oplus E_{\#}^{us}$,
we have
\[
J_{\#}^{c}L_{\#}^{c}=J_{\#}L_{\#}|_{E_{\#}^{c}},\quad e^{t(J_{\#}L_{\#}%
)}|_{E_{\#}^{c}}=e^{tJ_{\#}^{c}L_{\#}^{c}},
\]
and the combination $(E_{\#}^{c},J_{\#}^{c},L_{\#}^{c})$ satisfies ({H1-3}).
Using the mapping $\tilde{S}_{\#}^{c}$, we may just consider its conjugate
flow on $E^{c}$
\[
P^{c}i_{E_{\#}^{c}}e^{tJ_{\#}^{c}L_{\#}^{c}}\tilde{S}_{\#}^{c},\;\text{ with
the generator }P^{c}i_{E_{\#}^{c}}J_{\#}^{c}L_{\#}^{c}\tilde{S}_{\#}%
^{c}\;\text{ on }E^{c}.
\]
Let
\[
\tilde{L}_{\#}=(\tilde{S}_{\#}^{c})^{\ast}L_{\#}^{c}\tilde{S}_{\#}^{c}%
=(\tilde{S}_{\#}^{c})^{\ast}i_{E_{\#}^{c}}^{\ast}L_{\#}i_{E_{\#}^{c}}\tilde
{S}_{\#}^{c}:E^{c}\rightarrow(E^{c})^{\ast},
\]
and
\[
\tilde{J}_{\#}=P^{c}i_{E_{\#}^{c}}J_{\#}^{c}(P^{c}i_{E_{\#}^{c}})^{\ast}%
=P^{c}P_{\#}^{c}J_{\#}(P^{c}P_{\#}^{c})^{\ast}:(E^{c})^{\ast}\supset
D(\tilde{J}_{\#})\rightarrow E^{c}.
\]
Clearly,
\begin{equation}
\tilde{J}_{\#}\tilde{L}_{\#}=P^{c}i_{E_{\#}^{c}}J_{\#}^{c}L_{\#}^{c}\tilde
{S}_{\#}^{c}=P^{c}i_{E_{\#}^{c}}J_{\#}L_{\#}\tilde{S}_{\#}^{c}%
.\label{E:P-center-0.5}%
\end{equation}
Since $|P^{c}i_{E_{\#}^{c}}|,|\tilde{S}_{\#}^{c}|\leq2$, in order to prove
\eqref{E:P-center}, it suffices to prove on $E^{c}$
\begin{equation}
|e^{\tilde{J}_{\#}\tilde{L}_{\#}}|\leq 
C\epsilon^{\frac 1{2(1+ n^-(L) - \dim E^u)}-1} e^{C\epsilon^{\frac 1{2(1+ n^-(L) - \dim E^u)}}|t|},
\quad \forall t\in\mathbf{R}\label{E:P-center-1'}%
\end{equation}
for some $C$ depending only on $J$ and $L$. Our strategy is to verify that
$(E^{c},\tilde{J}_{\#},\tilde{L}_{\#})$ as a perturbation to $(E^{c}%
,J^{c},L^{c})$ satisfies (\textbf{A1-3}) and then apply Lemma
\ref{L:P-center-R}.

When $\epsilon=0$, Lemma \ref{L:decomJ} ensures that the unperturbed
\[
(E^{c},\ \tilde{J}_{\#}=J^{c},\ \tilde{L}_{\#}=L^{c}=i_{E^{c}}^{\ast}%
Li_{E^{c}})
\]
satisfies (\textbf{H1-3}). Moreover, since $\langle L\cdot,\cdot\rangle$ is
non-degenerate on $E^{us}$, we have
\[
\dim\ker L^{c}=\dim\ker(i_{E^{c}}^{\ast}Li_{E^{c}})=\dim\ker L<\infty
\]
due to the $L$-orthogonality between $E^{c}$ and $E^{us}$ and thus
(\textbf{A2}) is satisfied by $\tilde{L}_{\#}$ for $\epsilon=0$. From the
definitions, $\tilde{J}_{\#}-J^{c}$ is clearly anti-symmetric. We will show
that it is also bounded. Using the fact $I-P_{\#}^{c}=P_{\#}^{us}$, one may
compute
\begin{align*}
\tilde{J}_{\#}-J^{c}= &  -P^{c}P_{\#}^{us}J_{\#}(P^{c}P_{\#}^{c})^{\ast}%
-P^{c}J_{\#}(P^{c}P_{\#}^{us})^{\ast}+P^{c}J_{1}(P^{c})^{\ast}\\
= &  -P^{c}P_{\#}^{us}P_{\#}^{us}J_{\#}(P^{c}P_{\#}^{c})^{\ast}-P^{c}%
J_{\#}(P_{\#}^{us})^{\ast}(P^{c}P_{\#}^{us})^{\ast}+P^{c}J_{1}(P^{c})^{\ast}.
\end{align*}
Due to the $L_{\#}$-orthogonality between $E_{\#}^{us}$ and $E_{\#}^{c}$ and
the non-degeneracy of $\langle L_{\#}\cdot,\cdot\rangle$ on $E_{\#}^{us}$, it
is straightforward to obtain that $L_{\#}$ is an isomorphism from $E_{\#}%
^{us}$ to $R\big((P_{\#}^{us})^{\ast}\big)=(P_{\#}^{us})^{\ast}(E_{\#}%
^{us})^{\ast}$. Since $E_{\#}^{us}\subset D(J_{\#}L_{\#})$, we have
$R\big((P_{\#}^{us})^{\ast}\big)\subset D(J_{\#})$ and thus $J_{\#}%
|_{R\big((P_{\#}^{us})^{\ast}\big)}$ is a bounded operator. To estimate its
norm, we use the relationship
\[
J_{\#}|_{R\big((P_{\#}^{us})^{\ast}\big)}=(J_{\#}L_{\#}|_{E_{\#}^{us}}%
)(L_{\#}|_{E_{\#}^{us}})^{-1}=(A_{\#}^{u}\oplus A_{\#}^{s})(L_{\#}%
|_{E_{\#}^{us}})^{-1}.
\]
Recall $E_{\#}^{us}$ is $O(\epsilon)$ perturbation to $E^{us}$ and $L_{\#}$ is
$O(\epsilon)$ to $L$. Moreover, the spectral integral representations of
$A_{\#}^{u,s}$ yield that they are $O(\epsilon)$ perturbation to $JL|_{E^{us}%
}$. Therefore, we obtain that
\[
|J_{\#}|_{R\big((P_{\#}^{us})^{\ast}\big)}|\leq C\Longrightarrow|P_{\#}%
^{us}J_{\#}|=|J_{\#}(P_{\#}^{us})^{\ast}|\leq C
\]
for some $C>0$ depending on $J$ and $L$. Since $|P^{c}P_{\#}^{us}|\leq
C\epsilon$, we have
\[
|\tilde{J}_{1,\#}|\leq C\epsilon,\;\text{ where }\tilde{J}_{1,\#}%
\triangleq\tilde{J}_{\#}-J^{c}.
\]
From the definition of $\tilde{L}_{\#}$, it is easy to obtain
\[
\tilde{L}_{1,\#}=\tilde{L}_{1,\#}^{\ast},\quad|\tilde{L}_{1,\#}|\leq
C\epsilon,\;\text{ where }\tilde{L}_{1,\#}\triangleq\tilde{L}_{\#}-L^{c}.
\]
Therefore, we finish verifying (\textbf{A1}) for $(E^{c},\tilde{J}_{\#}%
,\tilde{L}_{\#})$.

We proceed to verify (\textbf{A3}). From Lemma \ref{L:resolvent}, we have
$D(J_{\#}L_{\#})=D(JL)$. Since $E_{\#}^{c}=\tilde{S}_{\#}^{c}(E^{c})$ is the
graph of $S_{\#}^{c}:E^{c}\rightarrow E^{us}$ and $E^{us}\subset
D(JL)=D(J_{\#}L_{\#})$, we obtain
\[
D(J_{\#}L_{\#})\cap E_{\#}^{c}=\tilde{S}_{\#}^{c}\big(E^{c}\cap D(JL)\big).
\]
From the boundedness of $\tilde{J}_{1,\#}$ and \eqref{E:P-center-0.5}, we
further obtain
\[
D(J^{c}\tilde{L}_{\#})=D(\tilde{J}_{\#}\tilde{L}_{\#})=E^{c}\cap
D(JL)=D(J^{c}L^{c})
\]
which along with $\tilde{L}_{\#}=L^{c}+\tilde{L}_{1,\#}$ obviously implies
$D(J^{c}L^{c})\subset D(J^{c}\tilde{L}_{1,\#})$.

In the next we estimate the graph norm of $J^{c}\tilde{L}_{1,\#}$, like the
one defined in \eqref{E:graph-norm}, on the domain $D(J^{c}\tilde{L}%
_{\#})=E^{c}\cap D(JL)$. From \eqref{E:P-center-0.5} one may compute that,
when restricted on $E^{c}\cap D(JL)$,
\begin{equation}
\tilde{J}_{\#}\tilde{L}_{\#}-J^{c}L^{c}=-P^{us}i_{E_{\#}^{c}}J_{\#}%
L_{\#}\tilde{S}_{\#}^{c}+J_{\#}L_{\#}S_{\#}^{c}+J_{\#}L_{\#}-JL.
\label{E:P-center-2}%
\end{equation}
We shall use
\begin{equation}
J_{\#}L_{\#}=JL+JL_{1}+J_{1}L_{\#} \label{E:P-center-3}%
\end{equation}
to estimate the three terms in \eqref{E:P-center-2}. In fact, for any $v\in
D(JL)$,
\[
||J_{\#}L_{\#}v-JLv||\leq|JL_{1}|_{G}||v||_{G}+|J_{1}L_{\#}|||v||\leq
C\epsilon||v||_{G}%
\]
for some $C>0$ depending on $J$ and $L$. A combination of this inequality with
\eqref{E:P-center-2} and the fact $|P^{us}i_{E_{\#}^{c}}|\leq C\epsilon$
implies, for any $u\in E^{c}\cap D(JL)$,
\[
||(\tilde{J}_{\#}\tilde{L}_{\#}-J^{c}L^{c})u||\leq C\big(\epsilon||\tilde
{S}_{\#}^{c}u||_{G}+||S_{\#}^{c}u||_{G}+\epsilon||u||_{G}\big)\leq
C\epsilon||u||_{G},
\]
where we used the fact that $JL$ is bounded on $E^{us}\subset D(JL)$. Since
\[
J^{c}\tilde{L}_{1,\#}=\tilde{J}_{\#}\tilde{L}_{\#}-J^{c}L^{c}-\tilde{J}%
_{1,\#}\tilde{L}_{\#}%
\]
with $|\tilde{J}_{1,\#}|\leq C\epsilon$, the above inequality implies
\[
||J^{c}\tilde{L}_{1,\#}u||\leq C\epsilon||u||_{G},\quad\forall u\in E^{c}\cap
D(JL)
\]
for some $C>0$ depending on $J$ and $L$. The above estimates allow us to apply
Lemma \ref{L:P-center-R} to obtain (\ref{E:P-center-1'}) for $e^{t\tilde
{J}_{\#}\tilde{L}_{\#}}\ $ on $E^{c}$, which in turn implies
\eqref{E:P-center} for $e^{tJ_{\#}L_{\#}}$ on $E_{\#}^{c}$. \hfill$\square$

To complete this subsection we present \newline

\noindent\textbf{Proof of Proposition \ref{P:PSubS}.} Adopt the notations used
in \eqref{E:ep}-\eqref{E:ep1}, and let
\[
\epsilon\triangleq|J_{1}|+|L_{1}|+|JL_{1}|_{G}.
\]
Let $\Omega\subset\mathbf{C}$ be an open domain with the compact closure and
smooth boundary $\Gamma\subset\mathbf{C}\backslash i\mathbf{R}$ such that
$\Omega\cap\sigma(JL)=\sigma_{2}$. For small $\epsilon$, Lemma
\ref{L:resolvent} allows us to define the following objects via standard
contour integrals
\begin{align*}
&  P_{\#}=\frac{1}{2\pi i}\oint_{\Gamma}(z-J_{\#}L_{\#})^{-1}dz,\quad
X_{2}^{\#}=P_{\#}X\subset D(J_{\#}L_{\#}),\quad X_{1}^{\#}=(I-P_{\#})X\\
&  A_{1,2}^{\#}=(J_{\#}L_{\#})|_{X_{1,2}}=\frac{1}{2\pi i}\oint_{\Gamma
}z(z-J_{\#}L_{\#})^{-1}dz.
\end{align*}
Let $P$, $X_{1,2}$, and $A_{1,2}$ denote the corresponding unperturbed objects.

From the standard spectral theory, the decomposition $X = X_{1}^{\#} \oplus
X_{2}^{\#}$ is invariant under $J_{\#} L_{\#}$ and thus $A_{1,2}^{\#}$ are
operators on $X_{1,2}^{\#}$ with $\sigma(A_{2}^{\#}) \subset\Omega$. (In fact
$\sigma(A_{1,2}) = \sigma_{1,2}$.) Since $\sigma(J_{\#}L_{\#}) = \sigma
(A_{1}^{\#}) \cup\sigma(A_{2}^{\#})$, we only need to prove that $\sigma
(A_{1}^{\#}) \subset i\mathbf{R}$.

Since the decomposition $X=X_{1}\oplus X_{2}$ is $L$-orthogonal and
$X_{2}\subset D(JL)$, Lemma \ref{L:decomJ} implies that $(X_{1},J_{X_{1}%
}=(I-P)J(I-P)^{\ast},L_{X_{1}})$ satisfies (\textbf{H1-3}). Therefore, the
index theorem Theorem \ref{theorem-counting} applies to $L_{X_{1}}$ and
$JL|_{X_{1}}$ which along with the second assumption of Proposition
\ref{P:PSubS} implies that $n^{-}(L_{X_{1}})=0$. As $L_{X_{1}}$ satisfies
(\textbf{H2}), we obtain that $L_{X_{1}}$ is positive definite. Lemma
\ref{L:resolvent} implies
\[
|P_{\#}-P|\leq C\epsilon.
\]
and thus $X_{1}^{\#}$ is $O(\epsilon)$ close to $X_{1}$. Namely $X_{1}^{\#}$
can be written as the graph of an $O(\epsilon)$ order bounded operator
$S_{\#}:X_{1}\rightarrow X_{2}$. It immediately implies that $L_{X_{1}^{\#}}$
is uniformly positive on $X_{1}^{\#}$ and the proposition follows from the
invariance of $X_{1}^{\#}$ under $J_{\#}L_{\#}$. \hfill$\square$

\subsection{Perturbations of purely imaginary spectrum and bifurcation to
unstable eigenvalues}

\label{SS:ImSpec}

In this subsection, we consider $\sigma(J_{\#}L_{\#})$ near some $i\mu
\in\sigma(JL)\cap i\mathbf{R}$ and prove Theorems \ref{T:SImSpec} and
\ref{T:USImSpec}. \newline

\noindent\textbf{`Structurally stable' cases}. We still adopt the notation
used in \eqref{E:ep} and let
\begin{equation}
\epsilon\triangleq|J_{1}|+|L_{1}|+|JL_{1}|_{G}. \label{E:ep2}%
\end{equation}

\noindent\textbf{Case 1: $i\mu\in\sigma(JL)\cap i \mathbf{R}$ is isolated with
$\langle L \cdot, \cdot\rangle$ sign definite on $E_{i\mu}$.}

Suppose $\delta>0$ and $\langle Lu,u\rangle\geq\delta||u||^{2}$, for all $u\in
E_{i\mu}$ (the opposite case $\langle L\cdot,\cdot\rangle\leq-\delta<0$ on
$E_{i\mu}$ is similar). Since $i\mu$ is assumed to be isolated in $\sigma
(JL)$, there exists $\alpha>0$ such that the closed disk $\overline
{B(i\mu,\alpha)}\cap\sigma(JL)=\{i\mu\}$. Let $\Gamma=\partial B(i\mu,\alpha)$
and $\Gamma\cap\sigma(J_{\#}L_{\#})=\emptyset$ for small $\epsilon$ due to
Lemma \ref{L:resolvent}. Define
\[
\tilde{P}_{\#}=\frac{1}{2\pi i}\oint_{\Gamma}(z-J_{\#}L_{\#})^{-1}%
dz,\quad\tilde{E}_{\#}=\tilde{P}_{\#}X.
\]
From the standard spectral theory, $\tilde{E}_{\#}$ is invariant under
$J_{\#}L_{\#}$ and
\[
\sigma(J_{\#}L_{\#})\cap\overline{B(i\mu,\alpha)}=\sigma(J_{\#}L_{\#}%
|_{\tilde{E}_{\#}}).
\]
Lemma \ref{L:resolvent}, the isolation of $i\mu$, and Proposition
\ref{P:non-deg} imply that $\tilde{E}_{\#}$ is $O(\epsilon)$ close to
$E_{i\mu}$. The positive definiteness assumption of $\langle L\cdot
,\cdot\rangle$ on $E_{i\mu}$ and the boundedness of $L$ and $L_{1}$ imply that
$\langle L_{\#}\cdot,\cdot\rangle$ is also positive definite on $\tilde
{E}_{\#}$. The stability -- both forward and backward in time -- of
$e^{tJ_{\#}L_{\#}}$ on $\tilde{E}_{\#}$, due to the conservation of energy,
implies
\[
\sigma(J_{\#}L_{\#})\cap\overline{B(i\mu,\alpha)}=\sigma(J_{\#}L_{\#}%
|_{\tilde{E}_{\#}})\subset i\mathbf{R}.
\]

\noindent\textbf{Case 2: $i\mu\in\sigma(JL)\cap i\mathbf{R}$ and $\langle
L\cdot,\cdot\rangle$ is positive definite on $E_{i\mu}$.}

Then
\begin{equation}
E_{i\mu}=\{0\}\text{ or }\langle Lu,u\rangle\geq\delta||u||^{2},\;\forall u\in
E_{i\mu}, \label{E:ImSpec1}%
\end{equation}
for some $\delta>0$.

\begin{remark}
In this case, besides the possibility of an isolated eigenvalue $i\mu\in
\sigma(JL)$ with $L$ positive definite on $E_{i\mu}$, we are mainly concerned
with the scenario that $i\mu$ is embedded in the continuous spectrum, whether
an eigenvalue or not, but without any eigenvector in a non-positive direction
of $L$. Our conclusion is that, under small perturbations, no hyperbolic
eigenvalues (i.e. away from imaginary axis) may bifurcate from $i\mu$.
\end{remark}

We argue by contradiction for Case 2. Suppose Theorem \ref{T:SImSpec} does not
hold in this case, then there exist a sequence
\[
J_{\#n}=J+\tilde{J}_{n},\;L_{\#n}=L+\tilde{L}_{n},\quad n=1,2,\ldots,
\]
satisfying (\textbf{A1-3}) for each $n$ such that
\[
\exists\lambda_{n}\in\sigma(J_{\#n}L_{\#n})\backslash i\mathbf{R}%
;\;\;\epsilon_{n}\triangleq|\tilde{J}_{n}|+|\tilde{L}_{n}|+|J\tilde{L}%
_{n}|_{G}\rightarrow0;\;\;\delta_{n}\triangleq\left\vert \lambda_{n}%
-i\mu\right\vert \rightarrow0.
\]
Since not in $i\mathbf{R}$, $\lambda_{n}$ must be eigenvalues. Let
\[
u_{n}\in X,\quad J_{\#n}L_{\#n}u_{n}=\lambda_{n}u_{n},\quad||u_{n}||=1.
\]

Using the graph norm of $JL_{1}$, one may estimate
\[
||J\tilde{L}_{n}u_{n}||\leq|J\tilde{L}_{n}|_{G}(1+||JLu_{n}||)\leq|J\tilde
{L}_{n}|_{G}\big(1+|\lambda_{n}|+||JLu_{n}-\lambda_{n}u_{n}||\big)
\]
and
\[
||JLu_{n}-\lambda_{n}u_{n}||=||JLu_{n}-J_{\#n}L_{\#n}u_{n}||\leq||\tilde
{J}_{n}L_{\#n}u_{n}||+||J\tilde{L}_{n}u_{n}||.
\]
Therefore, we obtain
\begin{equation}
||JLu_{n}-\lambda_{n}u_{n}||\leq C\epsilon_{n},\;\;||JLu_{n}-i\mu u_{n}%
||\leq(C\epsilon_{n}+\delta_{n})\label{E:SS-1}%
\end{equation}
for some $C>0$ depending on $|L|$ and $\mu$.

Let $X=\oplus_{j=0}^{6}$ be the decomposition given by Theorem
\ref{T:decomposition} for $(L,J)$, with $X_{0}=\ker L$, $P_{j}$ be the
associated projections, and $u_{n,j}=P_{j}u_{n}$. Let $A_{j}$ and $A_{jk}$
denote the blocks of $JL$ in this decomposition as given in Theorem
\ref{T:decomposition}. From the commutativity between $JL$ and $P_{5,6}$, we
obtain from \eqref{E:SS-1}
\[
||A_{5}u_{n,5}-i\mu u_{n,5}||+||A_{6}u_{n,6}-i\mu u_{n,6}||\leq C(\epsilon
_{n}+\delta_{n}).
\]
Since $\sigma(A_{5,6})\cap i\mathbf{R}=\emptyset$, we have
\begin{equation}
||u_{n,5}||+||u_{n,6}||\leq C(\epsilon_{n}+\delta_{n}).\label{E:SS-3}%
\end{equation}
From Lemma \ref{lemma-orthogonal-eigenspace}, we have $\langle L_{\#n}%
u_{n},u_{n}\rangle=0$. Along with \eqref{E:SS-3} this implies that
\begin{equation}
|2\langle Lu_{n,1},u_{n,4}\rangle+\langle L_{2}u_{n,2},u_{n,2}\rangle+\langle
L_{3}u_{n,3},u_{n,3}\rangle|\leq C(\epsilon_{n}+\delta_{n}).\label{E:SS-2}%
\end{equation}
Applying $P_{j}$, $j=0,\ldots,4$ to \eqref{E:SS-1} and using Theorem
\ref{T:decomposition}, we have
\begin{align}
&  ||A_{4}u_{n,4}-i\mu u_{n,4}||\leq C(\epsilon_{n}+\delta_{n});\label{E:SS2}%
\\
&  ||A_{3}u_{n,3}+A_{34}u_{n,4}-i\mu u_{n,3}||\leq C(\epsilon_{n}+\delta
_{n});\nonumber\\
&  ||A_{2}u_{n,2}+A_{24}u_{n,4}-i\mu u_{n,2}||\leq C(\epsilon_{n}+\delta
_{n});\nonumber\\
&  ||A_{1}u_{n,1}+A_{12}u_{n,2}+A_{13}u_{n,3}+A_{14}u_{n,4}-i\mu u_{n,1}||\leq
C(\epsilon_{n}+\delta_{n});\nonumber\\
&  ||A_{01}u_{n,1}+A_{02}u_{n,2}+A_{03}u_{n,3}+A_{04}u_{n,4}-i\mu
u_{n,0}||\leq C(\epsilon_{n}+\delta_{n}).\label{E:SS-4}%
\end{align}
Since $\dim X_{j}<\infty\ $when $j\neq0,3$, subject to a subsequence, we may
assume that as $n\rightarrow\infty$,
\[
u_{n,j}\rightarrow u_{j},\;j=1,2,4;\quad u_{n,5},\ u_{n,6}\rightarrow0;\quad
u_{n,j}\rightharpoonup u_{j},\;j=0,3.
\]
Passing to the limits in the above inequalities and using the boundedness of
$A_{j}$ and $A_{jk}$ except $A_{3}$, we obtain
\begin{align*}
&  A_{4}u_{4}-i\mu u_{4}=0;\quad A_{2}u_{2}+A_{24}u_{4}-i\mu u_{2}=0;\\
&  A_{1}u_{1}+A_{12}u_{2}+A_{13}u_{3}+A_{14}u_{4}-i\mu u_{1}=0;\\
&  A_{01}u_{1}+A_{02}u_{2}+A_{03}u_{3}+A_{04}u_{4}-i\mu u_{0}=0.
\end{align*}
Moreover, the above inequality involving $A_{3}u_{n,3}$ also implies that
\[
u_{n,3}\rightharpoonup u_{3},\quad A_{3}u_{n,3}\rightharpoonup-A_{34}%
u_{4}+i\mu u_{3}.
\]
Since the graph of the closed operator of $A_{3}$ as a closed subspace in
$X_{3}\times X_{3}$ is also closed under the weak topology, we obtain
\[
u_{3}\in D(A_{3})\;\text{ and }\;A_{3}u_{3}+A_{34}u_{4}-i\mu u_{3}=0.
\]
These equalities imply that
\[
JLu=i\mu u,\;\text{ where }u=u_{0}+u_{1}+u_{2}+u_{3}+u_{4}.
\]
In addition, \eqref{E:SS-2} implies
\[
\langle Lu,u\rangle=2\langle Lu_{1},u_{4}\rangle+\langle L_{2}u_{2}%
,u_{2}\rangle+\langle L_{3}u_{3},u_{3}\rangle\leq0.
\]

Due to property \eqref{E:ImSpec1} of $i\mu$, we must have $u=0$, which
immediately yields $u_{n,j}\rightarrow0$, $j=1,2,4,5,6$ and thus
\eqref{E:SS-2} implies $u_{n,3}\rightarrow0$ as well. Then the normalization
$||u_{n}||=1$ implies that we must have $\dim\ker L\geq1$, $||u_{n,0}%
||\rightarrow1$ and $u_{n,0}\rightharpoonup0$. From \eqref{E:SS-4}, we obtain
$\mu=0$. As $\ker L$ is nontrivial, this again contradicts to
\eqref{E:ImSpec1}. Therefore, Theorem \ref{T:SImSpec} holds in this case.

Summarizing the above two cases, \textbf{Theorem \ref{T:SImSpec} is proved.}
\newline

\noindent\textbf{`Structurally unstable' cases.} In the following, we will
consider cases in Theorem \ref{T:USImSpec} for the structural instability. In
many applications the symplectic structure $J$ usually does not vary,
therefore we will fix $J$ and focus on constructing perturbations to the
energy operator $L$ to induce instabilities arising from a purely imaginary
eigenvalue $i\mu$ of $JL$. Recall we have to complexify $X$, $J$, and $L$
accordingly. However, keep in mind that we would like to construct real
perturbations to create unstable eigenvalues near $i\mu$. This would require
the perturbations to also satisfy \eqref{E:real}, see Remark \ref{R:real}.
Recall that while $JL$ is a linear operator, $L$ and $J$ are complexified as
Hermitian forms or anti-linear mappings, see \eqref{E:complexify} and
\eqref{E:anti-linear}. \newline

\noindent\textbf{Case 3: $i\mu\in\sigma(JL)$ and $\exists$ a closed subspace
$\{0\}\neq Y\subset E_{i\mu}$ such that}
\[
JL(Y)\subset Y,\text{ and }\ \langle L\cdot,\cdot\rangle\text{ is
non-degenerate and sign indefinite on }Y.
\]

\begin{remark}
\label{R:Unstable1} Clearly, this includes, but not limited to, the situation
where $\langle L\cdot,\cdot\rangle$ is non-degenerate and indefinite on
$E_{i\mu}$, a special case of which is when $i\mu$ is isolated in $\sigma
(JL)$. It is analyzed in several subcases below.
\end{remark}

We will construct a perturbation $L_{\#}$ such that $\sigma(JL_{\#})$ contains
a hyperbolic eigenvalue near $i\mu$. The proof will basically be carried out
in some finite dimensional subspaces. Such finite dimensional problems had
been well studied in the literature, mostly for the Case 3b below when there
are two eigenvectors of opposite signs of $\langle L\cdot,\cdot\rangle$ (see
e.g. \cite{mackay86, ekeland90}). We could not find a reference for the proof
of structural instability when the indefiniteness of $L|_{E_{i\mu}}$ is caused
by a Jordan chain of $JL$ (Case 3c below). So we give a detailed proof for the
general case, which will also be used in later cases of embedded eigenvalues.
Our proof for the Case 3c uses the special basis constructed in Proposition
\ref{P:basis} for the Jordan blocks of $JL$ on $E_{i\mu}$.

Recall $\bar{u}\in E_{-i\mu}$ for any $u\in E_{i\mu}$. Let
\[
Y_{\mu}=\{u+\bar{v}\mid u,v\in Y\}\subset E_{i\mu}+E_{-i\mu}.
\]
From Lemma \ref{lemma-orthogonal-eigenspace} and the assumption on $Y$,
$\langle L\cdot,\cdot\rangle$ is still non-degenerate on $Y_{\mu}$ which is
also clearly invariant under $JL$. Recall that
\[
Y_{\mu}^{\perp_{L}}=\{u\in X\mid\langle Lu,v\rangle=0,\ \forall v\in Y_{\mu
}\}.
\]
From Lemmas \ref{L:non-degeneracy} and \ref{L:InvariantSubS}, $Y_{\mu}%
^{\perp_{L}}$ is also invariant under $e^{tJL}$ and $X=Y_{\mu}\oplus Y_{\mu
}^{\perp_{L}}$. The definition of $Y_{\mu}$ implies that $Y_{\mu}$ is real, in
the sense $\bar{u}\in Y_{\mu}$ for any $u\in Y_{\mu}$, and thus the
complexification of a real subspace of $X$. According to Lemma \ref{L:decomJ},
$JL|_{Y_{\mu}}$ is a also a Hamiltonian operator satisfying hypotheses
(\textbf{H1-3}) with the non-degenerate energy $L_{Y_{\mu}}$, defined in
\eqref{E:L_y1}. Therefore, we may apply Proposition \ref{P:basis} to $Y_{\mu}$
and $JL|_{Y_{\mu}}$, where $Y\subset Y_{\mu}$ is the subspace of all
generalized eigenvectors of $i\mu$ of $JL|_{Y_{\mu}}$. Since $L$ is
non-degenerate on $Y$, it is clear that \textbf{Case 3} contains the following
three subcases only. \newline

\noindent\textit{Case 3a: $\mu\ne0$ and $\langle L\cdot, \cdot\rangle$ changes
sign on $\ker(JL - i\mu) \cap Y$.}

In this subcase, let $u_{\pm}\in\ker(JL-i\mu)\cap Y$ be such that $\pm\langle
Lu_{\pm},u_{\pm}\rangle>0$. By a Gram-Schmidt process, without loss of
generality, we may assume
\[
\langle Lu_{\pm},u_{\pm}\rangle=\pm1,\quad\langle Lu_{+},u_{-}\rangle=0.
\]
Note that $u_{\pm}$ can not be real for $\mu\neq0$. As we will construct real
perturbations to create instability, we have to consider the complex conjugate
of $u_{\pm}$ as well. Let
\[
X_{1}=span\{u_{+},u_{-},\overline{u_{+}},\overline{u_{-}}\},\quad X_{2}%
=X_{1}^{\perp_{L}}=\{v\in X\mid\langle Lu_{\pm},v\rangle=\langle
L\overline{u_{\pm}},v\rangle=0\}.
\]
It is clear that subspaces $X_{1,2}$ are comlexifications of real subspaces in
the sense
\begin{equation}
\bar{u}\in X_{1,2}\;\text{ if }\;u\in X_{1,2}. \label{E:conjugacy}%
\end{equation}

Note that $\overline{u_{\pm}}$ are eigenvectors of $-i\mu\ \left(  \neq
i\mu\right)  $ and that $E_{i\mu}$ and $E_{-i\mu}$ are $L$-orthogonal.
Therefore, from the complexification process, it is easy to verify that, with
respect to this basis of the invariant subspace $X_{1}$ of $JL$, operators
$L_{X_{1}}$ and $JL|_{X_{1}}$ take the forms
\[
L_{X_{1}}=%
\begin{pmatrix}
\Lambda & 0\\
0 & \Lambda
\end{pmatrix}
,\quad JL|_{X_{1}}\triangleq A_{1}=i\mu%
\begin{pmatrix}
I_{2\times2} & 0\\
0 & -I_{2\times2}%
\end{pmatrix}
,
\]
where
\[
\Lambda=%
\begin{pmatrix}
1 & 0\\
0 & -1
\end{pmatrix}
,\quad I_{2\times2}=%
\begin{pmatrix}
1 & 0\\
0 & 1
\end{pmatrix}
.
\]
From the invariance of $X_{1}$ and Lemmas \ref{L:non-degeneracy} and
\ref{L:InvariantSubS}, $X_{2}=X_{1}^{\perp_{L}}$ is also invariant under $JL$
and $X=X_{1}\oplus X_{2}$. In this decomposition, $L$, $JL$, and $J$ take the
forms
\begin{equation}
L=%
\begin{pmatrix}
L_{X_{1}} & 0\\
0 & L_{X_{2}}%
\end{pmatrix}
,\quad JL=%
\begin{pmatrix}
A_{X_{1}} & 0\\
0 & A_{X_{2}}%
\end{pmatrix}
,\quad J=%
\begin{pmatrix}
J_{X_{1}} & 0\\
0 & J_{X_{2}}%
\end{pmatrix}
,\label{E:decomp1}%
\end{equation}
where, with respect to the basis of $X_{1}^{\ast}$ dual to $\{u_{\pm
},\ \overline{u_{\pm}}\}$,
\[
J_{X_{1}}=i\mu%
\begin{pmatrix}
\Lambda & 0\\
0 & -\Lambda
\end{pmatrix}
,\quad J_{X_{2}}:X_{2}^{\ast}\supset D(J_{X_{2}})\rightarrow X_{2},\quad
J_{X_{2}}^{\ast}=-J_{X_{2}}.
\]
Here, $J^{\ast}=-J$ is used.

Consider a perturbation $L_{1}$ in the form of
\[
L_{1}=%
\begin{pmatrix}
L_{1,X_{1}} & 0\\
0 & 0
\end{pmatrix}
,\;\text{ where }L_{1,X_{1}}=%
\begin{pmatrix}
\epsilon R & 0\\
0 & \epsilon R
\end{pmatrix}
,\quad R=%
\begin{pmatrix}
0 & 1\\
1 & 0
\end{pmatrix}
.
\]
It is straightforward to verify that $L_{1}$ is real, namely, $\langle
L_{1}\bar{u},\bar{v}\rangle=\overline{\langle L_{1}u,v\rangle}$. Let
$L_{\#}=L+L_{1}$. Clearly, the decomposition $X=X_{1}\oplus X_{2}$ is still
invariant under $JL_{\#}$ and orthogonal with respect to $L_{\#}$. Therefore,
by a direct computation on the $4\times4$ matrix $JL_{\#}|_{X_{1}}$ which can
be further reduced to the $2\times2$ matrix $i\mu\Lambda(\Lambda+\epsilon R)$,
we obtain
\[
i\mu\pm\epsilon\mu\in\sigma(JL_{\#}).
\]
Therefore, $\sigma(JL_{\#})$ contains hyperbolic eigenvalues near $i\mu$ for
any $\epsilon\neq0$. \newline

\noindent\textit{Case 3b: $\mu=0$ and $\langle L\cdot, \cdot\rangle$ changes
sign on $\ker(JL - i\mu) \cap Y$.}

In this case one may proceed as in the above through \eqref{E:decomp1},
however, with $J_{X_{1}}=0$. Therefore, no hyperbolic eigenvalue can bifurcate
through such type of perturbations of $L$. \newline

\noindent\textit{Cases 3c: $\mu\ne0$ and $Y$ contains a non-trivial Jordan
chain $u_{j} = (JL-i\mu)^{j-1} u_{1}$, $j=1, \ldots, k>1$, of $JL$ such that
$u_{k} \in\ker(JL-i\mu) \backslash\{0\}$ and $\langle L\cdot, \cdot\rangle$ is
non-degenerate on $span\{u_{1}, \ldots, u_{k}\}$}.

Again in this case let
\[
X_{1}=span\{u_{j},\ \overline{u_{j}}\mid j=1,\ldots,k\},\quad X_{2}%
=X_{1}^{\perp_{L}},
\]
which is a $L$-orthogonal invariant decomposition under $JL$ satisfying
\eqref{E:conjugacy} and thus the forms \eqref{E:decomp1} hold. From
Proposition \ref{P:basis}, without loss of generality, we may assume that,
with respect to the basis $\{u_{1},\ldots,u_{k},\overline{u_{1}}%
,\ldots,\overline{u_{k}}\}$ (as well as its dual basis in $X_{1}^{\ast}$),
$L_{X_{1}}$, $JL|_{X_{1}}\triangleq A_{X_{1}}$, and $J_{X_{1}}$ take the
forms
\[
L_{X_{1}}=%
\begin{pmatrix}
B_{+} & 0\\
0 & \overline{B_{+}}%
\end{pmatrix}
,\quad A_{X_{1}}=%
\begin{pmatrix}
A_{+} & 0\\
0 & \overline{A_{+}}%
\end{pmatrix}
,\quad J_{X_{1}}=%
\begin{pmatrix}
J_{+} & 0\\
0 & \overline{J_{+}}%
\end{pmatrix}
,
\]
where
\[
B_{+}=%
\begin{pmatrix}
0 & \ldots & 0 & b_{k}\\
0 & \ldots & b_{k-1} & 0\\
\ldots &  &  & \\
b_{1} & \ldots & 0 & 0
\end{pmatrix}
,\quad A_{+}=%
\begin{pmatrix}
i\mu & 0 & \ldots & 0 & 0\\
1 & i\mu & \ldots & 0 & 0\\
\ldots &  &  &  & \\
0 & 0 & \ldots & 1 & i\mu
\end{pmatrix}
,
\]
and $b_{j+1}=-b_{j},\ b_{k+1-j}=\overline{b_{j}}$. Therefore, $b_{j}\in\{\pm
i\}$ if $2|k$ or $b_{j}\in\{\pm1\}$ otherwise. Then one may compute
\[
J_{+}=%
\begin{pmatrix}
0 & 0 & \ldots & 0 & i\mu\overline{b_{1}}^{-1}\\
0 & 0 & \ldots & i\mu\overline{b_{2}}^{-1} & \overline{b_{1}}^{-1}\\
\ldots &  &  &  & \\
i\mu\overline{b_{k}}^{-1} & \overline{b_{k-1}}^{-1} & \ldots & 0 & 0
\end{pmatrix}
.
\]
Here, note that $\overline{b_{j}}^{-1}$ instead of $b_{j}^{-1}$ appears in
above $J_{+}$, namely
\[
J_{+}(u_{j}^{\ast})=i\mu\overline{b_{k+1-j}}^{-1}u_{k+1-j}+\overline
{b_{k+1-j}}^{-1}u_{k+2-j}%
\]
where $u_{k+1}=0$ is understood. This is due to the \textit{anti-linear}
complexification of $L$ and $J$, see \eqref{E:anti-linear}. In fact, let
$\{u_{j}^{\ast},\ \overline{u_{j}}^{\ast}\mid j=1,\ldots,k\}$ be the dual
basis in $X_{1}^{\ast}$ which are complex linear functionals. We have
\begin{align*}
\langle u_{l}^{\ast},Ju_{j}^{\ast}\rangle= &  \langle u_{l}^{\ast}%
,J(b_{k+1-j}^{-1}Lu_{k+1-j})\rangle=\langle u_{l}^{\ast},\overline{b_{k+1-j}%
}^{-1}JLu_{k+1-j})\rangle\\
= &  \overline{b_{k+1-j}}^{-1}\langle u_{l}^{\ast},JLu_{k+1-j}\rangle
=\overline{b_{k+1-j}}^{-1}\langle u_{l}^{\ast},i\mu u_{k+1-j}+u_{k+2-j}%
\rangle\\
= &  \overline{b_{k+1-j}}^{-1}\left(  i\mu\delta_{{l,k+1-j}}+\delta
_{l,k+2-j}\right)  .
\end{align*}

Consider perturbations in the form of
\[
L_{1}=%
\begin{pmatrix}
L_{1,X_{1}} & 0\\
0 & 0
\end{pmatrix}
,\;\text{ where }L_{1,X_{1}}=\epsilon%
\begin{pmatrix}
B & 0\\
0 & \overline{B}%
\end{pmatrix}
,\quad B=%
\begin{pmatrix}
0 & \ldots & 0 & 0\\
\ldots &  &  & \\
0 & \ldots & 0 & 0\\
0 & \ldots & 0 & 1
\end{pmatrix}
.
\]
Clearly, $L_{1}$ is real in the sense $\langle L_{1}\bar{u},\bar{v}%
\rangle=\overline{\langle L_{1}u,v\rangle}$. Let $L_{\#}=L+L_{1}$ and the
decomposition $X=X_{1}\oplus X_{2}$ is still invariant under $JL_{\#}$ and
orthogonal with respect to $L_{\#}$. Therefore, $\sigma\big(J_{+}%
(B_{+}+\epsilon B)\big)\subset\sigma(JL_{\#})$. By direct computation, we
obtain the matrix
\[
J_{+}(B_{+}+\epsilon B)=%
\begin{pmatrix}
i\mu & 0 & \ldots & 0 & i\epsilon\mu\overline{b_{1}}^{-1}\\
1 & i\mu & \ldots & 0 & \epsilon\overline{b_{1}}^{-1}\\
\ldots &  &  &  & \\
0 & 0 & \ldots & 1 & i\mu
\end{pmatrix}
\]
and its characteristic polynomial
\[
\det\big(\lambda-J_{+}(B_{+}+\epsilon B)\big)=(-i)^{k}p\big(i(\lambda
-i\mu)\big),\;\text{ }%
\]
where
\[
p(\lambda)=\lambda^{k}-\epsilon b\lambda+\epsilon b\mu,\ \ b=(-i)^{k-1}%
\overline{b_{1}}^{-1}\in\{\pm1\}.
\]
To find hyperbolic eigenvalues of $J_{+}(B_{+}+\epsilon B)$, it is equivalent
to show that $\,p(\lambda)=0$ has a root $\lambda\notin\mathbf{R}$. Choose the
sign of $\epsilon$ such that $\epsilon b\mu>0$. Denote $c_{1},\cdots,c_{k}$ to
be all the $k-$th roots of $-\left\vert b\mu\right\vert $, which are not real
except for at most one. So we can assume $\operatorname{Im}c_{1}\neq0$. Let
$\delta=\left\vert \epsilon\right\vert ^{\frac{1}{k}}$. Then $p\left(
\lambda\right)  =0$ is equivalent to
\begin{equation}
\left(  \frac{\lambda}{\delta}\right)  ^{k}-\delta|b|\frac{\left\vert
\mu\right\vert }{\mu}\left(  \frac{\lambda}{\delta}\right)  +\left\vert
b\mu\right\vert =0.\label{eqn-scaled}%
\end{equation}
When $\delta\ll1$, by the Implicit Function Theorem, (\ref{eqn-scaled}) has
$k$ roots of the form
\[
\frac{\lambda_{j}}{\delta}=c_{j}+O\left(  \delta\right)  ,\ j=1,\cdots,k,
\]
among which $\lambda_{1}=\delta c_{1}+O\left(  \delta^{2}\right)  $ satisfies
$\operatorname{Im}\lambda_{1}\neq0$. This implies that $J_{+}(B_{+}+\epsilon
B)$ has a hyperbolic eigenvalue of the form $i\mu-i\delta c_{1}+O\left(
\delta^{2}\right)  $.\newline

\noindent\textit{Cases 3d: $\mu=0$ and $Y$ contains a non-trivial Jordan chain
of length }$\geq3$. Let \textit{$u_{j}=(JL)^{j-1}u_{1}$, $j=1,\ldots,k,\left(
k\geq3\right)  \ $be a Jordan chain of $JL$ such that $\langle L\cdot
,\cdot\rangle$ is non-degenerate on $span\{u_{1},\ldots,u_{k}\}$}.

In this case one may proceed as in the above with
\[
p(\lambda)=\lambda^{k}-\epsilon b\lambda=\lambda(\lambda^{k-1}-\epsilon b).
\]
Choose $\epsilon$ such that $\epsilon b<0$. Since $k\geq3$, $p(\lambda)$ has a
complex root which implies that $J_{+}(B_{+}+\epsilon B)$ has a hyperbolic
eigenvalue. For $\mu=0$ and $k=2,\ $by straightforward computations, it can be
shown that $J$ must be degenerate and $J_{+}(B_{+}+\epsilon B)$ has only
eigenvalue $0$ and a purely imaginary eigenvalues for any $2\times2$ Hermitian
matrix $B$ and $\epsilon\ll1$.\newline

\noindent\textbf{Case 4: $i\mu\in\sigma(JL) \cap i\mathbf{R} \backslash\{0\}$
and $\langle L\cdot, \cdot\rangle$ is degenerate on $E_{i\mu} \ne\{0\}$. }

In this case, Proposition \ref{P:non-deg} implies that $i\mu$ must be
non-isolated in $\sigma(JL)$ and we start with the following lemma to isolate
$i\mu$ through a perturbation.

\begin{lemma}
\label{L:isolation} Assume (\textbf{H1-3}). Suppose $i\mu\in\sigma(JL)\cap
i\mathbf{R}$ is non-isolated in $\sigma(JL)$. For any $\epsilon>0$, there
exists a symmetric bounded linear operator $L_{1}:X\rightarrow X^{\ast}$
satisfying \eqref{E:real} such that $|L_{1}|<\epsilon$ and $i\mu\in
\sigma(JL_{\#})$ is an isolated eigenvalue, where $L_{\#}=L+L_{1}$, and
$\langle L_{\#}u,u\rangle>0$ for some generalized eigenvector $u$ of the
eigenvalue $i\mu$ of $JL_{\#}$.
\end{lemma}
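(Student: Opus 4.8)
\textbf{Proof plan for Lemma \ref{L:isolation}.}

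The plan is to use the structural decomposition Theorem \ref{T:decomposition} applied to $(J,L)$, where $i\mu$ lies on the imaginary axis, so that the action of $JL$ relevant to $i\mu$ is captured by the blocks $A_j$, $j=1,2,3,4$, on the central subspace $\oplus_{j=0}^4 X_j$; the hyperbolic blocks $A_{5,6}$ play no role near $i\mu$ and we leave $L$ unchanged there. Since $X_{1},X_{2},X_{4}$ are finite dimensional and $A_3$ is anti-self-adjoint with respect to the equivalent inner product $\mp\langle L_{X_3}\cdot,\cdot\rangle$ on $X_3$, the essential source of the non-isolation of $i\mu$ is continuous spectrum of $A_3$ accumulating at $i\mu$ (together with the finitely many generalized eigenvectors in the finite-dimensional blocks). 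The idea is to perturb $L$ only inside $X_3$ — more precisely, to add a small finite-rank symmetric operator $L_1 = \epsilon\, i_{X_3}^{*} R\, i_{X_3}$ supported on the spectral subspace of $A_3$ associated to a small interval of $i\mathbf{R}$ around $i\mu$ — in a way that shifts that piece of the spectrum of $A_3$ off of $i\mu$ while keeping the energy quadratic form uniformly positive (or negative) there. Because $A_3$ is anti-self-adjoint in the $\langle L_{X_3}\cdot,\cdot\rangle$ inner product, it has a spectral representation and one can choose $R$ supported on the spectral projection $\chi_{(i\mu-i\rho,\,i\mu+i\rho)}(A_3)$; for $\rho$ small this projection captures all spectrum of $A_3$ within distance $\rho$ of $i\mu$, and a suitable $R$ (e.g. a multiplication by a real function vanishing precisely at $\mu$ in the spectral variable, composed with the skew structure) moves that spectrum to the point $i\mu$ only, or pushes it away, leaving $i\mu$ isolated in $\sigma(A_3+\text{perturbation})$.

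Concretely, I would first fix the decomposition $X=\oplus_{j=0}^6 X_j$ from Theorem \ref{T:decomposition} and pass to the reduced Hamiltonian system $(X_3, L_{X_3}, J_{X_3})$ via Lemma \ref{L:decomJ} / Corollary \ref{C:decomposition}, noting $A_3 = J_{X_3}L_{X_3}$ is anti-self-adjoint in $\mp\langle L_{X_3}\cdot,\cdot\rangle$ and hence unitarily equivalent to multiplication by $i\lambda$ on an $L^2$-type space. Second, using the spectral theorem for $A_3$, I would pick $\rho>0$ so small that all of $\sigma(JL)$ within distance $2\rho$ of $i\mu$, other than what comes from $A_3$, is already accounted for by the finitely many eigenvalues of the finite-dimensional blocks $A_{1,2,4}$ (which contribute isolated eigenvalues only); the genuinely accumulating part is the continuous spectrum of $A_3$. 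Third, I would define $L_1$ on $X_3$ as $\epsilon$ times a bounded symmetric operator built from the spectral calculus of $A_3$ so that $A_3^{\#}\triangleq J_{X_3}(L_{X_3}+L_1)$ has its spectrum in $(i\mu-i\rho,i\mu+i\rho)$ collapsed to the single point $i\mu$ — achieved by replacing the spectral variable $\lambda$ on that interval by the constant $\mu$, which corresponds to a finite-bandwidth rank-type modification that is small in operator norm when $\rho$ and $\epsilon$ are small. One then extends $L_1$ by zero on the other $X_j$, checks it is symmetric, bounded, $O(\epsilon)$, and satisfies the reality condition \eqref{E:real} (by working with the real subspace underlying $X_3$ and symmetrizing), and verifies $D(JL)=D(JL_{\#})$ so that (\textbf{A1-3}) hold. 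Fourth, since on that interval $A_3^{\#}$ acts as $i\mu\cdot I$ restricted to a closed subspace $V\subset X_3$ with $\mp\langle L_{X_3}\cdot,\cdot\rangle|_V$ still uniformly positive, $V$ consists of honest eigenvectors of $i\mu$ for $JL_{\#}$ on which $\langle L_{\#}\cdot,\cdot\rangle$ (which agrees with $\pm\langle L_{X_3}\cdot,\cdot\rangle$ up to the small $L_1$ correction) has a definite sign; choosing the sign of the block so that $+$ occurs (i.e. using the $X_3$ with $L_{X_3}\geq\delta$), one gets $\langle L_{\#}u,u\rangle>0$ for $u\in V\setminus\{0\}$, a generalized eigenvector of $i\mu$. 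Finally, one checks $i\mu$ is isolated in $\sigma(JL_{\#})$: outside the collapsed interval the spectrum of $A_3^{\#}$ stays away from $i\mu$ by construction; the blocks $A_{1,2,4}$ contribute only finitely many eigenvalues, which may be separated from $i\mu$ by an additional arbitrarily small generic perturbation of those finite matrices (or are automatically separated after a further tiny shift); the hyperbolic blocks are uniformly bounded away from $i\mathbf{R}$.

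The main obstacle I anticipate is the interplay between the spectral-calculus perturbation on $X_3$ and the requirement that the perturbed triple still satisfy the global hypotheses (\textbf{H1-3}) and the reality condition — in particular, ensuring that $L_{\#}=L+L_1$ remains such that the decomposition of $X$ into positive/negative/kernel parts persists (handled by Lemma \ref{L:P-H3} since $\dim\ker L<\infty$ follows from $i\mu\ne0$ being embedded, or can be arranged), and that $L_1$ genuinely has bounded graph norm relative to $JL$ so that (\textbf{A3}) holds — here the key point is that $L_1$ is supported on $X_3$ where $JL$ already has its only unbounded block, so $JL_1$ is controlled by $JL$ and is in fact $O(\epsilon)$ in the graph norm. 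A secondary subtlety is that collapsing an interval of continuous spectrum to a point produces an eigenvalue of infinite multiplicity; this is allowed by the framework (embedded eigenvalues of $JL$ may have infinite multiplicity, cf. Corollary \ref{C:symmetry} and Proposition \ref{P:finite-dim-E}), but one must be careful that $\langle L_{\#}\cdot,\cdot\rangle$ remains non-degenerate on $E_{i\mu}(JL_{\#})$, which is exactly guaranteed afterwards by Proposition \ref{P:non-deg} once $i\mu$ is isolated — so the logical order is: first isolate $i\mu$ and produce the positive direction, then invoke Proposition \ref{P:non-deg} in the subsequent Case 4 argument. I would also remark that the perturbation can alternatively be obtained more softly by a compact (finite-rank in the reduced picture) perturbation plus an appeal to Lemma \ref{L:resolvent} to track how the spectral projections move, but the explicit spectral-calculus construction makes the sign of $\langle L_{\#}\cdot,\cdot\rangle$ on the new eigenspace transparent.
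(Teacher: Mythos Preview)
Your approach is essentially the paper's: apply Theorem \ref{T:decomposition}, observe that non-isolation of $i\mu$ forces $i\mu\in\sigma(A_3)$, and use the spectral resolution of the anti-self-adjoint operator $A_3$ on $X_3$ to build a small symmetric perturbation $\tilde L_1$ supported on the spectral band $|\lambda-\mu|<\nu$ (and its reflection $|\lambda+\mu|<\nu$, needed for reality \eqref{E:real}) that collapses this band to the single point $i\mu$; then $L_1=P_3^{*}\tilde L_1 P_3$ has $|L_1|=O(\nu)$ and the perturbed diagonal block $A_3(I-L_{X_3}^{-1}\tilde L_1)$ has $i\mu$ as an isolated eigenvalue with eigenspace in the positive cone of $L_{X_3}$.

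One point needs correction. You assert that the collapsed subspace $V\subset X_3$ ``consists of honest eigenvectors of $i\mu$ for $JL_\#$'', but this is false: the upper-triangular block form \eqref{block-JL} has nonzero off-diagonal entries $A_{03},A_{13}$ (unchanged by your perturbation), so for $u_3\in V$ one only gets $(JL_\#-i\mu)u_3\in X_0\oplus X_1$, not zero. The paper handles this by decomposing $\tilde X=\ker L\oplus X_1$ into the generalized $i\mu$-eigenspace $Z_\mu$ of $JL|_{\tilde X}$ and a complement $\tilde Z$ on which $JL-i\mu$ is invertible, then setting $u=u_3-\tilde u$ with $\tilde u=\big((JL-i\mu)|_{\tilde Z}\big)^{-1}\tilde P(JL_\#-i\mu)u_3$; one checks $(JL_\#-i\mu)u\in Z_\mu$, hence $(JL_\#-i\mu)^{K+1}u=0$. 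The energy computation survives because $\tilde u\in\tilde X$ where $L_\#=L$ and the block form of $L$ gives $\langle L_\# u,u\rangle=\langle (L_{X_3}+\tilde L_1)u_3,u_3\rangle>0$. Your worry about the finite blocks $A_{1,2,4}$ possibly having eigenvalue $i\mu$ is a non-issue: their spectra are finite sets, so once $i\mu$ is isolated in $\sigma(A_3^\#)$ the upper-triangular structure makes it isolated in $\sigma(JL_\#)$ automatically, with no further perturbation needed.
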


\begin{proof}
Since $\sigma(JL)$ is symmetric about both real and imaginary axes, without
loss of generality we can assume that $\mu\geq0$.

Let $X= \Sigma_{j=0}^{6} X_{j}$ be the decomposition given in Theorem
\ref{T:decomposition} with associated projections $P_{j}$. We will use the
notations there in the rest of the proof. Recall Theorem \ref{T:decomposition}
is proved without the complexification, i.e. in the framework of real Hilbert
space $X$ and real operators $J$ and $L$, the resulted decomposition and
operators are real. After the complexification, $X_{j}$ are real in the sense
of \eqref{E:conjugacy} and the operators satisfy \eqref{E:real} and the blocks
in $L$ and $J$ are anti-linear.

As $i\mu\in\sigma(JL)$ is assumed to be non-isolated and $\dim X_{j}<\infty$,
$j\neq0,3$, it must hold $i\mu\in\sigma(A_{3})$. Since $A_{3}$ is
anti-self-adjoint with respect to the positive definite Hermitian form
$\langle L_{X_{3}}\cdot,\cdot\rangle$, it induces a resolution of the
identity. Namely there exists a family of projections $\{\Pi_{\lambda
}\}_{\lambda\in\mathbf{R}}$ on $X_{3}$ such that

\begin{enumerate}
\item $\lim_{\lambda\to\lambda_{0}+} \Pi_{\lambda}u = \Pi_{\lambda_{0}}u$, for
all $\lambda_{0} \in\mathbf{R}$ and $u \in X_{3}$;

\item $\Pi_{\lambda_{1}} \Pi_{\lambda_{2}} = \Pi_{\min\{\lambda_{1},
\lambda_{2}\}}$, for all $\lambda_{1,2} \in\mathbf{R}$;

\item $\langle L_{X_{3}} \Pi_{\lambda}u_{1}, u_{2} \rangle= \langle L_{X_{3}}
u_{1}, \Pi_{\lambda}u_{2}\rangle$ for any $u_{1,2} \in X_{3}$ and $\lambda
\in\mathbf{R}$;

\item $u= \int_{-\infty}^{+\infty} d\Pi_{\lambda}u$, $A_{3} u = \int_{-\infty
}^{+\infty} i \lambda\ d\Pi_{\lambda}u$, for any $u \in X_{3}$;

\item $d\Pi_{\lambda}= \overline{d \Pi_{-\lambda}}$ for any $\lambda
\in\mathbf{R}$.
\end{enumerate}

Here the last property is due to the fact that $J$ and $L$ are real satisfying \eqref{E:real}.

For $\mu>0$, define a perturbation $\tilde L_{1} : X_{3} \to X_{3}^{*}$ by
\[
\tilde L_{1} u = L_{X_{3}} \big(\int_{|\lambda-\mu|< \nu} \frac{\lambda-\mu
}\lambda d\Pi_{\lambda}u + \int_{|\lambda+\mu|< \nu} \frac{\lambda+ \mu
}\lambda d\Pi_{\lambda}u \big), \quad\forall u \in X_{3},
\]
where $\nu\in(0, \mu)$ is a small constant to be determined later. As
$\Pi_{\lambda}u$ is continuous from the right, the integrals take the same
values on the open intervals or half open half closed interval like $[\mu-
\nu, \mu+ \nu)$.

For $\mu=0$, define
\[
\tilde{L}_{1}u=L_{X_{3}}\int_{-\nu}^{\nu}\ d\Pi_{\lambda}u,\quad\forall u\in
X_{3},
\]
where again $\nu>0$ is determined later.

For $\mu>0$, like $L_{X_{3}}$, it is clear that $\tilde{L}_{1}$ is anti-linear
satisfying \eqref{E:anti-linear}. We will verify that $\tilde{L}_{1}$ is also
real and symmetric. For any $u\in X$, one may compute using $d\Pi_{\lambda
}=\overline{d\Pi_{-\lambda}}$
\begin{align*}
\overline{\tilde{L}_{1}u}= &  \overline{L_{X_{3}}\big(\int_{|\lambda-\mu|<\nu
}\frac{\lambda-\mu}{\lambda}d\Pi_{\lambda}u+\int_{|\lambda+\mu|<\nu}%
\frac{\lambda+\mu}{\lambda}d\Pi_{\lambda}u\big)}\\
= &  L_{X_{3}}\big(\int_{|\lambda-\mu|<\nu}\frac{\lambda-\mu}{\lambda
}\overline{d\Pi_{\lambda}}\bar{u}+\int_{|\lambda+\mu|<\nu}\frac{\lambda+\mu
}{\lambda}\overline{d\Pi_{\lambda}}\bar{u}\big)\\
= &  L_{X_{3}}\big(\int_{|\lambda-\mu|<\nu}\frac{\lambda-\mu}{\lambda}%
d\Pi_{-\lambda}\bar{u}+\int_{|\lambda+\mu|<\nu}\frac{\lambda+\mu}{\lambda}%
d\Pi_{-\lambda}\bar{u}\big).
\end{align*}
Through a change of variable $\lambda\rightarrow-\lambda$, we obtain
$\overline{\tilde{L}_{1}u}=\tilde{L}_{1}\bar{u}$, namely, $\tilde{L}_{1}$ is
real (the complxification of a real linear operator). Moreover, for any
$u_{1,2}\in X$, we have
\begin{align*}
&  \langle\tilde{L}_{1}u_{1},u_{2}\rangle=\langle L_{X_{3}}\big(\int%
_{|\lambda-\mu|<\nu}\frac{\lambda-\mu}{\lambda}d\Pi_{\lambda}u_{1}%
+\int_{|\lambda+\mu|<\nu}\frac{\lambda+\mu}{\lambda}d\Pi_{\lambda}%
u_{1}\big),u_{2}\rangle\\
= &  \int_{|\lambda-\mu|<\nu}\frac{\lambda-\mu}{\lambda}d\langle L_{X_{3}}%
\Pi_{\lambda}u_{1},u_{2}\rangle+\int_{|\lambda+\mu|<\nu}\frac{\lambda+\mu
}{\lambda}d\langle L_{X_{3}}\Pi_{\lambda}u_{1},u_{2}\rangle.
\end{align*}
Since $L_{X_{3}}$ is Hermitian and $\langle\Pi_{\lambda}\cdot,\cdot
\rangle=\langle\cdot,\Pi_{\lambda}\cdot\rangle$ on $X_{3}$, we obtain that
$\tilde{L}_{1}$ is Hermitian. Therefore, $\left\langle \tilde{L}_{1}%
\cdot,\cdot\right\rangle $ is the complexification of a real bounded symmetric
quadratic form on $X_{3}$. Clearly, in the equivalent norm $\langle L_{X_{3}%
}u,u\rangle^{\frac{1}{2}}$ on $X_{3}$,
\[
|\tilde{L}_{1}|\leq\frac{\nu}{\mu-\nu}\rightarrow0,\quad\text{ as }%
\nu\rightarrow0.
\]
The same properties also hold for $\tilde{L}_{1}$ for $\mu=0$ and we skip the details.

Let
\[
L_{1}=P_{3}^{\ast}\tilde{L}_{1}P_{3},\quad L_{\#}=L-L_{1}.
\]
Accordingly in this decomposition
\[
L_{\#}\longleftrightarrow%
\begin{pmatrix}
0 & 0 & 0 & 0 & 0 & 0 & 0\\
0 & 0 & 0 & 0 & B_{14} & 0 & 0\\
0 & 0 & L_{X_{2}} & 0 & 0 & 0 & 0\\
0 & 0 & 0 & L_{X_{3}}+\tilde{L}_{1} & 0 & 0 & 0\\
0 & B_{14}^{\ast} & 0 & 0 & 0 & 0 & 0\\
0 & 0 & 0 & 0 & 0 & 0 & B_{56}\\
0 & 0 & 0 & 0 & 0 & B_{56}^{\ast} & 0
\end{pmatrix}
.
\]
From Corollary \ref{C:decomposition}, one can compute
\[
JL_{\#}\longleftrightarrow%
\begin{pmatrix}
0 & A_{01} & A_{02} & A_{03}(I-L_{X_{3}}^{-1}\tilde{L}_{1}) & A_{04} & 0 & 0\\
0 & A_{1} & A_{12} & A_{13}(I-L_{X_{3}}^{-1}\tilde{L}_{1}) & A_{14} & 0 & 0\\
0 & 0 & A_{2} & 0 & A_{24} & 0 & 0\\
0 & 0 & 0 & A_{3}(I-L_{X_{3}}^{-1}\tilde{L}_{1}) & A_{34} & 0 & 0\\
0 & 0 & 0 & 0 & A_{4} & 0 & 0\\
0 & 0 & 0 & 0 & 0 & A_{5} & 0\\
0 & 0 & 0 & 0 & 0 & 0 & A_{6}%
\end{pmatrix}
.
\]
Due to the upper triangular structure of $JL_{\#}$ and the finite
dimensionality of $X_{1,2}$, in order to prove that $i\mu$ belongs to and is
isolated in $\sigma(JL_{\#})$, it suffices to show that $i\mu$ belongs to and
is isolated in $\sigma\big(A_{3}(I-L_{X_{3}}^{-1}\tilde{L}_{1})\big)$. In
fact, for any $u\in X_{3}$ ,
\begin{equation}
A_{3}(I-L_{X_{3}}^{-1}\tilde{L}_{1})u=i\int_{S}\mu\ d\Pi_{\lambda}%
u+i\int_{\mathbf{R}\backslash S}\lambda\ d\Pi_{\lambda}u,
\label{formula-perturbation-A3}%
\end{equation}
where $S=(-\mu-\nu,-\mu+\nu)\cup(\mu-\nu,\mu+\nu)$. Since $\Pi_{\lambda}$ is
not constant on $S$ as $i\mu\in\sigma(A_{3})$, we obtain that $i\mu$ is an
isolated eigenvalue of $A_{3}(I-L_{X_{3}}^{-1}\tilde{L}_{1})$ and thus of
$\sigma(JL_{\#})$ as well. Indeed, for any $u\in R\left(  \Pi_{\mu+\nu}%
-\Pi_{\mu-\nu}\right)  $, by (\ref{formula-perturbation-A3}) we have%
\[
A_{3}(I-L_{X_{3}}^{-1}\tilde{L}_{1})u=i\mu u.
\]
So $i\mu$ is an eigenvalues of $A_{3}(I-L_{X_{3}}^{-1}\tilde{L}_{1})$. To show
$i\mu$ is isolated, taking any $\alpha\in\mathbf{C}$ such that $0<\left\vert
\alpha-i\mu\right\vert <\nu,$ then we have
\[
\left(  \alpha-A_{3}(I-L_{X_{3}}^{-1}\tilde{L}_{1})\right)  ^{-1}=\int%
_{S}\left(  \alpha-i\mu\right)  ^{-1}\ d\Pi_{\lambda}+\int_{\mathbf{R}%
\backslash S}\left(  \alpha-i\lambda\right)  ^{-1}\ d\Pi_{\lambda},
\]
which is clearly a bounded operator.

Finally, we prove that there exists a generalized eigenvector $u$ of $i\mu$ of
$JL_{\#}$ such that $\langle L_{\#}u,u\rangle>0$. Since $\dim X_{1}<\infty$,
there exists an integer $K>0$ such that
\[
X_{1}=Y_{\mu}\oplus\tilde{Y},\;\text{ where }Y_{\mu}=\ker(A_{1}-i\mu)^{K}\cap
X_{1},\;\tilde{Y}=(A_{1}-i\mu)^{K}X_{1}.
\]

In the following we proceed in the case of $\mu>0$ first. Let
\[
Z_{\mu}=\{u-(-i\mu)^{-K}P_{0}(JL-i\mu)^{K}u\mid u\in Y_{\mu}\},\quad\tilde
{Z}=\ker L\oplus\tilde{Y}.
\]
Note that the upper triangular structure of $JL$ implies that
\[
(A_{1}-i\mu)^{K}=P_{1}(JL-i\mu)^{K}|_{X_{1}}.
\]
Using this observation and the invariance of $\tilde{X}=\ker L\oplus X_{1}$
under $JL$, we obtain through straightforward computations
\begin{equation}
\tilde{X}=Z_{\mu}\oplus\tilde{Z},\;Z_{\mu}=\ker(JL-i\mu)^{K}\cap\tilde
{X},\;\tilde{Z}=(JL-i\mu)^{K}\tilde{X}, \label{E:decom2}%
\end{equation}
and on the invariant subspaces $Z_{\mu}$ and $\tilde{Z}$
\begin{equation}
\sigma(JL|_{Z_{\mu}})=\{i\mu\}\ \text{ if }\ i\mu\in\sigma(A_{1}),\quad
i\mu\notin\sigma(JL|_{\tilde{Z}}). \label{E:decom3}%
\end{equation}
Let $P:\tilde{X}\rightarrow\tilde{Z}$ be the projection associated to the
above decomposition and $u_{3}\in X_{3}$ be such that
\[
A_{3}(I-L_{X_{3}}^{-1}\tilde{L}_{1})u_{3}=i\mu u_{3}.
\]
The structure of $JL_{\#}$ implies $(JL_{\#}-i\mu)u_{3}\in\tilde{X}$. Let
\[
\tilde{u}=\big((JL-i\mu)|_{\tilde{Z}}\big)^{-1}\tilde{P}(JL_{\#}-i\mu)u_{3}%
\in\tilde{Z}\subset\tilde{X},\quad u=u_{3}-\tilde{u}.
\]
By using $(L_{\#}-L)|_{\tilde{X}}=0$, it is easy to verify that
\[
(JL_{\#}-i\mu)u\in Z_{\mu},
\]
which implies
\[
(JL_{\#}-i\mu)^{K+1}u=0.
\]
From the structure of $L_{\#}$, straightforward computation leads to
\[
\langle L_{\#}u,u\rangle=\langle(L_{3}+\tilde{L}_{1})u_{3},u_{3}\rangle>0,
\]
for $0<\nu<<1$.

The case of $\mu=0$ is largely similar. Let
\[
Z_{0} = Y_{0} \oplus\ker L, \quad\tilde Z = \tilde Y
\]
and \eqref{E:decom2} and \eqref{E:decom3} still hold. The rest of the argument
follows in exactly the same procedure.
\end{proof}

We return to construct a perturbation $L_{1}$ to $L$ to create unstable
eigenvalues. In Case 4, $E^{D}$ in Proposition \ref{P:basis} is non-trivial
and finite dimensional, therefore
\begin{equation}
\exists\ 0\neq u_{0}\in\ker(JL-i\mu)\;\text{ such that }\;\langle Lu_{0}%
,u_{0}\rangle=0, \label{E:temp4}%
\end{equation}
where $u_{0}\in E^{D}$. Since $\mu\neq0$ implies $\overline{u_{0}}\in
E_{-i\mu}$ with $\langle Lu_{0},\overline{u_{0}}\rangle=0$, let
\begin{equation}
Y_{0}=span\{u_{0},\overline{u_{0}}\}\subset\ker(JL-i\mu)\oplus\ker(JL+i\mu).
\label{defn-Y-0}%
\end{equation}
The following decomposition lemma is our first step in the construction of a
hyperbolically generating perturbation.

\begin{lemma}
\label{L:decom2} Suppose $0\neq i\mu\in\sigma(JL)\cap i\mathbf{R}$ satisfying
\eqref{E:temp4}. Let $Y_{0}$ be defined in (\ref{defn-Y-0}). Then there exists
$w\in D(JL)$ with $\overline{w}\neq w\ $and a codim-4 closed subspace
$Y_{1}\subset X$ satisfying \eqref{E:conjugacy} such that $X=Y_{0}\oplus
Y_{1}\oplus Y_{2}$, where $Y_{2}=span\{w,\overline{w}\}$. Moreover, in this
decomposition and the bases $\{u_{0},\overline{u_{0}}\},\ \{w,\bar{w}\}$ on
$Y_{0,2}$ respectively, $L$ and $JL$ take the forms
\[
L\longleftrightarrow%
\begin{pmatrix}
0 & 0 & I_{2\times2}\\
0 & L_{Y_{1}} & 0\\
I_{2\times2} & 0 & 0
\end{pmatrix}
,\;JL\longleftrightarrow%
\begin{pmatrix}
i\mu\Lambda & A_{01} & A_{02}\\
0 & A_{1} & A_{12}\\
0 & 0 & i\mu\Lambda
\end{pmatrix}
,\;\Lambda=%
\begin{pmatrix}
1 & 0\\
0 & -1
\end{pmatrix}
.
\]
Here, all blocks are bounded operators except $A_{1}=J_{Y_{1}}L_{Y_{1}}$ and
$(Y_{1},J_{Y_{1}},L_{Y_{1}})$ satisfies (\textbf{H1-3}).
\end{lemma}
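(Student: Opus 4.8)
The plan is to realize $Y_0$ and $Y_2=\operatorname{span}\{w,\overline w\}$ as a dual pair of $L$-isotropic subspaces, with $w$ playing the role of a generalized eigenvector of $i\mu$ dual to $u_0$, and then to take $Y_1:=(Y_0\oplus Y_2)^{\perp_L}$; the normal forms will follow from the anti-symmetry of $JL$ with respect to $\langle L\cdot,\cdot\rangle$ recorded in \eqref{E:anti-S}.

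First I would produce $w$. Since $\mu\neq0$ we have $E_{i\mu}\cap\ker L=\{0\}$, so $u_0\notin\ker L$ and $Lu_0\neq0$ in $X^{\ast}$; likewise $L\overline{u_0}\neq0$ as $\overline{u_0}\in E_{-i\mu}$. The crucial preliminary claim is that $Lu_0$ and $L\overline{u_0}$ are linearly independent in $X^{\ast}$. Writing $u_0=a+ib$ with $a,b$ in the underlying real space, $(JL-i\mu)u_0=0$ gives $JL\,a=-\mu b$ and $JL\,b=\mu a$; a dependence among $Lu_0,L\overline{u_0}$ amounts to $L(\alpha a+\beta b)=0$ for some real $(\alpha,\beta)\neq(0,0)$, whence $JL(\alpha a+\beta b)=0$, and feeding in $JL\,a=-\mu b$, $JL\,b=\mu a$ together with $\mu\neq0$ forces $a,b\in\ker L$, i.e. $u_0\in\ker L$, a contradiction. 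Given the independence, density of $D(JL)$ lets me choose $w_0\in D(JL)$ with $\langle Lu_0,w_0\rangle=1$ and $\langle L\overline{u_0},w_0\rangle=0$; reality of $L$ then forces $\langle L\overline{u_0},\overline{w_0}\rangle=1$ and $\langle Lu_0,\overline{w_0}\rangle=0$, so the $Y_0\times Y_2$ coupling of $L$ is $I_{2\times2}$. To arrange $L|_{Y_2}=0$ I would replace $w_0$ by $w=w_0+\alpha u_0+\beta\overline{u_0}\in D(JL)$: because $L$ vanishes on $Y_0$ this leaves all $Y_0$-pairings unchanged while changing $\langle Lw,w\rangle$ by $2\operatorname{Re}\alpha$ and $\langle Lw,\overline w\rangle$ by $2\bar\beta$, so suitable $\alpha,\beta$ kill both. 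Since $\langle Lu_0,w\rangle=1\neq0=\langle Lu_0,\overline w\rangle$, $w$ is not real.

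Next I would assemble the decomposition and verify the blocks. The $4\times4$ block of $L$ on the $4$-dimensional space $Y_0\oplus Y_2$ is $\bigl(\begin{smallmatrix}0&I\\ I&0\end{smallmatrix}\bigr)$, hence non-degenerate, so by Lemma \ref{L:non-degeneracy} $X=(Y_0\oplus Y_2)\oplus Y_1$ with $Y_1=(Y_0\oplus Y_2)^{\perp_L}$ closed, of codimension $4$, and stable under conjugation. Comparing codimensions gives $Y_0\oplus Y_1=Y_0^{\perp_L}$, which is $JL$-invariant by Lemma \ref{L:InvariantSubS} (as $Y_0$ is $e^{tJL}$-invariant); this kills the $(3,1)$ and $(3,2)$ blocks of $JL$, while $JL(Y_0)\subset Y_0$ gives the $(2,1)$ zero block and the diagonal block $i\mu\Lambda$ on $Y_0$. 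For the $Y_2$ diagonal block I would use \eqref{E:anti-S}: $\langle L(JL-i\mu)w,u_0\rangle=-\langle Lw,(JL-i\mu)u_0\rangle=0$ and $\langle L(JL-i\mu)w,\overline{u_0}\rangle=2i\mu\langle Lw,\overline{u_0}\rangle=0$, so $(JL-i\mu)w\in Y_0^{\perp_L}=Y_0\oplus Y_1$, i.e. $P_{Y_2}JL\,w=i\mu w$, and conjugating gives $P_{Y_2}JL\,\overline w=-i\mu\overline w$. The $L$-block form is then immediate from $L|_{Y_0}=L|_{Y_2}=0$, $Y_1\perp_L(Y_0\oplus Y_2)$, and the normalization of $\langle Lu_0,w\rangle$. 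Finally, since $Y_0\oplus Y_2\subset D(JL)$ is finite-dimensional and $L$-orthogonal to $Y_1$, Lemma \ref{L:decomJ} shows $(Y_1,J_{Y_1}=P_{Y_1}JP_{Y_1}^{\ast},L_{Y_1})$ satisfies (\textbf{H1-3}); moreover for $u\in Y_1$ the functional $Lu$ annihilates $Y_0\oplus Y_2$, so $Lu=P_{Y_1}^{\ast}L_{Y_1}u$ and hence $A_1:=P_{Y_1}JL|_{Y_1}=J_{Y_1}L_{Y_1}$. Boundedness of the remaining blocks is clear: $A_{02},A_{12}$ act on the finite-dimensional $Y_2\subset D(JL)$, and $A_{01}u=P_{Y_0}JL\,u$ can be written through the fixed $L$-pairings $\langle Lu,JL\,w\rangle$ and $\langle Lu,JL\,\overline w\rangle$ via \eqref{E:anti-S}.

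The step I expect to be the real obstacle is the linear independence of $Lu_0$ and $L\overline{u_0}$ — without it the prescribed $I_{2\times2}$ coupling between $Y_0$ and $Y_2$, and hence the whole normal form, is unattainable — together with the bookkeeping needed to keep $w$ inside $D(JL)$ through both its initial choice and the $\alpha,\beta$ correction; both are handled by the computation above using $\mu\neq0$, $u_0\notin\ker L$, and the density of $D(JL)$.
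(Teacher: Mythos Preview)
Your proof is correct and follows essentially the same strategy as the paper's: build a $w$ dual to $u_0$ under $\langle L\cdot,\cdot\rangle$, set $Y_1=(Y_0\oplus Y_2)^{\perp_L}$, and read off the block forms from the invariance of $Y_0$ and $Y_0^{\perp_L}$ together with the anti-symmetry identity \eqref{E:anti-S}.

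The only difference worth noting is in the construction step. The paper first takes an auxiliary decomposition $X=Y_0\oplus\tilde Y_1\oplus\tilde Y_2$ with $\tilde Y_1\subset Y_0^{\perp_L}$ defined via the Hilbert inner product $(\cdot,\cdot)$, gets $L$ in a block form with off-diagonal terms $B_{12},B_{22}$ still present, and then invokes the graph-modification procedure from Proposition~\ref{P:decomposition1} to shear $\tilde Y_1,\tilde Y_2$ over $Y_0$ and kill those blocks; only afterwards does it pick $w\in X_2$ with the prescribed pairings. You instead construct $w$ directly with all the desired $L$-pairings (including $L|_{Y_2}=0$, via the explicit correction $w_0\mapsto w_0+\alpha u_0+\beta\overline{u_0}$) and then take $Y_1$ as the $L$-orthogonal complement in one shot. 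Your route is a bit more economical and avoids the reference to Proposition~\ref{P:decomposition1}; the paper's has the advantage of reusing an already-established general mechanism. Your verification that $\ker i_{Y_1}^*=L(Y_0\oplus Y_2)\subset D(J)$ (needed for Lemma~\ref{L:decomJ}) and the explicit formula for $A_{01}$ via $\langle L(JLw),\cdot\rangle$ are both sound.

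One cosmetic point: in your linear-independence argument for $Lu_0,L\overline{u_0}$, the phrase ``forces $a,b\in\ker L$'' compresses an intermediate step ($\beta a=\alpha b$ implies $a,b$ proportional, hence $u_0$ is a complex multiple of a real vector in $\ker L$); the conclusion $u_0\in\ker L$ is what actually yields the contradiction. This is harmless.
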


\begin{proof}
Let
\[
\tilde{Y}=\{u\in X\mid\langle Lu,u_{0}\rangle=0=\langle Lu,\overline{u_{0}%
}\rangle\}\supset\{u_{0},\overline{u_{0}}\}.
\]
Clearly, $\tilde{Y}$, satisfying \eqref{E:conjugacy}, is the complexification
of some real codim-2 subspace. Lemma \ref{L:InvariantSubS} implies that
$\tilde{Y}$ is invariant under $JL$. Let
\[
\tilde{Y}_{1}=\{u\in\tilde{Y}\mid(u,u_{0})=(u,\overline{u_{0}})=0\}.
\]
Since $Y_{0}\cap\ker L=\{0\}$ and $D(JL)$ is dense in $X$, there exists a
2-dim subspace $\tilde{Y}_{2}\subset D(JL)$ such that $\langle Lu,v\rangle$,
$u\in Y_{0}$ and $v\in\tilde{Y}_{2}$, defines a non-degenerate bilinear form
on $Y_{0}\otimes\tilde{Y}_{2}$. Clearly, we have $X=Y_{0}\oplus\tilde{Y}%
_{1}\oplus\tilde{Y}_{2}$ and in this decomposition $L$ takes the form
\[
L\longleftrightarrow%
\begin{pmatrix}
0 & 0 & B_{02}\\
0 & L_{Y_{1}} & B_{12}\\
B_{02}^{\ast} & B_{12}^{\ast} & B_{22}%
\end{pmatrix}
,
\]
where $B_{02}:\tilde{Y}_{2}\rightarrow Y_{0}^{\ast}$ is non-degenerate and
$B_{22}^{\ast}=B_{22}$. Through exactly the same procedure as in the proof of
Proposition \ref{P:decomposition1}, we may obtain subspaces $Y_{1}$ and
$\tilde{X}_{2}$ as graphs of bounded linear operators from $\tilde{Y}_{1,2}$
to $Y_{0}$ such that $X=Y_{0}\oplus Y_{1}\oplus X_{2}$ and in this
decomposition $L$ takes the form
\[
L\longleftrightarrow%
\begin{pmatrix}
0 & 0 & B\\
0 & L_{Y_{1}} & 0\\
B^{\ast} & 0 & 0
\end{pmatrix}
,
\]
where $B:X_{2}\rightarrow Y_{0}^{\ast}$ is non-degenerate. There exists $w\in
X_{2}\subset D(JL)$ such that $\langle Lu_{0},w\rangle=1$ and $\langle
Lu_{0},\bar{w}\rangle=\langle L\overline{u_{0}},w\rangle=0$, which also
implies $\langle L\overline{u_{0}},\bar{w}\rangle=0$, where \eqref{E:real} is
used. Let $Y_{2}=span\{w,\bar{w}\}$. From the definition of $w$, $\tilde{Y}$,
and $Y_{1}$, we have $X=Y_{0}\oplus Y_{1}\oplus Y_{2}$, associated with
projections $P_{0,1,2}$, and in this decomposition, the desired block form of
$L$ is achieved. Applying Lemma \ref{L:decomJ} to $X=(Y_{0}\oplus Y_{2})\oplus
Y_{1}$, we obtain that $(Y_{1},J_{Y_{1}},L_{Y_{1}})$ satisfies (\textbf{H1-3}%
), where $J_{Y_{1}}=P_{1}JP_{1}^{\ast}$. The upper triangular block form of
$JL$ is due to the invariance of $Y_{0}$ and $\tilde{Y}=Y_{0}\oplus Y_{2}$.

To complete the proof of the lemma, we are left to show $P_{2} JLw = i\mu w$,
which along with the facts that $\tilde Y$ satisfies \eqref{E:conjugacy} and
$JL$ satisfies \eqref{E:real} also implies $P_{2} JL \bar w= -i \mu\bar w$.
From $(JL)^{*} = -LJ$ (Corollary \ref{C:decomJ}), we have
\[
\langle LJLw, u_{0} \rangle= - \langle L w, JL u_{0}\rangle= i\mu\langle L w,
u_{0}\rangle= i\mu
\]
and similarly $\langle LJLw, \overline{u_{0}} \rangle= - i\mu\langle L w,
\overline{u_{0}}\rangle=0$. According to the definitions of $\tilde Y$ and
$w$, we obtain $P_{2} JLw = i\mu w$ and the lemma is proved.
\end{proof}

With the above lemmas, we are ready to construct a perturbed energy operator
$L_{\#}$ to create unstable eigenvalues of $JL_{\#}$ near $i\mu$ in the Case
4. We start with the decomposition given in Lemma \ref{L:decom2}. Since $i\mu$
is an eigenvalue of $JL$ non-isolated in $\sigma(JL)$, we have $i\mu\in
\sigma(J_{Y_{1}}L_{Y_{1}})$ and is non-isolated in $\sigma(J_{Y_{1}}L_{Y_{1}%
})$. From Lemma \ref{L:isolation}, there exists a sufficiently small symmetric
bounded linear operator $\tilde{L}_{2}:Y_{1}\rightarrow Y_{1}^{\ast}$ such
that $i\mu\in\sigma\big(J_{Y_{1}}(L_{Y_{1}}+\tilde{L}_{2})\big)$ and is
isolated with an eigenvector $u_{1}\in Y_{1}$ satisfying $\langle(L_{Y_{1}%
}+\tilde{L}_{2})u_{1},u_{1}\rangle>0$. Let $L_{2}=P_{1}^{\ast}\tilde{L}%
_{2}P_{1}$ and $\tilde{L}_{\#}=L+L_{2}$, then the block forms of $L_{\#}$ and
$J\tilde{L}_{\#}$ imply that $i\mu\in\sigma(J\tilde{L}_{\#})$ is isolated and
\begin{equation}
u_{0},u_{1}\in\ker(J\tilde{L}_{\#}-i\mu),\quad\langle\tilde{L}_{\#}u_{0}%
,u_{0}\rangle=0,\;\text{ and }\langle\tilde{L}_{\#}u_{1},u_{1}\rangle
>0.\label{eqn-sign-quadratic}%
\end{equation}
Since $i\mu$ is isolated in $\sigma(J\tilde{L}_{\#})$, Proposition
\ref{P:non-deg} implies that $\langle\tilde{L}_{\#}\cdot,\cdot\rangle$ is
non-degenerate on $E_{i\mu}(J\tilde{L}_{\#})$, the subspace of generalized
eigenvectors of $i\mu$ for $J\tilde{L}_{\#}$. Moreover, by
(\ref{eqn-sign-quadratic}), $\langle\tilde{L}_{\#}\cdot,\cdot\rangle$ is sign
indefinite on $E_{i\mu}(J\tilde{L}_{\#})$. This situation has been covered in
\textbf{Case 3}. Therefore, there exists a sufficient small symmetric bounded
linear operator $L_{3}:X\rightarrow X^{\ast}$ such that there exists
$\lambda\in\sigma(JL_{\#})\backslash i\mathbf{R}$ sufficiently close to $i\mu
$, where $L_{\#}=L+L_{2}+L_{3}$. \newline

\noindent\textbf{Case 5: $i\mu\in\sigma(JL)\cap i\mathbf{R}\backslash\{0\}$ is
non-isolated and $\langle L\cdot,\cdot\rangle$ is negative definite on
$E_{i\mu}\neq\{0\}$. }

Much as in \textbf{Case 4} (but more easily), we can construct sufficiently
small symmetric bounded perturbations to the energy operator $L$ to create
unstable eigenvalues. In fact, Proposition \ref{P:basis} implies that in Case
5, it holds $\ker(JL-i\mu)=E_{i\mu}$. Let
\[
Y_{0}=E_{i\mu}\oplus E_{-i\mu},\quad Y=Y_{0}^{\perp_{L}}=\{v\in X\mid\langle
Lv,u\rangle=\langle Lv,\overline{u}\rangle=0,\ \forall u\in E_{i\mu}\}.
\]
Since $\langle L\cdot,\cdot\rangle$ is negative on $Y_{0}$, Lemma
\ref{L:non-degeneracy} implies that $X=Y_{0}\oplus Y$ associated with
projections $P_{Y_{0},Y}$. In this decomposition $L$ and $JL$ take the forms
\[
L\longleftrightarrow%
\begin{pmatrix}
L_{Y_{0}} & 0\\
0 & L_{Y}%
\end{pmatrix}
,\quad JL\longleftrightarrow%
\begin{pmatrix}
A_{0} & 0\\
0 & A
\end{pmatrix}
,
\]
where $A_{0}$ is a bounded operator satisfying $A^{2}+\mu^{2}=0$. Lemma
\ref{L:decomJ} implies that $A=J_{Y}L_{Y}$ and $(Y,J_{Y},L_{Y})$ satisfies
(\textbf{H1-3}). Clearly, it still holds that $i\mu\in\sigma(J_{Y}L_{Y})$ and
is non-isolated there. Applying Lemma \ref{L:isolation} to $L_{Y},$ we obtain
a perturbation $\tilde{L}:Y\rightarrow Y^{\ast}$ such that $i\mu$ is an
isolated point in $\sigma\big(J_{Y}(L_{Y}+\tilde{L})\big)$. Let $\tilde
{L}_{\#}=L+P_{Y}^{\ast}\tilde{L}P_{Y}$ and we obtain that $i\mu$ is an
isolated point in $\sigma(J\tilde{L}_{\#})$ with $\langle\tilde{L}_{\#}%
\cdot,\cdot\rangle$ sign indefinite on its eigenspace. This is a case covered
in \textbf{Case 3} and thus there exists a sufficient small symmetric bounded
linear perturbation $L_{\#}$ to $L$ so that $JL_{\#}$ has an unstable
eigenvalue close to $i\mu$.\newline

\noindent\textbf{Proof of Theorem \ref{T:USImSpec}.} It suffices to show that
\textbf{Cases 3, 4, 5} cover all the cases in Theorem \ref{T:USImSpec}. In
fact, if $\langle L\cdot,\cdot\rangle$ is degenerate on $E_{i\mu}\neq\{0\}$
and $\mu\neq0$, this is precisely \textbf{Case 4}. Let us consider the case
when $\langle L\cdot,\cdot\rangle$ is non-degenerate on $E_{i\mu}$ and
satisfies the assumptions in Theorem \ref{T:USImSpec}. Then $\langle
L\cdot,\cdot\rangle$ is either sign indefinite on $E_{i\mu}$ (\textbf{Case 3})
or is negative definite on $E_{i\mu}$ for an eigenvalue $i\mu\neq0$
non-isolated in $\sigma(JL)$ (\textbf{Case 5}). \hfill$\square$

\section{Proof of Theorem \ref{T:degenerate} where (\textbf{H2.b}) is
weakened}

\label{S:degenerate}

In this section, we consider the case when (\textbf{H2.b}) is weakened,
namely, $L$ is only assumed to be positive on $X_{+}$, but not necessarily
uniformly positive. More precisely, we will prove Theorem \ref{T:degenerate}
under hypotheses (\textbf{B1-5}) given in Subsection \ref{SS:degenerate}. In
Subsection \ref{SS:2dGGP}, as an example we will consider the stability of
traveling waves of a nonlinear Schr\"{o}dinger equation with non-vanishing
condition at infinity in two dimensions. \newline

\noindent\textbf{Initial decomposition of the phase space.} We adopt the
notations as in Section \ref{S:Preliminary}. Let $P_{\pm,0}:X\rightarrow
X_{\pm,0}$ be the projections associated to the decomposition $X=X_{-}%
\oplus\ker L\oplus X_{+}$, where $X_{0}=\ker L$, and
\[
\tilde{X}_{\pm,0}^{\ast}=P_{\pm,0}^{\ast}X_{\pm,0}^{\ast}\subset X^{\ast}.
\]
We also let
\[
X_{\leq0}=X_{-}\oplus\ker L,\quad P_{\leq0}=P_{0}+P_{-}=I-P_{+},\quad\tilde
{X}_{\leq0}^{\ast}=\tilde{X}_{-}^{\ast}\oplus\tilde{X}_{0}^{\ast}.
\]
Clearly, we have
\begin{equation}
\tilde{X}_{+}^{\ast}=\ker i_{X_{\leq0}}^{\ast},\quad\tilde{X}_{\leq0}^{\ast
}=\ker i_{X_{+}}^{\ast}\subset Q_{0}(X),\quad X^{\ast}=\tilde{X}_{\leq0}%
^{\ast}\oplus\tilde{X}_{+}^{\ast},\label{E:tX^*}%
\end{equation}
where assumption (\textbf{B5}) is used. Since $\langle Lu,u\rangle<0$ on
$X_{-}\backslash\{0\}$ and $\dim X_{-}=n^{-}(L)<\infty$, there exists
$\delta>0$ such that
\[
\langle Lu,u\rangle\leq-\delta\Vert u\Vert^{2},\quad\forall\ u\in X_{-}.
\]
From (\textbf{B4}), we also have
\[
LX_{+}\subset\tilde{X}_{+}^{\ast},\quad LX_{\leq0}=\tilde{X}_{-}^{\ast}%
\subset\tilde{X}_{\leq0}^{\ast}.
\]
Denote
\[
L_{+}=i_{X_{+}}^{\ast}Li_{X_{+}}:X_{+}\rightarrow X_{+}^{\ast},\quad L_{\leq
0}=i_{X_{\leq0}}^{\ast}Li_{X_{\leq0}}:X_{\leq0}\rightarrow X_{\leq0}^{\ast},
\]
which along with the $L$-orthogonality in (\textbf{B4}) implies
\[
L=P_{+}^{\ast}L_{+}P_{+}+P_{\leq0}^{\ast}L_{\leq0}P_{\leq0}.
\]

While the decomposition is not necessarily $Q_{0}$-orthogonal, we have the
following lemma. Let
\begin{align*}
&  Q_{0}^{\leq0,+}=i_{\leq0}^{\ast}Q_{0}i_{X_{+}}:X_{+}\rightarrow X_{\leq
0}^{\ast},\quad Q_{0}^{+,\leq0}=i_{X_{+}}^{\ast}Q_{0}i_{\leq0}:X_{\leq
0}\rightarrow X_{+}^{\ast},\\
&  Q_{0}^{\leq0}=i_{X_{\leq0}}^{\ast}Q_{0}i_{X_{\leq0}}:X_{\leq0}\rightarrow
X_{\leq0}^{\ast},\quad Q_{0}^{+}=i_{X_{+}}^{\ast}Q_{0}i_{X_{+}}:X_{+}%
\rightarrow X_{+}^{\ast}.
\end{align*}
Clearly, $Q_{0}^{\leq0,+}=(Q_{0}^{+,\leq0})^{\ast}$ and in the decomposition
$X=X_{\leq0}\oplus X_{+}$ and $X^{\ast}=P_{\leq0}^{\ast}X_{\leq0}^{\ast}\oplus
P_{+}^{\ast}X_{+}^{\ast}$, operator $Q_{0}$ takes the form $%
\begin{pmatrix}
Q_{0}^{\leq0} & Q_{0}^{\leq0,+}\\
Q_{0}^{+,\leq0} & Q_{0}^{+}%
\end{pmatrix}
$. Since $\langle Q_{0}u,u\rangle>0$ for all $0\neq u\in X$, $Q_{0}^{+}$ and
$Q_{0}^{\leq0}$, as well as $L_{+}$, are bounded, symmetric, and positive.
Therefore, $Q_{0}^{\leq0}:X_{\leq0}\rightarrow X_{\leq0}^{\ast}$ and
$Q_{0}^{+},L_{+}:X_{+}\rightarrow X_{+}^{\ast}$ are injective with dense
ranges. Consequently, $(Q_{0}^{\leq0})^{-1}:X_{\leq0}^{\ast}\rightarrow
X_{\leq0}$ and $(Q_{0}^{+})^{-1},L_{+}^{-1}:X_{+}^{\ast}\rightarrow X_{+}$ are
densely defined, closed, and positive operators with
\[
\big((Q_{0}^{\leq0})^{-1}\big)^{\ast}=(Q_{0}^{\leq0})^{-1},\big((Q_{0}%
^{+})^{-1}\big)^{\ast}=(Q_{0}^{+})^{-1},
\]
and $(L_{+}^{-1})^{\ast}=L_{+}^{-1}$.

\begin{lemma}
\label{L:Q0+} It holds that $P_{+}^{\ast}Q_{0}^{+}(X_{+})\subset\tilde{X}%
_{+}^{\ast}$ is dense in $\tilde{X}_{+}^{\ast}$ and
\[
Q_{0}(X)=\tilde{X}_{\leq0}^{\ast}\oplus P_{+}^{\ast}Q_{0}^{+}(X_{+}),\quad
P_{+}^{\ast}Q_{0}^{+}(X_{+})=Q_{0}(X)\cap\tilde{X}_{+}^{\ast},
\]
with $(Q_{0}^{\leq0})^{-1}$ and $(Q_{0}^{+})^{-1}Q_{0}^{+,\leq0}$ being
bounded operators. Moreover,
\[
A\triangleq Q_{0}^{-1}P_{+}^{\ast}Q_{0}^{+}:X_{+}\rightarrow X_{2},\;\text{
where }X_{2}=Q_{0}^{-1} (\tilde{X}_{+}^{\ast})\subset X,
\]
is an isomorphism.
\end{lemma}

This lemma makes the natural connection between $Q_{0}(X)$ and $Q_{0}^{+}
(X_{+})$.

\begin{proof}
Since the quadratic form $\langle Q_{0}u,u\rangle$ is positive on $X$, we have
that $Q_{0}:X\rightarrow X^{\ast}$ is injective with dense $Q_{0}(X)\subset
X^{\ast}$. As $\tilde{X}_{\leq0}^{\ast}=\ker i_{X_{+}}^{\ast}\subset Q_{0}(X)$
due to (\textbf{B5}) and $X^{\ast}=\tilde{X}_{\leq0}^{\ast}\oplus\tilde{X}%
_{+}^{\ast}$, we obtain that $\tilde{X}_{+}^{\ast}\cap Q_{0}(X)$ is dense in
$\tilde{X}_{+}^{\ast}$ and $Q_{0}(X)=\tilde{X}_{\leq0}^{\ast}\oplus
\big(Q_{0}(X)\cap\tilde{X}_{+}^{\ast}\big)$. In the rest of the proof, we
study $Q_{0}(X)\cap\tilde{X}_{+}^{\ast}$ and its associated properties.

Let $X_{1}=Q_{0}^{-1}(\tilde{X}_{\leq0}^{\ast})\subset X$, which is a closed
subspace. Since $\tilde{X}_{\leq0}^{\ast}\subset Q_{0}(X)$ and $Q_{0}$ is
injective, $Q_{0}:X_{1}\rightarrow\tilde{X}_{\leq0}^{\ast}$ is bounded,
injective, and surjective and thus an isomorphism. Let
\[
\phi=(Q_{0}|_{X_{1}})^{-1}P_{\leq0}^{\ast}:X_{\leq0}^{\ast}\rightarrow
X_{1},\ \ \phi_{\leq0}=P_{\leq0}\phi,\ \ \ \ \phi_{+}=P_{+}\phi,
\]
which are bounded operators. For any $f,g\in X_{\leq0}^{\ast}$, since
\[
\langle g,\phi_{\leq0}f\rangle=\langle P_{\leq0}^{\ast}g,\phi f\rangle=\langle
Q_{0}\phi g,\phi f\rangle,
\]
we obtain that $\phi_{\leq0}:X_{\leq0}^{\ast}\rightarrow X_{\leq0}$ is
symmetric and $\langle f,\phi_{\leq0}f\rangle>0$ for any $0\neq f\in X_{\leq
0}^*$. Therefore $\phi_{\le 0}^{-1}$ is a densely defined closed operator satisfying $(\phi_{\le 0}^{-1})^* = \phi_{\le 0}^{-1}>0$. 

For any $f\in X_{\leq0}^{\ast}$, let
\[
\phi f=u_{\leq0}+u_{+},\quad u_{+}=\phi_{+}f,\quad u_{\leq0}=\phi_{\leq0}f,
\]
then we have
\[
Q_{0}^{\leq0}u_{\leq0}+Q_{0}^{\leq0,+}u_{+}=f,\quad Q_{0}^{+,\leq0}u_{\leq
0}+Q_{0}^{+}u_{+}=0.
\]
It implies that $Q_{0}^{+,\leq0}u_{\leq0}\in Q_{0}^{+}(X_{+})$ and
$u_{+}=-(Q_{0}^{+})^{-1}Q_{0}^{+,\leq0}u_{\leq0}$. Therefore,
\[
\big(Q_{0}^{\leq0}-Q_{0}^{\leq0,+}(Q_{0}^{+})^{-1}Q_{0}^{+,\leq0}%
\big)u_{\leq0}=f,
\]
which implies that the closed positive symmetric operator$\ \phi_{\leq0}^{-1}$
satisfies
\[
0<\phi_{\leq0}^{-1}=Q_{0}^{\leq0}-Q_{0}^{\leq0,+}(Q_{0}^{+})^{-1}Q_{0}%
^{+,\leq0}\leq Q_{0}^{\leq0}.
\]
Here we also used $Q_{0}^{\leq0,+}=(Q_{0}^{+,\leq0})^{\ast}$ and the
positivity of the symmetric closed operator $(Q_{0}^{+})^{-1}$. Therefore,
$\phi_{\leq0}$ is an isomorphism and
\[
(Q_{0}^{+})^{-1}Q_{0}^{+,\leq0}=-\phi_{+}\phi_{\leq0}^{-1}%
\]
is bounded. The above inequality also implies the boundedness of $(Q_{0}%
^{\leq0})^{-1}\leq\phi_{\leq0}$.

On the one hand, for any $u\in X_{+}$, using $I=i_{X_{\leq0}}P_{\leq
0}+i_{X_{+}}P_{+}$ we can write
\[
P_{+}^{\ast}Q_{0}^{+}u=Q_{0}u-P_{\leq0}^{\ast}i_{X_{\leq0}}^{\ast}Q_{0}%
u=Q_{0}(I-\phi i_{X_{\leq0}}^{\ast}Q_{0})u.
\]
Therefore, $P_{+}^{\ast}Q_{0}^{+}(X_{+})\subset Q_{0}(X) \cap\tilde{X}_{+}^{\ast}$
and
\[
A\triangleq Q_{0}^{-1}P_{+}^{\ast}Q_{0}^{+}=I-\phi i_{X_{\leq0}}^{\ast}%
Q_{0}:X_{+}\rightarrow X_{2}%
\]
is bounded, where $X_{2}=Q_{0}^{-1} (\tilde{X}_{+}^{\ast})$ is a closed subspace
of $X$ and $Q_{0}(X)\cap\tilde{X}_{+}^{\ast} = Q_0(X_2)$. 

On the other hand, suppose $u=u_{\leq0}+u_{+}\in X_{2}$, let $f= i_{X_{+}}^{\ast}Q_{0}u\in X_{+}^{\ast}$ and $f_+= P_+^* f  = Q_0u \in \tilde X_+^*$. We have
\[
Q_{0}^{\leq0}u_{\leq0}+Q_{0}^{\leq0,+}u_{+}=0,\quad Q_{0}^{+,\leq0}u_{\leq
0}+Q_{0}^{+}u_{+}=f,
\]
and thus $u_{\leq0}=-(Q_{0}^{\leq0})^{-1}Q_{0}^{\leq0,+}u_{+}$. Substituting
it into the second equation in the above, we obtain
\[
f=\big(Q_{0}^{+}-Q_{0}^{+,\leq0}(Q_{0}^{\leq0})^{-1}Q_{0}^{\leq0,+}%
\big)u_{+}=Q_{0}^{+}\tilde{u}_{+},
\]
where, from the above boundedness of $(Q_{0}^{+})^{-1}Q_{0}^{+,\leq0}$,
\[
\tilde{u}_{+}=\big(I-(Q_{0}^{+})^{-1}Q_{0}^{+,\leq0}(Q_{0}^{\leq0})^{-1}%
Q_{0}^{\leq0,+}\big)u_{+} \in X_+.
\]
It implies $f\in Q_{0}^{+}(X_{+})$ and thus $f_+ \in P_{+}^{\ast}Q_{0}^{+}(X_{+})$. Therefore 
\[
Q_{0}(X)\cap\tilde{X}_{+}^{\ast}\subset P_{+}^{\ast}Q_{0}^{+}(X_{+}).
\]
Moreover, the above equality on $\tilde u_+$ also implies
\[
A\big(I-(Q_{0}^{+})^{-1}Q_{0}^{+,\leq0}(Q_{0}^{\leq0})^{-1}Q_{0}^{\leq0,+}\big)P_{+} u = Q_{0}^{-1}P_{+}^{\ast}Q_{0}^{+} \tilde u_+ = Q_{0}^{-1} f_+= u.
\]
Therefore we obtain
\[
A^{-1}=\big(I-(Q_{0}^{+})^{-1}Q_{0}^{+,\leq0}(Q_{0}^{\leq0})^{-1}Q_{0}%
^{\leq0,+}\big)P_{+}%
\]
is bounded and the proof of the lemma is complete.
\end{proof}

\noindent\textbf{Construction of $Y$.} As our main concern is that $L_{+}$ is
not uniformly positive definite on $X_{+}$, we will actually work on the
completion $Y_{+}$ of $X_{+}$ under the positive quadratic form $\langle
L_{+}\cdot,\cdot\rangle$.

We start with a resolution of identity to rewrite $L_{+}$ on $X_{+}$. From
(\textbf{B3}), there exists $a>0$ such that
\begin{equation}
\frac{1}{C}\Vert u\Vert^{2}\leq\Vert u\Vert_{L_{+},a}^{2}\leq C\Vert
u\Vert^{2},\quad\forall u\in X_{+},\label{E:La}%
\end{equation}
for some $C>0$, where, for $u,v\in X_{+},\Vert u\Vert_{L_{+},a}^{2}%
=(u,u)_{L_{+},a}\;$and
\[
(u,v)_{L_{+},a}\triangleq\langle(L_{+}+aQ_{0}^{+})u,v\rangle=\langle
(L+aQ_{0})u,v\rangle.
\]
For $u,v\in X_{+}$, let
\[
\mathbb{L}=(L_{+}+aQ_{0}^{+})^{-1}L_{+}:X_{+}\rightarrow X_{+},
\]
which implies $(\mathbb{L}u,v)_{L_{+},a}=\langle Lu,v\rangle\ $and
\begin{equation}
\mathbb{D}=(Q_{0}^{+})^{-1}(L_{+}+aQ_{0}^{+}):X_{+}\supset D(\mathbb{D}%
)=(L_{+}+aQ_{0}^{+})^{-1}Q_{0}^{+}(X_{+})\rightarrow X_{+}.\label{E:BD1}%
\end{equation}
Clearly, the Riesz representation $\mathbb{L}$ of $L_{+}$ with respect to the
equivalent metric $(\cdot,\cdot)_{L_{+},a}$ is a bounded symmetric linear
operator. Since
\begin{equation}
\mathbb{D}^{-1}=(L_{+}+aQ_{0}^{+})^{-1}Q_{0}^{+}=a^{-1}(I-\mathbb{L}%
)\label{E:BD2}%
\end{equation}
is a bounded linear operator symmetric (and positive) with respect to
$(\cdot,\cdot)_{L_{+},a}$, $\mathbb{D}$ is self-adjoint with respect to
$(\cdot,\cdot)_{L_{+},a}$. In applications, if $Q_{1}$ is a uniformly positive
elliptic operator and $Q_{0}$ corresponds to the $L^{2}$ duality, the operator
$\mathbb{D}$ is basically a differential operator on $X_{+}$ of the same order
as $Q_{1}$. The symmetric operator $\mathbb{L}$ admits a resolution of
identity consisting of bounded projections $\Pi_{\lambda}:X_{+}\rightarrow
X_{+}$, $\lambda\in\lbrack0,1]$, where

\begin{enumerate}
\item $\lim_{\lambda\to\lambda_{0}+} \Pi_{\lambda}u = \Pi_{\lambda_{0}}u$, for
all $\lambda_{0} \in[0, 1)$ and $u \in X_{+}$;

\item $\Pi_{\lambda_{1}} \Pi_{\lambda_{2}} = \Pi_{\min\{\lambda_{1},
\lambda_{2}\}}$, for all $\lambda_{1,2} \in[0, 1]$;

\item $\langle(L_{+} + aQ_{0}^{+}) \Pi_{\lambda}u_{1}, u_{2} \rangle=
\langle(L_{+} + aQ_{0}^{+}) u_{1}, \Pi_{\lambda}u_{2}\rangle$ for any $u\in
X_{+}$ and $\lambda\in[0, 1]$;

\item $u= \int_{0}^{1} d\Pi_{\lambda}u$, $\mathbb{L} u = \int_{0}^{1}
\lambda\ d\Pi_{\lambda}u$, for any $u \in X_{+}$.
\end{enumerate}

Here, $\Pi_{1}=I$ and $\Pi_{0}=0$ since $L_{+}$ is bounded and $0<L_{+}%
<L_{+}+aQ_{0}^{+}$ as a quadratic form. Using this resolution of identity, we
have the representations of $L_{+}$ and $\Vert\cdot\Vert_{L_{+},a}$
\[
\langle L_{+}u,v\rangle=\int_{0}^{1}\lambda\,d(\Pi_{\lambda}u,v)_{L_{+}%
,a},\;\Vert u\Vert_{L_{+},a}^{2}=\int_{0}^{1}d\,\Vert\Pi_{\lambda}%
u\Vert_{L_{+},a}^{2},\;u,v\in X_{+}.
\]

Let $(Y_{+},\Vert\cdot\Vert_{L_{+}})$ be the Hilbert space of the completion
of $X_{+}$ with respect to the inner product
\[
(u,v)_{L_{+}}=(\mathbb{L}u,v)_{L_{+},a}=\langle L_{+}u,v\rangle=\langle
Lu,v\rangle=\int_{0}^{1}\lambda\,d(\Pi_{\lambda}u,v)_{L_{+},a},\;u,v\in X_{+}.
\]
Therefore, $X_{+}$ is densely embedded into $Y_{+}$ through the embedding
$i_{X_{+}}$. Using the above spectral integral representation of $\mathbb{L}$,
one may extend $\Pi_{\lambda}$ to be bounded linear projections on $Y$
orthogonal with respect to $(\cdot,\cdot)_{L_{+}}$ as well, satisfying
$|\Pi_{\lambda}|_{Y}\leq1$. Moreover, for $\lambda\in(0,1]$, $(I-\Pi_{\lambda
})Y_{+}\subset X_{+}$ and
\begin{equation}%
\begin{split}
&  \forall\,u\in X_{+},\;\Vert\Pi_{\lambda}u\Vert_{L_{+}}\leq\lambda\Vert
\Pi_{\lambda}u\Vert_{L_{+},a},\\
&  \forall\,u\in Y_{+},\;\lambda\Vert(I-\Pi_{\lambda})u\Vert_{L_{+},a}%
\leq\Vert(I-\Pi_{\lambda})u\Vert_{L_{+}}\leq\Vert(I-\Pi_{\lambda}%
)u\Vert_{L_{+},a},
\end{split}
\label{E:Pi2}%
\end{equation}
where $I-\Pi_{\lambda}=\int_{(\lambda,1]}d\Pi_{\lambda}$ is used.

As $Y_{+}$ is defined as the completion of $X_{+}$ with respect to the metric
$(\mathbb{L}u,u)_{L_{+},a}$, elements in $Y_{+}$ are defined via Cauchy
sequences in $X_{+}$ with respect to this metric. This is rather inconvenient
technically. Instead, we give an integral representation of elements in
$Y_{+}$ and some linear quantities on $Y_{+}$ using $\Pi_{\lambda}$ and the
following lemma.

\begin{lemma}
\label{L:Space-Y+} $\lim_{\lambda\to0+} \Vert\Pi_{\lambda}u \Vert_{L_{+}} =0$
for any $u \in Y_{+}$.
\end{lemma}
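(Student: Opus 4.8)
\textbf{Proof plan for Lemma \ref{L:Space-Y+}.}

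The goal is to show that $\lim_{\lambda\to0+}\|\Pi_\lambda u\|_{L_+}=0$ for every $u\in Y_+$. The plan is to first verify the claim on the dense subspace $X_+$ and then extend to all of $Y_+$ by an $\varepsilon/3$-type density argument, using the uniform bound $|\Pi_\lambda|_{Y_+}\le1$.

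First I would treat $u\in X_+$. Here $u=\int_0^1 d\Pi_\mu u$ and $\Pi_\lambda u=\int_{[0,\lambda]}d\Pi_\mu u$, so by the defining spectral representation of the inner product $(\cdot,\cdot)_{L_+}$ together with property (2) of the resolution of identity ($\Pi_{\lambda_1}\Pi_{\lambda_2}=\Pi_{\min\{\lambda_1,\lambda_2\}}$) and the orthogonality property (3), one gets
\[
\|\Pi_\lambda u\|_{L_+}^2=\int_{[0,\lambda]}\mu\,d\|\Pi_\mu u\|_{L_+,a}^2 .
\]
Since the total measure $\int_{[0,1]}d\|\Pi_\mu u\|_{L_+,a}^2=\|u\|_{L_+,a}^2<\infty$ is finite and the integrand $\mu\le 1$ is bounded, dominated convergence (or simply monotonicity of the integral as $\lambda\downarrow0$, the integrals decreasing to $\int_{\{0\}}\mu\,d\|\Pi_\mu u\|_{L_+,a}^2=0$) gives $\|\Pi_\lambda u\|_{L_+}^2\to 0$. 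The value at $\mu=0$ contributes nothing because the integrand vanishes there; this is exactly why one works with $\|\cdot\|_{L_+}$ rather than $\|\cdot\|_{L_+,a}$, and is consistent with $\Pi_0=0$ having been recorded just above.

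Next I would pass to general $u\in Y_+$. Fix $\varepsilon>0$ and choose $v\in X_+$ with $\|u-v\|_{L_+}<\varepsilon$, possible since $X_+$ is dense in $Y_+$. Then
\[
\|\Pi_\lambda u\|_{L_+}\le\|\Pi_\lambda(u-v)\|_{L_+}+\|\Pi_\lambda v\|_{L_+}\le\|u-v\|_{L_+}+\|\Pi_\lambda v\|_{L_+}<\varepsilon+\|\Pi_\lambda v\|_{L_+},
\]
where I used $|\Pi_\lambda|_{Y_+}\le1$ on the first term. By the first step $\|\Pi_\lambda v\|_{L_+}\to0$, so $\limsup_{\lambda\to0+}\|\Pi_\lambda u\|_{L_+}\le\varepsilon$, and letting $\varepsilon\to0$ finishes the proof. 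The only mild subtlety — and the one place to be careful — is justifying that $\Pi_\lambda$ extends to a bounded operator on $Y_+$ with norm $\le1$ and that the spectral integral formula for $\|\Pi_\lambda u\|_{L_+}^2$ is legitimate; but both facts were already asserted in the construction of $Y_+$ preceding the lemma (the extension of $\Pi_\lambda$ to $Y_+$ with $|\Pi_\lambda|_Y\le1$, orthogonal with respect to $(\cdot,\cdot)_{L_+}$), so no new work is needed there. Thus the argument is essentially routine monotone/dominated convergence plus density.
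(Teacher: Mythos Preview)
Your proposal is correct and follows essentially the same density-plus-uniform-bound strategy as the paper. The only cosmetic difference is in the $v\in X_+$ step: you argue via the spectral integral $\|\Pi_\lambda v\|_{L_+}^2=\int_0^\lambda\mu\,d\|\Pi_\mu v\|_{L_+,a}^2\to0$, while the paper instead invokes the right-continuity $\Pi_\lambda v\to\Pi_0 v=0$ in $X_+$ together with the bound $\|\Pi_\lambda v\|_{L_+}\le\lambda\|\Pi_\lambda v\|_{L_+,a}$ from \eqref{E:Pi2}.
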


\begin{proof}
For any $\epsilon>0$, there exists $v \in X_{+}$ such that $\Vert
u-v\Vert_{L_{+}} < \frac\epsilon2$. Since $\lim_{\lambda\to0+} \Pi_{\lambda}v
= \Pi_{0} v =0$ in $X_{+}$, there exists $\lambda_{0} > 0$ such that $\Vert
\Pi_{\lambda}v \Vert_{L_{+},a} < \frac\epsilon2$ for any $\lambda\in(0,
\lambda_{0})$. Therefore, for any $\lambda\in(0, \lambda_{0})$,
\[
\Vert\Pi_{\lambda}u \Vert_{L_{+}} \le\Vert\Pi_{\lambda}(u-v) \Vert_{L_{+}} +
\Vert\Pi_{\lambda}v \Vert_{L_{+}} \le\Vert u-v\Vert_{L_{+}} + \lambda\Vert
\Pi_{\lambda}v\Vert_{L_{+},a} \le\epsilon.
\]
The lemma is proved.
\end{proof}

\begin{corollary}
\label{C:Space-Y+} For any $u,v\in Y_{+}$, we have
\begin{align*}
&  u=\int_{0}^{1}d\Pi_{\lambda}u=-\int_{0}^{1}d(I-\Pi_{\lambda})u=-\lim
_{\lambda\rightarrow0+}\int_{\lambda}^{1}d(I-\Pi_{\lambda})u,\\
&  \mathbb{L}u=-\int_{0}^{1}\lambda d(I-\Pi_{\lambda})u,\;\Vert u\Vert_{L_{+}%
}^{2}=-\int_{0}^{1}\lambda d\Vert(I-\Pi_{\lambda})u\Vert_{L_{+},a}^{2},\\
&  \langle L_{+}u,v\rangle=-\int_{0}^{1}\lambda d\big((I-\Pi_{\lambda
})u,v\big)_{L_{+},a}=\lim_{\lambda\rightarrow0+}\langle L_{+}(I-\Pi_{\lambda
})u,(I-\Pi_{\lambda})v\rangle,\\
&  Y_{+}^{\ast}=\{f=(L_{+}+aQ_{0}^{+})u\mid u\in X_{+},\\
&  \Vert f\Vert_{Y_{+}^{\ast}}^{2}=-\int_{0}^{1}\lambda^{-1}d\Vert
(I-\Pi_{\lambda})u\Vert_{L_{+},a}^{2}<\infty\}\subset X_{+}^{\ast}.
\end{align*}

\end{corollary}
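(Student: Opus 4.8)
\textbf{Plan of proof for Corollary \ref{C:Space-Y+}.}

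The plan is to derive each identity by starting from the corresponding statement on the dense subspace $X_{+}$ and passing to the limit using Lemma \ref{L:Space-Y+} together with the uniform bound $|\Pi_{\lambda}|_{Y_{+}}\le 1$. First I would record the basic facts that the family $\{\Pi_{\lambda}\}$ extends from $X_{+}$ to bounded $(\cdot,\cdot)_{L_{+}}$-orthogonal projections on $Y_{+}$ with $|\Pi_{\lambda}|_{Y_{+}}\le 1$ (already established in the text just before the corollary, using the spectral representation of $\mathbb{L}$), so that $\lambda\mapsto \Pi_{\lambda}u$ is a function of bounded variation valued in $Y_{+}$ for every $u\in Y_{+}$, right-continuous for $\lambda\in(0,1]$. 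The representation $u=\int_{0}^{1}d\Pi_{\lambda}u$ then holds on $X_{+}$ by property (4) of the resolution of identity; to extend it to $Y_{+}$ I would write, for $u\in X_{+}$ and $0<\lambda<1$, $\int_{\lambda}^{1}d\Pi_{\mu}u=u-\Pi_{\lambda}u$, note both sides lie in $X_{+}$ and are Cauchy in $\|\cdot\|_{L_{+}}$ as $\lambda\to 0+$ by Lemma \ref{L:Space-Y+}, and use density of $X_{+}$ in $Y_{+}$ to get the same for general $u\in Y_{+}$, identifying the limit with $u$ via $\|u-\Pi_{\lambda}u\|_{L_{+}}=\|\Pi_{\lambda}(u-v)+{}$(something in $X_{+}$)$\|_{L_{+}}\to 0$. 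The identity $u=-\int_{0}^{1}d(I-\Pi_{\lambda})u$ is then just a rewriting since $d(I-\Pi_{\lambda})=-d\Pi_{\lambda}$, and $-\lim_{\lambda\to0+}\int_{\lambda}^{1}d(I-\Pi_{\mu})u=\lim_{\lambda\to0+}(u-\Pi_{\lambda}u)=u$ again by Lemma \ref{L:Space-Y+}.

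Next I would treat $\mathbb{L}u$ and the norm identity. On $X_{+}$, $\mathbb{L}u=\int_{0}^{1}\lambda\,d\Pi_{\lambda}u=-\int_{0}^{1}\lambda\,d(I-\Pi_{\lambda})u$ and $\|u\|_{L_{+}}^{2}=(\mathbb{L}u,u)_{L_{+},a}=\int_{0}^{1}\lambda\,d\|\Pi_{\lambda}u\|_{L_{+},a}^{2}$, which equals $-\int_{0}^{1}\lambda\,d\|(I-\Pi_{\lambda})u\|_{L_{+},a}^{2}$ after integrating by parts against the constant term (using $\Pi_{1}=I$, $\Pi_{0}=0$ and that $\|u\|_{L_{+},a}^{2}=\|\Pi_{\lambda}u\|_{L_{+},a}^{2}+\|(I-\Pi_{\lambda})u\|_{L_{+},a}^{2}$ by orthogonality of $\Pi_{\lambda}$ with respect to $(\cdot,\cdot)_{L_{+},a}$). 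The point of writing things with $I-\Pi_{\lambda}$ rather than $\Pi_{\lambda}$ is precisely that $(I-\Pi_{\lambda})Y_{+}\subset X_{+}$ for $\lambda>0$ by \eqref{E:Pi2}, so these integrals make literal sense for $u\in Y_{+}$ even though $\Pi_{\lambda}u$ itself need not lie in $X_{+}$ and $\|\Pi_{\lambda}u\|_{L_{+},a}$ need not be finite. To pass to $Y_{+}$, I would observe that $\lambda\mapsto\|(I-\Pi_{\lambda})u\|_{L_{+},a}^{2}$ is finite, monotone, and of bounded variation on any $[\delta,1]$, so the Stieltjes integral $-\int_{\delta}^{1}\lambda\,d\|(I-\Pi_{\lambda})u\|_{L_{+},a}^{2}$ is well-defined; taking $\delta\to0+$ and comparing with $\|(I-\Pi_{\delta})u\|_{L_{+}}^{2}\to\|u\|_{L_{+}}^{2}$ (via the third displayed identity of the corollary, proven below) yields convergence and the claimed formula. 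The formula $\langle L_{+}u,v\rangle=-\int_{0}^{1}\lambda\,d((I-\Pi_{\lambda})u,v)_{L_{+},a}=\lim_{\lambda\to0+}\langle L_{+}(I-\Pi_{\lambda})u,(I-\Pi_{\lambda})v\rangle$ follows by polarization from the norm identity, or directly: on $X_{+}$, $\langle L_{+}(I-\Pi_{\lambda})u,(I-\Pi_{\lambda})v\rangle=((I-\Pi_{\lambda})u,(I-\Pi_{\lambda})v)_{L_{+}}\to(u,v)_{L_{+}}=\langle L_{+}u,v\rangle$, and the right-hand side makes sense for $u,v\in Y_{+}$ since $(I-\Pi_{\lambda})u,(I-\Pi_{\lambda})v\in X_{+}$.

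Finally, for the description of $Y_{+}^{*}$: the dual of $Y_{+}$ is canonically the completion of the range of the Riesz map of $(\cdot,\cdot)_{L_{+}}$, and since $(\mathbb{D}^{-1}u,v)_{L_{+},a}=a^{-1}((I-\mathbb{L})u,v)_{L_{+},a}$ and $L_{+}=\mathbb{L}(L_{+}+aQ_{0}^{+})$, the natural candidates for elements of $Y_{+}^{*}$ are the functionals $v\mapsto\langle (L_{+}+aQ_{0}^{+})u,v\rangle$ for $u\in X_{+}$; their $Y_{+}^{*}$-norm is computed by $\|(L_{+}+aQ_{0}^{+})u\|_{Y_{+}^{*}}^{2}=\sup_{v}|\langle(L_{+}+aQ_{0}^{+})u,v\rangle|^{2}/\|v\|_{L_{+}}^{2}$, which after the spectral computation equals $\int_{0}^{1}\lambda^{-1}d\|\Pi_{\lambda}u\|_{L_{+},a}^{2}=-\int_{0}^{1}\lambda^{-1}d\|(I-\Pi_{\lambda})u\|_{L_{+},a}^{2}$ (finite exactly when this integral converges, in which case the functional extends boundedly to $Y_{+}$ and conversely every element of $Y_{+}^{*}$ arises this way by density). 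I expect the main obstacle to be the careful justification of the convergence of the improper Stieltjes integrals as $\lambda\to0+$ and the interchange of limits — in particular verifying that $\|\Pi_{\lambda}u\|_{L_{+}}^{2}=\int_{(0,\lambda]}\mu\,d\|\Pi_{\mu}u\|_{L_{+},a}^{2}\to 0$ really does control the tails — but this is handled cleanly by Lemma \ref{L:Space-Y+} combined with the monotonicity of $\lambda\mapsto\|(I-\Pi_{\lambda})u\|_{L_{+},a}^{2}$ and \eqref{E:Pi2}; everything else is bookkeeping with the spectral resolution and density of $X_{+}$ in $Y_{+}$.
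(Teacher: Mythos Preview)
Your proposal is correct and follows essentially the same approach as the paper. In fact, the paper does not give a formal proof of this corollary at all: it simply remarks that the first integral converges in the $\|\cdot\|_{L_{+}}$ norm, that the minus signs arise from the non-increasing monotonicity of $\|(I-\Pi_{\lambda})u\|_{L_{+},a}^{2}$, and that the point of writing everything in terms of $I-\Pi_{\lambda}$ is that $(I-\Pi_{\lambda})u\in X_{+}$ for $\lambda\in(0,1]$; your plan spells out precisely these ingredients (Lemma \ref{L:Space-Y+}, the bound $|\Pi_{\lambda}|_{Y_{+}}\le 1$, the inclusion \eqref{E:Pi2}, and density of $X_{+}$ in $Y_{+}$) and is more detailed than what the paper provides.
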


Here, the first integral converges in the $\Vert\cdot\Vert_{L_{+}}$ norm and
the minus signs are due to the non-increasing monotonicity of $\Vert
(I-\Pi_{\lambda})u\Vert_{L_{+},a}^{2}$. With $(I-\Pi_{\lambda})u\in X_{+}$ for
$\lambda\in(0,1]$, these integral representations are more convenient than the
Cauchy sequence representations of elements in $Y_{+}$. In particular, for
$f=(L_{+}+aQ_{0}^{+})u\in Y_{+}^{\ast}$ and $v\in Y_{+}$,
\[%
\begin{split}
\langle f,v\rangle= &  -\int_{0}^{1}d\big((I-\Pi_{\lambda})u,v\big)_{L_{+}%
,a}=\lim_{\lambda\rightarrow0+}\langle(L_{+}+aQ_{0}^{+})(I-\Pi_{\lambda
})u,(I-\Pi_{\lambda})v\rangle\\
\leq &  \Vert f\Vert_{Y_{+}^{\ast}}\Vert v\Vert_{L_{+}}.
\end{split}
\]

Let
\begin{equation}
Y=X_{\leq0}\oplus Y_{+},\quad(u,v)_{Y}=(P_{\leq0}u,P_{\leq0}v)+\big((I-P_{\leq
0})u,(I-P_{\leq0})v\big)_{L_{+}},\label{E:space-Y}%
\end{equation}
where, with slight abuse of notations, $P_{\leq0}:Y\rightarrow X_{\leq0}$
represents the projection operator with kernel $Y_{+}$. Clearly, $X$ is
densely embedded into $Y$ and let $i_{X}$ denote the embedding.

The dual space $Y^{\ast}$ is densely embedded into $X^{\ast}$ through
$i_{X}^{\ast}$ and thus can be viewed as a dense subspace of $X^{\ast}$. It is
straightforward to see that $i_{X}^{\ast}Y^{\ast}=\tilde{X}_{\leq0}^{\ast
}\oplus\tilde{Y}_{+}^{\ast},\quad$
\[
\langle f,v\rangle=\langle g,u\rangle=0,\;\forall\,u\in X_{\leq0},\ v\in
Y_{+},\ \,f\in\tilde{X}_{\leq0}^{\ast},\ \,g\in\tilde{Y}_{+}^{\ast},
\]
and
\begin{equation}%
\begin{split}
\tilde{Y}_{+}^{\ast} &  =\tilde{X}_{+}^{\ast}\cap i_{X}^{\ast}(Y^{\ast}%
)=P_{+}^{\ast}\{f=(L_{+}+aQ_{0}^{+})u\mid u\in X_{+},\\
&  \qquad\qquad\qquad\Vert f\Vert_{Y_{+}^{\ast}}^{2}=-\int_{0}^{1}\lambda
^{-1}d\Vert(I-\Pi_{\lambda})u\Vert_{L_{+},a}^{2}<\infty\}\subset X^{\ast}.
\end{split}
\label{E:Y_+^*}%
\end{equation}
Operator $L$ is naturally extended as a bounded symmetric linear operator
$L_{Y}:Y\rightarrow Y^{\ast}$ by
\begin{equation}
\langle L_{Y}u,v\rangle=\langle L_{\leq0}P_{\leq0}u,P_{\leq0}v\rangle+\langle
L_{+}(I-P_{\leq0})u,(I-P_{\leq0})v\rangle,\label{E:L_Y}%
\end{equation}
where $L_{+}$ on $Y_{+}$ is computed by the formula given in Corollary
\ref{C:Space-Y+}.

From assumption (\textbf{B4}) on the $L$-orthogonality of the decomposition
\[
X=X_{-}\oplus\ker L\oplus X_{+}=X_{\leq0}\oplus X_{+}%
\]
and Corollary \ref{C:Space-Y+}, the operator $L_{Y}$ defined in the above
satisfies (\textbf{H2}) on $Y$ with $\delta=1$ in (\textbf{H2.b}). \newline

\noindent\textbf{Operator $J_{Y}$.} We define $J_{Y}:Y^{\ast}\supset
D(J_{Y})\rightarrow Y$ essentially as the restriction of $J$ on $Y^{\ast}$,
namely,
\begin{equation}
J_{Y}\triangleq i_{X}\mathbb{J}Q_{0}^{-1}i_{X}^{\ast}:Y^{\ast}\supset
D(J_{Y})\rightarrow Y,\quad D(J_{Y})=(i_{X}^{\ast})^{-1}Q_{0}(X)\subset
Y^{\ast},\label{E:J-ext}%
\end{equation}
where we recall that $i_{X}:X\rightarrow Y$ is the embedding. Assumption
(\textbf{H3}) is satisfied due to (\textbf{B5}) and \eqref{E:tX^*}. Therefore,
to complete the proof of Theorem \ref{T:degenerate}, it suffice to prove
$J_{Y}^{\ast}=-J_{Y}$.

\begin{lemma}
\label{L:DJ_Y} It holds that $i_{X}^{\ast}D(J_{Y})$ is dense in $X^{\ast}$
and
\[
i_{X}^{\ast}D(J_{Y})=Q_{0}(X)\cap i_{X}^{\ast}Y^{\ast}=\tilde{X}_{\leq0}%
^{\ast}\oplus P_{+}^{\ast}(L_{+}+aQ_{0}^{+})X_{1+},
\]
where
\[
X_{1+}=\{u\in X_{+}\mid\int_{0}^{1}\frac{-1}{\lambda(1-\lambda)^{2}}%
d\Vert(I-\Pi_{\lambda})u\Vert^{2}<\infty\}\subset X_{+}.
\]

\end{lemma}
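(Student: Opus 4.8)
The plan is to obtain the first equality directly from the definition of $J_Y$ in \eqref{E:J-ext}, and then to compute the intersection $Q_0(X)\cap i_X^*Y^*$ by superposing the two direct sum splittings already in hand: $Q_0(X)=\tilde X_{\le0}^*\oplus P_+^*Q_0^+(X_+)$ from Lemma \ref{L:Q0+}, and $i_X^*Y^*=\tilde X_{\le0}^*\oplus\tilde Y_+^*$ from the description preceding \eqref{E:Y_+^*}. Since $D(J_Y)=(i_X^*)^{-1}Q_0(X)$, we get $i_X^*D(J_Y)=Q_0(X)\cap i_X^*(Y^*)$ at once. Both $P_+^*Q_0^+(X_+)$ (equal to $Q_0(X)\cap\tilde X_+^*$ by Lemma \ref{L:Q0+}) and $\tilde Y_+^*$ lie in $\tilde X_+^*$, while $\tilde X_{\le0}^*\subset Q_0(X)\cap i_X^*Y^*$ by $(\textbf{B5})$ and \eqref{E:tX^*}, and $X^*=\tilde X_{\le0}^*\oplus\tilde X_+^*$; comparing $\tilde X_+^*$-components shows the intersection splits as $\tilde X_{\le0}^*\oplus\big(P_+^*Q_0^+(X_+)\cap\tilde Y_+^*\big)$. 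So everything reduces to identifying $P_+^*Q_0^+(X_+)\cap\tilde Y_+^*$ with $P_+^*(L_++aQ_0^+)X_{1+}$.

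For this I would use that $P_+^*$ is injective (as $P_+$ is onto) and that $L_++aQ_0^+:X_+\to X_+^*$ is an isomorphism (cf.\ \eqref{E:La}). Write a general element of $\tilde Y_+^*$ as $g=P_+^*(L_++aQ_0^+)u$, $u\in X_+$, subject to the finiteness condition in \eqref{E:Y_+^*}. Then $g\in P_+^*Q_0^+(X_+)$ iff, after cancelling $P_+^*$, $(L_++aQ_0^+)u\in Q_0^+(X_+)$, i.e.\ $u\in(L_++aQ_0^+)^{-1}Q_0^+(X_+)=D(\mathbb D)$ by \eqref{E:BD1}. Hence $P_+^*Q_0^+(X_+)\cap\tilde Y_+^*=\{P_+^*(L_++aQ_0^+)u:u\in D(\mathbb D),\ u\text{ obeys the }\tilde Y_+^*\text{-condition}\}$, and it remains to check that this joint requirement on $u$ is exactly $u\in X_{1+}$.

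The key step is the spectral translation. Let $\rho(\lambda)=\|\Pi_\lambda u\|_{L_+,a}^2$ be the spectral distribution of $u$ with respect to $\mathbb L$; since $\Pi_\lambda$ is orthogonal in the equivalent metric $(\cdot,\cdot)_{L_+,a}$, one has $-d\|(I-\Pi_\lambda)u\|_{L_+,a}^2=d\rho(\lambda)$, so the $\tilde Y_+^*$-condition from \eqref{E:Y_+^*} reads $\int_0^1\lambda^{-1}\,d\rho<\infty$. On the other hand $\mathbb D^{-1}=a^{-1}(I-\mathbb L)=a^{-1}\int_0^1(1-\lambda)\,d\Pi_\lambda$ by \eqref{E:BD2}, so $\mathbb D=a\int_0^1(1-\lambda)^{-1}\,d\Pi_\lambda$ and $u\in D(\mathbb D)$ iff $\int_0^1(1-\lambda)^{-2}\,d\rho<\infty$. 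Combining, $u$ lies in the intersection iff $\int_0^1\big(\lambda^{-1}+(1-\lambda)^{-2}\big)\,d\rho<\infty$; since $\lambda^{-1}+(1-\lambda)^{-2}$ and $\tfrac1{\lambda(1-\lambda)^2}$ are comparable on $(0,1)$ (matching blow-ups $\lambda^{-1}$ at $0$ and $(1-\lambda)^{-2}$ at $1$, and bounded above and below on compact subintervals), this is precisely $\int_0^1\frac{-d\|(I-\Pi_\lambda)u\|^2}{\lambda(1-\lambda)^2}<\infty$ (reading $\|\cdot\|$ through the equivalence \eqref{E:La}), i.e.\ $u\in X_{1+}$. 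This gives $i_X^*D(J_Y)=\tilde X_{\le0}^*\oplus P_+^*(L_++aQ_0^+)X_{1+}$. For density, the spectral subspaces $(\Pi_{1-\epsilon}-\Pi_\epsilon)X_+$, $\epsilon\in(0,\tfrac12)$, lie in $X_{1+}$ (their spectral measures are supported in $[\epsilon,1-\epsilon]$, where both weights are bounded), and $(\Pi_{1-\epsilon}-\Pi_\epsilon)u\to u$ as $\epsilon\to0^+$ using $\Pi_1=I$ and the right-continuity $\lim_{\lambda\to0^+}\Pi_\lambda=\Pi_0=0$; hence $X_{1+}$ is dense in $X_+$, $P_+^*(L_++aQ_0^+)X_{1+}$ is dense in $\tilde X_+^*$, and together with $\tilde X_{\le0}^*\subset i_X^*D(J_Y)$ and $X^*=\tilde X_{\le0}^*\oplus\tilde X_+^*$ this yields density of $i_X^*D(J_Y)$ in $X^*$. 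I expect the main obstacle to be the careful bookkeeping in this spectral translation — keeping the four relevant norms ($\|\cdot\|$, $\|\cdot\|_{L_+,a}$, $\|\cdot\|_{L_+}$, $\|\cdot\|_{Y_+^*}$) and their associated Stieltjes measures straight, and rigorously identifying $D(\mathbb D)$ with a spectral integrability class — whereas the comparability estimate and the set-theoretic manipulations are routine.
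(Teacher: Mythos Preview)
Your proof is correct and follows essentially the same route as the paper: decompose $i_X^*D(J_Y)=Q_0(X)\cap i_X^*Y^*$ using the two splittings from Lemma~\ref{L:Q0+} and \eqref{E:Y_+^*}, then identify membership in $P_+^*Q_0^+(X_+)\cap\tilde Y_+^*$ with the two spectral integrability conditions (one from $D(\mathbb D)$ via \eqref{E:BD2}, one from \eqref{E:Y_+^*}) and combine them into the single weight $\frac{1}{\lambda(1-\lambda)^2}$. You are in fact slightly more thorough than the paper: you spell out the comparability $\lambda^{-1}+(1-\lambda)^{-2}\asymp\frac{1}{\lambda(1-\lambda)^2}$ explicitly, and you supply a density argument (via the spectral truncations $(\Pi_{1-\epsilon}-\Pi_\epsilon)X_+\subset X_{1+}$) that the paper's proof does not write out.
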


\begin{proof}
From $i_{X}^{*} Y^{*} = \tilde X_{\le0}^{*} \oplus\tilde Y_{+}^{*}$ and
(\textbf{B5}), we can decompose
\[
i_{X}^{*} D(J_{Y}) = Q_{0}(X) \cap i_{X}^{*} Y^{*} = \tilde X_{\le0}^{*}
\oplus\big(\tilde Y_{+}^{*}\cap Q_{0}(X)\big).
\]
As $\tilde Y_{+}^{*} \subset\tilde X_{+}^{*}$, we obtain from Lemma
\ref{L:Q0+}
\begin{equation}
\label{E:DJ_Y}i_{X}^{*} D(J_{Y}) = \tilde X_{\le0}^{*} \oplus\big( \tilde
Y_{+}^{*}\cap P_{+}^{*} Q_{0}^{+} (X_{+})\big).
\end{equation}

Recall \eqref{E:BD1} and we have $(L_{+}+aQ_{0}^{+})u\in Q_{0}^{+}(X_{+})$,
$u\in X_{+}$, if and only if $u\in D(\mathbb{D})$, which is equivalent to
$u\in(I-\mathbb{L})(X_{+})$ according to \eqref{E:BD2}. Therefore, we obtain
that
\[
(L_{+}+aQ_{0}^{+})u\in Q_{0}^{+}(X_{+}),\ u\in X_{+},
\]
if and only if
\[
-\int_{0}^{1}(1-\lambda)^{-2}d\Vert(I-\Pi_{\lambda})u\Vert_{L_{+},a}%
^{2}<\infty,
\]
which can be seen from
\begin{equation}
\mathbb{D}u=(Q_{0}^{+})^{-1}(L_{+}+aQ_{0}^{+})u=-\frac{1}{a}\int_{0}%
^{1}(1-\lambda)^{-1}d(I-\Pi_{\lambda})u.\label{E:BD3}%
\end{equation}
The lemma follows immediately from this property and the characterization
\eqref{E:Y_+^*} of $\tilde{Y}_{+}^{\ast}$.
\end{proof}

To prove $J_{Y}^{\ast}=-J_{Y}$, suppose $f\in D(J_{Y}^{\ast})$ and
$u=J_{Y}^{\ast}f$, namely, $f\in Y^{\ast}$ and $u\in Y$ satisfies
\begin{equation}
\langle f,J_{Y}g\rangle=\langle g,u\rangle,\quad\forall g\in D(J_{Y}%
).\label{E:J_Y*1}%
\end{equation}
Firstly, for any $\epsilon\in(0,\frac{1}{2})$, take
\[
g=-(i_{X}^{\ast})^{-1}P_{+}^{\ast}(L_{+}+aQ_{0}^{+})\int_{\epsilon}^{\frac
{1}{2}}d(I-\Pi_{\lambda})P_{+}u\in D(J_{Y})\cap(i_{X}^{\ast})^{-1}\tilde
{X}_{+}^{\ast},
\]
where Lemma \ref{L:DJ_Y} is used. Equalities \eqref{E:J_Y*1} and \eqref{E:BD3}
imply
\begin{align*}
&  -\int_{\epsilon}^{\frac{1}{2}}d\Vert(I-\Pi_{\lambda})P_{+}u\Vert_{L_{+}%
,a}^{2}=\langle g,u\rangle=\langle f,J_{Y}g\rangle=\langle\mathbb{J}^{\ast
}i_{X}^{\ast}f,Q_{0}^{-1}i_{X}^{\ast}g\rangle\\
= &  -\langle\mathbb{J}^{\ast}i_{X}^{\ast}f,A\mathbb{D}\int_{\epsilon}%
^{\frac{1}{2}}d(I-\Pi_{\lambda})P_{+}u\rangle=-\frac{1}{a}\langle
\mathbb{J}^{\ast}i_{X}^{\ast}f,A\int_{\epsilon}^{\frac{1}{2}}(1-\lambda
)^{-1}d(I-\Pi_{\lambda})P_{+}u\rangle,
\end{align*}
where $A$ is defined in Lemma \ref{L:Q0+} and proved to be bounded. Since
$\lambda\in(0,\frac{1}{2}]$, there exists $C>0$ such that
\[
-\int_{\epsilon}^{\frac{1}{2}}d\Vert(I-\Pi_{\lambda})P_{+}u\Vert_{L_{+},a}%
^{2}\leq C\Vert f\Vert_{Y^{\ast}}^{2},\quad\forall\epsilon\in(0,\frac{1}{2}).
\]
Therefore, we obtain $u\in X$, or more precisely,
\[
u=i_{X}\tilde{u},\quad\tilde{u}\in X.
\]
For any $g\in D(J_{Y})$, from (\textbf{B1-2}) we can compute
\begin{align*}
&  \langle i_{X}^{\ast}f,\mathbb{J}Q_{0}^{-1}i_{X}^{\ast}g\rangle=\langle
f,J_{Y}g\rangle=\langle g,u\rangle=\langle g,i_{X}\tilde{u}\rangle\\
= &  \langle Q_{0}Q_{0}^{-1}i_{X}^{\ast}g,\tilde{u}\rangle=\langle
Q_{0}\mathbb{J}Q_{0}^{-1}i_{X}^{\ast}g,\mathbb{J}\tilde{u}\rangle=\langle
Q_{0}\mathbb{J}\tilde{u},\mathbb{J}Q_{0}^{-1}i_{X}^{\ast}g\rangle.
\end{align*}
Since $\mathbb{J}$ is assumed to be isomorphic in (\textbf{B2}), $Q_{0}^{-1}$
is surjective, and $i_{X}^{\ast}D(J_{Y})$ is dense in $X^{\ast}$ (Lemma
\ref{L:DJ_Y}), we obtain
\[
i_{X}^{\ast}f=Q_{0}\mathbb{J}\tilde{u}\in Q_{0}(X).
\]
Thus it follows from (\textbf{B2}) that
\[
J_{Y}f=i_{X}\mathbb{J}^{2}\tilde{u}=-u,
\]
which implies $J_{Y}^{\ast}\subset-J_{Y}$. Again from (\textbf{B2}), it is
easy to see that $J_{Y}$ is symmetric, namely, $J_{Y}\subset-J_{Y}^{\ast}$.
Therefore, we complete the proof of Theorem \ref{T:degenerate}.

\section{Hamiltonian PDE models}

\label{SS:example}

In this section, based on the above general theory, we study the stability
issues of examples of Hamiltonian PDEs including several dispersive wave
models, the 2D Euler equation for inviscid flows and a 2D nonlinear
Schr\"{o}dinger equations with nonzero conditions at infinity.

First, in Subsections 11.1 to 11.3, we study the stability/instability of
traveling solitary and periodic wave solutions of several classes of equations
modeling weakly nonlinear dispersive long waves. They include BBM, KDV, and
good Boussinesq type equations. These equations respectively have the forms:

1. BBM type%

\begin{equation}
\partial_{t}u+\partial_{x}u+\partial_{x}f\left(  u\right)  +\partial
_{t}\mathcal{M}u=0; \label{BBM}%
\end{equation}

2. KDV type
\begin{equation}
\partial_{t}u+\partial_{x}f\left(  u\right)  -\partial_{x}\mathcal{M}u=0;
\label{kdv}%
\end{equation}

3. good Boussinesq (gBou) type%
\begin{equation}
\partial_{t}^{2}u-\partial_{x}^{2}u+\partial_{x}^{2}f\left(  u\right)
-\partial_{x}^{2}\mathcal{M}u=0. \label{gBOU}%
\end{equation}
We follow the notations in \cite{lin-jfa09}. Here, the pseudo-differential
operator $\mathcal{M}$ is defined as
\[
\widehat{\mathcal{M}g}(\xi)=\alpha(\xi)\widehat{g}(\xi),
\]
where $\hat{g}$ is the Fourier transformation of $g$. We assume: i) $f$ is
$C^{1}$ with $f\left(  0\right)  =f^{\prime}\left(  0\right)  =0,\ $and
$f\left(  u\right)  /u\rightarrow\infty.\ $ii) $a\left\vert \xi\right\vert
^{m}\leq\alpha\left(  \xi\right)  \leq b\left\vert \xi\right\vert ^{m}$ for
large $\xi$, where $m>0$ and $a,b>0$. If $f\left(  u\right)  =u^{2}$ and
$\mathcal{M}=-\partial_{x}^{2}$, the above equations recover the original BBM,
KDV, and good Boussinesq equations, which have been used to model the
propagation of water waves of long wavelengths and small amplitude.

\subsection{Stability of Solitary waves of Long wave models}

\label{SS:solitary}

Consider the equations (\ref{BBM})-(\ref{gBOU}) with $\left(  x,t\right)
\in\mathbf{R}\times\mathbf{R}$. Up to a shift of a constant of the wave
speed/symbol $\alpha\left(  \xi\right)  $, we can assume that $\sigma
_{ess}\left(  \mathcal{M}\right)  \subset\lbrack0,\infty)$. Each of the
equations (\ref{BBM})-(\ref{gBOU}) admits solitary-wave solutions of the form
$u\left(  x,t\right)  =u_{c}\left(  x-ct\right)  $ for $c>1,c>0,c^{2}<1$
respectively, where $u_{c}\left(  x\right)  \rightarrow0$ as $\left\vert
x\right\vert \rightarrow\infty$. They satisfy the equations
\begin{equation}
\mathcal{M}u_{c}+\left(  1-\frac{1}{c}\right)  u_{c}-\frac{1}{c}f\left(
u_{c}\right)  =0,\ \text{(BBM)} \label{steady-BBM}%
\end{equation}%
\[
\mathcal{M}u_{c}+cu_{c}-f\left(  u_{c}\right)  =0,\ \text{(KDV)}%
\]
and
\[
\mathcal{M}u_{c}+\left(  1-c^{2}\right)  u_{c}-f\left(  u_{c}\right)
=0,\ \text{(gBou)}%
\]
respectively. We refer to the introduction of \cite{lin-jfa09} and the book
\cite{pava-book} for the literature on the existence of such solitary waves.
Before stating the results, we introduce some notations. For BBM type
equations (\ref{BBM}), define the operator
\begin{equation}
\mathcal{L}_{0}=\mathcal{M}+\left(  1-\frac{1}{c}\right)  -\frac{1}%
{c}f^{\prime}\left(  u_{c}\right)  :H^{m}\rightarrow L^{2}, \label{L-0-BBM}%
\end{equation}
and the momentum
\begin{equation}
P\left(  c\right)  =\frac{1}{2}\int u_{c}\left(  \mathcal{M+}1\right)  u_{c}.
\label{momentum-BBM}%
\end{equation}
For KDV type equations (\ref{kdv}), define
\begin{equation}
\mathcal{L}_{0}:=\mathcal{M}+c-f^{\prime}\left(  u_{c}\right)  ,\ \ P\left(
c\right)  =\frac{1}{2}\int u_{c}^{2}. \label{L-0-KDV}%
\end{equation}
For good Boussinesq type equations (\ref{gBOU}), define
\begin{equation}
\mathcal{L}_{0}:=\mathcal{M}+1-c^{2}-f^{\prime}\left(  u_{c}\right)
,\ P\left(  c\right)  =-c\int u_{c}^{2}. \label{L-0-gbou}%
\end{equation}
Denote by $n^{-}\left(  \mathcal{L}_{0}\right)  $ the number (counting
multiplicity) of negative eigenvalues of the operators $\mathcal{L}_{0}$.

The linearizations of (\ref{BBM})-(\ref{gBOU}) in the traveling frame $\left(
x-ct,t\right)  $ are
\begin{equation}
\left(  \partial_{t}-c\partial_{x}\right)  \left(  u+\mathcal{M}u\right)
+\partial_{x}\left(  u+f^{\prime}\left(  u_{c}\right)  u\right)
=0,\ \text{(BBM)} \label{L-BBM}%
\end{equation}%
\begin{equation}
\left(  \partial_{t}-c\partial_{x}\right)  u+\partial_{x}\left(  f^{\prime
}\left(  u_{c}\right)  u-\mathcal{M}u\right)  =0,\ \text{(KDV)} \label{L-KDV}%
\end{equation}
and
\begin{equation}
\left(  \partial_{t}-c\partial_{x}\right)  ^{2}u-\partial_{x}^{2}\left(
u-f^{\prime}\left(  u_{c}\right)  u+\mathcal{M}u\right)  =0,\ \text{(gBou)}
\label{L-gBou}%
\end{equation}
respectively. We consider the Hamiltonian structures of these equations.

For BBM type equations, (\ref{L-BBM}) can be written as $\partial_{t}u=JLu$,
where $J=c\partial_{x}\left(  1+\mathcal{M}\right)  ^{-1}$ and $L=\mathcal{L}%
_{0}$ is defined in (\ref{L-0-BBM}). By differentiating (\ref{steady-BBM}) in
$x$ and $c$, we have $\mathcal{L}_{0}u_{c,x}=0$ and
\[
\mathcal{L}_{0}\partial_{c}u_{c}=-\frac{1}{c}\left(  1+\mathcal{M}\right)
u_{c}%
\]
which implies that $J\mathcal{L}_{0}\partial_{c}u_{c}=-u_{c,x}$ and
$\left\langle \mathcal{L}_{0}\partial_{c}u_{c},\partial_{c}u_{c}\right\rangle
=-\frac{1}{c}dP/dc$.

For KDV type equations, (\ref{L-KDV}) is written as $\partial_{t}%
u=J\mathcal{L}_{0}u$, where $J=\partial_{x}$ and $L=\mathcal{L}_{0}$ is
defined in (\ref{L-0-KDV}). Similarly, $\mathcal{L}_{0}u_{c,x}=0$,
$\mathcal{L}_{0}\partial_{c}u_{c}=-u_{c},$ and
\begin{equation}
J\mathcal{L}_{0}\partial_{c}u_{c}=-u_{c,x},\ \left\langle \mathcal{L}%
_{0}\partial_{c}u_{c},\partial_{c}u_{c}\right\rangle =-dP/dc.
\label{KDV-index}%
\end{equation}

For good Boussinesq type equations, we write (\ref{L-gBou}) as a first order
system. Let $\left(  \partial_{t}-c\partial_{x}\right)  u=v_{x}$, then
\[
\left(  \partial_{t}-c\partial_{x}\right)  v=\partial_{x}\left(
\mathcal{M}+1-f^{\prime}\left(  u_{c}\right)  \right)  u=\partial_{x}\left(
\mathcal{L}_{0}+c^{2}\right)  u.
\]
Thus
\[
\partial_{t}\left(
\begin{array}
[c]{c}%
u\\
v
\end{array}
\right)  =JL\left(
\begin{array}
[c]{c}%
u\\
v
\end{array}
\right)  ,
\]
with
\begin{equation}
J=\left(
\begin{array}
[c]{cc}%
0 & \partial_{x}\\
\partial_{x} & 0
\end{array}
\right)  ,\ \ \ L=\left(
\begin{array}
[c]{cc}%
\mathcal{L}_{0}+c^{2} & c\\
c & 1
\end{array}
\right)  . \label{hamiltonian-gBou}%
\end{equation}
We have
\[
\ker L=\left\{  \left(  u,-cu\right)  \ |\ u\in\ker\mathcal{L}_{0}\right\}  .
\]
Since
\begin{equation}
\left\langle L\left(
\begin{array}
[c]{c}%
u\\
v
\end{array}
\right)  ,\left(
\begin{array}
[c]{c}%
u\\
v
\end{array}
\right)  \right\rangle =\left\langle \mathcal{L}_{0}u,u\right\rangle
+\int(v+cu)^{2}, \label{quadratic-L-gBou}%
\end{equation}
so $n^{-}\left(  L\right)  =n^{-}\left(  \mathcal{L}_{0}\right)  $. Similarly
as in BBM and KDV types, we have $\ \ $
\[
L\left(
\begin{array}
[c]{c}%
u_{c,x}\\
-cu_{c,x}%
\end{array}
\right)  =0,\ L\left(
\begin{array}
[c]{c}%
-\partial_{c}u_{c}\\
c\partial_{c}u_{c}+u_{c}%
\end{array}
\right)  =\left(
\begin{array}
[c]{c}%
-cu_{c}\\
u_{c}%
\end{array}
\right)  ,\
\]%
\[
JL\left(
\begin{array}
[c]{c}%
-\partial_{c}u_{c}\\
c\partial_{c}u_{c}+u_{c}%
\end{array}
\right)  =\left(
\begin{array}
[c]{c}%
u_{c,x}\\
-cu_{c,x}%
\end{array}
\right)  ,
\]
and%
\[
\left\langle \ L\left(
\begin{array}
[c]{c}%
-\partial_{c}u_{c}\\
c\partial_{c}u_{c}+u_{c}%
\end{array}
\right)  ,\left(
\begin{array}
[c]{c}%
-\partial_{c}u_{c}\\
c\partial_{c}u_{c}+u_{c}%
\end{array}
\right)  \right\rangle =-dP/dc,
\]
where $P$ is defined in (\ref{L-0-gbou}). For all three cases, we have
$\sigma_{ess}\left(  \mathcal{L}_{0}\right)  \subset\lbrack\delta_{0},\infty)$
for some $\delta_{0}>0$. So the quadratic form $\left\langle \mathcal{L}%
_{0}\cdot,\cdot\right\rangle $ is positive definite on $H^{\frac{m}{2}}\ $in a
finite codimensional space. This along with Remark \ref{R:H3} shows that the
quadratic form $\left\langle \mathcal{L}_{0}\cdot,\cdot\right\rangle $ in the
Hamiltonian formulation of BBM and KDV type equations satisfies the assumption
(\textbf{H1-3}) in the general framework with $X=H^{\frac{m}{2}}$. By
(\ref{quadratic-L-gBou}), the quadratic form $\left\langle L\cdot
,\cdot\right\rangle $ in the Hamiltonian formulation (\ref{hamiltonian-gBou})
of good Boussinesq type equations also satisfies (\textbf{H1-3}) in the space
$\left(  u,\partial_{t}u\right)  \in X=H^{\frac{m}{2}}\times L^{2}$. Thus by
Theorems \ref{theorem-dichotomy}, \ref{theorem-counting}, and Corollary
\ref{cor-instability-index}, we get the following results.

\begin{theorem}
\label{THM: solitary-long-wave}Consider the linearized equations
(\ref{L-BBM})-(\ref{L-gBou}) at solitary waves $u_{c}\left(  x-ct\right)
\ $of equations (\ref{BBM})-(\ref{gBOU}). Then: (i) The following index
formula holds
\begin{equation}
k_{r}+2k_{c}+2k_{i}^{\leq0}+k_{0}^{\leq0}=n^{-}\left(  \mathcal{L}_{0}\right)
. \label{index-KDV}%
\end{equation}
(ii) The linear exponential trichotomy holds in the space $H^{\frac{m}{2}}$
for the linearized equations (\ref{L-BBM}) and (\ref{L-KDV}), and in
$H^{\frac{m}{2}}\times L^{2}$ for (\ref{L-gBou}).\newline(iii) When
$dP/dc\geq0$, we have $k_{0}^{\leq0}\geq1$. Moreover, if $\ker\mathcal{L}%
_{0}=span\left\{  u_{c,x}\right\}  $, then
\[
k_{0}^{\leq0}=\left\{
\begin{array}
[c]{cc}%
1 & \text{if }dP/dc>0\\
0 & \text{if }dP/dc<0
\end{array}
\right.  .
\]

\end{theorem}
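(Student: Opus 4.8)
The plan is to apply the three general theorems cited at the end---the exponential trichotomy (Theorem \ref{theorem-dichotomy}), the index formula (Theorem \ref{theorem-counting}), and Proposition \ref{prop-counting-k-0-1} together with Remark \ref{R:counting-k-0-1}---to each of the three Hamiltonian formulations already set up in the preceding paragraphs. The main preliminary work, which is essentially done in the discussion above, is to verify that $(J,L,X)$ satisfies (\textbf{H1-3}) in each case: $X=H^{m/2}$ for BBM and KdV types, and $X=H^{m/2}\times L^2$ for the good Boussinesq type. The key point is that $\sigma_{ess}(\mathcal{L}_0)\subset[\delta_0,\infty)$ gives uniform positivity of $\langle\mathcal{L}_0\cdot,\cdot\rangle$ on a finite-codimensional subspace, which via the spectral characterization in Remark \ref{R:assumptions} yields (\textbf{H2}); then (\textbf{H3}) follows from Remark \ref{R:H3} once one checks $\dim\ker L<\infty$ (for gBou one uses $\ker L=\{(u,-cu):u\in\ker\mathcal{L}_0\}$ and $n^-(L)=n^-(\mathcal{L}_0)$, from the computation \eqref{quadratic-L-gBou}). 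One must also confirm that $J=c\partial_x(1+\mathcal{M})^{-1}$, $J=\partial_x$, and the matrix $J$ in \eqref{hamiltonian-gBou} are each anti-self-dual; for $\partial_x$ and the gBou matrix this is the standard skew-adjointness of $\partial_x$ on $L^2(\mathbf{R})$, and for the BBM operator it follows since $(1+\mathcal{M})^{-1}$ is bounded, positive, and self-dual.

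Once the hypotheses are in place, part (i), the index formula \eqref{index-KDV}, is immediate from Theorem \ref{theorem-counting}(iii): for BBM and KdV types $n^-(L)=n^-(\mathcal{L}_0)$ directly, and for gBou $n^-(L)=n^-(\mathcal{L}_0)$ by \eqref{quadratic-L-gBou}. Part (ii), the linear exponential trichotomy in the stated spaces, is a direct citation of Theorem \ref{theorem-dichotomy} applied to $JL$ on $X$; nothing further is needed beyond knowing $JL$ generates a $C^0$ group, which that theorem provides under (\textbf{H1-3}).

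For part (iii) I would compute $(JL)^{-1}(\ker L)$ explicitly using the identities already recorded. In each case one has $\mathcal{L}_0 u_{c,x}=0$ (so $u_{c,x}\in\ker L$, up to the obvious vector form in the gBou case) and $JL(\partial_c u_c)=-u_{c,x}$ (resp. the vector version), hence $\partial_c u_c\in(JL)^{-1}(\ker L)$. When $dP/dc>0$, the computation $\langle\mathcal{L}_0\partial_c u_c,\partial_c u_c\rangle=-\frac1c\,dP/dc$ (or $=-dP/dc$) shows $\langle L\cdot,\cdot\rangle$ is negative on the line through $\partial_c u_c$, so by Proposition \ref{prop-counting-k-0-1}(i) we get $k_0^{\le0}\ge 1$; when $dP/dc<0$ this direction is positive and contributes $0$. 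To pin down $k_0^{\le0}$ exactly under the assumption $\ker\mathcal{L}_0=\mathrm{span}\{u_{c,x}\}$, I would take $S=\mathrm{span}\{\partial_c u_c\}$ (or its vector analogue) as a complement to $\ker L$ inside $(JL)^{-1}(\ker L)$---one checks this is all of it, since $\ker L$ being one-dimensional forces $\dim (JL)^{-1}(\ker L)/\ker L\le 1$ by the structure of $JL$ on the generalized kernel---and then apply Remark \ref{R:counting-k-0-1}: $k_0^{\le0}=n^{\le0}(L|_S)$, which is $1$ if $dP/dc>0$ and $0$ if $dP/dc<0$, using non-degeneracy of $L|_S$ in this case. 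The case $dP/dc\geq 0$ with possible degeneracy is handled by the inequality $k_0^{\le0}\ge n_0$ alone.

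The step I expect to be the main obstacle is the bookkeeping around $(JL)^{-1}(\ker L)$ in the good Boussinesq case, where $\ker L$ is not spanned by a single eigenfunction of $JL$ but by the vector $(u_{c,x},-cu_{c,x})$, and one must check that the preimage under $JL$ is exactly $\ker L\oplus\mathrm{span}\{(-\partial_c u_c,\ c\partial_c u_c+u_c)\}$ when $\ker\mathcal{L}_0=\mathrm{span}\{u_{c,x}\}$ and $dP/dc\neq0$---i.e. that no extra Jordan directions appear. This follows from Proposition \ref{prop-counting-k-0-1} (closedness of $(JL)^{-1}(\ker L)$) together with the non-degeneracy of $\langle L\cdot,\cdot\rangle$ on the relevant quotient, which in turn is exactly the condition $dP/dc\ne0$; the details are routine but need to be written carefully to avoid conflating the $\mathcal{L}_0$-level and $L$-level kernels.
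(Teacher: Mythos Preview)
Your proposal is correct and follows essentially the same approach as the paper: the discussion preceding the theorem sets up the Hamiltonian structure and verifies (\textbf{H1--3}) via the essential-spectrum argument and Remark~\ref{R:H3}, after which the paper simply cites Theorems~\ref{theorem-dichotomy} and~\ref{theorem-counting} (and Corollary~\ref{cor-instability-index}) to conclude. Your treatment of part~(iii) via Proposition~\ref{prop-counting-k-0-1} is in fact more explicit than the paper's, and your observation that $J$ is injective on $L^2(\mathbf{R})$ so $\ker(JL)=\ker L$ and hence $\dim\big((JL)^{-1}(\ker L)/\ker L\big)\le\dim\ker L=1$ is exactly the right way to pin down $S$.
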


\begin{corollary}
\label{cor-instability-parity}(i) When $dP/dc\geq0$ and $n^{-}\left(
\mathcal{L}_{0}\right)  \leq1$, the spectral stability holds true.\newline(ii)
If $\ker\mathcal{L}_{0}=span\left\{  u_{c,x}\right\}  $, then there is linear
instability when $n^{-}\left(  \mathcal{L}_{0}\right)  $ is even and $dP/dc>0$
or $n^{-}\left(  \mathcal{L}_{0}\right)  $ is odd and $dP/dc<0.$
\end{corollary}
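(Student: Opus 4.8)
The plan is to deduce both parts of the corollary directly from the index identity \eqref{index-KDV}, the sign information on $k_0^{\le 0}$ recorded in Theorem \ref{THM: solitary-long-wave}(iii), and Corollary \ref{cor-instability-index}. All the analytic content — the Hamiltonian formulation, the verification of (\textbf{H1-3}), the computation of $\mathcal{L}_0 u_{c,x}=0$ and $\mathcal{L}_0\partial_c u_c = -u_c$ (with the $1/c$ factor and the matrix modifications in the BBM and gBou cases, respectively), and the consequent identification of $(JL)^{-1}(\ker L)$ via Proposition \ref{prop-counting-k-0-1} and Remark \ref{R:counting-k-0-1} — is already packaged in Theorem \ref{THM: solitary-long-wave}, so what remains is purely combinatorial.

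For part (i) I would first observe that \eqref{index-KDV} together with the nonnegativity of the four indices forces $k_0^{\le 0}\le n^-(\mathcal{L}_0)\le 1$. If $n^-(\mathcal{L}_0)=0$, then $\langle\mathcal{L}_0\cdot,\cdot\rangle\ge 0$, so $k_0^{\le 0}=n^-(L)=0$ and Corollary \ref{cor-instability-index}(i) applies. If $n^-(\mathcal{L}_0)=1$, then since $dP/dc\ge 0$ forces $k_0^{\le 0}\ge 1$ by Theorem \ref{THM: solitary-long-wave}(iii), one gets $k_0^{\le 0}=1=n^-(\mathcal{L}_0)$ and hence $k_r=k_c=k_i^{\le 0}=0$; again $k_0^{\le 0}=n^-(L)$, so Corollary \ref{cor-instability-index}(i) yields spectral stability. (The combination $n^-(\mathcal{L}_0)=0$ with $dP/dc\ge 0$ is in fact vacuous, since $\mathcal{L}_0\partial_c u_c=-u_c\ne 0$ and $-dP/dc=\langle\mathcal{L}_0\partial_c u_c,\partial_c u_c\rangle\le 0$ already force a non-positive direction transversal to $\ker\mathcal{L}_0$, hence $n^-(\mathcal{L}_0)\ge 1$.)

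For part (ii) the additional hypothesis $\ker\mathcal{L}_0=\mathrm{span}\{u_{c,x}\}$ lets me use the exact values $k_0^{\le 0}=1$ when $dP/dc>0$ and $k_0^{\le 0}=0$ when $dP/dc<0$ from Theorem \ref{THM: solitary-long-wave}(iii). In the first subcase ($n^-(\mathcal{L}_0)$ even, $dP/dc>0$) the quantity $n^-(\mathcal{L}_0)-k_0^{\le 0}=n^-(\mathcal{L}_0)-1$ is odd; in the second ($n^-(\mathcal{L}_0)$ odd, $dP/dc<0$) the quantity $n^-(\mathcal{L}_0)-k_0^{\le 0}=n^-(\mathcal{L}_0)$ is odd. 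In either subcase Corollary \ref{cor-instability-index}(ii) gives $k_r>0$, i.e. $JL$ possesses a positive eigenvalue, which is linear (indeed exponential) instability, the growing mode lying in the unstable subspace $E^u$ of the trichotomy of Theorem \ref{THM: solitary-long-wave}(ii).

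The only genuine difficulty lies upstream, in Theorem \ref{THM: solitary-long-wave}(iii) itself: one must show $(JL)^{-1}(\ker L)=\mathrm{span}\{u_{c,x},\partial_c u_c\}$ (and the analogous two-dimensional space for the good Boussinesq system) when $dP/dc\ne 0$, check that $\langle L\cdot,\cdot\rangle$ on this space modulo $\ker L$ is non-degenerate with negative index $0$ or $1$ according to the sign of $dP/dc$, and treat the borderline $dP/dc=0$ directly (where the form degenerates but one still reads off $k_0^{\le 0}\ge 1$). Granting that input, the corollary is immediate arithmetic in the index formula.
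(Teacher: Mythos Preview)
Your proof is correct and follows precisely the approach the paper intends: the corollary is presented in the paper as an immediate consequence of Theorem \ref{THM: solitary-long-wave} together with Corollary \ref{cor-instability-index}, and your argument makes this explicit by reading off $k_0^{\le 0}$ from part (iii) of the theorem and then invoking the parity criterion. The parenthetical remark about the case $n^-(\mathcal{L}_0)=0$ being vacuous is a nice observation but inessential, and your identification of where the real work lies (upstream in establishing part (iii) of the theorem via Proposition \ref{prop-counting-k-0-1}) is accurate.
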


In particular, when $\mathcal{M=-\partial}_{x}^{2}$, by the fact that
$u_{c,x}$ changes sign exactly once and the Sturm-Liouville theory, we have
$\ker\mathcal{L}_{0}=span \left\{  u_{c,x}\right\}  $ and $n^{-}\left(
\mathcal{L}_{0}\right)  =1$. Thus, we have

\begin{corollary}
When $\mathcal{M=-\partial}_{x}^{2}$ and $dP/dc<0$, for the linearized
equations (\ref{BBM})-(\ref{gBOU}), we have $k_{r}=1$ and $k_{c}=k_{i}^{-}=0.$
In particular, on the center space $E^{c}$ as given in Theorem
\ref{theorem-dichotomy}, we have
\begin{equation}
\left\langle L\cdot,\cdot\right\rangle |_{E^{c}\cap\left\{  u_{c,x}\right\}
^{\perp}}\geq\delta_{0}>0. \label{center-positive}%
\end{equation}

\end{corollary}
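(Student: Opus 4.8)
The plan is to apply the index formula \eqref{index-KDV} together with the exponential trichotomy of Theorem \ref{theorem-dichotomy}, specialized to the case $\mathcal{M} = -\partial_x^2$ and $dP/dc < 0$. First I would recall that when $\mathcal{M} = -\partial_x^2$, the operator $\mathcal{L}_0$ is a Schr\"odinger operator on the line whose potential-free part is $-\partial_x^2$; since $u_{c,x}$ is an eigenfunction with eigenvalue $0$ that changes sign exactly once (the solitary wave profile $u_c$ being a single bump, hence monotone on each half-line), the Sturm--Liouville oscillation theorem gives $\ker\mathcal{L}_0 = \operatorname{span}\{u_{c,x}\}$ and $n^-(\mathcal{L}_0) = 1$. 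For the good Boussinesq case the extra diagonal block $\int (v+cu)^2$ contributes nothing to the negative count by \eqref{quadratic-L-gBou}, so $n^-(L) = n^-(\mathcal{L}_0) = 1$ in all three cases. Since $dP/dc < 0$, part (iii) of Theorem \ref{THM: solitary-long-wave} (with $\ker\mathcal{L}_0 = \operatorname{span}\{u_{c,x}\}$) gives $k_0^{\leq 0} = 0$. Plugging into \eqref{index-KDV}: $k_r + 2k_c + 2k_i^{\leq 0} = 1$, and since $k_r, k_c, k_i^{\leq 0}$ are nonnegative integers, the only possibility is $k_r = 1$, $k_c = k_i^{\leq 0} = 0$. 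In particular $k_i^- = 0$ as well since $k_i^- \le k_i^{\leq 0}$.

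For the quadratic form statement \eqref{center-positive}, I would argue as follows. By Theorem \ref{theorem-dichotomy}, $X = E^u \oplus E^c \oplus E^s$ with $\dim E^u = \dim E^s = k_r = 1$ (the single pair of hyperbolic eigenvalues, which are real here since $k_c = 0$), $\langle L\cdot,\cdot\rangle$ vanishes on $E^u$ and $E^s$ and is non-degenerate on $E^u \oplus E^s$ with $n^-(L|_{E^u \oplus E^s}) = \dim E^u = 1$ by Corollary \ref{C:symmetry} (formula \eqref{formula-negative-dimension-hyperbolic}). Since $E^c = (E^u \oplus E^s)^{\perp L}$ and $n^-(L) = 1$ is entirely accounted for on $E^u \oplus E^s$, it follows that $\langle L\cdot,\cdot\rangle|_{E^c} \geq 0$. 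The remaining degenerate direction of $L$ on $E^c$ is exactly $\ker L \cap E^c$; I would check that $\ker L = \operatorname{span}\{u_{c,x}\} \subset E^c$ (the kernel lies in the generalized kernel $E_0$, hence in the center subspace, and it is $L$-orthogonal to the hyperbolic eigenspaces by Lemma \ref{lemma-orthogonal-eigenspace}). Then $L$ restricted to $E^c$ is positive semi-definite with kernel exactly $\operatorname{span}\{u_{c,x}\}$; one needs that there is a uniform positive lower bound on $E^c \cap \{u_{c,x}\}^\perp$. This last uniformity comes from assumption (\textbf{H2.b}): $L \geq \delta$ on $X_+$, and $E^c$ is a finite-codimensional perturbation of a subspace on which $L$ is non-negative with $\ker L$ the only flat direction, so the quotient argument (or equivalently, the spectral picture of $\mathbb{L}$ from Remark \ref{R:assumptions}: $0$ is isolated in $\sigma(\mathbb{L})$) yields the uniform bound $\delta_0 > 0$.

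I would organize the write-up as: (1) identify $\ker\mathcal{L}_0$ and $n^-(\mathcal{L}_0) = 1$ via Sturm--Liouville; (2) invoke part (iii) of Theorem \ref{THM: solitary-long-wave} to get $k_0^{\leq 0} = 0$; (3) solve the Diophantine equation $k_r + 2k_c + 2k_i^{\leq 0} = 1$ to conclude $k_r = 1$, $k_c = k_i^- = 0$; (4) deduce the sign of $\langle L\cdot,\cdot\rangle$ on $E^c$ from the fact that all of $n^-(L)$ sits on $E^u \oplus E^s$, and upgrade non-negativity to the uniform bound \eqref{center-positive} using (\textbf{H2.b}) and the isolation of $0$ in $\sigma(\mathbb{L})$. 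The main obstacle I anticipate is step (4): one must carefully justify that the restriction $\langle L\cdot,\cdot\rangle|_{E^c}$, which a priori is only positive semi-definite, actually has a uniform gap away from its one-dimensional kernel. This requires knowing that $E^c$, although defined via a spectral decomposition of the non-self-adjoint operator $JL$, is still a "graph over" a subspace on which the self-adjoint form $\langle L\cdot,\cdot\rangle$ is uniformly positive modulo $\ker L$ — which is precisely the content of the structural decomposition Theorem \ref{T:decomposition} (the block $L_{X_3} \geq \delta$ together with the finite-dimensionality of $X_1, X_2$ and $\ker L$ being the only genuinely flat piece). I would lean on that theorem rather than attempt a direct spectral estimate.
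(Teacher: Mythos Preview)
Your proposal is correct and follows exactly the line of reasoning the paper intends: the paper states this corollary as an immediate consequence of the Sturm--Liouville fact that $\ker\mathcal{L}_0=\operatorname{span}\{u_{c,x}\}$ and $n^-(\mathcal{L}_0)=1$ when $\mathcal{M}=-\partial_x^2$, combined with Theorem~\ref{THM: solitary-long-wave}(iii) and the index formula~\eqref{index-KDV}, and does not give any further detail. Your handling of step~(4) via the $L$-orthogonality of $E^c$ and $E^u\oplus E^s$, the count $n^-(L|_{E^u\oplus E^s})=1=n^-(L)$, and the uniform gap coming from~(\textbf{H2.b}) (equivalently, Lemma~\ref{L:decomJ} applied to the invariant splitting $X=(E^u\oplus E^s)\oplus E^c$ shows $L_{E^c}$ again satisfies~(\textbf{H2}) with $n^-(L_{E^c})=0$) is the right way to fill in what the paper leaves implicit.
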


The stability and instability of solitary waves of dispersive models had been
studied a lot in the literature. Assume $\ker\mathcal{L}_{0}=\left\{
u_{c,x}\right\}  ,\ n^{-}\left(  \mathcal{L}_{0}\right)  =1$, then when
$dP/dc>0$, the orbital stability of traveling solitary waves was proved (e.g.
\cite{benjamin72} \cite{gss-87} \cite{bona-sacks-88}) by using the method of
Lyapunov functionals. When $dP/dc<0$, the nonlinear instability was proved in
\cite{bss87} \cite{ss90} for generalized BBM and KDV equations, and in
\cite{liu-bousinesq93} for good Boussinesq equation. The instability proof in
these papers was by contradiction argument which bypassed the linearized
equation. The existence of unstable eigenvalues when $dP/dc>0$ was proved in
\cite{pego-weinstein92} for KDV and BBM equations. In \cite{lin-jfa09}, an
instability criterion as in Corollary \ref{cor-instability-parity} (ii) was
proved for KDV and BBM type equations. In \cite{pego-weinstein92} and
\cite{lin-jfa09}, an instability criterion was also given for the regularized
Boussinesq equation which takes an indefinite Hamiltonian form (i.e.
$n^{-}\left(  L\right)  =\infty$) and is therefore not included in the
framework of this paper. Recently, in \cite{kapitula-stefanov-kdv} and
\cite{pelinovsky-KDV}, an instability index theorem similar to
(\ref{index-KDV}) was given for KDV and BBM type equations under the
assumption that $\dim\ker\mathcal{L}_{0}=1$ and $dP/dc\neq0$. The proof of
\cite{kapitula-stefanov-kdv} \cite{pelinovsky-KDV} was by using ad-hoc
arguments to transform the eigenvalue problem $\partial_{x}\mathcal{L}%
_{0}u=\lambda u$ to another Hamiltonian form with a symplectic operator which
has a bounded inverse. The linear instability of solitary waves of good
Boussinesq equation ($\mathcal{M=-\partial}_{x}^{2}$) was studied in
\cite{alexander-sacks} by Evans function and in \cite{stefnov-2nd-order} by
using quadratic operator pencils. The index formula (\ref{index-KDV}) for the
good Boussinesq type equations appears to be new.

Besides giving a more unified and general index formula for linear
instability, Theorem \ref{THM: solitary-long-wave} also gives the exponential
trichotomy for $e^{tJ\mathcal{L}_{0}}$, which is an important step for
constructing invariant (stable, unstable and center) manifolds near the
translation orbits of $u_{c}$. Moreover, when $\ker\mathcal{L}_{0}%
=span\left\{  u_{c,x}\right\}  $ and $n^{-}\left(  \mathcal{L}_{0}\right)
=1$, there exists a pair of stable and unstable eigenvalues and $\mathcal{L}%
_{0}$ is positive on the codimension two center space modulo the translation
kernel. This positivity property has an important implication for the center
manifolds once constructed. For example, in \cite{jin-et-kdv}, the invariant
(stable, unstable and center) manifolds were constructed near the orbits of
unstable solitary waves of generalized KDV equation in the energy space. More
precisely, there exist 1-d stable and unstable manifolds and co-dimension two
center manifold near the translation orbits of unstable solitary waves. These
invariant manifolds give a complete description of local dynamics near
unstable traveling wave orbits. The positivity estimate (\ref{center-positive}%
) on the center subspace implies that on the codimension two center manifold,
the solitary wave $u_{c}$ is orbitally stable, which in turn also leads to the
local uniqueness of the center manifold. Any initial data not lying on the
center manifold will leave the orbit neighborhood of unstable traveling waves
exponentially fast.

\subsection{\label{subsection-periodic}Stability of periodic traveling waves}

Consider the equations (\ref{BBM})-(\ref{gBOU}) in the periodic case. For
convenience, we assume the period is $2\pi\,,\ $that is, $\left(  x,t\right)
\in\mathbf{S}^{1}\times\mathbf{R}$. A periodic traveling wave is of the form
$u\left(  x,t\right)  =u_{c,a}\left(  x-ct\right)  $, where $u_{c,a}$
satisfies the equations
\begin{equation}
\mathcal{M}u_{c,a}+\left(  1-\frac{1}{c}\right)  u_{c,a}-\frac{1}{c}f\left(
u_{c,a}\right)  =a,\ \text{(BBM)}\label{steady-BBM-P}%
\end{equation}%
\begin{equation}
\mathcal{M}u_{c,a}+cu_{c,a}-f\left(  u_{c,a}\right)  =a,\ \text{(KDV)}%
\label{steady-KDV-P}%
\end{equation}
and
\begin{equation}
\mathcal{M}u_{c,a}+\left(  1-c^{2}\right)  u_{c,a}-f\left(  u_{c,a}\right)
=a,\ \text{(gBou)}\label{steady-gBou-P}%
\end{equation}
for some constant $a$. In this subsection, we consider the perturbations of
the same period $2\pi$ (i.e. co-periodic perturbations) and leave the case of
different periods to the next subsection. The linearized equations in the
traveling frame $\left(  x-ct,t\right)  $ near traveling waves $u_{c,a}$ take
the same form (\ref{L-BBM})-(\ref{L-gBou}). Their Hamiltonian structures are
formally the same as in the case of solitary waves. However, the operator $J$
has rather different spectral properties in the periodic case. More precisely,
for solitary waves the symplectic operators $J,\ $which is $c\partial
_{x}\left(  1+\mathcal{M}\right)  ^{-1}~$for BBM, $\partial_{x}$ for KDV and
$\left(
\begin{array}
[c]{cc}%
0 & \partial_{x}\\
\partial_{x} & 0
\end{array}
\right)  $ for good Boussinesq, has no kernel in $L^{2}\left(  \mathbf{R}%
\right)  $. But for the periodic case, $J$ has nontrivial kernel in $X^{\ast}%
$. Indeed, $\ker J=span\left\{  1\right\}  $ for BBM and KDV, and
\[
\ker J=span\left\{  \vec{e}_{1},\vec{e}_{2}\right\}  =span\left\{  \left(
\begin{array}
[c]{c}%
1\\
0
\end{array}
\right)  ,\left(
\begin{array}
[c]{c}%
0\\
1
\end{array}
\right)  \right\}
\]
for good Boussinesq. This degeneracy of $J$ leads to the extra free parameter
$a$ in traveling waves.

We now discuss the consequential changes in the index formula induced by the
nontrivial kernel of $J$. For BBM type equations, define the operator
$\mathcal{L}_{0}:H^{m}\left(  \mathbf{S}^{1}\right)  \rightarrow L^{2}\left(
\mathbf{S}^{1}\right)  $ and the momentum $P$ as in (\ref{L-0-BBM}) and
(\ref{momentum-BBM}). Differentiating \eqref{steady-BBM-P}, we obtain
\[
R(\mathcal{L}_{0})\ni\mathcal{L}_{0}\partial_{a}u_{c,a}=1.
\]
Let
\begin{equation}
U_{c,a}=\partial_{a}u_{c,a},\ d_{1}=\int_{\mathbf{S}^{1}}U_{c,a}%
\ dx,\ N=\int_{\mathbf{S}^{1}}u_{c,a}dx~\left(  \text{total mass}\right)
.\label{defn-U-c-d-1}%
\end{equation}
We have $\mathcal{L}_{0}\partial_{x}u_{c,a}=0$ and from differentiating
\eqref{steady-BBM-P}
\[
\mathcal{L}_{0}\partial_{c}u_{c,a}=-\frac{1}{c}\left(  1+\mathcal{M}\right)
u_{c,a}+\frac{a}{c},
\]
and thus $J\mathcal{L}_{0}\partial_{c}u_{c,a}=-\partial_{x}u_{c,a}$. Denote
\begin{equation}
D=%
\begin{pmatrix}
\left\langle \mathcal{L}_{0}U_{c,a},U_{c,a}\right\rangle  & \left\langle
\mathcal{L}_{0}U_{c,a},\partial_{c}u_{c,a}\right\rangle \\
\left\langle \mathcal{L}_{0}U_{c,a},\partial_{c}u_{c,a}\right\rangle  &
\left\langle \mathcal{L}_{0}\partial_{c}u_{c,a},\partial_{c}u_{c,a}%
\right\rangle
\end{pmatrix}
=%
\begin{pmatrix}
d_{1} & N^{\prime}\left(  c\right)  \\
N^{\prime}\left(  c\right)   & -\frac{1}{c}dP/dc+\frac{a}{c}N^{\prime}\left(
c\right)
\end{pmatrix}
,\label{D-BBM}%
\end{equation}
that is, the matrix for $\left\langle L\cdot,\cdot\right\rangle $ on
$span\left\{  U_{c,a},\partial_{c}u_{c,a}\right\}  \subset g\ker\left(
J\mathcal{L}_{0}\right)  $. Denote $n^{\leq0}\left(  D\right)  $ to be the
number of non-positive eigenvalues of $D$.

For KDV type equations, similarly, $\mathcal{L}_{0}\partial_{x}u_{c,a}=0,$
\[
\mathcal{L}_{0}\partial_{c}u_{c,a}=-u_{c,a},\ J\mathcal{L}_{0}\partial
_{c}u_{c,a}=-\partial_{x}u_{c,a},\ \mathcal{L}_{0}\partial_{a}u_{c,a}=1,\
\]
and we define $U_{c,a},d_{1},N,D,n^{\leq0}\left(  D\right)  $ etc. as in
(\ref{defn-U-c-d-1}) and \eqref{D-BBM}.

For good Boussinesq type equations, still define $U_{c,a},d_{1},N$ as in
(\ref{defn-U-c-d-1}). Let
\begin{equation}
\vec{U}_{1}=\left(
\begin{array}
[c]{c}%
U_{c,a}\\
-cU_{c,a}%
\end{array}
\right)  ,\ \vec{U}_{2}=\left(
\begin{array}
[c]{c}%
-cU_{c,a}\\
1+c^{2}U_{c,a}%
\end{array}
\right)  ,\vec{U}_{3}=\left(
\begin{array}
[c]{c}%
-\partial_{c}u_{c,a}\\
c\partial_{c}u_{c,a}+u_{c,a}%
\end{array}
\right)  , \label{defn-U}%
\end{equation}
then
\[
L\vec{U}_{1}=\vec{e}_{1},\ \ L\vec{U}_{2}=\vec{e}_{2},\ L\vec{U}_{3}=\left(
\begin{array}
[c]{c}%
-cu_{c,a}\\
u_{c,a}%
\end{array}
\right)  .
\]
Define the matrix $D$ of $\left\langle L\cdot,\cdot\right\rangle \ $on the
space spanned by$\left\{  \vec{U}_{1},\ \vec{U}_{2},\vec{U}_{3}\right\}  $,
that is,
\begin{align}
D  &  =\left(
\begin{array}
[c]{ccc}%
\left\langle L\vec{U}_{1},\vec{U}_{1}\right\rangle  & \left\langle L\vec
{U}_{1},\vec{U}_{2}\right\rangle  & \left\langle L\vec{U}_{1},\vec{U}%
_{3}\right\rangle \\
\left\langle L\vec{U}_{1},\vec{U}_{2}\right\rangle  & \left\langle L\vec
{U}_{2},\vec{U}_{2}\right\rangle  & \left\langle L\vec{U}_{3},\vec{U}%
_{2}\right\rangle \\
\left\langle L\vec{U}_{1},\vec{U}_{3}\right\rangle  & \left\langle L\vec
{U}_{3},\vec{U}_{2}\right\rangle  & \left\langle L\vec{U}_{3},\vec{U}%
_{3}\right\rangle
\end{array}
\right) \label{D-gBou}\\
&  =\left(
\begin{array}
[c]{ccc}%
d_{1} & -cd_{1} & -N^{\prime}\left(  c\right) \\
-cd_{1} & \int_{\mathbf{S}^{1}}\left(  1+c^{2}U_{c,a}\right)  dx & cN^{\prime
}\left(  c\right)  +N\left(  c\right) \\
-N^{\prime}\left(  c\right)  & cN^{\prime}\left(  c\right)  +N\left(  c\right)
& -P^{\prime}\left(  c\right)
\end{array}
\right)  .\nonumber
\end{align}
Again $n^{\leq0}\left(  D\right)  $ denotes the number of non-positive
eigenvalues of $D$.

Since in the periodic case, the operator $\mathcal{L}_{0}$ has only discrete
spectrum which tends to $+\infty$, it is easy to verify that assumptions
(\textbf{H1-3}) are satisfied in $X= H^{\frac m2}$ for BBM and KdV type
equations and $X=H^{\frac m2} \times L^{2}$ for good Boussinesq type
equations. Thus similar to Theorem \ref{THM: solitary-long-wave}, we have

\begin{theorem}
\label{thm-periodic waves}Consider the linearized equations (\ref{L-BBM}%
)-(\ref{L-gBou}) near periodic waves $u_{c}\left(  x-ct\right)  \ $of
equations (\ref{BBM})-(\ref{gBOU}). Then: (i) the following index formula
holds
\[
k_{r}+2k_{c}+2k_{i}^{-}+k_{0}^{-}=n^{-}\left(  \mathcal{L}_{0}\right)  .
\]
(ii) the linear exponential trichotomy is true in the space $H^{\frac{m}{2}%
}\left(  \mathbf{S}^{1}\right)  $ for the linearized equations (\ref{L-BBM})
and (\ref{L-KDV}), and in $H^{\frac{m}{2}}\left(  \mathbf{S}^{1}\right)
\times L^{2}\left(  \mathbf{S}^{1}\right)  $ for (\ref{L-gBou}). (iii)
$k_{0}^{-}\geq n^{\leq0}\left(  D\right)  $, the number of non-positive
eigenvalues of the matrix $D$ defined in (\ref{D-BBM}), (\ref{defn-U}) and
(\ref{D-gBou}). Moreover, when $\ker\mathcal{L}_{0}=\left\{  \partial
_{x}u_{c,a}\right\}  $ and $D$ is nonsingular, $k_{0}^{-}=n^{-}\left(
D\right)  $ (the number of negative eigenvalues of $D$) and we have
\begin{equation}
k_{r}+2k_{c}+2k_{i}^{-}=n^{-}\left(  \mathcal{L}_{0}\right)  -n^{-}\left(
D\right)  . \label{index-nonsingular-D}%
\end{equation}

\end{theorem}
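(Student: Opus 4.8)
The plan is to verify, for each of the three families (BBM, KdV, good Boussinesq), that the Hamiltonian formulation $\partial_t u = JLu$ with $L=\mathcal{L}_0$ (or the matrix operator in \eqref{hamiltonian-gBou}) satisfies \textbf{(H1)}--\textbf{(H3)} in the stated phase space, and then invoke Theorems \ref{theorem-dichotomy} and \ref{theorem-counting} together with Proposition \ref{prop-counting-k-0-1} (or its subspace restatement Corollary \ref{C:counting-k-0-2-a}, \ref{C:counting-k-0-2-b}). First I would check \textbf{(H1)}: $J=c\partial_x(1+\mathcal{M})^{-1}$ (BBM), $J=\partial_x$ (KdV), and $J=\bigl(\begin{smallmatrix}0 & \partial_x\\ \partial_x & 0\end{smallmatrix}\bigr)$ (gBou) are all anti-self-dual on the appropriate domains. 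For \textbf{(H2)}, the key point is that in the periodic case $\mathcal{L}_0$ has compact resolvent, hence discrete spectrum accumulating only at $+\infty$; since $f'(u_{c,a})$ is bounded and $\mathcal{M}$ has symbol comparable to $|\xi|^m$, the quadratic form $\langle \mathcal{L}_0 u,u\rangle$ is uniformly positive on a finite-codimension subspace of $H^{m/2}(\mathbf{S}^1)$, giving the decomposition $X=X_-\oplus\ker\mathcal{L}_0\oplus X_+$ with $n^-(L)=n^-(\mathcal{L}_0)<\infty$ and \textbf{(H2.b)} with some $\delta>0$. For good Boussinesq one uses \eqref{quadratic-L-gBou}, which shows $\langle L\cdot,\cdot\rangle$ is the sum of $\langle\mathcal{L}_0 u,u\rangle$ and $\int(v+cu)^2$, hence again satisfies \textbf{(H2)} on $H^{m/2}\times L^2$ with $n^-(L)=n^-(\mathcal{L}_0)$. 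Finally \textbf{(H3)} holds automatically since $\dim\ker L<\infty$ (Remark \ref{R:H3}); here $\ker L=\ker\mathcal{L}_0$ for BBM/KdV and $\ker L=\{(u,-cu)\mid u\in\ker\mathcal{L}_0\}$ for gBou.

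With \textbf{(H1)}--\textbf{(H3)} established, statement (i), the index formula $k_r+2k_c+2k_i^-+k_0^-=n^-(\mathcal{L}_0)$, is a direct application of Theorem \ref{theorem-counting}, once one observes that in the periodic case all purely imaginary spectral points of $JL$ are isolated (the resolvent of $JL$ is compact, since $\mathcal{L}_0$ has compact resolvent and $J$ has closed range of finite corank), so by Proposition \ref{P:non-deg} the form $\langle L\cdot,\cdot\rangle$ is non-degenerate on every $E_{i\mu}$ and on $E_0/\ker L$, whence $k_i^{\le0}=k_i^-$ and $k_0^{\le0}=k_0^-$. Statement (ii), the exponential trichotomy in $H^{m/2}(\mathbf{S}^1)$ for BBM and KdV and in $H^{m/2}\times L^2$ for gBou, is immediate from Theorem \ref{theorem-dichotomy} applied in these spaces, noting $X^0 = X$ suffices since we only claim trichotomy in the base space.

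For statement (iii) I would compute $k_0^-$ via Proposition \ref{prop-counting-k-0-1}: one identifies $(JL)^{-1}(\ker L)/\ker L$ and the restriction of $\langle L\cdot,\cdot\rangle$ to it. Using $\ker J=\mathrm{span}\{1\}$ (BBM/KdV) or $\mathrm{span}\{\vec e_1,\vec e_2\}$ (gBou) together with the identities $\mathcal{L}_0 U_{c,a}=1$, $\mathcal{L}_0\partial_x u_{c,a}=0$, $J\mathcal{L}_0\partial_c u_{c,a}=-\partial_x u_{c,a}$ (and the analogous $\vec U_1,\vec U_2,\vec U_3$ for gBou), one checks that $\mathrm{span}\{U_{c,a},\partial_c u_{c,a}\}$ (resp.\ $\mathrm{span}\{\vec U_1,\vec U_2,\vec U_3\}$) is a complement of $\ker L$ inside $(JL)^{-1}(\ker L)$; the Gram matrix of $\langle L\cdot,\cdot\rangle$ on this complement is exactly $D$ in \eqref{D-BBM} (resp.\ \eqref{D-gBou}), so Proposition \ref{prop-counting-k-0-1}(i) gives $k_0^-\ge n^{\le0}(D)$, and when $\ker\mathcal{L}_0=\mathrm{span}\{\partial_x u_{c,a}\}$ (so that $\ker L$ is exactly generated by the translation mode) and $D$ is nonsingular, part (ii) of that proposition yields $k_0^-=n^-(D)$. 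Substituting into (i) then gives \eqref{index-nonsingular-D}. The main obstacle I anticipate is the verification that $\mathrm{span}\{U_{c,a},\partial_c u_{c,a}\}$ (resp.\ the three-dimensional span) is precisely the complement of $\ker L$ in $(JL)^{-1}(\ker L)$ and not a proper subspace of it --- i.e., ruling out extra generalized null vectors --- which requires a careful dimension count using the structure of $\ker J$, the one-to-one properties of $J$ modulo $\ker J$, and the hypothesis $\dim\ker\mathcal{L}_0=1$; for good Boussinesq the matrix-operator form makes this bookkeeping slightly more delicate, and one must also confirm that the three vectors $\vec U_1,\vec U_2,\vec U_3$ are linearly independent and that their images under $L$ span the two-dimensional $\ker J$ plus the extra direction $(-cu_{c,a},u_{c,a})^T$.
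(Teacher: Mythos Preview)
Your proposal is correct and follows essentially the same route as the paper: verify \textbf{(H1)}--\textbf{(H3)} using that $\mathcal{L}_0$ has discrete spectrum tending to $+\infty$ on $\mathbf{S}^1$, then invoke Theorems \ref{theorem-dichotomy} and \ref{theorem-counting} for (i)--(ii) and Proposition \ref{prop-counting-k-0-1} for (iii). Your explicit use of Proposition \ref{P:non-deg} (isolation of imaginary spectrum $\Rightarrow$ non-degeneracy of $L|_{E_{i\mu}}$, hence $k_i^{\le0}=k_i^-$ and $k_0^{\le0}=k_0^-$) is a detail the paper leaves implicit but which is indeed needed to justify writing $k_i^-$, $k_0^-$ rather than $k_i^{\le0}$, $k_0^{\le0}$.

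Regarding your anticipated obstacle: for KdV, say, $(JL)^{-1}(\ker L)$ consists of those $u$ with $\partial_x\mathcal{L}_0 u\in\mathrm{span}\{\partial_x u_{c,a}\}$, i.e.\ $\mathcal{L}_0 u=\alpha u_{c,a}+\beta$ for constants $\alpha,\beta$; since $\mathcal{L}_0\partial_c u_{c,a}=-u_{c,a}$ and $\mathcal{L}_0 U_{c,a}=1$, every such $u$ lies in $\ker\mathcal{L}_0\oplus\mathrm{span}\{\partial_c u_{c,a},U_{c,a}\}$, so under $\ker\mathcal{L}_0=\mathrm{span}\{\partial_x u_{c,a}\}$ the complement is exactly two-dimensional. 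The BBM and gBou cases are handled by the same dimension count using $\dim\ker J=1$ (resp.\ $2$) and the identities already recorded in the text, so your concern resolves straightforwardly.
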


As corollaries, we have from Proposition \ref{prop-counting-k-0-1} and Remark
\ref{R:counting-k-0-1} the following linear stability/instability conditions.

\begin{corollary}
\label{cor-parity-periodic}(i) If $n^{\leq0}\left(  D\right)  \geq
n^{-}\left(  \mathcal{L}_{0}\right)  $, then the spectral stability holds.

(ii) If $\ker\mathcal{L}_{0}=span\left\{  \partial_{x} u_{c,a}\right\}  $, $D$
is nonsingular and $n^{-}\left(  \mathcal{L}_{0}\right)  -n^{-} \left(
D\right)  $ is odd, then there is linear instability.
\end{corollary}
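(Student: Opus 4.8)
The plan is to read off both statements directly from the index identity and the estimates for $k_{0}^{-}$ furnished by Theorem \ref{thm-periodic waves}, combined with the general consequences recorded in Corollary \ref{cor-instability-index}, Proposition \ref{prop-counting-k-0-1}, and Remark \ref{R:counting-k-0-1}. There is essentially no new analytic content here; the work is purely bookkeeping of the indices, and the only delicate point is the identification of $(JL)^{-1}(\ker L)/\ker L$ with an explicit span of derivatives of $u_{c,a}$.

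For part (i), I would begin from the identity $k_{r}+2k_{c}+2k_{i}^{-}+k_{0}^{-}=n^{-}(\mathcal{L}_{0})$ of Theorem \ref{thm-periodic waves}(i), in which each summand on the left is a nonnegative integer, together with the lower bound $k_{0}^{-}\geq n^{\leq0}(D)$ from Theorem \ref{thm-periodic waves}(iii). The hypothesis $n^{\leq0}(D)\geq n^{-}(\mathcal{L}_{0})$ then forces $k_{0}^{-}=n^{-}(\mathcal{L}_{0})$ and $k_{r}=k_{c}=k_{i}^{-}=0$. In particular $JL$ has no eigenvalue with nonzero real part; since by Proposition \ref{P:decomposition2} and Corollary \ref{C:symmetry} every point of $\sigma(JL)\setminus i\mathbf{R}$ is such an eigenvalue, this gives $\sigma(JL)\subset i\mathbf{R}$, and in combination with the exponential trichotomy (Theorem \ref{theorem-dichotomy}, with $\dim E^{u}=\dim E^{s}=0$) — or equivalently with Corollary \ref{cor-instability-index}(i), after noting that $k_{0}^{-}=n^{-}(\mathcal{L}_{0})$ together with the $\leq0$ index identity also forces $k_{0}^{\leq0}=n^{-}(\mathcal{L}_{0})$ — this is precisely spectral stability.

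For part (ii), the two extra hypotheses are exactly the nondegeneracy needed to pin down $k_{0}$. Using Remark \ref{R:counting-k-0-1} one takes the complement $S$ of $\ker L$ in $(JL)^{-1}(\ker L)$ to be $\mathrm{span}\{U_{c,a},\partial_{c}u_{c,a}\}$ in the BBM and KdV cases (resp.\ $\mathrm{span}\{\vec{U}_{1},\vec{U}_{2},\vec{U}_{3}\}$ in the good Boussinesq case), on which $\langle L\cdot,\cdot\rangle$ is represented by the matrix $D$; since $\det D\neq0$, the form $\langle L\cdot,\cdot\rangle$ is nondegenerate on $(JL)^{-1}(\ker L)/\ker L$, so Proposition \ref{prop-counting-k-0-1}(ii) yields $k_{0}^{-}=n^{-}(D)$ and formula (\ref{index-nonsingular-D}) reads $k_{r}+2k_{c}+2k_{i}^{-}=n^{-}(\mathcal{L}_{0})-n^{-}(D)$. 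If this right-hand side is odd, then $k_{r}+2k_{c}+2k_{i}^{-}$ is odd, hence $k_{r}$ is odd, hence $k_{r}\geq1$: $JL$ has a positive real eigenvalue and the linearized equation is linearly unstable. The step requiring the most care is exactly the identification of $(JL)^{-1}(\ker L)/\ker L$ with that span of derivatives of $u_{c,a}$, and the resulting equivalence "$\det D\neq0 \iff \langle L\cdot,\cdot\rangle$ nondegenerate on that quotient"; both are already implicit in the proof of Theorem \ref{thm-periodic waves}(iii), so once they are in hand the two assertions of the corollary follow from the elementary index arithmetic above.
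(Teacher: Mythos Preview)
Your proposal is correct and follows exactly the approach the paper indicates: the paper simply states ``As corollaries, we have from Proposition \ref{prop-counting-k-0-1} and Remark \ref{R:counting-k-0-1} the following linear stability/instability conditions,'' and you have spelled out precisely this derivation, using the index identity of Theorem \ref{thm-periodic waves}(i), the lower bound $k_{0}^{-}\geq n^{\leq0}(D)$ from part (iii), and the parity argument via formula (\ref{index-nonsingular-D}). Your explicit identification of $(JL)^{-1}(\ker L)/\ker L$ with $\mathrm{span}\{U_{c,a},\partial_{c}u_{c,a}\}$ (resp.\ $\mathrm{span}\{\vec U_1,\vec U_2,\vec U_3\}$) under the hypothesis $\ker\mathcal{L}_0=\mathrm{span}\{\partial_x u_{c,a}\}$ is the one step the paper leaves implicit, and you correctly flag it as the point requiring care.
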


When $n^{-}\left(  \mathcal{L}_{0}\right)  =n^{-}\left(  D\right)  $,
nonlinear orbital stability holds for (\ref{BBM})-(\ref{gBOU}) as well. More
precisely, we have

\begin{proposition}
\label{Propo-stability-periodic} When $\ker\mathcal{L}_{0}=span\left\{
\partial_{x}u_{c,a}\right\}  $, $D$ is nonsingular and $n^{-}\left(
\mathcal{L}_{0}\right)  =n^{-}\left(  D\right)  $, then there is orbital
stability in $X$ of the traveling waves $u_{c,a}\left(  x-ct\right)  $ of
equations (\ref{BBM})-(\ref{gBOU}) for perturbations of the same period.
\end{proposition}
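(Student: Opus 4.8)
The plan is to exhibit $u_{c,a}$ as a constrained critical point of the conserved energy and then run a Grillakis--Shatah--Strauss type Lyapunov argument, with the coercivity of the second variation supplied by the hypotheses $n^{-}(\mathcal{L}_{0})=n^{-}(D)$, $D$ nonsingular, and $\ker\mathcal{L}_{0}=\mathrm{span}\{\partial_{x}u_{c,a}\}$.

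First I would set up the variational structure. For the BBM and KdV type equations the flow conserves the Hamiltonian $H$, the momentum $P$ (as in \eqref{momentum-BBM}, \eqref{L-0-KDV}), and the total mass $N=\int_{\mathbf{S}^{1}}u\,dx$; for the good Boussinesq system \eqref{hamiltonian-gBou} there are the Hamiltonian, the momentum $P$ of \eqref{L-0-gbou}, and two independent mass-type invariants corresponding to the two-dimensional $\ker J=\mathrm{span}\{\vec e_{1},\vec e_{2}\}$. In each case the profile equation \eqref{steady-BBM-P}--\eqref{steady-gBou-P} says exactly that $u_{c,a}$ is a critical point of $\mathcal{E}_{c,a}=H-cP-aN$ (with the obvious analogue, with a second multiplier, for gBou), so that after the Riesz identification $\mathcal{E}_{c,a}''(u_{c,a})=\mathcal{L}_{0}$ (resp. $=L$ of \eqref{hamiltonian-gBou}). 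Translation invariance gives $\partial_{x}u_{c,a}\in\ker\mathcal{L}_{0}$, and differentiating the profile equation in $c$ and $a$ produces the parameter directions $\partial_{c}u_{c,a}$, $U_{c,a}=\partial_{a}u_{c,a}$ (resp. $\vec U_{1},\vec U_{2},\vec U_{3}$ of \eqref{defn-U}), satisfying $\mathcal{L}_{0}\partial_{c}u_{c,a}=-(\text{density of }P)$ and $\mathcal{L}_{0}U_{c,a}=1=(\text{density of }N)$; these lie in $(J\mathcal{L}_{0})^{-1}(\ker\mathcal{L}_{0})$ and $D$ of \eqref{D-BBM}, \eqref{D-gBou} is precisely the Gram matrix of $\langle\mathcal{L}_{0}\cdot,\cdot\rangle$ on their span.

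Next I would establish the coercivity of the second variation modulo the symmetry. Let $V$ be the span of the parameter directions and $Z$ its $\mathcal{L}_{0}$-orthogonal complement in $X$, which by the identities above is exactly the joint tangent space of the level sets of the constraints. Since $D$ is nonsingular, $\langle\mathcal{L}_{0}\cdot,\cdot\rangle$ is non-degenerate on $V$, so the additivity of Morse indices under $L$-orthogonal splittings (Lemma \ref{L:non-degeneracy} and the counting in Section \ref{S:Index}) gives $n^{-}(\mathcal{L}_{0}|_{Z/\ker\mathcal{L}_{0}})=n^{-}(\mathcal{L}_{0})-n^{-}(D)=0$. As $\ker\mathcal{L}_{0}=\mathrm{span}\{\partial_{x}u_{c,a}\}\subset Z$, the form is positive semidefinite on $Z$ with kernel exactly this line; combining this finite-dimensional strict positivity with the spectral gap $\sigma_{ess}(\mathcal{L}_{0})\subset[\delta_{0},\infty)$, which makes $\langle\mathcal{L}_{0}v,v\rangle$ comparable to $\|v\|_{X}^{2}$ off a finite-dimensional subspace, yields $\delta>0$ with $\langle\mathcal{L}_{0}v,v\rangle\ge\delta\|v\|_{X}^{2}$ for $v\in Z$ with $v\perp\partial_{x}u_{c,a}$; for gBou the same applies to the form \eqref{quadratic-L-gBou} on $X=H^{\frac{m}{2}}\times L^{2}$. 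The standard argument then finishes: for $u(0)$ near the orbit $\{u_{c,a}(\cdot-s)\}$, choose by the implicit function theorem a modulation $s(t)$ so that $w(t)=u(t,\cdot+s(t))-u_{c,a}$ is orthogonal to $\partial_{x}u_{c,a}$; conservation of the constraints confines $w(t)$ to $Z$ up to an $O(\|w\|_{X}^{2})$ piece in $V$; Taylor expanding the conserved $\mathcal{E}_{c,a}$ gives $\mathcal{E}_{c,a}(u(t))-\mathcal{E}_{c,a}(u_{c,a})=\tfrac12\langle\mathcal{L}_{0}w,w\rangle+O(\|w\|_{X}^{3})$, and the coercivity converts the conserved left-hand side into the a priori bound $\|w(t)\|_{X}\le C\|w(0)\|_{X}$, which is orbital stability.

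The hard part will be the coercivity step, specifically two things: (i) the bookkeeping that the $\mathcal{L}_{0}$-orthogonal complement of $V$ coincides with the constraint tangent space, so that $D$ genuinely captures all the directions along which $\mathcal{L}_{0}$ fails to be positive; and (ii) upgrading the finite-dimensional strict positivity to a lower bound in the full $X$-norm, which needs the spectral gap of $\mathcal{L}_{0}$ together with the remark that replacing the $\mathcal{L}_{0}$-orthogonal complement by the $X$-orthogonal complement of $\ker\mathcal{L}_{0}$ is only a finite-rank change. A secondary point, which I would dispatch by invoking the known local well-posedness of \eqref{BBM}--\eqref{gBOU} in $X=H^{m/2}$ (resp. $H^{m/2}\times L^{2}$), is that the whole Lyapunov argument must be run in the same space $X$ in which the flow and the conserved functionals are continuous.
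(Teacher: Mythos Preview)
Your proposal is correct and follows essentially the same approach as the paper: both construct the Lyapunov functional $E+cP-aN$ (with the appropriate variants for BBM and gBou), identify $D$ as the Gram matrix of $\langle\mathcal{L}_{0}\cdot,\cdot\rangle$ on the parameter directions, observe that the $\mathcal{L}_{0}$-orthogonal complement of these directions is the constraint tangent space, deduce $n^{-}(\mathcal{L}_{0}|_{Z})=0$ from the Morse index splitting, and finish with the standard modulation-plus-conservation argument.
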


\begin{proof}
Here we sketch the proof based on the standard Lyapunov functional method
(e.g. \cite{gss-87}, \cite{gss-90}). Consider the KDV type equation
(\ref{kdv}). It has three invariants: (1) energy $E\left(  u\right)
=\int\left[  \frac{1}{2}u\mathcal{M}u-F\left(  u\right)  \right]  dx,\ $with
$F(u)=\int_{0}^{u}f(u^{\prime})du^{\prime};$ (2) momentum $P\left(  u\right)
=\frac{1}{2}\int u^{2}dx\ $and (3) total mass $N\left(  u\right)  =\int udx$.
Define the invariant
\[
I\left(  u\right)  =E\left(  u\right)  +cP\left(  u\right)  -aN\left(
u\right)  ,
\]
then $I^{\prime}\left(  u_{c,a}\right)  =0$ if and only if $u_{c,a}$ is a
traveling wave solution satisfying (\ref{steady-KDV-P}). So
\begin{equation}
I\left(  u\right)  -I\left(  u_{c,a}\right)  =\left\langle \mathcal{L}%
_{0}\delta u,\delta u\right\rangle +O\left(  \left\Vert \delta u\right\Vert
^{3}\right)  \text{, where }\delta u=u-u_{c,a}. \label{Liapunov-KDV}%
\end{equation}
Denote
\[
X_{1}=\left\{  u\in H^{\frac{m}{2}}\ |\ \left\langle u,\mathcal{L}_{0}%
U_{c,a}\right\rangle =\left\langle u,\mathcal{L}_{0}\partial_{c}%
u_{c,a}\right\rangle =0\right\}
\]
to be the orthogonal complement of $X_{2}=span\left\{  U_{c,a},\partial
_{c}u_{c,a}\right\}  $ in $\left\langle \mathcal{L}_{0}\cdot,\cdot
\right\rangle .$ Since
\[
\mathcal{L}_{0}U_{c,a}=N^{\prime}(u_{c,a}),\quad\mathcal{L}_{0}\partial
_{c}u_{c,a}=P^{\prime}(u_{c,a}),
\]
$X_{1}$ is the tangent space of the intersection of the level surfaces of the
conserved momentum $P$ and mass $N$. With $D$ assumed to be nonsingular,
$X_{2}$ roughly represents the gradient directions of $P$ and $N$ and thus
$X=X_{1}\oplus X_{2}$. Moreover, we have $\left\langle \mathcal{L}_{0}%
\cdot,\cdot\right\rangle |_{X_{1}}\geq0$ since
\[
n^{-}\left(  \mathcal{L}_{0}|_{X_{1}}\right)  =n^{-}\left(  \mathcal{L}%
_{0}\right)  -n^{-}\left(  \mathcal{L}_{0}|_{X_{2}}\right)  =n^{-}\left(
\mathcal{L}_{0}\right)  -n^{-}\left(  D\right)  =0\text{. }%
\]
We further decompose
\[
X_{1}=Y\oplus span\{\partial_{x}u_{c,a}\},\;\text{ where }Y=\{u\in X_{1}%
\mid(u,\partial_{x}u_{c,a})_{X}=0\}.
\]
Since $\ker\mathcal{L}_{0}=span\{\partial_{x}u_{c,a}\}$, there exists
$c_{0}>0$ such that $\langle\mathcal{L}_{0}\delta u,\delta u\rangle\geq
c_{0}\left\Vert \delta u\right\Vert ^{2}$ for any $\delta u\in Y$.

Suppose $u(t)$ is solution with $u(0)$ close to $u_{c,a}$ and $h(t)\in
\mathbf{S}^{1}$ satisfies
\[
\left\Vert u-u_{c,a}\left(  \cdot-h\right)  \right\Vert =\min_{y\in
\mathbf{S}^{1}}\left\Vert u-u_{c,a}\left(  \cdot-y\right)  \right\Vert ,
\]
then $w(t)=u(t)-u_{c,a}\big(\cdot-h(t)\big)\in Y\oplus X_{2}$. By using the
conservation of $P$ and $N$ to control the $X_{2}$ components of $w(t)$, and
the uniform positivity of $\mathcal{L}_{0}$ on $Y$ and \eqref{Liapunov-KDV} to
control the $Y$ component, we obtain the orbital stability. More details of
such arguments can be found for example in \cite{gss-87} \cite{lin-wang-zeng}.

For BBM type equations, the Lyapunov functional is
\[
I\left(  u\right)  =cP\left(  u\right)  -E\left(  u\right)  -caN\left(
u\right)  ,
\]
where the energy functional $E\left(  u\right)  =\int\left(  \frac{1}{2}%
u^{2}+F\left(  u\right)  \right)  \ dx$ and $P\left(  u\right)  $ is defined
in (\ref{momentum-BBM}). The rest of the proof is the same as in the KDV case.
For good Boussinesq type equations (\ref{gBOU}), we write it as a first order
Hamiltonian system
\[
\partial_{t}\left(
\begin{array}
[c]{c}%
u\\
v
\end{array}
\right)  =J\ \nabla E\left(  u,v\right)  ,\
\]
where $J=\left(
\begin{array}
[c]{cc}%
0 & \partial_{x}\\
\partial_{x} & 0
\end{array}
\right)  $ and the energy functional
\[
E\left(  u,v\right)  =\frac{1}{2}\left(  \mathcal{M}u,u\right)  +\int\left(
\frac{1}{2}v^{2}+\frac{1}{2}u^{2}-F\left(  u\right)  \right)  dx.
\]
For the traveling wave solution $\left(  u_{c,a}\left(  x-ct\right)
,v_{c,a}\left(  x-ct\right)  \right)  $, $u_{c,a}$ satisfies
(\ref{steady-BBM-P}) and $v_{c,a}=-cu_{c,a}$. Let $\vec{u}=\left(  u,v\right)
^{T}$ and construct the Lyapunov functional
\[
I\left(  \vec{u}\right)  =E\left(  \vec{u}\right)  +cP\left(  \vec{u}\right)
-aN_{1}\left(  \vec{u}\right)  ,
\]
where
\[
P\left(  \vec{u}\right)  =\int uv\ dx,\ N_{1}\left(  \vec{u}\right)  =\int
udx,\ N_{2}\left(  \vec{u}\right)  =\int vdx.
\]
Then $I^{\prime}\left(  \vec{u}_{c,a}\right)  =0$. The rest of the proof is
the same.
\end{proof}

Compared with solitary waves, the periodic traveling waves have richer
structures. They consist of a three parameter (period $T$, speed $c$, and
integration constant $a$) family of solutions and different type of
perturbations (co-periodic, multiple periodic, localized etc.) can be
considered. In recent years, there have been lots of works on
stability/instability of periodic traveling waves of dispersive PDEs. For
co-periodic perturbations (i.e. of the same period), the nonlinear orbital
stability were proved for various dispersive models (e.g. \cite{pava-book}
\cite{bona-et-cnoidal} \cite{johnson13} \cite{hur-johnson-stability}
\cite{bronski-et-quadratic-pencils} \cite{gavage16}) by using Liapunov
functionals. These stability results were proved for the cases when $\dim
\ker\left(  \mathcal{L}_{0}\right)  =1$ and $n^{-}\left(  \mathcal{L}%
_{0}\right)  =n^{-}\left(  D\right)  $ as in Proposition
\ref{Propo-stability-periodic}. An instability index formula similar to
(\ref{index-nonsingular-D}) was proved for KDV type equations
(\cite{kappitula-haragus} \cite{deconinck-kapitula}
\cite{bronski-et-index-KDV}). In these papers, some conditions (e.g.
Assumption 2.1 in \cite{kappitula-haragus} and Assumption 3 in
\cite{deconinck-kapitula}) were imposed to ensure that the generalized
eigenvectors of $J\mathcal{L}_{0}$ form a basis of $X$. These assumptions can
be checked for the case $\mathcal{M=-\partial}_{x}^{2}$. In Theorem
\ref{thm-periodic waves}, we do not need such assumptions on the completion of
generalized eigenspaces of $J\mathcal{L}_{0}\ $and therefore we can get the
index formula for very general nonlocal operators $\mathcal{M}$. In
\cite{bronski-et-quadratic-pencils}, an index formula was proved for periodic
traveling waves of good Boussinesq equation ($\mathcal{M=-\partial}_{x}^{2}%
$)\ by using the theory of quadratic operator pencils. In \cite{gavage16}, a
parity instability criterion (as in Corollary \ref{cor-parity-periodic} (ii))
was proved for periodic waves of several Hamiltonian PDEs including
generalized KDV equations by using Evans functions.

Besides providing a unified way to get instability index formula and the
stability criterion, we could also use the exponential trichotomy of
$e^{J\mathcal{L}_{0}}$ in Theorem \ref{thm-periodic waves} to construct
invariant manifolds near the orbit of unstable periodic traveling waves.
Moreover, as in the case of solitary waves, when $\dim\ker\left(
\mathcal{L}_{0}\right)  =1,\ D$ is nonsingular and $k_{i}^{-}=0$, we have
orbital stability and local uniqueness of the center manifolds once constructed.

\subsection{Modulational Instability of periodic traveling waves}

\label{SS:modulational}

Consider periodic traveling waves $u_{c,a}\left(  x-ct\right)  $ studied in
the Subsection \ref{subsection-periodic}. Assume the conditions in Proposition
\ref{Propo-stability-periodic}, so that $u_{c,a}$ is orbitally stable under
perturbations of the same period. In this subsection, we consider modulational
instability of periodic traveling waves, under perturbations of different
period or even localized perturbations. The modulational instability, also
called Benjamin-Feir or side-band instability in the literature, is a very
important instability mechanism in lots of dispersive and fluid models. Again,
we assume the minimal period of the traveling wave $u_{c,a}$ is $2\pi$. We
focus on KDV type equations (\ref{kdv}), and the consideration for BBM and
good-Boussinesq type equations is similar. We assume the Fourier symbol
$\alpha\left(  \xi\right)  $ of the operator $\mathcal{M}$ is even, so that
$\mathcal{M}$ is a real operator. Based on the standard Floquet-Bloch theory,
we seek bounded eigenfunction $\phi(x)$ of the linearized operator
$J\mathcal{L}_{0}$ in the form of $\phi(x)=e^{ikx}v_{k}(x)$, where
$k\in\mathbf{R}$ is a parameter and $v_{k}\in L^{2}(\mathbf{S}^{1})$. Recall
that $J=\partial_{x}$ and $\mathcal{L}_{0}:=\mathcal{M}+c-f^{\prime}\left(
u_{c,a}\right)  $. It leads us to the one-parameter family of eigenvalue
problems
\[
J\mathcal{L}_{0}e^{ikx}v_{k}(x)=\lambda(k)e^{ikx}v_{k}(x),
\]
or equivalently $\mathcal{J}_{k}\mathcal{L}_{k}v_{k}=\lambda\left(  k\right)
v_{k}$, where
\begin{equation}
\mathcal{J}_{k}=\partial_{x}+ik,\ \mathcal{L}_{k}=\mathcal{M}_{k}%
\mathcal{+}c-f^{\prime}(u_{c,a}). \label{defn-Jk-Lk}%
\end{equation}
Here, $\mathcal{M}_{k}$ is the Fourier multiplier operator with the symbol
$\alpha(\xi+k)$. We say that $u_{c,a}$ is linearly modulationally unstable if
there exists $k\notin\mathbf{Z}$ such that the operator $\mathcal{J}%
_{k}\mathcal{L}_{k}$ has an unstable eigenvalue $\lambda(k)$ with
$\operatorname{Re}\lambda(k)>0$ in the space $L^{2}(\mathbf{S}^{1})$.

Since $\mathcal{J}_{k}$ and $\mathcal{L}_{k}$ are complex operators, we first
reformulate the problem in terms of real operators to use the general theory
in this paper. Consider
\begin{equation}
\phi(x)=\cos\left(  kx\right)  u_{1}\left(  x\right)  +\sin\left(  kx\right)
u_{2}\left(  x\right)  , \label{E:phi-real-form}%
\end{equation}
where $u_{1},u_{2}\in L^{2}(\mathbf{S}^{1})$ are real functions. By
definition,%
\[
\mathcal{M}\left(  e^{ikx}u\left(  x\right)  \right)  =e^{ikx}\mathcal{M}%
_{k}u.
\]
We decompose
\[
\mathcal{M}_{k}=\mathcal{M}_{k}^{e}+i\mathcal{M}_{k}^{o},\quad\mathcal{M}%
_{-k}=\mathcal{M}_{k}^{e}-i\mathcal{M}_{k}^{o}%
\]
where $\mathcal{M}_{k}^{e},\mathcal{M}_{k}^{o}$ are operators with Fourier
multipliers
\[
\alpha_{k}^{e}\left(  \xi\right)  =\frac{1}{2}\left(  \alpha(\xi+k)+\alpha
(\xi-k)\right)
\]
and
\[
\alpha_{k}^{o}\left(  \xi\right)  =-\frac{i}{2}\left(  \alpha(\xi
+k)-\alpha(\xi-k)\right)  .
\]
Then $\mathcal{M}_{k}^{e},\ \mathcal{M}_{k}^{o}$ are self-adjoint and
skew-adjoint respectively. Since $\overline{\alpha_{k}^{e,o}\left(
\xi\right)  }=\alpha_{k}^{e,o}\left(  -\xi\right)  $, $\mathcal{M}_{k}^{e}$
and $\mathcal{M}_{k}^{o}$ map real functions to real. In particular, for
$\mathcal{M=-\partial}_{x}^{2}$, we have $\mathcal{M}_{k}^{e}=-\partial
_{x}^{2}+k^{2}$ and $\mathcal{M}_{k}^{o}=-2k\partial_{x}$. By using
\begin{equation}
\phi(x)=\frac{e^{ikx}}{2}\left(  u_{1}-iu_{2}\right)  +\frac{e^{-ikx}}%
{2}\left(  u_{1}+iu_{2}\right)  \label{E:complex-exp}%
\end{equation}
and via simple computations, we obtain
\[
\mathcal{M}\phi=\cos\left(  kx\right)  \left(  \mathcal{M}_{k}^{e}%
u_{1}+\mathcal{M}_{k}^{o}u_{2}\right)  +\sin\left(  kx\right)  \left(
-\mathcal{M}_{k}^{o}u_{1}+\mathcal{M}_{k}^{e}u_{2}\right)  ,
\]
and
\[
J\phi=\cos\left(  kx\right)  \left(  \partial_{x}u_{1}+ku_{2}\right)
+\sin\left(  kx\right)  \left(  \partial_{x}u_{2}-ku_{1}\right)  .
\]
Define the operators
\begin{equation}
J_{k}=%
\begin{pmatrix}
\partial_{x} & k\\
-k & \partial_{x}%
\end{pmatrix}
,\quad L_{k}=%
\begin{pmatrix}
\mathcal{M}_{k}^{e}\mathcal{+}c-f^{\prime}(u_{c,a}) & \mathcal{M}_{k}^{o}\\
-\mathcal{M}_{k}^{o} & \mathcal{M}_{k}^{e}\mathcal{+}c-f^{\prime}(u_{c,a})
\end{pmatrix}
. \label{E:JkLk}%
\end{equation}
Then $J_{k},\ L_{k}$ are skew-adjoint and self-adjoint real operators and
\[
\mathcal{J}\mathcal{L}_{0}\phi=\big(\cos(kx),\sin(kx)\big)J_{k}L_{k}%
\begin{pmatrix}
u_{1}\\
u_{2}%
\end{pmatrix}
.
\]

As always in the spectral analysis, $u_{1}$ and $u_{2}$, as well as operator
$J_{k}L_{k}$ and quadratic forms $\langle L_{k}\cdot,\cdot\rangle$ and
$\langle\cdot,J_{k}\cdot\rangle$, need to be complexified. By using operators
$\mathcal{J}_{k}$ and $\mathcal{L}_{k}$ we can diagonalize $J_{k}L_{k}$ and
$L_{k}$ blockwisely. In fact, let
\[
w_{1}=\frac{1}{2}(u_{1}-iu_{2}),\quad w_{2}=\frac{1}{2}(u_{1}+iu_{2}),\quad S%
\begin{pmatrix}
u_{1}\\
u_{2}%
\end{pmatrix}
=%
\begin{pmatrix}
w_{1}\\
w_{2}%
\end{pmatrix}
.
\]
One may compute using \eqref{E:complex-exp} and the definition of
$\mathcal{J}_{k}$ and $\mathcal{L}_{k}$
\begin{equation}
L_{k}=S^{-1}%
\begin{pmatrix}
\mathcal{L}_{k} & 0\\
0 & \mathcal{L}_{-k}%
\end{pmatrix}
S,\quad J_{k}L_{k}=S^{-1}%
\begin{pmatrix}
\mathcal{J}_{k}\mathcal{L}_{k} & 0\\
0 & \mathcal{J}_{-k}\mathcal{L}_{-k}%
\end{pmatrix}
S.\label{E:conjugacy1}%
\end{equation}
Moreover, $\mathcal{L}_{-k}$ and $\mathcal{J}_{-k}\mathcal{L}_{-k}$ are the
complex conjugates of $L_{k}$ and $J_{k}L_{k}$ respectively, namely,
\begin{equation}
\mathcal{L}_{-k}w=\overline{\mathcal{L}_{k}\bar{w}},\quad\mathcal{J}%
_{-k}\mathcal{L}_{-k}w=\overline{\mathcal{J}_{k}\mathcal{L}_{k}\bar{w}%
}.\label{E:conjugacy2}%
\end{equation}
From the above relations, we obtain
\[
n^{-}(L_{k})=n^{-}(\mathcal{L}_{k})+n^{-}(\mathcal{L}_{-k})=2n^{-}%
(\mathcal{L}_{k}),
\]
where $n^{-}(\mathcal{L}_{k})$ is understood as the negative index of the
complex Hermitian form $\langle\mathcal{L}_{k}\cdot,\cdot\rangle$. Moreover,
\eqref{E:conjugacy2} implies that $\lambda\in\sigma(\mathcal{J}_{k}%
\mathcal{L}_{k})$ if and only if $\bar{\lambda}\in\sigma(\mathcal{J}%
_{-k}\mathcal{L}_{-k})$, with $\ker(\bar{\lambda}-\mathcal{J}_{-k}%
\mathcal{L}_{-k})^{n}$ consisting exactly of the complex conjugates of the
functions in $\ker(\lambda-\mathcal{J}_{k}\mathcal{L}_{k})^{n}$ for any $n>0$.
Next, it is easy to see that $J_{k}L_{k}$, as well as $\mathcal{J}%
_{k}\mathcal{L}_{k}$, has compact resolvents and thus $\sigma\left(
J_{k}L_{k}\right)  $, as well as $\sigma(\mathcal{J}_{k}\mathcal{L}_{k})$,
consists of only discrete eigenvalues of finite algebraic multiplicity.
Therefore, by Proposition \ref{P:non-deg}, for any purely imaginary eigenvalue
$i\mu\in$ $\sigma\left(  J_{k}L_{k}\right)  $, $L_{k}$ is non-degenerate on
the finite dimensional eigenspace $E_{i\mu}$, and thus $n^{\leq0}%
(L_{k}|_{E_{i\mu}})=n^{-}(L_{k}|_{E_{i\mu}})$. Let $(k_{r},k_{c},k_{i}%
^{-},k_{0}^{-})$ be the indices defined in (\ref{E:kr-kc}), (\ref{E:ki}), and
\eqref{defn-k-0} for $J_{k}L_{k}$, and $(\tilde{k}_{r},\tilde{k}_{0}^{-})$ be
the corresponding indices for the positive and zero eigenvalues of
$\mathcal{J}_{k}\mathcal{L}_{k}$. Let $\tilde{k}_{c}$ be the sum of algebraic
multiplicities of eigenvalues of $\mathcal{J}_{k}\mathcal{L}_{k}$ in the first
and the fourth quadrants, $\tilde{k}_{i}^{-}$ be the total number of negative
dimensions of $\langle\mathcal{L}_{k}\cdot,\cdot\rangle$ restricted to the
subspaces of generalized eigenvectors of nonzero purely imaginary eigenvalues
of $\mathcal{J}_{k}\mathcal{L}_{k}$. On the one hand, \eqref{E:conjugacy1} and
\eqref{E:conjugacy2} imply
\[
k_{r}=2\tilde{k}_{r},\quad k_{c}=\tilde{k}_{c},\quad k_{i}^{-}=\tilde{k}%
_{i}^{-},\quad k_{0}^{-}=2\tilde{k}_{0}^{-}.
\]
On the other hand, Theorem \ref{theorem-counting} implies
\begin{equation}
k_{r}+2k_{c}+2k_{i}^{-}+k_{0}^{-}=2n^{-}\left(  \mathcal{L}_{k}\right)
.\label{index-modulational-double}%
\end{equation}
Therefore, we obtain

\begin{proposition}
\label{P:modu-index} For any $k\in\left(  0,1\right)  $,
\begin{equation}
\tilde{k}_{r}+\tilde{k}_{c}+\tilde{k}_{i}^{-}+\tilde{k}_{0}^{-}=n^{-}\left(
\mathcal{L}_{k}\right)  \text{. } \label{index-modulational}%
\end{equation}

\end{proposition}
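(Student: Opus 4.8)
The plan is to derive Proposition \ref{P:modu-index} as a direct corollary of the index Theorem \ref{theorem-counting} applied to the \emph{real} operator pair $(J_k, L_k)$ defined in \eqref{E:JkLk}, together with the blockwise diagonalization \eqref{E:conjugacy1}--\eqref{E:conjugacy2} that relates $(J_k, L_k)$ to the complex pair $(\mathcal{J}_k, \mathcal{L}_k)$. First I would verify that $(J_k, L_k)$ satisfies the hypotheses (\textbf{H1-3}): $J_k$ is skew-adjoint and $L_k$ is self-adjoint by construction (using that $\mathcal{M}_k^e$ is self-adjoint and $\mathcal{M}_k^o$ is skew-adjoint); moreover $L_k = \mathcal{M}_k^e + c - f'(u_{c,a})$ on the diagonal is a bounded perturbation of $\mathcal{M}_k^e$, whose symbol $\alpha_k^e(\xi)$ grows like $|\xi|^m$, so $\langle L_k \cdot, \cdot\rangle$ is uniformly positive on a finite-codimensional subspace of $H^{m/2}(\mathbf{S}^1) \times H^{m/2}(\mathbf{S}^1)$. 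Since $\ker L_k$ is finite-dimensional, (\textbf{H3}) is automatic by Remark \ref{R:H3}. Hence Theorem \ref{theorem-counting}(iii) gives \eqref{index-modulational-double}, i.e. $k_r + 2k_c + 2k_i^- + k_0^- = n^-(L_k)$, where I also use that $J_kL_k$ has compact resolvent so every spectral point is an isolated eigenvalue of finite algebraic multiplicity, and Proposition \ref{P:non-deg} then ensures $\langle L_k\cdot,\cdot\rangle$ is non-degenerate on each $E_{i\mu}$, so $k^{\le 0}(i\mu) = k^-(i\mu)$ and $k_0^{\le 0} = k_0^-$.

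Next I would establish the bookkeeping identities $n^-(L_k) = 2n^-(\mathcal{L}_k)$, $k_r = 2\tilde k_r$, $k_c = \tilde k_c$, $k_i^- = \tilde k_i^-$, $k_0^- = 2\tilde k_0^-$. These all follow from the conjugation \eqref{E:conjugacy1}: the similarity $S$ intertwines $J_kL_k$ with $\operatorname{diag}(\mathcal{J}_k\mathcal{L}_k, \mathcal{J}_{-k}\mathcal{L}_{-k})$ and $L_k$ with $\operatorname{diag}(\mathcal{L}_k, \mathcal{L}_{-k})$, and \eqref{E:conjugacy2} identifies the second block as the complex conjugate of the first. Concretely, $\sigma(J_kL_k) = \sigma(\mathcal{J}_k\mathcal{L}_k) \cup \overline{\sigma(\mathcal{J}_k\mathcal{L}_k)}$ with matching generalized eigenspace dimensions, and the quadratic form $\langle L_k\cdot,\cdot\rangle$ restricted to an eigenspace splits accordingly. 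Thus: a positive eigenvalue $\lambda$ of $\mathcal{J}_k\mathcal{L}_k$ contributes itself and its conjugate $\bar\lambda$ (also real positive, so the \emph{same} $\lambda$ counted in the conjugate block) --- more carefully, $\lambda > 0$ real gives $\lambda \in \sigma(\mathcal{J}_k\mathcal{L}_k)$ and $\lambda = \bar\lambda \in \sigma(\mathcal{J}_{-k}\mathcal{L}_{-k})$, so $k_r$ for $J_kL_k$ is $2\tilde k_r$. Eigenvalues in the open first quadrant of $\mathcal{J}_k\mathcal{L}_k$ pair with eigenvalues in the fourth quadrant of $\mathcal{J}_{-k}\mathcal{L}_{-k}$; combined with Hamiltonian symmetry (Corollary \ref{C:symmetry}, eigenvalues of $J_kL_k$ are symmetric about both axes) this gives $k_c = \tilde k_c$ where $\tilde k_c$ counts eigenvalues of $\mathcal{J}_k\mathcal{L}_k$ in the first \emph{and} fourth quadrants. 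The purely imaginary and zero cases are handled identically, yielding $k_i^- = \tilde k_i^-$ and $k_0^- = 2\tilde k_0^-$.

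Substituting these into \eqref{index-modulational-double} gives
\[
2\tilde k_r + 2\tilde k_c + 2\tilde k_i^- + 2\tilde k_0^- = 2 n^-(\mathcal{L}_k),
\]
and dividing by $2$ yields \eqref{index-modulational}. The argument is uniform in $k \in (0,1)$ since the symbol bounds $a|\xi|^m \le \alpha(\xi) \le b|\xi|^m$ and the evenness of $\alpha$ guarantee $\mathcal{M}_k^e$ has the same growth for all such $k$, so the (\textbf{H2}) decomposition persists.

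The main obstacle I anticipate is the careful bookkeeping in the second step: one must be precise about how the even/odd symbol splitting $\mathcal{M}_k = \mathcal{M}_k^e + i\mathcal{M}_k^o$ interacts with the complexification procedure (the operators $J_k, L_k$ are \emph{real} operators on a real Hilbert space, complexified as Hermitian forms / anti-linear maps per the Appendix conventions), and in particular to confirm that the second diagonal block in \eqref{E:conjugacy1} really is the complex conjugate of the first --- i.e. that \eqref{E:conjugacy2} holds --- which relies on $\overline{\alpha_k^{e,o}(\xi)} = \alpha_k^{e,o}(-\xi)$, hence that $\mathcal{M}_k^e$ and $\mathcal{M}_k^o$ map real functions to real functions. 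One also needs to double-check the orientation/quadrant conventions so that $\tilde k_c$ is correctly identified (first and fourth quadrants, not first quadrant alone, because the two blocks are conjugate rather than both equal to the $J_kL_k$ count). Everything else --- verifying (\textbf{H1-3}), invoking Theorem \ref{theorem-counting} and Proposition \ref{P:non-deg} --- is routine given the earlier results.
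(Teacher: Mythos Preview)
Your proposal is correct and follows essentially the same approach as the paper: the paper's argument (given in the text immediately preceding the proposition) applies Theorem \ref{theorem-counting} to the real pair $(J_k,L_k)$, uses compact resolvent plus Proposition \ref{P:non-deg} to replace $k^{\le 0}$ by $k^-$, establishes the same bookkeeping identities via \eqref{E:conjugacy1}--\eqref{E:conjugacy2}, and divides \eqref{index-modulational-double} by two. Your anticipated subtleties (that $\tilde k_c$ must count first \emph{and} fourth quadrants, and that $k_i^- = \tilde k_i^-$ without a factor of two because $\tilde k_i^-$ sums over all nonzero imaginary eigenvalues) are exactly the points the paper handles in defining these indices.
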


The modulational instability occurs if $\tilde k_{r}\ne0$ or $\tilde k_{c}
\ne0$.

\begin{remark}
Note that $\mathcal{J}_{k}$ is invertible for any $k\notin\mathbf{Z}$. With a
more concrete form of $\mathcal{M}$, it is possible to determine $\tilde
{k}_{0}^{-}$. \newline$\bullet$ Firstly, if $\ker\mathcal{L}_{0}$ is known
(recall $\partial_{x}u_{c,a}\in\ker\mathcal{L}_{0}$), then one may study
$\ker\mathcal{L}_{k}$, as well as $\tilde{k}_{0}^{-}$, for $0<|k|<<1$ through
asymptotic analysis.\newline$\bullet$ If $\mathcal{M}=-\partial_{xx}$, then
$\ker\mathcal{L}_{k}=\{0\}$ for any $k\in(0,1)$ (and thus for any
$k\notin\mathbf{Z}$). In fact, in this case,
\[
\mathcal{L}_{0}=-\partial_{xx}+c-f^{\prime}(u_{c,a}),\quad v\in\ker
\mathcal{L}_{k}\,\Longleftrightarrow\,e^{ikx}v\in\ker\mathcal{L}_{0}%
\]
and $\ker\mathcal{L}_{0}=span\{\partial_{x}u_{c,a}\}$. Suppose $\mathcal{L}%
_{k}\,$\ has nontrivial kernel for some $k\in\left(  0,1\right)  $ and $0\neq
v\in\ker\mathcal{L}_{k}$. Denote $v_{0}\triangleq\partial_{x}u_{c,a}$, then
the Wronskian of $v_{0}$ and $\,e^{ikx}v$ satisfies
\[
W(x)=e^{ikx}(v_{x}v_{0}-vv_{0x}+ikvv_{0})=const.
\]
Since $v$ and $v_{0}$ are $2\pi$-periodic and $k\in(0,1)$, it must hold that
\begin{equation}
v_{x}v_{0}-vv_{0x}+ikvv_{0}=0. \label{E:Wronskian}%
\end{equation}
We claim $v(x)\neq0$ for any $x\in\mathbf{S}^{1}$. In fact, if $v(x_{0})=0$,
then $v_{x}(x_{0})\neq0$ and \eqref{E:Wronskian} imply $v_{0}(x_{0})=0$. The
uniqueness of the solution to the ODE $\mathcal{L}_{0}u=0$ leads to the
proportionality between $v_{0}$ and $e^{ikx}v$, a contradiction to $k\in(0,1)$
and the $2\pi$-periodicity of $v(x)$. Now that $v(x)\neq0$,
\eqref{E:Wronskian} implies $\frac{v_{0}}{v}=Ce^{-ikx}$, which is again a contradiction.
\end{remark}

\begin{remark}
The above index formula (\ref{index-modulational}) was proved in
\cite{kappitula-haragus} for the case when $\ker\mathcal{L}_{k}=\{0\}$, with
additional assumptions to ensure that the generalized eigenfunctions of
$\mathcal{J}_{k}\mathcal{L}_{k}$ form a complete basis of $L^{2}\left(
\mathbf{S}^{1}\right)  $ as assumed in the case of co-periodic perturbations.
Proposition \ref{P:modu-index} is proved without such assumptions.
\end{remark}

\begin{remark}
We can also consider the case when the operator $\mathcal{M}$ is a smoothing
operator, that is, $\Vert\mathcal{M}(\cdot)\Vert_{H^{r}}\sim\Vert\cdot
\Vert_{L^{2}}$ for some $r>0$. One example is the Whitham equation which is a
KDV type equation (\ref{kdv}) with the symbol of $\mathcal{M}$ being
$\sqrt{\frac{\tanh\xi}{\xi}}$ and thus $r=\frac{1}{2}$. In this case, if we
assume that
\begin{equation}
-c-\Vert f^{\prime}(u_{c})\Vert_{L^{\infty}(\mathbb{T}_{2\pi})}\geqslant
\epsilon>0, \label{assumption-whitham}%
\end{equation}
then $\mathcal{L}_{0}$ and $\mathcal{L}_{k}$ are compact perturbations of the
positive operator $-c+f^{\prime}\left(  u_{c,a}\right)  $ so that
$n^{-}\left(  -\mathcal{L}_{0}\right)  ,n^{-}\left(  -\mathcal{L}_{k}\right)
<\infty$. Then the index formula
\[
\bar{k}_{r}+\bar{k}_{c}+\ k_{i}^{-}+k_{0}^{-}=n^{-}\left(  -\mathcal{L}%
_{k}\right)
\]
is still true for the operator $\mathcal{J}_{k}\mathcal{L}_{k}$ $,k\in\left(
0,1\right)  $. The assumption (\ref{assumption-whitham}) can be verified
(\cite{lin-liao-jin-modulational}) for small amplitude periodic traveling
waves of Whitham equation with $f\left(  u\right)  =u^{2}$.
\end{remark}

Under the conditions of orbital stability in Proposition
\ref{Propo-stability-periodic}, the spectra of the operator $J\mathcal{L}_{0}$
in $L^{2}\left(  \mathbf{S}^{1}\right)  $ lie on the imaginary axis and are
all discrete. Moreover, the non-degeneracy of the matrix $D$ (defined by
(\ref{D-BBM})) implies that the generalized kernel of $J\mathcal{L}_{0}$ is
spanned by $\left\{  \partial_{x}u_{c,a},\partial_{c}u_{c,a},U_{c,a}\right\}
$. For $k\in\left(  0,1\right)  $ small, it is natural to study the spectra of
$\mathcal{J}_{k}\mathcal{L}_{k}$ by the perturbation theory. Even though the
results in Subsection \ref{SS:SS} and Section \ref{S:perturbation} do not
apply directly as $\mathcal{J}_{k}-J:X^{\ast}\rightarrow X$ is not bounded,
the ideas there and the property that $\mathcal{J}_{k}\mathcal{L}_{k}$ has
only isolated eigenvalues still yield the desired results. We start with the
following lemma on the resolvent of $\mathcal{J}_{k}\mathcal{L}_{k}$.

\begin{lemma}
\label{L:C0-resolvent} Assume that the symbol $\alpha(\xi)$ of $\mathcal{M}$
satisfies $a |\xi|^{m} \le\alpha(\xi) \le b |\xi|^{m}$, $a, b>0$, $m>0$, for
large $\xi$ and
\begin{equation}
\label{E:M-assump-1}\lim_{\rho\to0} \sup_{\xi\in\mathbf{Z}} \frac{|\alpha(\xi+
\rho) - \alpha(\xi)|}{1+ |\xi|^{m}} \to0,
\end{equation}
then the resolvent $(\lambda- \mathcal{J}_{k}\mathcal{L}_{k})^{-1}$ is
continuous in $k\in[0,1]$.
\end{lemma}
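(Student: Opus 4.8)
The plan is to treat $\mathcal{J}_k\mathcal{L}_k$ as a relatively small perturbation of $\mathcal{J}_{k_0}\mathcal{L}_{k_0}$ as $k\to k_0$, and then run the same Neumann-series argument that produced Lemma~\ref{L:resolvent}. Fix $k_0\in[0,1]$ and $\lambda_0\notin\sigma(\mathcal{J}_{k_0}\mathcal{L}_{k_0})$, which exists because $\mathcal{J}_{k_0}\mathcal{L}_{k_0}$ has compact resolvent. The first step is to identify the domain: since $\alpha(\xi+k_0)\sim\langle\xi\rangle^m$ by the two-sided bound and $f'(u_{c,a})$ is a bounded multiplication operator, $\mathcal{L}_{k_0}$ is elliptic of order $m$ on $\mathbf{S}^1$, while $\mathcal{J}_{k_0}=\partial_x+ik_0$ loses one more derivative; elliptic regularity on the torus then gives $D(\mathcal{J}_{k_0}\mathcal{L}_{k_0})=H^{m+1}(\mathbf{S}^1)$ with graph norm $\|u\|_G=\|u\|_{L^2}+\|\mathcal{J}_{k_0}\mathcal{L}_{k_0}u\|_{L^2}$ equivalent to $\|u\|_{H^{m+1}}$. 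The same computation shows $D(\mathcal{J}_k\mathcal{L}_k)=H^{m+1}$ for every $k$ and that $\mathcal{L}_k:H^{m+1}\to L^2$ is bounded uniformly in $k\in[0,1]$.

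Next I would decompose the difference on $H^{m+1}$:
\[
\mathcal{J}_k\mathcal{L}_k-\mathcal{J}_{k_0}\mathcal{L}_{k_0}= i(k-k_0)\mathcal{L}_k + \mathcal{J}_{k_0}\big(\mathcal{M}_k-\mathcal{M}_{k_0}\big).
\]
The first term has norm $\le|k-k_0|\,\|\mathcal{L}_k\|_{H^{m+1}\to L^2}\le C|k-k_0|$. For the second, $\mathcal{J}_{k_0}(\mathcal{M}_k-\mathcal{M}_{k_0})$ is the Fourier multiplier with symbol $i(\xi+k_0)\big(\alpha(\xi+k)-\alpha(\xi+k_0)\big)$; splitting into $|\xi|\le N$ (where $\alpha$ is continuous, so the symbol is $o_{k\to k_0}(1)$ uniformly over the finite set) and $|\xi|>N$ (where $|i(\xi+k_0)(\alpha(\xi+k)-\alpha(\xi+k_0))|/\langle\xi\rangle^{m+1}\lesssim|\alpha(\xi+k)-\alpha(\xi+k_0)|/(1+|\xi|^m)$, controlled by the equicontinuity hypothesis~\eqref{E:M-assump-1}), one obtains
\[
\|\mathcal{J}_{k_0}(\mathcal{M}_k-\mathcal{M}_{k_0})u\|_{L^2}\le \varepsilon(|k-k_0|)\,\|u\|_{H^{m+1}},\qquad \varepsilon(\rho)\to0 \ \text{ as }\rho\to0 .
\]
Hence $\mathcal{J}_k\mathcal{L}_k-\mathcal{J}_{k_0}\mathcal{L}_{k_0}$ is $\mathcal{J}_{k_0}\mathcal{L}_{k_0}$-bounded with relative bound $\le C\varepsilon(|k-k_0|)+C|k-k_0|\to0$.

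With this in hand the conclusion is routine. Writing
\[
\lambda-\mathcal{J}_k\mathcal{L}_k=\Big(I-\big(\mathcal{J}_k\mathcal{L}_k-\mathcal{J}_{k_0}\mathcal{L}_{k_0}\big)(\lambda-\mathcal{J}_{k_0}\mathcal{L}_{k_0})^{-1}\Big)(\lambda-\mathcal{J}_{k_0}\mathcal{L}_{k_0})
\]
and using that $(\lambda-\mathcal{J}_{k_0}\mathcal{L}_{k_0})^{-1}$ maps $L^2$ boundedly into $(H^{m+1},\|\cdot\|_G)$ (closed graph theorem), the operator $\big(\mathcal{J}_k\mathcal{L}_k-\mathcal{J}_{k_0}\mathcal{L}_{k_0}\big)(\lambda-\mathcal{J}_{k_0}\mathcal{L}_{k_0})^{-1}$ is bounded on $L^2$ with norm $\le C'\varepsilon(|k-k_0|)<1$ for $|k-k_0|$ small. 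The Neumann series then shows $\lambda\notin\sigma(\mathcal{J}_k\mathcal{L}_k)$ and
\[
\big\|(\lambda-\mathcal{J}_k\mathcal{L}_k)^{-1}-(\lambda-\mathcal{J}_{k_0}\mathcal{L}_{k_0})^{-1}\big\|\le C''\varepsilon(|k-k_0|)\longrightarrow 0 ,
\]
which is continuity at $k_0$ (and, letting $\lambda$ vary as well, joint continuity in $(\lambda,k)$ on the resolvent set). Since $k_0\in[0,1]$ was arbitrary, the lemma follows. I expect the main obstacle to be precisely the relative-bound estimate for $\mathcal{J}_{k_0}(\mathcal{M}_k-\mathcal{M}_{k_0})$: unlike the bounded perturbations in Section~\ref{S:perturbation}, differentiating the symbol difference costs a full order of growth, so one genuinely needs \eqref{E:M-assump-1} — not merely the power bound $\alpha\sim|\xi|^m$ — to absorb it; a power-like but rapidly oscillating symbol would destroy continuity. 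A secondary technical nuisance is that \eqref{E:M-assump-1} is phrased over the integer lattice whereas $\mathcal{M}_k$'s multiplier lives on $\mathbf{Z}+k$; this is harmless (one applies the hypothesis in its evident uniform-in-$k_0$ form), but it is the kind of point that should be spelled out.
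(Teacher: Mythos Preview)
Your proposal is correct and follows essentially the same route as the paper: the same decomposition of the difference $\mathcal{J}_k\mathcal{L}_k-\mathcal{J}_{k_0}\mathcal{L}_{k_0}$ into $i(k-k_0)\mathcal{L}_k+\mathcal{J}_{k_0}(\mathcal{M}_k-\mathcal{M}_{k_0})$, the same use of~\eqref{E:M-assump-1} to control the unbounded piece, and the same Neumann-series factorization to conclude. The only variation is in how the relative smallness is packaged: you invoke elliptic regularity to identify $D(\mathcal{J}_{k_0}\mathcal{L}_{k_0})=H^{m+1}(\mathbf{S}^1)$ and then bound the difference as an operator $H^{m+1}\to L^2$, whereas the paper introduces an auxiliary constant-coefficient reference operator $a_0+(\partial_x+ik)\mathcal{M}_k$ with compact inverse, shows $\big(a_0+(\partial_x+ik)\mathcal{M}_k\big)^{-1}(\lambda-\mathcal{J}_k\mathcal{L}_k)$ is Fredholm of index~$0$ (being a compact perturbation of the identity), and deduces the needed bound on $(\lambda-\mathcal{J}_k\mathcal{L}_k)^{-1}(\mathcal{J}_{k'}\mathcal{L}_{k'}-\mathcal{J}_k\mathcal{L}_k)$ from invertibility of that Fredholm operator. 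The paper's detour through Fredholm theory avoids naming the domain explicitly and is slightly more robust if one were less sure of the exact ellipticity; your direct approach is more elementary and transparent once one accepts the domain identification. Your observation that \eqref{E:M-assump-1} must really be read as uniform over $\xi\in\mathbf{Z}+k_0$ is well taken and is tacitly assumed in the paper as well.
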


\begin{proof}
Fix $k\in\lbrack0,1]$. From \eqref{defn-Jk-Lk}, one can compute
\[
\mathcal{J}_{k^{\prime}}\mathcal{L}_{k^{\prime}}-\mathcal{J}_{k}%
\mathcal{L}_{k}=(\partial_{x}+ik)(\mathcal{M}_{k^{\prime}}-\mathcal{M}%
_{k})+i(k^{\prime}-k)\big(\mathcal{M}_{k^{\prime}}+c-f^{\prime}(u_{c,a})\big).
\]
On the one hand, there exists $a_{0}\neq0$ such that $a_{0}+(\partial
_{x}+ik)\mathcal{M}_{k}$ has a compact inverse on $X$. We obtain from
\eqref{E:M-assump-1}
\begin{equation}
|\big(a_{0}+(\partial_{x}+ik)\mathcal{M}_{k}\big)^{-1}(\mathcal{J}_{k^{\prime
}}\mathcal{L}_{k^{\prime}}-\mathcal{J}_{k}\mathcal{L}_{k})|\rightarrow
0\;\text{ as }k^{\prime}\rightarrow k.\label{E:C0-resolvent-1}%
\end{equation}
On the other hand, \eqref{defn-Jk-Lk} and $m>0$ imply that
\begin{align*}
&  I+\big(a_{0}+(\partial_{x}+ik)\mathcal{M}_{k}\big)^{-1}(\lambda
-\mathcal{J}_{k}\mathcal{L}_{k})\\
= &  \big(I+(\partial_{x}+ik)\mathcal{M}_{k}\big)^{-1}(\lambda+a_{0}%
-(\partial_{x}+ik)(c-f^{\prime}(u_{c,a})\big)
\end{align*}
is compact. Therefore, $A=\big(a_{0}+(\partial_{x}+ik)\mathcal{M}%
_{k}\big)^{-1}(\lambda-\mathcal{J}_{k}\mathcal{L}_{k})$ is a Fredholm operator
of index $0$. Suppose $\lambda\notin\sigma(\mathcal{J}_{k}\mathcal{L}_{k})$,
then $A$ is injective and thus $A^{-1}$ is bounded on $X$. Along with
\eqref{E:C0-resolvent-1}, we obtain
\[
|(\lambda-\mathcal{J}_{k}\mathcal{L}_{k})^{-1}(\mathcal{J}_{k^{\prime}%
}\mathcal{L}_{k^{\prime}}-\mathcal{J}_{k}\mathcal{L}_{k})|=|A^{-1}%
\big(a_{0}+(\partial_{x}+ik)\mathcal{M}_{k}\big)^{-1}(\mathcal{J}_{k^{\prime}%
}\mathcal{L}_{k^{\prime}}-\mathcal{J}_{k}\mathcal{L}_{k})|\rightarrow0
\]
as $k^{\prime}\rightarrow k$. From
\[
\lambda-\mathcal{J}_{k^{\prime}}\mathcal{L}_{k^{\prime}}=(\lambda
-\mathcal{J}_{k}\mathcal{L}_{k})\big(I-(\lambda-\mathcal{J}_{k}\mathcal{L}%
_{k})^{-1}(\mathcal{J}_{k^{\prime}}\mathcal{L}_{k^{\prime}}-\mathcal{J}%
_{k}\mathcal{L}_{k})\big),
\]
we obtain the continuity of the resolvent $(\lambda-\mathcal{J}_{k}%
\mathcal{L}_{k})^{-1}$ in $k\in\lbrack0,1]$.
\end{proof}

\begin{remark}
The assumption (\ref{E:M-assump-1}) is clearly satisfied if $\alpha(\xi)\in
C^{1}\left(  \mathbf{R}\right)  $ and
\[
\limsup_{\left\vert \xi\right\vert \rightarrow\infty}\frac{\alpha^{\prime}%
(\xi)}{\left\vert \xi\right\vert ^{m}}< \infty.
\]

\end{remark}

Next we show that when $k$ is small enough, the unstable modes of
$\mathcal{J}_{k}\mathcal{L}_{k}$ can only bifurcate from the zero eigenvalue
of $J\mathcal{L}_{0}$.

\begin{proposition}
Suppose $\ker\mathcal{L}_{0}=span \left\{  \partial_{x} u_{c,a}\right\}  $,
$D$ is nonsingular, $n^{-}\left(  \mathcal{L}_{0}\right)  =n^{-} \left(
D\right)  $ and \eqref{E:M-assump-1} holds. Then for any $\delta>0$, there
exists $\varepsilon_{0}>0$ such that if $\left\vert k\right\vert
<\varepsilon_{0}$, then $\sigma\left(  \mathcal{J}_{k}\mathcal{L}_{k}\right)
\cap\left\{  \left\vert z\right\vert \geq\delta\right\}  \subset i\mathbf{R}$.
\end{proposition}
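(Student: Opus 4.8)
The strategy is to regard the passage from $\mathcal{J}_0\mathcal{L}_0=J\mathcal{L}_0$ to $\mathcal{J}_k\mathcal{L}_k$ as a perturbation and to run a structural--stability argument in the spirit of Theorem \ref{T:SImSpec} and Proposition \ref{P:PSubS}. The essential difference is that here the energy perturbation $\mathcal{L}_k-\mathcal{L}_0=\mathcal{M}_k-\mathcal{M}$ is unbounded, so those theorems cannot be quoted directly; instead the role of the bounded--perturbation hypotheses will be played by the resolvent continuity of Lemma \ref{L:C0-resolvent} and by the Hamiltonian energy conservation (Lemma \ref{lemma-L-form-preserve}).

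First I would pin down the spectral picture at $k=0$. Since $\ker\mathcal{L}_0=\mathrm{span}\{\partial_x u_{c,a}\}$ and $D$ is nonsingular, Theorem \ref{thm-periodic waves}(iii) gives $k_0^-=n^-(D)$, and the hypothesis $n^-(\mathcal{L}_0)=n^-(D)$ together with the index formula of Theorem \ref{thm-periodic waves}(i) forces $k_r=k_c=k_i^-=0$. As $\mathcal{J}_0\mathcal{L}_0$ has compact resolvent, $\sigma(\mathcal{J}_0\mathcal{L}_0)\subset i\mathbf{R}$ and consists of isolated eigenvalues of finite multiplicity. By Proposition \ref{P:non-deg}, $\langle\mathcal{L}_0\cdot,\cdot\rangle$ is non-degenerate on every eigenspace $E_{i\mu}$, $\mu\ne0$, and $k_i^-=0$ then means it is positive definite there; by Lemma \ref{lemma-orthogonal-condition-chain} a positive signature also excludes nontrivial Jordan chains, so $E_{i\mu}=\ker(\mathcal{J}_0\mathcal{L}_0-i\mu)$.

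Then I would argue by contradiction: if the conclusion failed for some $\delta>0$ there would be $k_n\to0$, normalized eigenfunctions $u_n$, and $\lambda_n\in\sigma(\mathcal{J}_{k_n}\mathcal{L}_{k_n})$ with $|\lambda_n|\ge\delta$ and $\mathrm{Re}\,\lambda_n\ne0$. The first --- and I expect hardest --- task is to confine the off--axis eigenvalues of $\mathcal{J}_k\mathcal{L}_k$ to a fixed bounded region uniformly in small $k$, so that, after passing to a subsequence, $\lambda_n\to\lambda_*$ with $|\lambda_*|\ge\delta$. Here I would separate the constant and low Fourier modes (the source of the degeneracy of $\mathcal{J}_k^{-1}$ as $k\to0$, which also forces $|\Pi_0 u_n|=O(k_n)$ unless $|\lambda_n|=O(k_n)<\delta$), exploit the uniform ellipticity $\alpha(\xi+k)\gtrsim|\xi|^m$ for $|\xi|$ large and $k\in[0,1]$ to recover high--frequency control of $u_n$, and combine this with the uniform count $\tilde{k}_r+\tilde{k}_c\le n^-(\mathcal{L}_k)=n^-(\mathcal{L}_0)$ coming from the index formula \eqref{index-modulational} of Proposition \ref{P:modu-index} and with the upper semicontinuity of the spectrum furnished by Lemma \ref{L:C0-resolvent}: any off--axis eigenvalue of $\mathcal{J}_k\mathcal{L}_k$ for small $k$ must bifurcate continuously from $\sigma(\mathcal{J}_0\mathcal{L}_0)$, and there are boundedly many of them, which rules out escape to infinity. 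Granting this bound, Lemma \ref{L:C0-resolvent} forces $\lambda_*\in\sigma(\mathcal{J}_0\mathcal{L}_0)\subset i\mathbf{R}$, so, since $|\lambda_*|\ge\delta>0$, $\lambda_*$ is a nonzero isolated eigenvalue with $\langle\mathcal{L}_0\cdot,\cdot\rangle$ positive definite on $E_{\lambda_*}$.

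Finally I would close exactly as in Case 1 of the proof of Theorem \ref{T:SImSpec}, with Lemma \ref{L:C0-resolvent} substituting for the bounded--perturbation resolvent estimate. Choose a small circle $\Gamma$ around $\lambda_*$ with $\Gamma\cap\sigma(\mathcal{J}_0\mathcal{L}_0)=\emptyset$; by Lemma \ref{L:C0-resolvent} the Riesz projections $P_k=\frac{1}{2\pi i}\oint_\Gamma(z-\mathcal{J}_k\mathcal{L}_k)^{-1}\,dz$ converge to $P_0$, so for small $k$ the range $\widetilde{E}_k=P_kX$ has dimension $\dim E_{\lambda_*}$, is close to $E_{\lambda_*}$, is invariant under $e^{t\mathcal{J}_k\mathcal{L}_k}$, and lies in a fixed high--regularity subspace on which $\|(\mathcal{M}_k-\mathcal{M})u\|\to0$ by \eqref{E:M-assump-1}; hence $\langle\mathcal{L}_k\cdot,\cdot\rangle$ remains positive definite on $\widetilde{E}_k$. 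Then conservation of $\langle\mathcal{L}_k\cdot,\cdot\rangle$ along $e^{t\mathcal{J}_k\mathcal{L}_k}$, together with the invariance and finite dimensionality of $\widetilde{E}_k$, forces $e^{t\mathcal{J}_k\mathcal{L}_k}|_{\widetilde{E}_k}$ to be bounded uniformly in $t\in\mathbf{R}$, so $\sigma(\mathcal{J}_k\mathcal{L}_k|_{\widetilde{E}_k})\subset i\mathbf{R}$; but for large $n$ the eigenvalue $\lambda_n$ lies inside $\Gamma$ and is thus an eigenvalue of $\mathcal{J}_{k_n}\mathcal{L}_{k_n}|_{\widetilde{E}_{k_n}}$, contradicting $\mathrm{Re}\,\lambda_n\ne0$. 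This proves the proposition; the restriction $|z|\ge\delta$ is precisely what keeps $\lambda_*$ away from $0$, where the positivity of $\langle\mathcal{L}_0\cdot,\cdot\rangle$ on the generalized kernel fails and, as the modulational instability shows, unstable modes can genuinely bifurcate.
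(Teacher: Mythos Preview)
Your approach is workable in spirit but takes a significantly more circuitous route than the paper, and its hardest step (boundedness of $\lambda_n$) is not adequately justified.

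The paper's proof is global and direct. It takes a \emph{single} Riesz projection over the circle $\{|\lambda|=\delta\}$ (one may assume $\delta$ is smaller than the distance from $0$ to the rest of $\sigma(J\mathcal{L}_0)$), producing the invariant splitting $X=Z_k\oplus Y_k$ with $\sigma(\mathcal{J}_k\mathcal{L}_k|_{Y_k})\subset\{|\lambda|>\delta\}$. At $k=0$ the subspace $Z_0$ is exactly the three-dimensional generalized kernel $\mathrm{span}\{\partial_x u_{c,a},\partial_c u_{c,a},U_{c,a}\}$, and the hypothesis $n^-(\mathcal{L}_0)=n^-(D)=n^-(\mathcal{L}_0|_{Z_0})$ says that \emph{all} negative directions of $\mathcal{L}_0$ already live in $Z_0$; the $\mathcal{L}_0$-orthogonality of the spectral splitting (Lemma~\ref{L:L-orth-eS}) then forces $\langle\mathcal{L}_0\cdot,\cdot\rangle$ to be uniformly positive on $Y_0$. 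Since assumption~\eqref{E:M-assump-1} makes $\mathcal{L}_k:H^{m/2}\to H^{-m/2}$ continuous in $k$ (so your concern about an ``unbounded energy perturbation'' is misplaced at the level of the quadratic form), $\langle\mathcal{L}_k\cdot,\cdot\rangle$ remains uniformly positive on $Y_k$ for small $k$, and $\mathcal{J}_k\mathcal{L}_k|_{Y_k}$ is anti-self-adjoint in this equivalent inner product, hence has purely imaginary spectrum. All eigenvalues with $|\lambda|\ge\delta$ are handled in one stroke---no contradiction argument, no sequences, no eigenvalue-by-eigenvalue analysis.

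By contrast, your contradiction argument works locally around each putative limit $\lambda_*$ and therefore needs the sequence $\lambda_n$ to be bounded. Your sketch for this (``there are boundedly many of them, which rules out escape to infinity'') does not close: the bound $\tilde k_r+\tilde k_c\le n^-(\mathcal{L}_0)$ only counts off-axis eigenvalues for each \emph{fixed} $k_n$ and does not prevent a single such eigenvalue from drifting to infinity as $k_n\to0$, and since $\sigma(\mathcal{J}_0\mathcal{L}_0)$ is itself unbounded, resolvent continuity alone does not obviously preclude this either. The gap is fixable---once you observe, as the paper does, that $\langle\mathcal{L}_k\cdot,\cdot\rangle$ is positive on the codimension-three complement $Y_k$, any off-axis eigenvalue is forced into the three-dimensional block $Z_k$ and hence into $\{|\lambda|<\delta\}$---but at that point you have reproduced the paper's argument and the entire contradiction scaffolding becomes superfluous.
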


\begin{proof}
Since $0$ is an isolated spectral point of $J\mathcal{L}_{0}$, there exists
$\delta_{0}>0$ such that $\lambda\notin\sigma(J\mathcal{L}_{0})$ as long as
$0<|\lambda|\leq\delta_{0}$. Without loss of generality, assume $0<\delta
<\delta_{0}$. Lemma \ref{L:C0-resolvent} implies $\lambda\notin\sigma
(\mathcal{J}_{k}\mathcal{L}_{k})$ for $0<|k|<<1$. Let
\[
P(k)=\frac{1}{2\pi i}\oint_{|\lambda|=\delta}(\lambda-\mathcal{J}%
_{k}\mathcal{L}_{k})^{-1}d\lambda,\quad Z_{k}=P(k)X,\quad Y_{k}%
=\big(I-P(k)\big)X.
\]
The standard spectral theory implies that $P(k)$ is continuous in $k$, $Y_{k}$
and $Z_{k}$ are invariant under $\mathcal{J}_{k}\mathcal{L}_{k}$, and
\[
|\lambda|<\delta,\;\forall\lambda\in\sigma(\mathcal{J}_{k}\mathcal{L}%
_{k}|_{Z_{k}})\;\text{ and }\;|\lambda|>\delta,\;\forall\lambda\in
\sigma(\mathcal{J}_{k}\mathcal{L}_{k}|_{Y_{k}}).
\]
For $k=0$, our assumptions imply that $Z_{0}=span\left\{  \partial_{x}%
u_{c,a},\partial_{c}u_{c,a},U_{c,a}\right\}  $. Therefore, $Z_{k}$ close to
$Z_{0}$ is a 3-dim invariant subspace of $\mathcal{J}_{k}\mathcal{L}_{k}$ with
small eigenvalues containing $\ker\mathcal{L}_{k}$. Moreover, the assumption
\[
n^{-}(\mathcal{L}_{0})=n^{-}(D)=n^{-}(\mathcal{L}_{0}|_{Z_{0}})
\]
and the $\mathcal{L}_{0}$-orthogonality between $Z_{0}$ and $Y_{0}$ imply that
$\mathcal{L}_{0}$ is uniformly positive definite on $Y_{0}$. As $\mathcal{L}%
_{k}:X=H^{\frac{m}{2}}\rightarrow X^{\ast}=H^{-\frac{m}{2}}$ is continuous in
$k$, there exists $\alpha>0$ such that $\langle\mathcal{L}_{k}u,u\rangle
>\alpha\Vert u\Vert^{2}$ for all $u\in Y_{k}$. Clearly, $\mathcal{J}%
_{k}\mathcal{L}_{k}|_{Y_{k}}$ is skew-adjoint with respect to the equivalent
inner product given by $\langle\mathcal{L}_{k}\cdot,\cdot\rangle$ on $Y_{k}$,
therefore $\sigma(\mathcal{J}_{k}\mathcal{L}_{k}|_{Y_{k}})\subset i\mathbf{R}$
and the proposition follows.
\end{proof}

Since $\dim\ker\left(  J\mathcal{L}_{0}\right)  =3$, the perturbation of zero
eigenvalue of $J\mathcal{L}_{0}\ $for $\mathcal{J}_{k}\mathcal{L}_{k}$
$\left(  0<k\ll1\right)  $ can be reduced to the eigenvalue perturbation of a
$3$ by $3$ matrix. This had been studied extensively in the literature and
instability conditions were obtained for various dispersive models. See the
survey \cite{hur-et-survey-15} and the references therein.

Recently, it was proved in \cite{lin-liao-jin-modulational} that linear
modulational instability of the traveling wave $u_{c}\left(  x-ct\right)
\ $also implies the nonlinear instability for both multi-periodic and
localized perturbations. The semigroup estimates of $e^{tJ\mathcal{L}_{0}}$
play an important role on this proof of nonlinear instability. We sketch these
estimates below, as an example of the application of Theorem
\ref{theorem-dichotomy} on the exponential trichotomy of linear Hamiltonian
PDE. First, if $u_{c}$ is linearly modulationally unstable, then there exists
a rational $k_{0}=\frac{p}{q}\in\left(  0,1\right)  $ such that $\mathcal{J}%
_{k_{0}}\mathcal{L}_{k_{0}}$ has an unstable eigenvalue. By the definition of
$\mathcal{J}_{k_{0}}\mathcal{L}_{k_{0}}$, this implies that the operator
$J\mathcal{L}_{0}$ has an unstable eigenvalue on the $2\pi q$ periodic space
$L^{2}\left(  \mathbf{S}_{2\pi q}^{1}\right)  $ with an eigenfunction of the
form $e^{ik_{0}x}u\left(  x\right)  $ $\left(  u\in L^{2}\left(
\mathbf{S}^{1}\right)  \right)  $. The exponential trichotomy of the semigroup
$e^{tJ\mathcal{L}_{0}}$ on the space $H^{s}\left(  \mathbf{S}_{2\pi q}%
^{1}\right)  $ $\left(  s\geq\frac{m}{2}\right)  $ follows directly by Theorem
\ref{theorem-dichotomy}. This is used in \cite{lin-liao-jin-modulational} to
prove nonlinear orbital instability of $u_{c}$ for $2\pi q$ periodic
perturbations or even to construct stable and unstable manifolds. To prove
nonlinear instability for localized perturbations, we study the semigroup
$e^{tJ\mathcal{L}_{0}}$ on the space $H^{s}\left(  \mathbf{R}\right)  $
$\left(  s\geq\frac{m}{2}\right)  $. The operator $\mathcal{L}_{0}$ might have
negative continuous spectrum in $H^{s}\left(  \mathbf{R}\right)  $. For
example, when $\mathcal{M=-\partial}_{x}^{2}$, the spectrum of $\mathcal{L}%
_{0}=$ $\mathcal{-\partial}_{x}^{2}$ $+V\left(  x\right)  $ with periodic
$V\left(  x\right)  $ is well studied in the literature and is known to have
bands of continuous spectrum. So Theorem \ref{theorem-dichotomy} does not
apply. However, we have the following upper bound estimate of
$e^{tJ\mathcal{L}_{0}}$ on $H^{s}\left(  \mathbf{R}\right)  $, which suffices
to prove nonlinear localized instability.

\begin{lemma}
\label{lemma-modulational-localized-semigroup}Assume \eqref{E:M-assump-1}. Let
$\lambda_{0}\geq0$ be such that
\[
\operatorname{Re}\lambda\leq\lambda_{0},\quad\forall\xi\in\lbrack
0,1],\;\lambda\in\sigma(\mathcal{J}_{\xi}\mathcal{L}_{\xi}).
\]
For every $s\geq\frac{m}{2}$, there exist $C(s)>0$ such that
\begin{align}
&  \Vert e^{t\mathcal{J}_{\xi}\mathcal{L}_{\xi}}v(x)\Vert_{H^{s}%
(\mathbf{S}^{1})}\leq C(s)(1+t^{2n^{-}(\mathcal{L}_{\xi})+1})e^{\lambda_{0}%
t}\Vert v(x)\Vert_{H^{s}(\mathbf{S}^{1})},\label{E:SG-esti-1}\\
&  \Vert e^{tJ\mathcal{L}_{0}}u(x)\Vert_{H^{s}{(\mathbf{R})}}\leqslant
C(s)(1+t^{2n^{-}(\mathcal{L}_{\xi})+1})e^{\lambda_{0}t}\Vert u(x)\Vert
_{H^{s}{(\mathbf{R})}}, \label{E:SG-esti-2}%
\end{align}
for any $\xi\in\lbrack0,1]$, $v\in H^{s}(\mathcal{S}^{1})$, and $u\in
H^{s}{(\mathbf{R})}$.
\end{lemma}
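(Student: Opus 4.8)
The plan is to prove the two estimates in Lemma \ref{lemma-modulational-localized-semigroup} in two stages. First, estimate \eqref{E:SG-esti-1} is essentially a uniform-in-$\xi$ version of the exponential trichotomy bound \eqref{estimate-center}--\eqref{estimate-stable-unstable} applied to the Hamiltonian operator $\mathcal{J}_\xi\mathcal{L}_\xi$ on $H^s(\mathbf{S}^1)$. Here I would note that, after the real reformulation \eqref{E:JkLk} and the conjugacy \eqref{E:conjugacy1}, each $J_\xi L_\xi$ (equivalently $\mathcal{J}_\xi\mathcal{L}_\xi\oplus\mathcal{J}_{-\xi}\mathcal{L}_{-\xi}$) satisfies \textbf{(H1-3)} with $n^-(L_\xi)=2n^-(\mathcal{L}_\xi)\le 2n^-(\mathcal{L}_0)+O(1)$ uniformly bounded for $\xi\in[0,1]$ by the compactness of the perturbation $\mathcal{M}_\xi^e-\mathcal{M}_0$ together with \eqref{E:M-assump-1}. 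Since $\mathcal{J}_\xi\mathcal{L}_\xi$ has compact resolvent (as used in Subsection \ref{SS:modulational}), its spectrum consists of isolated eigenvalues; by hypothesis $\operatorname{Re}\lambda\le\lambda_0$ for all such $\lambda$. Applying Theorem \ref{theorem-dichotomy} to $\mathcal{J}_\xi\mathcal{L}_\xi - \lambda_0$ — whose spectrum then lies in $\{\operatorname{Re}\le 0\}$ — gives a decomposition into the (finite-dimensional) generalized eigenspace of the purely-imaginary-shifted part and a genuinely decaying part, with the polynomial degree on the center piece controlled by $1+2(n^-(L_\xi)-\dim E^u)\le 1+2n^-(L_\xi)$. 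The subtlety is that Theorem \ref{theorem-dichotomy} is stated for a fixed operator, whereas \eqref{E:SG-esti-1} asks for the constant $C(s)$ to be uniform in $\xi$; I would obtain this by the resolvent-continuity Lemma \ref{L:C0-resolvent}, which lets me choose a single compact spectral separating contour working for all $\xi$ in a neighborhood, then cover $[0,1]$ by finitely many such neighborhoods. The regularity upgrade from $L^2$ to $H^s$, $s\ge m/2$, follows from the $X^k$ version of Theorem \ref{theorem-dichotomy} (with the graph norm $\|\cdot\|_{X^k}$ equivalent to $\|\cdot\|_{H^{s}}$ when $s\le km/2$) plus interpolation.

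Second, for \eqref{E:SG-esti-2} the plan is to pass from the Floquet-Bloch pieces to the whole line by a direct-integral (Gelfand transform) argument. I would write, for $u\in H^s(\mathbf{R})$, the decomposition $u(x)=\int_0^1 e^{i\xi x} u_\xi(x)\,d\xi$ with $u_\xi\in H^s(\mathbf{S}^1)$, under which $J\mathcal{L}_0$ is conjugate to the fibered operator $\xi\mapsto \mathcal{J}_\xi\mathcal{L}_\xi$ (this is exactly what the reformulation \eqref{defn-Jk-Lk} encodes), and $\|u\|_{H^s(\mathbf{R})}^2 \sim \int_0^1 \|u_\xi\|_{H^s(\mathbf{S}^1)}^2\,d\xi$ up to absolutely continuous weights. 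Then
\[
\|e^{tJ\mathcal{L}_0}u\|_{H^s(\mathbf{R})}^2 \sim \int_0^1 \|e^{t\mathcal{J}_\xi\mathcal{L}_\xi} u_\xi\|_{H^s(\mathbf{S}^1)}^2\,d\xi
\le C(s)^2 (1+t^{2n^-(\mathcal{L}_\xi)+1})^2 e^{2\lambda_0 t}\int_0^1\|u_\xi\|_{H^s(\mathbf{S}^1)}^2\,d\xi,
\]
using the fiberwise bound \eqref{E:SG-esti-1} with the \emph{uniform} constant from the first stage, and $n^-(\mathcal{L}_\xi)$ replaced by its uniform upper bound. Taking square roots gives \eqref{E:SG-esti-2}. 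One technical point to handle carefully is that $J\mathcal{L}_0$ is only defined as a generator on $H^s(\mathbf{R})$ (its well-posedness on $\mathbf{R}$ needs $\mathcal{L}_0$ to satisfy \textbf{(H1-3)} there, which holds since $\mathcal{M}$ has a uniform lower symbol bound), and I must check that the unitary Gelfand equivalence actually intertwines the two semigroups, which is standard once one verifies it on the dense set of nice functions and uses the uniform exponential bound to extend.

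The step I expect to be the main obstacle is the \emph{uniformity in $\xi$} of the trichotomy constant $C(s)$ and of the splitting into center/hyperbolic parts. Theorem \ref{theorem-dichotomy} produces, for each $\xi$, a decomposition $H^s=E^u_\xi\oplus E^c_\xi\oplus E^s_\xi$ with bounds depending a priori on $\xi$ through the operator $\mathcal{J}_\xi\mathcal{L}_\xi$ and its spectral gaps. To control this uniformly I would use the decomposition Theorem \ref{T:decomposition}: all blocks except $A_3$ are finite-dimensional matrices depending continuously on $\xi$ (with dimensions bounded by $n^-(L_\xi)$, itself uniformly bounded), and $A_3$ is anti-self-adjoint in an equivalent inner product with norm uniformly comparable to $\|\cdot\|_{H^s}$ by Lemma \ref{L:C0-resolvent} and the continuity of $\mathcal{L}_\xi$ in $\xi$; hence the explicit trichotomy estimates \eqref{E:ET1}--\eqref{E:ET7} in Section \ref{S:ET} carry uniform constants on a neighborhood of each $\xi_0$, and compactness of $[0,1]$ finishes it. A secondary nuisance is ensuring $e^{t\mathcal{J}_\xi\mathcal{L}_\xi}$ acts boundedly on $H^s(\mathbf{S}^1)$ for $s\ge m/2$ with the graph-norm equivalences lining up; this is routine given the symbol bounds on $\mathcal{M}$ but must be stated.
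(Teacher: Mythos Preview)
Your plan for \eqref{E:SG-esti-2} via the Floquet--Bloch (Gelfand) decomposition is exactly what the paper does, and your reduction of the higher-$s$ case to $s=m/2$ by iterating $\mathcal{J}_\xi\mathcal{L}_\xi$ and interpolating is also the paper's route. The substantive issue is your treatment of the uniform-in-$\xi$ estimate \eqref{E:SG-esti-1}.

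Two concrete problems. First, you propose to apply Theorem \ref{theorem-dichotomy} to $\mathcal{J}_\xi\mathcal{L}_\xi-\lambda_0$; but a real shift destroys the $JL$ structure (there is no symmetric $L'$ with $\mathcal{J}_\xi L'=\mathcal{J}_\xi\mathcal{L}_\xi-\lambda_0$), so that theorem does not apply to the shifted operator. You can of course apply the trichotomy to $\mathcal{J}_\xi\mathcal{L}_\xi$ itself and then bound the unstable piece by $e^{\lambda_0 t}$, which is presumably what you intend. Second, and more seriously, your uniformity mechanism via Theorem \ref{T:decomposition} is not available: the subspaces $X_j$ there are built from the Pontryagin non-positive invariant subspace $W$ of Theorem \ref{T:Pontryagin}, and the paper itself points out (Subsection \ref{SS:CounterExample}, ``Continuous dependence of invariant subspaces on $JL$'') that $W$ need \emph{not} depend continuously on the parameter. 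So the claim ``all blocks except $A_3$ are finite-dimensional matrices depending continuously on $\xi$'' is unjustified, and with it the uniform constants from \eqref{E:ET1}--\eqref{E:ET7}.

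The paper bypasses both issues with a simpler, robust splitting that \emph{is} continuous in $\xi$. Fix $\xi_0$; by Proposition \ref{P:modu-index} only finitely many eigenvalues of $\mathcal{J}_{\xi_0}\mathcal{L}_{\xi_0}$ have $\langle\mathcal{L}_{\xi_0}\cdot,\cdot\rangle$ not uniformly positive on their eigenspace. Enclose this finite set by a contour disjoint from the remaining spectrum, and let $P(\xi)$ be the associated Riesz projection; Lemma \ref{L:C0-resolvent} makes $P(\xi)$ continuous on a neighborhood of $\xi_0$. On the finite-dimensional range $Z_\xi=P(\xi)X$ the Jordan block sizes are bounded by $2(1+n^-(\mathcal{L}_\xi))$ (Proposition \ref{P:basis}), giving the $(1+t^{2n^-(\mathcal{L}_\xi)+1})e^{\lambda_0 t}$ factor with constants continuous in $\xi$. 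On the complement $Y_\xi=(I-P(\xi))X$, continuity of $\mathcal{L}_\xi$ and of $P(\xi)$ make $\langle\mathcal{L}_\xi\cdot,\cdot\rangle$ uniformly positive, so energy conservation gives a uniform bound on $e^{t\mathcal{J}_\xi\mathcal{L}_\xi}|_{Y_\xi}$. Compactness of $[0,1]$ then yields the global constant. Your first instinct---``a single compact spectral separating contour working for all $\xi$ in a neighborhood''---is exactly this; the point is that the contour should separate the \emph{energy-indefinite} eigenvalues from the rest, not the hyperbolic/center split of Theorem \ref{theorem-dichotomy}, because it is the positive-energy complement on which you get a uniform bound for free.
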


\begin{proof}
It suffices to prove the lemma for $s=\frac{m}{2}$. The estimates for general
$s\geq\frac{m}{2}$ can be obtained by applying $\mathcal{J}_{\xi}%
\mathcal{L}_{\xi}$ and $J\mathcal{L}_{0}$ repeatedly to the estimates for
$s=\frac{m}{2}$ (and interpolation for the case when $\frac{2s}{m}$ is not an
integer). We start with the first estimate in the $2\pi$-periodic case. Due to
the compactness of $[0,1]$, it suffices to prove that for any $\xi_{0}%
\in\lbrack0,1]$, there exist $C,\epsilon>0$ and an integer $K\geq0$ such that
\eqref{E:SG-esti-1} holds for $\xi\in(\xi_{0}-\epsilon,\xi_{0}+\epsilon)$. We
first note that each $\lambda\in\sigma(\mathcal{J}_{\xi_{0}}\mathcal{L}%
_{\xi_{0}})$ is an isolated eigenvalue with finite algebraic multiplicity and
$\mathcal{L}_{\xi_{0}}$ is non-degenerate on $E_{\lambda}\slash(E_{\lambda
}\cap\ker\mathcal{L}_{\xi_{0}})$. Let
\[
\Lambda=\{\lambda\in\sigma(\mathcal{J}_{\xi_{0}}\mathcal{L}_{\xi_{0}}%
)\mid\exists\ \delta>0\text{ s.t. }\langle\mathcal{L}_{\xi_{0}}v,v\rangle
\geq\delta\Vert v\Vert^{2}\}.
\]
Due to Proposition \ref{P:modu-index}, $\sigma(\mathcal{J}_{\xi_{0}%
}\mathcal{L}_{\xi_{0}})\backslash\Lambda$ is finite and
\[
n=\Sigma_{\lambda\in\sigma(\mathcal{J}_{\xi_{0}}\mathcal{L}_{\xi_{0}%
})\backslash\Lambda}\dim E_{\lambda}<\infty.
\]
Moreover, there exists $\varepsilon>0$ such that
\[
\Omega\cap\Lambda=\emptyset,\;\text{ where }\Omega=\cup_{\lambda\in
\sigma(\mathcal{J}_{\xi_{0}}\mathcal{L}_{\xi_{0}})\backslash\Lambda}%
\{z\mid\Vert z-\lambda\Vert<\varepsilon\}\subset\mathbf{C}.
\]
From Lemma \ref{L:C0-resolvent}, there exists $\epsilon>0$ such that
$\partial\Omega\cap\sigma(\mathcal{J}_{\xi}\mathcal{L}_{\xi})=\emptyset$ for
any $\xi\in\lbrack\xi_{0}-\epsilon,\xi_{0}+\epsilon]$. For such $\xi$, let
\[
P(\xi)=\frac{1}{2\pi i}\oint_{\partial\Omega}(\lambda-\mathcal{J}%
_{k}\mathcal{L}_{k})^{-1}d\lambda,\quad Z_{\xi}=P(\xi)X,\quad Y_{\xi
}=\big(I-P(\xi)\big)X,
\]
which are continuous in $\xi$ and invariant under $e^{t\mathcal{J}_{\xi
}\mathcal{L}_{\xi}}$. Therefore, $\dim Z_{\xi}=n$ and the continuity of
$\mathcal{L}_{\xi}$ in $\xi$ implies that there exists $\delta>0$ such that
\[
\delta^{-2}\Vert v\Vert^{2}\geq\langle\mathcal{L}_{\xi}v,v\rangle\geq
\delta^{2}\Vert v\Vert^{2},\quad\forall v\in Y_{\xi},\;|\xi-\xi_{0}%
|\leq\epsilon.
\]
Moreover, according to Proposition \ref{P:basis}, for any $\lambda\in
\Omega\cap\sigma(\mathcal{J}_{\xi}\mathcal{L}_{\xi})$, the dimension of its
eigenspace
\[
E_{\lambda}(\mathcal{J}_{\xi}\mathcal{L}_{\xi})=\ker(\lambda-\mathcal{J}_{\xi
}\mathcal{L}_{\xi})^{2\big(1+n^{-}(\mathcal{L}_{\xi})\big)},
\]
namely, the maximal dimension of Jordan blocks of $\mathcal{J}_{\xi
}\mathcal{L}_{\xi}$ on $Y_{\xi}$ is no more than $2(1+n^{-}(\mathcal{L}_{\xi
})\big)$. So for any $\xi\in\lbrack\xi_{0}-\epsilon,\xi_{0}+\epsilon]$, there
exists a generic constant $C>0$ independent of $\xi$, such that
\begin{align*}
&  \Vert e^{t\mathcal{J}_{\xi}\mathcal{L}_{\xi}}v\Vert\leq\Vert
e^{t\mathcal{J}_{\xi}\mathcal{L}_{\xi}}P(\xi)v\Vert+\Vert e^{t\mathcal{J}%
_{\xi}\mathcal{L}_{\xi}}\big(I-P(\xi)\big)v\Vert\\
\leq &  C\Big((1+t^{2n^{-}(\mathcal{L}_{\xi})+1})e^{\lambda_{0}t}\Vert
P(\xi)v\Vert+\langle\mathcal{L}_{\xi}e^{t\mathcal{J}_{\xi}\mathcal{L}_{\xi}%
}\big(I-P(\xi)\big)v,e^{t\mathcal{J}_{\xi}\mathcal{L}_{\xi}}\big(I-P(\xi
)\big)v\rangle^{\frac{1}{2}}\Big)\\
\leq &  C\Big((1+t^{2n^{-}(\mathcal{L}_{\xi})+1})e^{\lambda_{0}t}\Vert
P(\xi)v\Vert+\langle\mathcal{L}_{\xi}\big(I-P(\xi)\big)v,\big(I-P(\xi
)\big)v\rangle^{\frac{1}{2}}\Big)\\
\leq &  C(1+t^{2n^{-}(\mathcal{L}_{\xi})+1})e^{\lambda_{0}t}\Vert v\Vert.
\end{align*}
Along with the compactness of $[0,1]$, it implies \eqref{E:SG-esti-1}.

To prove \eqref{E:SG-esti-2}, we first write, for any $u\in H^{s}%
{(\mathbf{R)}}$,
\[
u(x)=\int_{0}^{1}e^{i\xi x}u_{\xi}(x)d\xi,\;\text{ where }u_{\xi}%
(x)=\Sigma_{n\in\mathbf{Z}}e^{inx}\hat{u}(n+\xi)\in H^{s}(\mathbf{S}^{1}),
\]
and $\hat{u}$ is the Fourier transform of $u$. Clearly, there exists $C>0$
such that
\begin{equation}
\frac{1}{C}\Vert u\Vert_{H^{s}(\mathbb{R})}^{2}\leq\int_{0}^{1}\Vert u_{\xi
}\left(  x\right)  \Vert_{H^{s}(\mathbf{S}^{1})}^{2}\,d\xi\leq C\Vert
u\Vert_{H^{s}(\mathbb{R})}^{2}.\label{norm-equivalence}%
\end{equation}
Note
\[
e^{tJ\mathcal{L}_{0}}u(x)=\int_{0}^{1}e^{i\xi x}e^{t\mathcal{J}_{\xi
}\mathcal{L}_{\xi}}u_{\xi}\left(  x\right)  \,d\xi
\]
and thus
\begin{equation}
\Vert e^{tJ\mathcal{L}_{0}}u(x)\Vert_{H^{s}{(\mathbf{R})}}^{2}\thickapprox
\int_{0}^{1}\Vert e^{t\mathcal{J}_{\xi}\mathcal{L}_{\xi}}u_{\xi}\left(
x\right)  \Vert_{H^{s}(\mathbf{S}^{1})}^{2}\,d\xi.\label{semigroup-frequency}%
\end{equation}
Along with \eqref{E:SG-esti-1}, it immediately implies \eqref{E:SG-esti-2}.
\end{proof}

\begin{remark}
The semigroup estimates of the types (\ref{E:SG-esti-1}) and
(\ref{E:SG-esti-2}) can also be obtained for $s=-1$, that is, in the negative
Sobolev space $H^{-1}\left(  \mathbf{S}_{2\pi q}^{1}\right)  $ and
$H^{-1}{(\mathbf{R})}$ for $e^{tJ\mathcal{L}_{0}}$ (see
\cite{lin-liao-jin-modulational}). Such semigroup estimates were used in
\cite{lin-liao-jin-modulational} to prove nonlinear modulational instability
by a bootstrap argument.
\end{remark}

\subsection{The spectral problem $Lu=\lambda u^{\prime}$}

In this subsection, we consider the eigenvalue problem of the form
\begin{equation}
Lu=\lambda u^{\prime}, \label{eigenvalue-form-prime}%
\end{equation}
where the symmetric operator $L$ is of the form of $\mathcal{L}_{0}\ $in
Subsection \ref{SS:solitary}. As an example, consider the stability of
solitary waves of generalized Bullough--Dodd equation
(\cite{stefnov-L-prime16})%
\begin{equation}
u_{tx}=au-f\left(  u\right)  , \label{eqn-BD}%
\end{equation}
where $a>0$ and $f$ is a smooth function of $u$ satisfying
\begin{equation}
f\left(  u\right)  =O\left(  u^{2}\right)  ,\ f^{\prime}\left(  u\right)
=O\left(  u\right)  \ \text{for\ small\ }u. \label{assumption-f}%
\end{equation}
The traveling wave $u_{c}\left(  x+ct\right)  $ satisfies the ODE
\[
-cu_{c}^{\prime\prime}+au_{c}-f\left(  u_{c}\right)  =0\text{.}%
\]
Then the linearized equation in the traveling frame $\left(  x+ct,t\right)  $
takes the form
\begin{equation}
u_{tx}=-cu_{xx}+au-f^{\prime}\left(  u_{c}\right)  u. \label{eqn-LBD}%
\end{equation}
Thus the eigenvalue problem takes the form (\ref{eigenvalue-form-prime}) with
\begin{equation}
L=-c\frac{d^{2}}{dx^{2}}+a-f^{\prime}\left(  u_{c}\right)  . \label{defn-L-BD}%
\end{equation}

We consider the general problem (\ref{eigenvalue-form-prime}) with $L$ of the
form $L=\mathcal{M+}V\left(  x\right)  $. We assume that: i) $M$ is a Fourier
multiplier operator with the symbol $\alpha\left(  \xi\right)  $ satisfying
\begin{equation}
\alpha\left(  \xi\right)  \geq0\text{ and}\ \alpha\left(  \xi\right)
\thickapprox\left\vert \xi\right\vert ^{2s}\ \left(  s>0\right)  \text{,
when\ }\left\vert \xi\right\vert \text{\ is large, } \label{assumption-symbol}%
\end{equation}
$\ $ and ii) the real potential $V\left(  x\right)  $ satisfies
\begin{equation}
V\left(  x\right)  \rightarrow\delta_{0}>0\text{ when\ }\left\vert
x\right\vert \rightarrow\infty. \label{assumption-potential}%
\end{equation}
Let $X=H^{s}\left(  R\right)  $ $\left(  s>0\right)  $. Then the assumption
(\textbf{H2}) is satisfied for $L$ on $X$. Namely, $L:X\rightarrow X^{\ast}$
is bounded and symmetric, and there exists a decomposition of $X$
\[
X=X_{-}\oplus\ker L\oplus X_{+},\quad n^{-}(L)\triangleq\dim X_{-}<\infty,
\]
satisfying $L|_{X_{-}}<0$ and $L|_{X_{+}}\geq\delta>0$.

Define $J=\partial_{x}^{-1}$. Now we check that $J:X^{\ast}\rightarrow X$ is
densely defined and $J^{*}=-J$. On $X=H^{s}\left(  R\right)  $ with $s>0$, the
operator $\partial_{x}:X\rightarrow X^{\ast}$ is densely defined and satisfies
$(\partial_{x})^{*} = -\partial_{x}$. Since $\ker\partial_{x}=\left\{
0\right\}  $,
\[
\overline{R\left(  \partial_{x}\right)  }=\left(  \ker\left(  \partial
_{x}^{\ast}\right)  \right)  ^{\perp}=\left(  \ker\left(  -\partial
_{x}\right)  \right)  ^{\perp}=X^{\ast},
\]
so $D\left(  \partial_{x}^{-1}\right)  =R\left(  \partial_{x}\right)  $ is
dense in $X^{\ast}$ and $J=\partial_{x}^{-1}:X^{\ast}\rightarrow X$ satisfies
$J^{*}=-J$.

So the eigenvalue problem $Lu=\lambda u^{\prime}$ can be equivalently written
in the Hamiltonian form $JLu=\lambda u$, where $\left(  J,L,X\right)  $
satisfies the assumptions (\textbf{H1})-(\textbf{H3}). Let $\ker
L=span\left\{  \psi_{1},\cdots,\psi_{l}\right\}  $ and
\[
span\left\{  \psi_{1}^{\prime},\cdots,\psi_{l}^{\prime}\right\}  \cap R\left(
L\right)  =span\left\{  g_{1},\cdots,g_{m}\right\}  ,\ m\leq l\text{. }%
\]
Define the $m$ by $m$ matrix$\ $%
\[
D=\left(  \left\langle L^{-1}g_{i},g_{j}\right\rangle \right)  ,\ 1\leq
i,j\leq m\text{. }\
\]
By Theorem \ref{theorem-counting} and Proposition \ref{prop-counting-k-0-1},
we get the following theorem.

\begin{theorem}
Assume (\ref{assumption-symbol}) and (\ref{assumption-potential}). Then
\[
k_{r}+2k_{c}+2k_{i}^{\leq0}+k_{0}^{\leq0}=n^{-}\left(  L\right)  ,
\]
where $k_{r},k_{c},k_{i}^{\leq0},k_{0}^{\leq0}$ are the indexes for the
eigenvalues of $\partial_{x}^{-1}L$, as defined in Section
\ref{section-index theorem}. In addition, we have $k_{0}^{\leq0}\geq n^{\leq
0}\left(  D\right)  $, where $n^{\leq0}\left(  D\right)  $ is the number of
nonpositive eigenvalues of $D$. If $D$ is nonsingular, then $k_{0}^{\leq
0}=n^{-}\left(  D\right)  $, i.e., the number of negative eigenvalues of $D$.
\end{theorem}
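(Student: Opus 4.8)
The plan is to cast the eigenvalue problem $Lu = \lambda u'$ as the abstract linear Hamiltonian system $JLu = \lambda u$ with $J = \partial_x^{-1}$ on the phase space $X = H^s(\mathbf R)$, verify that the triple $(J,L,X)$ satisfies hypotheses (\textbf{H1})--(\textbf{H3}), and then simply invoke Theorem \ref{theorem-counting} for the index identity and Propositions \ref{prop-counting-k-0-1}, \ref{P:non-deg} (or rather the subspace reformulations in Corollaries \ref{C:counting-k-0-2-a}--\ref{C:counting-k-0-2-b} and Remark \ref{R:counting-k-0-1}) for the estimate and equality involving the matrix $D$.

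The first step is the verification of the hypotheses. That $L = \mathcal M + V(x): H^s \to H^{-s}$ is bounded and symmetric is immediate, and assumption \eqref{assumption-symbol} together with \eqref{assumption-potential} shows that the operator $\mathbb L$ associated to $\langle L\cdot,\cdot\rangle$ via Riesz representation has $0$ as an isolated point of its spectrum with finite-dimensional negative part: away from a compact frequency region the symbol $\alpha(\xi) + \delta_0/2$ dominates, and the remaining piece $V(x) - \delta_0$ plus the low-frequency cutoff is a relatively compact perturbation, so $\sigma(\mathbb L) \cap (-\delta', \delta') \subset \{0\}$ and $\Pi_{-\delta'}X$ is finite dimensional. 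This gives (\textbf{H2}) in the form of Remark \ref{R:assumptions}. For (\textbf{H1}) I would argue as in the excerpt: $\partial_x: H^s \to H^{-s}$ is densely defined and closed with $(\partial_x)^* = -\partial_x$ and trivial kernel, so $\overline{R(\partial_x)} = (\ker(\partial_x^*))^\perp = X^*$, hence $J = \partial_x^{-1}$ with domain $R(\partial_x)$ is densely defined and anti-self-dual. Finally (\textbf{H3}): since $\dim \ker L < \infty$, Remark \ref{R:H3} applies automatically, so nothing further is needed.

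With the hypotheses in hand, Theorem \ref{theorem-counting} gives $k_r + 2k_c + 2k_i^{\le 0} + k_0^{\le 0} = n^-(L)$ directly. For the lower bound $k_0^{\le 0} \ge n^{\le 0}(D)$, the point is to identify the abstract object $(JL)^{-1}(\ker L)/\ker L$ — equivalently, by Remark \ref{R:counting-k-0-1}, a complementary subspace $S$ inside $(JL)^{-1}(\ker L)$ — with the concrete data defining $D$. Here $JLu \in \ker L$ means $Lu' {}^{-1}$... more precisely $u \in (JL)^{-1}(\ker L)$ iff $\partial_x^{-1} L u \in \ker L$, i.e. $Lu = \partial_x(\ker L) = \mathrm{span}\{\psi_1', \ldots, \psi_l'\}$, so $Lu$ lies in $\mathrm{span}\{\psi_j'\} \cap R(L) = \mathrm{span}\{g_1,\ldots,g_m\}$, and the preimages $L^{-1}g_i$ (well-defined modulo $\ker L$) span a complement $S$ of $\ker L$ in $(JL)^{-1}(\ker L)$. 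Then $\langle L\cdot,\cdot\rangle$ on $S$ has the matrix $(\langle L(L^{-1}g_i), L^{-1}g_j\rangle) = (\langle g_i, L^{-1}g_j\rangle) = (\langle L^{-1}g_i, g_j\rangle) = D$ using symmetry of $L$, so $n_0 = n^{\le 0}(L|_S) = n^{\le 0}(D)$, and Proposition \ref{prop-counting-k-0-1}(i) yields $k_0^{\le 0} \ge n^{\le 0}(D)$; part (ii) gives $k_0^{\le 0} = n^-(D)$ when $D$ is nonsingular.

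The main obstacle is the bookkeeping in the third step: one must check carefully that $(JL)^{-1}(\ker L)$ is exactly $\ker L \oplus S$ with $S = L^{-1}(\mathrm{span}\{g_j\})$ — in particular that no spurious directions arise from the fact that $L^{-1}$ is only defined on $R(L)$ and that $\ker \partial_x = \{0\}$ matters here (this is why $m \le l$ and why $D$ is genuinely $m \times m$ rather than $l \times l$) — and that the closedness of $(JL)^{-1}(\ker L)$ claimed in Proposition \ref{prop-counting-k-0-1} is available, which it is since the Proposition is stated under (\textbf{H1}--\textbf{3}). The spectral/functional-analytic verifications in steps one and two are routine given the symbol and potential assumptions; everything else is a direct citation. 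For the Bullough--Dodd application one then only needs to observe that $L = -c\,\partial_x^2 + a - f'(u_c)$ with $f$ satisfying \eqref{assumption-f} meets \eqref{assumption-symbol} (with $s=1$) and \eqref{assumption-potential} (with $\delta_0 = a$), which is immediate.
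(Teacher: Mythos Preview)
Your proposal is correct and follows essentially the same route as the paper: verify that $(J,L,X)=(\partial_x^{-1},\,\mathcal M+V,\,H^s(\mathbf R))$ satisfies (\textbf{H1})--(\textbf{H3}), then cite Theorem~\ref{theorem-counting} for the index identity and Proposition~\ref{prop-counting-k-0-1} (via Remark~\ref{R:counting-k-0-1}) for the $D$-matrix estimates, after identifying $(JL)^{-1}(\ker L)/\ker L$ with $\mathrm{span}\{L^{-1}g_j\}$. The only superfluous ingredients in your outline are the references to Proposition~\ref{P:non-deg} and Corollaries~\ref{C:counting-k-0-2-a}--\ref{C:counting-k-0-2-b}: since $\ker J=\{0\}$ here, the $\overline{R(J)}$--$\ker(JL)$ splitting plays no role and Proposition~\ref{prop-counting-k-0-1} alone suffices.
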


For many applications, particularly the generalized Bullough--Dodd equation
where$\ \mathcal{M=-}c\mathcal{\partial}_{x}^{2}$ $\left(  c>0\right)  $, $L$
has at most one dimensional kernel and negative eigenspace. In this case, we
get a more explicit instability criterion.

\begin{corollary}
\label{corollary-prime}i) Assume $n^{-}\left(  L\right)  =1$ and $\ker
L=\left\{  \psi_{0}\right\}  $. Then there is a positive eigenvalue of
$\partial_{x}^{-1}L$ when $\left\langle L^{-1}\psi_{0}^{\prime},\psi
_{0}^{\prime}\right\rangle >0$.

ii) Assume $n^{-}\left(  L\right)  \le1$ and there exists $0\ne\psi_{0}\in\ker
L$ such that $\left\langle L^{-1}\psi_{0}^{\prime},\psi_{0}^{\prime
}\right\rangle \leq0$, then $\partial_{x}^{-1}L$ has no unstable eigenvalues.
\end{corollary}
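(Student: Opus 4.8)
The plan is to read off both statements from the index formula of the preceding theorem (equivalently Theorem~\ref{theorem-counting}) together with Proposition~\ref{prop-counting-k-0-1} and the parity count in Corollary~\ref{cor-instability-index}, after making the subspace $(JL)^{-1}(\ker L)/\ker L$ completely explicit for the symplectic operator $J=\partial_x^{-1}$. The one genuinely new ingredient is this identification: for $u\in X=H^{s}(\mathbf{R})$ one has $JLu\in\ker L$ precisely when $Lu\in R(\partial_x)=D(J)$ and $\partial_x^{-1}Lu\in\ker L$, i.e.\ when $Lu=\phi'$ for some $\phi\in\ker L$; since moreover $Lu\in R(L)$ automatically, this says $Lu\in\operatorname{span}\{\psi_1',\dots,\psi_l'\}\cap R(L)=\operatorname{span}\{g_1,\dots,g_m\}$. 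Using $L^{*}=L$ and the closedness of $R(L)$ (from (\textbf{H2.b})), the induced quadratic form $\langle L\cdot,\cdot\rangle$ on $(JL)^{-1}(\ker L)/\ker L$ is, in the basis $[L^{-1}g_1],\dots,[L^{-1}g_m]$, exactly the matrix $D=\big(\langle L^{-1}g_i,g_j\rangle\big)$; in particular $n_0=n^{\le 0}(D)$ in the notation of Proposition~\ref{prop-counting-k-0-1}.

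For part (i): here $\ker L=\operatorname{span}\{\psi_0\}$, so $l=1$. Since $\psi_0\in H^{s}(\mathbf{R})$ decays, $\int\psi_0'\psi_0=\frac{1}{2}\int(\psi_0^2)'=0$, hence $\psi_0'\perp\ker L$ and therefore $\psi_0'\in R(L)=(\ker L)^{\perp}$; also $\psi_0'=\partial_x\psi_0\in R(\partial_x)=D(J)$ by the regularity of $\psi_0$, and $\psi_0'\ne 0$ because a nonzero constant is not in $H^{s}(\mathbf{R})$. Thus $m=1$, $g_1=\psi_0'$, and $(JL)^{-1}(\ker L)/\ker L$ is one-dimensional with form value $\langle L^{-1}\psi_0',\psi_0'\rangle>0$; so $D$ is nonsingular with $n^{-}(D)=0$, and Proposition~\ref{prop-counting-k-0-1}(ii) gives $k_0^{\le 0}=n^{-}(D)=0$. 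Plugging into the index formula, $k_r+2k_c+2k_i^{\le 0}=n^{-}(L)-k_0^{\le 0}=1$, which is odd, so Corollary~\ref{cor-instability-index}(ii) forces $k_r=1$ (and $k_c=k_i^{\le 0}=0$), i.e.\ $\partial_x^{-1}L$ has a positive eigenvalue.

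For part (ii): if $n^{-}(L)=0$ the index formula reads $k_r+2k_c+2k_i^{\le 0}+k_0^{\le 0}=0$, so in particular $k_r=k_c=0$ and there is nothing to prove. If $n^{-}(L)=1$, the hypothesis presupposes $\psi_0'\in R(L)$, and then the class $[L^{-1}\psi_0']$ in $(JL)^{-1}(\ker L)/\ker L$ is nonzero, since $L(L^{-1}\psi_0')=\psi_0'\ne 0$ (again $\psi_0\ne 0$ cannot be constant), while its form value is $\langle L^{-1}\psi_0',\psi_0'\rangle\le 0$ by assumption; hence $n_0\ge 1$, and Proposition~\ref{prop-counting-k-0-1}(i) gives $k_0^{\le 0}\ge n_0\ge 1$. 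The index formula $k_r+2k_c+2k_i^{\le 0}+k_0^{\le 0}=1$ then forces $k_r=k_c=0$, so $\partial_x^{-1}L$ has no unstable eigenvalues.

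The main technical point will be the bookkeeping around the unbounded, non-invertible operator $J=\partial_x^{-1}$ in the identification of $(JL)^{-1}(\ker L)$: one must confirm that $\psi_0'$ lies in $D(J)=R(\partial_x)$ and in $R(L)$, that the quotient form is genuinely represented by $D$ (or, for part (ii), simply exhibit one explicit nonpositive direction inside a subspace complementary to $\ker L$, as in Remark~\ref{R:counting-k-0-1}), and that $\dim\ker L<\infty$ so that (\textbf{H1--3}), and hence Theorem~\ref{theorem-counting} and Proposition~\ref{prop-counting-k-0-1}, really apply in $H^{s}(\mathbf{R})$ — all of which is already arranged in the preceding discussion of this subsection. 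Once that identification is in hand, the rest is an immediate consequence of the index formula plus a parity count, with no further computation needed.
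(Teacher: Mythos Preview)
Your proof is correct and follows exactly the approach the paper intends: the paper states this corollary without proof, treating it as an immediate specialization of the preceding index theorem together with Proposition~\ref{prop-counting-k-0-1}, and your argument is precisely that specialization made explicit. The key identification you carry out---that $(JL)^{-1}(\ker L)/\ker L$ is represented by $L^{-1}\psi_0'$ with form value $\langle L^{-1}\psi_0',\psi_0'\rangle$, and hence that the $1\times 1$ matrix $D$ of the preceding theorem is this single number---is exactly what the paper's setup of $D=\big(\langle L^{-1}g_i,g_j\rangle\big)$ reduces to in the one-dimensional kernel case; the parity/counting step is then immediate from the index formula.
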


\begin{remark}
\label{remark-stefanov-paper}The above Corollary was obtained in
\cite{stefnov-L-prime16} under some additional assumptions. In
\cite{stefnov-L-prime16}, Corollary \ref{corollary-prime} i) was proved under
the following two assumptions:

C1) $\left(  f_{0},g_{0}\right)  \neq0$, where $f_{0}$ is the eigenfunction of
$L$ with the negative eigenvalue and $g_{0}^{\prime}\in\ker L$.

C2) For any $\lambda\in\mathbf{R}$,
\[
\left\Vert P_{+}\left(  L-\lambda\partial_{x}\right)  ^{-1}P_{+}v\right\Vert
_{H^{1}}\leq C\left(  \lambda\right)  \left\Vert v\right\Vert _{L^{2}},
\]
where $P_{+}$ is the projection to the positive space of $L$ and $C\left(
\lambda\right)  $ is bounded on compact sets. \newline The proof in
\cite{stefnov-L-prime16} is by constructing Evans-like functions. Corollary
\ref{corollary-prime} ii) was proved in \cite{stefnov-L-prime16} under the
following additional assumptions:

D1) $\ker L=\left\{  \psi_{0}\right\}  $ and $\left\langle L^{-1}\psi
_{0}^{\prime},\psi_{0}^{\prime}\right\rangle <0$;

D2) For any $\lambda\notin i\mathbf{R}$, the operator $L-\lambda\partial_{x}$
has zero index and the equation $\left(  L-\lambda\partial_{x}\right)
f=g\ $satisfies certain Fredholm alternative properties (see (12)(13)(14) in
\cite{stefnov-L-prime16});

D3) The symbol $\alpha\left(  \xi\right)  \ $ of the leading order part
$\mathcal{M}$ of $L$ satisfies
\[
\alpha\left(  \xi\right)  \thickapprox\left\vert \xi\right\vert ^{2s}\ \left(
s>\frac12\right)  \text{, when\ }\left\vert \xi\right\vert \text{\ is large.}%
\]
The proof in \cite{stefnov-L-prime16} is by Lyapunov--Schmidt reduction
arguments and the index theorem in \cite{kapitula-et-04}.
\end{remark}

For the Bullough--Dodd equation (\ref{eqn-BD}), $\ker L=\left\{
u_{c,x}\right\}  $ where $L$ is defined by (\ref{defn-L-BD}). Since the
momentum of the problem is $\frac{1}{2}\int\left(  u_{c}^{\prime}\right)
^{2}dx$, by similar computation as in (\ref{KDV-index}), it was shown in
\cite{stefnov-L-prime16} that
\[
\left\langle L^{-1}u_{c}^{\prime\prime},u_{c}^{\prime\prime}\right\rangle
=-\frac{1}{2}\partial_{c}\int\left(  u_{c}^{\prime}\right)  ^{2}dx=-\frac
{1}{2}\partial_{c}\left[  c^{-\frac{1}{2}}\int\left(  u_{1}^{\prime}\right)
^{2}dx\right]  >0\text{,}%
\]
where $u_{c}=u_{1}\left(  x/\sqrt{c}\right)  $ and $-u_{1}^{\prime\prime
}+au_{1}-f\left(  u_{1}\right)  =0$. So we get the following

\begin{theorem}
Assume $f\left(  u\right)  $ is a smooth function satisfying
(\ref{assumption-f}) and the traveling wave solution $u_{c}\left(
x-ct\right)  \ $to (\ref{eqn-BD}) exists with $c>0$ and $u_{c}(x)\rightarrow0$
as $|x|\rightarrow\infty$, then $u_{c}$ is linearly unstable.
\end{theorem}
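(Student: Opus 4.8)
The plan is to recast the linearization at $u_c$ as a Hamiltonian system $JLu=\lambda u$ inside the framework of Section \ref{section-index theorem}, to identify $\ker L$, $n^-(L)$ and the zero-mode index $k_0^{\le0}$ explicitly, and then to invoke the parity part of the index theorem (Corollary \ref{corollary-prime}, equivalently Corollary \ref{cor-instability-index}) to force the existence of a positive eigenvalue.

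First I would set up the operators. In the traveling frame $(x+ct,t)$ the linearized equation \eqref{eqn-LBD} reads $u_{tx}=Lu$ with $L=-c\,d^2/dx^2+a-f'(u_c)$, so with $J=\partial_x^{-1}$ the eigenvalue problem $Lu=\lambda u'$ is exactly the Hamiltonian form $JLu=\lambda u$. Working in $X=H^1(\mathbf R)$, one checks as in the subsection above that $J:X^{*}\to X$ is densely defined with $J^{*}=-J$, while $L:X\to X^{*}$ is bounded and symmetric. Since $u_c(x)\to0$ and $f'(u_c)\to0$ as $|x|\to\infty$, the self-adjoint operator on $L^2$ associated with $\langle L\cdot,\cdot\rangle$ is a Schr\"odinger-type operator whose essential spectrum lies in $[a,\infty)$ with $a>0$; hence $0$ is isolated in its spectrum, $R(L)$ is closed, and $n^-(L)<\infty$. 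Thus (\textbf{H1})--(\textbf{H2}) hold, and (\textbf{H3}) follows from Remark \ref{R:H3} once $\dim\ker L<\infty$ is established.

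Next I would pin down $\ker L$ and $n^-(L)$. Differentiating the profile equation $-cu_c''+au_c-f(u_c)=0$ in $x$ gives $Lu_c'=0$, so $u_c'\in\ker L$. Because the profile equation is a conservative scalar second-order ODE and $u_c$ is a homoclinic orbit to the saddle $0$, the orbit is symmetric about $\{u'=0\}$ in the phase plane; hence $u_c'$ vanishes exactly once on $\mathbf R$. By Sturm--Liouville theory $u_c'$ is then the (simple) second eigenfunction of the associated Schr\"odinger operator, which forces $\ker L=\mathrm{span}\{u_c'\}$ and $n^-(L)=1$. This Sturm--Liouville spectral-counting step is the one I expect to demand the most care, since it is where the geometric structure of the solitary wave actually enters the argument.

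Finally I would compute the zero-mode index. Differentiating the profile equation in $c$ yields $L\,\partial_c u_c=u_c''$, and since $\int u_c''\,u_c'\,dx=\tfrac12[(u_c')^2]_{-\infty}^{\infty}=0$ we get $u_c''\in R(L)$ and $L^{-1}u_c''=\partial_c u_c$ modulo $\ker L$. Hence, integrating by parts, $\langle L^{-1}u_c'',u_c''\rangle=\int(\partial_c u_c)\,u_c''\,dx=-\tfrac12\,\partial_c\!\int(u_c')^2\,dx$, and with the scaling $u_c(x)=u_1(x/\sqrt c)$, $-u_1''+au_1-f(u_1)=0$, one has $\int(u_c')^2\,dx=c^{-1/2}\!\int(u_1')^2\,dy$, so $\langle L^{-1}u_c'',u_c''\rangle=\tfrac14 c^{-3/2}\!\int(u_1')^2\,dy>0$. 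Therefore the matrix $D$ of Corollary \ref{corollary-prime} is the nonzero scalar $\langle L^{-1}u_c'',u_c''\rangle>0$: it is nonsingular with $n^-(D)=0$, and by Proposition \ref{prop-counting-k-0-1} together with the identification $(JL)^{-1}(\ker L)=\ker L\oplus\mathrm{span}\{\partial_c u_c\}$ we obtain $k_0^{\le0}=n^-(D)=0$. Then the index formula $k_r+2k_c+2k_i^{\le0}+k_0^{\le0}=n^-(L)=1$ of Theorem \ref{theorem-counting} forces $k_r$ to be odd, hence $k_r\ge1$: $\partial_x^{-1}L$ has a positive eigenvalue, so $u_c$ is linearly unstable (in fact Corollary \ref{cor-instability-index}(ii) yields exactly one simple stable--unstable pair). $\square$
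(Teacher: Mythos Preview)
Your proof is correct and follows essentially the same approach as the paper: you verify the hypotheses of Corollary \ref{corollary-prime}(i) by checking $\ker L=\mathrm{span}\{u_c'\}$, $n^-(L)=1$, and computing $\langle L^{-1}u_c'',u_c''\rangle=-\tfrac12\partial_c\int(u_c')^2\,dx>0$ via the scaling $u_c(x)=u_1(x/\sqrt c)$, exactly as the paper does. Your version is slightly more detailed in that you spell out the Sturm--Liouville argument for $n^-(L)=1$ and the single sign change of $u_c'$, which the paper leaves implicit.
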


In \cite{stefnov-L-prime16}, the above Theorem was proved for smooth and
convex function $f$. Their additional convexity assumption on $f$ was used to
verify the condition C1) in Remark \ref{remark-stefanov-paper}.

Besides the above linear instability result, Theorem \ref{theorem-dichotomy}
can be applied to give the exponential trichotomy for the linearized equation
(\ref{eqn-LBD}). This will be useful for the construction of invariant
manifolds of (\ref{eqn-BD}) near the unstable traveling wave orbit.

\subsection{Stability of steady flows of 2D Euler equation}

\label{SS:Euler}

We consider the 2D Euler equations
\begin{equation}
\partial_{t}u+\left(  u\cdot\nabla u\right)  +\nabla p=0, \label{eqn-Euler}%
\end{equation}

\begin{equation}
\nabla\cdot u=0, \label{eqn-div-free}%
\end{equation}
in a bounded domain $\Omega\subset\mathbf{R}^{2}$ with smooth boundary
$\partial\Omega$ composed of a finite number of connected components
$\Gamma_{i}$ . The boundary condition is
\[
u\cdot n=0\text{ \ \ \ on\ \ }\partial\Omega,
\]
For simplicity, first we consider $\Omega$ to be simply connected and
$\partial\Omega=\Gamma$. The vorticity form of (\ref{eqn-Euler}%
)-(\ref{eqn-div-free}) is given by%

\begin{equation}
\partial_{t}\omega+\psi_{y}\partial_{x}\omega-\psi_{x}\partial_{y}\omega=0,
\label{vorticity}%
\end{equation}
where $\psi$ is the stream function, then $\omega\equiv-\Delta\psi
\equiv-\left(  \partial_{x}^{2}+\partial_{y}^{2}\right)  \psi$ is the
vorticity and $u=\nabla^{\perp}\psi=\left(  \psi_{y},-\psi_{x}\right)  $ is
the velocity. The boundary condition associated with (\ref{vorticity}) is
given by $\psi=0$ on $\partial\Omega$. A stationary solution of
(\ref{vorticity}) is given by a stream function $\psi_{0}$ satisfying%
\begin{equation}
-\psi_{0_{y}}\partial_{x}\omega_{0}+\psi_{0_{x}}\partial_{y}\omega_{0}=0,
\label{steady}%
\end{equation}
here $\omega_{0}\equiv-\Delta\psi_{0}$ and $u_{0}=\nabla^{\perp}\psi_{0}\ $are
the associated vorticity and velocity. Suppose $\psi_{0}$ satisfy the
following elliptic equation
\[
-\Delta\psi_{0}=g\left(  \psi_{0}\right)
\]
with boundary condition $\psi_{0}=0$ on $\partial\Omega$, where $g$ is some
differentiable function. Then $\omega_{0}\equiv-\Delta\psi_{0}=g\left(
\psi_{0}\right)  $ is a steady solution of equation (\ref{vorticity}). The
linearized equation near $\omega_{0}$ is
\begin{equation}
\partial_{t}\omega+\psi_{0_{y}}\partial_{x}\omega-\psi_{0_{x}}\partial
_{y}\omega=-\psi_{y}\partial_{x}\omega_{0}+\psi_{x}\partial_{y}\omega_{0},
\label{1stlinear}%
\end{equation}
with $\omega=-\Delta\psi$ and the boundary condition $\psi\ |_{\partial\Omega
}=0$. The above equation can be written as
\begin{equation}
\partial_{t}\omega+u_{0}\cdot\nabla\omega-g^{\prime}\left(  \psi_{0}\right)
u_{0}\cdot\nabla\psi=0. \label{eqn-linearized-vorticity}%
\end{equation}

Below we consider the case when $g^{\prime}>0$ which appeared in many
interesting cases such as mean field equations (e.g. \cite{caglioti-et-1}
\cite{caglioti-et-2}). Then (\ref{eqn-linearized-vorticity}) has the following
Hamiltonian structure
\begin{equation}
\partial_{t}\omega=JL\omega,\;\text{where }J=-g^{\prime}\left(  \psi
_{0}\right)  u_{0}\cdot\nabla,\quad L=\frac{1}{g^{\prime}\left(  \psi
_{0}\right)  }-\left(  -\Delta\right)  ^{-1}.\label{E:LEuler}%
\end{equation}
We take the energy space of the linearized Euler \eqref{E:LEuler} as the
weighted space
\[
X=\left\{  \omega\ |\Vert\omega\Vert_{X}<\infty\right\}  ,\;\text{ where
}\left\Vert \omega\right\Vert _{X}=\left(  \int\int_{\Omega}\frac{\left\vert
\omega\right\vert ^{2}}{g^{\prime}\left(  \psi_{0}\right)  }\ dxdy\right)
^{\frac{1}{2}}.
\]
If $g^{\prime}$ has a positive lower bound, $X$ is equivalent to $L^{2}%
(\Omega)$. In general, $\omega\in X$ implies $\omega\in L^{2}$ and $\nabla
\psi\in L^{2}$. Therefore, $\langle L\cdot,\cdot\rangle$ defines a bounded
symmetric quadratic form on $X$ and $L:X\rightarrow X^{\ast}$ is a bounded
symmetric operator. Moreover, it is easy to see that
\[
S:L^{2}\rightarrow X,\quad S\omega=g^{\prime}(\psi_{0})^{\frac{1}{2}}\omega
\]
defines an isometry. As $f(\psi_{0})\cdot$ and $u_{0}\cdot\nabla$ are
commutative for any $f$, we have
\[
\tilde{J}\triangleq S^{-1}J(S^{\ast})^{-1}=u_{0}\cdot\nabla:(L^{2})^{\ast
}\rightarrow L^{2}%
\]
is anti-self-dual due to $\nabla\cdot u_{0}=0$, from which we obtain $J^{\ast
}=-J$ and thus (\textbf{H1}) is satisfied by $J$ and $X$. Moreover, since
$\frac{1}{g^{\prime}\left(  \psi_{0}\right)  }\cdot:X\rightarrow X^{\ast}$ is
an isomorphism and $(-\Delta)^{-1}$ is compact, we have $\dim\ker L<\infty$
and thus (\textbf{H3}) is satisfied. Note that the closed subspace $\ker
J\subset X^{\ast}$ is infinite dimensional since
\[
\ker J\supset\left\{  h\left(  \psi_{0}\right)  ,\text{ }h\in C^{1}\right\}  .
\]
Let $\tilde{P}:(L^{2})^{\ast}\rightarrow\ker\tilde{J}$ be the orthogonal
projection and define
\[
P=(S^{\ast})^{-1}\tilde{P}S^{\ast}:X^{\ast}\rightarrow\ker J.
\]
Clearly, $P$ is a bounded linear operator on $X^{\ast}$ and it defines a
projection on $X^{\ast}$, but orthogonal in the $L^{2}$ sense. In fact, due to
the commutativity between $f(\psi_{0})\cdot$ and $u_{0}\cdot\nabla$ for any
$f$, operators $P$ and $\tilde{P}$ take the same form shown in
(\cite{lin-cmp-04})
\[
P\phi\ |_{\gamma_{i}\left(  c\right)  }=\frac{\oint_{\gamma_{i}\left(
c\right)  }\frac{\phi\left(  x,y\right)  }{\left\vert \nabla\psi
_{0}\right\vert }dl}{\oint_{\gamma_{i}\left(  c\right)  }\frac{1}{\left\vert
\nabla\psi_{0}\right\vert }dl},
\]
where $c$ is in the range of $\psi_{0}$ and $\gamma_{i}\left(  c\right)  $ is
a branch of $\left\{  \psi_{0}=c\right\}  $. As in (\cite{lin-cmp-04}), define
operator $A:H_{0}^{1}\cap H^{2}\left(  \Omega\right)  \rightarrow L^{2}\left(
\Omega\right)  $ by
\[
A\phi=-\Delta\phi-g^{\prime}\left(  \psi_{0}\right)  \phi+g^{\prime}\left(
\psi_{0}\right)  P\phi.
\]
We also denote the operator
\[
A_{0}=-\Delta-g^{\prime}\left(  \psi_{0}\right)  :H_{0}^{1}\cap
H^{2}\left(  \Omega\right)  \rightarrow L^{2}\left(  \Omega\right)  \ .
\]
Clearly, $A,A_{0}$ are self-adjoint with compact resolvents and thus with only
discrete spectra. The next lemma studies the spectral information of $L$ on
the weighted space $X$.

Recall that for any subspace $Y \in X$, $\langle L\cdot, \cdot\rangle$ also
defines a bounded symmetric quadratic form on the quotient space $Y/(Y
\cap\ker L)$.

\begin{lemma}
\label{lemma-euler-spectral}i) The assumption (\textbf{H2}) is satisfied by
$\left\langle L\cdot,\cdot\right\rangle $ on $X$, with $n^{-}\left(  L\right)
=n^{-}\left(  A_{0}\right)  $ and $\dim\ker L=\dim\ker A_{0}$.

ii) The quadratic form $\left\langle L\cdot,\cdot\right\rangle $ is
non-degenerate on $\overline{R(J)} / \big(\overline{R(J)} \cap\ker L\big)$ if
and only if $\ker A\subset\ker A_{0}$. Moreover,
\begin{equation}
n^{-}\Big( L|_{\overline{R(J)} / \big(\overline{R(J)} \cap\ker L\big)}
\Big) =n^{-}\big( L|_{\overline{R\left(  J\right)  } }\big) =n^{-}\left(
A\right)  \text{. } \label{equality-negative-modes-Euler}%
\end{equation}

\end{lemma}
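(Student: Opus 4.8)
The plan is to push the whole analysis into the self-adjoint picture on the weighted space $X$ via the Riesz representation: there $L$ becomes a compact perturbation of the identity, and $\ker L$, $n^-(L)$ (and their analogues on $\overline{R(J)}$) can be read off by comparison with the second order operators $A_0$ and $A$ on $H_0^1(\Omega)$.

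\emph{Part (i).} First I would identify the Riesz representation $\mathbb L:X\to X$, characterized by $(\mathbb L\omega_1,\omega_2)_X=\langle L\omega_1,\omega_2\rangle$; a one-line computation gives $\mathbb L=I-C$ with $C=g'(\psi_0)(-\Delta)^{-1}$. Because $\omega\in X$ implies $\omega\in L^2$ and $(-\Delta)^{-1}$ maps $L^2$ into $H^2$, Rellich compactness shows $C$ is compact on $X$; it is $(\cdot,\cdot)_X$-self-adjoint since the weight cancels and $\langle\omega_1,(-\Delta)^{-1}\omega_2\rangle$ is symmetric. Hence $\sigma(\mathbb L)$ is discrete except for the accumulation point $1$, so $0$ is isolated in $\sigma(\mathbb L)$, $\dim\ker\mathbb L<\infty$, and $\mathbb L$ has only finitely many negative eigenvalues; by Remark \ref{R:assumptions} this is exactly (H2). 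Next, $\mathbb L\omega=0$ is equivalent to $-\Delta\psi=g'(\psi_0)\psi$ for $\psi:=(-\Delta)^{-1}\omega$, i.e. $A_0\psi=0$, so $\omega\mapsto\psi$ is a linear isomorphism $\ker L\to\ker A_0$ and $\dim\ker L=\dim\ker A_0$. Finally $n^-(L)=n^-(\mathbb L)$ equals the number of eigenvalues of $C$ exceeding $1$; rewriting $C\omega=\kappa\omega$ in terms of $\psi$ turns it into the generalized eigenvalue problem $-\Delta\psi=\lambda\,g'(\psi_0)\psi$ on $H_0^1$ with $\lambda=1/\kappa$, so $n^-(L)=\#\{n:\lambda_n<1\}$, and the min-max characterization of the $\lambda_n$ together with Lemma \ref{L:Morse-Index} identifies this number with the largest dimension of a subspace of $H_0^1$ on which $\int_\Omega|\nabla\psi|^2<\int_\Omega g'(\psi_0)\psi^2$, which is $n^-(A_0)$ since $\langle A_0\psi,\psi\rangle=\int_\Omega|\nabla\psi|^2-\int_\Omega g'(\psi_0)\psi^2$.

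\emph{Part (ii).} Since $J^*=-J$, one has $\overline{R(J)}=\{\omega\in X:\langle\phi,\omega\rangle=0\ \forall\phi\in\ker J\}$, and as $P$ is (the pullback of) the orthogonal projection onto $\ker J$ and is self-adjoint for the plain $L^2$ pairing, this gives $\overline{R(J)}=\ker(P|_X)$, with $P|_X$ the streamline-averaging projection (also $(\cdot,\cdot)_X$-self-adjoint and idempotent). By Lemma \ref{lemma-L-form-preserve} and Lemma \ref{L:InvariantSubS} the subspace $\overline{R(J)}$ is invariant under $e^{tJL}$, and by Lemma \ref{L:decomJ} the restricted triple $\big(\overline{R(J)},L|_{\overline{R(J)}},J|_{\overline{R(J)}}\big)$ again satisfies (H1--3). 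Repeating the computation of part (i) inside $\overline{R(J)}$: the Riesz representation of $L|_{\overline{R(J)}}$ is $I-C_R$ with $C_R=(I-P)\,g'(\psi_0)(-\Delta)^{-1}|_{\ker P}$, again compact and self-adjoint, and since $g'(\psi_0)$ commutes with $P$, the equation $C_R\omega=\kappa\omega$ becomes $-\Delta\psi=\frac1\kappa g'(\psi_0)(I-P)\psi$ for $\psi:=(-\Delta)^{-1}\omega$. The eigenvalue $\kappa=1$ therefore corresponds to $-\Delta\psi=g'(\psi_0)(I-P)\psi$, i.e. $A\psi=0$, so $\omega\mapsto\psi$ is an isomorphism from the radical $\ker(L|_{\overline{R(J)}})$ onto $\ker A$, while the same computation identifies $\overline{R(J)}\cap\ker L$ with $\{\psi\in\ker A_0:P\psi=0\}$; this latter set is always contained in $\ker A$, with equality precisely when $\ker A\subset\ker A_0$. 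Hence $\langle L\cdot,\cdot\rangle$ is non-degenerate on $\overline{R(J)}/(\overline{R(J)}\cap\ker L)$ iff $\ker A\subset\ker A_0$. The Morse index is unchanged on passing to the quotient by $\overline{R(J)}\cap\ker L$ (a subspace of that radical), and the same generalized-eigenvalue min-max as in (i), now for $-\Delta\psi=\lambda\,g'(\psi_0)(I-P)\psi$, gives $n^-(L|_{\overline{R(J)}})=n^-(A)$, using $\langle A\psi,\psi\rangle=\int_\Omega|\nabla\psi|^2-\int_\Omega g'(\psi_0)\big((I-P)\psi\big)^2$.

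\emph{Main obstacle.} The delicate point is the bookkeeping in part (ii): identifying $\overline{R(J)}$ precisely with $\ker(P|_X)$ (reconciling the projection $P:X^*\to\ker J$ used in the definition with its action on $X$ through the weighted duality), checking that the compressed operator $L|_{\overline{R(J)}}$ --- a priori the compression $(I-P)\big(\frac1{g'(\psi_0)}-(-\Delta)^{-1}\big)(I-P)$ --- has Riesz representation exactly matching $A$ under the correspondence $\omega\leftrightarrow(-\Delta)^{-1}\omega$, and running the min-max for the generalized eigenvalue problem whose weight $g'(\psi_0)(I-P)$ is only positive semi-definite. The structural part of (H2), the kernel identifications, and the ``iff'' then follow routinely once the compact-perturbation picture and the commutation of $P$ with multiplication by functions of $\psi_0$ are in hand.
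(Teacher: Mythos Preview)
Your approach is correct and genuinely different from the paper's. The paper proceeds by direct quadratic-form inequalities obtained by completing the square: writing
\[
\langle L\omega,\omega\rangle=\int\!\!\int_\Omega\Big(\tfrac{\omega}{\sqrt{g'(\psi_0)}}-\psi\sqrt{g'(\psi_0)}\Big)^2\,dxdy+(A_0\psi,\psi)\ge (A_0\psi,\psi),
\]
and, conversely, testing with $\tilde\omega=g'(\psi_0)\psi$ to get $(A_0\psi,\psi)\ge\langle L\tilde\omega,\tilde\omega\rangle$; the two one-sided bounds pin down $n^{\le 0}(L)=n^{\le 0}(A_0)$ and hence $n^-(L)=n^-(A_0)$. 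Part (ii) is proved the same way with the pair $(A,(I-P)\psi)$ replacing $(A_0,\psi)$, using the explicit identity \eqref{E:A(omega)} to identify the radical of $L|_{\overline{R(J)}}$ with $(-\Delta)\ker A$ and derive the ``if and only if''. Your route---Riesz representation $\mathbb L=I-C$ with $C=g'(\psi_0)(-\Delta)^{-1}$ compact, then transforming $C\omega=\kappa\omega$ into the generalized eigenvalue problem $-\Delta\psi=\lambda g'(\psi_0)\psi$ and invoking min-max---reaches the same conclusions and makes the spectral picture (isolated kernel, finitely many negative directions) transparent from the start. The paper's method is more elementary and entirely self-contained: it never needs compactness of $C$, spectral theory of $\mathbb L$, or a min-max for a generalized eigenvalue problem, and in part (ii) it completely avoids the semi-definite weight issue you correctly flag as the main obstacle. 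Your approach, on the other hand, is more structural and would transfer more readily to related problems; the bookkeeping you list (identification of $\overline{R(J)}$ with $\ker(P|_X)$, the compressed Riesz operator, and the min-max with weight $g'(\psi_0)(I-P)$) is all correct, though the last step is where one has to be most careful---the paper's two explicit test-function inequalities \eqref{inequality-L>A}--\eqref{inequality-A-L} give a clean way to bypass that subtlety entirely.
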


\begin{proof}
i) For any $\omega\in X,$ we have
\begin{align}
\left\langle L\omega,\omega\right\rangle  &  =\int\int_{\Omega}\left\{
\frac{\omega^{2}}{g^{\prime}\left(  \psi_{0}\right)  }-\left\vert \nabla
\psi\right\vert ^{2}\right\}  dxdy\label{estimate-inequality-L>A-0}\\
&  =\int\int_{\Omega}\left\{  \frac{\omega^{2}}{g^{\prime}\left(  \psi
_{0}\right)  }-2\psi\omega+\left\vert \nabla\psi\right\vert ^{2}\right\}
dxdy\nonumber\\
&  =\int\int_{\Omega}\left\{  \left(  \frac{\omega}{\sqrt{g^{\prime}\left(
\psi_{0}\right)  }}-\psi\sqrt{g^{\prime}\left(  \psi_{0}\right)  }\right)
^{2}-g^{\prime}\left(  \psi_{0}\right)  \psi^{2}+\left\vert \nabla
\psi\right\vert ^{2}\right\}  dxdy\nonumber\\
&  \geq\int\int_{\Omega}\left[  \left\vert \nabla\psi\right\vert
^{2}-g^{\prime}\left(  \psi_{0}\right)  \psi^{2}\ \right]  dxdy=\left(
A_{0}\psi,\psi\right)  ,\nonumber
\end{align}
where $\psi=\left(  -\Delta\right)  ^{-1}\omega$. Recall that $n^{\leq
0}\left(  L\right)  $ and $n^{\leq0}\left(  A_{0}\right)  $ denote the maximal
dimensions of subspaces where the quadratic forms $\langle L\cdot,\cdot
\rangle$ and $(A_{0}\cdot,\cdot)$ are nonpositive. Let
\[
\left\{  \psi_{1},\cdots,\psi_{l}\right\}  ,\ \ \ l=n^{\leq0}\left(
A_{0}\right)  ,
\]
be linearly independent  eigenfunctions associated to nonpositive eigenvalues of $A_{0}$.
Define the space $Y_{1}\subset X$ by
\[
Y_{1}=\left\{  \omega\in X\ |\ \int_{\Omega}\psi_{j}\left(  -\Delta\right)
^{-1}\omega=0,\;1\leq j\leq l\right\}  .
\]
Then for any $\omega\in Y_{1}$, we have
\[
\left(  A_{0}\psi,\psi\right)  \geq\delta\left\Vert \psi\right\Vert _{H^{1}%
}^{2},\text{ for some }\delta>0.
\]
So by (\ref{estimate-inequality-L>A-0}), for any $\omega\in Y_{1}$,
\begin{align*}
\left\langle L\omega,\omega\right\rangle  &  =\varepsilon\int\int_{\Omega
}\left\{  \frac{\omega^{2}}{g^{\prime}\left(  \psi_{0}\right)  }-\left\vert
\nabla\psi\right\vert ^{2}\right\}  dxdy+\left(  1-\varepsilon\right)
\left\langle L\omega,\omega\right\rangle \\
&  \geq\varepsilon\int\int_{\Omega}\left\{  \frac{\omega^{2}}{g^{\prime
}\left(  \psi_{0}\right)  }-\left\vert \nabla\psi\right\vert ^{2}\right\}
dxdy+\left(  1-\varepsilon\right)  \delta\left\Vert \psi\right\Vert _{H^{1}%
}^{2}\\
&  \geq\varepsilon\int\int_{\Omega}\left\{  \frac{\omega^{2}}{g^{\prime
}\left(  \psi_{0}\right)  }+\left\vert \nabla\psi\right\vert ^{2}\right\}
dxdy,
\end{align*}
by choosing $\varepsilon>0$ such that $\left(  1-\varepsilon\right)
\delta>2\varepsilon$. Since the positive subspace $Y_{1}$ has co-dimension
$n^{\leq0}\left(  A_{0}\right)  $, this shows that the assumption
(\textbf{H2}) for $L\ $on $X\ $is satisfied and $n^{\leq0}\left(  L\right)
\leq n^{\leq0}\left(  A_{0}\right)  $.

To prove $n^{\leq0}\left(  L\right)  \geq n^{\leq0}\left(  A_{0}\right)  $,
let $\tilde{\omega}_{j}=g^{\prime}\left(  \psi_{0}\right)  \psi_{j}\in X$ and
$\tilde{\psi}_{j}=\left(  -\Delta\right)  ^{-1}\tilde{\omega}_{j}$,
$j=1,\ldots,l$ and then
\begin{align*}
\left(  A_{0}\psi_{j},\psi_{j}\right)   &  =\int\int_{\Omega}\left[
\left\vert \nabla\psi_{j}\right\vert ^{2}-g^{\prime}\left(  \psi_{0}\right)
\psi_{j}^{2}\ \right]  dxdy=\int\int_{\Omega}\left[  \left\vert \nabla\psi
_{j}\right\vert ^{2}-\frac{\tilde{\omega}_{j}^{2}}{g^{\prime}\left(  \psi
_{0}\right)  }\right]  dxdy\\
&  =\int\int_{\Omega}\left[  \left\vert \nabla\psi_{j}\right\vert ^{2}%
-2\tilde{\omega}_{j}\psi_{j}+\frac{\tilde{\omega}_{j}^{2}}{g^{\prime}\left(
\psi_{0}\right)  }\right]  dxdy\\
&  =\int\int_{\Omega}\left[  \left\vert \nabla\psi_{j}\right\vert ^{2}%
-2\nabla\psi_{j}\cdot\nabla\tilde{\psi}_{j}+\frac{\tilde{\omega}_{j}^{2}%
}{g^{\prime}\left(  \psi_{0}\right)  }\right]  dxdy\\
&  \geq\int\int_{\Omega}\left[  \frac{\tilde{\omega}_{j}^{2}}{g^{\prime
}\left(  \psi_{0}\right)  }-\left\vert \nabla\tilde{\psi}_{j}\right\vert
^{2}\right]  dxdy=\left\langle L\tilde{\omega}_{j},\tilde{\omega}%
_{j}\right\rangle ,
\end{align*}
and thus $n^{\leq0}\left(  L\right)  \geq n^{\leq0}\left(  A_{0}\right)  $.
Combined with above, this implies that $n^{\leq0}\left(  L\right)  =n^{\leq
0}\left(  A_{0}\right)  $. Since $\omega\in\ker L$ if and only if
$\psi=\left(  -\Delta\right)  ^{-1}\omega\in\ker A_{0}$, we obtain $\dim\ker
L=\dim\ker A_{0}$ and thus
\[
n^{-}\left(  L\right)  =n^{\leq0}\left(  L\right)  -\dim\ker L=n^{\leq
0}\left(  A_{0}\right)  -\dim\ker A_{0}=n^{-}\left(  A_{0}\right)  .
\]

ii) Note that, like $J$, the projection $P$ also commutes with $f(\psi
_{0})\cdot$ for any $f$. Therefore, $\omega\in\overline{R(J)}$ if and only if
$P\frac{\omega}{g^{\prime}(\psi_{0})}=0$. It implies that
\begin{equation}
(I-P)L\omega=\frac{\omega}{g^{\prime}\left(  \psi_{0}\right)  }-\left(
I-P\right)  \psi=\frac{1}{g^{\prime}(\psi_{0})}A\psi,\quad\forall\,\omega
\in\overline{R(J)},\label{E:A(omega)}%
\end{equation}
where $\psi=\left(  -\Delta\right)  ^{-1}\omega$, and thus
\begin{equation}
(-\Delta)\ker A=\overline{R(J)}\cap\ker\big((I-P)L\big)=\overline{R(J)}%
\cap\ker JL.\label{eqn-ker-L-R}%
\end{equation}
Since
\[
\ker\left(  \langle L\cdot,\cdot\rangle|_{\overline{R(J)}}\right)
=\overline{R(J)}\cap\ker JL,
\]
it immediately implies
\begin{equation}
\dim\ker\left(  \langle L\cdot,\cdot\rangle|_{\overline{R(J)}}\right)
=\dim\ker A.\label{equality-kernel-L-R(J)}%
\end{equation}

Suppose $\left\langle L\cdot,\cdot\right\rangle $ is degenerate on
$\overline{R(J)}/\big(\overline{R(J)}\cap\ker L\big)$, namely
\[
\exists\,\omega_{1}\in\overline{R(J)}\backslash\ker L\;\text{ such that
}\left\langle L\omega_{1},\omega\right\rangle =0\text{,\ }\forall\ \omega
\in\overline{R\left(  J\right)  }\text{. }%
\]
Such $\omega_{1}$ satisfies $0\neq L\omega_{1}\in\ker J$, or equivalently
$\left(  I-P\right)  L\omega_{1}=0$. Therefore, \eqref{eqn-ker-L-R} implies
$A\psi_{1}=0$. Since $A_{0}\psi_{1}\neq0$ due to $L\omega_{1}\neq0,$ we obtain
$\ker A\subsetneqq\ker A_{0}$. The converse can be proved similarly and the
first statement follows.

To prove (\ref{equality-negative-modes-Euler}), first we notice that for any
$\omega\in\overline{R\left(  J\right)  },$
\begin{align}
&  \left\langle L\omega,\omega\right\rangle =\int\int_{\Omega}\left\{  \left(
\frac{\omega}{\sqrt{g^{\prime}\left(  \psi_{0}\right)  }}-\psi\sqrt{g^{\prime
}\left(  \psi_{0}\right)  }\right)  ^{2}-g^{\prime}\left(  \psi_{0}\right)
\psi^{2}+\left\vert \nabla\psi\right\vert ^{2}\right\}  dxdy\nonumber\\
=  &  \int\int_{\Omega}[\left(  \frac{\omega}{\sqrt{g^{\prime}\left(  \psi
_{0}\right)  }}-\sqrt{g^{\prime}\left(  \psi_{0}\right)  }\left(  I-P\right)
\psi\right)  ^{2}+g^{\prime}\left(  \psi_{0}\right)  \left(  P\psi\right)
^{2}\nonumber\\
&  \ \ \ \ \ \ \ \ \ \ \ \ \ -g^{\prime}\left(  \psi_{0}\right)  \psi
^{2}+\left\vert \nabla\psi\right\vert ^{2}]\ dxdy\nonumber\\
\geq &  \int\int_{\Omega}\left\vert \nabla\psi\right\vert ^{2}-g^{\prime
}\left(  \psi_{0}\right)  \psi^{2}+g^{\prime}\left(  \psi_{0}\right)  \left(
P\psi\right)  ^{2}dxdy=\left(  A\psi,\psi\right)  . \label{inequality-L>A}%
\end{align}
Next, for any $\psi\in H_{0}^{1}$, let
\[
\tilde{\omega}=g^{\prime}\left(  \psi_{0}\right)  \left(  I-P\right)  \psi
\in\overline{R\left(  J\right)  }, \quad\tilde\psi= (-\Delta)^{-1}
\tilde\omega,
\]
then
\begin{align}
\left(  A\psi,\psi\right)   &  =\int\int_{\Omega}\left\vert \nabla
\psi\right\vert ^{2}-g^{\prime}\left(  \psi_{0}\right)  \left(  \left(
I-P\right)  \psi\right)  ^{2}dxdy\label{inequality-A-L}\\
&  =\int\int_{\Omega}\left[  \left\vert \nabla\psi\right\vert ^{2}%
-\frac{\tilde{\omega}^{2}}{g^{\prime}\left(  \psi_{0}\right)  }\right]
dxdy\nonumber\\
&  =\int\int_{\Omega}\left[  \left\vert \nabla\psi\right\vert ^{2}%
-2\tilde{\omega}\psi+\frac{\tilde{\omega}^{2}}{g^{\prime}\left(  \psi
_{0}\right)  }\right]  dxdy\nonumber\\
&  \geq\int\int_{\Omega}\left[  \frac{\tilde{\omega}^{2}}{g^{\prime}\left(
\psi_{0}\right)  }-\left\vert \nabla\tilde{\psi}\right\vert ^{2}\right]
dxdy=\left\langle L\tilde{\omega},\tilde{\omega}\right\rangle .\nonumber
\end{align}
From (\ref{inequality-L>A}), (\ref{inequality-A-L}) and
(\ref{equality-kernel-L-R(J)}), we get (\ref{equality-negative-modes-Euler})
as in the proof of i).
\end{proof}

By Lemma \ref{lemma-euler-spectral} and (iii) of Proposition
\ref{prop-counting-k-0-2}, we have

\begin{theorem}
\label{thm-stability-euler-1}Assume $g^{\prime}\left(  \psi_{0}\right)  >0$
and $\ker A=\left\{  0\right\}  $, then the index formula
\begin{equation}
k_{r}+2k_{c}+2k_{i}^{\leq0}=n^{-}\left(  A\right)  .
\label{Euler-index-formula}%
\end{equation}
holds. In particular, when $n^{-}\left(  A\right)  $ is odd, there is linear
instability; when $A>0$, there is linear stability.
\end{theorem}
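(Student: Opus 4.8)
The statement to be proved is Theorem \ref{thm-stability-euler-1}: under the hypotheses $g'(\psi_0)>0$ and $\ker A = \{0\}$, the index formula \eqref{Euler-index-formula} $k_r + 2k_c + 2k_i^{\leq 0} = n^-(A)$ holds, with the instability and stability corollaries following immediately. The plan is to realize this as a direct consequence of Lemma \ref{lemma-euler-spectral} together with part (iii) of Proposition \ref{prop-counting-k-0-2}. First I would note that the hypotheses $g'(\psi_0)>0$ and $\ker A = \{0\}$ allow us to invoke Lemma \ref{lemma-euler-spectral}: part (i) gives that $(J,L,X)$ with $J = g'(\psi_0) u_0\cdot\nabla$, $L = \frac{1}{g'(\psi_0)} - (-\Delta)^{-1}$ satisfies (\textbf{H1})--(\textbf{H3}) — (\textbf{H1}) via the isometry $S$ conjugating $J$ to the anti-self-dual $u_0\cdot\nabla$, (\textbf{H3}) via $\dim\ker L = \dim\ker A_0 < \infty$ — so that the abstract index theorem Theorem \ref{theorem-counting} and its refinements apply here.

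The key mechanism is that $\ker A = \{0\} \subset \ker A_0$, so part (ii) of Lemma \ref{lemma-euler-spectral} tells us that $\langle L\cdot,\cdot\rangle$ is non-degenerate on $\overline{R(J)}/\big(\overline{R(J)}\cap\ker L\big)$ and that $n^-\big(L|_{\overline{R(J)}/(\overline{R(J)}\cap\ker L)}\big) = n^-(A)$. The next step is to verify that this non-degeneracy is exactly the hypothesis needed for Proposition \ref{prop-counting-k-0-2}(iii): by Corollary \ref{C:counting-k-0-2-a}, non-degeneracy of $\langle L\cdot,\cdot\rangle$ on $S^\#$ (a complement of $\overline{R(J)}\cap\ker L$ in $\overline{R(J)}$) is equivalent to its non-degeneracy on $S_1$ (a complement of $\ker L$ in $\ker(JL)$), which is the running hypothesis of Proposition \ref{prop-counting-k-0-2}. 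I would also need to check the supplementary non-degeneracy on $\tilde S/(\ker L\cap\tilde S)$ where $\tilde S = \overline{R(J)}\cap(JL)^{-1}(\ker L)$; here I expect that $\ker A = \{0\}$ forces $(JL)^{-1}(\ker L)\cap\overline{R(J)}$ to reduce (modulo $\ker L$) to something on which $L$ is non-degenerate — indeed \eqref{eqn-ker-L-R} shows $\overline{R(J)}\cap\ker(JL) = (-\Delta)\ker A = \{0\}$, so $\tilde S\cap\ker L = \tilde S\cap\overline{R(J)}\cap\ker L$, and the relevant quotient should be controlled by the same argument. Given these, Proposition \ref{prop-counting-k-0-2}(iii) yields $k_0^{\leq 0} = n^-(L|_{\ker(JL)/\ker L}) + n^-(L|_{\tilde S/(\ker L\cap\tilde S)})$ and $k_r + 2k_c + 2k_i^{\leq 0} = n^-\big(L|_{\overline{R(J)}/(\overline{R(J)}\cap\ker L)}\big) - n^-(L|_{\tilde S/(\ker L\cap\tilde S)})$. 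Combining with $\tilde S = \{0\}$ modulo $\ker L$ (from $\ker A = \{0\}$) and the identification $n^-\big(L|_{\overline{R(J)}/(\overline{R(J)}\cap\ker L)}\big) = n^-(A)$ from Lemma \ref{lemma-euler-spectral}(ii), this gives precisely \eqref{Euler-index-formula}.

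The corollaries then drop out. If $n^-(A)$ is odd, then $k_r = n^-(A) - 2k_c - 2k_i^{\leq 0}$ is odd, hence $k_r \geq 1$, so there is a positive eigenvalue of $JL$ and hence linear instability (this is the oddness criterion of Corollary \ref{cor-instability-index}(ii), adapted to the fact that $k_0^{\leq 0}$ is being separated off). If $A > 0$, then $n^-(A) = 0$, forcing $k_r = k_c = k_i^{\leq 0} = 0$, so $\sigma(JL)\subset i\mathbf{R}$ and there is spectral stability by Corollary \ref{cor-instability-index}(i) and Theorem \ref{theorem-dichotomy} (the center-space growth being at most polynomial).

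The main obstacle I anticipate is the bookkeeping around $\ker L$, $(JL)^{-1}(\ker L)$, and the projection $P$ commuting with $f(\psi_0)\cdot$: one must carefully confirm that the abstract quotient-space hypotheses of Proposition \ref{prop-counting-k-0-2}(iii) are genuinely equivalent to the concrete condition $\ker A \subset \ker A_0$ (here trivially satisfied since $\ker A = \{0\}$), and that $\tilde S/(\ker L\cap\tilde S)$ really is trivial — this rests on the identity \eqref{eqn-ker-L-R}, $\overline{R(J)}\cap\ker(JL) = (-\Delta)\ker A$, and a parallel computation showing $(JL)^{-1}(\ker L)\cap\overline{R(J)}$ contributes nothing beyond $\ker L$. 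Once the dictionary between the Euler-specific operators $A, A_0$ and the abstract subspaces is nailed down, the rest is a direct substitution into the index formulae already established in Section \ref{S:Index}.
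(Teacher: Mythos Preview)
Your plan is essentially the paper's own: invoke Lemma~\ref{lemma-euler-spectral}(ii) (since $\ker A=\{0\}\subset\ker A_0$) to get non-degeneracy of $\langle L\cdot,\cdot\rangle$ on $\overline{R(J)}/\bigl(\overline{R(J)}\cap\ker L\bigr)$ and the identification $n^-\bigl(L|_{\overline{R(J)}}\bigr)=n^-(A)$, translate this via Lemma~\ref{L:counting-k-0-2}/Corollary~\ref{C:counting-k-0-2-a} into the hypothesis of Proposition~\ref{prop-counting-k-0-2}, and then read off \eqref{Euler-index-formula} from part~(iii). One point to make precise: the paper actually shows $\tilde S=\overline{R(J)}\cap(JL)^{-1}(\ker L)=\{0\}$ outright, by a two-step bootstrap using \eqref{eqn-ker-L-R} twice --- for $\omega\in\tilde S$ one first observes $JL\omega\in R(J)\cap\ker L\subset\overline{R(J)}\cap\ker\bigl((I-P)L\bigr)=\{0\}$, hence $L\omega\in\ker J$ and $(I-P)L\omega=0$; then $\omega\in\overline{R(J)}\cap\ker\bigl((I-P)L\bigr)=\{0\}$ again. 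This is exactly the ``parallel computation'' you flag, and it goes through cleanly.

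There is, however, a genuine gap in your stability argument when $A>0$. You deduce $k_r=k_c=k_i^{\leq 0}=0$, hence $\sigma(JL)\subset i\mathbf{R}$, and call this spectral stability with at most polynomial growth. But the theorem claims \emph{linear} stability, i.e.\ uniform boundedness of $e^{tJL}$. Observe that $n^-(A)=0$ does \emph{not} force $n^-(L)=n^-(A_0)=0$; by \eqref{counting-formula} one then has $k_0^{\leq 0}=n^-(L)$, so nontrivial Jordan structure at zero (hence genuine polynomial growth) is not ruled out by your index argument alone. The paper's route is different here: from \eqref{equality-negative-modes-Euler} and \eqref{equality-kernel-L-R(J)}, $A>0$ makes $\langle L\cdot,\cdot\rangle$ uniformly positive definite on $\overline{R(J)}$, so $e^{tJL}|_{\overline{R(J)}}$ is uniformly bounded by energy conservation; combined with the decomposition $X=\ker(JL)+\overline{R(J)}$ from Proposition~\ref{prop-counting-k-0-2}(i) (and $e^{tJL}$ acting trivially on $\ker(JL)$), this gives the claimed uniform bound. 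You should replace your spectral-stability step with this energy argument.
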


\begin{proof}
To apply (iii) of Proposition \ref{prop-counting-k-0-2} to obtain
\eqref{Euler-index-formula}, it suffices to verify that a.) $\langle
L\cdot,\cdot\rangle$ is non-degenerate on $\ker(JL)/\ker L$, which is
satisfied due to Lemma \ref{L:counting-k-0-2}, Lemma
\ref{lemma-euler-spectral}, and $\{0\}=\ker A\subset\ker A_{0}$; and b.)
\[
\tilde{S}\triangleq\overline{R(J)}\cap(JL)^{-1}(\ker L)=\{0\}.
\]
To see the latter, we first note that $\ker A=\{0\}$ and \eqref{eqn-ker-L-R}
imply
\[
\overline{R(J)}\cap\ker\big((I-P)L\big)=\{0\}.
\]
Consequently, if $\omega\in\overline{R(J)}\cap(JL)^{-1}(\ker L)$, then
$JL\omega\in R(J)\cap\ker L$ must vanish, namely, $L\omega\in\ker J$, and thus
$(I-P)L\omega=0$. Again, since $\omega\in\overline{R(J)}$, we obtain
$\omega=0$. Therefore, $\tilde{S}=\{0\}$ and \eqref{Euler-index-formula} follows.

The instability of $e^{tJL}$ under the assumption of $n^{-}(A)$ being odd is
straightforward from \eqref{Euler-index-formula}. Finally suppose $A>0$,
\eqref{equality-negative-modes-Euler} and \eqref{equality-kernel-L-R(J)} imply
that $\langle L\cdot,\cdot\rangle$ is uniformly positive definite on
$\overline{R(J)}/\big(\overline{R(J)}\cap\ker L\big)=\overline{R(J)}$.
Therefore, $e^{tJL}$ is stable on the closed invariant subspace $\overline
{R(J)}$ and thus its stability follows from the decomposition $X=\ker
(JL)+\overline{R(J)}$, which is proved in Proposition
\ref{prop-counting-k-0-2}.
\end{proof}

By Theorem \ref{T:USImSpec}, the index formula (\ref{Euler-index-formula}) and
the fact $i\mathbf{R} \subset\sigma(JL)$ for the linearized Euler equation
imply the following.

\begin{corollary}
\label{cor-structure-insta-Euler}Under the assumption of Theorem
\ref{thm-stability-euler-1}, when $n^{-}\left(  A\right)  >0$, then there is
linear instability or structural instability for $JL$ (in the sense of Theorem
\ref{T:USImSpec}).
\end{corollary}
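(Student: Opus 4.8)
The plan is to prove Corollary~\ref{cor-structure-insta-Euler} as an essentially immediate consequence of Theorem~\ref{thm-stability-euler-1} (the index formula \eqref{Euler-index-formula}) combined with the structural instability Theorem~\ref{T:USImSpec}. First I would dispose of the easy dichotomy coming from the index formula: under the standing assumptions $g'(\psi_0)>0$ and $\ker A=\{0\}$, Theorem~\ref{thm-stability-euler-1} gives $k_r+2k_c+2k_i^{\leq0}=n^-(A)$. If $n^-(A)$ is odd, then $k_r$ is odd, hence $k_r\geq 1$, so $JL$ already has a positive eigenvalue and the system is linearly unstable, with nothing more to prove. So the substantive case is when $n^-(A)>0$ is even and moreover $k_r=k_c=0$ (otherwise, again, there is an unstable eigenvalue and we are done). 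In that remaining case the index formula forces $k_i^{\leq0}=\tfrac12 n^-(A)\geq 1$, i.e. there exists a purely imaginary eigenvalue $i\mu\in\sigma(JL)\cap i\mathbf{R}$ with positive imaginary part such that $n^{\leq0}(L|_{E_{i\mu}})\geq 1$, meaning $\langle L\cdot,\cdot\rangle$ is \emph{not} positive definite on the generalized eigenspace $E_{i\mu}$.

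Next I would check that such an $i\mu$ falls into the hypotheses of Theorem~\ref{T:USImSpec}. Here I must distinguish $\mu\neq0$ from $\mu=0$. For an eigenvalue $i\mu\neq0$: since $k^{\leq0}(i\mu)\geq1$, either $\langle L\cdot,\cdot\rangle$ is indefinite on $E_{i\mu}$ (this includes the degenerate case, where by definition there is $0\neq u\in E_{i\mu}$ with $\langle Lu,u\rangle=0\leq 0$, so it is certainly not positive definite), or it is negative definite on $E_{i\mu}$. In the first situation condition~(1) of Theorem~\ref{T:USImSpec} applies directly. In the second situation I invoke the fact, recorded in the excerpt, that for the linearized 2D Euler operator $i\mathbf{R}\subset\sigma(JL)$ — indeed this is exactly the point emphasized just before the corollary — so in particular $i\mu$ is a non-isolated point of $\sigma(JL)$, and then condition~(2) of Theorem~\ref{T:USImSpec} is satisfied (take $u\in E_{i\mu}$ with $\langle Lu,u\rangle<0\leq 0$). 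Either way Theorem~\ref{T:USImSpec} produces, for every $\varepsilon>0$, a symmetric bounded perturbation $L_1$ with $|L_1|<\varepsilon$ and an eigenvalue $\lambda\in\sigma\big(J(L+L_1)\big)$ with $\operatorname{Re}\lambda>0$ near $i\mu$; this is precisely the asserted structural instability.

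The one genuine subtlety is the possibility $\mu=0$: Theorem~\ref{T:USImSpec} explicitly excludes $i\mu=0$ (see Remark~\ref{R:0-e-v}), so if the only source of nonpositivity in the index count were on $E_0$ this argument would stall. I would handle this by arguing that $k_0^{\leq0}$ does not in fact contribute in the present setting: the proof of Theorem~\ref{thm-stability-euler-1} already shows, using $\ker A=\{0\}$ together with \eqref{eqn-ker-L-R} and the identity $\tilde S=\overline{R(J)}\cap(JL)^{-1}(\ker L)=\{0\}$ established there, that the index formula takes the ``reduced'' form \eqref{Euler-index-formula} with \emph{no} $k_0^{\leq0}$ term — equivalently, by Corollary~\ref{C:counting-k-0-2-b}, $k_0^-=n^-(L|_{S_1})+n^-(L|_{S_2})$ with $S_2\subset\tilde S=\{0\}$ and $\langle L\cdot,\cdot\rangle$ positive (hence $n^-=0$) on $S_1\cong\ker(JL)/\ker L$ by Lemma~\ref{lemma-euler-spectral}(ii) and $\ker A\subset\ker A_0$. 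Thus all the nonpositive directions counted in $n^-(A)>0$ are genuinely attached to \emph{nonzero} purely imaginary eigenvalues $i\mu$, which is exactly what lets us apply Theorem~\ref{T:USImSpec}. I expect this bookkeeping — confirming that the $\mu=0$ block carries no nonpositivity and hence that some $i\mu\neq0$ with $k^{\leq0}(i\mu)>0$ must exist — to be the only place requiring care; everything else is a direct citation of the index theorem and the structural instability theorem.
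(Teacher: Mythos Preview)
Your proposal is correct and follows essentially the same route as the paper's one-line justification (index formula \eqref{Euler-index-formula} together with $i\mathbf{R}\subset\sigma(JL)$ and Theorem~\ref{T:USImSpec}). One remark: your $\mu=0$ discussion is unnecessary, since $k_i^{\leq0}$ by definition sums only over $\mu>0$, so if $k_r=k_c=0$ the formula \eqref{Euler-index-formula} already forces some $i\mu\neq0$ with $k^{\leq0}(i\mu)\geq1$ and then condition~(2) of Theorem~\ref{T:USImSpec} applies directly via non-isolation; moreover your auxiliary claim that $\langle L\cdot,\cdot\rangle$ is positive on $S_1\cong\ker(JL)/\ker L$ is not needed and is not obviously correct (by Proposition~\ref{prop-counting-k-0-2}(i) and Lemma~\ref{lemma-euler-spectral} one has $n^{-}(L|_{S_1})=n^{-}(A_0)-n^{-}(A)$, which can be strictly positive).
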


In the sense of Theorem \ref{T:USImSpec}, the structural instability of the
linearized Euler equation $\omega_{t} = JL \omega$ means that there exist
arbitrarily small bounded perturbations $L_{\#}$ to $L$ such that $JL_{\#}$
has unstable eigenvalues. However, it is not clear that such perturbations can
be realized in the context of the Euler equation, such as by considering
neighboring steady states along with possible small domain variation.

\begin{remark}
\label{remark-nonlinear-stability-Euler}In \cite{lin-cmp-04}, it was shown
that for general $g\in C^{1}$, when $\ker A=\left\{  0\right\}  $ and
$n^{-}\left(  A\right)  $ is odd, there is linear instability. Here, the index
formula (\ref{Euler-index-formula}) gives more detailed information about the
spectrum of the linearized Euler operator.

We give one example satisfying the stability condition $A>0$. Let $\lambda
_{0}>0$ be the lowest eigenvalue of $-\Delta$ in $\Omega$ with Dirichlet
boundary condition and $\psi_{0}$ be the corresponding eigenfunction. Then
$g^{\prime}\left(  \psi_{0}\right)  =\lambda_{0}$ and it is easy to show that
$A>0$.
\end{remark}

\begin{remark}
When the domain $\Omega$ is not simply connected, let $\partial\Omega
=\cup_{i=0}^{n}\Gamma_{i}$ consist of outer boundary $\Gamma_{0}$ and $n$
interior boundaries $\Gamma_{1},\cdots,\Gamma_{n}$. Then the operators
$A_{0},\ A,\ -\Delta$ should be defined by using the boundary conditions:
\begin{equation}
\phi|_{\Gamma_{i}}\ \text{is\ constant},\ \oint_{\Gamma_{i}}\frac{\partial
\phi}{\partial n}=0\ \text{and }\int\int_{\Omega}\phi\ dxdy=0.
\label{bc-non-simply-connected}%
\end{equation}
The same formula (\ref{Euler-index-formula}) is still true. The linearized
stream functions satisfying \eqref{bc-non-simply-connected} represent
perturbations preserving the circulations along each $\Gamma_{i}$, which are
conserved in the nonlinear evolution.
\end{remark}

Below, we consider the case when $\ker A$ is nontrivial. This usually happens
when the problem has some symmetry. As an example, we consider the case when
$\Omega$ is a channel, that is,
\[
\Omega=\left\{  y_{1}\leq y\leq y_{2},\ x\text{ is }T-\text{periodic}\right\}
.
\]
The steady stream function $\psi_{0}$ satisfies
\begin{equation}
-\Delta\psi_{0}=g\left(  \psi_{0}\right)  \text{ in }\Omega
,\label{eqn-steady-channel}%
\end{equation}
with boundary conditions $\psi_{0}$ being constants on $\left\{
y=y_{i}\right\}  $, $i=1,2,$ where $g\in C^{1}$. Define the operators
$L,\ A_{0},\ A$ as before with the boundary conditions
\begin{equation}
\phi\ \text{is\ constant on }\left\{  y=y_{i}\right\}  ,\ \int_{\left\{
y=y_{i}\right\}  }\frac{\partial\phi}{\partial y}%
dx=0,\ i=1,2,\label{bc-channel-eigenfunction}%
\end{equation}
and $\int\int_{\Omega}\phi\ dxdy=0$. Taking $x-$derivative of equation
(\ref{eqn-steady-channel}), we get
\[
-\Delta\psi_{0,x}=g^{\prime}\left(  \psi_{0}\right)  \psi_{0,x}\ \text{in
}\Omega,
\]
and $\psi_{0,x}$ satisfies the boundary condition
(\ref{bc-channel-eigenfunction}). Thus we have $A_{0}\psi_{0,x}=0$ and
\[
L\omega_{0,x}=L\left(  g^{\prime}\left(  \psi_{0}\right)  \psi_{0,x}\right)
=0.
\]
Since $\psi_{0,x}=u_{0}\cdot\nabla\left(  -y\right)  $, so $P\psi_{0,x}=0$ and
thus $A\psi_{0,x}=A_{0}\psi_{0,x}=0$.

\begin{theorem}
\label{thm-Euler-index-2}Assume $g^{\prime}\left(  \psi_{0}\right)  >0$ and
$\ker A=span\left\{  \psi_{0,x}\right\}  ,$ then
\[
\exists\,\omega_{1}\in\overline{R(J)}\;\text{ such that }JL\omega_{1}%
=\omega_{0,x}.
\]
Moreover, if
\[
d=\left\langle L\omega_{1},\omega_{1}\right\rangle =-\int\int_{\Omega}%
y\omega_{1}dxdy=T(\psi_{1}|_{y=y_{1}}-\psi_{1}|_{y=y_{2}})\neq0,
\]
where $\psi_{1}$ satisfies $-\Delta\psi_{1}=\omega_{1}$ with the boundary
condition \eqref{bc-channel-eigenfunction}, or more explicitly
\[
\psi_{1}=-A^{-1}\left(  g^{\prime}\left(  \psi_{0}\right)  \left(  I-P\right)
y\right)  ,
\]
then we have the index formula
\begin{equation}
k_{r}+2k_{c}+2k_{i}^{\leq0}=n^{-}\left(  A\right)  -n^{-}\left(  d\right)
,\label{index-euler-2}%
\end{equation}
where $n^{-}(d)=1$ if $d<0$ and $n^{-}(d)=0$ if $d>0$.
\end{theorem}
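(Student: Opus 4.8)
The plan is to obtain \eqref{index-euler-2} as a direct specialization of the general index theorem, more precisely of Proposition \ref{prop-counting-k-0-2}(iii) (equivalently Corollary \ref{C:counting-k-0-2-b}), applied to the Hamiltonian pair $(J,L)$ of \eqref{E:LEuler}, after identifying all the abstract ingredients with the concrete operators $A$, $A_{0}$, $-\Delta$ of the channel problem. Throughout I would use that $P$ is the orthogonal projection of $L^{2}(\Omega)$ onto functions constant on the level sets of $\psi_{0}$, that $u_{0}\cdot\nabla$ commutes with multiplication by any function of $\psi_{0}$ and annihilates $\mathrm{range}(P)$, and that $A_{0}\psi_{0,x}=A\psi_{0,x}=0$, $\omega_{0,x}=-\Delta\psi_{0,x}=g'(\psi_{0})\psi_{0,x}=J(-y)\in R(J)$, $L\omega_{0,x}=0$, so that $\omega_{0,x}$ lies in $\overline{R(J)}\cap\ker L$ and in the generalized kernel of $JL$.

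First I would construct $\omega_{1}$. Since $\ker A=\mathrm{span}\{\psi_{0,x}\}$ is one dimensional and $A$ is self-adjoint with compact resolvent, $R(A)=(\ker A)^{\perp}$; a short integration by parts using $\psi_{0}|_{\{y=y_{i}\}}=\mathrm{const}$ and the boundary conditions \eqref{bc-channel-eigenfunction} gives $\int\!\!\int_{\Omega}y\,\omega_{0,x}\,dxdy=0$, i.e. $g'(\psi_{0})(I-P)y\perp\ker A$, so $\psi_{1}:=-A^{-1}\big(g'(\psi_{0})(I-P)y\big)$ is well defined modulo $\ker A$. Setting $\omega_{1}:=-\Delta\psi_{1}$ and using $A\psi=-\Delta\psi-g'(\psi_{0})(I-P)\psi$ yields $\omega_{1}=g'(\psi_{0})(I-P)(\psi_{1}-y)$, whence $P\big(\omega_{1}/g'(\psi_{0})\big)=0$ and $\omega_{1}\in\overline{R(J)}$. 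Next, for any $\omega\in\overline{R(J)}$ with $\psi=(-\Delta)^{-1}\omega$, \eqref{E:A(omega)} gives $(I-P)L\omega=\tfrac1{g'(\psi_{0})}A\psi$, and since $u_{0}\cdot\nabla$ kills $PL\omega$ and commutes with multiplication by functions of $\psi_{0}$ one gets $JL\omega=u_{0}\cdot\nabla(A\psi)$. Applying this to $\omega_{1}$, using $A\psi_{1}=-g'(\psi_{0})(I-P)y$, the commutation property, and $u_{0}\cdot\nabla y=-\psi_{0,x}$, gives $JL\omega_{1}=-u_{0}\cdot\nabla\big(g'(\psi_{0})y\big)=g'(\psi_{0})\psi_{0,x}=\omega_{0,x}$. (Adding an element of $\ker A$ to $\psi_{1}$ changes $\omega_{1}$ only by a multiple of $\omega_{0,x}\in\ker L$, affecting neither $JL\omega_{1}$ nor the scalar $d$ below.)

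Next I would identify the subspaces in \eqref{index-formula-D}. Since $\psi_{0,x}\in\ker A_{0}$, the hypothesis gives $\ker A=\mathrm{span}\{\psi_{0,x}\}\subset\ker A_{0}$, so by Lemma \ref{lemma-euler-spectral}(ii) the form $\langle L\cdot,\cdot\rangle$ is non-degenerate on $\overline{R(J)}/\big(\overline{R(J)}\cap\ker L\big)$ with $n^{-}=n^{-}(A)$; by Lemma \ref{L:counting-k-0-2}(ii) it is then also non-degenerate on $\ker(JL)/\ker L$, so the hypotheses of Proposition \ref{prop-counting-k-0-2}(ii) hold. From \eqref{eqn-ker-L-R}, $\overline{R(J)}\cap\ker(JL)=(-\Delta)\ker A=\mathrm{span}\{\omega_{0,x}\}$, which (arguing as after \eqref{eqn-ker-L-R}, using $P\psi_{0,x}=0$) equals $\overline{R(J)}\cap\ker L$ here. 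For $\omega\in\overline{R(J)}\cap(JL)^{-1}(\ker L)$ one has $JL\omega\in R(J)\cap\ker L\subset\mathrm{span}\{\omega_{0,x}\}$, say $JL\omega=c\,\omega_{0,x}$, so $\omega-c\,\omega_{1}\in\overline{R(J)}\cap\ker(JL)$; hence $\tilde S:=\overline{R(J)}\cap(JL)^{-1}(\ker L)=\mathrm{span}\{\omega_{0,x},\omega_{1}\}$, which is two dimensional because $JL\omega_{1}=\omega_{0,x}\neq0$ for a genuinely $x$-dependent $\psi_{0}$ (if $\psi_{0,x}\equiv0$ then $\ker A=\{0\}$ and Theorem \ref{thm-stability-euler-1} applies). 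Thus $\tilde S/(\ker L\cap\tilde S)$ is spanned by $\omega_{1}$, and since $\omega_{0,x}\in\ker L$ the induced quadratic form equals $\langle L\omega_{1},\omega_{1}\rangle=d$, independent of the representative. A direct computation — writing $L\omega_{1}=-P\psi_{1}-(I-P)y$, using $\int\!\!\int\omega_{1}\,Pf=0$ for $\omega_{1}\in\overline{R(J)}$ and $\int\!\!\int(I-P)g\cdot h(\psi_{0})=0$ (orthogonality of $P$ in $L^{2}$), and then integrating by parts with \eqref{bc-channel-eigenfunction} — gives $d=-\int\!\!\int_{\Omega}y\,\omega_{1}=T\big(\psi_{1}|_{y=y_{1}}-\psi_{1}|_{y=y_{2}}\big)$, with $\psi_{1}=-A^{-1}\big(g'(\psi_{0})(I-P)y\big)$ as stated.

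Finally, assuming $d\neq0$, the form on $\tilde S/(\ker L\cap\tilde S)$ is non-degenerate with $n^{-}=n^{-}(d)$, so Proposition \ref{prop-counting-k-0-2}(iii) applies and \eqref{index-formula-D} yields $k_{r}+2k_{c}+2k_{i}^{\leq0}=n^{-}\big(L|_{\overline{R(J)}/(\ker L\cap\overline{R(J)})}\big)-n^{-}\big(L|_{\tilde S/(\ker L\cap\tilde S)}\big)=n^{-}(A)-n^{-}(d)$, which is \eqref{index-euler-2}. I expect the main obstacle to be the two ``$P$-gymnastics'' verifications — that $\omega_{1}\in\overline{R(J)}$ with $JL\omega_{1}=\omega_{0,x}$, and the evaluation of $d$ — since these require careful tracking of the commutations among $u_{0}\cdot\nabla$, multiplication by functions of $\psi_{0}$, and the (plain-$L^{2}$) orthogonality of $P$, together with the boundary terms generated by \eqref{bc-channel-eigenfunction}; the purely abstract bookkeeping (identifying $\tilde S$, $S^{\#}$ and invoking Proposition \ref{prop-counting-k-0-2}) is then routine.
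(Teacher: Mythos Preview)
Your proposal is correct and follows essentially the same route as the paper's proof: you invoke Lemma \ref{lemma-euler-spectral}(ii) for the non-degeneracy on $\overline{R(J)}$, construct $\psi_{1}$ by solving $A\psi_{1}=-g'(\psi_{0})(I-P)y$ after checking the solvability condition, identify $\tilde S=\mathrm{span}\{\omega_{0,x},\omega_{1}\}$, compute $d$, and conclude via Proposition \ref{prop-counting-k-0-2}(iii). Your derivation of $JL\omega_{1}=\omega_{0,x}$ via the identity $JL\omega=u_{0}\cdot\nabla(A\psi)$ is a slightly more streamlined variant of the paper's argument (which instead characterizes $\tilde S$ through the equation $(I-P)(L\omega+ay)=0$), but the two are equivalent.
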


\begin{proof}
Our assumption implies $\ker A\subset\ker A_{0}$ and thus $\langle
L\cdot,\cdot\rangle$ is non-degenerate on $\overline{R(J)}/\big(\overline
{R(J)}\cap\ker L\big)$ by Proposition \ref{prop-counting-k-0-2}. To apply
index formula \eqref{index-formula-D}, we need to obtain the non-degeneracy of
$\langle L\cdot,\cdot\rangle$ on $\tilde{S}/(\tilde{S}\cap\ker L)$ and compute
$n^{\leq0}(L|_{\tilde{S}/(\tilde{S}\cap\ker L)})$, where
\[
\tilde{S}=\overline{R(J)}\cap(JL)^{-1}(\ker L)=\overline{R(J)}\cap
(JL)^{-1}\big(\ker L\cap\overline{R(J)}\big).
\]
Since $P\psi_{0,x}=0$ implies $\omega_{0,x}=\frac{\psi_{0,x}}{g^{\prime}%
(\psi_{0})}\in\overline{R(J)}$. From \eqref{eqn-ker-L-R} and our assumption on
$\ker A$, we have
\[
span\{\omega_{0,x}\}\subset\overline{R(J)}\cap\ker L\subset\overline{R(J)}%
\cap\ker\big((I-P)L\big)=\Delta\ker A=span\{\omega_{0,x}\},
\]
which yields
\[
\overline{R(J)}\cap\ker L=span\{\omega_{0,x}\}.
\]
By the definition of $\tilde{S}$, $\omega\in\tilde{S}$ if and only if there
exist $\omega\in\overline{R(J)}$ and $a\in\mathbf{R}$ such that
\[
a\omega_{0,x}=JL\omega=J(I-P)L\omega.
\]
Since $\omega_{0,x}=-Jy=-J(I-P)y$, we obtain equivalently $(I-P)(L\omega
+ay)=0$. From \eqref{E:A(omega)}, it follows that
\[
\omega\in\tilde{S}\Longleftrightarrow\omega\in\overline{R(J)}\,\text{ and
}A\psi=-ag^{\prime}(\psi_{0})(I-P)y,\,\text{ where }-\Delta\psi=\omega.
\]

Note $\ker A=span\{\psi_{0,x}\}$ and
\[
\left\langle g^{\prime}\left(  \psi_{0}\right)  \left(  I-P\right)
y,\psi_{0,x}\right\rangle =\int\int_{\Omega}yg^{\prime}\left(  \psi
_{0}\right)  \psi_{0,x}\ dxdy=0.
\]
There exists a stream function $\psi_{1}$ satisfying
\[
A\psi_{1}=-g^{\prime}\left(  \psi_{0}\right)  \left(  I-P\right)  y,
\]
which implies $\omega_{1}=-\Delta\psi_{1}\in\overline{R(J)}$ and $JL\omega
_{1}=\omega_{0,x}$. Namely $\omega_{1}\in\tilde{S}$ and $\tilde{S}%
=span\{\omega_{0,x},\omega_{1}\}$. One may compute
\begin{align*}
d  &  =\left\langle L\omega_{1},\omega_{1}\right\rangle =\left\langle \left(
I-P\right)  L\omega_{1},\omega_{1}\right\rangle =\left\langle -\left(
I-P\right)  y,\omega_{1}\right\rangle \\
&  =-\left\langle y,\omega_{1}\right\rangle =T(\psi_{1}|_{y=y_{1}}-\psi
_{1}|_{y=y_{2}})
\end{align*}
where the last equal sign follows from integration by parts. If $d\neq0$, then
the desired index formula follows from (iii) of Proposition
\ref{prop-counting-k-0-2}.
\end{proof}

Similar to Corollary \ref{cor-structure-insta-Euler} (and the comments
immediately thereafter), we have

\begin{corollary}
Under the assumption of Theorem \ref{thm-Euler-index-2}, when $n^{-}\left(
A\right)  -n^{-}\left(  d\right)  >0$, then there is linear instability or
structural instability for $JL$.
\end{corollary}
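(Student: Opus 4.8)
The plan is to mirror the argument behind Corollary \ref{cor-structure-insta-Euler}, replacing the index identity \eqref{Euler-index-formula} by the finer one \eqref{index-euler-2} of Theorem \ref{thm-Euler-index-2}, and then feeding the outcome into either Theorem \ref{theorem-dichotomy} or Theorem \ref{T:USImSpec}. Concretely, under the hypotheses of Theorem \ref{thm-Euler-index-2} (namely $g'(\psi_{0})>0$, $\ker A=\mathrm{span}\{\psi_{0,x}\}$ and $d\ne 0$), that theorem gives the index count $k_{r}+2k_{c}+2k_{i}^{\leq 0}=n^{-}(A)-n^{-}(d)$. Since the assumption of the corollary is that the right-hand side is strictly positive, at least one of $k_{r}$, $k_{c}$, $k_{i}^{\leq 0}$ must be nonzero, and the two possible conclusions of the corollary correspond exactly to two cases.

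First I would dispose of the case $k_{r}>0$ or $k_{c}>0$. By the definitions in \eqref{E:kr-kc} this means $JL$ has an eigenvalue $\lambda$ with $\operatorname{Re}\lambda>0$ (a positive real eigenvalue if $k_{r}>0$, one in the open first quadrant if $k_{c}>0$); the associated eigenvector produces a solution $e^{\lambda t}v$ of $\omega_{t}=JL\omega$ that grows exponentially, which is linear instability (Theorem \ref{theorem-dichotomy} moreover locates this eigenvalue in the finite-dimensional unstable subspace $E^{u}$, but that is not needed here). In the remaining case $k_{r}=k_{c}=0$, we must have $k_{i}^{\leq 0}>0$, hence there exists $0\ne\mu\in\mathbf{R}^{+}$ with $n^{\leq 0}\big(L|_{E_{i\mu}}\big)\geq 1$; choosing a nonzero $u$ in a one-dimensional subspace of $E_{i\mu}$ on which $\langle L\cdot,\cdot\rangle$ is nonpositive yields $u\in E_{i\mu}$ with $\langle Lu,u\rangle\leq 0$. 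Because $i\mathbf{R}\subset\sigma(JL)$ for the linearized Euler operator (the fact quoted just before Corollary \ref{cor-structure-insta-Euler}, reflecting the essential spectrum of the advection operator $u_{0}\cdot\nabla$ carried by $J$), the point $i\mu$ is non-isolated in $\sigma(JL)$. Thus hypothesis (2) of Theorem \ref{T:USImSpec} is met — recalling that $(J,L,X)$ satisfies \textbf{(H1--3)} by Lemma \ref{lemma-euler-spectral}(i) and the accompanying discussion — so for every $\epsilon>0$ there is a symmetric bounded $L_{1}:X\to X^{\ast}$ with $|L_{1}|<\epsilon$ and $\lambda\in\sigma\big(J(L+L_{1})\big)$ having $\operatorname{Re}\lambda>0$; this is structural instability of $JL$ in the sense of Theorem \ref{T:USImSpec}.

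I expect no serious obstacle: the heavy lifting is done by Theorems \ref{thm-Euler-index-2}, \ref{theorem-dichotomy} and \ref{T:USImSpec}. The one input that deserves care is the assertion $i\mathbf{R}\subset\sigma(JL)$, which guarantees non-isolation of $i\mu$ and hence availability of case (2) rather than only case (1) of Theorem \ref{T:USImSpec}; this is invoked as known for the linearized 2D Euler flow in a channel and should be cited as such. A minor routine check is that the perturbed pair $\big(J,L+L_{1}\big)$ still lies in the framework, which is automatic since $L_{1}$ is bounded and symmetric and $\dim\ker(L+L_{1})\leq \dim\ker L<\infty$, so that assumptions \textbf{(A1--3)} of Section \ref{SS:SS} hold.
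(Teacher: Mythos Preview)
Your proposal is correct and follows essentially the same approach as the paper, which proves this corollary by the remark ``Similar to Corollary \ref{cor-structure-insta-Euler}'': the index identity \eqref{index-euler-2} forces one of $k_{r},k_{c},k_{i}^{\leq 0}$ to be positive, yielding either a hyperbolic eigenvalue (linear instability) or a purely imaginary eigenvalue $i\mu\ne 0$ with a non-positive direction of $L$ in $E_{i\mu}$, which is non-isolated since $i\mathbf{R}\subset\sigma(JL)$, so Theorem \ref{T:USImSpec} applies. Your added remark about \textbf{(A1--3)} is harmless but unnecessary, since Theorem \ref{T:USImSpec} already handles the perturbed framework internally.
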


As another application of the Hamiltonian structure of the linearized Euler
equation, we consider the inviscid damping of a stable steady flow. Assume
$g^{\prime}\left(  \psi_{0}\right)  >0$ and $A>0$, then by Theorem
\ref{thm-stability-euler-1}, the steady flow is linearly stable in the $L^{2}$
norm of vorticity. There is no time decay in $\left\Vert \omega\right\Vert
_{L^{2}}$. However, the linear decay in the velocity norm $\left\Vert u
\right\Vert $ $_{L^{2}}$ is possible due to the mixing of the vorticity. For
example, see \cite{lin-zeng-couette} for the linear damping near Couette flow
$\left(  y,0\right)  $ in a channel. Here, we give a weak form of the linear
decay for general stable steady flows.

\begin{theorem}
\label{thm-RAGE-Euler}Assume $g^{\prime}\left(  \psi_{0}\right)  >0$ and
$A>0$. For $\omega\left(  0\right)  \in\overline{R\left(  J\right)  }$, let
$\omega\left(  t\right)  \in\overline{R\left(  J\right)  }$ be the solution of
the linearized Euler equation (\ref{eqn-linearized-vorticity}). Then

(i) When $T\rightarrow\infty$, $\frac{1}{T}\int_{0}^{T}\omega\left(  t\right)
dt\rightarrow0$ strongly in $L^{2}$.

(ii) If there is no embedded imaginary eigenvalue of $JL$ on $\overline
{R\left(  J\right)  }$, then for any compact operator $C$ in $L^{2}$, we have
\begin{equation}
\frac{1}{T}\int_{0}^{T}\left\Vert C\omega\left(  t\right)  \right\Vert
_{L^{2}}^{2}dt\rightarrow0\text{, when }T\rightarrow\infty. \label{Rage-Euler}%
\end{equation}
In particular, for the velocity $u=\operatorname{curl}^{-1}\omega$,
\begin{equation}
\frac{1}{T}\int_{0}^{T}\left\Vert u \left(  t\right)  \right\Vert _{L^{2}}%
^{2}dt\rightarrow0\text{, when }T\rightarrow\infty.
\label{rage-Euler-velocity}%
\end{equation}

\end{theorem}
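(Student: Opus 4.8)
The plan is to realize $e^{tJL}$ restricted to the invariant subspace $\overline{R(J)}$ as a strongly continuous unitary group on a Hilbert space isomorphic to $L^{2}(\Omega)$, and then invoke the von Neumann mean ergodic theorem for part (i) and the RAGE theorem for part (ii). First I would record the functional-analytic setup. By Lemma \ref{lemma-L-form-preserve}, $\overline{R(J)}$ is invariant under $e^{tJL}$ and $\langle L e^{tJL}u, e^{tJL}v\rangle = \langle Lu,v\rangle$ for all $u,v$; also $\omega(t) = e^{tJL}\omega(0)$ by uniqueness. By Lemma \ref{lemma-euler-spectral} and the proof of Theorem \ref{thm-stability-euler-1}, the hypothesis $A>0$ implies that $\langle L\cdot,\cdot\rangle$ is uniformly positive definite on $\overline{R(J)}$; since it is also bounded, and $\|\cdot\|_{X}$ is equivalent to $\|\cdot\|_{L^{2}(\Omega)}$ on the bounded domain $\Omega$ (because $g'(\psi_{0})$ is continuous and strictly positive on $\overline{\Omega}$, hence bounded below by a positive constant), the quadratic form $(u,v)\mapsto\langle Lu,v\rangle$ defines on $\overline{R(J)}$ an inner product equivalent to the $L^{2}$ inner product. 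Let $H$ denote $\overline{R(J)}$ equipped with this inner product. By Theorem \ref{theorem-dichotomy}, $e^{tJL}$ restricts to a strongly continuous group on $H$, and by the identity above it consists of isometries, hence is a unitary group. After complexifying, Stone's theorem provides a self-adjoint operator $B$ on $H$ with $JL|_{\overline{R(J)}} = iB$ (consistent with \eqref{E:anti-S}, which says $JL$ is skew-symmetric in $\langle L\cdot,\cdot\rangle$), so the spectral decomposition $B=\int_{\mathbf{R}}\lambda\,dE_{\lambda}$ is available and $\sigma(JL|_{\overline{R(J)}})\subset i\mathbf{R}$.

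For part (i), I would apply the von Neumann mean ergodic theorem to the unitary group $e^{tJL}$ on $H$: as $T\to\infty$, $\frac1T\int_{0}^{T}e^{tJL}\omega(0)\,dt$ converges in $H$ to $\Pi\omega(0)$, where $\Pi$ is the orthogonal projection (with respect to $\langle L\cdot,\cdot\rangle$) onto the fixed-point subspace $\ker B=\ker(JL)\cap\overline{R(J)}$. By \eqref{eqn-ker-L-R} we have $\ker(JL)\cap\overline{R(J)}=(-\Delta)\ker A$, and $A>0$ forces $\ker A=\{0\}$, so $\Pi=0$. Convergence in $H$ is the same as convergence in $L^{2}(\Omega)$, which proves (i).

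For part (ii), I would use the RAGE theorem. The assumption that $JL$ has no embedded imaginary eigenvalue on $\overline{R(J)}$, together with $\ker(JL)\cap\overline{R(J)}=\{0\}$ and $\sigma(JL|_{\overline{R(J)}})\subset i\mathbf{R}$, implies that $B$ has empty point spectrum, i.e., every spectral measure $\lambda\mapsto\langle dE_{\lambda}\omega(0),\phi\rangle$ is non-atomic. Given a compact operator $C$ on $L^{2}(\Omega)$, view it as a compact map $H\to L^{2}(\Omega)$ (composing with the bounded embedding $H\hookrightarrow L^{2}(\Omega)$) and approximate it in operator norm by finite rank operators; for a rank-one piece $v\mapsto\langle v,\phi\rangle_{L^{2}}\psi$, the quantity $\frac1T\int_{0}^{T}|\langle e^{tJL}\omega(0),\phi\rangle_{L^{2}}|^{2}\,dt$ equals, after rewriting the $L^{2}$ pairing through its Riesz representative in $H$, the Cesàro mean of the squared modulus of the Fourier transform of the finite non-atomic complex measure $\langle dE_{\lambda}\omega(0),\cdot\rangle$, which tends to $0$ by Wiener's theorem. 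Summing the finitely many rank-one contributions and bounding the finite-rank error uniformly in $t$ by $\sup_{t}\|e^{tJL}\omega(0)\|_{L^{2}}<\infty$ yields $\frac1T\int_{0}^{T}\|Ce^{tJL}\omega(0)\|_{L^{2}}^{2}\,dt\to0$, which is \eqref{Rage-Euler}. Finally, $\mathrm{curl}^{-1}=\nabla^{\perp}(-\Delta)^{-1}$ maps $L^{2}(\Omega)$ into $H^{1}(\Omega)$, hence is a compact operator on $L^{2}(\Omega)$ by the Rellich--Kondrachov theorem on the bounded domain $\Omega$; applying \eqref{Rage-Euler} with $C=\mathrm{curl}^{-1}$ gives \eqref{rage-Euler-velocity}.

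The main obstacle I anticipate is the careful bookkeeping in the functional-analytic setup: confirming that the restriction of the abstract $C^{0}$ group $e^{tJL}$ to $\overline{R(J)}$ is genuinely unitary on a Hilbert space isomorphic to $L^{2}(\Omega)$, so that Stone's theorem, the spectral theorem, the mean ergodic theorem, and the RAGE/Wiener argument all apply verbatim, and in particular matching the $L^{2}$ pairing appearing in ``$C$ compact on $L^{2}$'' with the $\langle L\cdot,\cdot\rangle$ pairing underlying the spectral calculus. Once this is in place, the identification of $\ker(JL)\cap\overline{R(J)}$ via \eqref{eqn-ker-L-R} and the compactness of $\mathrm{curl}^{-1}$ are routine.
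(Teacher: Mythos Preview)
Your proposal is correct and follows essentially the same approach as the paper: equip $\overline{R(J)}$ with the inner product $\langle L\cdot,\cdot\rangle$ (uniformly positive there since $A>0$, and equivalent to the $L^{2}$ norm), observe that $JL|_{\overline{R(J)}}$ is anti-self-adjoint so $e^{tJL}$ is unitary, apply the mean ergodic theorem with $\ker(JL)\cap\overline{R(J)}=(-\Delta)\ker A=\{0\}$ for (i), and invoke the RAGE theorem together with the compactness of $\operatorname{curl}^{-1}$ for (ii). Your write-up is somewhat more explicit (Stone's theorem, the Wiener/finite-rank reduction inside RAGE, Rellich--Kondrachov for $\operatorname{curl}^{-1}$), but the argument is the same.
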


\begin{proof}
By Lemma \ref{lemma-euler-spectral}, $L|_{\overline{R\left(  J\right)  } }>0$.
Since $\overline{R\left(  J\right)  }$ is an invariant subspace of $JL$, we
can consider the operator $JL$ in $\overline{R\left(  J\right)  }$. Define the
inner product $\left[  \cdot,\cdot\right]  =\left\langle L\cdot,\cdot
\right\rangle $ on $\overline{R\left(  J\right)  }$, then the norm in $\left[
\cdot,\cdot\right]  \ $is equivalent to the $L^{2}$ norm. As noted before, the
operator $JL|_{\overline{R\left(  J\right)  }}$ is anti-self-adjoint with
respect to the inner product $\left[  \cdot,\cdot\right]  $.

(i) By the mean ergodic convergence of unitary operators (\cite{yosida-book})
\[
\lim_{T\rightarrow\infty}\frac{1}{T}\int_{0}^{T}\omega\left(  t\right)
dt=\lim_{T\rightarrow\infty}\frac{1}{T}\int_{0}^{T}e^{tJL|_{\overline{R\left(
J\right)  }}}\omega\left(  0\right)  dt=P_{0}\omega\left(  0\right)
\]
in $L^{2}$, where $P_{0}$ is the projection operator from $\overline{R\left(
J\right)  }$ to $\ker JL|_{\overline{R\left(  J\right)  }}$ orthogonal with
respect to $[\cdot,\cdot]$. Since $\ker A=\left\{  0\right\}  $, by Lemma
\ref{lemma-euler-spectral} and \eqref{E:A(omega)} in particular, $\ker
JL|_{\overline{R\left(  J\right)  }}=\left\{  0\right\}  $ and thus
$P_{0}\omega\left(  0\right)  =0$.

(ii) If $JL$ has no embedded imaginary eigenvalue, then (\ref{Rage-Euler})
follows directly by the RAGE theorem (\cite{CFKS}), again by using the
anti-self-adjoint property of $JL|_{\overline{R\left(  J\right)  }}$. The
conclusion (\ref{rage-Euler-velocity}) follows by choosing the compact
operator $C=\operatorname{curl}^{-1}$.
\end{proof}

\begin{remark}
Assuming $A>0$, from the proof of Theorem \ref{thm-stability-euler-1}, the
subspace $\tilde{S}$ defined in Proposition \ref{prop-counting-k-0-2} is
trivial. By Proposition \ref{prop-counting-k-0-2}, there is a direct sum
decomposition $L^{2}=\ker\left(  JL\right)  \oplus\overline{R\left(  J\right)
}$ invariant under $JL$. In fact $\ker\left(  JL\right)  $ corresponds to the
steady solution of the linearized Euler equation. So above Lemma shows that
for any initial data in $L^{2}$, in the time averaged limit, the solution of
the linearized Euler equation converges to a steady solution. This is a weak
form of inviscid damping.
\end{remark}

A stable example satisfying the assumption $A>0$ in Theorem
\ref{thm-RAGE-Euler} is given in Remark \ref{remark-nonlinear-stability-Euler}%
. Below, we consider two examples of stable shear flows. First, we consider
the Poisseulle flow $U\left(  y\right)  =y^{2}$ in a $2\pi$-periodic channel
$\left\{  -1<y<1\right\}  $. The linearized Euler equation becomes
\[
\partial_{t}\omega+y^{2}\partial_{x}\omega+2\partial_{x}\psi=0.
\]
Consider the subspace of non-shear vorticities with a weighted $L^{2}$ norm
\[
X_{1}=\left\{  \omega=\sum_{k\in\mathbf{Z},\ k\neq0}e^{ikx}\omega_{k}\left(
y\right)  ,\ \Vert\omega\Vert_{X_{1}}^{2}=\sum_{k\in\mathbf{Z},\ k\neq0}\Vert
y\omega_{k}\Vert_{L^{2}}^{2}<\infty\right\}  .
\]
Define $J=-\partial_{x}$ and $L=y^{2}+2\left(  -\Delta\right)  ^{-1}$. Then
$L$ is uniformly positive on $X_{1}$.

Second, consider the Kolmogorov flow $U\left(  y\right)  =\sin y$ in a torus
$T^{2}=S_{\frac{2\pi}{\alpha}}\times S_{2\pi}$ with $\alpha>1$. Here
$\alpha>1$ is the sharp stability condition since the shear flow is unstable
when $\alpha<1$. The linearized equation is
\[
\partial_{t}\omega+\sin y\partial_{x}\left(  \omega-\psi\right)  =0.
\]
Let $J=\sin y\partial_{x}$ and $L=1-\left(  -\Delta\right)  ^{-1}$. Then $L$
is uniformly positive on
\[
X_{2}=\left\{  \omega=\sum_{k\in\mathbf{Z},\ k\neq0}e^{ikx}\omega_{k}\left(
y\right)  ,\ \omega\in L^{2}\right\}
\]
when $\alpha>1$. It can be shown (\cite{lin-Xu}) that for above two examples,
the linearized Euler operator has no embedded eigenvalues. Therefore, Theorem
\ref{thm-RAGE-Euler} (ii) is true for the above two shear flows in $X_{1}$ and
$X_{2}$ respectively. In particular, if we choose $C$ to be $P_{N}$, the
projection operator to the first $N$ Fourier modes (in $x$), then
\begin{equation}
\frac{1}{T}\int_{0}^{T}\left\Vert P_{N}\omega\left(  t\right)  \right\Vert
_{L^{2}}^{2}dt\rightarrow0,\text{when }T\rightarrow\infty.
\label{enstrophy-cascade}%
\end{equation}
This shows that in the time averaged sense, the low frequency parts of
$\omega$ tends to zero. This observation was used to prove (\cite{lin-Xu}) the
metastability of Kolmogorov flows. In the fluid literature (see e.g.
\cite{tableling}), for 2D turbulence a dual cascade was known that energy
moves to low frequency end and the enstrophy ($\int\omega^{2}dx$) moves to the
high frequency end. The result (\ref{enstrophy-cascade}) can be seen as a
justification of such physical intuition in a weak sense.

\begin{remark}
Two classes of shear flows generalizing the above two examples are studied in
\cite{lin-Xu}. The linear inviscid damping in the sense of
(\ref{rage-Euler-velocity}) is proved for stable shear flows and on the center
space for the unstable shear flows, when $\omega\left(  0\right)  \in L^{2}$
is non-shear. Recently, for monotone and certain symmetric shear flows, more
explicit linear decay estimates of the velocity were obtained in
\cite{zillinger, Wei-Zhang-Zhao2017, Wei-Zhang-Zhao1704} for more regular
initial data (e.g. $\omega\left(  0\right)  \in H^{1}$ or $H^{2}$).

In \cite{lin-zhu}, the stability of shear flows under Coriolis forces is
studied. By using the instability index Theorem \ref{theorem-counting}, the
sharp stability condition for a class of shear flows can be obtained. Then the
linear damping as in the above sense is proved for non-shear $\omega\left(
0\right)  \in L^{2}$.
\end{remark}

\subsection{Stability of traveling waves of 2-dim nonlinear Schr\"{o}dinger
equations with nonzero conditions at infinity}

\label{SS:2dGGP}

In this subsection, we consider the nonlinear Schr\"{o}dinger equation (NLS)
\begin{equation}
i\frac{\partial u}{\partial t}+\Delta u+F(|u|^{2})u=0,\quad u=u_{1}%
+iu_{2}:\mathbf{R}\times\mathbf{R}^{2}\rightarrow\mathbf{C}. \label{E:GGP}%
\end{equation}
In particular, we assume that the nonlinearity $F(s)$ satisfies
\begin{equation}
F\in C^{2},\quad F(1)=0,\quad F^{\prime}(1)<0. \label{E:F1}%
\end{equation}
Important well-known equations of this type are Gross-Pitaevskii (GP) equation
with $F\left(  s\right)  =1-s\ $and the cubic-quintic NLS with $F\left(
s\right)  =-\alpha_{1}+\alpha_{3}s-\alpha_{5}s^{2}$, where $\alpha_{1}%
,\alpha_{3}$ and $\alpha_{5}$ are positive constants. Assume $s=1$ is a local
minimal point of $F$, it is natural to consider solutions $u(t,x)$ satisfying
the following boundary condition in some appropriate sense
\begin{equation}
|u|\rightarrow1\;\text{ as }|x|\rightarrow\infty. \label{E:GGP-BC}%
\end{equation}
After normalization, we can assume that $u\rightarrow1$ when $\left\vert
x\right\vert \rightarrow\infty$ in some weak sense such as $u-1$ being
approximable by Schwartz class functions in certain Sobolev norms. The
equation (\ref{E:GGP}) has the conserved energy and momentum functionals
\begin{align*}
&  E\left(  u\right)  =\frac{1}{2}\int_{\mathbf{R}^{2}}\big(\left\vert \nabla
u\right\vert ^{2}+V(\left\vert u\right\vert ^{2})\big)dx,\\
\vec{P}\left(  u\right)   &  =\left(  P_{1}\left(  u\right)  ,P_{2}\left(
u\right)  \right)  =\frac{1}{2}\int_{\mathbf{R}^{2}}\langle\nabla u,i\left(
u-1\right)  \rangle\ dx=\int_{\mathbf{R}^{2}}\left(  u_{1}-1\right)  \nabla
u_{2}dx,
\end{align*}
\textrm{where}$\ V(s)=\int_{s}^{1}F(\tau)d\tau$. We also denote the first
component of $\vec{P}\left(  u\right)  $ by
\[
P\left(  u\right)  =\frac{1}{2}\int_{\mathbf{R}^{2}}\langle\partial_{x_{1}%
}u,i\left(  u-1\right)  \rangle\ dx=\int_{\mathbf{R}^{2}}\left(
u_{1}-1\right)  \partial_{x_{1}}u_{2}dx.
\]

A traveling wave (without loss of generality, in $x_{1}$-direction) of
\eqref{E:GGP} with wave speed $c\in\left(  0,\sqrt{2}\right)  $ is a solution
in the form of $u=U_{c}(x_{1}-ct,x_{2})$, where $U_{c}$ satisfies the elliptic
equation
\begin{equation}
-ic\partial_{x_{1}}U_{c}+\Delta U_{c}+F(|U_{c}|^{2})U_{c}=0, \label{TW-GGP}%
\end{equation}
with the boundary condition $U_{c}\rightarrow1$ when $|x|\rightarrow\infty$ in
the sense $U_{c}-1\in\dot{H}^{1}$. Here, $\sqrt{2}$ is the sound speed and
when $c\geq\sqrt{2}$, in general the traveling waves do not exist (see e.g.
\cite{Maris-non-existence}). Formally, $U_{c}$ is a critical point of $E-cP$.
Our goal is to understand the linear stability/instability of such a traveling
wave, namely, the evolution of the linearized equation of \eqref{E:GGP} at
$U_{c}=u_{c}+iv_{c}$ put in the moving frame $x_{1}\rightarrow x_{1}%
-ct,x_{2}\rightarrow x_{2}$:
\begin{equation}
u_{t}=JL_{c}u,\quad u=(u_{1},u_{2})^{T}\rightarrow0\;\text{ as }%
|x|\rightarrow\infty, \label{E:LGGP}%
\end{equation}
where $J=%
\begin{pmatrix}
0 & 1\\
-1 & 0
\end{pmatrix}
$ and
\[
L_{c}:=\left(
\begin{array}
[c]{cc}%
-\Delta-F\left(  \left\vert U_{c}\right\vert ^{2}\right)  -2F^{\prime}\left(
\left\vert U_{c}\right\vert ^{2}\right)  u_{c}^{2} & -c\partial_{x_{1}%
}-2F^{\prime}\left(  \left\vert U_{c}\right\vert ^{2}\right)  u_{c}v_{c}\\
c\partial_{x_{1}}-2F^{\prime}\left(  \left\vert U_{c}\right\vert ^{2}\right)
u_{c}v_{c} & -\Delta-F\left(  \left\vert U_{c}\right\vert ^{2}\right)
-2F^{\prime}\left(  \left\vert U_{c}\right\vert ^{2}\right)  v_{c}^{2}%
\end{array}
\right)  .
\]
Through $L^{2}$ duality, $L_{c}$ generates the quadratic form
\begin{align}
\langle L_{c}u,v\rangle=\int_{\mathbf{R}^{2}}  &  {\large \{}\nabla
u\cdot\nabla v+c(v_{1x_{1}}u_{2}-u_{1}v_{2x_{1}})-F(|U_{c}|^{2})u\cdot
v\nonumber\\
&  \qquad\qquad\qquad\qquad-2F^{\prime}(|U_{c}|^{2})(U_{c}\cdot u)(U_{c}\cdot
v)\,{\large \}}\ dx, \label{E:GGP-L1}%
\end{align}
where $u\cdot v=\operatorname{Re}(u\bar{v})$.

For the purpose of studying the linearized equation (\ref{E:LGGP}), we make
the following assumptions:

\begin{enumerate}
\item[(\textbf{NLS-1})] $U_{c}-1\in H^{1}\times\dot{H}^{1}$ satisfies
\eqref{E:GGP-BC} and $|U_{c}|_{C^{1}(\mathbf{R}^{2})}<\infty$.

\item[(\textbf{NLS-2})] Let $\Gamma$ be the collection of subspaces $S\subset
H^{1}(\mathbf{R}^{2})\times H^{1}(\mathbf{R}^{2})$ such that $\langle
L_{c}u,u\rangle<0$ for all $0\neq u\in S$, then
\[
\max\{\dim S\mid S\in\Gamma\}=n^{-}(L_{c})<\infty.
\]

\end{enumerate}

The above (\textbf{NLS-1}) is a natural regularity assumption. For any given
traveling wave of \eqref{E:GGP}, it is probably not so straightforward to
verify (\textbf{NLS-2}). This, however, would be a direct consequence if
$U_{c}$ is obtained through a constrained variational approach related to
energy and momentum, which is often the case. For example, in
\cite{chiron-maris-13} \cite{chiron-scheid}, the 2D traveling waves of
(\ref{TW-GGP}) were constructed by minimizing the functional $E\left(
u\right)  -cP\left(  u\right)  $ subject to a constraint $P\left(  u\right)
=p$ or $E_{kin}\left(  u\right)  =\int\left\vert \nabla u\right\vert ^{2}%
dx=k$, for general nonlinearity $F$. The variational problem of minimizing
$E\left(  u\right)  -cP\left(  u\right)  $ subject to fixed $P\left(
u\right)  $ was also studied in \cite{betheul-et-minimizer-cmp} to construct
2D traveling waves of GP equation. Since these 2D traveling waves $U_{c}%
\ $were minimizers of $E\left(  u\right)  -cP\left(  u\right)  $ subject to
one constraint, it can be shown that $n^{-}\left(  L_{c}\right)  \leq1$ (see
e.g. the proof of Lemma 2.7 of \cite{lin-wang-zeng}). Here, we note that
$U_{c}$ is a critical point of $E\left(  u\right)  -cP\left(  u\right)  $ and
$L_{c}=E^{\prime\prime}\left(  U_{c}\right)  -cP^{\prime\prime}\left(
U_{c}\right)  $.

To study the quadratic form $\left\langle L_{c}\cdot,\cdot\right\rangle $,
obviously one may take $X=H^{1}(\mathbf{R}^{2})\times H^{1}(\mathbf{R}^{2})$.
On the one hand, the above assumptions ensure that $L_{c}:X\rightarrow
X^{\ast}=H^{-1}\times H^{-1}$ is bounded, satisfies $L_{c}^{\ast}=L_{c}$, and
has $n^{-}(L_{c})$ negative dimensions. On the other hand, it is easy to see
that $J:X^{\ast}\rightarrow X$ is unbounded, but has a dense domain
$H^{1}\times H^{1}\subset X^{\ast}=H^{-1}\times H^{-1}$, and satisfies
$J^{\ast}=-J$. However, as the boundary condition \eqref{E:GGP-BC} does not
provide enough control of $|u|^{2}$ near $|x|=\infty$ in $\langle
L_{c}u,u\rangle$, it is not clear that (\textbf{H2.b}) can be satisfied by any decomposition.

For \eqref{E:GGP} considered on $\mathbf{R}^{N}$, $N\geq3$, as in
\cite{lin-wang-zeng}, it would be possible to work on $X=H^{1}\times\dot
{H}^{1}$, where $u_{1}\in H^{1}$ and $u_{2}\in\dot{H}^{1}$, and verify
assumptions (\textbf{H1-3}) for $J$ and $L_{c}$ based on the following two
observations. Firstly, in such higher dimensions, the Gagliardo-Nirenberg
inequality implies that $\dot{H}^{1}$ functions decay at $x=\infty$ in the
$L^{p}$ sense. Therefore, we may reasonably strengthen the boundary condition
\eqref{E:GGP-BC} to $U_{c}\rightarrow1$ as $|x|\rightarrow\infty$.
Consequently the `principle part' in $\langle L_{c}u,u\rangle$ provides the
control on the $H^{1}\times\dot{H}^{1}$ norm of $u$. Secondly, there are
indications that $U_{c}$ decays like $u_{c}-1=O(|x|^{-N})$ and $v_{c}%
=O(|x|^{1-N})$ as in the case proved for the (GP) equation in \cite{B-G-S}.
Along with the Hardy inequality, this allows us to control those terms in
\eqref{E:GGP-L1} with vanishing variable coefficients by the $H^{1}\times
\dot{H}^{1}$ norm of $u$. See \cite{lin-wang-zeng} for more details.

The situation is much worse on $\mathbf{R}^{2}$ unfortunately since both of
the above key observations break down on $\mathbf{R}^{2}$. To overcome these
difficulties, our idea is to study the stability of the linearized equation
\eqref{E:LGGP} on some space roughly between $H^{1}\times H^{1}$ and $\dot
{H}^{1}\times\dot{H}^{1}$ defined according to the properties of $L_{c}$ by
applying Theorem \ref{T:degenerate}.

Let $X=H^{1}\times H^{1}\ $for \eqref{E:LGGP} and define $Q_{0},Q_{1}%
:X\rightarrow X^{\ast}$ as
\[
\langle Q_{0}u,v\rangle=\operatorname{Re}\int_{\mathbf{R}^{2}}u\bar{v}%
dx,\quad\langle Q_{1}u,v\rangle=\operatorname{Re}\int_{\mathbf{R}^{2}}\left(
u_{x_{1}}\bar{v}_{x_{1}}+u_{x_{2}}\bar{v}_{x_{2}}\right)  \ dx,
\]
namely, the $L^{2}$ and $\dot{H}^{1}$ duality, respectively, which satisfy
(\textbf{B1}) in Subsection \ref{SS:degenerate}. Let $\mathbb{J}:X\rightarrow
X$\ be $\mathbb{J}=%
\begin{pmatrix}
0 & 1\\
-1 & 0
\end{pmatrix}
$. Clearly, $\mathbb{J}$ satisfies (\textbf{B2}) and the unbounded operator
$J=\mathbb{J}Q_{0}^{-1}:X^{\ast}\rightarrow X$ has the same matrix
representation through the $L^{2}$ duality. As $L_{c}-Q_{1}$ consists of terms
of at most one order of derivative, it satisfies (\textbf{B3}). From
(\textbf{NLS-2}), there exists a subspace $S\subset X$ such that $\dim
S=n^{-}(L_{c})$ and $L_{c}$ is negative definite on $S$. By a slight
perturbation, e.g. applying the mollifier to a basis of $S$, we obtain a
subspace $X_{-}\subset H^{3}\times H^{3}$ such that $\dim X_{-}=n^{-}(L_{c})$
and $L_{c}$ is negative definite on $X_{-}$. Let
\[
X_{\geq0}=X_{-}^{\perp_{L_{c}}}=\{u\in X\mid\langle L_{c}v,u\rangle
=0\}\supset\ker L_{c},
\]
and
\[
X_{+}=\{u\in X_{\geq0}\mid\int_{\mathbf{R}^{2}}u\cdot vdx=0,\,\forall v\in\ker
L\}.
\]
Since $\dim X_{-}<\infty$ and $L_{c}$ is negative definite on $X_{-}$, from
Lemma \ref{L:non-degeneracy} where (\textbf{H2.b}) is not necessary (see
Remark \ref{R:non-degeneracy}), we have $X=X_{-}\oplus X_{\geq0}$. It is
obvious $X_{\geq0}=X_{+}\oplus\ker L_{c}$ and the decomposition $X=X_{-}%
\oplus\ker L_{c}\oplus X_{+}$ is $L_{c}$-orthogonal. From (\textbf{NLS-2}) and
the definition of $X_{\pm}$, $\langle L_{c}u,u\rangle$ is (not necessarily
uniformly) positive on $X_{+}$ and thus (\textbf{B4}) is satisfied. Finally,
one may compute from the construction that
\[
\ker i_{X_{+}}^{\ast}=Q_{0}(\ker L_{c})\oplus L_{c}(X_{-}).
\]
Since we take $X_{-}\subset H^{3}\times H^{3}$ and $|U_{c}|_{C^{1}}<\infty$,
(\textbf{B5}) is also satisfied. From Theorem \ref{T:degenerate}, there exists
a function space $Y$ roughly between $X=H^{1}\times H^{1}$ and $\dot{H}%
^{1}\times\dot{H}^{1}$, an extension $L_{c,Y}:Y\rightarrow Y^{\ast}$ of
$L_{c}$, and the restriction
\[
J_{Y}:Y^{\ast}\supset D(J_{Y})\rightarrow Y
\]
of $J$, such that $(Y,L_{c,Y},J_{Y})$ satisfies assumption (\textbf{H1-3}).
Therefore, all our main results apply to the linearized NLS \eqref{E:LGGP} on
$Y$.

In the rest of this subsection, we assume, for some $c_{0}>0$,

\begin{enumerate}
\item[(\textbf{NLS})] There exists a $C^{1}$ curve of traveling waves for
$c\ $near$\ c_{0}$ satisfying (\textbf{NLS-1}) such that $n^{-}\left(
L_{c_{0}}\right)  \leq1$ and (\textbf{NLS-2}) is satisfied for $c=c_{0}$.
\end{enumerate}

As mentioned in the above, $n^{-}(L_{c_{0}}) \le1$ is satisfied if $U_{c_{0}}$
is constructed as minimizers of $E-c_{0}P$ subject to one constraint such as
fixed $P\left(  u\right)  $ or $E_{kin}\left(  u\right)  $. We shall apply
Theorem \ref{theorem-counting} to study the linearized equation (\ref{E:LGGP})
on $Y.$ In order to estimate $k_{0}^{\leq0}$ in the counting formula
(\ref{counting-formula}), differentiating (\ref{TW-GGP}) in $x_{i}$ and we get
$\ker L_{c_{0}}\supset\left\{  \partial_{x_{i}}U_{c_{0}}, i=1,2 \right\}  $.
Moreover, differentiating (\ref{TW-GGP}) in $c$, we have
\[
L_{c_{0}}\partial_{c}U_{c}|_{c_{0}}=P^{\prime}(U_{c_{0}})=J^{-1}%
\partial_{x_{1}}U_{c_{0}},
\]
and thus $JL_{c_{0}}\partial_{c}U_{c}|_{c_{0}}\in\ker L_{c_{0}}$. Since
\[
\left\langle L_{c_{0}}\partial_{c}U_{c}|_{c_{0}},\partial_{c}U_{c}|_{c_{0}%
}\right\rangle =\frac{dP\left(  U_{c}\right)  }{dc}|_{c_{0}},
\]
by Proposition \ref{prop-counting-k-0-1}, we have $k_{0}^{\leq0}\geq1$ when
$\frac{dP\left(  U_{c}\right)  }{dc}|_{c_{0}}\leq0$ and in this case $U_{c}$
is spectrally stable by (\ref{counting-formula}).

The traveling waves constructed in the literature
(\cite{betheul-et-minimizer-cmp} \cite{chiron-maris-13} \cite{chiron-scheid})
are even in $x_{2}$, that is, of the form $U_{c}\left(  x_{1},\left\vert
x_{2}\right\vert \right)  $. Thus, we can consider odd and even perturbations
(in $x_{2}$) respectively. We consider the even perturbations, that is, in the
space $Y_{e}=\left\{  u\in Y\ |\ u\text{ is even in }x_{2}\right\}  $. For
traveling waves as constrained minimizers of $E-cP$, in general it can be
shown that there is at least one even negative direction of $\left\langle
L_{c}\cdot,\cdot\right\rangle ,$ which then implies $n^{-}\left(
L_{c}|_{Y_{e}}\right)  =1$. Such a symmetry preserving negative direction of
$L_{c}$ was constructed in \cite{lin-wang-zeng} for the $3$D case. For the 2D
case, an even negative direction could be constructed by refining the Derrick
type arguments used in \cite{jones-et-stability}. More specifically, one can
consider a scaled traveling wave $U^{a,b}=U_{c_{0}}\left(  ax_{1}%
,bx_{2}\right)  $ and choose a family of parameters $a\left(  s\right)
,b\left(  s\right)  $ near $1$ with $a\left(  0\right)  =b\left(  0\right)
=1$ such that
\[
\left(  E-c_{0}P\right)  \left(  U^{a,b}\right)  <\left(  E-c_{0}P\right)
\left(  U_{c_{0}}\right)  ,
\]
from which an even negative direction $\frac{d}{ds}U^{a\left(  s\right)
,b\left(  s\right)  }|_{s=0}$ may be obtained. If in addition to the condition
$n^{-}\left(  L_{c}|_{Y_{e}}\right)  =1$, we assume that $\partial_{x_{1}%
}U_{c}$ is the only even kernel of $L_{c}$, then by Theorem
\ref{theorem-counting} and Proposition \ref{prop-counting-k-0-1}, there is
linear instability in case $\frac{dP\left(  U_{c}\right)  }{dc}|_{c_{0}}>0$.
We summarize above discussions in the following theorem.

\begin{theorem}
\label{thm: stability 2D GGP}(i) Assuming (\textbf{NLS}), the 2D traveling
wave $U_{c_{0}}\ $ is spectrally stable if $\frac{dP\left(  U_{c}\right)
}{dc}|_{c_{0}}\leq0.$

(ii) If we further assume that $U_{c_{0}}$ is even in $x_{2}$ and there exists
$v \in Y_{e}$ in the negative direction of $L_{c_{0}}$ and $\ker L_{c_{0}}
\cap Y_{e} = span\{\partial_{x_{1}}U_{c_{0}}\}$, then $\frac{dP\left(
U_{c}\right)  }{dc}|_{c_{0}}>0$ implies linear instability of $U_{c_{0}}$.
\end{theorem}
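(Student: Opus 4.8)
The plan is to place the linearized equation \eqref{E:LGGP} in the abstract framework and then read off both conclusions from the index formula \eqref{counting-formula} of Theorem \ref{theorem-counting} together with the bounds for $k_0^{\le0}$ in Proposition \ref{prop-counting-k-0-1}. The first step is the reduction already carried out above: taking $X=H^1\times H^1$, with $Q_0,Q_1$ the $L^2$ and $\dot H^1$ dualities and $\mathbb J$ the standard symplectic matrix, one verifies (\textbf{B1})--(\textbf{B5}) and applies Theorem \ref{T:degenerate} to obtain a Hilbert space $Y$ (roughly between $H^1\times H^1$ and $\dot H^1\times\dot H^1$), a bounded symmetric extension $L_{c,Y}\colon Y\to Y^*$ of $L_c$, and the restriction $J_Y\colon D(J_Y)\to Y$ of $J$, such that $(Y,L_{c,Y},J_Y)$ satisfies (\textbf{H1})--(\textbf{H3}) with $n^-(L_{c_0,Y})=n^-(L_{c_0})\le1$ and (from the $L_Y$-orthogonal splitting $Y=X_{\le0}\oplus Y_+$ with $L_Y|_{Y_+}$ uniformly positive) $\ker L_{c_0,Y}=\ker L_{c_0}$. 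Then Theorems \ref{theorem-dichotomy} and \ref{theorem-counting} and Proposition \ref{prop-counting-k-0-1} apply to $e^{tJ_YL_{c_0,Y}}$. I also record that $\ker J_Y=\{0\}$, since $\mathbb J$ is invertible and $i_X\colon X\to Y$ has dense range, whence $\ker(J_YL_{c_0,Y})=\ker L_{c_0,Y}$.

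For part (i), differentiating the profile equation \eqref{TW-GGP} in $x_1,x_2$ gives $\partial_{x_1}U_{c_0},\partial_{x_2}U_{c_0}\in\ker L_{c_0}$, and differentiating in $c$ along the $C^1$ curve gives $L_{c_0}\partial_cU_c|_{c_0}=P'(U_{c_0})=J^{-1}\partial_{x_1}U_{c_0}$, so $J_YL_{c_0,Y}\partial_cU_c|_{c_0}=\partial_{x_1}U_{c_0}\in\ker L_{c_0,Y}$ and
\[
\big\langle L_{c_0}\partial_cU_c|_{c_0},\,\partial_cU_c|_{c_0}\big\rangle=\frac{d}{dc}P(U_c)\Big|_{c_0}.
\]
Since $\partial_{x_1}U_{c_0}\ne0$, the class of $\partial_cU_c|_{c_0}$ is a nonzero element of $(J_YL_{c_0,Y})^{-1}(\ker L_{c_0,Y})/\ker L_{c_0,Y}$; when $\frac{d}{dc}P(U_c)|_{c_0}\le0$ it spans a nonpositive direction there, so Proposition \ref{prop-counting-k-0-1}(i) gives $k_0^{\le0}\ge1$. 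Together with $k_0^{\le0}\le n^-(L_{c_0})\le1$ (from \eqref{counting-formula}) this yields $k_0^{\le0}=n^-(L_{c_0})$, hence spectral stability by Corollary \ref{cor-instability-index}(i) (equivalently $k_r=k_c=0$, so $\dim E^u=\dim E^s=0$ in Theorem \ref{theorem-dichotomy}).

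For part (ii), since $U_{c_0}$ is even in $x_2$ all coefficients of $L_{c_0,Y}$ are even in $x_2$, so $J_Y$ and $L_{c_0,Y}$ preserve the even subspace $Y_e$ and $(Y_e,L_{c_0,Y}|_{Y_e},J_Y|_{Y_e})$ again satisfies (\textbf{H1})--(\textbf{H3}). By hypothesis there is an even negative direction, so $n^-(L_{c_0,Y}|_{Y_e})=1$, and $\ker(L_{c_0,Y})\cap Y_e=\mathrm{span}\{\partial_{x_1}U_{c_0}\}$ (note $\partial_{x_2}U_{c_0}$ is odd). The vector $\partial_cU_c|_{c_0}$ is even; using $J_YL_{c_0,Y}\partial_cU_c|_{c_0}=\partial_{x_1}U_{c_0}$ and $\ker(J_YL_{c_0,Y})=\ker L_{c_0,Y}$, any $u\in(J_YL_{c_0,Y})^{-1}(\ker L_{c_0,Y})\cap Y_e$ satisfies $u-a\,\partial_cU_c|_{c_0}\in\ker L_{c_0,Y}$ for some $a\in\mathbf R$, whence
\[
(J_YL_{c_0,Y})^{-1}(\ker L_{c_0,Y})\cap Y_e=\big(\ker L_{c_0,Y}\cap Y_e\big)\oplus\mathrm{span}\{\partial_cU_c|_{c_0}\}.
\]
When $\frac{d}{dc}P(U_c)|_{c_0}>0$ the form $\langle L\cdot,\cdot\rangle$ is positive, in particular non-degenerate, on this quotient, so Proposition \ref{prop-counting-k-0-1}(ii) gives $k_0^{\le0}=0$ on $Y_e$. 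Thus $n^-(L_{c_0,Y}|_{Y_e})-k_0^{\le0}=1$ is odd, and Corollary \ref{cor-instability-index}(ii) gives $k_r=1$, $k_c=k_i^{\le0}=0$: $e^{tJ_YL_{c_0,Y}}$ has an exponentially growing solution in $Y_e$, so $U_{c_0}$ is linearly unstable.

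The part I expect to be the real work is the reduction, not the index bookkeeping: verifying (\textbf{B1})--(\textbf{B5}) (producing $X_-\subset H^3\times H^3$, checking $\ker i_{X_+}^*\subset Q_0(X)$), confirming $(Y,L_{c,Y},J_Y)$ indeed satisfies (\textbf{H1})--(\textbf{H3}), and checking that $\partial_cU_c|_{c_0}$ lies in $Y$ (not merely in $H^1\times\dot H^1$) and that the even/odd splitting descends to $Y$ and $L_{c,Y}$. A second nontrivial ingredient, needed before part (ii) applies to concrete waves, is the existence of a symmetry-preserving negative direction of $L_{c_0}$; for profiles obtained as constrained minimizers of $E-cP$ this follows from a Derrick/Pohozaev-type scaling argument applied to $U_{c_0}(ax_1,bx_2)$, refining the computation in the cited references. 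Once these functional-analytic points are settled, both parts follow from Theorems \ref{theorem-counting}, \ref{theorem-dichotomy} and Proposition \ref{prop-counting-k-0-1} as sketched.
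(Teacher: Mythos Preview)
Your proposal is correct and follows essentially the same approach as the paper: the argument in the paper is exactly the discussion immediately preceding the theorem statement, which applies Theorem \ref{T:degenerate} to put the problem in the abstract framework, uses $\partial_c U_c|_{c_0}$ as a generalized eigenvector with $\langle L_{c_0}\partial_c U_c|_{c_0},\partial_c U_c|_{c_0}\rangle = dP/dc|_{c_0}$, and reads off both conclusions from Proposition \ref{prop-counting-k-0-1} and the index formula \eqref{counting-formula}. Your write-up is in fact more detailed than the paper's in several places (explicitly computing $(J_YL_{c_0,Y})^{-1}(\ker L_{c_0,Y})\cap Y_e$, noting $\ker J_Y=\{0\}$, and flagging the genuine technical points about $\partial_c U_c|_{c_0}\in Y$ and the even/odd splitting descending to $Y$), all of which the paper either takes for granted or absorbs into the preceding reduction.
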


For the GP equation, by numerical computations (\cite{jones-et-stability})
$dP/dc$\thinspace$<0$ is true for the whole solitary wave branch. Thus 2D
traveling waves of GP are expected to be linearly stable. In
\cite{chiron-maris-13}, the orbital stability of these GP traveling waves was
obtained by showing concentration compactness of the constrained minimizing
sequence, under the assumption of local uniqueness of minimizers. The
transversal instability of 2D traveling waves of GP to 3D perturbation was
proved in \cite{lin-wang-zeng}. For general nonlinear term $F$ such as
cubic-quintic type, it is possible that there is an unstable branch of 2D
traveling waves with $dP/dc$\thinspace$>0$. See the numerical examples given
in \cite{chiron-scheid}.

Lastly, as a corollary of Theorems \ref{theorem-counting} and
\ref{thm: stability 2D GGP}, we prove that the traveling waves $U_{c_{0}}%
\ $have positive momentum $P\left(  U_{c_{0}}\right)  $.

\begin{corollary}
\label{cor-positive-momentum} Under the assumptions in both (i) and (ii) of
Theorem \ref{thm: stability 2D GGP}, except for the signs of $\frac{dP\left(
U_{c}\right)  }{dc}|_{c_{0}}$, we have $P\left(  U_{c_{0}}\right)  >0$.
\end{corollary}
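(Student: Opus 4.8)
The plan is to pass to the subspace $Y_{e}\subset Y$ of perturbations even in $x_{2}$, where the hypotheses of Theorem \ref{thm: stability 2D GGP}(ii) give $n^{-}(L_{c_{0}}|_{Y_{e}})=1$ and $\ker L_{c_{0}}\cap Y_{e}=\operatorname{span}\{\partial_{x_{1}}U_{c_{0}}\}$, and to couple the index formula \eqref{counting-formula} applied to $(J,L_{c_{0}},Y_{e})$ with Pohozaev-type identities obtained from anisotropic rescalings of $U_{c_{0}}$. First I would record the spectral data on $Y_{e}$: differentiating the traveling-wave equation \eqref{TW-GGP} in $c$ gives $L_{c_{0}}\,\partial_{c}U_{c}|_{c_{0}}=P'(U_{c_{0}})$, hence $JL_{c_{0}}\,\partial_{c}U_{c}|_{c_{0}}=\partial_{x_{1}}U_{c_{0}}\in\ker L_{c_{0}}$ and $\langle L_{c_{0}}\partial_{c}U_{c}|_{c_{0}},\partial_{c}U_{c}|_{c_{0}}\rangle=\tfrac{dP}{dc}|_{c_{0}}$; thus $\operatorname{span}\{\partial_{x_{1}}U_{c_{0}},\partial_{c}U_{c}|_{c_{0}}\}\subset E_{0}$, and via Proposition \ref{prop-counting-k-0-1} (together with Proposition \ref{P:non-deg} if $0$ is isolated in the spectrum on $Y_{e}$) the index identity $k_{r}+2k_{c}+2k_{i}^{\le0}+k_{0}^{\le0}=1$ forces $k_{c}=k_{i}^{\le0}=0$ and the dichotomy $(k_{r},k_{0}^{\le0})=(1,0)$ or $(0,1)$ according to the sign of $\tfrac{dP}{dc}|_{c_{0}}$. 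I would also use that, $U_{c_{0}}$ being obtained as a minimizer of $E-c_{0}P$ under one scalar constraint, $L_{c_{0}}$ is nonnegative on the codimension-one subspace $T=Y_{e}\cap\ker P'(U_{c_{0}})$ (note $P'(U_{c_{0}})$ does not vanish on $Y_{e}$ since $\langle P'(U_{c_{0}}),(\partial_{x_{1}}v_{c_{0}},-\partial_{x_{1}}u_{c_{0}})\rangle=\int|\partial_{x_{1}}U_{c_{0}}|^{2}>0$).

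Next I would extract the scaling identities. With $U^{a,b}(x)=U_{c_{0}}(ax_{1},bx_{2})$, which remains even in $x_{2}$, a change of variables gives
\[
(E-c_{0}P)(U^{a,b})=\tfrac ab\,e_{1}+\tfrac ba\,e_{2}+\tfrac1{ab}\,v-\tfrac{c_{0}p}{b},
\]
with $e_{1}=\tfrac12\int|\partial_{x_{1}}U_{c_{0}}|^{2}$, $e_{2}=\tfrac12\int|\partial_{x_{2}}U_{c_{0}}|^{2}$, $v=\tfrac12\int V(|U_{c_{0}}|^{2})$, $p=P(U_{c_{0}})$. Since $U_{c_{0}}$ is a critical point of $E-c_{0}P$, the gradient of this function of $(a,b)$ vanishes at $(1,1)$, which yields the Pohozaev relations $c_{0}p=2v=2(e_{1}-e_{2})$ and makes the Hessian at $(1,1)$ equal to the Gram matrix of $\langle L_{c_{0}}\cdot,\cdot\rangle$ on $\operatorname{span}\{x_{1}\partial_{x_{1}}U_{c_{0}},\,x_{2}\partial_{x_{2}}U_{c_{0}}\}$; in particular $\langle L_{c_{0}}(x\cdot\nabla U_{c_{0}}),x\cdot\nabla U_{c_{0}}\rangle=c_{0}P(U_{c_{0}})$, while $P(U_{c_{0}}(\lambda\,\cdot))=\lambda^{-1}P(U_{c_{0}})$ gives $P'(U_{c_{0}})[x\cdot\nabla U_{c_{0}}]=-P(U_{c_{0}})$ and hence $\langle L_{c_{0}}(x\cdot\nabla U_{c_{0}}),\partial_{c}U_{c}|_{c_{0}}\rangle=-P(U_{c_{0}})$. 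I would also check, from the decay of $U_{c_{0}}$ implied by \textbf{(NLS-1)} and elliptic regularity, that $x\cdot\nabla U_{c_{0}}$ and $x_{2}\partial_{x_{2}}U_{c_{0}}$ genuinely lie in $Y_{e}$, so that these identities feed into the index bound $n^{-}(L_{c_{0}}|_{Y_{e}})=1$.

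Finally I would argue by contradiction, assuming $P(U_{c_{0}})\le0$, comparing the explicit Gram matrix of $\langle L_{c_{0}}\cdot,\cdot\rangle$ on $W=\operatorname{span}\{x_{1}\partial_{x_{1}}U_{c_{0}},\,x_{2}\partial_{x_{2}}U_{c_{0}},\,\partial_{c}U_{c}|_{c_{0}},\,\partial_{x_{1}}U_{c_{0}}\}$ (whose entries are all computed from the identities above and $\tfrac{dP}{dc}|_{c_{0}}$) against the two constraints $n^{-}(L_{c_{0}}|_{Y_{e}})=1$ and $L_{c_{0}}|_{T}\ge0$. In the instability regime $\tfrac{dP}{dc}|_{c_{0}}>0$, the vector $\eta_{\ast}=x\cdot\nabla U_{c_{0}}+\tfrac{P(U_{c_{0}})}{dP/dc|_{c_{0}}}\,\partial_{c}U_{c}|_{c_{0}}$ lies in $T$ and satisfies $\langle L_{c_{0}}\eta_{\ast},\eta_{\ast}\rangle=c_{0}P(U_{c_{0}})-P(U_{c_{0}})^{2}/(dP/dc|_{c_{0}})<0$ when $P(U_{c_{0}})<0$, contradicting $L_{c_{0}}|_{T}\ge0$; in the stability regime $\tfrac{dP}{dc}|_{c_{0}}\le0$ one instead uses that $k_{0}^{\le0}=1$ already exhausts the unique non-positive direction of $L_{c_{0}}$ on $Y_{e}/\ker L_{c_{0}}$ through $\partial_{c}U_{c}|_{c_{0}}$, so that $x\cdot\nabla U_{c_{0}}$ being a further non-positive direction of $W$ gives a $2$-dimensional non-positive subspace and violates $n^{-}(L_{c_{0}}|_{Y_{e}})=1$; the borderline $P(U_{c_{0}})=0$ forces $L_{c_{0}}(x\cdot\nabla U_{c_{0}})\parallel P'(U_{c_{0}})=L_{c_{0}}\partial_{c}U_{c}|_{c_{0}}$, hence $x\cdot\nabla U_{c_{0}}-\mu\,\partial_{c}U_{c}|_{c_{0}}\in\operatorname{span}\{\partial_{x_{1}}U_{c_{0}}\}$, which is excluded for a genuinely two-dimensional $U_{c_{0}}$ by an ODE/characteristics uniqueness argument. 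In every case this yields $P(U_{c_{0}})>0$. I expect the main obstacle to be the stable/degenerate regime $\tfrac{dP}{dc}|_{c_{0}}\le0$, where the plain variational inequality $L_{c_{0}}|_{T}\ge0$ no longer immediately closes the contradiction and one must invoke the finer structure of the generalized eigenspace $E_{0}$ from Propositions \ref{prop-counting-k-0-1} and \ref{P:basis}, with the isotropic borderline $P(U_{c_{0}})=0$ being the most delicate subcase.
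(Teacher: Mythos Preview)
Your approach is genuinely different from the paper's, and as written it has a real gap. The paper does \emph{not} use dilation vectors $x\!\cdot\!\nabla U_{c_0}$ at all. Instead it exploits the rotational freedom in the direction of propagation: writing $U_{\vec c}(x)=U_{c}(Qx)$ for $\vec c=(c_1,c_2)$, $|\vec c|=c$, and differentiating \eqref{TW-GGP} in $c_2$ at $(c_0,0)$ produces $v_2=\partial_{c_2}U_{\vec c}|_{(c_0,0)}$ with $L_{c_0}v_2=J^{-1}\partial_{x_2}U_{c_0}$ and $\langle L_{c_0}v_2,v_2\rangle=P(U_{c_0})/c_0$. The crucial point is that $v_2\in(JL_{c_0})^{-1}(\ker L_{c_0})$, so together with $v_1=\partial_c U_c|_{c_0}$ one gets a $2\times2$ block $\mathrm{diag}\bigl(dP/dc,\;P/c_0\bigr)$ sitting inside the generalized zero-eigenspace, and the clean chain $n^{\le0}\bigl(L_{c_0}|_{\mathrm{span}\{v_1,v_2\}}\bigr)\le k_0^{\le0}\le n^{-}(L_{c_0})\le1$ finishes both regimes in two lines (using Theorem~\ref{thm: stability 2D GGP}(ii) to rule out $P\le0$ when $dP/dc>0$).

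Your dilation vector $x\!\cdot\!\nabla U_{c_0}$ does \emph{not} lie in $(JL_{c_0})^{-1}(\ker L_{c_0})$, so you cannot route through $k_0^{\le0}$. To close the unstable case you invoke $L_{c_0}|_{T}\ge0$ on $T=\ker P'(U_{c_0})$, but this is a constrained-minimizer property, not a consequence of the hypotheses of the corollary (which only assume $n^{-}(L_{c_0})\le1$; a single negative direction need not be transverse to $T$). In the stable case your ``two non-positive directions'' argument is also incomplete: on $\mathrm{span}\{x\!\cdot\!\nabla U_{c_0},\,\partial_c U_c\}$ the Gram matrix is $\begin{pmatrix} c_0P & -P\\ -P & dP/dc\end{pmatrix}$, and with $P<0$, $dP/dc<0$ its determinant $c_0P\,\tfrac{dP}{dc}-P^2$ can have either sign, so this span need not be $L_{c_0}$-non-positive. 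The missing idea is precisely the rotational derivative $v_2$, which supplies a second \emph{generalized-kernel} direction carrying the momentum $P(U_{c_0})$ and makes the index bound bite without any minimizer assumption.
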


\begin{proof}
First, we find $v_{2}$ such that $L_{c_{0}}v_{2}=J^{-1}\partial_{x_{2}%
}U_{c_{0}}$. Consider traveling waves $U_{\vec{c}}\ \left(  \vec{x}-\vec
{c}t\right)  $ with velocity vector $\vec{c}=\left(  c_{1},c_{2}\right)
\,\ $and $\left\vert \vec{c}\right\vert =c\in\left(  0,\sqrt{2}\right)  $,
which satisfies
\begin{equation}
-J\vec{c}\cdot\nabla U_{\vec{c}}+\Delta U_{\vec{c}}+F(|U_{\vec{c}}%
|^{2})U_{\vec{c}}=0.\label{tw-general-velocity}%
\end{equation}
Let
\[
Q=\frac{1}{\left\vert \vec{c}\right\vert }\left(
\begin{array}
[c]{cc}%
c_{1} & c_{2}\\
-c_{2} & c_{1}%
\end{array}
\right)
\]
be the rotating matrix which transforms $\vec{c}$ to $\left(  c,0\right)  $,
then it is easy to check that $U_{\vec{c}}\left(  \vec{x}\right)
=U_{c}\left(  Q\vec{x}\right)  $ is a solution of (\ref{tw-general-velocity})
and
\[
\vec{P}\left(  U_{\vec{c}}\right)  =Q^{T}\vec{P}\left(  U_{c}\right)
=P\left(  U_{c}\right)  \frac{\vec{c}}{c},
\]
where we use $\vec{P}\left(  U_{c}\right)  =P\left(  U_{c}\right)  \left(
1,0\right)  ^{T}$ which is due to the evenness of $U_{c}$ in $x_{2}$.
Differentiating (\ref{tw-general-velocity}) in $c_{2}$ and then evaluating at
$\left(  c_{0},0\right)  $, we get
\[
L_{c_{0}}\partial_{c_{2}}U_{\vec{c}}|_{\left(  c_{0},0\right)  }%
=J^{-1}\partial_{x_{2}}U_{c_{0}}.
\]
Thus we can choose $v_{2}=\partial_{c_{2}}U_{\vec{c}}|_{\left(  c_{0}%
,0\right)  }$ and
\[
\left\langle L_{c_{0}}v_{2},v_{2}\right\rangle =\partial_{c_{2}}P_{2}\left(
U_{\vec{c}}\right)  |_{\left(  c_{0},0\right)  }=\partial_{c_{2}}\left(
P\left(  U_{c}\right)  \frac{c_{2}}{c}\right)  |_{\left(  c_{0},0\right)
}=\frac{P\left(  U_{c_{0}}\right)  }{c_{0}}.
\]
Denote $v_{1}=\partial_{c}U_{c}|_{c_{0}}$ and recall that
\[
L_{c_{0}}v_{1}=J^{-1}\partial_{x_{1}}U_{c_{0}},\ \left\langle L_{c_{0}}%
v_{1},v_{1}\right\rangle =\frac{dP\left(  U_{c}\right)  }{dc}|_{c_{0}}.
\]
Also, by using the evenness of $U_{c_{0}}$ in $x_{2}$, we get
\[
\left\langle L_{c_{0}}v_{2},v_{1}\right\rangle =\left\langle J^{-1}%
\partial_{x_{2}}U_{c_{0}},\partial_{c}U_{c}|_{c_{0}}\right\rangle =0,
\]
and thus
\[
\left\langle L_{c_{0}}\cdot,\cdot\right\rangle |_{span\left\{  v_{1}%
,v_{2}\right\}  }=\left(
\begin{array}
[c]{cc}%
\frac{dP\left(  U_{c}\right)  }{dc}|_{c_{0}} & 0\\
0 & \frac{P\left(  U_{c_{0}}\right)  }{c_{0}}%
\end{array}
\right)  .
\]
Since
\[
n^{\leq0}\left(  L_{c_{0}}|_{span\left\{  v_{1},v_{2}\right\}  }\right)  \leq
k_{0}^{\leq0}\left(  L_{c_{0}}\right)  \leq n^{-}\left(  L_{c_{0}}\right)
\leq1,
\]
when $\frac{dP\left(  U_{c}\right)  }{dc}|_{c_{0}}\leq0$, we must have
$P\left(  U_{c_{0}}\right)  >0$. When $\frac{dP\left(  U_{c}\right)  }%
{dc}|_{c_{0}}>0$ and with the assumptions of Theorem
\ref{thm: stability 2D GGP} (ii), $U_{c_{0}}$ is linearly unstable, which
again implies that $P\left(  U_{c_{0}}\right)  >0$. Since otherwise $P\left(
U_{c_{0}}\right)  \leq0$, then $k_{0}^{\leq0}\left(  L_{c_{0}}\right)  \geq1$
and by Theorem \ref{theorem-counting}, $U_{c_{0}}$ is linearly stable, a contradiction.
\end{proof}

\begin{remark}
For 2D traveling wave solution $U_{c}$ satisfying (\ref{TW-GGP}), one can
prove the identity
\begin{equation}
cP\left(  U_{c}\right)  =2\int_{\mathbf{R}^{2}}V\left(  \left\vert
U_{c}\right\vert \right)  ^{2}dx, \label{virial-GP}%
\end{equation}
by using energy conservation and virial identity (see \cite{chiron-scheid} for
general $F$ and \cite{jones-et-stability} for GP). So for $F$ such that $V$ is
nonnegative (such as GP), we have $P\left(  U_{c}\right)  >0$ from
(\ref{virial-GP}). However, when $V$ also takes negative values (such as
cubic-quintic), then one can not conclude the sign of $P\left(  U_{c}\right)
$ from (\ref{virial-GP}). By using the index counting, above Corollary
\ref{cor-positive-momentum} shows that $P\left(  U_{c}\right)  >0$ is true for
any nonlinear term $F$ under the assumptions there.

Consider axial symmetric 3D traveling waves $U_{c}=\left(  x_{1},\left\vert
x^{\perp}\right\vert \right)  $ which are constrained energy-momentum
minimizers, as constructed in \cite{Maris-annal}. We can also prove that
$P\left(  U_{c}\right)  >0$ by the same arguments as in Corollary
\ref{cor-positive-momentum}. Actually, the argument for 3D is much simpler
than 2D and does not need the additional assumptions on $\ker L_{c}$. Let
\[
v_{1}=\partial_{c}U_{c},\ v_{j}=\partial_{c_{j}}U_{\vec{c}}|_{\left(
c,0,0\right)  },\ j=2,3,
\]
where $\vec{c}=\left(  c_{1},c_{2},c_{3}\right)  $ with $\left\vert \vec
{c}\right\vert =c\in\left(  0,\sqrt{2}\right)  $ and $U_{\vec{c}}$ is the
traveling wave with the velocity vector $\vec{c}$. Then we can compute in a
similar way that
\[
\left\langle L_{c}\cdot,\cdot\right\rangle |_{span\left\{  v_{1},v_{2}%
,v_{3}\right\}  }=\left(
\begin{array}
[c]{ccc}%
\frac{dP\left(  U_{c}\right)  }{dc} & 0 & 0\\
0 & \frac{P\left(  U_{c}\right)  }{c} & 0\\
0 & 0 & \frac{P\left(  U_{c}\right)  }{c}%
\end{array}
\right)  .
\]
Since
\[
n^{\leq0}\left(  L_{c}|_{span\left\{  v_{1},v_{2},v_{3}\right\}  }\right)
\leq n^{-}\left(  L_{c}\right)  \leq1
\]
by the index counting formula (\ref{counting-formula}), so regardless of the
sign of $\frac{dP\left(  U_{c}\right)  }{dc}$, we must have $P\left(
U_{c}\right)  >0$. The 3D analogue (see \cite{Maris-annal}) of the identity
(\ref{virial-GP}) is
\[
cP\left(  U_{c}\right)  =\int_{\mathbf{R}^{3}}\left\vert \frac{\partial U_{c}%
}{\partial x_{1}}\right\vert ^{2}dx+\int_{\mathbf{R}^{3}}V\left(  \left\vert
U_{c}\right\vert \right)  ^{2}dx,
\]
which is again not enough to conclude $P\left(  U_{c}\right)  >0$ when $V$
takes negative values.
\end{remark}

\section{Appendix}

\label{S:appendix}

In this appendix, we give some elementary properties of
\eqref{eqn-hamiltonian}, which are mostly based on theoretical functional
analysis arguments. They include some basic decomposition of the phase space,
the well-posedness of \eqref{eqn-hamiltonian}, and the standard
complexification procedure.

We start with some elementary properties of $L$. First we prove that $n^{-}(L)
= \dim X_{-}$ in assumption (\textbf{H2}) is the maximal dimension of
subspaces where $\langle L\cdot, \cdot\rangle<0$.

\begin{lemma}
\label{L:Morse-Index} If $N\subset X$ is a subspace such that $\langle Lu,
u\rangle<0$ for all $u \in N\backslash\{0\}$, then $\dim N \le n^{-}(L)$.
\end{lemma}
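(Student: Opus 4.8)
The statement is a standard Morse-index inequality, and the plan is to exhibit an injective linear map from $N$ into $X_{-}$. First I would fix the decomposition $X=X_{-}\oplus\ker L\oplus X_{+}$ supplied by \textbf{(H2)} and let $P:X\to X_{-}$ denote the linear projection onto $X_{-}$ along $\ker L\oplus X_{+}$ (its boundedness is not even needed here, only that it is well defined, since the three summands are closed and form a direct sum). The claim I would prove is that the restriction $P|_{N}$ is injective; once this is established, $\dim N\le\dim X_{-}=n^{-}(L)$ follows immediately, whether $\dim N$ is finite or infinite.

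To prove injectivity of $P|_{N}$, suppose $u\in N$ satisfies $Pu=0$. Then $u=u_{0}+u_{+}$ with $u_{0}\in\ker L$ and $u_{+}\in X_{+}$. Using $Lu_{0}=0$ together with the symmetry of $L$, every cross term involving $u_{0}$ vanishes:
\[
\langle Lu,u\rangle=\langle L(u_{0}+u_{+}),u_{0}+u_{+}\rangle=\langle Lu_{+},u_{+}\rangle\ge\delta\|u_{+}\|^{2}\ge0,
\]
where the inequality is \textbf{(H2.b)}. On the other hand, if $u\ne0$ the hypothesis on $N$ forces $\langle Lu,u\rangle<0$, a contradiction. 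Hence $u=0$, so $P|_{N}$ is injective, and the lemma follows.

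There is essentially no hard step here; the only point requiring a moment's care is the use of $Lu_{0}=0$ and $L^{*}=L$ to discard the $\langle Lu_{0},u_{+}\rangle$ cross term, which is what makes the argument insensitive to the presence of a (possibly infinite-dimensional) kernel of $L$. I would also remark that this same argument, with $X_{-}$ replaced by any subspace on which $\langle L\cdot,\cdot\rangle$ is negative definite and its $L$-orthogonal complement being nonnegative, is exactly the one reused later (e.g.\ in the first part of Theorem \ref{T:Pontryagin}), so it is worth stating in this self-contained form.
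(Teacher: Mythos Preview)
Your proof is correct and follows essentially the same approach as the paper: both use the projection $P_{-}:X\to X_{-}$ associated to the decomposition $X=X_{-}\oplus\ker L\oplus X_{+}$ from \textbf{(H2)} and argue that $P_{-}|_{N}$ is injective because $P_{-}u=0$ forces $u\in\ker L\oplus X_{+}$, whence $\langle Lu,u\rangle\ge0$. Your remark that this same argument is reused for the first statement of Theorem~\ref{T:Pontryagin} is also exactly what the paper does.
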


\begin{proof}
Let $X_{\pm}$ be given in (\textbf{H2}) and $P_{+,0,-}$ be the projections
associated to the decomposition $X=X_{+}\oplus\ker L\oplus X_{-}$. For any
$u\in X$, $P_{-}u=0$ would imply $u\in\ker L\oplus X_{+}$ and thus $\langle
Lu,u\rangle\geq0,\ $so $u\notin N$. Therefore, $P_{-}:N\rightarrow X_{-}$ is
injective and in turn it implies $\dim N\leq\dim X_{-}$.
\end{proof}

In order to proceed we have to introduce some notations. Given a closed
subspace $Y\subset X$, let $i_{Y}:Y\rightarrow X$ be the embedding and then
$i_{Y}^{\ast}:X^{\ast}\rightarrow Y^{\ast}$. Define
\begin{equation}%
\begin{split}
&  L_{Y}=i_{Y}^{\ast}Li_{Y}:Y\rightarrow Y^{\ast},\\
&  Y^{\perp_{L}}=\ker(i_{Y}^{\ast}L)=\{u\in X\mid\langle Lu,i_{Y}%
v\rangle=\langle Lu,v\rangle=0,\;\forall v\in Y\},
\end{split}
\label{E:L_y1}%
\end{equation}
which satisfy
\begin{equation}
L_{Y}^{\ast}=L_{Y}\;\text{ and }\langle L_{Y}u,v\rangle=\langle Lu,v\rangle
,\;\forall u,v\in Y. \label{E:L_Y2}%
\end{equation}
The following is a simple technical lemma.

\begin{lemma}
\label{L:non-degeneracy} Assume (\textbf{H1-3}). Let $Y\subset X$ be a closed subspace.

\begin{enumerate}
\item Suppose the quadratic form $\langle L\cdot, \cdot\rangle$ is
non-degenerate (in the sense of \eqref{E:non-degeneracy-def}) on $Y$, then $X=
Y \oplus Y^{\perp_{L}}$.

\item Assume $\dim\ker L < \infty$ and $\ker L_{Y}=\{0\}$, then $\langle
L\cdot, \cdot\rangle$ is non-degenerate on $Y$.

\item If $X= \ker L \oplus Y$ then $\langle L\cdot, \cdot\rangle$ is
non-degenerate on $Y$.
\end{enumerate}
\end{lemma}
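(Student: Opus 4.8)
The plan is to treat the three statements in order, with part~1 being the engine for part~2. For part~1, non-degeneracy of $\langle L\cdot,\cdot\rangle$ on $Y$ means exactly that $L_Y=i_Y^{*}Li_Y:Y\to Y^{*}$ is an isomorphism (cf.\ the discussion after \eqref{E:non-degeneracy-def}). Given $u\in X$, I would set $y=L_Y^{-1}i_Y^{*}Lu\in Y$ and check, using \eqref{E:L_Y2}, that $\langle L(u-i_Y y),v\rangle=\langle Lu,v\rangle-\langle L_Y y,v\rangle=\langle Lu,v\rangle-\langle i_Y^{*}Lu,v\rangle=0$ for every $v\in Y$; hence $u-i_Y y\in Y^{\perp_{L}}$ and $X=Y+Y^{\perp_{L}}$. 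The sum is direct because $y\in Y\cap Y^{\perp_{L}}$ forces $L_Y y=0$, so $y=0$ by injectivity of $L_Y$; and $Y^{\perp_{L}}=\ker(i_Y^{*}L)$ is automatically closed. This part is routine.

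For part~2 the idea is to split $Y$ $L$-orthogonally into a uniformly positive piece and a finite-dimensional piece and then read off non-degeneracy. Using $\dim\ker L<\infty$ together with Remark~\ref{R:assumptions} (or Lemma~\ref{L:decom1}), fix an $L$-orthogonal decomposition $X=X_{-}\oplus\ker L\oplus X_{+}$ with $Z\triangleq X_{-}\oplus\ker L$ finite-dimensional, $\langle L\cdot,\cdot\rangle\le0$ on $Z$, and $\langle Lu,u\rangle\ge\delta\Vert u\Vert^{2}$ on $X_{+}$. Let $N=Y\cap X_{+}$; the form is uniformly positive, hence non-degenerate, on $N$, so part~1 applied inside the Hilbert space $Y$ yields the $L$-orthogonal direct sum $Y=N\oplus M$ with $M=\{u\in Y:\langle Lu,w\rangle=0\ \forall w\in N\}$. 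Two observations finish it: (i) $M\cap X_{+}=\{0\}$, since such a vector lies in $Y\cap X_{+}=N$ and in $M$, hence in $N\cap M=\{0\}$; therefore $P_Z|_{M}$ is injective and $\dim M\le\dim Z<\infty$; (ii) $\ker L_{M}=\{0\}$, for if $m\in M$ were $L$-orthogonal to all of $M$ it would, by the $L$-orthogonality of $N$ and $M$, be $L$-orthogonal to all of $Y$, i.e.\ $m\in\ker L_Y=\{0\}$. A finite-dimensional symmetric form with trivial kernel is non-degenerate, so $L_{M}$ is an isomorphism, and $L_{N}$ is an isomorphism by Lax--Milgram. Using the $L$-orthogonality $Y=N\oplus M$, for $u=n+m\ne0$ one gets $\sup\{|\langle Lu,v\rangle|:v\in Y,\ \Vert v\Vert\le1\}\ge\max(c_{1}\Vert n\Vert,c_{2}\Vert m\Vert)\ge c\Vert u\Vert$, which is \eqref{E:non-degeneracy-def}.

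For part~3, where $X=\ker L\oplus Y$ with $Y$ closed, I would first note $\ker L_Y=\{0\}$: if $u\in Y$ with $\langle Lu,v\rangle=0$ for all $v\in Y$, then also $\langle Lu,w\rangle=\langle Lw,u\rangle=0$ for all $w\in\ker L$ by symmetry of $L$, so $Lu=0$ and $u\in\ker L\cap Y=\{0\}$. For surjectivity of $L_Y:Y\to Y^{*}$, I would use that $R(L)$ is closed and equals $(\ker L)^{\perp}$ (Remark~\ref{R:closedness}) together with the dual splitting $X^{*}=(\ker L)^{\perp}\oplus Y^{\perp}=R(L)\oplus Y^{\perp}$ induced by $X=\ker L\oplus Y$: given $g\in Y^{*}$, lift it to $f\in X^{*}$, decompose $f=f_{1}+f_{2}$ with $f_{1}\in R(L)$ and $f_{2}\in Y^{\perp}=\ker i_Y^{*}$, so $i_Y^{*}f_{1}=g$; then writing $f_{1}=Lx$ and $x=x_{0}+y$ with $x_{0}\in\ker L$, $y\in Y$, we obtain $g=i_Y^{*}Li_Y y=L_Y y$. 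Hence $L_Y$ is a bounded bijection, so an isomorphism by the open mapping theorem, i.e.\ $\langle L\cdot,\cdot\rangle$ is non-degenerate on $Y$. (Note this argument does not need $\dim\ker L<\infty$.)

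The main obstacle is the finite-dimensionality of $M$ in part~2: this is the one place the hypothesis $\dim\ker L<\infty$ is genuinely consumed, and it requires having an $L$-orthogonal block decomposition of $X$ rather than merely the sign-definite one built into \textbf{(H2)}. Everything else is bookkeeping with bounded symmetric forms, Lax--Milgram, annihilators, and the open mapping theorem.
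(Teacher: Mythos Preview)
Your proofs of parts~1 and~2 are correct and follow essentially the same route as the paper: the same projection formula $y=L_Y^{-1}i_Y^{*}Lu$ in part~1, and in part~2 the same splitting $Y=(Y\cap X_{+})\oplus(\text{its $L$-orthogonal complement in }Y)$, followed by the observation that the complement injects into the finite-dimensional nonpositive block and hence is finite-dimensional with trivial $L$-kernel. The paper's write-up of the final estimate in part~2 is a case split $\Vert u_{1}\Vert\gtrless\Vert u_{+}\Vert$ rather than your $\max$ argument, but this is cosmetic.

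Your part~3 is correct but genuinely different from the paper's. The paper first proves non-degeneracy on $X_{+}\oplus X_{-}$ by writing down the block inverse $(L_{++}-L_{+-}L_{--}^{-1}L_{-+})^{-1}i_{X_{+}}^{*}+(L_{--}-L_{-+}L_{++}^{-1}L_{+-})^{-1}i_{X_{-}}^{*}$ explicitly, then observes that any complement $Y$ of $\ker L$ is the graph of a bounded map $T:X_{-}\oplus X_{+}\to\ker L$ and that $\langle L(u+Tu),v+Tv\rangle=\langle Lu,v\rangle$, so non-degeneracy transfers. Your argument bypasses the block inverse entirely: you show $L_Y$ is injective directly and surjective via the closed-range fact $R(L)=(\ker L)^{\perp}\subset X^{*}$ (Remark~\ref{R:closedness}) combined with the dual splitting $X^{*}=(\ker L)^{\perp}\oplus Y^{\perp}$ induced by $X=\ker L\oplus Y$. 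This is cleaner and, as you note, makes it transparent that $\dim\ker L<\infty$ is not needed here; the paper's approach has the minor advantage of producing the inverse formula explicitly, which is occasionally useful elsewhere.
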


\begin{proof}
We first notice that $L_{Y}$ being an isomorphism implies $Y\cap Y^{\perp_{L}}
= \{0\}$. For any $u \in X$, let
\[
u_{1} = L_{Y}^{-1} i_{Y}^{*} L u \in Y \; \Longrightarrow\; \langle Lu_{1} - L
u, v\rangle= 0, \; \forall v\in Y,
\]
and thus $u_{2} = u-u_{1} \in Y^{\perp_{L}}$ which implies $X = Y \oplus
Y^{\perp_{L}}$.

In order prove the second statement, from the standard argument, it suffices
to show that
\begin{equation}
\inf_{u\in Y\backslash\{0\}}\sup_{v\in Y\backslash\{0\}}\frac{|\langle
Lu,v\rangle|}{\Vert u\Vert\Vert v\Vert}>0.\label{E:non-degeneracy}%
\end{equation}
According to Remark \ref{R:assumptions} and the assumption of the lemma, there
exist closed subspaces $X_{\leq0}$ and $X_{+}$ such that the decomposition
$X=X_{\leq0}\oplus X_{+}$ is orthogonal with respect to both $(\cdot,\cdot)$
and $\langle L\cdot,\cdot\rangle$, $\dim X_{\leq0}<\infty$, $\langle
Lu,u\rangle\leq0$ for all $u\in X_{\leq0}$, and for some $\delta>0$, $\langle
Lu,u\rangle\geq\delta\Vert u\Vert^{2}$ for all $u\in X_{+}$. This splitting is
associated to the orthogonal projections $\mathcal{P}_{\leq0,+}:X\rightarrow
X_{\leq0,+}$. Let $Y_{+}=Y\cap X_{+}$ and
\[
Y_{1}=\{u\in Y\mid\langle Lu,v\rangle=0,\ \forall v\in Y_{+}\}.
\]
Clearly, $Y_{+}$ and $Y_{1}$ are both closed subspaces of $Y$. Much as in the
first statement, using the uniform positive definiteness of $\langle
Lu,u\rangle$ on $Y_{+}$, we have $Y=Y_{+}\oplus Y_{1}$ via
\[
u=u_{+}+(u-u_{+}),\text{ where }u_{+}=L_{Y_{+}}^{-1}i_{Y_{+}}^{\ast}Lu\in
Y_{+},\quad\forall u\in Y.
\]

For any $u_{1}\in Y_{1}\backslash\{0\}$, let $x_{\leq0,+}=\mathcal{P}%
_{X_{\leq0,+}}u_{1}$ and we have $u_{1}=x_{\leq0}+x_{+}$. Since $\mathcal{P}%
_{X_{\leq0}}u_{1}=0$ would imply $u_{1}\in X_{+}\cap Y=Y_{+}$ contradictory to
$Y=Y_{+}\oplus Y_{1}$, we obtain that the linear mapping $\mathcal{P}%
_{X_{\leq0}}|_{Y_{1}}$ is one-to-one. Therefore, $\dim Y_{1}<\infty$. From the
definition of $Y_{1}$, if $u_{1}\in Y_{1}\backslash\{0\}$ satisfies that
$\langle Lu_{1},v\rangle=0$ for all $v\in Y_{1}$, we would have $L_{Y}u_{1}=0$
which contradicts the assumption $\ker L_{Y}=\{0\}$. Therefore, $L_{Y}%
|_{Y_{1}}$ defines an isomorphism from $Y_{1}$ to $Y_{1}^{\ast}$ as $\dim
Y_{1}<\infty$ and thus there exists $\delta^{\prime}>0$ such that for any
$u_{1}\in Y_{1}\backslash\{0\}$, there exists $v\in Y_{1}$ such that $\langle
L_{Y}u_{1},v\rangle\geq\delta^{\prime}\Vert u_{1}\Vert\Vert v\Vert$.

Consider any $u=u_{1}+u_{+}\in Y$. If $\Vert u_{1}\Vert\geq\Vert u_{+}\Vert$,
there exists $v\in Y_{1}$ such that
\[
\langle Lu,v\rangle=\langle Lu_{1},v\rangle\geq\delta^{\prime}\Vert u_{1}%
\Vert\Vert v\Vert\geq\frac{\delta^{\prime}}{2}\Vert u\Vert\Vert v\Vert.
\]
If $\Vert u_{+}\Vert\geq\Vert u_{1}\Vert$, then let $v=u_{+}$ and we have
\[
\langle Lu,v\rangle=\langle Lu_{+},u_{+}\rangle\geq\delta\Vert u_{+}\Vert
^{2}\geq\frac{\delta}{2}\Vert u\Vert\Vert v\Vert.
\]
Therefore, \eqref{E:non-degeneracy} is obtained and the second statement is proved.

Finally we prove the last statement. We first show the non-degeneracy of
$\langle L\cdot,\cdot\rangle$ on $X_{+}\oplus X_{-}$ though a standard
procedure. The bounded symmetric quadratic form $\langle L\cdot,\cdot\rangle$
on $X_{+}\oplus X_{-}$ induces bounded linear operators
\[
L_{\alpha,\beta}=i_{X_{\alpha}}^{\ast}Li_{X_{\beta}}:X_{\beta}\rightarrow
X_{\alpha}^{\ast},\quad\alpha,\beta\in\{+,-\}.
\]
Since $L_{++}$ and $-L_{--}$ are both symmetric and bounded below, thus
isomorphic, and $L_{+-}=L_{-+}^{\ast}$, so the same are true for
\[
L_{++}-L_{+-}L_{--}^{-1}L_{-+}\;\text{ and }\;-(L_{--}-L_{-+}L_{++}^{-1}%
L_{+-}).
\]
It is easy to verify that
\[
L^{-1}=(L_{++}-L_{+-}L_{--}^{-1}L_{-+})^{-1}i_{X_{+}}^{\ast}+(L_{--}%
-L_{-+}L_{++}^{-1}L_{+-})^{-1}i_{X_{-}}^{\ast}%
\]
is a bounded operator from $(X_{+}\oplus X_{-})^{\ast}$ to $X_{+}\oplus X_{-}%
$. In general, if $X=\ker L\oplus Y$, there exists an isomorphism
$T:X_{-}\oplus X_{+}\rightarrow\ker L$ such that $Y=\graph (T)$. The
non-degeneracy of $\langle L\cdot,\cdot\rangle$ on $Y$ follows immediately
from its non-degeneracy on $X_{-}\oplus X_{+}$. The proof of the lemma is complete.
\end{proof}

\begin{remark}
\label{R:non-degeneracy} The first statement in the lemma holds actually for
any closed subspace $Y\subset X$ as long as $\langle L\cdot,\cdot\rangle$ is
non-degenerate on $Y$. The finite dimensionality assumption on $\ker L\ $is
essential for the second statement in the above lemma. A counter example is
\[
X=l^{2}\oplus l^{2},\;L=I\oplus0,\;Y=\big\{(\{x_{n}\},\{y_{n}\})\in X\mid
x_{n}=\frac{1}{n}y_{n}\big\},
\]
for which $\dim\ker L=\infty,\ n^{-}\left(  L\right)  =0,\ \ker L|_{Y}%
=\left\{  0\right\}  $, but $\langle L\cdot,\cdot\rangle$ is not
non-degenerate on $Y\ $in the sense of (\ref{E:non-degeneracy-def}).
\end{remark}

The next lemma will allow us to decompose equation \eqref{eqn-hamiltonian}.

\begin{lemma}
\label{L:decomJ} Suppose $X_{1,2} \subset X$ are closed subspaces satisfying
$X= X_{1} \oplus X_{2}$. Let $P_{1,2}: X\to X_{1,2}$ be the associated
projections, which imply $P_{1,2}^{*}: X_{1,2}^{*} \to X^{*}$, and
\[
J_{jk} = P_{j} JP_{k}^{*}: D(J_{jk}) \to X_{j}, \; D(J_{jk}) = (P_{k}%
^{*})^{-1} \big( D(J) \cap P_{k}^{*} X_{k}^{*}\big), \; j,k=1,2.
\]

\begin{enumerate}
\item If $\ker i_{X_{2}}^{*} \subset D(J)$, then $J_{11}$ and $J_{21}$ are
bounded operators defined on $X_{1}^{*}$, $J_{11}^{*} =- J_{11}$, $J_{22} =
-J_{22}^{*}$, and $J_{12}^{*}=-J_{21}$, and $J_{12}$ can be extended to the
bounded operator $-J_{21}^{*}= J_{12}^{**}$ defined on $X_{2}^{*}$.

\item If $\langle Lu_{1},u_{2}\rangle=0$, for all $u_{j}\in X_{j}$, $j=1,2$,
then $LX_{j}\subset\ker i_{X_{3-j}}^{\ast}$, $L_{X_{1,2}}$ satisfy
(\textbf{H2}) on $X_{1,2}$, $n^{-}(L)=n^{-}(L_{X_{1}})+n^{-}(L_{X_{2}})$, and
$\ker L=\ker L_{X_{1}}\oplus\ker L_{X_{2}}$.

\item Assume $\langle Lu_{1}, u_{2}\rangle=0$, for all $u_{j} \in X_{j}$,
$j=1,2$, and $\ker i_{X_{2}}^{*} \subset D(J)$, then the combinations $(X_{j},
L_{X_{j}}, J_{jj})$, $j=1,2$, satisfy (\textbf{H1-3}).
\end{enumerate}
\end{lemma}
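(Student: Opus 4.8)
\textbf{Proof plan for Lemma \ref{L:decomJ}.}

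The plan is to prove the three statements in order, since each builds on the previous, and to reduce everything to the abstract properties of $J^{*}=-J$ together with the $L$-orthogonality hypothesis on $X_{1,2}$. For statement (1), the key observation is that the hypothesis $\ker i_{X_{2}}^{*}\subset D(J)$ identifies the domain issue precisely: a functional $f\in X^{*}$ with $f\perp X_{2}$ (i.e. $f\in\ker i_{X_{2}}^{*}$) lies in $D(J)$. First I would note $P_{1}^{*}X_{1}^{*}=\ker i_{X_{2}}^{*}$, so $P_{1}^{*}X_{1}^{*}\subset D(J)$, which makes $J_{11}=P_{1}JP_{1}^{*}$ and $J_{21}=P_{2}JP_{1}^{*}$ everywhere-defined operators on $X_{1}^{*}$; boundedness then follows from the closed graph theorem once one checks they are closed, which in turn follows because $J$ is closed (being $-J^{*}$) and $P_{1}^{*},P_{1},P_{2}$ are bounded. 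The adjoint relations are pure bookkeeping: from $J^{*}=-J$ one computes $J_{11}^{*}=(P_{1}JP_{1}^{*})^{*}=P_{1}J^{*}P_{1}^{*}=-J_{11}$, and similarly $J_{22}=-J_{22}^{*}$ and $J_{12}^{*}=(P_{1}JP_{2}^{*})^{*}\supset P_{2}J^{*}P_{1}^{*}=-J_{21}$; since $J_{21}$ is bounded and everywhere defined, $-J_{21}^{*}=J_{12}^{**}$ is the desired bounded extension of $J_{12}$.

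For statement (2), the hypothesis $\langle Lu_{1},u_{2}\rangle=0$ says exactly that $Li_{X_{j}}$ annihilates $X_{3-j}$, i.e. $LX_{j}\subset\ker i_{X_{3-j}}^{*}$; this is immediate from the definition of the dual operator. To verify (\textbf{H2}) for $L_{X_{j}}$ on $X_{j}$, I would take the decomposition $X=X_{-}\oplus\ker L\oplus X_{+}$ from (\textbf{H2}) and intersect/project it appropriately — but there is a subtlety since $X_{-},\ker L,X_{+}$ need not respect the splitting $X=X_{1}\oplus X_{2}$. The cleaner route is to use Remark \ref{R:assumptions}: pass to the Riesz representation $\mathbb{L}$ and use the fact that $0$ is an isolated point of $\sigma(\mathbb{L})$ with finite-dimensional negative spectral subspace; since $\langle L\cdot,\cdot\rangle$ restricted to $X_{j}$ is again a bounded symmetric form, and the $L$-orthogonal direct sum $X=X_{1}\oplus X_{2}$ forces any negative/nonpositive subspace of $X$ to split as (negative subspace of $X_{1}$) $\oplus$ (negative subspace of $X_{2}$), one gets $n^{-}(L)=n^{-}(L_{X_{1}})+n^{-}(L_{X_{2}})<\infty$ for each piece. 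For the uniform positivity part of (\textbf{H2.b}) on each $X_{j}$, I would argue that the uniform positive lower bound of $\langle L\cdot,\cdot\rangle$ on a finite-codimension subspace of $X$, combined with the $L$-orthogonal splitting, descends to a uniform positive lower bound on a finite-codimension subspace of $X_{j}$; here Lemma \ref{L:Morse-Index} controls the dimension count. The identity $\ker L=\ker L_{X_{1}}\oplus\ker L_{X_{2}}$ follows because $u=u_{1}+u_{2}\in\ker L$ iff $\langle Lu,v\rangle=0$ for all $v$, and testing against $v\in X_{1}$ then $v\in X_{2}$ and using $L$-orthogonality gives $\langle L_{X_{j}}u_{j},\cdot\rangle=0$ separately.

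For statement (3), one assembles the pieces: (\textbf{H1}) for $J_{jj}$ is the relation $J_{jj}^{*}=-J_{jj}$ proved in (1) (for $j=1$ it is $J_{11}^{*}=-J_{11}$ directly; for $j=2$ note the hypothesis $\ker i_{X_{2}}^{*}\subset D(J)$ is symmetric in role once combined with $L$-orthogonality, or one simply observes $J_{22}=-J_{22}^{*}$ was shown unconditionally in (1)). (\textbf{H2}) for $L_{X_{j}}$ on $X_{j}$ is statement (2). (\textbf{H3}) requires that $\{f\in X_{j}^{*}\mid \langle f,u\rangle=0,\ \forall u\in X_{j,-}\oplus X_{j,+}\}\subset D(J_{jj})$; here I would use that this annihilator, pulled back via $P_{j}^{*}$, sits inside $\ker i_{X_{3-j}}^{*}$ plus the original $\ker i_{X_{+}\oplus X_{-}}^{*}$, and invoke both the ambient (\textbf{H3}) and the hypothesis $\ker i_{X_{2}}^{*}\subset D(J)$ to conclude membership in $D(J)$, hence in $D(J_{jj})$ after applying $P_{j}$. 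The main obstacle I anticipate is statement (2): carefully justifying that the (\textbf{H2})-type decomposition of $X$ can be chosen (or modified) to be simultaneously compatible with the $L$-orthogonal splitting $X=X_{1}\oplus X_{2}$, so that the Morse index and the uniform positivity both distribute additively — the spectral-theory viewpoint of Remark \ref{R:assumptions} applied to the restricted forms, together with Lemma \ref{L:Morse-Index}, is the tool I expect to need, and the bookkeeping of finite-codimension uniform positivity under projection is where the real care lies.
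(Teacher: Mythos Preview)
Your plan for part (1) has a real gap at the claim $J_{22}^{*}=-J_{22}$. The formal identity $(P_{2}JP_{2}^{*})^{*}=P_{2}J^{*}P_{2}^{*}$ that you invoke only yields an \emph{inclusion} $-J_{22}\subset J_{22}^{*}$, i.e.\ anti-symmetry, because for a bounded $A$ and unbounded closed $B$ one has in general only $(AB)^{*}\supset B^{*}A^{*}$. The equality you need---that every $g\in D(J_{22}^{*})$ actually satisfies $P_{2}^{*}g\in D(J)$---is precisely where the hypothesis $\ker i_{X_{2}}^{*}=P_{1}^{*}X_{1}^{*}\subset D(J)$ must be used in a nontrivial way. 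The paper's argument does this by testing $u=J_{22}^{*}g$ against $P_{2}^{*}f$ for $f\in D(J_{22})$ \emph{and} against $P_{1}^{*}h$ for arbitrary $h\in X_{1}^{*}$ (which is allowed since $P_{1}^{*}X_{1}^{*}\subset D(J)$), thereby identifying $i_{X_{2}}u+i_{X_{1}}J_{21}^{*}g=J^{*}P_{2}^{*}g$ and concluding $P_{2}^{*}g\in D(J^{*})=D(J)$. Without this step you have only shown that $J_{22}$ is anti-symmetric, which is strictly weaker than (\textbf{H1}) and would leave part (3) unproved for $j=2$. Your handling of $J_{12}^{*}=-J_{21}$ is actually fine (since $-J_{21}$ is already everywhere-defined the inclusion forces equality), but the same logic does not transfer to $J_{22}$.

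Your sketch for (\textbf{H3}) in part (3) is also too loose. The annihilator of $X_{2,-}\oplus X_{2,+}$ inside $X_{2}^{*}$, pushed to $X^{*}$ via $P_{2}^{*}$, does \emph{not} in general sit inside $\ker i_{X_{1}}^{*}+\ker i_{X_{+}\oplus X_{-}}^{*}$, because the subspaces $X_{2,\pm}$ you build in part (2) bear no direct relation to the ambient $X_{\pm}$. The paper instead writes the ambient $Z=X_{-}\oplus X_{+}$ as a graph over $Y=Y_{1}\oplus Y_{2}$ (both being complements of $\ker L$) via bounded maps $S_{jk}:Y_{k}\to\ker L_{X_{j}}$, and then for $f$ annihilating $Z_{2}=\{y_{2}+S_{22}y_{2}\}$ shows that $P_{2}^{*}f-P_{1}^{*}S_{21}^{*}f$ annihilates all of $Z$, hence lies in $D(J)$ by the ambient (\textbf{H3}); since $P_{1}^{*}S_{21}^{*}f\in D(J)$ automatically, this gives $P_{2}^{*}f\in D(J)$. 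Your plan for part (2) is workable in spirit and close to the paper's, though the paper avoids the spectral-theory detour by directly showing $L_{Y_{j}}:Y_{j}\to Y_{j}^{*}$ is an isomorphism (using that $L|_{Y}:Y\to R(L)$ is an isomorphism and $L(Y_{j})\subset\tilde{Y}_{j}^{*}$ by $L$-orthogonality), then splitting $Y_{j}=Y_{j+}\oplus Y_{j-}$ by standard quadratic-form theory.
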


\begin{proof}
For $j=1,2$, define $\tilde{X}_{j}^{\ast}$ as
\begin{equation}
\tilde{X}_{j}^{\ast}=P_{j}^{\ast}X_{j}^{\ast}=\ker i_{X_{3-j}}^{\ast}=\{f\in
X^{\ast}\mid\langle f,u\rangle=0,\,\forall u\in X_{3-j}\}\subset X^{\ast
}.\label{E:tildeX*}%
\end{equation}
Clearly, it holds
\begin{equation}
i_{X_{1}}P_{1}+i_{X_{2}}P_{2}=I_{X},\quad P_{1}^{\ast}i_{X_{1}}^{\ast}%
+P_{2}^{\ast}i_{X_{2}}^{\ast}=I_{X^{\ast}},\quad X^{\ast}=\tilde{X}_{1}^{\ast
}\oplus\tilde{X}_{2}^{\ast}.\label{E:splitX*}%
\end{equation}

Assume $\tilde{X}_{1}^{\ast}=P_{1}^{\ast}X_{1}^{\ast}\subset D(J)$. The Closed
Graph Theorem implies that the closed operator $JP_{1}^{\ast}:X_{1}^{\ast
}\rightarrow X$ is actually bounded, and thus $J_{11}$ and $J_{21}$ are
bounded as well. The property $J_{11}^{\ast}=-J_{11}$ is obvious from
$J^{\ast}=-J$ and the boundedness of $J_{11}$. We also obtain from this
assumption and \eqref{E:splitX*} that $D(J)\cap\tilde{X}_{2}^{\ast}$ is dense
in $\tilde{X}_{2}^{\ast}$ and thus $J_{12}$ and $J_{22}$ are densely defined,
as $P_{j}^{\ast}:X_{j}^{\ast}\rightarrow\tilde{X}_{j}^{\ast}$ is an
isomorphism. It remains to prove $J_{12}^{\ast}=-J_{21}$ and $J_{22}^{\ast
}=-J_{22}$.

Suppose $u=J_{12}^{\ast}g$, or equivalently, $g\in X_{1}^{\ast}$ and $u\in
X_{2}$ satisfy, $\forall f\in D(J_{12})\subset X_{2}^{\ast}$,
\begin{equation}
\langle P_{2}^{\ast}f,i_{X_{2}}u-i_{X_{1}}P_{1}JP_{1}^{\ast}g\rangle=\langle
f,u\rangle=\langle g,J_{12}f\rangle=\langle P_{1}^{\ast}g,JP_{2}^{\ast
}f\rangle,\label{E:dual1}%
\end{equation}
where we used $P_{j}i_{X_{j}}=id$ and $P_{3-j}i_{X_{j}}=0$ on $X_{j}$. For any
$h\in X_{1}^{\ast}$, we have
\[
\langle P_{1}^{\ast}h,i_{x_{2}}u-i_{X_{1}}P_{1}JP_{1}^{\ast}g\rangle=\langle
P_{1}^{\ast}g,JP_{1}^{\ast}h\rangle.
\]
Therefore, \eqref{E:splitX*} and \eqref{E:dual1} imply $u=J_{12}^{\ast}g$ is
equivalent to
\[%
\begin{split}
&  \langle\gamma,i_{X_{2}}u-i_{X_{1}}P_{1}JP_{1}^{\ast}g\rangle=\langle
P_{1}^{\ast}g,J\gamma\rangle,\quad\forall\gamma\in D(J)\\
\Longleftrightarrow &  i_{X_{2}}u-i_{X_{1}}P_{1}JP_{1}^{\ast}g=J^{\ast}%
P_{1}^{\ast}g=-JP_{1}^{\ast}g\\
\Longleftrightarrow &  u=-P_{2}JP_{1}^{\ast}g=-J_{21}g
\end{split}
\]
Therefore, $J_{12}^{\ast}=-J_{21}$.

Similarly, using the assumption $\tilde{X}_{1}^{\ast}\subset D(J)$, one can
prove $u=J_{22}^{\ast}g\in X_{2}$, $g\in X_{2}^{\ast}$, if and only if
\[
i_{X_{2}}u+i_{X_{1}}J_{21}^{\ast}g=J^{\ast}P_{2}^{\ast}g\Longleftrightarrow
u=-P_{2}JP_{2}^{\ast}g=-J_{22}g.
\]
Therefore, we obtain $J_{22}^{\ast}=-J_{22}$.

Assume $\langle Lu_{1},u_{2}\rangle=0$, for all $u_{j}\in X_{j}$, $j=1,2$. As
a direct consequence, we have $LX_{j}\subset\tilde{X}_{j}^{\ast}$, which,
along with \eqref{E:splitX*}, immediately implies
\[
L=P_{1}^{\ast}L_{X_{1}}P_{1}+P_{2}^{\ast}L_{X_{2}}P_{2},\quad P_{j}^{\ast
}L_{X_{j}}P_{j}(X)\subset\tilde{X}_{j},
\]
which in turn yield
\[
\ker L=\ker L_{X_{1}}\oplus\ker L_{X_{2}},\quad\ker L_{X_{j}}=X_{j}\cap\ker
L,\;j=1,2.
\]
Let
\[
Y_{1,2}=\{u\in X_{1,2}\mid(u,v)=0,\,\forall v\in\ker L_{X_{1,2}}%
\},\;Y=Y_{1}\oplus Y_{2},
\]
which implies%
\[
X=Y\oplus\ker L=Y_{1}\oplus Y_{2}\oplus\ker L,
\]
and
\[
\langle Ly_{j},y_{1}^{\prime}+y_{2}^{\prime}+u\rangle=\langle Ly_{j}%
,y_{j}^{\prime}\rangle=\langle L_{Y_{j}}y_{j},y_{j}^{\prime}\rangle,
\]
for any $y_{j},y_{j}^{\prime}\in Y_{j}$, $j=1,2$, and $u\in\ker L$. Let
$P_{Y_{1,2,0}}$ be the projections associated to this decomposition, then we
have
\[
L(Y_{j})\subset\tilde{Y}_{j}^{\ast}\triangleq P_{Y_{j}}^{\ast}Y_{j}^{\ast
}=\ker i_{\ker L\oplus Y_{3-j}}^{\ast},\ \ \ j=1,2.
\]
Assumption (\textbf{H2}) implies that $L|_{Y}:Y\rightarrow R(L)$ is an
isomorphism to the closed subspace $R(L)\subset X^{\ast}$. Therefore,
$L(Y_{1,2})\subset\tilde{Y}_{1,2}^{\ast}$ are closed subspaces and
$L|_{Y_{1,2}}:Y_{1,2}\rightarrow L(Y_{1,2})$ are isomorphisms. It implies that
$L_{Y_{1,2}}$ are isomorphisms from $Y_{1,2}$ to closed subspaces $L_{Y_{1,2}%
}(Y_{1,2})\subset Y_{1,2}^{\ast}$. Due to their boundedness and symmetry, we
obtain that $L_{Y_{1,2}}Y_{1,2}$ is equal to the orthogonal complement of
$\ker L_{Y_{1,2}}^{\ast}=\ker L_{Y_{1,2}}=\{0\}$. So $L_{Y_{1,2}}%
:Y_{1,2}\rightarrow Y_{1,2}^{\ast}$ are isomorphisms, which induce bounded
non-degenerate symmetric quadratic forms on $Y_{1,2}$. From the standard
theory on symmetric quadratic forms, $Y_{j}$, $j=1,2$, can be split into
$Y_{j}=Y_{j+}\oplus Y_{j-}$, where closed subspaces $Y_{j\pm}$ are orthogonal
with respect to both $(\cdot,\cdot)$ and $\langle L\cdot,\cdot\rangle$.
Moreover, there exists $\delta>0$ such that
\[
\pm\langle L_{X_{j}}u,u\rangle=\pm\langle Lu,u\rangle\geq\delta\Vert
u\Vert^{2},\;\forall u\in Y_{j\pm}.
\]
This proves that $X_{j}$ satisfies (\textbf{H2}) with
\[
X_{j}=Y_{J-}\oplus\ker L_{X_{j}}\oplus Y_{j+},\ \ \ j=1,2.
\]
Finally, since $X=X_{1}\oplus X_{2}$, there exists $C>0$ such that,
\[
\Vert u_{1}\Vert^{2}+\Vert u_{2}\Vert^{2}\leq C\Vert u_{1}+u_{2}\Vert
^{2},\ \ \ \forall\ u_{1,2}\in X_{1,2}.
\]
Therefore, the splitting
\[
X=(Y_{1-}\oplus Y_{2-})\oplus\ker L\oplus(Y_{1+}\oplus Y_{2+})
\]
satisfies the properties in (\textbf{H2}), which implies $n^{-}(L)=n^{-}%
(L_{X_{1}})+n^{-}(L_{X_{2}})$.

Finally, assume $\langle Lu_{1},u_{2}\rangle=0$, for all $u_{j}\in X_{j}$,
$j=1,2$, and $P_{1}^{\ast}X_{1}^{\ast}\subset D(J)$. To complete the proof of
the lemma, we only need to show that (\textbf{H3}) is satisfied by
$(X_{j},L_{X_{j}},J_{jj})$, $j=1,2$. This is obvious for $j=1$, as $J_{11}$ is
a bounded operator, and thus we only need to work on $j=2$. Let $X_{\pm
}\subset X$ be the closed subspaces assumed in (\textbf{H2-3}) and
$Z=X_{-}\oplus X_{+}$. Since $X=\ker L\oplus Z=\ker L\oplus Y$, $Z$ can be
represented as the graph of a bounded linear operator from $Y$ to $\ker L$. As
$\ker L=\ker L_{X_{1}}\oplus\ker L_{X_{2}}$ and $Y=Y_{1}\oplus Y_{2}$, there
exist bounded operators $S_{jk}:Y_{k}\rightarrow\ker L_{X_{j}}$ such that
\[
Z=\{y_{1}+y_{2}+\Sigma_{j,k=1}^{2}S_{jk}y_{k}\mid y_{1,2}\in Y_{1,2}\}.
\]
We will first show
\begin{equation}
W\triangleq\{f\in X_{2}^{\ast}\mid\langle f,u\rangle=0,\,u\in Z_{2}\}\subset
D(J_{22}),\label{E:tem1}%
\end{equation}
where
\[
Z_{2}=\{y_{2}+S_{22}y_{2}\mid y_{2}\in Y_{2}\}\subset X_{2}.
\]
Trivially extend $S_{jk}$ to be an operator from $X_{k}$ to $\ker L_{X_{j}%
}\subset X_{j}$ via
\[
S_{jk}(y_{k}+v_{k})=S_{jk}y_{k},\ \ \forall\ y_{k}\in Y_{k},\ v_{k}\in\ker
L_{X_{k}}.
\]
It leads to $S_{jk}S_{kl}=0$, $\forall j,k,l=1,2$. Given any $f\in W\subset
X_{2}^{\ast}$, one may compute, for any
\[
u=y_{1}+y_{2}+\Sigma_{j,k=1}^{2}S_{jk}y_{k}\in Z,
\]
using the definition of $W$, and the property of the extensions of $S_{jk}$,
\[%
\begin{split}
\langle P_{2}^{\ast}f-P_{1}^{\ast}S_{21}^{\ast}f,u\rangle= &  \langle
f,y_{2}+S_{21}y_{1}+S_{22}y_{2}\rangle-\langle S_{21}^{\ast}f,y_{1}%
+S_{11}y_{1}+S_{12}y_{2}\rangle\\
= &  \langle f,S_{21}y_{1}\rangle-\langle f,S_{21}y_{1}+S_{21}S_{11}%
y_{1}+S_{21}S_{12}y_{2}\rangle=0.
\end{split}
\]
Therefore, (\textbf{H3}) implies $P_{2}^{\ast}f-P_{1}^{\ast}S_{21}^{\ast}f\in
D(J)$. Since we assume $P_{1}^{\ast}X_{1}^{\ast}\subset D(J)$, we obtain
$P_{2}^{\ast}f\in D(J)$ and thus $f\in D(J_{22})$ which proves \eqref{E:tem1}.

Since $y_{2}\rightarrow y_{2}+S_{22}y_{2}$ is an isomorphism from $Y_{2}$ to
$Z_{2}$,
\[
\langle L(y_{2}+S_{22}y_{2}),y_{2}^{\prime}+S_{22}y_{2}^{\prime}%
\rangle=\langle Ly_{2},y_{2}^{\prime}\rangle,
\]
and $L_{Y_{2}}$ is isomorphic, we have $\langle L\cdot,\cdot\rangle$ is
non-degenerate on $Z_{2}$ and $L_{Z_{2}}$ is also an isomorphism. Therefore,
there exist closed subspaces $X_{2\pm}\subset Z_{2}$ and $\delta>0$ such that
$Z_{2}=X_{2-}\oplus X_{2+}$, $\dim X_{2-}=n^{-}(L_{X_{2}})$, and $\pm\langle
L_{X_{2}}u,u\rangle\geq\delta\Vert u\Vert^{2}$, for any $u\in X_{2\pm}$. It
along with \eqref{E:tem1} and $X_{2}=Z_{2}\oplus\ker L_{X_{2}}$ completes the
proof of the lemma.
\end{proof}

\begin{remark}
Under assumptions $\langle Lu_{1},u_{2}\rangle=0$, for all $u_{j}\in X_{j}$,
$j=1,2$, and $P_{1}^{\ast}X_{1}^{\ast}\subset D(J)$, $(X_{j},L_{X_{j}}%
,J_{jj})$, $j=1,2$, satisfies the same hypothesis (\textbf{H1-H3}) as
$(X,L,J)$ and $n^{-}(L)=n^{-}(L_{X_{1}})+n^{-}(L_{X_{2}})$. Moreover, it is
easily verified based on these assumptions that $J_{jj}L_{X_{j}}%
=P_{j}JL|_{X_{j}}$. Therefore, this lemma would often be applied to reduce the
problem to subspaces when $JL(X_{1})\subset X_{1}$, which implies $JL$ has
certain upper triangular structure.
\end{remark}

\begin{corollary}
\label{C:decomJ} $LJ: D(J) \to X^{*}$ is a closed operator and consequently
$(JL)^{*} = -LJ$.
\end{corollary}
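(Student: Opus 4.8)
\textbf{Proof proposal for Corollary \ref{C:decomJ}.}

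The plan is to establish the operator identity $(JL)^{\ast}=-LJ$ first; the closedness of $LJ$ then follows for free, since the dual of a densely defined operator is always closed. To make sense of $(JL)^{\ast}$ I would invoke Lemma~\ref{L:decomJL}, which is precisely the statement that (H3) forces $JL$ to be densely defined. Conceptually the identity is just the composition rule $(J\circ L)^{\ast}=L^{\ast}J^{\ast}=L(-J)=-LJ$ valid when one factor, here $L:X\to X^{\ast}$, is bounded and everywhere defined; the only real work is the careful identification of domains, where the unboundedness of $J$ and the non-injectivity of $L$ interact.

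First I would prove the easy inclusion $-LJ\subseteq (JL)^{\ast}$ by a direct duality computation: for $g\in D(J)=D(J^{\ast})$ and $u\in D(JL)$ (so that $Lu\in D(J)$),
\[
\langle g,JLu\rangle=\langle J^{\ast}g,Lu\rangle=\langle -Jg,Lu\rangle=\langle L(-Jg),u\rangle=\langle -LJg,u\rangle,
\]
using $J^{\ast}=-J$ together with $L^{\ast}=L$ and the boundedness of $L$. Hence $g\in D((JL)^{\ast})$ and $(JL)^{\ast}g=-LJg$.

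The substantive step is the reverse inclusion $D((JL)^{\ast})\subseteq D(J)$. Let $g\in D((JL)^{\ast})$, so that $u\mapsto\langle g,JLu\rangle$ is bounded on $D(JL)$ in the norm of $X$. Since $\ker L\subseteq D(JL)$ and $JL$ vanishes on $\ker L$, the element $(JL)^{\ast}g$ annihilates $\ker L$, hence lies in $R(L)$ by Remark~\ref{R:closedness}. Because $R(L)$ is closed, $L:X/\ker L\to R(L)$ is a continuous bijection of Banach spaces, so there is a constant $C$ with: every $v\in R(L)$ admits $u$ with $Lu=v$ and $\|u\|\le C\|v\|$. Substituting $v=Lu$ shows that $v\mapsto\langle g,Jv\rangle$ is bounded on $R(L)\cap D(J)=L(D(JL))$. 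Now write $X^{\ast}=R(L)\oplus Z$ with $Z=\ker i_{X_{+}\oplus X_{-}}^{\ast}$, the annihilator of $X_{-}\oplus X_{+}$: this is a topological direct sum, since $R(L)\cap Z=\{0\}$ (a functional killing both $\ker L$ and $X_{-}\oplus X_{+}$ kills all of $X$) and $R(L)+Z=X^{\ast}$ via the projections of the decomposition in (H2), and crucially $Z\subseteq D(J)$ by (H3). Applying the Closed Graph Theorem to the closed subspace $Z\subseteq D(J)$ shows $J|_{Z}$ is bounded, so $v\mapsto\langle g,Jv\rangle$ is bounded on $Z$ as well, and therefore on all of $D(J)=(R(L)\cap D(J))\oplus Z$. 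Thus $g\in D(J^{\ast})=D(J)$, and then necessarily $(JL)^{\ast}g=-LJg$ by the first inclusion.

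Combining the two inclusions gives $(JL)^{\ast}=-LJ$ as operators (domains included), and hence $LJ=-(JL)^{\ast}$ is closed. I expect the main obstacle to be exactly this reverse inclusion: one must resist concluding membership in $D(J)$ too hastily from the formal composition rule, and instead check that the test space $L(D(JL))=R(L)\cap D(J)$ is "large enough" inside $D(J)$ — which is what the closedness of $R(L)$ (norm-controlled solvability of $Lu=v$) together with (H3) (splitting off a complement $Z$ on which $J$ is automatically bounded) supply.
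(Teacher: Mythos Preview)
Your proof is correct, and it follows a genuinely different route from the paper's. The paper proves the closedness of $LJ$ \emph{first}, by applying the block decomposition of Lemma~\ref{L:decomJ} to the splitting $X=\ker L\oplus(X_{-}\oplus X_{+})$: in that decomposition $LJ$ becomes a bounded piece $P_{2}^{\ast}L_{X_{2}}J_{21}i_{X_{1}}^{\ast}$ plus a closed piece $P_{2}^{\ast}L_{X_{2}}J_{22}i_{X_{2}}^{\ast}$ (closed because $L_{X_{2}}$ is an isomorphism and $J_{22}^{\ast}=-J_{22}$), hence $LJ$ is closed; the identity $(JL)^{\ast}=-LJ$ then drops out of $LJ=(LJ)^{\ast\ast}$ together with the elementary computation $(LJ)^{\ast}=-JL$.

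You reverse the logic: you compute $(JL)^{\ast}$ directly by proving both inclusions, and obtain closedness of $LJ$ as a corollary of being an adjoint. The substantive part --- the reverse inclusion $D((JL)^{\ast})\subset D(J)$ --- rests on the same structural facts the paper uses (closedness of $R(L)$ and (\textbf{H3})), but you package them as the topological splitting $X^{\ast}=R(L)\oplus Z$ with $Z=\ker i_{X_{+}\oplus X_{-}}^{\ast}\subset D(J)$, rather than going through the full block machinery of Lemma~\ref{L:decomJ}. Your argument is more self-contained and perhaps conceptually cleaner for this particular corollary; the paper's route has the advantage that it reuses decomposition technology already built for other purposes.
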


\begin{proof}
Let $X_{\pm}$ and $\ker L$ satisfy the requirements in (\textbf{H2-3}) and let
$X_{1}=\ker L$ and $X_{2}=X_{-}\oplus X_{+}$. Clearly, we have, $L_{X_{1}}=0$,
$\langle Lu_{1},u_{2}\rangle=0$, for all $u_{j}\in X_{j}$, $j=1,2$, and
$P_{1}^{\ast}X_{1}^{\ast}\subset D(J)$ due to (\textbf{H3}). Using
\[
i_{X_{1}}P_{1}+i_{X_{2}}P_{2}=I_{X},\quad Li_{X_{1}}=0,\quad i_{X_{1}}^{\ast
}L=0,
\]
$LJ$ can be rewritten in this decomposition
\[
LJ\gamma=P_{2}^{\ast}L_{X_{2}}J_{21}i_{X_{1}}^{\ast}\gamma+P_{2}^{\ast
}L_{X_{2}}J_{22}i_{X_{2}}^{\ast}\gamma,\quad\forall\gamma\in X^{\ast},
\]
which is equivalent to using the blockwise decomposition of $J$ and $L$. Since
$J_{21}$ is continuous, $P_{2}^{\ast}L_{X_{2}}J_{21}i_{X_{1}}^{\ast}$ is
continuous too. Moreover, the facts that $L_{X_{2}}:X_{2}\rightarrow
X_{2}^{\ast}$ is an isomorphism, $P_{2}^{\ast}$ has a continuous left inverse
$i_{X_{2}}^{\ast}$ as $P_{2}i_{X_{2}}=I_{X_{2}}$, along with the closedness of
$J_{22}$ imply that $P_{2}^{\ast}L_{X_{2}}J_{22}$ and thus $P_{2}^{\ast
}L_{X_{2}}J_{22}i_{X_{2}}^{\ast}$ is a closed operator. Therefore, $LJ$ is closed.

Since $(LJ)^{*}= JL$ is densely defined and thus $(LJ)^{**} = - (JL)^{*}$ is
well defined. The closeness of $LJ$ implies $LJ=(LJ)^{**} = - (JL)^{*}$.
\end{proof}

\begin{remark}
\label{R:inf-dim-nage-1} We would like to point out that, in the proof Lemma
\ref{L:decomJ} and Corollary \ref{C:decomJ}, we do not use the assumption that
$n^{-}(L)<\infty$. Therefore, they actually hold even if $n^{-}(L)=\infty$
except that $n^{-}(L_{X_{1,2}})$ might be $\infty$.
\end{remark}

The following is a simple, but useful, technical lemma.

\begin{lemma}
\label{L:decom1} There exist closed subspaces $X_{\pm}\subset X$ satisfying
the properties in (\textbf{H2-3}) and in addition,

\begin{enumerate}
\item $X=X_{0}\oplus X_{-}\oplus X_{+}$ is a $L$-orthogonal splitting with
associated projections $P_{0,\pm}$, where $X_{0}=\ker L$;

\item $L_{X_{\pm}} : X_{\pm}\to X_{\pm}^{*}$ are isomorphic; and

\item $\tilde X_{0, -}^{*} \subset D(J)$ and $D(J) \cap\tilde X_{+}^{*}$ is
dense in $\tilde X_{+}^{*}$, where $\tilde X_{\pm, 0}^{*} \triangleq P_{\pm,
0}^{*} X_{\pm, 0}^{*}$ (see \eqref{E:L_y1} and \eqref{E:tildeX*}).
\end{enumerate}
\end{lemma}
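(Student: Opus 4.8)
The plan is to work throughout with the \emph{fixed} complement $\tilde Y:=X_-^0\oplus X_+^0$ of $\ker L$ coming from the decomposition assumed in (\textbf{H2}), and to build the new $X_\pm$ by splitting $\tilde Y$ alone. Keeping $\tilde Y$ fixed is the point: the annihilator $\ker i_{\tilde Y}^\ast$ never changes, so the part of (\textbf{H3}) asserting $\ker i_{X_-\oplus X_+}^\ast\subset D(J)$ is automatically inherited. The first step is to record that, by Remark \ref{R:closedness}, $R(L)$ is closed and equals $\ker i_{\ker L}^\ast$, whence $X^\ast=\ker i_{\tilde Y}^\ast\oplus R(L)$ with the (bounded) dual projection $P_0^\ast$ mapping onto $\ker i_{\tilde Y}^\ast$ along $R(L)$. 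Since $D(J)$ is dense in $X^\ast$ and $\ker i_{\tilde Y}^\ast\subset D(J)$ by (\textbf{H3}), projecting a sequence from $D(J)$ converging into $R(L)$ along this splitting shows that $D(J)\cap R(L)$ is dense in $R(L)$. Also $L|_{\tilde Y}\colon\tilde Y\to R(L)$ is a continuous bijection between Banach spaces (onto because $R(L)=L(\tilde Y)$, injective because $\tilde Y\cap\ker L=\{0\}$), hence an isomorphism, so $(L|_{\tilde Y})^{-1}\big(D(J)\cap R(L)\big)$ is dense in $\tilde Y$.

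Next I would perturb the negative subspace into the domain of $J$. Moving a basis of $X_-^0$ slightly inside the dense set $(L|_{\tilde Y})^{-1}(D(J))$ produces an $n^-(L)$-dimensional subspace $X_-\subset\tilde Y$ with $L(X_-)\subset D(J)$ which, being a small perturbation of $X_-^0$, still carries a negative definite and hence (finite dimension) uniformly negative form $\langle L\cdot,\cdot\rangle|_{X_-}$. Set $X_+:=X_-^{\perp_{L}}\cap\tilde Y$. Since $\langle L\cdot,\cdot\rangle$ is non-degenerate on $\tilde Y$ by Lemma \ref{L:non-degeneracy} (using $X=\ker L\oplus\tilde Y$), and $X_-$ is finite dimensional with $\langle L\cdot,\cdot\rangle|_{X_-}$ non-degenerate, Lemma \ref{L:non-degeneracy} gives $X=X_-\oplus X_-^{\perp_{L}}$; intersecting with $\tilde Y$ yields $\tilde Y=X_-\oplus X_+$ and thus $X=\ker L\oplus X_-\oplus X_+$, an $L$-orthogonal splitting by construction (as $L$ kills $\ker L$). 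For the uniform positivity of $\langle L\cdot,\cdot\rangle$ on $X_+$ I would argue in two moves: for $u\in X_+$ and $v=v_-+v_+\in\tilde Y$ one has $\langle Lu,v\rangle=\langle Lu,v_+\rangle$, so non-degeneracy of $\langle L\cdot,\cdot\rangle$ on $\tilde Y$ descends to $X_+$; and an elementary argument using Lemma \ref{L:Morse-Index} together with the $L$-orthogonality of the splitting shows $\langle Lu,u\rangle>0$ for every $0\neq u\in X_+$. A non-degenerate bounded symmetric form that is pointwise positive is uniformly positive (its Riesz representative has $0$ outside its spectrum and no negative part), which gives (\textbf{H2.b}) on $X_+$; the isomorphism properties of $L_{X_-}$ and $L_{X_+}$ then follow from negative definiteness in finite dimension and from the Lax--Milgram bound, respectively.

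It remains to identify the dual pieces. With the splitting $L$-orthogonal, $L(X_-)=\tilde X_-^\ast$ and $L(X_+)=\tilde X_+^\ast$: indeed each $L|_{X_\pm}$ maps $X_\pm$ injectively into $\tilde X_\pm^\ast=\ker i_{(\ker L)\oplus X_\mp}^\ast$, surjectively by a dimension count in the negative case and by the direct-sum decomposition $R(L)=L(X_-)\oplus L(X_+)$ in the positive case, while $\tilde X_0^\ast=\ker i_{\tilde Y}^\ast$. Hence $\tilde X_0^\ast\subset D(J)$ (this is exactly (\textbf{H3}) for the new splitting), $\tilde X_-^\ast=L(X_-)\subset D(J)$ by the choice of $X_-$, and projecting the dense set $D(J)\cap R(L)$ onto $\tilde X_+^\ast=L(X_+)$ along the finite-dimensional $\tilde X_-^\ast\subset D(J)$ shows $D(J)\cap\tilde X_+^\ast$ is dense in $\tilde X_+^\ast$. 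This gives properties (1)--(3) together with (\textbf{H2})--(\textbf{H3}). I expect the main obstacle to be the density statement ``$D(J)\cap R(L)$ dense in $R(L)$'' — which is where (\textbf{H3}) is genuinely used — combined with checking that the $L$-orthogonal complement $X_+$ of the perturbed negative subspace stays uniformly positive; the latter needs care precisely because $\ker L$ may be infinite dimensional, so part (2) of Lemma \ref{L:non-degeneracy} is unavailable and one must instead transfer non-degeneracy from $\tilde Y$ and couple it with pointwise positivity.
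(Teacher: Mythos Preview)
Your proposal is correct and follows essentially the same route as the paper's proof: fix the complement $Y=X_-^0\oplus X_+^0$ of $\ker L$ coming from (\textbf{H2--3}), use (\textbf{H3}) and the density of $D(J)$ to see that $D(J)\cap\tilde Y^\ast$ is dense in $\tilde Y^\ast=R(L)$, perturb a basis of the finite-dimensional negative part inside $(L|_Y)^{-1}(D(J))$ to obtain $X_-\subset Y$ with $L(X_-)\subset D(J)$, and take $X_+$ to be the $L$-orthogonal complement of $X_-$ in $Y$. The paper compresses the verification of uniform positivity on $X_+$ and the density of $D(J)\cap\tilde X_+^\ast$ into ``the rest of the proof follows easily,'' whereas you spell these out; one small remark is that your pointwise-positivity step via Lemma~\ref{L:Morse-Index} does not directly exclude $\langle Lu,u\rangle=0$, but this is immediately handled by the argument you already have in hand: the strong non-degeneracy of $L_{X_+}$ (from the block-diagonal splitting of the isomorphism $L_{\tilde Y}$) gives a spectral decomposition $X_+=Z_-\oplus Z_+$, and then $X_-\oplus Z_-$ together with Lemma~\ref{L:Morse-Index} forces $Z_-=\{0\}$.
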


\begin{proof}
Let $Y_{\pm}\subset X$ be closed subspaces satisfying hypothesis
(\textbf{H2-3}). Let $Y=Y_{-}\oplus Y_{+}$, $P:X\rightarrow Y$ be the
projection associated to the decomposition $X=X_{0}\oplus Y$, $\tilde{X}%
_{0}^{\ast}=(I-P)^{\ast}X_{0}^{\ast}$, and $\tilde{Y}^{\ast}=P^{\ast}Y^{\ast}%
$, which are closed subspaces. According to (\textbf{H3}), we have $\tilde
{X}_{0}^{\ast}\subset D(J)$. Consequently, $\tilde{Y}^{\ast}\cap D(J)$ is
dense in $\tilde{Y}^{\ast}$ as $X^{\ast}=\tilde{X}_{0}^{\ast}\oplus\tilde
{Y}^{\ast}$. Our assumptions imply $L_{Y}:Y\rightarrow\tilde{Y}^{\ast}$ is an
isomorphism, which induces a bounded symmetric quadratic form on $Y$ with
Morse index equal to $n^{-}(L)$. Therefore, there exists a closed subspace
$X_{-}\subset Y$ such that $\dim X_{-}=n^{-}(L)$, $L(X_{-})\subset D(J)$, and
$\langle Lu,u\rangle\leq-\delta\Vert u\Vert^{2}$, for all $u\in X_{-}$. Let
\[
X_{+}=\{u\in Y\mid\langle Lu,v\rangle=0,\,\forall v\in X_{-}\}.
\]
Since $L$ is uniformly negative on $X_{-}$, Lemma \ref{L:non-degeneracy}
implies the $L$-orthogonal splitting $Y=X_{-}\oplus X_{+}$ and thus the
$L$-orthogonal decomposition $X=X_{0}\oplus X_{-}\oplus X_{+}$ as well. The
rest of the proof follows easily from the facts that $L_{Y}$ is isomorphic,
$X^{\ast}=\tilde{X}_{0}^{\ast}\oplus\tilde{X}_{-}^{\ast}\oplus\tilde{X}%
_{+}^{\ast}$, $\dim X_{-}=n^{-}(L)$, $\tilde{X}_{0}^{\ast}\subset D(J)$, and
$\tilde{X}_{-}^{\ast}=L(X_{-})\subset D(J)$.
\end{proof}

\begin{remark}
\label{R:orthogonal-d-2} Under assumption \eqref{E:orthogonal-d}, it is
possible to choose $X_{\pm}$ such that $X_{+}\oplus X_{-}=(\ker L)^{\perp}$
satisfies all properties in Lemma \ref{L:decom1}, where $(\ker L)^{\perp}$ is
defined in \eqref{E:orthogonal-c}. In fact, let $Y=(\ker L)^{\perp}$, then
\eqref{E:orthogonal-d} implies that the splitting $X=\ker L\oplus Y$ satisfies
all assumptions in Lemma \ref{L:decomJ}. The rest of the construction of
$X_{\pm}\subset Y=(\ker L)^{\perp}$ follows in exactly the same procedure as
in the proof of Lemma \ref{L:decom1}.
\end{remark}

In order to establish the well-posedness of the linear equation in the next,
we start with the following lemma.

\begin{lemma}
\label{L:decomJL} There exists an equivalent inner product $(\cdot, \cdot
)_{L}$ on $X$, a linear operator $A: D(JL) \to X$ which is anti-self-adjoint
with respect to $(\cdot, \cdot)_{L}$, and a bound linear operator $B: X \to X$
such that $JL= A+ B$.
\end{lemma}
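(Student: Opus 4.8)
\textbf{Proof plan for Lemma \ref{L:decomJL}.}

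The plan is to use the structural decomposition of Lemma \ref{L:decom1} (equivalently, the $L$-orthogonal splitting $X = X_0 \oplus X_- \oplus X_+$, where $X_0 = \ker L$), which isolates the unboundedness of $J$ into a single, well-controlled block. First I would invoke Lemma \ref{L:decom1} to obtain closed subspaces $X_\pm$ with $X = X_0 \oplus X_- \oplus X_+$ an $L$-orthogonal decomposition, with $L_{X_\pm}: X_\pm \to X_\pm^*$ isomorphic, $\pm\langle L_{X_\pm}\cdot,\cdot\rangle \geq \delta \|\cdot\|^2$, and the regularity $\tilde X_{0,-}^* \subset D(J)$, while $D(J)\cap \tilde X_+^*$ is dense in $\tilde X_+^*$. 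The key observation is that the only part of $J$ that is genuinely unbounded is the block acting on $\tilde X_+^*$, and on $X_+$ the quadratic form $\langle L\cdot,\cdot\rangle$ is uniformly positive, hence defines an equivalent norm.

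The construction of $(\cdot,\cdot)_L$ goes as follows. Let $P_{0,\pm}$ be the projections associated with the decomposition and set
\[
(u,v)_L \triangleq (P_0 u, P_0 v) + \langle L_{X_+} P_+ u, P_+ v\rangle - \langle L_{X_-} P_- u, P_- v\rangle,
\]
which is an equivalent inner product on $X$ by the uniform sign-definiteness of $L$ on $X_\pm$ and finite-dimensionality of $X_-$. I claim that with respect to $(\cdot,\cdot)_L$, the operator $J L_+$, where $L_+ = P_+^* L_{X_+} P_+$ collects the ``good'' positive block of $L$, is (up to sign) anti-self-adjoint on a suitable domain. Indeed, $JL$ differs from $JL_+$ by $J(L - L_+)$, and $L - L_+ = P_0^* L_{X_0} P_0 + P_-^* L_{X_-} P_-$; since $L_{X_0} = 0$ and $L_{X_-}$ maps into $\tilde X_-^* \subset D(J)$, the operator $J(L-L_+) = -2 J P_-^* L_{X_-} P_-$ (taking into account the split of $L$) is a bounded finite-rank operator on $X$, because $J$ restricted to the finite-dimensional $\tilde X_-^* \subset D(J)$ is bounded by the closed graph theorem. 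So I set $B \triangleq J(L - L_+)$ (bounded) and $A \triangleq JL_+ = JL - B$ on $D(A) = D(JL)$; it remains to check $A$ is anti-self-adjoint with respect to $(\cdot,\cdot)_L$.

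The verification of anti-self-adjointness of $A$ is the main technical point. Formally, for $u,v \in D(JL)$, using $J^* = -J$ and the symmetry of $L_+$, one computes $(Au, v)_L = (JL_+ u, v)_L = \langle L_+ JL_+ u, v\rangle + \cdots$; the cross-terms involving $P_0$ and $P_-$ must be handled carefully, but the heart of the matter is that $(\cdot,\cdot)_L$ agrees with $\langle L_+ \cdot,\cdot\rangle$ on the range relevant to $JL_+$, and $\langle L_+ J L_+ u, v\rangle = -\langle L_+ u, J L_+ v\rangle$ follows from Corollary \ref{C:decomJ} (i.e. $(JL_+)^* = -L_+ J$, applied with $L$ replaced by $L_+$, for which $(X, L_+, J)$ still satisfies the relevant hypotheses by Lemma \ref{L:decomJ}). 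The hard part will be pinning down the precise domain of $A$ as an operator on the Hilbert space $(X, (\cdot,\cdot)_L)$ and showing it is exactly anti-self-adjoint rather than merely anti-symmetric — this requires showing $D(A)$ is maximal, which one gets from the fact that $JL_+$ on $X_+$ is skew-adjoint with respect to the equivalent inner product $\langle L_{X_+}\cdot,\cdot\rangle$ (Stone's theorem / the standard correspondence between skew-adjoint generators and unitary groups), combined with the finite-dimensionality of $X_-$ and the triviality of $L$ on $\ker L$. Once $A$ is anti-self-adjoint and $B$ is bounded, the decomposition $JL = A + B$ is complete, and this also sets up the proof that $JL$ generates a $C^0$ group via the bounded perturbation theorem.
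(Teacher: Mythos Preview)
Your choice of the equivalent inner product $(\cdot,\cdot)_L$ is exactly the same as the paper's, but your choice of $A = JL_+$ is not anti-self-adjoint with respect to it, and this is a genuine gap. The computation $(JL_+u,v)_L = \langle (L_++L_-+L_0)JL_+u,v\rangle$ produces cross terms such as $\langle L_- JL_+u,v\rangle$ and $\langle L_0 JL_+u,v\rangle$, and there is no reason these equal $-\langle L_- u,JL_+v\rangle$ and $-\langle L_0 u,JL_+v\rangle$: the operator $J$ does not respect the splitting $X=X_0\oplus X_-\oplus X_+$, so the range of $JL_+$ is not contained in $X_+$, and your claim that ``$(\cdot,\cdot)_L$ agrees with $\langle L_+\cdot,\cdot\rangle$ on the range relevant to $JL_+$'' is false. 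Similarly, the operator that is anti-self-adjoint on $X_+$ with respect to $\langle L_{X_+}\cdot,\cdot\rangle$ is $P_+JP_+^*L_{X_+}$, not $JL_+$ restricted to $X_+$.

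The fix is simple and is what the paper does: take $A = J(L_++L_-+L_0) = J\mathbb{L}$, where $\mathbb{L}:X\to X^*$ is the Riesz isomorphism of $(\cdot,\cdot)_L$. Then anti-self-adjointness of $A$ in $(X,(\cdot,\cdot)_L)$ is immediate from $J^*=-J$ and $\mathbb{L}^*=\mathbb{L}$ (the Hilbert-space adjoint of $A$ is $\mathbb{L}^{-1}(J\mathbb{L})^*\mathbb{L} = \mathbb{L}^{-1}(-\mathbb{L}J)\mathbb{L} = -A$, using Corollary \ref{C:decomJ}). The remainder $B = JL - A = J(L-\mathbb{L}) = -J(2L_-+L_0)$ is bounded because $L_-$ and $L_0$ take values in $\tilde X_-^*\oplus\tilde X_0^*\subset D(J)$, and the Closed Graph Theorem applies. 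Note also your expression $J(L-L_+) = -2JP_-^*L_{X_-}P_-$ has a sign/factor error: since $L = L_+ - L_-$ with $L_- = -P_-^*L_{X_-}P_-$, one gets $L-L_+ = -L_- = P_-^*L_{X_-}P_-$, not $-2P_-^*L_{X_-}P_-$.
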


\begin{proof}
Let $X=X_{-}\oplus X_{0}\oplus X_{+}$ be a decomposition as given in Lemma
\ref{L:decom1} with $X_{0}=\ker L$. Let
\[
L_{\pm}=\pm P_{\pm}^{\ast}i_{X_{\pm}}^{\ast}Li_{X_{\pm}}P_{\pm}:X\rightarrow
X^{\ast},
\]
which satisfy
\[
L_{\pm}^{\ast}=L_{\pm},\quad L=L_{+}-L_{-},\quad\langle L_{\pm}u,v\rangle
=\pm\langle L_{X_{\pm}}u,v\rangle=\pm\langle Lu,v\rangle,\,\forall u,v\in
X_{\pm}.
\]
Let $R:X\rightarrow X^{\ast}$ be the isomorphism corresponding to
$(\cdot,\cdot)$ through the Riesz Representation Theorem and
\[
L_{0}=P_{0}^{\ast}i_{X_{0}}^{\ast}Ri_{X_{0}}P_{X_{0}}:X\rightarrow X^{\ast
}\ \longleftrightarrow\ \langle L_{0}u,v\rangle=(P_{0}u,P_{0}v).
\]
From Lemma \ref{L:decom1} and assumptions (\textbf{H2-3}), it is easy to
verify that
\[
(u,v)_{L}\triangleq\langle(L_{+}+L_{-}+L_{0})u,v\rangle=\langle L_{+}%
u,v\rangle+\langle L_{-}u,v\rangle+(P_{0}u,P_{0}v)
\]
is uniformly positive and defines an equivalent inner product on $X$. Let
\[
A=J(L_{+}+L_{-}+L_{0})=JL+2JL_{-}+JL_{0}\triangleq JL-B.
\]
Since $P_{0,-}^{\ast}X_{0,-}^{\ast}\subset D(J)$, the Closed Graph Theorem
implies that $B$ is bounded. If $\dim\ker L<\infty$, $B$ is obviously of
finite rank. The proof of the lemma is complete.
\end{proof}

A direct consequence of this lemma is the well-posedness of equation
\eqref{eqn-hamiltonian} which follows from the standard perturbation theory of semigroups.

\begin{proposition}
\label{P:well-posedness} $JL$ generates a $C^{0}$ group $e^{tJL}$ of bounded
linear operators on $X$.
\end{proposition}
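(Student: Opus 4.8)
The plan is to deduce the statement directly from Lemma \ref{L:decomJL}, Stone's theorem, and the bounded perturbation theorem for $C^{0}$ groups. First I would invoke Lemma \ref{L:decomJL} to write $JL = A + B$, where $A : D(JL) \to X$ is anti-self-adjoint with respect to an equivalent inner product $(\cdot,\cdot)_{L}$ on $X$ and $B$ is a bounded linear operator. Let $X_{L}$ denote the Hilbert space consisting of the set $X$ equipped with $(\cdot,\cdot)_{L}$. Because $(\cdot,\cdot)_{L}$ is equivalent to the original inner product, $X_{L}$ and $X$ have the same bounded operators with equivalent operator norms, the same closed operators, and $D(JL)$ is dense in $X_{L}$ (the density being one of the consequences of \textbf{(H3)}; see Remark \ref{R:H3} and the construction in Lemma \ref{L:decomJL}).

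Next I would apply Stone's theorem on $X_{L}$: an anti-self-adjoint operator on a Hilbert space generates a $C^{0}$ group of unitary operators. Hence $A$ generates a $C^{0}$ group $\{e^{tA}\}_{t\in\mathbf{R}}$ with $\Vert e^{tA}\Vert_{X_{L}} = 1$ for all $t$; the same applies to $-A$. Then, since $B$ is bounded on $X_{L}$, the Phillips bounded-perturbation theorem for $C^{0}$ semigroups — applied to both $A+B$ and $(-A)+(-B)$ — shows that $A+B = JL$ generates a $C^{0}$ group $\{e^{tJL}\}_{t\in\mathbf{R}}$ on $X_{L}$, with the exponential bound $\Vert e^{tJL}\Vert_{X_{L}} \le e^{|t|\,\Vert B\Vert_{X_{L}}}$. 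Transferring this back through the norm equivalence between $X_{L}$ and $X$ yields that $\{e^{tJL}\}_{t\in\mathbf{R}}$ is a $C^{0}$ group of bounded linear operators on $X$, which is the assertion.

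The routine part is the semigroup generation; the real content is entirely carried by Lemma \ref{L:decomJL}, so the place where the hypotheses \textbf{(H1)}--\textbf{(H3)} are genuinely used is the structural decomposition $JL = A + B$. The delicate point there is that the possibly indefinite and degenerate form $\langle L\cdot,\cdot\rangle$ is traded for the uniformly positive form $\langle (L_{+}+L_{-}+L_{0})\cdot,\cdot\rangle$ (using \textbf{(H2.b)} and $\dim X_{-} = n^{-}(L) < \infty$) at the cost of changing $JL$ only by $B = -2JL_{-} - JL_{0}$, whose boundedness follows from $\tilde X_{0,-}^{*} = P_{0,-}^{*}X_{0,-}^{*} \subset D(J)$, i.e. exactly \textbf{(H3)}, together with the Closed Graph Theorem. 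I also expect the one point worth double-checking to be that $A$ is truly anti-self-adjoint, not merely skew-symmetric: here one uses $J^{*} = -J$ and that $L_{+}+L_{-}+L_{0}:X\to X^{*}$ is the symmetric isomorphism realizing the Riesz identification of $(\cdot,\cdot)_{L}$, so that on $D(A) = (L_{+}+L_{-}+L_{0})^{-1}D(J)$ one gets $A^{*} = -(L_{+}+L_{-}+L_{0})J = -A$, in the spirit of Corollary \ref{C:decomJ}. Once anti-self-adjointness and boundedness of $B$ are secured, no further obstacle remains.
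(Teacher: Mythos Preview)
Your proposal is correct and follows exactly the paper's approach: the paper simply states that Proposition~\ref{P:well-posedness} is a direct consequence of Lemma~\ref{L:decomJL} together with ``the standard perturbation theory of semigroups,'' and you have merely unpacked that phrase into Stone's theorem plus the bounded perturbation (Phillips) theorem. Your additional remarks on verifying that $A=J(L_{+}+L_{-}+L_{0})$ is genuinely anti-self-adjoint in $(\cdot,\cdot)_{L}$ (not just skew-symmetric), via $J^{*}=-J$ and the fact that $L_{+}+L_{-}+L_{0}$ is the Riesz isomorphism for the new inner product, fill in a detail the paper leaves implicit in the statement of Lemma~\ref{L:decomJL}.
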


\noindent\textbf{Complexification.} For considerations where complex
eigenvalues are involved, we have to work with the standard complexification
of $X$ and the associated operators. Let
\[
\tilde{X}=\{x=x_{1}+ix_{2}\mid x_{1,2}\in X\}\;\text{ with }\;\overline
{x_{1}+ix_{2}}=x_{1}-ix_{2}%
\]
equipped with the complexified inner product
\[
(x_{1}+ix_{2},x_{1}^{\prime}+ix_{2}^{\prime})=(x_{1},x_{1}^{\prime}%
)+(x_{2},x_{2}^{\prime})+i\big((x_{2},x_{1}^{\prime})-(x_{1},x_{2}^{\prime
})\big).
\]
Instead of complexifying $L$ as a linear operator directly, it is much more
convenient for us to complexify its corresponding real symmetric quadratic
form $\langle Lu,v\rangle$ into a complex Hermitian symmetric form
\begin{align}
&  \mathcal{B}(x_{1}^{\prime}+ix_{2}^{\prime},x_{1}+ix_{2})=\langle\tilde
{L}(x_{1}+ix_{2}),(x_{1}^{\prime}+ix_{2}^{\prime})\rangle\nonumber\\
=  &  \langle Lx_{1},x_{1}^{\prime}\rangle+\langle Lx_{2},x_{2}^{\prime
}\rangle+i\big(\langle Lx_{1},x_{2}^{\prime}\rangle-\langle Lx_{2}%
,x_{1}^{\prime}\rangle\big), \label{E:complexify}%
\end{align}
for any $x_{1,2},\ x_{1,2}^{\prime}\in X$. Accordingly $L$ is complexified to
a (anti-linear) mapping $\tilde{L}$ from $\tilde{X}$ to $\tilde{X}^{\ast}$
satisfying
\begin{equation}
\tilde{L}(cx+c^{\prime}x^{\prime})=\bar{c}\tilde{L}x+\bar{c}^{\prime}\tilde
{L}x^{\prime}. \label{E:anti-linear}%
\end{equation}
A similar complexification can also be carried out for $J$ corresponding to a
Hermitian symmetric form on $\tilde{X}^{\ast}$ and a (anti-linear) mapping
from $\tilde{X}^{\ast}\rightarrow\tilde{X}$.

The composition $\tilde{J}\circ\tilde{L}$ of (anti-linear) mappings $\tilde
{J}$ and $\tilde{L}$ is a closed complex linear operator from $D(\tilde
{J}\tilde{L})\subset\tilde{X}$ to $\tilde{X}$. The fact that $\widetilde{JL}$
is anti-symmetric with respect to the Hermitian symmetric form $\langle
\tilde{L}u,v\rangle$, that is,
\begin{equation}
\langle\tilde{L}(\tilde{J}\tilde{L}u),v\rangle=-\langle\tilde{L}u,\tilde
{J}\tilde{L}v\rangle, \label{E:AntiHermitian}%
\end{equation}
will be used frequently. According to Corollary \ref{C:decomJ}, the dual
operator of $\tilde{J}\tilde{L}$ is given by
\begin{equation}
(\tilde{J}\tilde{L})^{\ast}=-\tilde{L}\tilde{J}. \label{E:dual-JL}%
\end{equation}
It is easy to verify that $\tilde{L}$, $\tilde{J}$, $\tilde{J}\tilde{L}$ and
$\tilde{L}\tilde{J}$ are real in the sense
\begin{equation}
\overline{\langle\tilde{L}x,x^{\prime}\rangle}=\langle\tilde{L}\bar
{x},\overline{x^{\prime}}\rangle,\;\overline{\langle f,\tilde{J}g\rangle
}=\langle\bar{f},\tilde{J}\bar{g}\rangle,\;\overline{\tilde{J}\tilde{L}%
x}=\tilde{J}\tilde{L}\bar{x},\;\overline{\tilde{L}\tilde{J}x}=\tilde{L}%
\tilde{J}\bar{x}. \label{E:real}%
\end{equation}
This implies that the spectrum of $\tilde{J}\tilde{L}$ and $\tilde{L}\tilde
{J}$ are symmetric about the real axis in the complex plane.

\begin{remark}
\label{R:real} In fact, on the complexified Hilbert space $\tilde X$ (or on
$\tilde X^{*}$), a linear operator or a Hermitian form is the complexification
of a (real) operator or a symmetric quadratic form on $X$ (or on $\tilde
X^{*}$) if and only if \eqref{E:real} holds.
\end{remark}

In the rest of the paper, with slight abuse of notations, we will write $X,
JL, \langle Lu, v\rangle$ also for their complexifications unless confusion
might occur.

\begin{remark}
\label{R:complexification} The linear group of bounded operators $e^{tJL}$
obtained in Proposition \ref{P:well-posedness} is also complexified
accordingly when needed.
\end{remark}

\begin{remark}
\label{R:splitting} Exactly the same statements in Lemma
\ref{L:non-degeneracy}, \ref{lemma-L-form-preserve}, \ref{L:InvariantSubS}
hold in the complexified framework.
\end{remark}

\begin{center}
{\Large Acknowledgement}
\end{center}

Zhiwu Lin is supported in part by NSF grants DMS-1411803 and DMS-1715201.
Chongchun Zeng is supported in part by a NSF grant DMS-1362507.

\bigskip

\bigskip

\bigskip

\end{document}